\def\chaptermark#1{}
\def\chapter{%
  \if@openright\cleardoublepage\else\clearpage\fi
  \thispagestyle{plain}\global\@topnum\z@
  \@afterindenttrue \secdef\@chapter\@schapter}
\def\@chapter[#1]#2{\refstepcounter{chapter}%
  \ifnum\c@secnumdepth<\z@ \let\@secnumber\@empty
  \else \let\@secnumber\thechapter \fi
  \typeout{\chaptername\space\@secnumber}%
  \def\@toclevel{0}%
  \ifx\chaptername\appendixname \@tocwriteb\tocappendix{chapter}{#2}%
  \else \@tocwriteb\tocchapter{chapter}{#2}\fi
  \chaptermark{#1}%
  \addtocontents{lof}{\protect\addvspace{10\p@}}%
  \addtocontents{lot}{\protect\addvspace{10\p@}}%
  \@makechapterhead{#2}\@afterheading}
\def\@schapter#1{\typeout{#1}%
  \let\@secnumber\@empty
  \def\@toclevel{0}%
  \ifx\chaptername\appendixname \@tocwriteb\tocappendix{chapter}{#1}%
  \else \@tocwriteb\tocchapter{chapter}{#1}\fi
  \chaptermark{#1}%
  \addtocontents{lof}{\protect\addvspace{10\p@}}%
  \addtocontents{lot}{\protect\addvspace{10\p@}}%
  \@makeschapterhead{#1}\@afterheading}
\newcommand\chaptername{Chapter}
\def\@makechapterhead#1{\global\topskip 7.5pc\relax
  \begingroup
  \fontsize{\@xivpt}{18}\bfseries\centering
    \ifnum\c@secnumdepth>\m@ne
      \leavevmode \hskip-\leftskip
      \rlap{\vbox to\z@{\vss
          \centerline{\normalsize\mdseries
              \uppercase\@xp{\chaptername}\enspace\thechapter}
          \vskip 3pc}}\hskip\leftskip\fi
     #1\par \endgroup
  \skip@34\p@ \advance\skip@-\normalbaselineskip
  \vskip\skip@ }
\def\@makeschapterhead#1{\global\topskip 7.5pc\relax
  \begingroup
  \fontsize{\@xivpt}{18}\bfseries\centering
  #1\par \endgroup
  \skip@34\p@ \advance\skip@-\normalbaselineskip
  \vskip\skip@ }
\def\appendix{\par
  \c@chapter\z@ \c@section\z@
  \let\chaptername\appendixname
  \def\thechapter{\@Alph\c@chapter}}
\newcounter{chapter}
\newif\if@openright
\newtheorem{theorem}{Theorem}[section]
\newtheorem{lemma}[theorem]{Lemma}
\newtheorem{proposition}[theorem]{Proposition}
\newtheorem{corollary}[theorem]{Corollary}
\newtheorem{notation}[theorem]{Notation}
\newtheorem*{hr}{Holomorphic Rigidity of Teichm\"uller space}
\DeclareMathOperator*{\essinf}{ess\,inf}
\theoremstyle{introtheorem}
\newtheorem{introtheorem}{Theorem}
\theoremstyle{introconjecture}
\theoremstyle{definition}
\newtheorem{definition}[theorem]{Definition}
\newtheorem{remark}[theorem]{Remark}
 \newenvironment{siubochner}
{{\it Proof of Siu's Bochner formula, Theorem~\ref{siubochner}.}}
{\hfill $\Box$ \\}
\newenvironment{proof:pluriharmonic}
{{\it Proof of Theorem~\ref{pluriharmonic}.}}
{\hfill $\Box$ \\}
\numberwithin{equation}{section}
\newtheorem*{theorem*}{Theorem}
{{\sc Proof of Lemma~\ref{tri1}.}}%
{{\qed} \\}
\newenvironment{proofTheorem2}%
{{\it Proof of Theorem~\ref{theorem:holomorphic}.}}%
{\hfill $\Box$ \\}
\newenvironment{proofTheorem3}%
{{\it Proof of Theorem~\ref{theorem:sameconclusion}.}}%
{\hfill $\Box$ \\}
\newenvironment{proofTheorem4}%
{{\it Proof of Theorem~\ref{theorem:buildings}.}}%
{\hfill $\Box$ \\}
\newenvironment{proofTheorem5}%
{{\it Proof of Theorem~\ref{theorem:teichmuller}.}}%
{\hfill $\Box$ \\}
\newcommand{\R}{\mathbb R}
\newcommand{\Z}{\mathbb Z}
\newcommand{\RR}{\mathcal R}
\newcommand{\DD}{\mathcal D}
\newcommand{\Pb}{\mathbb P}
\newcommand{\N}{\mathbb N}
\newcommand{\C}{\mathbb C}
\newcommand{\D}{\mathbb D}
\newcommand{\Sp}{\mathbb S}
\newcommand{\B}{\mathcal B}
\newcommand{\HH}{\mathcal H}
\newcommand{\NN}{\mathcal N}
\newcommand{\PP}{\mathcal P}
\newcommand{\CC}{\mathcal C}
\newcommand{\LL}{\mathcal L}
\newcommand{\domain}{M}
\newcommand{\Hyp}{\mathbb  H}
\newcommand{\Pn}{\mathsf P(n,\mathbb R)}
\newcommand{\Sn}{\mathsf S(n,\R)}
\newcommand{\Mn}{\mathsf M(n,\R)}
\newcommand{\Gl}{\mathsf{GL}(n,\mathbb R)}
\newcommand{\So}{\mathsf{SO}(n)}
\newcommand{\On}{\mathsf{O}(n)}
\newcommand{\An}{\mathsf{A}(n)}
\newcommand{\Nn}{\mathsf{N}(n)}
\newcommand{\Gsf}{{\mathsf G}_{\xi}}
\newcommand{\Nsf}{{\mathsf N}_{\xi}}
\newcommand{\DDD}{\widetilde{\bar \D^*}}
\newcommand{\DDDD}
{\bar \D^* \rightarrow \DDD \times_{\rho} \tilde X}
\newcommand{\Ej}{\mathsf E_j}
\newcommand{\Ei}{\mathsf E_i}
\title{Infinite energy maps and rigidity}
\author[Daskalopoulos]{Georgios Daskalopoulos}
\address{Department of Mathematics \\
                 Brown Univeristy \\
                 Providence, RI}
\email{daskal@math.brown.edu}
\author[Mese]{Chikako Mese}
\address{Johns Hopkins University\\
Department of Mathematics\\
Baltimore, MD}
\email{cmese@math.jhu.edu}
\begin{document}

\thanks{
GD supported in part by NSF DMS-2105226, CM supported in part by NSF DMS-2005406.}
\maketitle

\begin{abstract} 
We extend Siu's and Sampson's celebrated rigidity results to  non-compact domains. More precisely, 
let $M$ be a smooth quasi-projective variety with universal cover $\tilde M$ and let $\tilde X$ be a symmetric space of non-compact type, a locally finite Euclidean building or the Weil-Petersson completion of the Teichm\"uller space of a surface of genus $g$ and $p$ punctures with $3g-3+p>0$.      Under suitable assumptions on a homomorphism $\rho: \pi_1(M) \rightarrow  \mathsf{Isom}(\tilde X)$,  we show that there exists a $\rho$-equivariant pluriharmonic map
$\tilde u: \tilde M \rightarrow \tilde X$ of possibly infinite energy. In the case when the target is  K\"ahler  and  $\mathsf{rank}(d \tilde u) \geq 3$ at some point,  $\tilde u$  is  holomorphic or conjugate holomorphic. This  builds on previous important work by \cite{jost-zuo} and \cite{mochizuki-memoirs}. We also extend these results to the case when the target is a Riemannian manifold with sectional curvature bounded from above by a negative constant. 
\end{abstract}

\vspace*{0.3in}

\begin{center}
{\sc Introduction}
\end{center}

In his influential paper \cite{siu1}, Y.-T.~Siu proved   that harmonic maps between K\"ahler manifolds of complex dimension $\geq 2$ are pluriharmonic, provided that the domain manifold is compact and  the target  non-positively curved in a strong sense.  This result partially settled a conjecture by Yau made a few years earlier.  Siu’s ideas included an extension of the Bochner's formula and were further developed by others.  Notably, Sampson \cite{sampson} showed that a harmonic map from  a compact K\"ahler manifold to a Riemannian manifold of  nonpositive Hermitian sectional curvature is pluriharmonic.  The compactness  of the domain  can be replaced by  completeness and finite volume, provided that there exists a finite energy map (cf. \cite{siu2} and \cite{corlette}).   However, in many situations,  it may be difficult or even impossible to prove that a finite energy  maps exists.  The goal of this manuscript is  to extend Siu's and Sampson's work  by   proving  the  existence of possibly infinite energy pluriharmonic or even holomorphic  maps between K\"ahler manifolds.  

Infinite energy  harmonic maps between  manifolds  previously appeared the work of Lohkamp and Wolf.  Lohcamp \cite{lohkamp}  proved  the existence of a harmonic map in a given homotopy class of maps between two non-compact manifolds, provided that  a certain simplicity condition is satisfied. The most important case  is when the domain  is metrically a product  near infinity.  Wolf \cite{wolf} studied  harmonic maps of infinite energy  when the domain is a nodal Riemann surface and applied this study to describe degenerations of surfaces in the Riemann moduli space.

A few years later, Jost and Zuo (cf. \cite{jost-zuo}, \cite{zuo}) sketched a proof of the existence of  infinite energy pluriharmonic maps from  non-compact K\"ahler manifolds to symmetric spaces and  buildings. 
In a remarkable paper, Mochizuki  \cite{mochizuki-memoirs} rigorously constructed pluriharmonic  maps into the symmetric space    $GL(r, \C)/ U(r)$. More precisely, Mochizuki constructed pluriharmonic metrics on flat vector bundles.  These metrics correspond to harmonic maps  by the Donaldson-Corlette theorem (cf.~\cite{donaldson}, \cite{corlette}).  
Mochizuki  brought into the picture a new Bochner formula (cf.~ \cite[Proposition 21.42]{mochizuki-memoirs}) for harmonic bundles that can be used to bypass the potential obstruction coming from the non-triviality of the normal bundle of the divisor at infinity.

In this paper, we prove the existence of pluriharmonic maps without assuming the existence of a finite energy maps nor the triviality assumption.  One of the contributions of this paper is to broaden Mochizuki's Bochner formula for  harmonic maps.   Applying these Bochner formulas, we prove the following theorems:
\begin{introtheorem} \label{theorem:pluriharmonic} 
Let $M$, $\tilde X$ and $\rho:\pi_1(M) \rightarrow \tilde X$ satisfy the following:
\begin{itemize}
\item $M$ is a smooth quasi-projective variety with universal cover $\tilde M$
\item $\tilde X$ is a symmetric space of non-compact type
 \item  $\rho: \pi_1(M) \rightarrow  \mathsf{Isom}(\tilde X)$  is a proper homomorphism that  does not fix an unbounded closed convex strict subset of $\tilde X$.
 \end{itemize}
Then  there exists a unique  $\rho$-equivariant pluriharmonic map $\tilde u: \tilde M \rightarrow \tilde X$ of sub-logarithmic growth. \end{introtheorem}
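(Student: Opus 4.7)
The plan is to construct $\tilde u$ by compact exhaustion combined with the Siu-Sampson Bochner formula and a sharpened asymptotic analysis near the divisor at infinity. First I fix a smooth compactification $\bar M$ of $M$ with $D = \bar M \setminus M$ a simple normal crossings divisor, and equip $M$ with a complete, finite-volume Kähler metric of Poincaré type near $D$. Using properness of $\rho$, I choose a smooth $\rho$-equivariant reference map $v_0 : \tilde M \to \tilde X$ (built by gluing local sections via partitions of unity and the barycenter construction in the nonpositively curved target). Take an exhaustion $M_1 \subset M_2 \subset \cdots \subset M$ by relatively compact subdomains with smooth boundary, obtained by removing shrinking tubular neighborhoods of the components of $D$.

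On each $M_j$ I solve the equivariant Dirichlet problem for the energy functional among $\rho$-equivariant maps $\tilde M_j \to \tilde X$ with boundary values $v_0|_{\partial \tilde M_j}$. Existence, uniqueness and interior regularity of the minimizer $\tilde u_j$ are standard for nonpositively curved symmetric targets. The key step in extracting a limit is a uniform local bound on a given compact $K \subset \tilde M$: the assumption that $\rho$ is proper and fixes no unbounded closed convex strict subset of $\tilde X$ is precisely a reductivity condition, which, together with convexity of the distance function on $\tilde X$, prevents the $\tilde u_j$ from drifting to infinity along any subsequence. Standard interior estimates upgrade the resulting $C^0$ bound to $C^{k}_{loc}$ estimates, so after passing to a subsequence we obtain a $\rho$-equivariant harmonic map $\tilde u : \tilde M \to \tilde X$. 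The Siu-Sampson Bochner identity, applied to each $\tilde u_j$ and passed to the limit on compact subsets, shows that $\tilde u$ is pluriharmonic.

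The main obstacle is the sub-logarithmic growth bound $d_{\tilde X}(\tilde u(x), v_0(x)) = o(\log r(x))$ near $D$, where $r$ is the distance to $D$ in the chosen compactification. For this step I would model a neighborhood of a component of $D$ by a punctured polydisk $(\Delta^*)^k \times \Delta^{n-k}$ and adapt Mochizuki's extension of the Bochner formula, as foreshadowed in the introduction, to absorb boundary terms coming from the nontrivial normal bundle of $D$. The strategy is to prove a monotonicity or frequency-type estimate for a suitable modification of $d^2_{\tilde X}(\tilde u, v_0)$ on the model space, rule out growth faster than $o(\log r)$ using the Kähler hypothesis and the structure at the boundary, and patch the resulting local growth bounds across $D$ via a partition of unity argument compatible with the SNC structure. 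This is precisely the analytic content captured later by the Key Technical Lemma for model space, and it is the step where classical finite-energy techniques are unavailable and new Bochner identities are essential.

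Uniqueness of $\tilde u$ follows from a maximum-principle argument: given two such pluriharmonic maps $\tilde u, \tilde u'$, the function $x \mapsto d_{\tilde X}(\tilde u(x), \tilde u'(x))$ is subharmonic on $M$ (convexity of distance on an NPC target), and sub-logarithmic near $D$, hence extends across $\bar M$ as a subharmonic function of logarithmic growth, which forces it to be constant; the no-fixed-unbounded-convex-set hypothesis on $\rho$ then forces the constant to be zero.
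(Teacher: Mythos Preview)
Your outline has the right architecture (compact exhaustion, Dirichlet problems, Bochner formula, sub-log growth, uniqueness by subharmonicity), but two of the key steps are misidentified and would fail as stated.

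\textbf{The reference map is not generic.} You propose to build $v_0$ by ``gluing local sections via partitions of unity and the barycenter construction.'' The paper cannot use such a map: the uniform local energy bound $E^{u_r}[M_{r_1}] \le E^{v}[M_{r_1}] + C$ (your step ``prevent the $\tilde u_j$ from drifting'') requires that $v$ be \emph{almost energy-minimizing} in every annular shell $\mathcal D_{r_1,r_2}$ near $D$ (Proposition~\ref{comparison}). This forces a very specific construction: on each transverse punctured disk the prototype map is the \emph{unique harmonic Dirichlet solution} of Theorem~\ref{exists}, and near crossings one uses the exponential-decay property of commuting isometries (Definition~\ref{jz2}) to build an almost-flat equivariant extension. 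A barycenter/partition-of-unity map would typically have energy growing faster than $\mathsf E_\rho \log\frac{1}{r}$ and the comparison argument would give no uniform bound. The sub-logarithmic growth of $u$ then follows from these energy estimates (Lemma~\ref{(iii)}, Lemma~\ref{uniqueH}), not from a separate frequency/monotonicity argument.

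\textbf{Pluriharmonicity is where the modified Bochner formula enters, not the growth bound.} Your sentence ``The Siu--Sampson Bochner identity, applied to each $\tilde u_j$ and passed to the limit on compact subsets, shows that $\tilde u$ is pluriharmonic'' is the gap. The Dirichlet solutions $u_j$ are harmonic but not pluriharmonic; Siu's formula integrated against a cutoff $\chi_N$ produces boundary terms near $D$ that converge, as $N\to\infty$, to a \emph{nonzero} residue involving the curvature of $\mathcal O(\Sigma_j)$ (see the computation of $(V)_1$ in the proof of Proposition~\ref{integrable}). The point of Theorem~\ref{mochizukibochnerformula} is that the modified $(2n-1)$-form $\{\bar\partial_{E'}\partial u,\bar\partial u-\partial u\}$ has an extra cancellation ($\tfrac{dr}{r}\wedge\tfrac{dr}{r}=0$ in the dominant term) so that \emph{those} boundary terms do vanish (Section~\ref{plur}); this is what forces $\partial_{E'}\bar\partial u=0$. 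You have inverted the roles: Mochizuki's formula is not used for growth control. Finally, note that the paper does not run this argument in arbitrary dimension directly; it proves the $n=2$ case and then inducts via Lefschetz hyperplane sections (Chapter~\ref{chap:higher dimensions}), using the one-dimensional uniqueness theorem (Theorem~\ref{uniqueness}) to glue the pluriharmonic maps on ample divisors into a well-defined map on $M$.
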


Here, {\it sub-logarithmic growth} means that the map grows slower than a logarithmic function as it approaches the divisor, cf.~Definition~\ref{logdec} and Definition~\ref{sub-log}.

\begin{introtheorem} \label{theorem:holomorphic}
In Theorem~\ref{theorem:pluriharmonic}, if we additionally assume that  $\tilde X$ is a  K\"ahler manifold and the $\rho$-equivariant harmonic map $\tilde u:\tilde M \rightarrow \tilde X$ has $\mathsf{rank}_{\R}(d \tilde u) \geq 3$ at some point, then $\tilde u$ is holomorphic or conjugate holomorphic.   \end{introtheorem}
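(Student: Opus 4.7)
\medskip
\noindent\textbf{Proof proposal for Theorem~\ref{theorem:holomorphic}.}

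My plan is to adapt the classical Siu--Sampson rigidity argument to the present non-compact, possibly infinite-energy setting, leveraging the generalized Bochner identities and the sub-logarithmic decay established earlier in the paper. I start from the $\rho$-equivariant pluriharmonic map $\tilde u: \tilde M \to \tilde X$ produced by Theorem~\ref{theorem:pluriharmonic} and split $d\tilde u = \partial\tilde u + \bar\partial\tilde u$ with respect to the complex structures on both sides. Pluriharmonicity together with the K\"ahler condition on $\tilde X$ makes $\partial\tilde u$ a holomorphic (in the appropriate weak sense) section of $\text{Hom}(T^{1,0}\tilde M, \tilde u^*T^{1,0}\tilde X)$; the scalar quantity $R^{\tilde X}(\partial\tilde u, \overline{\partial\tilde u}, \partial\tilde u, \overline{\partial\tilde u})$ built from $d\tilde u$ and the target curvature is $\rho$-invariant and hence descends to $M$.

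Next, I would invoke the Siu--Sampson--Mochizuki-type Bochner identity developed earlier in the paper. On a compact K\"ahler domain this identity integrates to a non-negative curvature term plus a divergence that vanishes by Stokes. In the present non-compact setting one must integrate against a family of cut-offs adapted to shrinking neighborhoods of the divisor at infinity, and the central difficulty---and the step I expect to be the hardest---is showing that the boundary error tends to zero as the cut-offs exhaust $M$. This is precisely where the sub-logarithmic growth of $\tilde u$ and the broadened Mochizuki-type Bochner formula, which is specifically designed to absorb contributions from the (possibly non-trivial) normal bundle of the divisor, combine to close the argument.

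Once the integrated identity forces $R^{\tilde X}(\partial\tilde u, \overline{\partial\tilde u}, \partial\tilde u, \overline{\partial\tilde u}) \equiv 0$ pointwise on $\tilde M$, the strong seminegativity of the curvature of the K\"ahler target $\tilde X$ yields the pointwise structural restrictions first derived by Siu for $\partial\tilde u$ and $\bar\partial\tilde u$. Under the hypothesis $\mathsf{rank}_\R(d\tilde u)\geq 3$ at some $p_0$, these restrictions, combined with the $\bar\partial$-type elliptic equations satisfied by $\partial\tilde u$ and $\bar\partial\tilde u$ (coming from pluriharmonicity) and unique continuation, propagate to force one of $\partial\tilde u$ or $\bar\partial\tilde u$ to vanish identically on $\tilde M$. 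Hence $\tilde u$ is globally holomorphic or conjugate holomorphic, as claimed.
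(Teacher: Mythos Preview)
Your plan is correct but substantially over-engineered relative to what the paper actually does. The paper's entire proof of Theorem~\ref{theorem:holomorphic} is one line: ``Follows from an argument exactly as in \cite{siu1}.'' The reason this suffices is that the hard analytical work you describe in your second paragraph---the cut-off integration against $\chi_N$, the Mochizuki-type Bochner identity of Theorem~\ref{mochizukibochnerformula}, the sub-logarithmic growth estimates controlling the boundary terms near the divisor---is \emph{already} the content of the proof of Theorem~\ref{theorem:pluriharmonic} (see Section~\ref{sec:pluriharmonicity}, especially Proposition~\ref{integrable} and the proof of Theorem~\ref{pluriharmonicity}). That argument simultaneously establishes $\partial_{E'}\bar\partial u = 0$ \emph{and} $Q \equiv 0$, since both summands in the integrand $4|\partial_{E'}\bar\partial u|^2 + Q$ are non-negative and the total integral is shown to vanish.

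So when you ``start from the pluriharmonic map produced by Theorem~\ref{theorem:pluriharmonic}'' you are already entitled to $Q\equiv 0$; there is no need to re-run the Bochner machinery. What remains is exactly your third paragraph: Siu's pointwise algebraic consequence of $Q=0$ under strong non-positivity, the rank $\geq 3$ hypothesis forcing $\partial u$ or $\bar\partial u$ to vanish at a point, and propagation by holomorphicity of $\partial u$ (from pluriharmonicity) plus connectedness. That step is purely local and needs nothing about the divisor at infinity or compactness, which is why the paper can simply cite \cite{siu1}.
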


\begin{introtheorem} \label{theorem:sameconclusion}
The conclusions of Theorem~\ref{theorem:pluriharmonic}  and Theorem~\ref{theorem:holomorphic} hold if we replace $\tilde X$ by  a complete simply connected   K\"ahler manifold with strong nonpositive  curvature in the sense of Siu and sectional curvature bounded from above by $\kappa<0$.
Furthermore, 
the conclusion of Theorem~\ref{theorem:pluriharmonic}   holds if we replace $\tilde X$ by a Riemannian manifold $\tilde X$ of  Hermitian-negative curvature and sectional curvature bounded from above by $\kappa<0$.\end{introtheorem}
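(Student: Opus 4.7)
The plan is to follow the proofs of Theorem~\ref{theorem:pluriharmonic} and Theorem~\ref{theorem:holomorphic} line-by-line, with the symmetric-space target replaced by the K\"ahler (respectively Riemannian) target of the hypothesis. Those proofs use the symmetric-space structure only through three ingredients: (i) a strict negative upper bound on sectional curvature, which provides CAT$(\kappa)$-type convexity for \emph{a priori} estimates and uniqueness; (ii) a Siu--Sampson-type Bochner identity for harmonic maps; and (iii) Siu's local rigidity statement for pluriharmonic maps into K\"ahler targets of strong nonpositive curvature. All three ingredients remain available under the hypotheses of Theorem~\ref{theorem:sameconclusion}: the sectional curvature bound $\kappa<0$ supplies (i); strong nonpositive curvature in the sense of Siu (in the K\"ahler case) or Hermitian-negative curvature (in the Riemannian case) supplies (ii) in the appropriate form; and (iii) is a purely local statement that requires only Siu's curvature hypothesis.

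\emph{Existence and sub-logarithmic growth.} I would exhaust $\tilde M$ by relatively compact $\rho$-invariant domains $\tilde M_i$, solve the equivariant Dirichlet problem on each quotient, and pass to a limit. Properness of $\rho$ together with the hypothesis that $\rho$ does not fix an unbounded convex subset prevents escape of the images to infinity. The CAT$(\kappa)$ character of $\tilde X$ furnishes both uniqueness of the Dirichlet solutions and convexity of the distance function, hence the uniform interior bounds needed on compact subsets of $\tilde M$. Near the divisor $D$ at infinity, the argument of the Key Technical Lemma for the punctured-polydisc model space applies verbatim in the present setting since it depends only on the CAT$(\kappa)$ structure with $\kappa < 0$; this yields sub-logarithmic growth of the limit map and uniqueness of the limit.

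\emph{Pluriharmonicity, and holomorphicity when $\tilde X$ is K\"ahler.} Once sub-logarithmic growth of $\tilde u$ is in hand, I would apply the generalized Mochizuki-type Bochner formula developed earlier in the paper. Integrating this identity against a cutoff $\eta_\varepsilon$ supported away from $D$ produces a non-negative bulk term (thanks to Siu's curvature condition in the K\"ahler case, respectively Hermitian negativity in the Riemannian case) plus boundary contributions near $D$ that decay to zero by sub-logarithmic growth together with a careful choice of $\eta_\varepsilon$ modelled on the local geometry of the divisor. Letting $\varepsilon \to 0$ forces the bulk term to vanish, which is pluriharmonicity. In the K\"ahler case, once $\tilde u$ is pluriharmonic and $\mathsf{rank}_\R(d\tilde u)\geq 3$ at some point, Siu's original local argument applies to show that $\tilde u$ is holomorphic or conjugate holomorphic on a nonempty open set, and unique continuation for the pluriharmonic system propagates this conclusion to all of $\tilde M$.

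\emph{Main obstacle.} I expect the hardest step to be establishing sub-logarithmic growth at the divisor for these genuinely non-homogeneous targets. In the symmetric-space setting of Theorem~\ref{theorem:pluriharmonic} one can exploit flats, totally geodesic subspaces and equivariance of the target geometry, whereas here one has only the CAT$(\kappa)$ structure. The required estimate should still follow by a comparison argument: the strict upper bound $\kappa<0$ allows one to dominate the map by a model harmonic function on the punctured disk and rule out faster-than-logarithmic blow-up, but verifying that every constant that previously came from the symmetric-space structure can be replaced by one depending only on $\kappa$ is the delicate point. Everything else is a corollary of the Bochner formulas and Siu's local argument once growth is controlled.
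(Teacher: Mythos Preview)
Your outline is broadly correct and matches the paper's approach: one reruns the entire proof of Theorem~\ref{theorem:pluriharmonic} verbatim, using the CAT$(-\kappa)$ structure in place of the symmetric-space structure, and the Siu (resp.\ Sampson) Bochner formula of Theorem~\ref{mochizukibochnerformula} (resp.\ Theorem~\ref{mochizukibochnerformula'}) for pluriharmonicity. The paper's own proof is a single sentence: the only change from Theorem~\ref{theorem:pluriharmonic} is that the uniqueness step in the inductive argument invokes Theorem~\ref{uniqueness}~(i) (negatively curved targets) rather than Theorem~\ref{uniqueness}~(ii) (symmetric spaces).

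Your diagnosis of the ``main obstacle'' is off, however. You flag sub-logarithmic growth at the divisor as the delicate point, worrying that in the symmetric-space case one relied on flats and equivariant target geometry. But the paper's existence machinery in Chapters~\ref{chap:Riemann surfaces}--\ref{chap:Kahler surfaces} is written for a general NPC target and uses the symmetric-space hypothesis \emph{only} to guarantee that every isometry (and every commuting pair) has exponential decay to its translation length. For CAT$(-\kappa)$ targets this is Theorem~\ref{thm:jzcat-1}, proved by a two-line comparison with $\mathbb{H}^2$; no constants coming from symmetric-space structure are involved. Once exponential decay is known, the prototype-section construction, the energy estimates, and hence sub-logarithmic growth all go through unchanged. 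The genuine point of divergence is the \emph{uniqueness} of the pluriharmonic section with sub-logarithmic growth, which feeds the inductive argument of Chapter~\ref{chap:higher dimensions}: here the strict negative curvature replaces the ``does not fix an unbounded closed convex subset'' rigidity via the Hartman/Mese argument in Theorem~\ref{uniqueness}~(i). You should redirect your attention there rather than to the growth estimates.
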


\begin{introtheorem} \label{theorem:buildings}
The conclusion of Theorem~\ref{theorem:pluriharmonic}  holds if we place $\tilde X$ by a locally finite Euclidean building without a Euclidean factor.  
\end{introtheorem}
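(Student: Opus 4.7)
The plan is to follow the same architecture as the proof of Theorem~\ref{theorem:pluriharmonic}, but to replace the smooth symmetric-space theory with the Korevaar-Schoen-Gromov-Schoen theory of harmonic maps into NPC metric spaces, and then exploit the special rigidity of Euclidean buildings to upgrade harmonicity to pluriharmonicity. Since a locally finite Euclidean building $\tilde X$ is a complete NPC space, equivariant maps $\tilde M \to \tilde X$ carry a well-defined Korevaar-Schoen energy and equivariant Dirichlet problems with finite-energy boundary data admit unique minimizers that are locally Lipschitz in the interior. This is the substitute for the PDE-based existence theory used in the smooth case.

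First, I would fix a smooth compactification $\bar M$ of $M$ with simple normal crossing divisor $D = \bar M \setminus M$ and a $\pi_1(M)$-invariant exhaustion $\{\tilde M_n\}$ of $\tilde M$ by lifts of complements of shrinking tubular neighborhoods of $D$. On each $\tilde M_n$ I would solve the equivariant Dirichlet problem into $\tilde X$ with boundary data modelled on the expected sub-logarithmic asymptotic profile near $D$, obtaining harmonic maps $\tilde u_n$. The hypothesis that $\rho$ is proper and preserves no unbounded closed convex strict subset of $\tilde X$ plays the same coercivity role as in the symmetric-space case: any sequence $\tilde u_n$ escaping uniformly to infinity would, via the Korevaar-Schoen ultralimit construction, produce either a $\rho$-invariant unbounded closed convex strict subset of $\tilde X$ or else a contradiction to properness; the absence of a Euclidean factor in $\tilde X$ is what lets us rule out degenerate limits into a single flat. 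Combined with uniform local energy bounds from the chosen boundary data, this yields uniform pointwise estimates, and the Korevaar-Schoen compactness theorem gives a subsequential limit $\tilde u$ which is locally Lipschitz and harmonic.

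The crux is pluriharmonicity together with the global sub-logarithmic growth estimate. Here I would invoke the Gromov-Schoen regularity theorem: the regular set $\mathcal R(\tilde u) \subset \tilde M$ on which $\tilde u$ locally factors through a single apartment of $\tilde X$ is open, and its complement, the singular set $\mathcal S(\tilde u)$, has Hausdorff codimension at least two. On $\mathcal R(\tilde u)$ the target is locally a Euclidean space, so the Mochizuki-type Bochner formula and the weighted energy estimates developed earlier in the paper for smooth nonpositively curved targets apply essentially verbatim; they yield both pluriharmonicity and the sub-logarithmic bound on $\mathcal R(\tilde u)$. The main obstacle, and where the bulk of the work lies, is to propagate these conclusions across $\mathcal S(\tilde u)$: one needs a removable-singularities argument for the weighted Bochner quantities, using the codimension-two bound together with a capacity argument in the spirit of \cite{jost-zuo} and the uniform energy density control near $\mathcal S(\tilde u)$ provided by Gromov-Schoen. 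Once pluriharmonicity holds on $\mathcal R(\tilde u)$ with the relevant quantities bounded, the sub-logarithmic growth estimate near $D$ is a global Korevaar-Schoen maximum-principle statement that survives the extension across $\mathcal S(\tilde u)$.

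Uniqueness is by the standard NPC argument. Given two $\rho$-equivariant pluriharmonic maps $\tilde u_1, \tilde u_2$ of sub-logarithmic growth, the function $d(\tilde u_1, \tilde u_2)$ is subharmonic on $\tilde M$ and of sub-logarithmic growth, hence constant by the same Liouville-type argument used in Theorem~\ref{theorem:pluriharmonic}; the non-fixing hypothesis on $\rho$ forces this constant to vanish, since otherwise the parallel geodesics joining $\tilde u_1$ and $\tilde u_2$ would sweep out a $\rho$-invariant closed convex strict subset of $\tilde X$. The hardest part of the whole argument is thus the interaction between the Gromov-Schoen singular set of $\tilde u$ and the weighted Bochner/energy estimates near the divisor at infinity, where the smooth Kähler tools used in the symmetric-space case must be reconstructed on the regular set and then transported across a codimension-two exceptional set.
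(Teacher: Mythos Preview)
Your overall architecture---exhaustion by Dirichlet problems, Gromov--Schoen regularity to isolate a codimension-two singular set, Bochner on the regular set, removable singularities across $\mathcal S(\tilde u)$---is indeed the paper's strategy. But you misidentify the role of the hypothesis that $\tilde X$ has no Euclidean factor. The paper does not use it to ``rule out degenerate limits into a single flat'' in a compactness argument. It uses it, via Parreau's theorem \cite{parreau}, to conclude that \emph{every} isometry of $\tilde X$ is semisimple (elliptic or hyperbolic). This is what guarantees the exponential-decay hypothesis of Definition~\ref{jz} and Definition~\ref{jz2} for all relevant isometries, and hence is precisely what allows the prototype-section machinery of Chapter~\ref{chap:Kahler surfaces} to be invoked verbatim. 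In other words, the existence argument is \emph{not} redone for buildings; Theorem~\ref{theorem:pluriharmonicDim2} is already stated for an arbitrary NPC target satisfying the exponential-decay condition, and the only new input is verifying that condition. Your vague ``boundary data modelled on the expected sub-logarithmic asymptotic profile'' hides exactly this point, and your ultralimit compactness argument is not needed: properness of $\rho$ plus the uniform local energy bounds coming from the prototype section already bound the image, exactly as in the proof of Theorem~\ref{theorem:pluriharmonicDim2}.

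For the integration by parts across $\mathcal S(\tilde u)$, the paper is more concrete than your ``capacity argument in the spirit of \cite{jost-zuo}'': it uses the specific Gromov--Schoen cutoff functions $\psi_i$ satisfying $\int |\nabla\psi_i|\to 0$ and $\int |\nabla\nabla u|\,|\nabla\psi_i|\to 0$ (Theorem~\ref{gs}), which directly justify the two integrations by parts needed (Lemmas~\ref{Bintegrable} and~\ref{Bpluri}). Your uniqueness argument also diverges from the paper's: the paper does not run a global Liouville argument on $M$, but instead restricts to ample curves $\RR\in\CC$ and invokes Theorem~\ref{uniqueness}(ii), which relies on the building-specific rigidity of \cite{daskal-meseUnique}. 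Your claim that a nonzero constant distance would produce a $\rho$-invariant unbounded closed convex strict subset requires justification in the building case (constant distance gives parallel families of geodesics, but turning this into a convex subset preserved by all of $\rho(\pi_1(M))$ is exactly the content of \cite{daskal-meseUnique}, not a formality).
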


\begin{introtheorem} \label{theorem:teichmuller}
The conclusion of Theorem~\ref{theorem:pluriharmonic}  holds if we replace $\tilde X$ by the Weil-Petersson completion of Teichm\"uller space and assume only that $\rho$ is a proper homomorphism (i.e.~we do not need to assume   that it fixes an unbounded closed convex strict subset of $\tilde X$).  
\end{introtheorem}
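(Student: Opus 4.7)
The plan is to follow the two-step strategy used in Theorem~\ref{theorem:pluriharmonic}: (i) construct an equivariant harmonic map $\tilde u:\tilde M \to \tilde X$ of sub-logarithmic growth by an exhaustion-and-limit argument with a priori energy control, and then (ii) deduce pluriharmonicity via a Bochner formula tailored to the target. The Weil-Petersson completion $\tilde X$ is a CAT(0) space by work of Yamada and Wolpert, so Korevaar-Schoen harmonic map theory into NPC spaces is available, and the finite-dimensional strata of $\tilde X$ are smooth K\"ahler (in fact, $\tilde X$ restricted to its top stratum is a K\"ahler manifold with strong negative curvature, by Wolpert's curvature computations), so the Siu-Sampson type arguments used for Theorems~\ref{theorem:pluriharmonic} and \ref{theorem:sameconclusion} can be imported once the map is known to take values in the smooth locus.

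The key point allowing us to drop the ``no unbounded invariant convex subset'' hypothesis is the specific geometry of the WP completion: its strata are themselves WP completions of Teichm\"uller spaces of nodal surfaces (products thereof), and an isometry of $\tilde X$ either has a fixed point, stabilizes a proper stratum, or has no minimal invariant convex subset at infinity of the kind that obstructs existence in the symmetric space case. Concretely, I would argue that any unbounded closed convex $\rho$-invariant subset of $\tilde X$ must be contained in (or split off) a boundary stratum, so by induction on the dimension of the stratum and the properness of $\rho$, one reduces to a setting where the existence argument of Theorem~\ref{theorem:pluriharmonic} applies. This is parallel to Daskalopoulos-Mese's earlier existence results for equivariant harmonic maps into the WP completion, where properness of the representation already suffices.

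Concretely, the steps are: exhaust $M$ by a sequence $M_k$ with smoothly bounded complement near the divisor at infinity; solve the equivariant Dirichlet-type problem on each $\tilde M_k$ into $\tilde X$ by Korevaar-Schoen (using properness of $\rho$ to obtain uniform bounds on image diameters over compact base sets); pass to an interior limit on $\tilde M$; control the growth near the divisor using the logarithmic/punctured-disk model and the WP cusp geometry to extract a sub-logarithmic rate; and finally apply a Mochizuki-style Bochner formula on the smooth K\"ahler stratum to upgrade harmonicity to pluriharmonicity. The main obstacle is ensuring that the limiting harmonic map takes values in the smooth locus (or at least that its singular set is negligible), so that the K\"ahler Bochner computation is justified; here I would invoke the regularity theory for harmonic maps into the WP completion, which says that the singular set of the map has Hausdorff codimension at least two and that energy-type differential identities extend across it. Together with the sub-logarithmic growth, this is what permits the Bochner integration by parts to close despite the incompleteness of the smooth part of the target and the non-compactness of $M$.
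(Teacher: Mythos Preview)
Your overall architecture---exhaustion, Dirichlet solutions, limit, Bochner, and invoking codimension-$2$ regularity of the singular set---is the right one and matches the paper. But your explanation of \emph{why} the hypothesis ``$\rho$ does not fix an unbounded closed convex strict subset'' can be dropped is off in two places.

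First, in the paper that hypothesis is never used for \emph{existence}; existence (Theorem~\ref{existence}, Theorem~\ref{theorem:pluriharmonicDim2}) requires only properness of $\rho$ together with the condition that each monodromy isometry has exponential decay to its translation length. For the Weil-Petersson completion this holds because every mapping class acts as a \emph{semisimple} isometry (elliptic or hyperbolic), by Thurston's classification together with \cite{daskal-wentworth}. You do not mention this fact, and your stratification/induction heuristic (``any unbounded closed convex invariant subset must be contained in a boundary stratum'') is not needed and would be hard to justify as stated.

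Second, the place the convex-subset hypothesis actually enters is the \emph{uniqueness} theorem (Theorem~\ref{uniqueness}), which in turn drives the inductive gluing in Chapter~\ref{chap:higher dimensions}. The paper sidesteps the hypothesis by observing that the Weil-Petersson metric has strictly negative sectional curvature on $\mathcal T$, so one may invoke case~(i) of Theorem~\ref{uniqueness} (negatively curved targets) rather than case~(ii) (symmetric spaces/buildings, which requires the convex-subset assumption). Your proposal does not isolate uniqueness as the issue, and the mechanism you suggest is not the one that works. With these two corrections---semisimplicity of mapping classes for existence, and negative curvature for uniqueness---your outline becomes the paper's proof.
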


 Theorems~\ref{theorem:pluriharmonic}, \ref{theorem:holomorphic} and \ref{theorem:buildings}  were first stated in \cite{jost-zuo} and \cite{zuo} where some of the ideas originate. 
 One of the contributions of this paper is to provide complete proofs.  However, our main motivation of this paper is to develop techniques 
 to study rigidity questions associated to the mapping class group of surfaces.   Indeed, in a sequel to this paper, we will use Theorem~\ref{theorem:teichmuller} to provide a proof of the following generalization of the  holomorphic rigidity theorem of \cite{daskal-meseHR}:

 \begin{hr} \label{rigiditytheorem}
Let $ \Gamma$ and $\mathcal T$ denote the mapping class group and the Teichm\"uller space of an oriented  surface  $S$ of genus $g$ and $p$ punctures where $3g-3+p>0$.   Assume that $ \Gamma$ acts as a discrete automorphism group on a contractible K\"{a}hler manifold $\tilde{M}$ and there exists a finite cover  $M'$ of $\tilde M/\Gamma$ that is a smooth quasi-projective variety.
Then $\tilde{M}$ is equivariantly biholomorphic or conjugate biholomorphic to  the Teichm\"{u}ller space $\mathcal T$ of $S$.
\end{hr}

The above statement strengthens  \cite[Corollary 1.5]{daskal-meseHR}. In  \cite{daskal-meseHR}, we made the assumption that $M$ admits a compactification $\overline M$ as an algebraic variety such that   the codimension  of $\overline M \backslash M$ is $\geq 3$. This condition originates in \cite[Lemma 4]{jost-yau}, where they use this to prove the existence of an equivariant homotopy equivalence {\it of finite energy} between $\tilde M$ and $\mathcal T$. This is also the starting point  of \cite[Corollary 1.5]{daskal-meseHR}. With the theory developed in this paper, we will show that we can remove this unnecessary assumption.\\

\thanks{\bf Acknowledgements.} The authors would like to thank T. Mochizuki and Y. Siu for illuminating discussions. \\

\begin{center}
{\sc Outline of the paper}
\end{center}

In Chapter~\ref{chap:preliminaries}, we record the relevant background material needed in this paper.  Of particular importance is the characterization of certain class of  isometries in a space of non-positive curvature $\tilde X$.  To give a rough description of these isometries, first note that a semi-simple (i.e.~elliptic or hyperbolic) isometry $I$ with translation length $\Delta_I$ is characterized by the property that there exists a point $P \in \tilde X$ such that 
\[
d(P, I(P))=\Delta_I.
\]
  We want to allow for isometries that are more general than semi-simple ones.  This motivates the following:  An isometry $I:\tilde X \rightarrow \tilde X$ is said to have an {\it exponential decay to its translation length}  if  there exists a  geodesic ray $c:[0,\infty) \rightarrow \tilde X$ such that 
  \[
 d(c(t),I(c(t))) = \Delta_I(1 +be^{-at})
 \]
 for some constants $a,b>0$ (cf.~\cite[3.2.3]{jost-zuo}).
 We show that all isometries of $\tilde X$ considered  in  Theorem~\ref{theorem:pluriharmonic}, Theorem~\ref{theorem:sameconclusion}, and Theorem~\ref{theorem:buildings} satisfy this property.  We use this to obtain the energy growth estimates for the harmonic map  we construct later.

In Chapter~\ref{chap:bochner}, we present {\it the key tools of this paper:  the variations on the Siu's and Sampson's Bochner formulas}.  If  the domain K\"ahler manifold $M$ is compact, then Siu's or Sampson's Bochner formulas can be used to prove that harmonicity implies pluriharmonicity.    With $M$ as in Theorem~\ref{theorem:pluriharmonic}, neither Siu's nor Sampson's formulas are sufficient to overcome  the complication coming from the non-compactness of the domain.  Indeed,   they need to be rewritten in a different form  to get the  cancellations of the  residues  near the divisor.  
The new formulas, contained in 
Theorem~\ref{mochizukibochnerformula} and Theorem~\ref{mochizukibochnerformula'}, are  inspired by Mochizuki's work on harmonic bundles  \cite[Proposition 21.42]{mochizuki-memoirs}.  

Chapter~\ref{chap:Riemann surfaces}, Chapter~\ref{chap:Kahler surfaces}, and Chapter~\ref{chap:higher dimensions} comprise  the proofs  of Theorem~\ref{theorem:pluriharmonic}, Theorem~\ref{theorem:holomorphic} and Theorem~\ref{theorem:sameconclusion}.  We follow the proof of  \cite{mochizuki-memoirs} with adjustments made  on the account of  the  more general target spaces.  
A brief sketch of the proof is given below.

Let $M$, 
$\tilde X$ and $\rho:\pi_1(M) \rightarrow \tilde X$ be as in Theorem~\ref{theorem:pluriharmonic} with $\dim_\C M=n$.  We construct a pluriharmonic map by an induction on $n$ (cf.~Chapter~\ref{chap:higher dimensions}).
We can assume that  $M=\bar M \backslash \Sigma$ where $\bar M$ is a  smooth projective variety of complex dimension $\geq 2$ and  $\Sigma$ is a divisor with normal crossings. 
The inductive hypothesis implies that if
\begin{itemize}
\item $\bar Y \subset \bar M$ is a sufficiently ample divisor such that $\bar Y \cap \Sigma$ is a normal crossing,
\item $Y=\bar Y \backslash \Sigma$,
\item $\iota: Y \hookrightarrow M$ is the inclusion map, and 
\item $\rho_Y: \pi_1(Y) \rightarrow \tilde X$ is the composition $\rho_Y=\rho \circ \iota_*$,
\end{itemize}
then there exists a unique pluriharmonic $\rho_Y$-equivariant map $\tilde u_Y: \tilde Y \rightarrow \tilde X$.  
Letting $\{\bar Y_s\}$ be a collection of  such subvarieties whose union is $\bar M$,  we show that the map   $\tilde u:\tilde M \rightarrow \tilde X$ given by 
\begin{equation} \label{x983}
\tilde u=\tilde u_{Y_s} \mbox{ on $Y_s:=\bar Y_s \backslash \Sigma$}
\end{equation}  
is a well-defined, thereby proving the existence result for $\dim_\C M=n$.  To get this induction started, we first need  to show existence and uniqueness of   pluriharmonic maps in  dimensions $n=1$ and $n=2$.

First assume  $\dim_\C M=1$.  In other words, we construct harmonic maps from punctured Riemann surfaces of possibly infinite energy (cf.~Chapter~\ref{chap:Riemann surfaces}).  For the purpose of explaining this case, we assume that $\tilde X$ is a universal cover of a compact manifold $X$ and let  $\gamma:\Sp^1 \rightarrow X$ be  an arc length parameterization of a minimizing closed geodesic in $X$.  Denote the energy of $\gamma$ by $E^\gamma$.   A prototype map $v$  corresponding to $\gamma$ is defined on the  infinite cylinder $[0,\infty) \times \Sp^1$ and is   equal to $\gamma$ on slices $\{t\} \times \Sp^1$ for  all sufficiently large $t$.  More precisely, given any Lipschitz map $k: \Sp^1 \rightarrow  X$, let 
\[
v: [1, \infty)  \times \Sp^1 \rightarrow X
\]
be a  Lipschitz continuous map with 
\begin{eqnarray*}
v(1,\theta)=k(1,\theta), & & \forall \theta \in \Sp^1, 
\\
v(t,\theta)=\gamma(\theta), & & \forall t \in [1,\infty), \, \theta \in \Sp^1.
\end{eqnarray*}
The energy of $v$ on the finite cylinder $[0,T] \times \Sp^1$ is bounded from above by $T \cdot E^\gamma +C$ where $C$ is a constant independent of $T$.  Let 
\[
u_T:[0,T] \times \Sp^1 \rightarrow X
\]
be the Dirichlet solution on the finite cylinder $[0,T] \times \Sp^1$  with boundary values equal to $v$.  Since $u_T$  is an energy minimizing map, the energy of $u_T$ is bounded from above by $T \cdot E^\gamma +C$.   Since $\gamma$ is an energy minimizing map, the energy of $u_T$ in any finite cylinder $[0,t] \times \Sp^1$ for $0< t \leq T$ is bounded from below by $t \cdot E^\gamma$.  From this, we conclude that the energy of $u_T$ on any finite cylinder $[t_1,t_2] \times \Sp^1$ has an upper  bound independent of $T$; namely,  $(t_2-t_1) \cdot E^\gamma + C$.
 Thus,  the regularity theory for harmonic maps implies  that the family of maps $\{u_T\}_{T\geq 1}$ have a uniform Lipschitz bound on any compact subset of $[0,\infty) \times \Sp^1$.  By the Arzela-Ascoli theorem, there exists a sequence $T_i \rightarrow \infty$ such that  $u_{T_i}$ converges locally uniformly to a harmonic map $u:[0,\infty) \times \Sp^1 \rightarrow  X$.  We extend this  idea  to prove the existence of an equivariant  harmonic map from any punctured Riemann surface $\Sigma$.  In doing so, we replace the existence of a minimizing geodesic $\gamma$ with the assumption that the the isometry $\rho([\gamma])$ has  exponential decay to its translation length for any $[\gamma] \in \pi_1(M)$  representing a loop around a puncture.  We  note that the idea of the prototype map already appears in \cite{lohkamp}.  The main contribution of Chapter~\ref{chap:Riemann surfaces} to the theory of infinite energy harmonic maps  is the extension of the existence result to general NPC target spaces (i.e.~not necessarily manifolds) and, more importantly,  the  uniqueness result (cf.~Theorem~\ref{existence} and Theorem~\ref{uniqueness} respectively). The uniqueness is a crucial part of the inductive argument.  Indeed, it is used to show that the the pluriharmonic map in dimension $n$ (\ref{x983})    is well-defined.

Next, assume $\dim_\C M=2$.   In other words, we consider harmonic maps from a complex surface (cf.~Chapter~\ref{chap:Kahler surfaces}). The idea, as in the case of $\dim_\C M=1$, is to first construct a prototype map and then use it to construct a harmonic map (cf.~\cite[Sec.~3.2]{jost-zuo}).  By the assumption that the divisor $\Sigma \subset \bar M$ is a normal crossing divisor,  $\Sigma$ consists of a finite set of  irreducible smooth components $\Sigma_j$ intersecting pairwise at most isolated points. We  choose a canonical section $\sigma_j$ of the line bundle $\mathcal O(\Sigma_j)$ vanishing along $\Sigma_j$ and let $h_j$ be a Hermitian metric on $\mathcal O(\Sigma_j)$.  We endow the quasi-projective variety $M=\bar M \backslash \Sigma$ with a  Poincare type metric $g$ (cf.~Definition~\ref{poincarekahler} and Cornabla and Griffiths~\cite{cornalba-griffiths}) making $M$ into a K\"ahler manifold.    
Following \cite{mochizuki-memoirs}, we construct a fiber-wise harmonic map along the divisor at infinity.  In other words, we construct a prototype map $v$ as follows:  
on any punctured disk $\D^*$ where $\D$ is a disk transverse to $\Sigma_j$, let $v$ be  such that the restriction to $\D^*$  is a harmonic map.  Much like in  the construction of the harmonic map from the infinite cylinder $[0,\infty) \times \Sp^1$ (which is conformally equivalent to the punctured disk), we use the prototype map $v$ to construct a sequence of    Dirichlet solutions defined on compact sets that exhaust $M$.  This sequence contains a subsequence that  converges locally uniformly to a $\rho$-equivariant harmonic map $u: M \rightarrow \tilde X$.  By the Bochner method, we then  show that $u$ is in fact pluriharmonic (cf.~Theorem~\ref{theorem:pluriharmonicDim2}).  Technically, the most challenging part is the  analysis near the crossings of the divisor and dealing with the non-triviality of the normal bundle of the divisor.  In fact, even when the divisor is smooth,  an application of the Siu's or Sampson's Bochner formula does not immediately yield pluriharmonicity.   We need the variations of the Siu's and Sampson Bochner formulas provided in Chapter~\ref{chap:bochner}.  Integrating these formulas and applying integration by parts,  a miraculous cancellation (originally discovered by Mochizuki \cite[Section 6.3]{mochizuki-memoirs} in terms of flat vector bundles)  allows us to prove pluriharmonicity and even holomorphicity of $u$.   We can thus start the induction process to prove the existence in higher dimensions.

In Chapter~\ref{chap:Euclidean buildings}, we combine the harmonic map theory to Euclidean buildings developed in \cite{gromov-schoen} and to the Weil-Petersson completion of Teichm\"uller space developed   in \cite{daskal-meseDM}, \cite{daskal-meseER}, \cite{daskal-meseC1} and \cite{daskal-meseHR}  with  ideas in the previous chapters to prove Theorem~\ref{theorem:buildings}. 
The main difference in the proof of these theorems is that we must  deal with the singularities of harmonic maps that apriori may exist because of the singular nature of the target spaces. 
We also use the important fact that every element of the  isometry group for a Euclidean building (resp.~the mapping class group)  acts as an elliptic or hyperbolic element.  For the isometry group of a Euclidean building (resp.~the mapping class group), this follows from \cite[Proposition 2.1]{ramos-cuevas} (resp.~Thurston's classification of mapping classes and \cite{daskal-wentworth}). 
\newpage

\vspace*{0.1in}

\begin{center}
{\sc Table of Contents}
\end{center}

\begin{itemize}

\item[] Chapter~\ref{chap:preliminaries}  Preliminaries
\begin{itemize}
\item[] \S\ref{part1prelim} Definitions  \hfill p.~\pageref{part1prelim}
\item[] \S\ref{sec:Expdecay} Exponential decay of an isometry
\hfill p.~\pageref{sec:Expdecay}
\item[] \S\ref{sec:Expdecay2} Exponential decay of a commuting pair of isometries
\hfill p.~\pageref{sec:Expdecay2}
\end{itemize}
\item[] Chapter~\ref{chap:bochner}  Bochner Formulae
\begin{itemize} 
\item[] \S\ref{sec:KtoK} Between K\"ahler manifolds 
	\hfill p.~\pageref{sec:KtoK}
\item[] \S\ref{sec:KtoR} From K\"ahler manifolds to Riemannian manifolds 
	\hfill p.~\pageref{sec:KtoR}
\item[] \S\ref{boch-appendix} Appendix to Chapter~\ref{chap:bochner}
	\hfill p.~\pageref{boch-appendix}
\end{itemize}
\item[] Chapter~\ref{chap:Riemann surfaces}  Harmonic maps from Riemann surfaces
\begin{itemize}
\item[] \S\ref{disc} Infinite energy harmonic maps from punctured disk 
	\hfill p.~\pageref{disc}
\item[] \S\ref{sec:proofexistence} Proof of existence, Theorem~\ref{existence}	
	\hfill p.~\pageref{sec:proofexistence}
\item[] \S\ref{sec:proofuniqueness} Proof of uniqueness, Theorem~\ref{uniqueness} 
	\hfill p.~\pageref{sec:proofuniqueness}

\end{itemize}
\item[] Chapter~\ref{chap:Kahler surfaces} Harmonic maps from K\"ahler surfaces
\begin{itemize}
\item[] \S\ref{sec:neardivisor} Neighborhoods near the divisor \hfill p.~\pageref{sec:neardivisor}

\item[] \S\ref{sec:metric} Poincar\'e-type metric and its estimates \hfill p.~\pageref{sec:metric}

\item[] \S\ref{prototype} Prototype section \hfill p.~\pageref{prototype}

\item[] \S\ref{sec:ptm} Energy estimates of the prototype section \hfill p.~\pageref{sec:ptm}

\item[] \S\ref{sec:existence} Harmonic maps of possibly infinite energy \hfill p.~\pageref{sec:existence}

\item[] \S\ref{sec:pluriharmonicity} Pluriharmonicity \hfill p.~\pageref{sec:pluriharmonicity}

\item[] \S\ref{appendix:calculus} Appendix to Chapter~\ref{chap:Kahler surfaces} \hfill p.~\pageref{appendix:calculus}

\end{itemize}
\item[] Chapter~\ref{chap:higher dimensions} Harmonic maps in higher dimensions
\begin{itemize}

\item[] \S\ref{premsetup} Preliminary set-up
 \hfill p.~\pageref{premsetup}
\item[] \S\ref{proofpluriharmonic} 
 Proof of Theorem~\ref{theorem:pluriharmonic}, Theorem~\ref{theorem:holomorphic}, and Theorem~\ref{theorem:sameconclusion}
  \hfill p.~\pageref{proofpluriharmonic}
\end{itemize}

\item[] Chapter~\ref{chap:Euclidean buildings} Euclidean buildings and Teichm\"uller spaces
\begin{itemize}
\item[] \S\ref{sec:singularsets} Singular sets of harmonic maps
\hfill p.~\pageref{sec:singularsets}
\item[] \S\ref{proofofthm4} Proof of  Theorem~\ref{theorem:buildings} and Theorem~\ref{theorem:teichmuller} \hfill p.~\pageref{proofofthm4}
\end{itemize}
\item[] References \hfill p.~\pageref{references}
\end{itemize}

\chapter{Preliminaries} \label{chap:preliminaries}

\section{Definitions} \label{part1prelim}

\subsection{NPC spaces}

We refer to \cite{bridson-haefliger} for more details than provided here.

\begin{definition}
A curve $c:[a,b] \rightarrow \tilde X$ into a metric space is called a {\it geodesic} if $\mathsf{length}(l|_{[\alpha,\beta]})=d(c(\alpha),c(\beta))$ for any subinterval $[\alpha,\beta] \subset [a,b]$.  (Note that a identically constant map from an interval is a geodesic.). A metric space $\tilde X$ is a {\it geodesic space} if there exists a geodesic connecting every pair of points in $\tilde X$.
\end{definition}
\begin{definition} \label{def:NPC}
An \emph{NPC space} $\tilde X$ is a complete geodesic space that satisfies the 
 following condition:
  For any three points $P,R,Q \in \tilde X$ and an arclength parameterized geodesic $c:[0,l]
\rightarrow \tilde X$ with $c(0)=Q$ and $c(l)=R$, 
\[
d^2(P,Q_t) \leq (1-t) d^2(P,Q)+td^2(P,R)-t(1-t)d^2(Q,R)
\]
where $Q_{t}=c(tl)$. 
\end{definition}

 \begin{notation} \label{interpolationnotation}
\emph{It follows immediately from Definition~\ref{def:NPC} that, given $P, Q \in \tilde X$ and $t \in [0,1]$, there exists a {\it unique} point with distance from $P$ equal to $td(P,Q)$ and the distance from $Q$ equal to $(1-t)d(P,Q)$.  We denote this point by
\[
(1-t)P+tQ.
\]
}
\end{notation}

\begin{definition}
Let $\tilde X$ be an NPC space.  We say that two geodesics rays $c,c':[0,\infty) \rightarrow X$ are {\it equivalent} if there exists a constant $K$ such that $d(c(t), c'(t)) \leq K$ for all $t \in [0,\infty)$.  Denote the equivalence class of a geodesic ray $c$ by $[c]$.  The set $\partial \tilde X$ of boundary points of $\tilde X$ is the set of equivalence classes of geodesic rays.  
\end{definition}

\begin{definition} 
For any $\kappa>0$, we say an NPC space $\tilde X$ is CAT$(-\kappa)$ if for any  $P,Q,R$ and $Q_t$ as in Definition~\ref{def:NPC}, 
\begin{equation} \label{cat}
\cosh d(P,Q_t) \leq \frac{\sinh (1-t)\kappa d(P,Q)}{\sinh \kappa d(P,Q))} \cosh d(P,Q)+ \frac{\sinh t\kappa d(P,Q)}{\sinh \kappa d(P,Q))} \cosh d(P,R).
\end{equation}
\end{definition}
\begin{definition} \label{neg}
We say that an NPC space $\tilde X$ is {\it negatively curved} if, for any point $P \in \tilde X$, there exists $r>0$ and $\kappa>0$  such that every $P,Q,R \in B_r(P)$ satisfies (\ref{cat}).
\end{definition}

\subsection{Maps into NPC spaces}
\label{subsec:korevaarschoen}

In this paper, we consider harmonic maps into  NPC spaces.  In many cases, the target space $\tilde X$ is a smooth Riemannian manifold of non-positive sectional curvature.  In this case, the energy of a smooth map $u: \Omega \rightarrow \tilde X$ is 
\[
E^u =\int_\Omega |df|^2 d\mbox{vol}_g
\]
where $(\Omega,g)$ is a Riemannian domain and $d\mbox{vol}_g$ is the volume form of $\Omega$.  
We say $u:\tilde M \rightarrow \tilde X$ is {\it harmonic} if it is locally energy minimizing; i.e.~for any $p \in \tilde M$, there exists $r>0$ such that the restriction  $u|_{B_r(p)}$ minimizes energy amongst all maps $v:B_r(p) \rightarrow \tilde X$ with the same boundary values as $u|_{B_r(p)}$.

In other cases, we consider the target $\tilde X$ to be an NPC space, not necessarily smooth.  Below, we will review harmonic maps in this context, and refer to \cite{korevaar-schoen1} for more details than provided here.

Let $(\Omega, g)$ be a bounded Lipschitz Riemannian domain.   Let $\Omega_{\epsilon}$  be the set of points in $\Omega$ at a distance least $\epsilon$ from $\partial \Omega$.  Let $B_{\epsilon}(x)$ be a geodesic ball centered at $x$  and $S_{\epsilon}(x)=\partial B_{\epsilon}(x)$.   We say $f: \Omega \rightarrow X$ is  an $L^2$-map (or that $f \in L^2(\Omega,X)$) if 
\[
\int_{\Omega} d^2(f,P) d\mbox{vol}_g< \infty.
\]
For $f \in L^2(\Omega,X)$, define
\[e_{\epsilon}:\Omega \rightarrow {\bf R}, \ \ \ 
e_{\epsilon}(x) =
\left\{
\begin{array}{ll} 
\displaystyle{\int_{y \in S_{\epsilon}(x)}
\frac{d^2(f(x),f(y))}{\epsilon^2}
\frac{d\sigma_{x,\epsilon}}{\epsilon} }& x \in \Omega_{\epsilon}\\
0 & \mbox{otherwise}
\end{array}
\right.
\]
where $\sigma_{x,\epsilon}$ is the induced measure on  $S_{\epsilon}(x)$. We define a
family of  functionals 
\[
E^f_{\epsilon}:C_c(X) \rightarrow
{\bf R}, \ \ \ \ 
E^f_{\epsilon}(\varphi)=\int_{\Omega} \varphi e_{\epsilon} d\mbox{vol}_g.
\]
We say $f$ has finite energy (or that $f \in W^{1,2}(\Omega,X)$)
if
\[ E^f := \sup_{\varphi \in C_c(\Omega ), 0 \leq \varphi \leq 1}
\limsup_{\epsilon \rightarrow 0} E^f_{\epsilon}(\varphi) < \infty.
\] It is shown in \cite{korevaar-schoen1} that if $f$ has finite energy, the measures $
e_{\epsilon}(x)d\mbox{vol}_g$ converge weakly to a measure which is
absolutely continuous with respect to the Lebesgue measure.
Therefore, there exists a function $e(x)$, which we call the
energy density, so that $e_{\epsilon}(x)d\mbox{vol}_g \rightharpoonup
e(x)d\mbox{vol}_g$. In analogy to the case of smooth targets, we
write $|\nabla f|^2(x)$ in place of $e(x)$. In particular, the (Korevaar-Schoen) energy of $f$ in $\Omega$ is 

\[
E^f[\Omega] =
\int_{\Omega}|\nabla f|^2 d\mbox{vol}_g.
\]

\begin{definition}
We say a continuous map $u: \Omega \rightarrow \tilde X$ from a Lipschitz domain $\Omega$ is {\it harmonic} if it is locally energy minimizing;   more precisely, at each $p \in \Omega$, there exists a neighborhood $\Omega$ of $p$  so that all continuous comparison maps which agree with $u$ outside of this neighborhood have no less energy.
\end{definition}

For  $V \in \Gamma \Omega$ where $\Gamma \Omega$ is the set of
Lipschitz vector fields  on $\Omega$, $|f_*(V)|^2$  is similarly
defined.  The real valued $L^1$ function $|f_*(V)|^2$ generalizes
the norm squared on the directional derivative of $f$.  The
generalization of the pull-back metric is the continuous, symmetric, bilinear, non-negative and tensorial operator
\[
\pi_f(V,W)=\Gamma \Omega \times \Gamma \Omega \rightarrow
L^1(\Omega, {\bf R})
\]
where
\[
\pi_f(V,W)=\frac{1}{2}|f_*(V+W)|^2-\frac{1}{2}|f_*(V-W)|^2.
\]
  We refer to \cite{korevaar-schoen1} for more
details.

Let  $(x^1, \dots, x^n)$ be local coordinates of $(\Omega,g)$ and $g=(g_{ij})$, $g^{-1}=(g^{ij})$ be the local metric expressions.  Then energy density function of
$f$ can be written (cf.~\cite[(2.3vi)]{korevaar-schoen1})
\[
|\nabla f|^2 =  g^{ij} \pi_f(\frac{\partial}{\partial x^i}, \frac{\partial}{\partial x^j})
\]
Next assume  $(\Omega, g)$ is a Hermitian domain and let $(z^1=x^1+ix^2, \dots, z^n=x^{2n-1}+ix^{2n})$ be local complex coordinates.  We extend $\pi_f$ linearly cover $\C$ and  denote
\[
\frac{\partial f}{\partial z^i} \cdot  \frac{\partial f}{\partial \bar z^j}=\pi_f(\frac{\partial}{\partial z^i},  \frac{\partial}{\partial \bar z^j})
\]
and 
\[
\left| \frac{\partial f}{\partial z^i}\right|^2= \pi_f(\frac{\partial}{\partial z^i},  \frac{\partial}{\partial \bar z^i}).
\]
Thus,
\[
\frac{1}{4}|\nabla f|^2= g^{i\bar j} \frac{\partial f}{\partial z^i} \cdot 
\frac{\partial f}{\partial \bar z^j}.
\]

\subsection{Isometries of an NPC space}

Throughout this paper, we denote the group of isometries of an NPC space $\tilde X$ by $\mathsf{Isom}(\tilde X)$.  Isometries of an NPC space are classified as follows.

\begin{definition} \label{Delta}
For $I\in \mathsf{Isom}(\tilde X)$,
let
\[
\Delta_I: =\inf_{P \in \tilde X} d(I(P), P)
\]
denote  its translation length and define
\[
\mathsf{Min}(I):=\{ P \in \tilde X:  d(I(P),P)=\Delta_I\}.
\]
The isometry $I$ is  {\it elliptic}   if $\Delta_I=0$ and $\mathsf{Min}(I) \neq \emptyset$.     It is {\it hyperbolic}  if $\Delta_I>0$ and $\mathsf{Min}(I) \neq \emptyset$.   If $I$ is elliptic or hyperbolic, then we  say $I$ is {\it semisimple}.  Otherwise, $I$ is said to be {\it parabolic}.
\end{definition}

\begin{lemma}
If  $\tilde X$ is locally compact and $I \in \mathsf{Isom}(\tilde X)$ is a parabolic isometry, then there exists a arclength parameterized geodesic ray 
\begin{equation} \label{quasi}
c:[0,\infty) \rightarrow \tilde X \ \mbox{ such that  }\
\lim_{s \rightarrow \infty} d(I(c(s)), c(s))=\Delta_I.
\end{equation}
\end{lemma}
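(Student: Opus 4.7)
The displacement function $f : \tilde X \to \mathbb{R}$ defined by $f(P) := d(P, I(P))$ is convex and continuous on $\tilde X$, with $\inf f = \Delta_I$ not attained since $\mathsf{Min}(I) = \emptyset$. The plan is to construct $c$ as the limit of geodesic segments joining a fixed basepoint to a minimizing sequence for $f$.

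First I would pick a sequence $\{P_n\}$ with $f(P_n) \to \Delta_I$ and fix $P_0 \in \tilde X$. Since $\tilde X$ is locally compact and complete, closed bounded sets are compact; a bounded subsequence of $\{P_n\}$ would have a convergent sub-subsequence with limit in $\mathsf{Min}(I) = \emptyset$, a contradiction. Thus $L_n := d(P_0, P_n) \to \infty$. Let $c_n : [0, L_n] \to \tilde X$ denote the unit-speed geodesic from $P_0$ to $P_n$. By Arzela-Ascoli on each compact subinterval of $[0, \infty)$, a subsequence of $\{c_n\}$ converges uniformly on compacta to a unit-speed geodesic ray $c : [0, \infty) \to \tilde X$.

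Setting $\phi_n(s) := d(c_n(s), I(c_n(s)))$ and $\phi(s) := d(c(s), I(c(s)))$, each $\phi_n$ is convex on $[0, L_n]$ because the distance between two geodesics in an NPC space is convex; hence $\phi$ is convex on $[0, \infty)$. Since $\phi_n(0) = d(P_0, I(P_0)) =: D$ and $\phi_n(L_n) = f(P_n) \to \Delta_I$, convexity forces $\phi_n(s) \leq D$ for large $n$, so $\phi(s) \leq D$. A bounded convex function on $[0, \infty)$ is non-increasing, so $\phi(s) \searrow L$ for some $L \in [\Delta_I, D]$.

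The hard part is showing $L = \Delta_I$. An arbitrary minimizing sequence can fail: in the product $\tilde X = \mathbb{H}^2 \times \mathbb{R}$ with $I$ a parabolic on the first factor and the identity on the second, a minimizing sequence that drifts in the $\mathbb{R}$-direction yields a limit ray along which $\phi$ is constant. To ensure the correct direction, I would refine the choice of $P_n$ by letting $P_n := Q_n$ be the unique nearest point to $P_0$ in the closed convex sublevel set $S_n := \{P : f(P) \leq \Delta_I + 1/n\}$; this nearest point exists uniquely in the CAT(0) space $\tilde X$. The nested family $\{S_n\}$ has intersection $\mathsf{Min}(I) = \emptyset$, so $L_n \to \infty$, and the orthogonality characterization of the nearest-point projection---the angle at $Q_n$ subtended by $c_n$ and any direction into $S_n$ is at least $\pi/2$---ensures that the limit ray $c$ asymptotes in the infimum-seeking direction of $f$ at infinity. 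A contradiction argument, using this orthogonality together with the convexity of $\phi_n$ to rule out $L > \Delta_I$, then yields $L = \Delta_I$.
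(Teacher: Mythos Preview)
Your construction of the ray---geodesics from a fixed basepoint $P_0$ to a minimizing sequence, then Arzel\`a--Ascoli using local compactness---is exactly the paper's argument. In fact the paper's proof \emph{stops} immediately after producing $c$ and never verifies that $d(c(s),I(c(s)))\to\Delta_I$. So you are already more careful than the paper, and your $\mathbb H^2\times\mathbb R$ example (parabolic on the first factor, identity on the second) is a genuine counterexample to the unstated implication: a minimizing sequence that drifts in the $\mathbb R$-direction produces a limit ray along which $\phi$ is constant and strictly larger than $\Delta_I=0$. The paper's proof, as written, has the same gap you diagnose.

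Your fix---replace an arbitrary minimizing sequence by the nearest-point projections $Q_n=\pi_{S_n}(P_0)$ onto the sublevel sets $S_n=\{f\le\Delta_I+1/n\}$---is the right instinct, and it visibly resolves your counterexample (there the $Q_n$ all lie in $\mathbb H^2\times\{0\}$ and point toward the parabolic fixed point). However, your last sentence is not yet a proof. Orthogonality at $Q_n$ gives $d(Q_n,Q_m)^2\le L_m^2-L_n^2$ for $m>n$, but this alone does not bound the comparison angles at $P_0$ well enough, nor does it bound the first entrance time of $c_m$ into a fixed $S_N$; without such a bound one cannot rule out that the limit ray stays outside $S_N$ forever. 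One workable completion: show that $b_m(c_m(s))=-s$ where $b_m(P):=d(P,S_m)-L_m$, deduce that the $1$-Lipschitz convex functions $b_m$ subconverge to the Busemann function of $c$, and use this to force $c$ into every $S_N$. As written, the final contradiction step remains a sketch.
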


\begin{proof}
Let  $P_i \in \tilde X$ be such that
\[
\lim_{i \rightarrow \infty} d(I(P_i), P_i) = \Delta_I.
\]
Fix $P_0 \in \tilde X$ and let $T_i=d(P_i, P_0)$.  Since  $\mathsf{Min}(I)=\emptyset$, we have that $T_i \rightarrow \infty$.   
Let $c_i:[0,T_i] \rightarrow \tilde X$ be the arclength parameterized geodesic from $P_0$ to $P_i$.  Since $\tilde X$ is assumed to be locally compact, by taking a subsequence if necessary, the sequence  $c_i(s)$ converges uniformly on every compact set to an arclength parameterized geodesic ray $c(s)$.  
\end{proof}

\begin{definition}  \label{proper}
Let $\Gamma$ be a finitely generated group, $\Lambda$ be a finite set of generators of $\Gamma$, $\tilde{X}$ be an NPC space and $\rho:  \Gamma \rightarrow \mathsf{Isom}(\tilde{X})$ be a homomorphism. Define $\delta: \tilde{X} \rightarrow [0,\infty)$ to be the function
\[
\delta(P)=\max \{d(\rho(\lambda)P,P): \lambda \in \Lambda\}.
\]  We say $\rho$ is {\it proper} if the sublevel sets of the function $\delta$ is bounded in $\tilde X$; i.e.~given $c>0$, there exists $P_0 \in X$ and $R_0>0$ such that
\[
\{P \in \tilde X:  \delta(P)\leq c\} \subset B_{R_0}(P_0).
\]
\end{definition}

\subsection{Equivariant maps and section of the flat $\tilde X$-bundle} \label{subsec:donaldsonsection}

Following Donaldson  \cite{donaldson},  we will replace equivariant   maps with  sections of an associated fiber bundle. 
Assume we have the following:
\begin{itemize}
\item
a complete Riemannian manifold $(M,g)$ with universal covering $\Pi:\tilde M \rightarrow M$ 
\item 
NPC space $\tilde X$ 
\item action of $\pi_1(M)$ on  $\tilde \domain$  by deck transformations
\item homomorphism $\rho:\pi_1(M) \rightarrow \mathsf{Isom}(\tilde X)$ 
\end{itemize}

\begin{definition} \label{def:equivariant}A map $\tilde f:\tilde \domain \rightarrow  \tilde X$ is said to be $\rho$-equivariant if 
\[
\tilde f(\gamma p) = \rho(\gamma) \tilde f(p), \ \ \forall \gamma \in \pi_1(M), \ p \in \tilde \domain.
\]
\end{definition}

  \begin{remark}
Assume  $\rho:\pi_1(M) \rightarrow \mathsf{Isom}(\tilde X)$ is a proper homomorphism (cf.~Definition~\ref{proper}). If there exists a finite energy $\rho$-equivariant map $f:\tilde M \rightarrow \tilde X$, then there exists a Lipschitz harmonic map $u:\tilde M \rightarrow \tilde X$ (cf.~\cite[Theorem 2.1.3, Remark 2.1.5]{korevaar-schoen2}).    In this paper, we are trying to establish the existence of a harmonic map {\it without the assuming that there exists a finite energy map to start with.}
\end{remark}

The quotient space under the action of $\pi_1(M)$ on the product $\tilde M \times \tilde X$  is  the {\it twisted product}
\[
\tilde M \times_\rho \tilde X.
\]
In other words, $\tilde M \times_\rho \tilde X$ is the set of orbits  $[(p, x)]$ of a point $(p,x) \in  \tilde M \times  \tilde X$  under the action of $\gamma \in \pi_1(M)$ via   the deck transformation  on the first component and the isometry $\rho(\gamma)$ on the second component.  The fiber bundle
\[
\tilde M \times_\rho \tilde X \rightarrow M
\]
with  fibers over $p \in M$ is isometric to $\tilde X$ is the {\it flat $\tilde X$-bundle} over $M$ defined by $\rho$.

There is a one-to-one correspondence between sections of this fibration   and $\rho$-equivariant maps 
\[
 \tilde f:  \tilde M \rightarrow \tilde X 
 \ \ \ \ \longleftrightarrow \ \ \ \ 
f: M  \rightarrow \tilde M \times_{\rho} \tilde X
 \]
satisfying the relationship  
\[
[(\tilde p, \tilde f(\tilde p))] \leftrightarrow f(p)
\ \mbox{
 where $\Pi(\tilde p) =p$.}
 \]
Since the energy density function  $|\nabla \tilde f|^2$ of $\tilde f$ is a $\rho$-invariant function, we can define
\[
|\nabla f|^2(p):=|\nabla \tilde f|^2(\tilde p). 
\]
(In literature, this quantity is sometimes referred  as the {\it vertical} part the derivative of $f$, e.g.~\cite{donaldson}.) 
We can similarly define the pullback inner product and directional energy density functions of $f$  by using the corresponding notions for $\tilde f$ given in Subsection~\ref{subsec:korevaarschoen}.
For $U \subset M$, 
the (vertical) energy of a section $f$ is 
\begin{equation} \label{vertical}
E^f[U] = \int_U |\nabla f|^2 d\mbox{vol}_g.
\end{equation}
Furthermore,
for   sections $f_1$, $f_2$, we define
\begin{equation} \label{defdis}
d(f_1(p),f_2(p)):=d(\tilde f_1(\tilde p), \tilde f_2(\tilde p))
\end{equation}
where  $\tilde f_1$,  $\tilde f_2$ are  the associated  $\rho$-equivariant maps  to sections $f_1$, $f_2$ respectively.

\section{Exponential decay of an isometry}
\label{sec:Expdecay}

The distinguishing characteristic of a semisimple  isometry $I:\tilde X \rightarrow \tilde X$  is that there is a point $P_* \in \tilde X$ such that  $d(P_*,I(P_*))$ is equal to its translation length $\Delta_I$.  No such point $P_*$ exists if $I$ is a parabolic isometry.  
This motivates the following:  

\begin{definition} \label{jz}
We say that $I \in \mathsf{Isom}(\tilde X)$ has an {\it exponential decay to its translation length} (or simply {\it exponential decay} for short)  if one of the following conditions hold:
\begin{itemize}
\item $I$ is semisimple, or
\item $I$ is parabolic and there exist an arclength parameterized geodesic ray $c:[0,\infty) \rightarrow \tilde X$ and constants $a,b>0$ such that
\[
d^2(I(c(t)), c(t)) \leq  \Delta_I^2(1+be^{-at}).
\]
\end{itemize}
\end{definition}
If $I$ is semisimple, we can simply let $c(t)$ be identically equal to the point $P_*$ as above, and thus $I$ has an exponential decay to its translation length.  On the other hand, for a parabolic isometry  in a general NPC space, we may not be able to find a geodesic ray $c(t)$ such that that    $d(c(t),I(c(t)))$ converges to $\Delta_I$ exponentially fast.

In this section, we  prove that isometries of  important examples of NPC spaces (i.e.~symmetric spaces of non-compact type, CAT($-1$) spaces, and Euclidean buildings) have an exponential decay to their translation length.

\subsection{Symmetric spaces of non-compact type} \label{symmetricspace}
In this subsection, we prove that every isometry of a symmetric space  of non-compact type has exponential decay.  To this end, we introduce the following notations:
\begin{itemize}
\item $\Mn$ is the algebra of $(n \times n)$-matrices with real coefficients.
\item $\Sn$ is the vector subspace of $\Mn$ consisting of symmetric matrices.
\item $\Pn$ is the open cone of positive definite symmetric matrices.
\item $\Gl$ is the general linear group, or the  invertible $(n \times n)$-matrices. 
\item $\On$ is the orthogonal group, i.e.~the elements of $\Gl$ such that its transpose is its inverse.
\item $\An$ is the subgroup of diagonal matices with positive diagonals.
\item $\Nn$ is the subgroup of upper triangular matrices with diagonal entries 1.
\end{itemize}
The group $\Gl$ acts on $\Pn$ by the standard left action; i.e.
\[
G.p:=G p G^T,  \ \ \  \forall G \in \Gl, \, p \in \Pn
\]
where the left hand side consists of matrix multiplication.
Denoting the identity matrix by $e$, we have
\[
T_e\Pn=\Sn.
\]
The manifold $\Pn$ along with the Riemannian metric given by
\[
\langle V,W \rangle_p = \mbox{Trace}(p^{-1} V p^{-1}W), \ \ \ p \in \Pn, \ V,W \in T_p\Pn
\]
defines a  NPC symmetric space with isometry group $\mathsf{Isom}(\Pn)=\Gl$ (cf.~\cite[II.10.33, II.10.34, II.10.39]{bridson-haefliger}).   

Let $e$ be the identity element of $\Pn$.  
A geodesic line 
\[
c:\R \rightarrow \Pn \mbox{ \  with $c(0)=e$ and $c'(0)=V \in T_e\Pn=\Sn$}
\]
 is given by $c(t)=\exp(tV)$.  If $p =\exp(V)$, then $d(e,p)=|V|$ (cf.~\cite[II.10.42]{bridson-haefliger}).

\begin{definition} \label{def:parabolicsubgroup}
For a geodesic ray $c:[0,\infty) \rightarrow \Pn$, let $\xi=[c] \in \partial \Pn$ be the equivalence class of geodesic rays containing $c$.   The {\it parabolic subgroup} associated to $\xi=[c]$ is 
\[
\Gsf:=\{G \in \Pn:  G.\xi=\xi\}.
\]
The {\it horospherical subgroup} associated to $\xi=[c]$ is 
 \[
 \Nsf=\{G \in G_\xi:  \exp(-tV)G\exp(tV) \rightarrow I \mbox{ as } t \rightarrow \infty\}.
 \]
We refer to  \cite[10.62]{bridson-haefliger} for details regarding these subgroups.
\end{definition}
Following an argument of \cite{mochizuki-memoirs},  we will first prove that all isometries of $\Pn$ have exponential decay.

\begin{definition}
For $G \in \Gl$, define
\[
\rho(G):=\left( \sum_{i=1}^n (\log |b_i|^2)^2 \right)^{\frac{1}{2}}
\]
where $b_1, \dots, b_n$ are the eigenvalues of $G$.
\end{definition}

\begin{lemma} \label{tl0}
The translation length of $G \in \Gl$ is at least $\rho(G)$; i.e.~
$
\rho(G) \leq \Delta_G.$
\end{lemma}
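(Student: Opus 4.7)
The plan is to compute $d(p,G.p)^2$ directly in terms of the eigenvalues of a matrix similar to $G$, and then to invoke a classical majorization inequality (Weyl--Horn) relating eigenvalues and singular values. Concretely, since the action of $p^{-1/2}\in \Gl$ is an isometry of $\Pn$ sending $p$ to $e$, I would first observe
\[
d(p, GpG^T)^2 = d\bigl(e, p^{-1/2}GpG^Tp^{-1/2}\bigr)^2 = \sum_{i=1}^n (\log \mu_i)^2,
\]
where the $\mu_i>0$ are the eigenvalues of $p^{-1/2}GpG^Tp^{-1/2}$. The key algebraic identity is
\[
p^{-1/2}GpG^Tp^{-1/2} = AA^T, \qquad A := p^{-1/2}Gp^{1/2},
\]
so the $\mu_i = \sigma_i(A)^2$ are the squared singular values of $A$. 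Crucially, $A$ is conjugate to $G$ by $p^{1/2}$, so the eigenvalues of $A$ are exactly $b_1,\dots,b_n$.

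With this reduction in hand, the inequality $\rho(G)\le \Delta_G$ reduces to the purely linear-algebraic statement that, for any $A\in \Gl$ with eigenvalues $b_1,\dots,b_n$ and singular values $\sigma_1\ge\cdots\ge\sigma_n>0$,
\[
\sum_{i=1}^n \bigl(\log \sigma_i^2\bigr)^2 \;\ge\; \sum_{i=1}^n \bigl(\log |b_i|^2\bigr)^2.
\]
I would derive this from the classical Weyl inequalities: ordering $|b_1|\ge\cdots\ge|b_n|$, one has $\prod_{i=1}^k |b_i| \le \prod_{i=1}^k \sigma_i$ for every $1\le k\le n$, with equality when $k=n$ (both sides equal $|\det A|$). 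Taking logarithms, the vector $(\log \sigma_i)_{i=1}^n$ \emph{majorizes} $(\log |b_i|)_{i=1}^n$ in the sense of Hardy--Littlewood--P\'olya.

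Applying Schur's lemma on majorization with the convex function $\phi(t)=4t^2$, one concludes $\sum (\log\sigma_i^2)^2 \ge \sum (\log |b_i|^2)^2$. Combining this with the computation of $d(p,G.p)^2$ above gives $d(p,G.p)^2 \ge \rho(G)^2$ for every $p\in \Pn$, whence $\Delta_G = \inf_{p}d(p,G.p) \ge \rho(G)$, as required. The only step that is not formal is the Weyl inequality together with the majorization implication; the main conceptual point is recognizing that after translating $p$ to the origin, the displacement is governed by the singular values of a matrix similar to $G$, while the lower bound $\rho(G)$ involves only its eigenvalues, so majorization is exactly the tool needed to bridge the two.
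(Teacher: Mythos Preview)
Your argument is correct and takes a genuinely different route from the paper. After reducing to $p=e$ (which both you and the paper do), the paper proceeds geometrically: it approximates by matrices with distinct eigenvalues, applies the Iwasawa decomposition $G=OA_0N=ON'A_0$, and then observes that $A_0.e$ lies on a geodesic ray through $e$ while $N'A_0.e=G.e$ lies on the horosphere through $A_0.e$ centered at the endpoint of that ray; since the ray meets the horosphere orthogonally, $A_0.e$ is the closest point on that horosphere to $e$, giving $d(G.e,e)\ge d(A_0.e,e)=\rho(G)$. Your proof instead recognizes that $d(p,G.p)$ is the $\ell^2$-norm of $(\log\sigma_i^2)$ for the singular values $\sigma_i$ of $A=p^{-1/2}Gp^{1/2}$, and then invokes the Weyl--Horn majorization $(\log\sigma_i)\succ(\log|b_i|)$ together with Karamata's inequality for $t\mapsto t^2$. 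Your approach is more self-contained and avoids both the density argument and the horosphere geometry; the paper's approach, on the other hand, sets up exactly the geodesic ray and Iwasawa structure that are reused in the next theorem on exponential decay, so it dovetails better with what follows.
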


\begin{proof}
We need to show 
\begin{equation} \label{tl}
\rho(G) \leq d(p,G.p), \ \ \forall p \in \Pn, \forall G \in \Gl.
\end{equation}
Before we proceed, note the following:
\begin{itemize}
\item {\it It is sufficient to prove the inequality of (\ref{tl}) when $p$ is the identity element $e$ in $\Pn$.}  
We prove this claim in two steps.
\begin{itemize}
\item  Let $q$ be a diagonal matrix; i.e.~$q=\mathsf{diag}(\lambda_1, \dots, \lambda_n)$   and  $q^{\frac{1}{2}}=\mathsf{diag}(\lambda_1^{\frac{1}{2}}, \dots, \lambda_n^{\frac{1}{2}})$.  Thus,  $q=q^{\frac{1}{2}} e q^{\frac{1}{2}}$ and hence
for any $G_0 \in \Gl$ and $G_1:=q^{-\frac{1}{2}} G_0 q^{\frac{1}{2}}$,
\begin{eqnarray*}
d(G_0.q, q) & = & d(G_0 q G_0^t, q) 
 \ = \   
d(G_0 q^{\frac{1}{2}} e q^{\frac{1}{2}} G_0^t, \,q^{\frac{1}{2}}  e q^{\frac{1}{2}})
\\
& = & 
d(q^{-\frac{1}{2}} G_0 q^{\frac{1}{2}} e q^{\frac{1}{2}} G_0^t q^{-\frac{1}{2}}, e)
\ = \ 
d(G_1.e,e).
\end{eqnarray*} 

\item 
Let  $p \in \Pn$.   Choose $O \in \On$ be such that $p=O q O^t=O.q$ where $q$ is a diagonal matrix.   If  $G_0:=OGO^t$ and $G_1:=q^{-\frac{1}{2}} G_0 q^{\frac{1}{2}}$ and, then   
\begin{eqnarray*}
d(G.p,p) & = &  d(GO.q, \,O.q)
\ = \ 
d(O^tGO.q, q)
\\
& = &   
d(G_0.q, q) 
\ = \ 
d(G_1.e,e)
\end{eqnarray*}
 and $\rho(G)=\rho(G_1)$.
\end{itemize}

\item {\it It is sufficient to prove (\ref{tl}) assuming the eigenvalues of $G$ are distinct.}  Indeed, if $G$ does not have distinct eigenvalues, we can take a sequence $G^{(j)}$ of matrices with distinct eigenvalues such that $\rho(G^{(j)}) \rightarrow \rho(G)$ and $d(G^{(j)}.e,e) \rightarrow d(G.e,e)$.
\end{itemize}
  
By virtue of the above two bullet points, we assume that  $G \in \Gl$ has distinct eigenvalues $b_1, b_2, \dots, b_n$ and show (\ref{tl}) with $p=e$.  

Let   $V \in T_e\Pn=\Sn$ be a diagonal matrix with diagonal entries 
\[
\frac{\log |b_1|^2}{\rho(G)}, \ \frac{\log |b_2|^2}{\rho(G)}, \ \dots, \frac{\log |b_n|^2}{\rho(G)}. 
\]
Conjugating by elemen
Let  $c(t)=\exp(tV)$ and $\xi=[c]$. We apply the Iwasawa decomposition to write $G=O A_0 N$  where  $O \in \On$, $A_0 \in \An$ and $N \in \Nn$. 
Since $A'=A_0 N$ is an upper triangular matrix, we can decompose 
$
A'=N' A_0
$
where $N' \in \Nn$.  In summary,
\[
G=O  A_0  N = O  A'=O  N'  A_0.
\] 
The diagonal matrix $A_0$ is the hyperbolic isometry fixing the geodesic $c(t)=\exp (tV)$ and the upper triangular matrix  $N$ with 1's on the diagonals is an  element of the horospherical subgroup $\Nsf$ associated to $\xi$ (cf.~\cite[II.10.66]{bridson-haefliger}).  
By construction, 
\[
c(\rho(G))=\exp(\rho(G) V)=A_0A_0^T=A_0.e.
\]  
Since the geodesic line $c(t)$ intersects the horospheres centered at $\xi=[c]$ orthogonally,   the point $A_0.e$ is the closest point  to $e$ amongst all points on the  horosphere containing $A_0.e$ (cf.~\cite[I.10.5]{bridson-haefliger}).  Furthermore, any orbit of $N'$ is contained in a single horosphere (cf.~\cite[II.10.66]{bridson-haefliger}), and hence 
\[
d(A_0.e,e) \leq d(N'A_0.e, \, e)=d(G.e,e).
\]
This in turn implies
\[
\rho(G)=\rho(A_0) = d(A_0.e,e)  \leq d(G.e,e).
\]
\end{proof}

 \begin{theorem} \label{jzmatrix}
For a parabolic element $G\in \Pn$, let 
\begin{itemize}
\item $\xi \in \partial \Pn$ be such that $G \in \Gsf$ (i.e.~$G$ is an element of the parabolic subgroup associated to $\xi$), 
\item $c:[0,\infty) \rightarrow \Pn$ be the geodesic ray such that $\xi=[c]$ and $c(0)=e$ (cf.~\cite[10.62]{bridson-haefliger}). 
\end{itemize}
Then there exist constants $a,b>0$ such that 
 \[
  \lim_{t \rightarrow \infty} d(c(t),G.c(t)) =  \triangle_G+be^{-at}.
 \]
 \end{theorem}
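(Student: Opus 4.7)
The plan is to refine the Iwasawa-type decomposition of Lemma~\ref{tl0}, choosing one adapted to the boundary point $\xi$ fixed by $G$. Writing $V = c'(0) \in \Sn$ and $g_t := \exp(tV/2)$, so that $c(t) = g_t.e$, I would use the Langlands decomposition of the parabolic subgroup $\Gsf$ to factor $G = L \cdot N$ with $N \in \Nsf$ and $L = m \exp(sV)$, where $m \in \On$ centralizes $V$ and $s \in \R$. This makes $L$ commute with $g_t$ for every $t$, while by Definition~\ref{def:parabolicsubgroup} together with the weight-space decomposition of the Lie algebra of $\Nsf$ under the adjoint action of $V$, the conjugate $N_t := g_t^{-1} N g_t$ satisfies $\|N_t - I\| \leq b_0 e^{-at}$ in any fixed matrix norm, for some $a, b_0 > 0$.

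Using that $g_t$ acts isometrically on $\Pn$ and that $L$ commutes with $g_t$, I would rewrite
\[
d(c(t), G.c(t)) = d(g_t.e,\, G g_t.e) = d(e,\, g_t^{-1} G g_t.e) = d(e,\, L N_t.e).
\]
Since $N_t.e$ converges exponentially to $e$ in $\Pn$ and $L$ acts isometrically, the triangle inequality yields
\[
d(c(t), G.c(t)) \leq d(e, L.e) + d(L.e, L N_t.e) = d(e, L.e) + d(e, N_t.e) \leq d(e, L.e) + b\, e^{-at}
\]
for a suitable $b > 0$. A direct computation gives $d(e, L.e) = d(e, m.c(s)) = |s|$, since $m$ fixes $e$.

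The remaining and most delicate step is to identify $|s| = d(e, L.e)$ with the translation length $\Delta_G$. The inequality $d(e, L.e) \geq \Delta_G$ is immediate from the display above combined with $d(c(t), G.c(t)) \geq \Delta_G$ for all $t$. For the reverse, I would iterate: since $L$ normalizes $\Nsf$, we have $G^n = L^n \cdot N^{(n)}$ with $N^{(n)} \in \Nsf$ a product of conjugates $L^{-k} N L^k$; by orienting $V$ toward $\xi$ so that $s \geq 0$, these conjugates contract geometrically and $N^{(n)}.e$ stays at bounded distance from $e$. Hence $d(e, G^n.e) \leq n \cdot d(e, L.e) + C$, and the stable translation length formula $\Delta_G = \lim_n d(e, G^n.e)/n$ gives $\Delta_G \leq d(e, L.e)$. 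The main obstacle is precisely this identification step: setting up the Langlands decomposition intrinsically attached to $\xi$ and verifying that the Levi factor realizes the translation length of $G$. Once these structural facts are established, the exponential decay follows directly from the contractive property built into the definition of $\Nsf$.
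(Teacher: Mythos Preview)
Your decomposition $L = m\exp(sV)$ with $m \in \On$ is the crux of the argument, and it is not correct in general. The Levi factor of the parabolic $\Gsf$ is the centralizer of $V$ in $\Gl$, which is the group of block-diagonal matrices with block sizes $r_1,\dots,r_k$ equal to the multiplicities of the eigenvalues of $V$. When $V$ has a repeated eigenvalue this Levi is strictly larger than $\{m\exp(sV): m \in \On \text{ centralizes } V,\ s \in \R\}$: for instance a non-orthogonal element of $\mathsf{GL}(r_1,\R)$ sitting in the first block lies in the Levi but cannot be written in your form. Consequently the identity $d(e,L.e)=|s|$ and the subsequent identification with $\Delta_G$ both collapse, because the $M$-part of $L$ need not fix $e$ and may itself be parabolic inside the smaller symmetric space $\prod_i \mathsf{P}(r_i,\R)$.

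This is exactly the reason the paper's proof is by induction on $n$. After diagonalizing $V$ and writing $G=NA$ with $N\in\Nsf$ and $A$ block-diagonal (cf.~\cite[10.64]{bridson-haefliger}), the paper splits into two cases. When the eigenvalues of $V$ are distinct (the regular case), $A$ is genuinely diagonal, translates along the unique maximal flat $F(c)$, and one gets $d(c(t),A.c(t))=\rho(A)=\rho(G)$ directly; your argument is essentially correct here. When $V$ has repeated eigenvalues (the singular case), each diagonal block $A_{ii}$ lies in a strictly lower-dimensional $\mathsf{P}(r_i,\R)$ and one applies the inductive hypothesis to get $d(c_i(t),A_{ii}.c_i(t))\le \rho(A_{ii})+b_ie^{-a_it}$ for each block, then sums. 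In both cases the identification with $\Delta_G$ goes through $\rho(G)$ and Lemma~\ref{tl0} rather than through stable translation length. Your stable-length argument would also run into trouble in the singular case: the iterates $L^{-k}NL^k$ need not contract if the Levi part $A$ is itself parabolic in its block.
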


 \begin{proof}
 We will prove this by an inductive argument on $n$.
 Let 
 \[
 V \in T_e(\Pn)=\Sn \mbox{ such that }c(t)=\exp(tV).
 \]  
 By conjugating by an element of $\So$, we may assume that $V$ is a diagonal matrix 
 \[
 V = \mathsf{diag}(\lambda_1, \dots, \lambda_n)
 \mbox{ 
  with $\lambda_1 \geq \dots \geq \lambda_n$}.
  \]
Let $r_1, \dots, r_k$ be the multiplicities of these eigenvalues.
By \cite[10.64]{bridson-haefliger}, 
\[
G=NA
\]
where 
\begin{itemize}
\item $A$ is a block diagonal matrix,
\begin{equation} \label{eq:A}
A=
\begin{pmatrix}
A_{11} & 0 & \dots & 0\\
0 & A_{22} & \dots & 0\\
\dots & \dots & \dots & \dots
\\
0 & 0 & \dots & A_{kk}
\end{pmatrix}
\end{equation}
\item $N \in \Nsf$, the horospherical subgroup associated to $\xi \in [c_0]$, and $N$ is a block upper triangular matrix
\begin{equation} \label{eqN}
N=
\begin{pmatrix}
I_{r_1} & A_{12} & \dots & A_{1k}\\
0 & I_{r_2} & \dots & A_{2k}\\
\dots & \dots & \dots & \dots
\\
0 & 0 & \dots & I_{r_k}
\end{pmatrix}
\end{equation}
where $A_{ij}$ is an $(r_j \times r_j)$-matrix and $I_{ij}$ is the  $(r_j \times r_j)$-identity matrix.
\end{itemize}
Let $F(c)$ be the union of all geodesic lines parallel to $c$.  We have that (cf.~\cite[10.67]{bridson-haefliger})
\begin{equation} \label{eq:F}
F(c)=
\begin{pmatrix}
\mathsf{P}(r_1,\R) & 0 & \dots & 0\\
0 & \mathsf{P}(r_2,\R) & \dots & 0\\
\dots & \dots & \dots & \dots
\\
0 & 0 & \dots & \mathsf{P}(r_k,\R)
\end{pmatrix}.
\end{equation}

We consider two cases: \\

\vspace*{0.1in}
 {\sc Case 1.} {\it The eigenvalues $\lambda_1, \dots, \lambda_n$ of $V$ are distinct}; i.e.~$r_i=1$ for all $i$.\\
 
The assumption implies that $c(t)=\exp (tV)$ is a regular geodesic (cf.~\cite[10.45]{bridson-haefliger}).  Moreover,  $F(c)$  is the unique maximal flat containing $c(t)$, $A$ is a diagonal matrix $\mathsf{diag}(
\alpha_1, \dots, \alpha_n)$  and acts on  $F(c)$ by translation (cf.~\cite[10.64, 10.67, and 10.68]{bridson-haefliger}).  In particular,   $t \mapsto d(c(t),A.c(t))$ is a constant function in $t$.  Thus,
\begin{equation} \label{anotanot}
d(c(t), A.c(t))  = d(c(0), A.c(0)) = d(e,A.e)=\rho(A)=\rho(G).
\end{equation}
Furthermore, letting $\hat c(t):=A.c(t)$, we have
\begin{eqnarray*}
d(\hat c(t),N.\hat c(t)) & = & d(\hat c(t), \, N  \hat c(t) N^T)  \\
& = & d(\hat c(t)^{\frac{1}{2}} \hat c(t)^{\frac{1}{2}} , \  N  \hat c(t)^{\frac{1}{2}} \hat c(t)^{\frac{1}{2}}  N^T)  \\
& = & d(e, \, (\hat c(t)^{-\frac{1}{2}}N \hat c(t)^{\frac{1}{2}})  e(\hat c(t)^{-\frac{1}{2}}  N \hat c(t)^{\frac{1}{2}})^T)
\\
& = & d(e, \, (\hat c(t)^{-\frac{1}{2}}N \hat c(t)^{\frac{1}{2}}).  e).
\end{eqnarray*}
Since $\hat c(t)$ is a diagonal matrix with diagonal entries $b_1^2 e^{t\lambda_1},  b_2^2 e^{t\lambda_2},  \dots,  b_n^2 e^{t \lambda_n}$ and $N$ is an upper triangular matrix with  diagonal entries equal to 1,  there exists $C_1, C_2>0$ such that 
\[
\| e - \hat c(t)^{-\frac{1}{2}}  N  \hat c(t)^{\frac{1}{2}}\| \leq C_1e^{-C_2t}
\]
Note that the choice of $V$ having distinct diagonal entries is used in the estimate above.  Indeed,  the constant $C_2$ is chosen less than $\min \{\lambda_i-\lambda_{i+1}: i=1, \dots, n-1 \}>0$.
We thus conclude
\begin{equation} \label{eq:N}
d(\hat c(t),N.\hat c(t)) \leq be^{-at}
\end{equation}
for some $a, b>0$.
Combining this with (\ref{anotanot}), the triangle inequality implies that
\begin{equation} \label{c}
d(c(t), G. c(t)) \leq d(c(t), \hat c(t))+ d(\hat c(t), N.\hat c(t)) \leq 
\rho(G)+be^{-at}. 
\end{equation}
\\
\noindent {\sc Case 2:}  {\it The eigenvalues of $V$  are not distinct.}\\

The assumption implies that $c(t)=\exp (tV)$ is a singular geodesic (cf.~\cite[10.45]{bridson-haefliger}).  We write
\[
c(t)=
\begin{pmatrix}
c_1(t) & 0 & \dots & 0\\
0 & c_2(t) & \dots & 0\\
\dots & \dots & \dots & \dots
\\
0 & 0 & \dots & c_k(t)
\end{pmatrix}. 
\]
where $c_i(t)$ is a geodesic ray in  $\mathsf{P}(r_i,\R)$, the block diagonal entry that appear in (\ref{eq:F}).     If $c_i(t)$ is not constant, then the block diagonal $A_{ii}$ of (\ref{eq:A}) is an element of  parabolic subgroup associated to $\xi_i=[c_i] \in \partial \mathsf{P}(r_i,\R)$ by \cite[10.64]{bridson-haefliger}.  By the induction hypothesis, there exists a geodesic ray $c_i:[0,\infty)  \rightarrow  \partial \mathsf{P}(r_i,\R)$ such that 
 \[
 d(c_i(t), A_{ii}.c_i(t)) \leq 
\rho(A_{ii})+b_ie^{-a_it}.
\]
 If $c(t)$ is a constant, then $c(t)=e$ is a fixed point of $A_{ii}$.   
Thus, 
there exist $\alpha, \beta>0$ such that
 \[
  d(c(t), A.c(t)) \leq 
\rho(A)+\beta e^{-\alpha t}. 
\]
By an analogous proof as in (\ref{eq:N}), there exist constants  $\bar \alpha, \bar \beta>0$ such that
 \[
 d(A.c(t), (NA).c(t)) \leq 
\bar \beta e^{-\bar \alpha t}. 
\]
Let $a=\min\{\alpha, \bar \alpha\}$,  $b=\beta+\bar \beta$.  By the triangle inequality,
\[
d(c(t),G.c(t)) \leq d(c(t), A.c(t)) + d(A.c(t), G.c(t)) \leq
\rho(A)+b e^{-a t}. 
\]
 \end{proof}

\begin{theorem} \label{expdecaysymspace}
Let $\tilde X$ be a  irreducible symmetric space of non-compact type.  For $I \in \mathsf{Isom}(\tilde X)$, there exist a geodesic line $c:\R \rightarrow \tilde X$ and constants $a,b>0$ such that
\[
d(c(t),I(c(t))) \leq \triangle_{I} + be^{-at}.
\]
\end{theorem}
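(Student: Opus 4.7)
The plan is to reduce to Theorem~\ref{jzmatrix}. If $I$ is semisimple the statement is immediate: for hyperbolic $I$, an axis $c \subset \mathsf{Min}(I)$ satisfies $d(c(t), I(c(t))) \equiv \Delta_I$ and any $a, b > 0$ work; for elliptic $I$, $\Delta_I = 0$ and the constant geodesic at a fixed point is enough. So the real content is the parabolic case.

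For parabolic $I$, I would exploit the fact that the identity component $G = \mathsf{Isom}(\tilde X)^0$ is a noncompact semisimple Lie group, which admits a faithful finite-dimensional real representation $\rho : G \hookrightarrow \mathsf{GL}(n,\R)$ sending the stabilizer $K$ of a base point into $\mathsf{O}(n)$ (standard averaging argument, possibly after passing to a finite cover). This produces a $G$-equivariant, totally geodesic embedding
\[
\iota : \tilde X = G/K \hookrightarrow \Pn,
\]
which becomes isometric after a suitable rescaling of the Killing-form metric on $\tilde X$. Since $\iota(\tilde X)$ is totally geodesic, the translation length of $I$ on $\tilde X$ coincides with the translation length of $\rho(I)$ on $\Pn$. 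Applying Theorem~\ref{jzmatrix} to $\rho(I)$ then produces constants $a, b > 0$ and a geodesic ray $c_0 : [0,\infty) \to \Pn$, of the form $c_0(t) = \exp(tV).e$, with
\[
d(c_0(t), \rho(I).c_0(t)) \leq \Delta_I + b e^{-at}.
\]

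The main obstacle will be guaranteeing that $c_0$ actually lies inside $\iota(\tilde X)$, since the direction $V$ produced by the proof of Theorem~\ref{jzmatrix} a priori sits in the Cartan subspace of $\mathsf{GL}(n,\R)$ rather than of $G$. I would address this by rerunning the argument of Theorem~\ref{jzmatrix} intrinsically: use the Iwasawa decomposition $G = KAN$, let $\xi \in \partial \tilde X$ be the boundary point fixed by the parabolic isometry $I \in \Gsf$, and choose the initial direction $V$ inside the maximal abelian subspace $\mathfrak a \subset \mathfrak p$ whose associated flat contains a geodesic representative of $\xi$ through the base point. The analogue of the block factorization $G = NA$ of Theorem~\ref{jzmatrix} is the parabolic decomposition $I = n a$ with $a$ in the Levi component of $\Gsf$ and $n \in \Nsf$: the element $a$ translates the flat containing $c$, contributing exactly $\Delta_I$, while conjugation of $n$ by $\exp(tV)$ contracts to the identity at exponential rate as $t \to \infty$, precisely as the upper-triangular matrix factor did in the estimate~(\ref{eq:N}). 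An induction on the real rank of $\tilde X$, stratifying by the Weyl chamber face containing $V$ (parallel to the induction on matrix block size in Theorem~\ref{jzmatrix}, with the role of the block-diagonal embedded $\mathsf{P}(r_i,\R)$ factors played by the rank-smaller symmetric spaces that are the irreducible factors of the Levi subgroup), then completes the proof.
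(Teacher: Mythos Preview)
Your proposal is correct and starts exactly as the paper does---embed $\tilde X$ totally geodesically into $\Pn$ and invoke Theorem~\ref{jzmatrix}---but you take an unnecessarily long detour at the end.

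The paper's proof is three lines: after scaling, $\tilde X$ sits as a totally geodesic submanifold of $\Pn$ with $e \in \tilde X$, every isometry $I$ of $\tilde X$ is the restriction of some $G \in \Gl$ preserving $\tilde X$, and $I$ is parabolic iff $G$ is (\cite[II.10.61]{bridson-haefliger}); then apply Theorem~\ref{jzmatrix}. The obstacle you flag---that the ray $c_0$ produced by Theorem~\ref{jzmatrix} may fail to lie in $\iota(\tilde X)$---is resolved not by rerunning the whole induction intrinsically, but by noticing that the statement of Theorem~\ref{jzmatrix} lets you \emph{choose} the boundary point $\xi$ freely among those fixed by $G$. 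Since $I$ is parabolic on $\tilde X$, it fixes some $\xi \in \partial \tilde X \subset \partial \Pn$; the geodesic ray from $e \in \tilde X$ toward this $\xi$ then lies in $\tilde X$ by total geodesicity, and that is precisely the ray Theorem~\ref{jzmatrix} hands you. (As a side benefit, since $c(t) \in \tilde X$ forces $d(c(t),G.c(t)) \geq \Delta_I \geq \Delta_G$ while the theorem gives convergence to $\Delta_G$, one gets $\Delta_I = \Delta_G$ automatically.)

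Your proposed intrinsic rerun---Iwasawa decomposition of $\mathsf{Isom}(\tilde X)^0$, Levi factor of $\Gsf$, induction on rank through the symmetric-space factors of the Levi---would certainly work and is in some sense the ``honest'' argument, but it amounts to reproving Theorem~\ref{jzmatrix} in greater generality when the totally geodesic embedding already transports the result for free.
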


\begin{proof}
After scaling, we can assume that  $\tilde X$  is a  totally geodesic submanifold  of $\Pn$ (cf.~\cite[2.6, pp.~14-16]{mostow} or \cite[appendix]{eberlein2})   and $\mathsf{Isom}(\tilde X)=\{I=G|_{\tilde X}:  G \in \Gl, G.\tilde X=\tilde X\}$.  
Let $I=G|_{\tilde X} \in \mathsf{Isom}(\tilde X)$.  Then $I$ semisimple if and only if $G$ is semisimple in $\Gl$ (cf.~\cite[10.61]{bridson-haefliger}).  Thus, the assertion follows from Theorem~\ref{jzmatrix}.
\end{proof}

We have therefore shown the following.
\begin{theorem} \label{thm:jzsymmetricspace}
If  $\tilde X$ is an irreducible symmetric space of non-compact type, then every isometry $I \in \mathsf{Isom}(\tilde X)$ has exponential decay.
\end{theorem}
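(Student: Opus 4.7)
This theorem is a near-immediate consequence of the two preceding statements; my plan is essentially to unpack Definition~\ref{jz} and observe that the estimate in Theorem~\ref{expdecaysymspace} is already in the required form up to squaring.

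First I would dispose of the semisimple case: if $I \in \mathsf{Isom}(\tilde X)$ is elliptic or hyperbolic, then by the first bullet of Definition~\ref{jz}, $I$ automatically has exponential decay to its translation length with no further condition to verify. So I may restrict to the case when $I$ is parabolic.

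For parabolic $I$, Theorem~\ref{expdecaysymspace} supplies a geodesic line $c:\R \to \tilde X$ and constants $a, b > 0$ such that $d(c(t), I(c(t))) \leq \Delta_I + be^{-at}$. Restricting $c$ to $[0,\infty)$ produces an arclength parameterized geodesic ray as demanded in Definition~\ref{jz}. Squaring the estimate and using $e^{-2at} \leq e^{-at}$ for $t \geq 0$,
\[
d^2(c(t), I(c(t))) \leq \Delta_I^2 + (2\Delta_I b + b^2)e^{-at} = \Delta_I^2 \left(1 + \frac{2\Delta_I b + b^2}{\Delta_I^2}e^{-at}\right),
\]
which is exactly the exponential decay condition of the second bullet of Definition~\ref{jz} with the adjusted constant $b' := (2\Delta_I b + b^2)/\Delta_I^2$.

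The only possible subtlety arises in the degenerate case $\Delta_I = 0$ (for instance a parabolic isometry of real hyperbolic space), where the multiplicative rewriting above is ill-defined. In that situation the bound $d^2(c(t), I(c(t))) \leq b^2 e^{-2at}$ is itself an exponential decay to zero, which is strictly stronger than the definition requires, so no real obstacle is present. In short, all the substantive work has already been carried out in the matrix-theoretic Theorem~\ref{jzmatrix} and the embedding argument of Theorem~\ref{expdecaysymspace}; Theorem~\ref{thm:jzsymmetricspace} is a formal restatement of those results in the language of Definition~\ref{jz}, and thus there is no genuine obstacle to overcome.
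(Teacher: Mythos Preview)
Your proposal is correct and matches the paper's approach: the paper offers no separate proof of Theorem~\ref{thm:jzsymmetricspace} at all, merely stating ``We have therefore shown the following'' after Theorem~\ref{expdecaysymspace}, so both you and the authors treat it as an immediate repackaging of that result in the language of Definition~\ref{jz}. Your explicit squaring argument and your remark on the degenerate case $\Delta_I = 0$ are additional care that the paper omits; the latter is a genuine wrinkle in the literal wording of Definition~\ref{jz}, but as you note it causes no real difficulty for the intended meaning.
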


\subsection{CAT($-1$) spaces}

In the following important cases, every isometry has exponential decay. 

\label{sec:CAT(-1)}
\begin{theorem} \label{thm:jzcat-1}
If  $\tilde X$ is a CAT$(-1)$ space, then every isometry $I:\tilde X \rightarrow \tilde X$ has exponential decay.
\end{theorem}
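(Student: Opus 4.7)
The proof splits according to whether $I$ is semisimple or parabolic.  If $I$ is semisimple, the first bullet of Definition~\ref{jz} is satisfied and there is nothing to prove.  The substance of the theorem is the parabolic case, where I must exhibit an arclength parametrized geodesic ray $c:[0,\infty) \to \tilde X$ along which $d(c(t), I(c(t)))$ approaches $\Delta_I$ exponentially fast.

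The first step is to locate a unique fixed point $\xi \in \partial \tilde X$ of $I$.  Choose a minimizing sequence $P_n \in \tilde X$ with $d(P_n, I(P_n)) \to \Delta_I$.  Convexity of the displacement function $P \mapsto d(P, I(P))$ together with $\mathsf{Min}(I)=\emptyset$ forces $P_n$ to leave every bounded set, and any subsequential limit in $\partial \tilde X$ is fixed by $I$.  The CAT($-1$) visibility property precludes a second fixed point at infinity for a non-semisimple isometry, since the unique geodesic line joining two such boundary points would be $I$-invariant and the restriction of $I$ to it would realize the translation length at every point, contradicting $\mathsf{Min}(I) = \emptyset$.  Hence $\xi$ is unique.

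The second step is to choose an arclength parametrized geodesic ray $c:[0,\infty) \to \tilde X$ with $c(\infty) = \xi$ and compare $c$ with the asymptotic ray $I \circ c$ (which also limits to $\xi$ because $I$ fixes $\xi$).  Applying the CAT($-1$) inequality (\ref{cat}) to the quadrilateral with vertices $c(0), I(c(0)), I(c(t)), c(t)$ and comparing with the corresponding configuration of two asymptotic vertical rays in the upper half-plane model of $\mathbb{H}^2$, I obtain the contraction estimate
\[
d(c(t), I(c(t))) \;\le\; C\, e^{-t}
\]
with $C$ depending only on $d(c(0), I(c(0)))$.  In particular $d(c(t), I(c(t))) \to 0$, which forces $\Delta_I = 0$, and the exponential decay condition of Definition~\ref{jz} follows (consistent with the formulations in Theorem~\ref{jzmatrix} and Theorem~\ref{expdecaysymspace}, the essential content is that $d(c(t), I(c(t))) - \Delta_I$ is dominated by $b\, e^{-at}$).

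The main technical obstacle is the exponential contraction estimate between the asymptotic rays $c$ and $I \circ c$.  In the proper (locally compact) case this is classical (cf.~\cite{bridson-haefliger}), but as the theorem does not assume properness of $\tilde X$ the estimate must be derived solely from the two-point CAT($-1$) inequality (\ref{cat}).  The argument amounts to a horosphere-by-horosphere comparison with the hyperbolic model, and relies on the fact that parabolicity of $I$ forces its Busemann cocycle at $\xi$ to vanish—otherwise $I$ would shift horospheres centered at $\xi$, contradicting that the infimum of the displacement function is unattained along sequences converging to $\xi$.  Carrying out this comparison cleanly in an arbitrary CAT($-1$) space, and verifying the visibility property used in Step 1 without local compactness, is where the delicate work lies.
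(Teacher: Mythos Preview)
Your approach is genuinely different from the paper's, and the paper's route is considerably simpler.  The paper does not identify a fixed point on $\partial\tilde X$, does not invoke visibility, and does not need to know that $\Delta_I=0$.  Instead, it starts from a ray $c_0$ along which $d(c_0(t),I(c_0(t)))\to\Delta_I$, sets $c_1=I\circ c_0$, and for each $R$ builds the Reshetnyak comparison quadrilateral in $\mathbb{H}^2$ on the four points $c_0(0),c_0(R),c_1(R),c_1(0)$.  Quadrilateral comparison gives $d(c_0(t),c_1(t))\le d(\tilde c_0^R(t),\tilde c_1^R(t))$ for $t\in[0,R]$; letting $R\to\infty$ produces two limiting rays in $\mathbb{H}^2$, and the exponential bound is then read off from the already-established symmetric-space case (Theorem~\ref{thm:jzsymmetricspace} applied to $\mathbb{H}^2$).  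So the CAT($-1$) hypothesis is used only to make Reshetnyak comparison against $\mathbb{H}^2$ legitimate, and all the analysis happens in the model.

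What this buys: the paper sidesteps exactly the two issues you flag as delicate---existence of a boundary fixed point and visibility in the possibly non-proper setting---because neither is needed.  Your approach, if it can be completed, yields the sharper conclusion $\Delta_I=0$, but you correctly identify that the boundary-dynamics step is not routine without local compactness, and you do not actually carry it out.  Note also that the paper's final inequality is the additive form $d(c(t),I(c(t)))\le \Delta_I+be^{-at}$ (matching Theorem~\ref{expdecaysymspace}) rather than the multiplicative form in Definition~\ref{jz}; your observation about this discrepancy is accurate, but the paper's argument is written for the additive version and does not need $\Delta_I>0$.
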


\begin{proof}
Let  $I$ be a parabolic isometry and $c_0:[0,\infty) \rightarrow \tilde X$ be a geodesic ray such that
\[
\lim_{t \rightarrow \infty} d(c_0(t), I(c_0(t))) = \Delta_{I}=\inf_{t \in [0,\infty)} d(c_0(t), I(c_0(t))).
\]

Let $c_1(t)=I(c_0(t))$.  For each $R \in [0,\infty)$, let $p=c_0(0)$, $q=c_0(R)$, $r=I(c_0(R))$, $s=c_1(0)$ and $\tilde p$, $\tilde q$, $\tilde r$, $\tilde s$ be the comparison quadrilateral in $\Hyp^2$ to the quadrilateral $(p,q,r,s)$ in $\tilde X$ (cf.~\cite{reshetnyak} or \cite[Theorem 2.1.1]{korevaar-schoen1}).  
Let $\tilde c_0^R$ ($\tilde c_1^R$ resp.) be the ray geodesic emanating from $\tilde p$ ($\tilde s$ resp.)and passing through $\tilde q$ ($\tilde r$ resp.). Let $\tilde \gamma_0$ ($\tilde \gamma_1$ resp.) be the geodesic from $\tilde p$ to $\tilde s$ ($\tilde q$ to $\tilde r$ resp.).
By the quadrilateral comparison (cf.~\cite{reshetnyak} or \cite[Theorem 2.1.2]{korevaar-schoen1}),
\[
d(c_0(t), c_1(t)) \leq d(\tilde c_0^R(t), \tilde c_1^R(t)), \ \ \forall t \in [0,R].
\]
Without  loss of generality, we may assume $\tilde c_0^R(0) =\tilde c_0^{R'}(0)$ and $\tilde c_1^R(0) =\tilde c_1^{R'}(0)$ for all $R,R' \in [0,\infty)$.  Arzela-Ascoli then implies that  there exists $R_j \rightarrow \infty$ and geodesic rays $\tilde c_0$, $\tilde c_1=I. \tilde c_0$ such that $\tilde c_0^R \rightarrow \tilde c_0$ and $\tilde c_1^R \rightarrow \tilde c_1$ on every compact subsets of $[0,\infty)$.  Thus, by Theorem~\ref{thm:jzsymmetricspace} for $M=\Hyp^2$,
\[
d(c_0(t), c_1(t)) \leq d(\tilde c_0(t), \tilde c_1(t)) \leq \Delta_{I}+be^{-at}.
\]
\end{proof}

\section{Exponential decay of a commuting pair of isometries} \label{sec:Expdecay2}

In this subsection, we consider the notion of exponential decay for a commuting pair of isometries $I_m: \tilde X \rightarrow \tilde X$, $ m=1,2$.

\begin{definition} \label{jz2}
  We say a commuting pair $I_1, I_2 \in \mathsf{Isom}(\tilde X)$ has {\it exponential decay to their translation length} (or {\it exponential decay} for short) if one of the following conditions hold:
\begin{itemize}
\item $I_1$ and $I_2$ are semisimple, or
\item $I_1$ and $I_2$ are parabolic and there exist an arclength parameterized geodesic ray $c:[0,\infty) \rightarrow \tilde X$ and constants  $a,b>0$ such that
\[
d^2(I_m(c(t)), c(t)) \leq  \Delta_{I_m}^2(1+be^{-at}), \ \ m=1,2.
\]
\end{itemize}
\end{definition}

 \begin{remark} \label{prop:fys}
The following is due to \cite{fsy}.
If  $\tilde X$ is  locally compact, $I_1, I_2 \in \mathsf{Isom}(\tilde X)$ commute and $I_1$ is parabolic, then $I_2$ is also parabolic  and fix the same point at infinity.  
Thus, commuting isometries  are   always both semisimple or both parabolic.   In the latter case, there exists a geodesic ray $c:[0,\infty) \rightarrow \infty$ such that 
\[
\lim_{t \rightarrow \infty} d(I_m(c(t)),c(t)) =\Delta_{I_m}, \ \ m=1,2.
\] 
 \end{remark}

\begin{lemma} \label{flattorus}
If the subgroup $\langle I_1, I_2 \rangle \subset \mathsf{Isom}(\tilde X)$  is generated by commuting semisimple isometries $I_1, I_2$ and 
\[
\rho:2\pi\Z \times 2\pi \Z \rightarrow \Gamma
\]
is a homomorphism defined by $(2\pi,0), (0,2\pi) \in 2\pi\Z \times 2\pi \Z$ mapping to $I_1, I_2$ respectively, then 
there exists a totally geodesic $\rho$-equivariant map
\[
h: \R \times \R \rightarrow \tilde X
\]
such that 
\[
\left| \frac{\partial h}{\partial x}\right|^2=\frac{\Delta_{I_1}^2}{4\pi^2}, \ \ \ \left| \frac{\partial h}{\partial y} \right|=\frac{\Delta_{I_2}^2}{4\pi^2}
\]
where $(x,y)$ are the standard coordinates of $\R \times \R$.
\end{lemma}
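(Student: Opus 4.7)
The plan is to invoke the Flat Torus Theorem for commuting semisimple isometries of an NPC space to obtain a Euclidean flat inside $\tilde X$ on which $I_1$ and $I_2$ act by pure translations, and then to take $h$ to be the affine parametrization of this flat scaled so that the lattice $2\pi\Z\times 2\pi\Z$ maps to the translation vectors.

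First, since each $I_m$ is semisimple, the set $\mathsf{Min}(I_m)$ is a nonempty closed convex subset of $\tilde X$. Because $I_1$ and $I_2$ commute, each preserves the other's minimum set, so $N := \mathsf{Min}(I_1)\cap\mathsf{Min}(I_2)$ is a nonempty closed convex subset of $\tilde X$ invariant under both isometries. By the product decomposition theorem for commuting semisimple isometries of a CAT(0) space (Bridson--Haefliger, II.6.8 together with the Flat Torus Theorem II.7.1), $N$ admits an isometric splitting $N = Y\times E$, where $E$ is a flat Euclidean space of dimension at most two and each $I_m$ acts as the identity on the $Y$-factor and as translation by some vector $v_m\in E$ with $|v_m|=\Delta_{I_m}$ on the $E$-factor.

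Next, fix a basepoint $(y_0,0)\in Y\times E$ and define
\[
h(x,y) \;=\; \Bigl(y_0,\; \tfrac{x}{2\pi}\,v_1 + \tfrac{y}{2\pi}\,v_2\Bigr) \;\in\; Y\times E \;\subset\; \tilde X.
\]
The image lies in the totally geodesic affine flat $\{y_0\}\times E$, so $h$ is totally geodesic. Equivariance follows from
\[
h(x+2\pi,y) = h(x,y) + (0,v_1) = I_1\bigl(h(x,y)\bigr),
\]
and similarly in the $y$-direction. Since $h$ is affine with coordinate partial derivatives $v_1/(2\pi)$ and $v_2/(2\pi)$ valued in the Euclidean factor $E$, the identities $|\partial h/\partial x|^2 = \Delta_{I_1}^2/(4\pi^2)$ and $|\partial h/\partial y|^2 = \Delta_{I_2}^2/(4\pi^2)$ follow immediately from $|v_m|=\Delta_{I_m}$.

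The principal obstacle is the splitting assertion in the second paragraph; this is the essential content of the Flat Torus Theorem and is where all the NPC geometry is used. The rest is a direct calculation. The construction is designed to work uniformly across edge cases: if $v_1,v_2$ are linearly dependent, or if one of $\Delta_{I_m}$ vanishes so that $I_m$ is elliptic and $v_m=0$, then $E$ collapses to a line or a point, but the formula for $h$ remains well-defined and the required (possibly degenerate) norm identities still hold.
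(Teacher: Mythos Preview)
Your argument is correct and in fact more streamlined than the paper's. The paper proceeds by a three-way case split (both elliptic; one hyperbolic and one elliptic; both hyperbolic), constructing $h$ respectively as a constant map, a parametrized axis, or via the Flat Torus Theorem. You instead give a single unified construction using the splitting $\mathsf{Min}(I_1)\cap\mathsf{Min}(I_2)\cong Y\times E$, which absorbs all three cases by allowing $\dim E\in\{0,1,2\}$ and $v_m$ possibly zero or collinear. One small caveat on your citation: the Flat Torus Theorem as stated in Bridson--Haefliger II.7.1 requires the abelian group to act \emph{properly}, which is not guaranteed here (two commuting hyperbolic isometries can generate a non-discrete group, e.g.\ translations by $1$ and $\sqrt{2}$ on $\R$). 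What you actually use is the iterated application of II.6.8 (the splitting $\mathsf{Min}(\gamma)=Y\times\R$ and the fact that commuting isometries respect it), which you do cite and which does not need properness. The paper, incidentally, makes the same slip: in its both-hyperbolic case it asserts without justification that $\langle I_1,I_2\rangle$ is properly discontinuous before invoking II.7.1.
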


\begin{proof}
By \cite[II.7.20]{bridson-haefliger}, we can choose $P_* \in \mathsf{Min}(I_1) \cap \mathsf{Min}(I_2)$. We consider the following cases:
\begin{itemize}
\item
Assume  $I_1$ and $I_2$ are both elliptic.   Then let $h(x,y)$ be identically equal to $P_*$.
\item 
Assume $I_1$ is hyperbolic and $I_2$ is elliptic. Then $P_*$ is a fixed point of $I_2$.  Let $c_1:\R \rightarrow \tilde X$ be an arclength parameterized geodesic such that   $c_1(0)=P_*$ and  $c_1(\R)$ is an axis of $I_1$.  
  By commutativity, 
\[
I_2 \circ I_1^k(P_*) = I_1^k \circ I_2 (P_*)=I_1^k(P_*), \ \ \forall k \in \Z.
\]
Thus,    $c_1(k\Delta_{I_1})=I_1^k(P_*)$ is a fixed point of $I_2$ for any $k \in \Z$ which implies that  every point of  $c_1(\R)$ is a fixed point of $I_2$.  Define $h(x,y)=c_1(\frac{\Delta_I}{2\pi}x)$.
\item Assume $I_1$ and $I_2$ are both hyperbolic.  Then $\langle I_1, I_2 \rangle$ is properly discontinuous, and we can  apply the flat torus theorem  (cf.~\cite[II.7.1]{bridson-haefliger}) to assert the existence of a $\rho$-equivariant totally geodesic map from $\R \times \R$ to $\tilde X$.  After an appropriate reparameterization, we obtain a map $h$ with the desired derivative estimates.
\end{itemize}
\end{proof}

\begin{lemma} \label{almostflattorus}
If the subgroup $\langle I_1, I_2 \rangle \subset \mathsf{Isom}(\tilde X)$  is generated by commuting parabolic isometries $I_1, I_2$ with exponential decay and 
\[
\rho:2\pi\Z \times 2\pi \Z \rightarrow \Gamma
\]
is a homomorphism defined by $(2\pi,0), (0,2\pi) \in 2\pi\Z \times 2\pi \Z$ mapping to $I_1, I_2$ respectively, then we have the following:   there exist  $a,b >0$ and a  
 map
\[
\tilde h: [0,\infty) \times \R \times \R \rightarrow \tilde X
\]
such that for each $t \in [0,\infty)$, $\tilde h_t(x,y):=\tilde h(t,x,y)$ is a $\rho$-equivariant map and 
\[
\left| \frac{\partial \tilde h}{\partial t}\right|^2\leq 1, \ \ \ \left| \frac{\partial \tilde h}{\partial x}\right|^2\leq \frac{\Delta_{I_1}^2}{4\pi^2}(1+be^{-at}), \ \ \ \left| \frac{\partial \tilde h}{\partial y} \right|\leq \frac{\Delta_{I_2}^2}{4\pi^2}(1+be^{-at}).
\]
\end{lemma}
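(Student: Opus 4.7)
The plan is to construct $\tilde h$ via two successive NPC interpolations anchored along the geodesic ray supplied by the exponential decay hypothesis, and then to derive all three derivative bounds from the basic NPC fact that convex combinations are jointly $1$-Lipschitz in their endpoints.

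\emph{Construction.} Let $c:[0,\infty)\to\tilde X$ be the arclength parametrized ray supplied by Definition~\ref{jz2}, so that $d^{2}(I_m(c(t)),c(t))\leq \Delta_{I_m}^{2}(1+be^{-at})$ for $m=1,2$. For each fixed $t\geq 0$ I first define $\eta_t:\R\to\tilde X$ to be the $I_1$-equivariant piecewise geodesic whose restriction to each fundamental interval $[2\pi k,2\pi(k+1)]$ is the constant-speed geodesic from $I_1^{k}(c(t))$ to $I_1^{k+1}(c(t))$. Then for $(x,y)\in \R\times[0,2\pi]$ I put
\[
\tilde h_t(x,y):=\Bigl(1-\tfrac{y}{2\pi}\Bigr)\,\eta_t(x)+\tfrac{y}{2\pi}\,I_2(\eta_t(x))
\]
in the sense of Notation~\ref{interpolationnotation}, and extend to all of $\R\times\R$ by $\tilde h_t(x,y+2\pi)=I_2(\tilde h_t(x,y))$. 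The $\rho$-equivariance in the $y$-direction is built in, while equivariance in the $x$-direction follows from commutativity: the identity $I_2(\eta_t(x+2\pi))=I_2 I_1(\eta_t(x))=I_1 I_2(\eta_t(x))$ shows that the defining interpolation intertwines the $I_1$-shift.

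\emph{Derivative bounds.} The key tool is the classical NPC inequality
\[
d\bigl((1{-}s)P+sQ,\,(1{-}s)P'+sQ'\bigr)\leq (1{-}s)\,d(P,P')+s\,d(Q,Q'),
\]
a direct consequence of the convexity of the distance between two geodesics in a CAT$(0)$ space. For $|\partial\tilde h/\partial x|$: on each fundamental interval $\eta_t$ has speed $d(c(t),I_1(c(t)))/(2\pi)$, so $d(\eta_t(x_1),\eta_t(x_2))\leq |x_1-x_2|\,d(c(t),I_1(c(t)))/(2\pi)$; applying the NPC inequality to the $y$-interpolation (and using that $I_2$ is an isometry) gives $d(\tilde h_t(x_1,y),\tilde h_t(x_2,y))\le d(\eta_t(x_1),\eta_t(x_2))$, which yields the stated bound. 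For $|\partial\tilde h/\partial y|$: the curve $y\mapsto \tilde h_t(x,y)$ is by definition a constant-speed geodesic of speed $d(\eta_t(x),I_2(\eta_t(x)))/(2\pi)$; the function $x\mapsto d(\eta_t(x),I_2(\eta_t(x)))$ is convex on each fundamental interval (distance between the geodesic segments $\eta_t$ and $I_2\circ\eta_t$) and equals $d(c(t),I_2(c(t)))$ at every lattice point $x=2\pi k$ by commutativity, hence it is globally dominated by $\Delta_{I_2}\sqrt{1+be^{-at}}$. For $|\partial\tilde h/\partial t|$: each curve $t\mapsto I_1^{k}(c(t))$ is a unit-speed geodesic (since $c$ is and $I_1$ is isometric), so two successive applications of the NPC inequality (first in the interpolation defining $\eta_t$, then in the one defining $\tilde h_t$) show that $t\mapsto \tilde h_t(x,y)$ is $1$-Lipschitz for each fixed $(x,y)$.

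\emph{Main obstacle.} The principal technicality I expect is translating these pointwise Lipschitz bounds into the Korevaar-Schoen directional-energy formalism used throughout the paper, and verifying that the measure-zero corner set of $\eta_t$ at $x\in 2\pi\Z$ does not invalidate the almost-everywhere derivative estimates; since $\tilde h$ is globally Lipschitz by construction, this reduces to a routine differentiation argument.
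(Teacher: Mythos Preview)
Your proposal is correct and is essentially the same construction and argument as the paper's. The paper defines $\tilde h$ on $[0,\infty)\times[0,2\pi]\times[0,2\pi]$ by first taking $x\mapsto\tilde h(t,x,0)$ and $x\mapsto\tilde h(t,x,2\pi)$ to be the geodesics from $c(t)$ to $I_1(c(t))$ and from $I_2(c(t))$ to $I_1I_2(c(t))$ respectively, then interpolating geodesically in $y$, and extending equivariantly---exactly your $\eta_t$ followed by the $(1-\tfrac{y}{2\pi})\eta_t(x)+\tfrac{y}{2\pi}I_2(\eta_t(x))$ interpolation; the derivative bounds are then attributed to ``multiple applications of the quadrilateral comparison property of NPC spaces,'' which is precisely the convex-combination contraction you invoke, only spelled out in more detail.
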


\begin{proof}
Let $c(t)$ be as in Definition~\ref{jz2}.  Let \begin{eqnarray*}
\tilde h(t,0,0)=c(t), & & \tilde h(t,2\pi,0)= I_1(c(t)),\\
\tilde h(t,0,2\pi)=I_2(t) & & \tilde h(t,2\pi, 2\pi)=I_1 \circ I_2(c(t))=I_2 \circ I_1(c(t)).
\end{eqnarray*}
For each $t \in [0,\infty)$, let 
\begin{eqnarray*}
x  \in [0,2\pi]  & \mapsto & \tilde h(t, x,0) \\
   x \in [0,2\pi] & \mapsto & \tilde h(t,x,2\pi) 
   \end{eqnarray*}
   be  geodesics. 
     For each $x \in [0,2\pi]$, let 
     \[
     y \mapsto \tilde h(t,x,y)
     \]
      be a geodesic.  
Finally, we extend $\tilde h_t(x,y)$ equivariantly from $[0,2\pi] \times [0,2\pi]$ to $\R \times \R$.  

The assumption that  the commuting pair $I_1$ and $I_2$ has exponential decay along with  multiple applications of the quadrilateral comparison property of NPC spaces imply the derivative estimates. 
\end{proof}

\subsection{Symmetric spaces of non-compact type} \label{symmetricspace2}
In this section, we prove that every pair of commuting isometries of a symmetric space  of non-compact type has exponential decay.  

 \begin{theorem} \label{jzmatrix}
Let 
\begin{itemize}
\item $G_1, G_2 \in \Gsf$, and
\item   
$c:[0,\infty) \rightarrow \Pn$ be the geodesic ray such that $\xi=[c]$ and $c(0)=e$. 
\end{itemize}
Then there exist constants $a,b>0$ such that 
 \begin{eqnarray*}
  \lim_{t \rightarrow \infty} d(c(t),G_m.c(t)) & = &  \triangle_{G_m}+be^{-at}, \ \ m=1,2.
 \end{eqnarray*}
 \end{theorem}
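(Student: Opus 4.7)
The plan is to adapt the proof of the single-isometry version treated earlier in Subsection~\ref{symmetricspace} to a pair $G_1, G_2 \in \Gsf$. The key observation is that the geodesic ray $c$ depends only on $\xi = [c]$, which is a common boundary point for both isometries, so the same $c$ witnesses exponential decay for both $G_m$ simultaneously provided the decay exponents and multiplicative constants can be chosen uniformly in $m$. After conjugating by an element of $\On$, I arrange the initial direction $V \in T_e \Pn$ of $c$ to be diagonal with entries $\lambda_1 \geq \cdots \geq \lambda_n$ grouped into blocks of multiplicities $r_1, \dots, r_k$. Applying \cite[10.64]{bridson-haefliger} separately to each $G_m$ yields decompositions $G_m = N_m A_m$, where $A_m$ is block diagonal relative to this \emph{common} block structure determined by $V$, and $N_m$ lies in the horospherical subgroup $\Nsf$.

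In Case 1 (distinct eigenvalues of $V$), each $A_m$ is a positive diagonal matrix acting by translation on the unique maximal flat $F(c)$ containing $c$, so $d(c(t), A_m.c(t))$ is constant in $t$ and equals $\rho(G_m)$. The conjugation estimate used in the single-isometry proof applies verbatim to each $N_m$: the exponential decay rate of $\|e - c(t)^{-1/2} N_m c(t)^{1/2}\|$ depends only on the positive spectral gaps of $V$, giving $d(A_m.c(t), G_m.c(t)) \leq b_m e^{-a_m t}$ for constants $a_m, b_m > 0$. Combining these two bounds with Lemma~\ref{tl0} forces $\rho(G_m) = \Delta_{G_m}$ and yields $d(c(t), G_m.c(t)) \leq \Delta_{G_m} + b_m e^{-a_m t}$; setting $a = \min(a_1,a_2)$ and $b = \max(b_1,b_2)$ provides the common constants. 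Case 2 (non-distinct eigenvalues) proceeds by induction on $n$: the ray $c$ decomposes into diagonal blocks $c_i$ in $\mathsf{P}(r_i,\R)$, each block $A_m^{(ii)}$ lies in the parabolic subgroup associated to $[c_i]$, and the inductive hypothesis applied to each pair $(A_1^{(ii)}, A_2^{(ii)})$ furnishes block-wise exponential decay along the common $c_i$. Summing block contributions and again estimating the $N_m$ correction completes the induction step.

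The main obstacle I anticipate is ensuring that all constants can be taken uniformly over $m = 1, 2$ and over the finitely many blocks arising in the induction. Because the decay rates throughout the argument are governed by the spectral gaps of $V$ (and of the induced directions within each block), which are intrinsic to $c$ and independent of the particular $G_m$, replacing individual exponents by their minimum and individual multiplicative constants by their sum produces $a, b > 0$ that serve both isometries simultaneously.
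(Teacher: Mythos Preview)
Your proposal is correct and follows essentially the same approach as the paper: diagonalize $V$, decompose each $G_m = N_m A_m$ via \cite[10.64]{bridson-haefliger} relative to the common block structure determined by $V$, treat the distinct-eigenvalue case by the translation-plus-conjugation estimate, handle the general case by induction on $n$ blockwise, and finally take $a=\min(a_1,a_2)$, $b=\max(b_1,b_2)$. Your explicit invocation of Lemma~\ref{tl0} to identify $\rho(G_m)=\Delta_{G_m}$ is a welcome clarification that the paper leaves implicit.
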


 \begin{proof}
 We will prove this by an inductive argument on $n$.
 Let 
 \[
 V \in T_e(\Pn)=\Sn \mbox{ such that }c(t)=\exp(tV).
 \]  
 By conjugating by an element of $\So$, we may assume that $V$ is a diagonal matrix 
 \[
 V = \mathsf{diag}(\lambda_1, \dots, \lambda_n)
 \mbox{ 
  with $\lambda_1 \geq \dots \geq \lambda_n$}.
  \]
Let $r_1, \dots, r_k$ be the multiplicities of these eigenvalues.
By \cite[10.64]{bridson-haefliger}, 
\[
G_1=N_1A_1 \ \mbox{ and } \  G_2=N_2A_2
\]
where for $m=1,2$:
\begin{itemize}
\item $A_m,$ is a block diagonal matrix as in (\ref{eq:A}) which we write as
\begin{equation} \label{eq:A2}
A_m=
\begin{pmatrix}
A_{m,11} & 0 & \dots & 0\\
0 & A_{m,22} & \dots & 0\\
\dots & \dots & \dots & \dots
\\
0 & 0 & \dots & A_{m,kk}
\end{pmatrix}
\end{equation}
\item $N \in \Nsf$, the horospherical subgroup associated to $\xi \in [c_0]$, and $N$ is a block upper triangular matrix as in (\ref{eqN}).
\end{itemize}
Let $F(c)$  be the union of all geodesic lines parallel to $c$ as in (\ref{eq:F}).  
We consider two cases: 

\vspace*{0.1in}
 {\sc Case 1.} {\it The eigenvalues $\lambda_1, \dots, \lambda_n$ of $V$ are distinct}; i.e.~$r_i=1$ for all $i$.\\
 
The assumption implies that $c(t)=\exp (tV)$ is a regular geodesic (cf.~\cite[10.45]{bridson-haefliger}).  Moreover,  $F(c)$  is the unique maximal flat containing $c(t)$ and $A_m$ acts on  $F(c)$ by translation (cf.~\cite[10.64, 10.67, and 10.68]{bridson-haefliger}).  In particular,   $t \mapsto d(c(t),A_m.c(t))$ is a constant function in $t$.  Thus,
\begin{equation} \label{anotanot}
d(c(t), A_m.c(t))  = d(c(0), A_m.c(0)) = d(e,A_m.e)=\rho(A_m)=\rho(G_m).
\end{equation}
Furthermore, letting $\hat c(t):=A_m.c(t)$, we have
\begin{eqnarray*}
d(\hat c(t),N_m.\hat c(t)) & = & d(\hat c(t), \, N_m  \hat c(t) N_m^T)  \\
& = & d(\hat c(t)^{\frac{1}{2}} \hat c(t)^{\frac{1}{2}} , \  N_m  \hat c(t)^{\frac{1}{2}} \hat c(t)^{\frac{1}{2}}  N_m^T)  \\
& = & d(e, \, (\hat c(t)^{-\frac{1}{2}}N_m \hat c(t)^{\frac{1}{2}})  e(\hat c(t)^{-\frac{1}{2}}  N_m \hat c(t)^{\frac{1}{2}})^T)
\\
& = & d(e, \, (\hat c(t)^{-\frac{1}{2}}N_m \hat c(t)^{\frac{1}{2}}).  e).
\end{eqnarray*}
Since $\hat c(t)$ is a diagonal matrix with diagonal entries $b_1^2 e^{t\lambda_1},  b_2^2 e^{t\lambda_2},  \dots,  b_n^2 e^{t \lambda_n}$ and $N_m$ is an upper triangular matrix with  diagonal entries equal to 1,  there exists $C_1, C_2>0$ such that 
\[
\| e - \hat c(t)^{-\frac{1}{2}}  N_m  \hat c(t)^{\frac{1}{2}}\| \leq C_1e^{-C_2t}
\]
Note that the choice of $V$ having distinct diagonal entries is used in the estimate above.  Indeed,  the constant $C_2$ is chosen less than $\min \{\lambda_i-\lambda_{i+1}: i=1, \dots, n-1 \}>0$.
We thus conclude
\begin{equation} \label{eq:N2}
d(\hat c(t),N_m.\hat c(t)) \leq b_me^{-a_mt}
\end{equation}
for some $a_m, b_m>0$.
Combining this with (\ref{anotanot}), the triangle inequality implies that
\begin{equation} \label{c}
d(c(t), G_m. c(t)) \leq d(c(t), \hat c(t))+ d(\hat c(t), N_m.\hat c(t)) \leq 
\rho(G_m)+b_me^{-a_mt}. 
\end{equation}
Choose $a=\min\{a_1,a_2\}$ and $b=\max\{b_1,b_2\}$.\\
\\
\noindent {\sc Case 2:}  {\it The eigenvalues of $V$  are not distinct.}\\

The assumption implies that $c(t)=\exp (tV)$ is a singular geodesic (cf.~\cite[10.45]{bridson-haefliger}).  We write
\[
c(t)=
\begin{pmatrix}
c_1(t) & 0 & \dots & 0\\
0 & c_2(t) & \dots & 0\\
\dots & \dots & \dots & \dots
\\
0 & 0 & \dots & c_k(t)
\end{pmatrix}. 
\]
where $c_i(t)$ is a geodesic ray in  $\mathsf{P}(r_i,\R)$, the block diagonal entry that appear in (\ref{eq:F}).     If $c_i(t)$ is not constant, then the block diagonal $A_{m,ii}$ of (\ref{eq:A2}) is an element of  parabolic subgroup associated to $\xi_i=[c_i] \in \partial \mathsf{P}(r_i,\R)$ by \cite[10.64]{bridson-haefliger}.  By the induction hypothesis, there exists a geodesic ray $c_i:[0,\infty)  \rightarrow  \partial \mathsf{P}(r_i,\R)$ such that 
 \[
 d(c_i(t), A_{m,ii}.c_i(t)) \leq 
\rho(A_{m,ii})+b_ie^{-a_it}
\]
 If $c(t)$ is a constant, then $c(t)=e$ is a fixed point of $A_{m,ii}$.   
Thus, 
there exists $\alpha_m, \beta_m>0$ such that
 \[
  d(c(t), A_m.c(t)) \leq 
\rho(A_m)+\beta_m e^{-\alpha_m t}. 
\]
By an analogous proof as in (\ref{eq:N2}), there exist constants  $\bar \alpha_m, \bar \beta_m>0$ such that
 \[
 d(A_m.c(t), (N_mA_m).c(t)) \leq 
\bar \beta_m e^{-\bar \alpha_m t}. 
\]
Let $a_m=\min\{\alpha_m, \bar \alpha_m\}$,  $b_m=\beta+\bar \beta$.  By triangle inequality,
\[
d(c(t),G_m.c(t)) \leq d(c(t), A_m.c(t)) + d(A_m.c(t), G_m.c(t)) \leq
\rho(A_m)+b_m e^{-a_m t}. 
\]
Choose $a=\min\{a_1,a_2\}$ and $b=\max\{b_1,b_2\}$
 \end{proof}

\begin{theorem}
If $\tilde X$ is a  irreducible symmetric space of non-compact type, then every pair of commuting isometries has exponential decay. \end{theorem}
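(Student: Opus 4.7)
The plan is to mimic the proof of Theorem~\ref{thm:jzsymmetricspace}, upgrading to commuting pairs at each step. By Mostow's embedding, I can view $\tilde X$ as a totally geodesic submanifold of $\Pn$ so that each $I_m \in \mathsf{Isom}(\tilde X)$ is the restriction of some $G_m \in \Gl$ preserving $\tilde X$; commutativity of $I_1$ and $I_2$ lifts to commutativity of $G_1$ and $G_2$, and $I_m$ is semisimple if and only if $G_m$ is (by \cite[10.61]{bridson-haefliger}).

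By Remark~\ref{prop:fys}, the commuting pair $I_1, I_2$ is either both semisimple or both parabolic. The semisimple case satisfies Definition~\ref{jz2} automatically via its first clause, so only the parabolic case requires work. In the parabolic case, Remark~\ref{prop:fys} further asserts that $I_1$ and $I_2$ fix a common boundary point $\xi \in \partial \tilde X$, and hence $G_1, G_2$ both lie in the parabolic subgroup $\Gsf$ of $\Gl$ associated to $\xi$.

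With this setup, the second Theorem~\ref{jzmatrix} applies and furnishes a \emph{single} geodesic ray $c:[0,\infty) \rightarrow \Pn$ with $[c]=\xi$ together with constants $a,b>0$ such that
\[
d(c(t), G_m.c(t)) \leq \Delta_{G_m} + b e^{-at}, \quad m=1,2.
\]
Squaring and using $\Delta_{I_m}=\Delta_{G_m}$ (since $\tilde X \hookrightarrow \Pn$ is totally geodesic and hence distance-preserving) yields the form demanded by Definition~\ref{jz2}. The ray $c$ can be chosen inside $\tilde X$ by selecting its initial velocity $V \in T_{P_*}\tilde X$ at a base point $P_* \in \tilde X$; the block decomposition used to derive Theorem~\ref{jzmatrix} is compatible with $\tilde X$ because Mostow's embedding is equivariant with respect to the Cartan involution, so $c([0,\infty)) \subset \tilde X$.

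The main difficulty, already absorbed into Theorem~\ref{jzmatrix} for pairs, is uniformity: \emph{the same} geodesic ray and constants $a,b$ must witness the exponential decay of both $I_1$ and $I_2$ simultaneously. Applying Theorem~\ref{thm:jzsymmetricspace} to $I_1$ and $I_2$ separately would in general produce two different geodesic rays and thus fail Definition~\ref{jz2}. The uniform statement is precisely what is guaranteed by the simultaneous inductive argument of Theorem~\ref{jzmatrix}, which tracks the horospherical and diagonal factors of $G_1$ and $G_2$ in a common Iwasawa-type decomposition adapted to $\xi$.
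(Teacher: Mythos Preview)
Your approach is essentially identical to the paper's, which simply says ``The proof is similar to the proof of Theorem~\ref{expdecaysymspace}'': embed $\tilde X$ totally geodesically in $\Pn$ and invoke the commuting version of Theorem~\ref{jzmatrix}. Your added discussion of why a \emph{single} ray suffices is exactly the point of that second Theorem~\ref{jzmatrix}.

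One minor remark: the claim that commutativity of $I_1,I_2$ lifts to commutativity of $G_1,G_2$ is not needed (and would require more care to justify). The second Theorem~\ref{jzmatrix} only requires $G_1,G_2\in\Gsf$ for a common $\xi$, not that they commute; you already obtain the common $\xi\in\partial\tilde X\subset\partial\Pn$ directly from Remark~\ref{prop:fys} applied to $I_1,I_2$ in $\tilde X$.
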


\begin{proof}
The proof is similar to the proof of Theorem~\ref{expdecaysymspace}.
\end{proof}

\subsection{CAT($-1$) spaces}

\begin{theorem}
If $\tilde X$ is a CAT($-1$), then every pair of commuting isometries  has exponential decay.
\end{theorem}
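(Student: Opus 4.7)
The plan is to split into the two cases afforded by Remark~\ref{prop:fys}: commuting isometries of a CAT($-1$) space are either both semisimple or both parabolic, and in the latter case they share a common fixed point at infinity. In the semisimple case, \cite[II.7.20]{bridson-haefliger} applied to the abelian group $\langle I_1, I_2 \rangle$ yields a common point $P_* \in \mathsf{Min}(I_1) \cap \mathsf{Min}(I_2)$, and letting $c(t) \equiv P_*$ trivially satisfies Definition~\ref{jz2}, since $d(I_m(c(t)), c(t)) = \Delta_{I_m}$ for all $t$.

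In the parabolic case, let $\xi \in \partial \tilde X$ be the common fixed point at infinity guaranteed by Remark~\ref{prop:fys}. I would first extract a single arclength geodesic ray $c_0: [0,\infty) \to \tilde X$ with $c_0(\infty) = \xi$ via the minimizing-sequence/Arzela--Ascoli argument already deployed at the start of the proof of Theorem~\ref{thm:jzcat-1} (applied, say, to $I_1$). The crucial observation is that since $I_2$ also fixes $\xi$, the ray $I_m \circ c_0$ is an arclength geodesic ray with endpoint $\xi$ for \emph{both} $m=1,2$, so the same base ray $c_0$ is available for the quadrilateral comparison with respect to each $I_m$. For each $R>0$ and each $m$, form the comparison quadrilateral in $\Hyp^2$ with vertices corresponding to $c_0(0), c_0(R), I_m(c_0(R)), I_m(c_0(0))$ and apply the Reshetnyak quadrilateral comparison exactly as in Theorem~\ref{thm:jzcat-1}. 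Passing to the limit $R \to \infty$ via Arzela--Ascoli and invoking the hyperbolic-plane case (i.e.\ the commuting-pair statement for symmetric spaces applied to $\Hyp^2$) yields constants $a_m, b_m > 0$ with
\[
d(c_0(t), I_m(c_0(t))) \leq \Delta_{I_m} + b_m e^{-a_m t}, \qquad m = 1,2.
\]
Setting $a := \min\{a_1, a_2\}$ and $b := \max\{b_1, b_2\}$ then fulfills Definition~\ref{jz2}.

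The main obstacle will be the requirement that a \emph{single} geodesic ray simultaneously witness exponential decay for both $I_1$ and $I_2$; Theorem~\ref{thm:jzcat-1} on its own produces a ray per isometry and there is no a priori reason these coincide. The resolution is purely geometric: commutativity forces $I_1$ and $I_2$ to share a fixed point $\xi$ at infinity, and any geodesic ray asymptotic to $\xi$ serves as a valid base ray for the comparison argument with respect to both isometries at once. Once this geometric fact is in hand, the analytic part reduces to two parallel applications of the comparison argument of Theorem~\ref{thm:jzcat-1}, and the uniform decay constants are extracted by taking minima and maxima.
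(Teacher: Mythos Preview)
Your approach is essentially the paper's: its proof is the single line ``Analogously to the proof of Theorem~\ref{thm:jzcat-1}, the assertion follows from comparing $\tilde X$ to $\mathbb{H}^2$,'' and you have correctly unpacked what that analogy entails, including the key point that commutativity forces a common fixed point at infinity so that one ray serves both quadrilateral comparisons. Two minor simplifications: the semisimple case is immediate from the first bullet of Definition~\ref{jz2} (no common $P_*$ is needed), and in the parabolic case Remark~\ref{prop:fys} already hands you a single ray along which both displacements tend to their translation lengths, so your Arzela--Ascoli extraction and the appeal to the \emph{commuting-pair} symmetric-space statement in $\Hyp^2$ (rather than the single-isometry statement used in Theorem~\ref{thm:jzcat-1}) are redundant.
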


\begin{proof}
Analogously to the proof of Theorem~\ref{thm:jzcat-1}, the assertion follows from comparing $\tilde X$ to $\mathbb{H}^2$.
\end{proof}

\chapter{Bochner Formulae} \label{chap:bochner}

The goal of this chapter is to prove variations of the Siu's and Sampson's Bochner formulas.  These formulas are contained in  Theorem~\ref{mochizukibochnerformula} and Theorem~\ref{mochizukibochnerformula'} and are inspired by Mochizuki's formula in \cite[Proposition 21.42]{mochizuki-memoirs}.

\section{Between K\"ahler manifolds} \label{KtoK}
\label{sec:KtoK}
In \cite{siu1}, Siu introduced the following notion of negative curvature.

\begin{definition}
For local holomorphic coordinates $(w^i)$   of a  K\"ahler manifold $X$, the curvature tensor component is  
 \[
R_{i\bar j l \bar k} =\frac{\partial^2 h_{l \bar k}}{\partial w^i \partial \bar w^j}+h^{p\bar q} \frac{\partial h_{l\bar q}}{\partial w^i} \frac{\partial h_{p \bar k}}{\partial \bar w^j}.
\]  
For $V=(V^i)$ and $W=(W^i)$, let 
 \[
R(V,W)=-\frac{1}{2} R_{i\bar j l \bar k}(V^i \overline{W^j}-W^i \overline{V^j})\overline{(V^l \overline{W^k}-W^l \overline{V^k})}
\] 
We say $X$ has \emph{strongly negative (resp.~strongly non-positive)  curvature in the sense of Siu} \cite{siu1} if
 $R(V,W)<0$ (resp.~$R(V,W) \leq 0$) 
 for any nonzero $n \times n$ complex matrix $(V^i \overline{W^j}-W^i \overline{V^j})$.
 \end{definition}

 \begin{remark}
 It is well known that a K\"ahler  symmetric space of non-compact type has strongly non-positive curvature in the sense of Siu (cf.~\cite{siu1}).
 \end{remark}

\begin{lemma}\label{branch}
Let $X$ be a K\"ahler manifold with K\"ahler form $\omega$ and strongly non-positive curvature in the sense of Siu.  Let $M$ be another K\"ahler manifold and  $u : M \rightarrow X $ be a smooth map.
If the function $Q:M \rightarrow \R$ is defined by setting
\begin{eqnarray*}
Q \frac{ \omega^n}{n!} =-R_{i \bar j l \bar k}  \partial u^l \wedge \bar \partial \bar{u}^k \wedge \bar \partial u^i \wedge \partial \bar{u}^j \wedge \frac{ \omega^{n-2}}{(n-2)!},
\end{eqnarray*}
then $Q \geq 0$.
\end{lemma}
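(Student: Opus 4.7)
The proof will be a pointwise algebraic computation. Fix a point $p_0 \in M$ and choose holomorphic coordinates $(z^a)_{a=1}^n$ on $M$ centered at $p_0$ with $g_{a\bar b}(p_0) = \delta_{ab}$, which is possible because $M$ is K\"ahler. Setting $A^i_a := \partial u^i/\partial z^a$ and $B^i_a := \partial u^i/\partial \bar z^a$, we have
\[
\partial u^l = A^l_a\,dz^a, \quad \bar \partial \bar u^k = \overline{A^k_a}\, d\bar z^a, \quad \bar \partial u^i = B^i_a\, d\bar z^a, \quad \partial \bar u^j = \overline{B^j_a}\, dz^a.
\]

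Expanding the $(2,2)$-form on $M$ and reordering the factors,
\[
\partial u^l \wedge \bar \partial \bar u^k \wedge \bar \partial u^i \wedge \partial \bar u^j = A^l_a\,\overline{A^k_b}\,B^i_c\,\overline{B^j_d}\, dz^a \wedge dz^d \wedge d\bar z^b \wedge d\bar z^c.
\]
At $p_0$, the form $\omega^{n-2}/(n-2)!$ is a positive multiple of $i^{n-2}\sum_{|S|=n-2}\prod_{\mu \in S}(dz^\mu \wedge d\bar z^\mu)$, so its wedge with the $(2,2)$-form above is nonzero only on terms where $\{a,d\}=\{b,c\}$ is a two-element set (so in particular $a \neq d$ and $b \neq c$). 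There are exactly two surviving configurations, $(b,c)=(a,d)$ and $(b,c)=(d,a)$, and their contributions have opposite signs. After collecting these signs one obtains the pointwise identity
\[
Q = \sum_{a \ne d}(-R_{i \bar j l \bar k})\bigl[A^l_a\,\overline{A^k_a}\,B^i_d\,\overline{B^j_d} - A^l_a\,\overline{A^k_d}\,B^i_a\,\overline{B^j_d}\bigr].
\]

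It remains to show the right-hand side is non-negative. Using the K\"ahler symmetries $R_{i\bar j l\bar k} = R_{l\bar j i\bar k} = R_{i\bar k l\bar j}$ (together with the first Bianchi identity) to interchange pairs of dummy indices, the sum can be regrouped over unordered pairs so that each pair's contribution acquires the Siu skew-symmetric form
\[
-R_{i\bar j l\bar k}\,(V^i\overline{W^j} - W^i\overline{V^j})\,\overline{(V^l\overline{W^k} - W^l\overline{V^k})}
\]
for suitable vectors $V, W \in T^{1,0}X$ built from the partial derivatives $A_a, A_d, B_a, B_d$ of $u$. By definition this equals $-2R(V,W)$, and by the hypothesis that $X$ has strongly non-positive curvature in the sense of Siu, $R(V,W) \leq 0$; hence every summand is non-negative, and so is $Q$. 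The main obstacle in the argument is exactly this last step: the symbolic bookkeeping needed to identify the correct complex combinations $V, W$ at each pair $(a,d)$ and to reconcile all sign factors produced by the reordering of $dz^a \wedge dz^d \wedge d\bar z^b \wedge d\bar z^c$ with those arising from the K\"ahler symmetries of $R_{i\bar j l \bar k}$.
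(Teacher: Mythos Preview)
The paper's own proof is a one-line citation to \cite[Proposition 3, (4.2), (4.3)]{siu1}; your outline is precisely the computation carried out there, so the approach is the same. However, what you have written is a sketch, not a proof. You correctly reduce $Q$ to the coordinate expression
\[
Q=\sum_{a\neq d}(-R_{i\bar j l\bar k})\bigl[A^l_a\overline{A^k_a}B^i_d\overline{B^j_d}-A^l_a\overline{A^k_d}B^i_a\overline{B^j_d}\bigr],
\]
and then assert that ``the sum can be regrouped over unordered pairs so that each pair's contribution acquires the Siu skew-symmetric form \dots\ for suitable vectors $V,W$,'' while in the same breath admitting that ``the main obstacle in the argument is exactly this last step.'' That regrouping \emph{is} the lemma: identifying the vectors $V,W$ for each pair $(a,d)$ and verifying that the four cross-terms obtained from $(a,d)$ and $(d,a)$ assemble into the expression $(V^i\overline{W^j}-W^i\overline{V^j})\overline{(V^l\overline{W^k}-W^l\overline{V^k})}$ is exactly the content of Siu's (4.2)--(4.3). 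Without carrying it out you have only restated what must be shown.

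There is also a sign inconsistency that illustrates why the deferred bookkeeping matters. By the definition given in the paper,
\[
R(V,W)=-\tfrac12 R_{i\bar j l\bar k}(V^i\overline{W^j}-W^i\overline{V^j})\overline{(V^l\overline{W^k}-W^l\overline{V^k})},
\]
so the displayed ``Siu skew-symmetric form'' equals $+2R(V,W)$, not $-2R(V,W)$. With $R(V,W)\le 0$, your chain of implications as written would yield $Q\le 0$. The discrepancy must be absorbed somewhere in the regrouping step you omitted (via the K\"ahler symmetries $R_{i\bar j l\bar k}=R_{l\bar j i\bar k}=R_{i\bar k l\bar j}$ and the antisymmetry in the domain indices), but until you actually perform that step you cannot locate it.
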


\begin{proof}
Follows from  \cite[Proposition 3, (4.2), and (4.3)]{siu1} .
\end{proof}

Let $(M,g)$ and $(X,h)$ be K\"ahler manifolds. 
For a smooth map $u:M \rightarrow X$, 
let
\begin{equation} \label{E}
E:=u^{-1}(TX \otimes \C).
\end{equation}
  Let
\[
E=E' \oplus E'' \ \mbox{ where } \ E':=u^{-1} (T^{(1,0)}X) , \ \ E'':= u^{-1} (T^{(0,1)}X).
\]
Denote by $\Omega^{p,q}(E)$, $\Omega^{p,q}(E')$ and $\Omega^{p,q}(E'')$  the space of   $E$-, $E'$- and $E''$-valued $(p,q)$-forms respectively.  
If $(w^i)$ are local holomorphic coordinates in $X$, then
$\{\frac{\partial}{\partial u^i}  :=  \frac{\partial}{\partial w^i} \circ u, \
\frac{\partial}{\partial \bar u^i} := \frac{\partial}{\partial \bar w^i} \circ u \}$ 
is a local  frame  of $E$.
Following \cite{siu1}, denote the  natural projections and  inclusions by
\begin{eqnarray*}
\Pi_{(1,0)}: TX \otimes \C \rightarrow  T^{(1,0)}X, & & 
\Pi_{(0,1)}: TX \otimes \C \rightarrow  T^{(0,1)}X,\\
\mbox{I}_{(1,0)}: T^{(1,0)}M \hookrightarrow TM \otimes \C, & & 
\mbox{I}_{(0,1)}: T^{(0,1)}M \hookrightarrow TM \otimes \C.
\end{eqnarray*}
The differential $du:TM \rightarrow TX$ gives rise to a map
\[
du \otimes \C:  TM \otimes \C \rightarrow TX \otimes \C.
\]
Define
\begin{eqnarray*}
\partial u = \Pi_{(1,0)} \circ du \otimes \C  \circ \mbox{I}_{(1,0)} \in \Omega^{(1,0)}(E')& & 
\bar \partial u = \Pi_{(1,0)} \circ du \otimes \C  \circ \mbox{I}_{(0,1)} \in \Omega^{(0,1)}(E')\\ 
\partial \bar u = \Pi_{(0,1)} \circ du \otimes \C  \circ \mbox{I}_{(1,0)} \in \Omega^{(1,0)}(E'')& & 
\bar \partial \bar u = \Pi_{(0,1)} \circ du \otimes \C  \circ \mbox{I}_{(0,1)} \in \Omega^{(0,1)}(E''). 
\end{eqnarray*}
In local holomorphic coordinates of $M$ and $X$,
\begin{eqnarray*}
\partial u=\partial u^i \frac{\partial}{\partial u^i} 
& & 
\bar \partial u=\bar \partial u^i\frac{\partial}{\partial u^i} 
\\
\partial \bar u= \partial\bar u^i \frac{\partial}{\partial \bar u^i} 
& & 
\bar \partial \bar u=\bar \partial \bar u^i \frac{\partial}{\partial \bar u^i}\\
\overline{\partial u} = \bar \partial \bar u \  & &  \overline{\bar \partial u}=\partial \bar u.
\end{eqnarray*}

 Let $\partial_E$, $\bar \partial_E$ denote the exterior derivatives on $E$ induced by the Riemannian  connection  $\nabla$ on $X$.  
Since $\nabla$ preserves  $T^{(1,0)}X$ and   $T^{(0,1)}X$, they induce exterior derivatives on $E'$ and $E''$ denoted by $\partial_{E'}$, $\bar\partial_{E'}$, $\partial_{E''}$, $\bar\partial_{E''}$.

\begin{definition} \label{curvop}
The curvature operators of $E'$ and $E''$ are $R_{E'}=(\partial_{E'} + \bar \partial_{E'} )^2$ and $R_{E''}=(\partial_{E''} + \bar \partial_{E''} )^2$ respectively. 
\end{definition}

\begin{definition} \label{bracket}
Let $\{e^i\}$ be a  local frame of $E$.  For 
\begin{eqnarray*}
\psi  =  \psi_i e^i \in \Omega^{p,q}(E) \ \mbox{and }  \
 \xi  =  \xi_ie^i \in \Omega^{p',q'}(E),
 \end{eqnarray*} 
we set
\[ 
\{\psi, \xi \}= \langle e^i, e^j \rangle
 \psi_i \wedge \bar \xi_j
 \in \Omega^{p+q',q+p'}
\] 
where $ \langle \cdot, \cdot \rangle$ is  the Hermitian metric on $E$ induced by identification $TX  \simeq T^{(1,0)}X$.
\end{definition}

\begin{lemma} \label{form4} 
With $\{ \cdot, \cdot \}$ given as in Definition~\ref{bracket}, 
\[
 \{ \partial u, \partial u \} = - \{ \bar \partial \bar u, \bar \partial \bar u \} , \ \  \{ \partial \bar u, \partial \bar u \} = - \{ \bar \partial u, \bar \partial u \}.
\]
for any smooth map $u : M \rightarrow X $ between  K\"ahler manifolds, 
\end{lemma}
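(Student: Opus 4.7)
The plan is a direct local computation using the definition of the bracket together with the anticommutativity of wedges of $1$-forms. First I would fix local holomorphic coordinates on $M$ and on $X$, so that $E'$ has frame $\{\partial/\partial u^i\}$ and $E''$ has frame $\{\partial/\partial \bar u^i\}$, and record the induced Hermitian pairings
\[
\langle \tfrac{\partial}{\partial u^i}, \tfrac{\partial}{\partial u^j}\rangle = h_{i\bar j}, \qquad \langle \tfrac{\partial}{\partial \bar u^i}, \tfrac{\partial}{\partial \bar u^j}\rangle = h_{j\bar i},
\]
the second because, under the identification $TX \simeq T^{(1,0)}X$, the metric on $T^{(0,1)}X$ is the complex conjugate of the Hermitian metric on $T^{(1,0)}X$. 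I would also note the pointwise identity $\overline{\partial u^j} = \bar\partial \bar u^j$, which is immediate from the coordinate expressions.

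Next I expand both sides of the first identity using Definition~\ref{bracket}. With $\psi = \xi = \partial u = \partial u^i \, \partial/\partial u^i$ I obtain
\[
\{\partial u, \partial u\} = h_{i\bar j}\, \partial u^i \wedge \overline{\partial u^j} = h_{i\bar j}\, \partial u^i \wedge \bar\partial \bar u^j,
\]
while with $\psi = \xi = \bar\partial \bar u = \bar\partial \bar u^i \, \partial/\partial \bar u^i$ I obtain
\[
\{\bar\partial\bar u, \bar\partial\bar u\} = h_{j\bar i}\, \bar\partial \bar u^i \wedge \overline{\bar\partial \bar u^j} = h_{j\bar i}\, \bar\partial \bar u^i \wedge \partial u^j.
\]
Since $\partial u^j$ and $\bar\partial \bar u^i$ are both $1$-forms on $M$, they anticommute under the wedge; swapping their order and then relabeling the dummy indices $i \leftrightarrow j$ converts the right-hand side into $-h_{i\bar j}\, \partial u^i \wedge \bar\partial \bar u^j = -\{\partial u, \partial u\}$. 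This establishes the first identity.

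The second identity $\{\partial \bar u, \partial \bar u\} = -\{\bar\partial u, \bar\partial u\}$ follows by the identical computation after interchanging the roles of $(u^i)$ and $(\bar u^i)$ throughout; equivalently, it is the complex conjugate of the first identity, since $\overline{\{\alpha,\alpha\}}$ corresponds, under conjugation of coefficients, to the bracket of the conjugate form with itself. There is essentially no obstacle here; the only subtlety worth flagging is keeping careful track of which index of $h_{i\bar j}$ is barred when passing between the frames of $E'$ and $E''$, and remembering that the coefficients $\partial u^i$ and $\bar\partial \bar u^i$ are $1$-forms on $M$ of opposite type, so that their wedge products always anticommute.
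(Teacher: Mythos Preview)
Your proof is correct and follows essentially the same route as the paper: expand both brackets in local coordinates, use $\overline{\partial u^j}=\bar\partial\bar u^j$, anticommute the $1$-forms, and relabel indices. The only cosmetic difference is that the paper chooses normal coordinates on $X$ at $u(p)$ so that $h_{i\bar j}=\delta_{ij}$ and the metric factor disappears, whereas you track $h_{i\bar j}$ explicitly; both are fine since the identity is pointwise.
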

\begin{proof}
In local  holomorphic coordinates $(z^\alpha)$ on $M$ and  normal coordinates $(w^i)$ on $X$ at $u(p)$,
\begin{eqnarray*}
\{ \partial u, \partial u\} 
=
\sum_i
\frac{\partial{u^i}}{ {\partial{ z^\beta}}}  \overline{\frac{\partial{u^i}}{\partial{ z^\alpha}}} dz^\beta \wedge d\bar z^\alpha
=
-\sum_i
  \frac{\partial{\bar u^i}}{\partial{ \bar z^\alpha}} 
 \overline{ \frac{\partial{ \bar u^i}}{ {\partial{ \bar z^\beta}}}
 }
  d\bar z^\alpha \wedge dz^\beta
= -\{\bar \partial \bar u, \bar \partial \bar u\}
\end{eqnarray*}
and
\begin{eqnarray*}
\{ \partial \bar u, \partial \bar u\} 
=
 \sum_i
\frac{\partial \bar u^i}{ {\partial{ z^\beta}}}  \overline{\frac{\partial \bar u^i}{\partial{ z^\alpha}}} dz^\beta \wedge d\bar z^\alpha 
= 
  - \sum_i
 \frac{\partial  u^i}{\partial{ \bar z^\alpha}} 
 \overline{
 \frac{\partial  u^i}{ {\partial{\bar  z^\beta}}} 
 }
 d\bar z^\alpha \wedge dz^\beta =-\{\bar \partial u, \bar \partial u\}.
\end{eqnarray*}
\end{proof} 
\begin{lemma} \label{easycommute}
For any smooth map $u : M \rightarrow X $ between  K\"ahler manifolds, we have
\[
\partial_{E'} \bar \partial u = - \bar \partial_{E'} \partial u, \ \ \partial_{E''} \bar \partial \bar u = - \bar \partial_{E''} \partial \bar u
\]
and
\[
\partial_{E'} \partial u = 0,  \  \bar \partial_{E'} \bar \partial u = 0, \  \partial_{E''} \partial \bar u = 0,  \  \bar \partial_{E''} \bar \partial \bar u = 0.
\]
\end{lemma}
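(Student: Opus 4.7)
The plan is to derive all six identities from a single source: the vanishing of the exterior covariant derivative applied to $du$. First I would note that, viewing $du$ as an $E$-valued $1$-form on $M$, the torsion-freeness of the Levi-Civita connection $\nabla$ on $X$ is equivalent to $d^{\nabla}(du) = 0$, where $d^{\nabla} = \partial_E + \bar\partial_E$ is the exterior covariant derivative on $E$-valued forms. Unwinding the definitions,
\[
(d^{\nabla} du)(V,W) \;=\; \nabla_V u_\ast W - \nabla_W u_\ast V - u_\ast [V,W]
\]
is precisely the torsion of $\nabla$ evaluated at $u_\ast V$ and $u_\ast W$, and hence vanishes.

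Next, the K\"ahler assumption on $X$ enters in the following way. Since $\nabla J = 0$, the connection preserves $T^{(1,0)}X$ and $T^{(0,1)}X$ separately, and therefore preserves the pullback splitting $E = E' \oplus E''$. Consequently $d^{\nabla}$ itself respects this splitting. Writing
\[
du \;=\; (\partial u + \bar\partial u) \;+\; (\partial \bar u + \bar\partial \bar u),
\]
where the first summand lies in $\Omega^{1}(E')$ and the second in $\Omega^{1}(E'')$, the single equation $d^{\nabla}(du) = 0$ splits into the two independent equations
\[
d^{\nabla}_{E'}\bigl(\partial u + \bar\partial u\bigr) \;=\; 0, \qquad d^{\nabla}_{E''}\bigl(\partial \bar u + \bar\partial \bar u\bigr) \;=\; 0.
\]

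Finally I would decompose each of these by bidegree on $M$. Writing $d^{\nabla}_{E'} = \partial_{E'} + \bar\partial_{E'}$ and recalling that $\partial u \in \Omega^{1,0}(E')$, $\bar\partial u \in \Omega^{0,1}(E')$, the $(2,0)$, $(1,1)$, $(0,2)$ parts of the first equation give respectively
\[
\partial_{E'}(\partial u) = 0, \qquad \partial_{E'}(\bar\partial u) + \bar\partial_{E'}(\partial u) = 0, \qquad \bar\partial_{E'}(\bar\partial u) = 0,
\]
which are three of the six identities claimed. The analogous bidegree decomposition of $d^{\nabla}_{E''}(\partial \bar u + \bar\partial \bar u) = 0$ gives the remaining three.

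There is no real obstacle here: the content of the lemma is entirely captured by the slogan \emph{torsion-free + K\"ahler} $\Longrightarrow$ \emph{bigraded Bianchi-type identity for $du$}. The only care needed is bookkeeping of the sign convention in $d^{\nabla}(\omega \otimes s) = d\omega \otimes s - \omega \wedge \nabla s$ and the decomposition of $\nabla s$ into its $(1,0)$ and $(0,1)$ parts, neither of which presents any difficulty.
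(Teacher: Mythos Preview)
Your argument is correct. The paper takes a more bare-hands route: it fixes a point, chooses holomorphic coordinates $(z^\alpha)$ on $M$ and \emph{normal} coordinates $(w^i)$ on $X$ at the image point (so that the Christoffel symbols of $\nabla$ vanish there), and then simply writes out, e.g.,
\[
\partial_{E'}\bar\partial u \;=\; \frac{\partial^2 u^i}{\partial z^\alpha \partial \bar z^\beta}\,dz^\alpha\wedge d\bar z^\beta\otimes\frac{\partial}{\partial u^i}
\;=\; -\,\frac{\partial^2 u^i}{\partial \bar z^\beta \partial z^\alpha}\,d\bar z^\beta\wedge dz^\alpha\otimes\frac{\partial}{\partial u^i}
\;=\; -\,\bar\partial_{E'}\partial u,
\]
declaring the other five identities ``similar.'' Your approach packages the same content coordinate-free: torsion-freeness of $\nabla$ is exactly $d^{\nabla}(du)=0$, the K\"ahler condition on $X$ lets you project onto $E'$ and $E''$, and the bidegree decomposition on $M$ then yields all six identities at once. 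What the paper's computation buys is immediacy with no appeal to the general formula for $d^{\nabla}$; what your argument buys is that all six identities drop out simultaneously from a single equation rather than needing to be checked one at a time, and it makes transparent exactly where each hypothesis (torsion-free for $d^{\nabla}(du)=0$, K\"ahler target for the $E'\oplus E''$ split) is used. One small remark: your stated sign convention $d^{\nabla}(\omega\otimes s)=d\omega\otimes s-\omega\wedge\nabla s$ is the specialization to odd-degree $\omega$, which is the only case you need here.
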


\begin{proof} We will prove the first equality,  all the other being  similar.  In local  holomorphic coordinates $(z^\alpha)$ on $M$ and  normal coordinates $(w^i)$ on $X$ at $u(p)$,

\begin{eqnarray*}
\partial_{E'} \bar \partial u = 
\frac{\partial^2 u^i}{\partial z^{\alpha} \partial \bar z^{\beta}}  dz^\alpha \wedge d\bar z^{\beta} \otimes \frac{\partial}{\partial{ u^i}}  =  
-\frac{\partial^2 u^i}{\partial \bar z^{\beta} \partial z^{\alpha}}  d\bar z^\beta \wedge d z^\alpha \otimes \frac{\partial}{\partial{ u^i}}
= -\bar \partial_{E'}  \partial u.
\end{eqnarray*}
\end{proof}

We now state the well-known Siu-Bochner formula \cite[Proposition 2]{siu1}.  For the sake of completeness, 
 we provide a proof in  the appendix to this chapter (cf.~Section~\ref{boch-appendix}).
\begin{theorem}[Siu-Bochner formula]
\label{siubochner}
For a harmonic map $u: M \rightarrow X$ between K\"{a}hler manifolds, 
\begin{eqnarray*}
 \partial  \bar \partial\{\bar \partial u,\bar \partial u \}\wedge \frac{ \omega^{n-2}}{(n-2)!} \nonumber
& = &    \left(4\left|\partial_{E'}\bar \partial u \right|^2 +Q\right) \frac{ \omega^n}{n!} \ = \    \partial  \bar \partial\{\bar \partial \bar u,\bar \partial \bar u \}\wedge \frac{ \omega^{n-2}}{(n-2)!}. 
\end{eqnarray*}
\end{theorem}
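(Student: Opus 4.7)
The plan is to evaluate $\partial\bar\partial\{\bar\partial u,\bar\partial u\}\wedge\omega^{n-2}/(n-2)!$ pointwise at an arbitrary $p\in M$, working in local holomorphic coordinates $(z^\alpha)$ on $M$ together with normal holomorphic coordinates $(w^i)$ on $X$ centered at $u(p)$. In these coordinates $h_{i\bar j}(u(p))=\delta_{ij}$ and all first derivatives of $h_{i\bar j}$ vanish at $u(p)$, so the connection symbols on $E'$ vanish at $p$ while the curvature contribution survives.

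The first step is to compute $\bar\partial\{\bar\partial u,\bar\partial u\}$ via a Leibniz identity for the bracket. Because the pulled-back Hermitian pairing on $E'$ is compatible with the induced connection, the identity produces a term $\{\bar\partial_{E'}\bar\partial u,\bar\partial u\}$, which vanishes by Lemma~\ref{easycommute}, plus a term proportional to $\{\bar\partial u,\partial_{E'}\bar\partial u\}$. Applying $\partial$ once more and expanding similarly yields two types of terms: a quadratic term proportional to $\{\partial_{E'}\bar\partial u,\partial_{E'}\bar\partial u\}$, and a term involving $\bar\partial_{E'}\partial_{E'}\bar\partial u$. For the latter I would use the curvature operator $R_{E'}$ of Definition~\ref{curvop} together with $\bar\partial_{E'}\bar\partial u=0$ to rewrite this term as $\{\bar\partial u,R_{E'}\bar\partial u\}$; in local components this is a contraction of $R_{i\bar j l\bar k}$ against $\bar\partial u^l$, $\partial\bar u^k$, $\bar\partial u^i$, $\partial\bar u^j$.

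Wedging with $\omega^{n-2}/(n-2)!$ converts these $(2,2)$-forms into scalar multiples of $\omega^n/n!$ via the standard K\"ahler combinatorial identities. The quadratic term produces $4|\partial_{E'}\bar\partial u|^2\,\omega^n/n!$, with the factor $4$ being the well-known combinatorial artifact of this wedge identity applied to the bracket. The cross-trace terms that this reduction generates involve $g^{\alpha\bar\beta}(\partial_{E'}\bar\partial u)^i_{\alpha\bar\beta}$, which is precisely the tension field of $u$ and hence vanishes by harmonicity between K\"ahler manifolds. The curvature term matches the index arrangement defining $Q$ in Lemma~\ref{branch}, producing $Q\,\omega^n/n!$. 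Combining these contributions gives the first equality.

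For the second equality, the same argument applies verbatim to $\{\bar\partial\bar u,\bar\partial\bar u\}\in\Omega^{(1,1)}$, using Lemma~\ref{easycommute} and Definition~\ref{curvop} with $E''$ replacing $E'$; complex conjugation of the Chern data gives $|\partial_{E''}\bar\partial\bar u|^2=|\partial_{E'}\bar\partial u|^2$, and $Q$ is manifestly real. Alternatively, one can invoke Lemma~\ref{form4} to rewrite $\{\bar\partial\bar u,\bar\partial\bar u\}=-\{\partial u,\partial u\}$ and run the same Bochner computation on $\partial u\in\Omega^{(1,0)}(E')$. The main obstacle I anticipate is bookkeeping: tracking the $(-1)^{p+q}$ signs appearing in the Leibniz rule for the bracket through two applications of $\bar\partial$ and $\partial$, verifying the precise index arrangement of the curvature contraction against the definition of $Q$, and confirming the combinatorial factor $4$ arising from the wedge with $\omega^{n-2}/(n-2)!$.
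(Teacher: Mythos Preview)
Your proposal is correct and follows essentially the same route as the paper's proof (contained in Lemmas~\ref{brac}, \ref{aux}, \ref{tree}, \ref{palm}): expand $\partial\bar\partial\{\bar\partial u,\bar\partial u\}$ via the Leibniz rule~(\ref{bracket}), use $\bar\partial_{E'}\bar\partial u=0$ to reduce to $-\{\partial_{E'}\bar\partial u,\partial_{E'}\bar\partial u\}+\{\bar\partial u,R_{E'}\bar\partial u\}$, then wedge with $\omega^{n-2}/(n-2)!$ and invoke harmonicity to kill the trace. One small point you glossed over: passing from $\{\bar\partial u,\bar\partial_{E'}\partial_{E'}\bar\partial u\}$ to $\{\bar\partial u,R_{E'}\bar\partial u\}$ requires not only $\bar\partial_{E'}\bar\partial u=0$ but also the observation that $\{\bar\partial u,\partial_{E'}^2\bar\partial u\}\wedge\omega^{n-2}/(n-2)!$ is an $(n-1,n+1)$-form and hence vanishes for dimensional reasons---the paper makes this explicit in the proof of Lemma~\ref{brac}.
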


The following is the variation of the Siu's Bochner Formula, inspired by the work of  Mochizuki (cf. \cite[Proposition 21.42]{mochizuki-memoirs}) for harmonic metrics.
\begin{theorem} \label{mochizukibochnerformula}
For a harmonic map $u: M \rightarrow X$ between K\"{a}hler manifolds, 
\begin{eqnarray*}
  d \{\bar \partial_{E'}  \partial u,  \bar \partial u -  \partial u\} \wedge \frac{ \omega^{n-2}}{(n-2)!} 
 &= &  \left(8\left|{\partial}_{E'} \bar \partial u\right|^2+2Q   \right)  \wedge \frac{\omega^n}{n!}.
  \end{eqnarray*}
\end{theorem}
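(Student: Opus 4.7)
Proof proposal:

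The plan is to reduce the identity to two applications of Theorem~\ref{siubochner} via the intermediate equality
\[
d\{\bar\partial_{E'}\partial u,\bar\partial u - \partial u\} \wedge \frac{\omega^{n-2}}{(n-2)!} = \partial\bar\partial\big[\{\bar\partial u,\bar\partial u\} + \{\bar\partial\bar u,\bar\partial\bar u\}\big] \wedge \frac{\omega^{n-2}}{(n-2)!}.
\]
Once this is proved, Theorem~\ref{siubochner} applied to each summand on the right yields $2(4|\partial_{E'}\bar\partial u|^2 + Q)\omega^n/n!$, which is exactly $(8|\partial_{E'}\bar\partial u|^2 + 2Q)\omega^n/n!$.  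By Lemma~\ref{form4}, $\{\bar\partial\bar u,\bar\partial\bar u\} = -\{\partial u,\partial u\}$, so equivalently it suffices to verify
\[
d\{\bar\partial_{E'}\partial u,\bar\partial u - \partial u\} \wedge \tfrac{\omega^{n-2}}{(n-2)!} = \partial\bar\partial\big[\{\bar\partial u,\bar\partial u\} - \{\partial u,\partial u\}\big] \wedge \tfrac{\omega^{n-2}}{(n-2)!}.
\]

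The first step is a bidegree reduction. Since $\{\bar\partial_{E'}\partial u,\bar\partial u\}$ is of type $(2,1)$ and $\{\bar\partial_{E'}\partial u,\partial u\}$ is of type $(1,2)$, only one of $\partial$ or $\bar\partial$ survives on each term after wedging with the $(n-2,n-2)$-form $\omega^{n-2}/(n-2)!$, so the left-hand side becomes
\[
\big[\bar\partial\{\bar\partial_{E'}\partial u,\bar\partial u\} - \partial\{\bar\partial_{E'}\partial u,\partial u\}\big] \wedge \tfrac{\omega^{n-2}}{(n-2)!}.
\]
Next I would use the Leibniz rule for the bracket coming from compatibility of the pullback Chern connection on $E'$ with the Hermitian metric: for $\psi$ of type $(p,q)$,
\[
\partial\{\psi,\xi\} = \{\partial_{E'}\psi,\xi\} + (-1)^{p+q}\{\psi,\bar\partial_{E'}\xi\}, \quad \bar\partial\{\psi,\xi\} = \{\bar\partial_{E'}\psi,\xi\} + (-1)^{p+q}\{\psi,\partial_{E'}\xi\}.
\]
These are verified in a normal frame at a point (where the connection matrix vanishes) and extended by tensoriality; they are the engine of the whole computation.

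Applying the Leibniz rule repeatedly and invoking Lemma~\ref{easycommute} (which gives $\partial_{E'}\partial u = 0$, $\bar\partial_{E'}\bar\partial u = 0$, and $\partial_{E'}\bar\partial u = -\bar\partial_{E'}\partial u$) together with the curvature operator $R_{E'} = (\partial_{E'}+\bar\partial_{E'})^2$ of Definition~\ref{curvop}, each side of the desired intermediate identity collapses to the same core term $-2\{\bar\partial_{E'}\partial u,\bar\partial_{E'}\partial u\}$ plus residual bracket terms involving $R_{E'}$ paired with $\partial u$ or $\bar\partial u$.

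The final step is to match these residual curvature terms on the two sides. This uses the K\"ahler symmetries of the curvature $R_{i\bar j l\bar k} = R_{l\bar j i\bar k}$ — the same symmetries that make Siu's definition of $Q$ (Lemma~\ref{branch}) well-posed — and each such residual bracket evaluates to $\pm Q\,\omega^n/n!$ after wedging with $\omega^{n-2}/(n-2)!$, so that the totals on the two sides agree.  Two applications of Siu's Bochner formula then close the argument.  The main technical obstacle is precisely this sign-and-K\"ahler-symmetry bookkeeping, compounded by the fact that, because $u$ is not assumed holomorphic, the $(0,2)$-component of the pullback curvature $R_{E'}$ is generally nonzero; this subtlety is handled most cleanly in K\"ahler normal coordinates at the chosen point, where the Christoffel symbols vanish and the remaining expressions reduce to explicit polynomials in the curvature tensor of $X$ and the derivatives of $u$.
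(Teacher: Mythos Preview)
Your overall strategy is correct and coincides with the paper's: use the bidegree count to drop the $(n+1,n-1)$ and $(n-1,n+1)$ pieces, reduce to the intermediate identity
\[
d\{\bar\partial_{E'}\partial u,\bar\partial u-\partial u\}\wedge\tfrac{\omega^{n-2}}{(n-2)!}
=\partial\bar\partial\big[\{\bar\partial u,\bar\partial u\}+\{\bar\partial\bar u,\bar\partial\bar u\}\big]\wedge\tfrac{\omega^{n-2}}{(n-2)!},
\]
and then apply Theorem~\ref{siubochner} to each summand.

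Where you diverge from the paper is in the verification of this intermediate identity. You propose to expand both sides via the Leibniz rule all the way down to Hessian-squared terms plus residual $R_{E'}$-brackets, and then match the curvature contributions using the K\"ahler symmetries. That would work, but it is unnecessary. The paper observes that the Leibniz rule together with Lemma~\ref{easycommute} ($\partial_{E'}\partial u=0$, $\bar\partial_{E'}\bar\partial u=0$) already gives
\[
\bar\partial\{\partial u,\partial u\}=\{\bar\partial_{E'}\partial u,\partial u\},
\qquad
\partial\{\bar\partial u,\bar\partial u\}=\{\partial_{E'}\bar\partial u,\bar\partial u\},
\]
so after your bidegree reduction one has immediately
\[
-\partial\{\bar\partial_{E'}\partial u,\partial u\}=-\partial\bar\partial\{\partial u,\partial u\}=\partial\bar\partial\{\bar\partial\bar u,\bar\partial\bar u\},
\qquad
\bar\partial\{\bar\partial_{E'}\partial u,\bar\partial u\}=\partial\bar\partial\{\bar\partial u,\bar\partial u\},
\]
and the intermediate identity follows without ever writing a curvature term. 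The K\"ahler-symmetry bookkeeping you anticipate as the ``main technical obstacle'' is thus completely avoided.
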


\begin{proof}
They key observation is that, since  $\partial \{ \partial_{E'}  \bar\partial u, \bar \partial u\} \wedge\frac{ \omega^{n-2}}{(n-2)!}
$ is an $(n+1,n-1)$-form and $\bar \partial \{\partial_{E'} \bar \partial u, \partial u\}\wedge \frac{ \omega^{n-2}}{(n-2)!} $ is an $(n-1,n+1)$-form,  these two forms  are both   identically equal to zero.  Thus, by applying Lemma~\ref{form4}  and Lemma~\ref{easycommute}, 
\begin{eqnarray} 
\partial \{  \partial_{E'}  \bar \partial u, \partial u -\bar  \partial u\} \wedge\frac{ \omega^{n-2}}{(n-2)!}
\nonumber 
& = &  \partial \{\partial_{E'} \bar \partial u,  \partial u\} \wedge\frac{ \omega^{n-2}}{(n-2)!}
\nonumber \\
& = &-  \partial \{\bar \partial_{E'} \partial u,  \partial u \} \wedge\frac{ \omega^{n-2}}{(n-2)!} \nonumber \\
  & = &
-  \partial \bar \partial \{ \partial u, \partial u\} \wedge \frac{ \omega^{n-2}}{(n-2)!} \nonumber 
  \nonumber \\
  & = & \partial \bar \partial\{\bar \partial \bar u, \bar \partial \bar u\} \wedge \frac{ \omega^{n-2}}{(n-2)!}
  \label{sb1},
 \\
 \bar\partial \{\partial_{E'}\bar \partial u, \partial u - \bar \partial u\} \wedge \frac{ \omega^{n-2}}{(n-2)!} & = & - \bar\partial \{\partial_{E'}\bar \partial u, \bar \partial u\} \wedge \frac{ \omega^{n-2}}{(n-2)!} 
  \nonumber \\  
  & = &  -  \bar\partial\partial\{\bar \partial u, \bar \partial u\} \wedge \frac{ \omega^{n-2}}{(n-2)!}  \nonumber  \\ 
& = & \partial \bar\partial\{\bar \partial u, \bar \partial u\} \wedge \frac{ \omega^{n-2}}{(n-2)!}.
 \label{sb2}
 \end{eqnarray}
Thus, we obtain
\begin{eqnarray*} \label{mochizukitrick}
 \lefteqn{d \{\bar \partial_{E'}  \partial u,  \bar \partial u -  \partial u\} \wedge \frac{ \omega^{n-2}}{(n-2)!} }\\
  & = &  d \{\partial_{E'} \bar\partial u, \partial u - \bar \partial u\} \wedge \frac{ \omega^{n-2}}{(n-2)!}
 \ \ \mbox{(by Lemma~\ref{easycommute})}
 \\
 & = &  (\partial+\bar \partial) \{\partial_{E'} \bar\partial u, \partial u - \bar \partial u\} \wedge \frac{ \omega^{n-2}}{(n-2)!}
 \\
& = &  \left( \partial \bar\partial\{\bar \partial u, \bar \partial u\}+ \partial \bar \partial\{\bar \partial \bar u, \bar \partial \bar u\}  \right)\wedge \frac{ \omega^{n-2}}{(n-2)!}  \ \ \mbox{(by (\ref{sb1}) and (\ref{sb2}))}.
 \end{eqnarray*} 
 Thus, the asserted identity follows from
Theorem~\ref{siubochner}. 
\end{proof}

\section{From a K\"ahler manifold to a Riemannian manifold}
\label{sec:KtoR}

In \cite{sampson}, Sampson introduced the following notion of negative curvature.
\begin{definition}
Let $X$ be a Riemannian manifold and  $R_{ijkl}$ be its Riemann curvature tensor.
Then $X$ is said to have {\it Hermitian-negative curvature} if  
\begin{equation} \label{Hnc}
R_{ijkl} A^{i\bar l} A^{j\bar k} \leq 0
\end{equation} 
for any Hermitian semi-positive matrix $A = (A^{i\bar l})$. 
\end{definition}
 
 \begin{remark} \label{symHer}
 It is well known that symmetric spaces of non-compact type have Hermitian-negative curvature.  For an exposition, see \cite[Chapter 5]{loustau}.
 \end{remark}
Let $(M,g)$ be K\"ahler manifold, $(X,h)$ be a Riemannian manifold and $u:M \rightarrow X$ be a smooth map.
Again consider the pullback vector bundle 
\begin{equation} \label{ERiemannian}
E:=u^{-1}(TX \otimes \C)
\end{equation}
and denote by $\Omega^{p,q}(E)$  the space of   $E$-valued $(p,q)$-forms respectively.  
If $(x^i)$ are local  coordinates in $X$, then
$\{\frac{\partial}{\partial u^i}  :=  \frac{\partial}{\partial x^i} \circ u \}$ 
is a local  frame  of $E$.
\begin{definition}
Let  
\begin{eqnarray*}
\mbox{I}_{(1,0)}: T^{(1,0)}M \hookrightarrow TM \otimes \C, & & 
\mbox{I}_{(0,1)}: T^{(0,1)}M \hookrightarrow TM \otimes \C.
\end{eqnarray*}
be the inclusions and define
\begin{eqnarray*}
\partial u = du  \circ \mbox{I}_{(1,0)} \in \Omega^{(1,0)}(E),& & 
\bar \partial u = du  \circ \mbox{I}_{(0,1)} \in \Omega^{(0,1)}(E).
\end{eqnarray*}
In local holomorphic coordinates of $M$,
\begin{eqnarray*}
\partial u=\partial u^i   \frac{\partial}{\partial u^i} 
& & 
\bar \partial u=\bar \partial u^i\frac{\partial}{\partial u^i}.
\end{eqnarray*}
\end{definition}
 Let $\partial_E$, $\bar \partial_E$ denote the exterior derivatives on $E$ induced by the Riemannian  connection  $\nabla$ on $X$.

\begin{definition} \label{curvop}
The curvature operator of $E$  is $R_E=(\partial_E + \bar \partial_E )^2$. 
\end{definition}

\begin{definition} \label{bracket}
Let $\{e^i\}$ be a  local frame of $E$.  For 
\begin{eqnarray*}
\psi  =  \psi_i e^i \in \Omega^{p,q}(E) \ \mbox{and }  \
 \xi  =  \xi_ie^i \in \Omega^{p',q'}(E),
 \end{eqnarray*} 
we set
\[ 
\{\psi, \xi \}= \langle e^i, e^j \rangle
 \psi_i \wedge \bar \xi_j
 \in \Omega^{p+q',q+p'}
\] 
where  $ \langle \cdot, \cdot \rangle$ is  the pullback to $E$ of the Hermitian metric defined by extending the Riemannian metric $h$ of  $X$.
\end{definition}

\begin{lemma} \label{form4'} 
With $\{ \cdot, \cdot \}$ given as in Definition~\ref{bracket}, 
\[
 \{ \partial u, \partial u \} = - \{ \bar \partial  u, \bar \partial u \}.
\]
for any smooth map $u : M \rightarrow X $.
\end{lemma}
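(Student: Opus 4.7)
The plan is to compute both sides of the claimed identity directly in local coordinates, mirroring the proof of Lemma~\ref{form4} but taking advantage of the fact that, since the target $X$ is merely Riemannian, the components of $u$ in local coordinates on $X$ are \emph{real-valued} functions on $M$. This simplifies the bookkeeping relative to the K\"ahler target case (where the two identities $\{\partial u,\partial u\} = -\{\bar\partial\bar u,\bar\partial\bar u\}$ and $\{\partial\bar u,\partial\bar u\} = -\{\bar\partial u,\bar\partial u\}$ had to be handled separately), collapsing everything into a single identity.

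First I would fix local holomorphic coordinates $(z^\alpha)$ on $M$ and arbitrary smooth local coordinates $(x^i)$ on $X$, so that $u$ is locally given by real-valued functions $u^i = x^i\circ u$ on $M$. Then $\partial u = \tfrac{\partial u^i}{\partial z^\alpha}\,dz^\alpha \otimes \tfrac{\partial}{\partial u^i}$ and $\bar\partial u = \tfrac{\partial u^i}{\partial \bar z^\alpha}\,d\bar z^\alpha \otimes \tfrac{\partial}{\partial u^i}$, and because each $u^i$ is real, complex conjugation of forms interchanges these: $\overline{\partial u^i} = \bar\partial u^i$.

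Next, I would unwind the bracket using Definition~\ref{bracket}. Writing $h_{ij} = \bigl\langle \tfrac{\partial}{\partial u^i},\tfrac{\partial}{\partial u^j}\bigr\rangle$ for the pulled-back Riemannian metric (which is real and symmetric in $i,j$), the definitions yield
\begin{equation*}
\{\partial u,\partial u\} \;=\; h_{ij}\,\partial u^i \wedge \overline{\partial u^j} \;=\; h_{ij}\,\partial u^i \wedge \bar\partial u^j,
\end{equation*}
and similarly
\begin{equation*}
\{\bar\partial u,\bar\partial u\} \;=\; h_{ij}\,\bar\partial u^i \wedge \overline{\bar\partial u^j} \;=\; h_{ij}\,\bar\partial u^i \wedge \partial u^j.
\end{equation*}
Finally, I would apply anticommutativity of the wedge product on $1$-forms together with the symmetry $h_{ij}=h_{ji}$ and a relabeling $i\leftrightarrow j$, which yields
\begin{equation*}
\{\bar\partial u,\bar\partial u\} \;=\; -h_{ij}\,\partial u^j \wedge \bar\partial u^i \;=\; -h_{ji}\,\partial u^i \wedge \bar\partial u^j \;=\; -\{\partial u,\partial u\},
\end{equation*}
as desired.

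There is no real obstacle here: everything is an index-juggling calculation and the only subtlety is remembering that the reality of the $u^i$ forces $\overline{\partial u^i}=\bar\partial u^i$, which is precisely what replaces the complex-conjugate identifications $\overline{\partial u^i}=\bar\partial\bar u^i$ used in the K\"ahler target case of Lemma~\ref{form4}. The point of stating Lemma~\ref{form4'} is to serve as the Riemannian-target substitute for Lemma~\ref{form4} in the derivation of the Bochner formula of Theorem~\ref{mochizukibochnerformula'}, where the key cancellations come from precisely this antisymmetry.
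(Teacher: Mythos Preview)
Your proof is correct and follows essentially the same approach as the paper's: a direct local-coordinate computation using the reality of the $u^i$ (so that $\overline{\partial u^i}=\bar\partial u^i$) together with anticommutativity of the wedge product. The only cosmetic difference is that the paper works in normal coordinates on $X$ (so $h_{ij}=\delta_{ij}$ at the point and the symmetry step is implicit), whereas you carry $h_{ij}$ explicitly and invoke $h_{ij}=h_{ji}$.
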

\begin{proof}
In local  holomorphic coordinates $(z^\alpha)$ on $M$ and  normal coordinates $(x^i)$ on $X$ at $u(p)$,
\begin{eqnarray*}
\{ \partial u, \partial u\} 
=
\sum_i
\frac{\partial{u^i}}{ {\partial{ z^\beta}}}  \overline{\frac{\partial{u^i}}{\partial{ z^\alpha}}} dz^\beta \wedge d\bar z^\alpha
=
-\sum_i
  \frac{\partial{u^i}}{\partial{ \bar z^\alpha}} 
 \overline{ \frac{\partial{u^i}}{ {\partial{ \bar z^\beta}}}
 }
  d\bar z^\alpha \wedge dz^\beta
= -\{\bar \partial u, \bar \partial u\}
\end{eqnarray*}
\end{proof}

\begin{lemma} \label{easycommute'}
For any smooth map $u : M \rightarrow X $, we have
\[
\partial_{E} \bar \partial u = - \bar \partial_{E} \partial u, \ \ \partial_{E} \partial u = 0,  \  \bar \partial_{E} \bar \partial u = 0.
\]
\end{lemma}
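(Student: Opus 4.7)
The plan is to adapt the proof of Lemma~\ref{easycommute} essentially verbatim, noting that nothing in that earlier argument actually required the target to be K\"ahler: only the vanishing of connection coefficients at a chosen basepoint was used, and normal coordinates provide this for any Riemannian target. Since all three identities are tensorial, it suffices to verify them pointwise. So I would fix an arbitrary $p \in M$, choose local holomorphic coordinates $(z^\alpha)$ on $M$ near $p$, and choose Riemannian normal coordinates $(x^i)$ on $X$ centered at $u(p)$. In these normal coordinates, the Christoffel symbols of the Levi-Civita connection $\nabla$ on $X$ vanish at $u(p)$, so at the point $p$ the covariant exterior derivatives $\partial_E$ and $\bar\partial_E$ reduce, on the moving frame $\{\partial/\partial u^i\}$, to ordinary partial differentiation on the component functions.

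Under this simplification I would compute directly at $p$:
\[
\partial_E \bar\partial u \big|_p = \frac{\partial^2 u^i}{\partial z^\alpha \partial \bar z^\beta}\, dz^\alpha \wedge d\bar z^\beta \otimes \frac{\partial}{\partial u^i}, \qquad \bar\partial_E \partial u \big|_p = \frac{\partial^2 u^i}{\partial \bar z^\beta \partial z^\alpha}\, d\bar z^\beta \wedge dz^\alpha \otimes \frac{\partial}{\partial u^i}.
\]
Using the symmetry of mixed second partials of the smooth components $u^i$ together with the identity $d\bar z^\beta \wedge dz^\alpha = - dz^\alpha \wedge d\bar z^\beta$, one obtains $\partial_E \bar\partial u = - \bar\partial_E \partial u$ at $p$. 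For the other two equalities, the analogous expressions at $p$ are
\[
\partial_E \partial u\big|_p = \frac{\partial^2 u^i}{\partial z^\alpha \partial z^\beta}\, dz^\alpha \wedge dz^\beta \otimes \frac{\partial}{\partial u^i}, \qquad \bar\partial_E \bar\partial u\big|_p = \frac{\partial^2 u^i}{\partial \bar z^\alpha \partial \bar z^\beta}\, d\bar z^\alpha \wedge d\bar z^\beta \otimes \frac{\partial}{\partial u^i},
\]
and both vanish because a symmetric $(0,2)$- or $(2,0)$-tensor in the indices contracts to zero against the antisymmetric wedge. Since $p$ was arbitrary, the identities hold globally.

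I do not anticipate any genuine obstacle: the proof is a bookkeeping exercise once one realizes that the splitting $TX \otimes \C = T^{(1,0)}X \oplus T^{(0,1)}X$ used in Lemma~\ref{easycommute} played no role beyond providing a frame in which the connection was torsion-free and whose connection coefficients could be made to vanish at a point. Riemannian normal coordinates supply an analogous frame on any Riemannian $X$, which is why the present lemma is formally identical to its K\"ahler-target predecessor. The main subtlety worth flagging in the write-up is simply that one should verify the identities pointwise at an arbitrary basepoint via the normal-coordinate trick, rather than attempting a global coordinate-free derivation, since the latter would otherwise require invoking the Bianchi-type curvature identities unnecessarily.
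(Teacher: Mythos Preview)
Your proof is correct and follows essentially the same approach as the paper: fix a point, take holomorphic coordinates on $M$ and Riemannian normal coordinates on $X$ at the image point so that the Christoffel symbols vanish there, and then read off the identities from the symmetry of mixed partials against the antisymmetry of the wedge. The paper only writes out the first identity explicitly and declares the others similar, whereas you spell out all three, but the method is identical.
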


\begin{proof} We will prove the first equality,  all the other being  similar.  In local  holomorphic coordinates $(z^\alpha)$ on $M$ and  normal coordinates $(x^i)$ on $X$ at $u(p)$,
\begin{eqnarray*}
\partial_{E'} \bar \partial u = 
\frac{\partial^2 u^i}{\partial z^{\alpha} \partial \bar z^{\beta}}  dz^\alpha \wedge d\bar z^{\beta} \otimes \frac{\partial}{\partial{ u^i}}  =  
-\frac{\partial^2 u^i}{\partial \bar z^{\beta} \partial z^{\alpha}}  d\bar z^\beta \wedge d z^\alpha \otimes \frac{\partial}{\partial{ u^i}}
= -\bar \partial_{E'}  \partial u.
\end{eqnarray*}
\end{proof}

 The following version of Sampson's Bochner formula is contained in \cite[Lemma 6.10]{liu-yang}.
 
 \begin{theorem}[Sampson's Bochner formula]
\label{sampsonbochner}
For a harmonic map $u: (M,g) \rightarrow (X,h)$ from a  K\"{a}hler manifold to a Riemannian manifold, 
\begin{eqnarray}
 \partial  \bar \partial\{\bar \partial u,\bar \partial u \}\wedge \frac{ \omega^{n-2}}{(n-2)!} 
& = &    \left(4\left|\partial_E\bar \partial u \right|^2 +4Q_0\right) \frac{ \omega^n}{n!}  \label{SampsonFormula}
\end{eqnarray}
where 
\[
Q_0=-2g^{\alpha \bar \delta} g^{\gamma \bar \beta} R_{ijkl}
\frac{\partial u^i}{\partial z^\alpha} \frac{\partial u^k}{\partial \bar z^\beta} \frac{\partial u^j}{\partial z^\gamma} \frac{\partial u^l}{\partial \bar z^\delta}\]
in local coordinates $(z^\alpha)$ of $M$ and $(x^i)$ of $X$. 
\end{theorem}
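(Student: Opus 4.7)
The plan is to mirror the derivation of the Siu-Bochner formula (Theorem~\ref{siubochner}) given in the appendix, but with the target now only Riemannian. The key algebraic inputs remain the symmetry identities $\{\partial u,\partial u\}=-\{\bar\partial u,\bar\partial u\}$ of Lemma~\ref{form4'} and the commutation/vanishing identities $\partial_E\bar\partial u=-\bar\partial_E\partial u$, $\partial_E\partial u=0$, $\bar\partial_E\bar\partial u=0$ of Lemma~\ref{easycommute'}. Since the target is not K\"ahler, one cannot split $E$ as $E'\oplus E''$, so the entire argument has to be carried out on the single bundle $E=u^{-1}(TX\otimes\C)$, and the curvature operator $R_E=(\partial_E+\bar\partial_E)^2$ will be identified directly with the pullback $u^*R^X$ of the Riemann tensor of $(X,h)$.

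First, I would compute $\partial\bar\partial\{\bar\partial u,\bar\partial u\}$ by two successive applications of the graded Leibniz rule with respect to the Chern-type connection on $E$. Using $\bar\partial_E\bar\partial u=0$, the first application gives
\[
\partial\{\bar\partial u,\bar\partial u\}=\{\partial_E\bar\partial u,\bar\partial u\}-\{\bar\partial u,\partial_E\bar\partial u\}^{\ast}
\]
(with the conjugation built into $\{\,,\,\}$ accounting for the sign). Applying $\bar\partial$ again and using $\bar\partial_E\bar\partial u=0$ a second time, the pointwise terms split into (i) a norm-squared piece $\{\partial_E\bar\partial u,\partial_E\bar\partial u\}$, which after wedging with $\omega^{n-2}/(n-2)!$ gives $4|\partial_E\bar\partial u|^2\,\omega^n/n!$ by a standard K\"ahler linear-algebra identity, and (ii) a curvature piece $\{\bar\partial_E\partial_E\bar\partial u,\bar\partial u\}$ coming from the failure of $\bar\partial_E$ and $\partial_E$ to commute.

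Next, I would rewrite the curvature piece using $\bar\partial_E\partial_E=R_E-\partial_E\bar\partial_E$ on $E$-valued forms, together with $\partial_E\bar\partial u=-\bar\partial_E\partial u$, in order to bring it into the form $\{R_E(\bar\partial u),\bar\partial u\}$ plus an extra term $\{\partial_E\bar\partial_E\partial u,\bar\partial u\}$. The latter is traced against $\omega^{n-2}/(n-2)!$ into a contraction that involves $g^{\alpha\bar\beta}(\partial_E\bar\partial u)_{\alpha\bar\beta}$; this trace vanishes precisely by the harmonic map equation. The surviving curvature term, expressed in local coordinates $(z^\alpha)$ on $M$ and $(x^i)$ on $X$, gives the contraction
\[
-2g^{\alpha\bar\delta}g^{\gamma\bar\beta}R_{ijkl}\,\partial_\alpha u^i\,\partial_{\bar\beta}u^k\,\partial_\gamma u^j\,\partial_{\bar\delta}u^l,
\]
which is exactly $Q_0$. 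Wedging everything with $\omega^{n-2}/(n-2)!$ and normalizing by $\omega^n/n!$ yields the claimed identity.

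The main obstacle is bookkeeping: the target is only Riemannian, so one cannot decompose $E=E'\oplus E''$ and follow Siu's proof line by line. Consequently one has to prove the curvature exchange directly on $E$, and one must track signs carefully in the graded Leibniz rule for the Hermitian bracket $\{\,,\,\}$ in order to correctly isolate the $(2,2)$-type curvature contraction and identify it with $Q_0$ rather than with some other contraction of $R_{ijkl}$ that would vanish only under stronger hypotheses. Once this is done, the harmonicity-driven cancellation of the divergence term works exactly as in the K\"ahler-to-K\"ahler case, giving the formula \eqref{SampsonFormula}.
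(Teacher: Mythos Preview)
The paper does not supply its own proof of this theorem; it simply cites \cite[Lemma 6.10]{liu-yang}. So there is nothing line-by-line to compare against, and your plan to transplant the appendix proof of Theorem~\ref{siubochner} to the Riemannian-target setting (replacing $E'$ by $E$ and using Lemmas~\ref{form4'} and~\ref{easycommute'} in place of Lemmas~\ref{form4} and~\ref{easycommute}) is exactly the right strategy and does go through.

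One bookkeeping point is misattributed, though. You place the use of harmonicity in step (ii), claiming that an extra term $\{\partial_E\bar\partial_E\partial u,\bar\partial u\}$ arising from $\bar\partial_E\partial_E=R_E-\partial_E\bar\partial_E$ ``vanishes precisely by the harmonic map equation.'' It does not: since $\bar\partial_E\bar\partial u=0$ (Lemma~\ref{easycommute'}), the term $\partial_E\bar\partial_E\bar\partial u$ is already zero, and the remaining $(2,0)$ and $(0,2)$ parts of $R_E$ acting on $\bar\partial u$ produce forms of type $(n+1,n-1)$ or $(n-1,n+1)$ after wedging with $\omega^{n-2}$, hence vanish for dimensional reasons --- exactly as in the proof of Lemma~\ref{brac}. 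Harmonicity enters instead in step (i): the K\"ahler linear-algebra identity you invoke is $-\{\phi,\phi\}\wedge\omega^{n-2}/(n-2)!=4(|\phi|^2-|\mathrm{Tr}_\omega\phi|^2)\,\omega^n/n!$, and it is the trace term $|\mathrm{Tr}_\omega\partial_E\bar\partial u|^2$ that is killed by the harmonic map equation (cf.\ Lemma~\ref{aux}). With this correction your argument is complete.
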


By applying a similar proof as Theorem~\ref{mochizukibochnerformula}, we obtain a generalization the following.
 \begin{theorem} \label{mochizukibochnerformula'}
For a harmonic map $u: M \rightarrow X$ from a K\"{a}hler manifold to a Riemannian manifold, 
\begin{eqnarray*}
  d \{\bar \partial_E  \partial u,  \bar \partial u -  \partial u\} \wedge \frac{ \omega^{n-2}}{(n-2)!} 
 &= &  \left(8\left|{\partial}_E \bar \partial u\right|^2+8Q_0   \right)  \wedge \frac{\omega^n}{n!}.
  \end{eqnarray*}
\end{theorem}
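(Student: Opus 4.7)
The plan is to mirror the proof of Theorem~\ref{mochizukibochnerformula}, substituting Sampson's Bochner formula (Theorem~\ref{sampsonbochner}) for Siu's and accounting for the fact that in the Riemannian setting the target has no complex structure, so the bundle $E$ does not split as $E' \oplus E''$. In particular, Lemma~\ref{form4'} now reads $\{\partial u,\partial u\} = -\{\bar\partial u,\bar\partial u\}$ with both sides taking values in the same bundle $E$, rather than relating $\{\partial u,\partial u\}$ to a separate $\{\bar\partial\bar u,\bar\partial\bar u\}$ on $E''$.

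First I would use Lemma~\ref{easycommute'} to rewrite $\bar\partial_E\partial u = -\partial_E\bar\partial u$, so that the left-hand side becomes $d\{\partial_E\bar\partial u, \partial u-\bar\partial u\}\wedge \omega^{n-2}/(n-2)!$. Splitting $d=\partial+\bar\partial$, I would observe that the ``wrong-type'' pieces $\partial\{\partial_E\bar\partial u, \bar\partial u\}\wedge \omega^{n-2}/(n-2)!$ and $\bar\partial\{\partial_E\bar\partial u, \partial u\}\wedge \omega^{n-2}/(n-2)!$ vanish for bidegree reasons: they are of types $(n+1,n-1)$ and $(n-1,n+1)$ respectively on a K\"ahler manifold of complex dimension $n$. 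Hence only the ``matched'' pieces $\partial\{\partial_E\bar\partial u, \partial u\}\wedge\omega^{n-2}/(n-2)!$ and $-\bar\partial\{\partial_E\bar\partial u, \bar\partial u\}\wedge\omega^{n-2}/(n-2)!$ survive.

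For each of the two surviving pieces, the Leibniz rule for the pairing $\{\cdot,\cdot\}$ (which is conjugate-linear in the second slot, and therefore interchanges $\partial$ and $\bar\partial$ when applied to that slot) together with $\partial_E\partial u = 0$ and $\bar\partial_E\bar\partial u = 0$ from Lemma~\ref{easycommute'} gives the identities $\bar\partial\{\partial u,\partial u\} = \{\bar\partial_E\partial u, \partial u\}$ and $\partial\{\bar\partial u,\bar\partial u\} = \{\partial_E\bar\partial u, \bar\partial u\}$. Applying $\partial$ to the first and $\bar\partial$ to the second, then invoking Lemma~\ref{easycommute'} once more and Lemma~\ref{form4'} in the form $\{\partial u,\partial u\} = -\{\bar\partial u,\bar\partial u\}$, both surviving pieces collapse to the single quantity $\partial\bar\partial\{\bar\partial u,\bar\partial u\}\wedge\omega^{n-2}/(n-2)!$. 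Their sum is therefore $2\partial\bar\partial\{\bar\partial u,\bar\partial u\}\wedge\omega^{n-2}/(n-2)!$.

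Finally I would apply Sampson's Bochner formula (Theorem~\ref{sampsonbochner}) to this last quantity, yielding $2\bigl(4|\partial_E\bar\partial u|^2 + 4Q_0\bigr)\omega^n/n! = \bigl(8|\partial_E\bar\partial u|^2 + 8Q_0\bigr)\omega^n/n!$, which is exactly the right-hand side of the theorem. The main obstacle in carrying this out is purely sign bookkeeping: because $\{\cdot,\cdot\}$ contains a conjugation on its second argument, the outside $\partial$ interacts with $\bar\partial_E$ on the second slot (and vice versa), so the Leibniz rule acquires mixed types and every sign coming from form degrees must be tracked carefully. Once the sign conventions are fixed, the proof is a direct transcription of that of Theorem~\ref{mochizukibochnerformula}, with the doubling of the $Q_0$ coefficient (relative to Sampson's formula) arising naturally from the single bundle $E$ absorbing both copies of the bracket identity in Lemma~\ref{form4'}.
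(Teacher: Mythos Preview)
Your proposal is correct and follows essentially the same route as the paper's own proof: both use the bidegree vanishing of the cross-type terms, then Lemma~\ref{easycommute'} and Lemma~\ref{form4'} to reduce each surviving piece to $\partial\bar\partial\{\bar\partial u,\bar\partial u\}\wedge\omega^{n-2}/(n-2)!$, and finally apply Sampson's formula (Theorem~\ref{sampsonbochner}) to pick up the factor of $2\cdot(4|\partial_E\bar\partial u|^2+4Q_0)$. Your remark about $\{\partial u,\partial u\}=-\{\bar\partial u,\bar\partial u\}$ living in a single bundle $E$ (rather than relating $E'$ and $E''$ as in the K\"ahler-target case) is exactly the point of Lemma~\ref{form4'}, and is the only substantive difference from the proof of Theorem~\ref{mochizukibochnerformula}.
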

\begin{proof} 
We apply a similar argument as in the proof of Theorem~\ref{mochizukibochnerformula}.  Again, we use the vanishing $\partial \{ \partial_{E'}  \bar\partial u, \bar \partial u\} \wedge\frac{ \omega^{n-2}}{(n-2)!}=0=\bar \partial \{\partial_{E'} \bar \partial u, \partial u\}\wedge \frac{ \omega^{n-2}}{(n-2)!}$ to show \begin{eqnarray} 
\partial \{  \partial_{E'}  \bar \partial u, \partial u -\bar  \partial u\} \wedge\frac{ \omega^{n-2}}{(n-2)!}
& = &  \partial \{\partial_{E'} \bar \partial u,  \partial u\} \wedge\frac{ \omega^{n-2}}{(n-2)!}
\nonumber \\
& = &-  \partial \{\bar \partial_{E'} \partial u,  \partial u \} \wedge\frac{ \omega^{n-2}}{(n-2)!}\ \ \ \mbox{(by Lemma~\ref{easycommute'})}.\nonumber \\
  & = &
-  \partial \bar \partial \{ \partial u, \partial u\} \wedge \frac{ \omega^{n-2}}{(n-2)!}
  \nonumber
  \nonumber \\
  & = & \partial \bar \partial\{\bar \partial u, \bar \partial u\} \wedge \frac{ \omega^{n-2}}{(n-2)!}
 \ \ \ \mbox{(by Lemma~\ref{form4'})}, \label{sb1'}
 \\
 \bar\partial \{\partial_{E'}\bar \partial u, \partial u - \bar \partial u\} \wedge \frac{ \omega^{n-2}}{(n-2)!} & = & - \bar\partial \{\partial_{E'}\bar \partial u, \bar \partial u\} \wedge \frac{ \omega^{n-2}}{(n-2)!} 
  \nonumber \\  & = &  -  \bar\partial\partial\{\bar \partial u, \bar \partial u\} \wedge \frac{ \omega^{n-2}}{(n-2)!} \nonumber \\
& = & \partial \bar\partial\{\bar \partial u, \bar \partial u\} \wedge \frac{ \omega^{n-2}}{(n-2)!}.
 \label{sb2'}
 \end{eqnarray}
Thus,
\begin{eqnarray*} \label{mochizukitrick}
d \{\bar \partial_{E'}  \partial u,  \bar \partial u -  \partial u\} \wedge \frac{ \omega^{n-2}}{(n-2)!}   & = &  d \{\partial_{E'} \bar\partial u, \partial u - \bar \partial u\} \wedge \frac{ \omega^{n-2}}{(n-2)!}
 \ \ \mbox{(by Lemma~\ref{easycommute'})}
 \\
 & = &  (\partial+\bar \partial) \{\partial_{E'} \bar\partial u, \partial u - \bar \partial u\} \wedge \frac{ \omega^{n-2}}{(n-2)!}
 \\
& = & 2\partial \bar\partial\{\bar \partial u, \bar \partial u\} \wedge \frac{ \omega^{n-2}}{(n-2)!}  \ \ \mbox{(by (\ref{sb1'}) and (\ref{sb2'}))}.
 \end{eqnarray*} 
 Thus, the asserted identity follows from
Theorem~\ref{sampsonbochner}. 
\end{proof}

\section{Appendix to Chapter~\ref{chap:bochner}}

\label{boch-appendix}

In this section, we provide a proof of Siu's Bochner formula for the sake of completeness. 
First, by a simple computation (cf.~\cite{liu-yang}), for $\psi   \in \Omega^{p,q}(E)$ and $\xi   \in \Omega^{p',q'}(E)$, 
\begin{eqnarray}\label{bracket}
\partial\{\psi, \xi \}&=&\{\partial_E\psi, \xi \}+ (-1)^{p+q}\{\psi, \bar \partial_E\xi \} \nonumber \\
\bar \partial\{\psi, \xi \}&=&\{\bar\partial_E\psi, \xi \}+(-1)^{p+q}\{\psi,  \partial_E\xi \} \\
\partial \bar\partial\{\psi, \xi \}&=&\{\partial_E\bar\partial_E\psi, \xi \}+ (-1)^{p+q+1}\{\bar\partial_E\psi, \bar \partial_E\xi \}  \nonumber \\
& & \ + (-1)^{p+q}\{\partial_E\psi,  \partial_E\xi \}+\{\psi, \bar \partial_E \partial_E\xi \} \nonumber.
\end{eqnarray}\label{didibarbracket}

\begin{lemma}\label{brac}For any smooth map $u : M \rightarrow X $ between  K\"ahler manifolds, we have

\[
\partial  \bar \partial\{\bar \partial u,\bar \partial u \}\wedge \frac{ \omega^{n-2}}{(n-2)!}=\left(-\{\partial_{E'}\bar \partial u,  \partial_{E'}\bar \partial u \}+ \{\bar \partial u, R_{E'}( \bar \partial u) \}\right)\wedge \frac{ \omega^{n-2}}{(n-2)!}
\]
and
\[
\partial  \bar \partial\{\bar \partial \bar{u},\bar \partial \bar{u} \}\wedge \frac{ \omega^{n-2}}{(n-2)!}
= \left(-\{\partial_{E''}\bar \partial \bar{u},  \partial_{E''}\bar \partial \bar{u} \}+ \{\bar \partial \bar{u}, R_{E''}( \bar \partial \bar{u}) \}\right)\wedge \frac{ \omega^{n-2}}{(n-2)!}.
\] 
\end{lemma}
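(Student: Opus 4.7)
The plan is to apply the third identity in (\ref{bracket}) directly to $\psi = \xi = \bar\partial u \in \Omega^{0,1}(E')$ (so that $p = 0$, $q = 1$), invoke Lemma~\ref{easycommute} to kill the terms containing $\bar\partial_{E'} \bar\partial u$, and then use a bidegree argument to rewrite the remaining mixed fourth-order term in terms of the curvature $R_{E'}$. Substituting $\psi = \xi = \bar\partial u$ into the $\partial\bar\partial$-Leibniz formula of (\ref{bracket}) yields
\begin{align*}
\partial \bar\partial \{\bar\partial u, \bar\partial u\}
&= \{\partial_{E'} \bar\partial_{E'} \bar\partial u, \bar\partial u\} + \{\bar\partial_{E'} \bar\partial u, \bar\partial_{E'} \bar\partial u\} \\
&\quad {}- \{\partial_{E'} \bar\partial u, \partial_{E'} \bar\partial u\} + \{\bar\partial u, \bar\partial_{E'} \partial_{E'} \bar\partial u\}.
\end{align*}
Since $\bar\partial_{E'} \bar\partial u = 0$ by Lemma~\ref{easycommute}, the first two terms on the right drop out, leaving
\[
\partial \bar\partial \{\bar\partial u, \bar\partial u\} = -\{\partial_{E'} \bar\partial u, \partial_{E'} \bar\partial u\} + \{\bar\partial u, \bar\partial_{E'} \partial_{E'} \bar\partial u\}.
\]

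To replace $\bar\partial_{E'} \partial_{E'} \bar\partial u$ by the curvature acting on $\bar\partial u$, expand
\[
R_{E'}(\bar\partial u) = (\partial_{E'} + \bar\partial_{E'})^2 \bar\partial u = \partial_{E'}^2 \bar\partial u + \partial_{E'} \bar\partial_{E'} \bar\partial u + \bar\partial_{E'} \partial_{E'} \bar\partial u + \bar\partial_{E'}^2 \bar\partial u.
\]
Two of these four summands vanish on the nose: $\partial_{E'} \bar\partial_{E'} \bar\partial u = \partial_{E'}(0) = 0$ and $\bar\partial_{E'}^2 \bar\partial u = \bar\partial_{E'}(0) = 0$, both by Lemma~\ref{easycommute}. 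The remaining extraneous piece $\partial_{E'}^2 \bar\partial u$ lies in $\Omega^{2,1}(E')$, so $\{\bar\partial u, \partial_{E'}^2 \bar\partial u\}$ has bidegree $(1, 3)$, and wedging with $\omega^{n-2}/(n-2)!$ of bidegree $(n-2, n-2)$ produces a form of bidegree $(n-1, n+1)$ on an $n$-dimensional complex manifold, which vanishes for dimensional reasons. Consequently $\{\bar\partial u, \bar\partial_{E'} \partial_{E'} \bar\partial u\}$ and $\{\bar\partial u, R_{E'}(\bar\partial u)\}$ agree after wedging with $\omega^{n-2}/(n-2)!$, which establishes the first identity.

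The second identity follows by the identical argument applied to $\bar\partial \bar u \in \Omega^{0,1}(E'')$ in place of $\bar\partial u \in \Omega^{0,1}(E')$, using the companion vanishing $\bar\partial_{E''} \bar\partial \bar u = 0$ from Lemma~\ref{easycommute}. The main point to track carefully is the dimensional cancellation in the last step: without the factor $\omega^{n-2}/(n-2)!$, the piece $\partial_{E'}^2 \bar\partial u$ could not be discarded, and the clean identification of $\bar\partial_{E'} \partial_{E'} \bar\partial u$ with the full curvature $R_{E'}(\bar\partial u)$ would fail.
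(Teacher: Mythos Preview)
Your proof is correct and follows essentially the same approach as the paper: apply the $\partial\bar\partial$-Leibniz formula (\ref{bracket}) with $\psi=\xi=\bar\partial u$, use Lemma~\ref{easycommute} to eliminate the terms involving $\bar\partial_{E'}\bar\partial u$, and then discard $\{\bar\partial u,\partial_{E'}^2\bar\partial u\}\wedge \omega^{n-2}/(n-2)!$ by bidegree counting to complete the square to $R_{E'}$.
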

\begin{proof}
By setting $\psi=\xi= \bar \partial u \in \Omega^{0,1}(E')$ in (\ref{bracket}) and repeatedly using the fact that $\bar \partial_{E'} \bar \partial u=0$ (cf.~Lemma~\ref{easycommute}), 
\begin{eqnarray*}
\partial  \bar \partial\{\bar \partial u,\bar \partial u \}
& =&-\{\partial_{E'}\bar \partial u,  \partial_{E'}\bar \partial u \}+ \{\bar \partial u, \bar \partial_{E'} \partial_{E'}\bar \partial u \}
\\
& =&-\{\partial_{E'}\bar \partial u,  \partial_{E'}\bar \partial u \}+ \{\bar \partial u, (\bar \partial_{E'} \partial_{E'} +   \partial_{E'} \bar \partial_{E'}+ \bar \partial_{E'}^2) \bar \partial u  \}. 
\end{eqnarray*}
Since $\{\bar \partial u, \partial_{E'}^2 \bar \partial u\} \wedge \frac{ \omega^{n-2}}{(n-2)!}$ is an 
$(n-1,n+1)$-form and hence zero  for dimensional reasons, we can complete the square to obtain
\begin{eqnarray}
\partial  \bar \partial\{\bar \partial u,\bar \partial u \}\wedge \frac{ \omega^{n-2}}{(n-2)!}
& =&\left(-\{\partial_{E'}\bar \partial u,  \partial_{E'}\bar \partial u \}+ \{\bar \partial u, (\partial_{E'} + \bar \partial_{E'} )^2 \bar \partial u\}\right) \wedge \frac{ \omega^{n-2}}{(n-2)!} \nonumber\\
& =&\left(-\{\partial_{E'}\bar \partial u,  \partial_{E'}\bar \partial u \}+ \{\bar \partial u, R_{E'}( \bar \partial u) \} \right)\wedge \frac{ \omega^{n-2}}{(n-2)!} \nonumber
\end{eqnarray}
which proves the first equation.
The second equation follows by setting $\psi=\xi= \bar \partial \bar{u} \in \Omega^{0,1}(E'')$ in (\ref{bracket}) and following exactly the same computation.
\end{proof}

\begin{lemma}\label{aux}
For any harmonic map $u : M \rightarrow X $ between  K\"ahler manifolds, we have
\begin{eqnarray*}
-\{\partial_{E'}\bar \partial u,  \partial_{E'}\bar \partial u \} \wedge \frac{ \omega^{n-2}}{(n-2)!} & = &  
4\left|\partial_{E'}\bar \partial u \right|^2 \frac{ \omega^n}{n!} \\
-\{\partial_{E''}\bar \partial \bar{u},  \partial_{E''}\bar \partial \bar{u} \} \wedge \frac{ \omega^{n-2}}{(n-2)!} & = & 
4\left|\partial_{E''}\bar \partial \bar u \right|^2 \frac{ \omega^n}{n!}.
\end{eqnarray*}
\end{lemma}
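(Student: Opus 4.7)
\emph{Proof proposal.} The identity is pointwise, so I fix an arbitrary $p\in M$ and reduce to a calculation at $p$ in carefully chosen coordinates. I take holomorphic coordinates $(z^\alpha)$ on $M$ centered at $p$ with $g_{\alpha\bar\beta}(p)=\delta_{\alpha\beta}$, together with holomorphic normal coordinates $(w^i)$ on $X$ centered at $u(p)$, so that the Chern connection of $(T^{(1,0)}X,h)$ has vanishing connection one-forms at $u(p)$. Setting $A^i_{\beta\bar\alpha}:=\partial^2 u^i/(\partial z^\beta\,\partial\bar z^\alpha)(p)$, the connection contribution drops out at $p$ and Lemma~\ref{easycommute} gives
\[
\partial_{E'}\bar\partial u\,\big|_p=A^i_{\beta\bar\alpha}\,dz^\beta\wedge d\bar z^\alpha\otimes\frac{\partial}{\partial u^i},
\]
while the harmonic map equation reduces at $p$ to the scalar trace condition $\sum_\alpha A^i_{\alpha\bar\alpha}=0$ for each $i$.

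Using the definition of the bracket together with $h_{i\bar j}(u(p))=\delta_{ij}$, at $p$ one has
\[
\{\partial_{E'}\bar\partial u,\,\partial_{E'}\bar\partial u\}=\sum_i A^i_{\beta\bar\alpha}\overline{A^i_{\gamma\bar\delta}}\;dz^\beta\wedge d\bar z^\alpha\wedge d\bar z^\gamma\wedge dz^\delta.
\]
With $\omega=\tfrac{i}{2}\sum dz^\alpha\wedge d\bar z^\alpha$ in this frame, a direct sign count reduces any basis $(2,2)$-form wedged with $\omega^{n-2}/(n-2)!$ to a scalar multiple of the top form, yielding the combinatorial identity
\[
dz^\beta\wedge d\bar z^\alpha\wedge d\bar z^\gamma\wedge dz^\delta\wedge\frac{\omega^{n-2}}{(n-2)!}=4\bigl(\delta_{\alpha\beta}\delta_{\gamma\delta}-\delta_{\alpha\delta}\delta_{\gamma\beta}\bigr)\frac{\omega^n}{n!}.
\]
Substituting this into the previous display and using harmonicity $\sum_\alpha A^i_{\alpha\bar\alpha}=0$ to kill the diagonal-trace term, one obtains
\[
\{\partial_{E'}\bar\partial u,\,\partial_{E'}\bar\partial u\}\wedge\frac{\omega^{n-2}}{(n-2)!}=-4\sum_{i,\alpha,\gamma}|A^i_{\gamma\bar\alpha}|^2\,\frac{\omega^n}{n!}.
\]
The right-hand sum is precisely $|\partial_{E'}\bar\partial u|^2$ in the orthonormal frame at $p$, so multiplying through by $-1$ proves the first identity.

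The second identity is established by exactly the same calculation on the bundle $E''$. In the chosen coordinates the components of $\partial_{E''}\bar\partial\bar u$ at $p$ are the complex conjugates $\overline{A^i_{\alpha\bar\beta}}$, whose diagonal trace $\sum_\alpha\overline{A^i_{\alpha\bar\alpha}}$ again vanishes by harmonicity; the combinatorial wedge identity and the orthonormal-frame interpretation of the surviving sum carry over verbatim. The only delicate point in the proof is the combinatorial reduction of a basis $(2,2)$-form wedged with $\omega^{n-2}/(n-2)!$, which is a sign-bookkeeping exercise on the canonical top-form; no genuine analytic difficulty arises, as is typical for the principal-symbol term in a Bochner-type identity.
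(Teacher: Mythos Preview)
Your proof is correct and follows essentially the same route as the paper's. The paper quotes the general identity $-\{\phi,\phi\}\wedge\omega^{n-2}/(n-2)!=4(|\phi|^2-|\mathrm{Tr}_\omega\phi|^2)\,\omega^n/n!$ for $\phi\in\Omega^{1,1}(E)$ from \cite[Lemma~6.2]{liu-yang} and then invokes harmonicity (via \cite[Corollary~3.12]{liu-yang} and the K\"ahler identities) to kill the trace term; you simply unpack that identity in orthonormal coordinates and observe directly that the harmonic map equation at $p$ gives $\sum_\alpha A^i_{\alpha\bar\alpha}=0$.
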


\begin{proof}
From a straightforward computation (cf.~\cite[Lemma 6.2]{liu-yang}), 
\[
-\{\phi, \phi\}\wedge \frac{ \omega^{n-2}}{(n-2)!}=4(|\phi|^2-|\mbox{Tr}_\omega\phi|^2)\frac{ \omega^n}{n!}, \ \ \ \forall \phi \in \Omega^{1,1}(E).
\]
We apply this identity with $\phi=\partial_{E'}\bar \partial u$ (resp.~$\phi=\partial_{E''}\bar \partial \bar u$).   Since $u$ is harmonic, $\mbox{Tr}_{\omega} \partial_{E'}\bar \partial u=0$ and $\mbox{Tr}_{\omega} \partial_{E''}\bar \partial \bar u=0$ by  \cite[Corollary 3.12]{liu-yang} and the K\"ahler identities.
\end{proof}

\begin{lemma}\label{tree}
For any smooth map $u : M \rightarrow X $ between  K\"ahler manifolds, we have
\[
\{\bar \partial  u, R_{E'}  (\bar \partial  u) \}  = -R_{i \bar j l \bar k}  \partial u^l \wedge \bar \partial \bar{u}^k \wedge \bar \partial u^i \wedge \partial \bar{u}^j=\{\bar \partial \bar u, R_{E''}  (\bar \partial \bar u) \}.
\]
\end{lemma}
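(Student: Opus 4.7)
The plan is to unwind both bracket expressions into explicit coordinate sums and show that each reduces to the middle expression. Fix local holomorphic coordinates $(z^\alpha)$ on $M$ and $(w^i)$ on $X$. Since $E' = u^{-1}T^{(1,0)}X$, the curvature of the pullback Chern connection is the pullback of the curvature of $T^{(1,0)}X$: in the frame $\{\partial/\partial u^l\}$ it is the $\mathrm{End}(E')$-valued $(1,1)$-form with components $R_{i\bar j l}{}^{m}(u)\,\partial u^i \wedge \bar\partial \bar u^j$, so
\[
R_{E'}(\bar\partial u) \;=\; R_{i\bar j l}{}^m(u)\, \partial u^i \wedge \bar\partial \bar u^j \wedge \bar\partial u^l \otimes \tfrac{\partial}{\partial u^m} \;\in\; \Omega^{1,2}(E').
\]

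Next, I would apply Definition~\ref{bracket} to pair $\bar\partial u \in \Omega^{0,1}(E')$ with $R_{E'}(\bar\partial u)$ using the Hermitian metric $h_{m\bar n}$. The complex conjugation in the second slot of the bracket sends $\partial u^i \mapsto \bar\partial\bar u^i$, $\bar\partial\bar u^j \mapsto \partial u^j$, $\bar\partial u^l \mapsto \partial\bar u^l$, and transforms the curvature components via the Hermitian symmetry $\overline{R_{i\bar j l \bar k}} = R_{j\bar i k \bar l}$. Contracting $h_{m\bar n}h^{k\bar n} = \delta^k_m$ and relabeling indices yields
\[
\{\bar\partial u, R_{E'}(\bar\partial u)\} \;=\; R_{m\bar i j \bar l}\, \bar\partial u^m \wedge \bar\partial \bar u^i \wedge \partial u^j \wedge \partial\bar u^l.
\]
The two K\"ahler symmetries $R_{j\bar i m \bar l} = R_{m\bar i j \bar l}$ and $R_{i\bar k l\bar j} = R_{i\bar j l \bar k}$ then rewrite this as $R_{i\bar j l \bar k}\,\bar\partial u^i \wedge \bar\partial\bar u^k \wedge \partial u^l \wedge \partial\bar u^j$, and reordering the four one-forms into $\partial u^l \wedge \bar\partial\bar u^k \wedge \bar\partial u^i \wedge \partial\bar u^j$ contributes exactly one overall sign (three transpositions: $(-1)(-1)(-1)=-1$), producing the middle expression of the lemma.

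For the second equality the argument is parallel. Observe that $E'' = \overline{E'}$ as a conjugate holomorphic bundle, so its Chern curvature is the componentwise complex conjugate of that of $E'$, and the Hermitian pairing on $E''$ is the complex conjugate of the one on $E'$. These two conjugations cancel against the simultaneous replacement of $\bar\partial u$ by $\bar\partial\bar u$ in both slots of the bracket, so the same sequence of Hermitian and K\"ahler symmetries followed by the identical wedge-product rearrangement produces the same middle expression, as required.

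The main obstacle will be purely bookkeeping: carefully tracking the Hermitian metric contractions $h_{m\bar n}h^{k\bar n} = \delta^k_m$, the conjugations on both curvature components and on the one-forms $\partial u^i, \bar\partial\bar u^j, \bar\partial u^l$, and the signs generated by each transposition of one-forms in the wedge product. Conceptually the only non-trivial ingredients are the Hermitian symmetry $\overline{R_{i\bar j k \bar l}} = R_{j\bar i l \bar k}$ and the two K\"ahler symmetries of the curvature tensor; everything else is algebraic rearrangement, and the fact that both expressions equal the same manifestly real middle term is a consistency check on the signs.
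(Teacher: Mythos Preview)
Your approach is essentially the same as the paper's: both proofs unwind $\{\bar\partial u, R_{E'}(\bar\partial u)\}$ in local holomorphic coordinates by writing out the pullback curvature acting on $\bar\partial u$, applying the bracket of Definition~\ref{bracket} (which introduces the conjugation on the second slot), invoking the Hermitian symmetry $\overline{R_{i\bar j l\bar k}}=R_{j\bar i k\bar l}$ together with the K\"ahler pair-symmetries of $R_{i\bar j l\bar k}$, and finally tracking the sign from reordering the four one-forms. Your intermediate expression $R_{m\bar i j\bar l}\,\bar\partial u^m\wedge\bar\partial\bar u^i\wedge\partial u^j\wedge\partial\bar u^l$ matches the paper's line $\bar\partial u^i\wedge\partial\bar u^j\wedge\overline{R_{j\bar i k\bar l}\,\partial u^k\wedge\overline{\partial u^l}}$ after the same symmetries, and the final sign comes out to $-1$ in both (your ``three transpositions'' is just one decomposition of the single swap $(1\,3)$; either way the parity is odd).

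The only place you diverge is the second equality: the paper repeats the entire coordinate computation for $E''$ verbatim (citing the conjugate-curvature identity from \cite{koba}), whereas you shortcut via $E''=\overline{E'}$ and observe that the two conjugations cancel. That shortcut is correct and slightly more economical; the paper's explicit route has the advantage of making every index visible, which matches its style elsewhere.
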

\begin{proof}
We compute
 \begin{eqnarray}\label{dminor}
\{\bar \partial  u, R_{E'}  (\bar \partial  u) \}  &=&
\{\bar \partial u^i\frac{\partial}{\partial u^i}, R_{E'}(\bar \partial u^j\frac{\partial}{\partial u^j})\} \nonumber \\
&=&\{\bar \partial u^i\frac{\partial}{\partial u^i}, \bar \partial u^j \wedge R_{E'}(\frac{\partial}{\partial u^j})\} \nonumber \\
&=&\{\bar \partial u^i\frac{\partial}{\partial u^i}, \bar \partial u^j \wedge R^s_{  j k \bar l}\partial u^k \wedge \overline{\partial u^l} \frac{\partial}{\partial u^s}\} \nonumber\\
 &=&\bar \partial u^i \wedge \partial \bar{u}^j \wedge G_{i \bar s} \overline{R^s_{  j k \bar l}\partial u^k \wedge \overline{\partial u^l}} \\
 &=&\bar \partial u^i \wedge \partial \bar{u}^j \wedge \overline{ G_{s \bar i} R^s_{  j k \bar l}\partial u^k \wedge \overline{\partial u^l}} \nonumber\\
 &=&\bar \partial u^i \wedge \partial \bar{u}^j \wedge \overline{  R_{  j \bar i k \bar l}\partial u^k \wedge \overline{\partial u^l}} \ \mbox{(by \cite{koba}, (1.7.10))} \nonumber\\
 &=& -R_{i \bar j l \bar k}  \partial u^l \wedge \bar \partial \bar{u}^k \wedge \bar \partial u^i \wedge \partial \bar{u}^j \ \mbox{(by \cite{koba}, (1.7.21))}
  \nonumber
 \end{eqnarray}
 which proves the first equality.
Similarly as in (\ref{dminor}),
\begin{eqnarray}\label{emajor}
\{\bar \partial  \bar{u}, R_{E''}  (\bar \partial  \bar{u}) \}  &=&
\{\bar \partial \bar{u}^i\frac{\partial}{\partial \bar{u}^i}, R_{E''}(\bar \partial \bar{u}^j\frac{\partial}{\partial \bar{u}^j})\} \nonumber \\ &=&
\{\bar \partial \bar{u}^i\frac{\partial}{\partial \bar{u}^i}, \bar \partial \bar{u}^j \wedge R_{E''}(\frac{\partial}{\partial \bar{u}^j})\} \nonumber \\
&=&\{\bar \partial \bar{u}^i\frac{\partial}{\partial \bar{u}^i}, \bar \partial \bar{u}^j \wedge \overline{R^s_{  j k \bar l}}\partial \bar{u}^k \wedge \bar \partial u^l  \frac{\partial}{\partial \bar{u}^s} \} \ \ \ \ \mbox{(by \cite[(1.5.10)]{koba})}\\
 &=&\bar \partial \bar{u}^i \wedge \partial u^j \wedge G_{s \bar i} R^s_{  j k \bar l}\bar \partial u^k \wedge \partial \bar{u}^l  \nonumber \\ 
 &=&\bar \partial \bar{u}^i \wedge \partial u^j \wedge \  R_{  j \bar i k \bar l}\bar \partial u^k  \wedge \partial \bar{u}^l  \ \mbox{(by \cite[(1.7.10)]{koba})} \nonumber\\
 &=& -R_{  j \bar i k \bar l}  \partial u^j \wedge \bar \partial \bar{u}^i \wedge \bar \partial u^k \wedge \partial \bar{u}^l 
  \nonumber \\
   &=& -R_{ k \bar l j \bar i }  \partial u^j \wedge \bar \partial \bar{u}^i \wedge \bar \partial u^k \wedge \partial \bar{u}^l \ \mbox{(by \cite[(1.7.22)]{koba})}
  \nonumber \\
   &=& -R_{i \bar j l \bar k}  \partial u^l \wedge \bar \partial \bar{u}^k \wedge \bar \partial u^i \wedge \partial \bar{u}^j.
  \nonumber
\end{eqnarray}
 This proves the second equality.
\end{proof}

\begin{lemma} \label{palm}
For any harmonic map $u : M \rightarrow X $ between  K\"ahler manifolds, we have  
\[
\left|\partial_{E''}\bar \partial \bar u \right|^2=\left|\bar  \partial_{E''}\partial \bar u \right|^2=\left|\partial_{E'}\bar \partial u \right|^2.
\]  
\end{lemma}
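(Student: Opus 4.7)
My plan is to observe that Lemma~\ref{palm} is essentially a two-line consequence of Lemma~\ref{easycommute} together with the fact that complex conjugation is a Hermitian isometry on $E$-valued forms, so no serious analysis is required and harmonicity plays no role (it is inherited from the ambient context).

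For the first equality $|\partial_{E''}\bar\partial\bar u|^2=|\bar\partial_{E''}\partial\bar u|^2$, I would simply invoke Lemma~\ref{easycommute}, which gives the pointwise identity $\partial_{E''}\bar\partial\bar u=-\bar\partial_{E''}\partial\bar u$ as $E''$-valued $(1,1)$-forms. Taking squared norms yields the equality immediately.

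For the second equality $|\bar\partial_{E''}\partial\bar u|^2=|\partial_{E'}\bar\partial u|^2$, the strategy is to show that these two forms are complex conjugates of each other. Working in local holomorphic coordinates $(z^\alpha)$ on $M$ and normal coordinates $(w^i)$ on $X$ at a given point, I would use the coordinate expression established in the proof of Lemma~\ref{easycommute}, namely
\[
\partial_{E'}\bar\partial u=\frac{\partial^2 u^i}{\partial z^\alpha\partial\bar z^\beta}\,dz^\alpha\wedge d\bar z^\beta\otimes\frac{\partial}{\partial u^i},
\]
and the analogous expression
\[
\bar\partial_{E''}\partial\bar u=\frac{\partial^2\bar u^i}{\partial\bar z^\beta\partial z^\alpha}\,d\bar z^\beta\wedge dz^\alpha\otimes\frac{\partial}{\partial\bar u^i},
\]
from which it is visibly manifest that $\bar\partial_{E''}\partial\bar u=\overline{\partial_{E'}\bar\partial u}$. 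Since the Hermitian metric on $E$ pulled back from $X$ satisfies $\langle\tfrac{\partial}{\partial u^i},\tfrac{\partial}{\partial u^j}\rangle=\overline{\langle\tfrac{\partial}{\partial\bar u^i},\tfrac{\partial}{\partial\bar u^j}\rangle}$ and $g^{\alpha\bar\beta}=\overline{g^{\beta\bar\alpha}}$, the pointwise norm of an $E$-valued form is invariant under complex conjugation, whence $|\bar\partial_{E''}\partial\bar u|^2=|\partial_{E'}\bar\partial u|^2$.

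The only mild subtlety — and the step I would be most careful about — is verifying that the Hermitian norm convention used in the preceding lemmas really is conjugation-invariant, i.e.\ that $|\overline{\phi}|^2=|\phi|^2$ for $\phi\in\Omega^{1,1}(E')$ when one views $\overline{\phi}\in\Omega^{1,1}(E'')$. This reduces to the fact that the Hermitian extension of the Kähler metric $h$ on $TX$ to $TX\otimes\mathbb C$ treats $E'$ and $E''$ as conjugate subbundles isometrically, which is the standard convention used throughout Section~\ref{sec:KtoK}. With that in hand, no further work is needed, and harmonicity of $u$ is not used in the proof.
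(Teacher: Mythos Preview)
Your proposal is correct and follows exactly the paper's approach: the paper's proof reads in full ``The left equality follows from Lemma~\ref{easycommute} and the right from the fact that conjugation is an isometry,'' which is precisely your two steps, and your observation that harmonicity is not actually used is accurate.
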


\begin{proof}
The left equality follows from Lemma~\ref{easycommute} and the right from the fact that conjugation is an isometry.
\end{proof}

\noindent \begin{siubochner}
Combine Lemma~\ref{branch}, Lemma~\ref{brac}, Lemma~\ref{aux}, Lemma~\ref{tree} and Lemma~\ref{palm}.
\end{siubochner}

\chapter{Harmonic maps from Riemann surfaces} \label{chap:Riemann surfaces}

In this chapter,  we present some results about harmonic section with possibly infinite energy from punctured Riemann surfaces.   The target space  $\tilde{X}$ is an NPC space.  For smooth targets, the existence results are due to \cite{lohkamp} and \cite{wolf}. 
   We will also prove new uniqueness theorems for these harmonic maps.   In the case when the harmonic map is of finite energy,  uniqueness of  harmonic maps  follows from previous work (cf.~\cite{mese}, \cite{daskal-meseUnique}) of the authors. Additionally, we give precise estimates  of these harmonic maps near the punctures.  These results are used to find pluriharmonic maps from smooth quasi-projective varieties  of arbitrary dimensions in Chapter~\ref{chap:higher dimensions}.

Throughout this chapter, we use the following notation:
\begin{itemize}
\item
$\bar \RR$ is a compact Riemann surface 
\item $\RR=\bar \RR \backslash \{p^1, \dots, p^n\}$ is a punctured Riemann surface
\item $\Pi:\tilde \RR \rightarrow \RR$ is the universal covering map
\item $\rho:\pi_1(\RR) \rightarrow \mathsf{Isom}(\tilde X)$ is a homomorphism.
\end{itemize}
For each $j=1, \dots, n$, 
\begin{itemize}
\item $\lambda^j \in \pi_1(\RR)$ is the element associated to the loop around the puncture $p^j$
\item $I^j:=\rho(\lambda^j) \in \mathsf{Isom}(\tilde X)$.
\end{itemize}
The main results of this chapter are the following two theorems.

\begin{theorem}[Existence] \label{existence}
Assume that $\tilde X$ is an NPC space.    
If 
\begin{itemize}
 \item[(A)] the action of $\rho(\pi_1(\RR))$ is proper (cf.~Definition~\ref{proper}), and
 \item[(B)] $I^j$ has exponential decay (cf.~Definition~\ref{jz}) for $j=1,\dots, n$,
 \end{itemize}then there exists a harmonic section $u:  \RR \rightarrow \tilde \RR \times_\rho \tilde X$  with logarithmic energy growth (cf.~Definition~\ref{logdec} below).
\end{theorem}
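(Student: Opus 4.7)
The plan is to construct the harmonic section as a limit of Dirichlet solutions on a compact exhaustion of $\RR$. First I would build a \emph{prototype equivariant map} near each puncture. Around $p^j$, choose a punctured disk neighborhood, conformally identified with a half-cylinder $[0,\infty)\times \Sp^1$ (with $\Sp^1=\R/2\pi\Z$) so that $\lambda^j$ corresponds to $\theta\mapsto \theta+2\pi$. Using assumption~(B), pick a ray $c_j:[0,\infty)\to \tilde X$ with $d(c_j(t),I^j(c_j(t)))^2\leq \Delta_{I^j}^2(1+b_je^{-a_jt})$, and define $\tilde v_j(t,\theta)$ on the universal cover to be the constant-speed geodesic interpolation from $c_j(t)$ to $I^j(c_j(t))$ as $\theta$ runs over $[0,2\pi]$, extended $\rho$-equivariantly. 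NPC convexity of the interpolation gives $|\partial_t\tilde v_j|^2\leq 1$, and the exponential decay gives $|\partial_\theta\tilde v_j|^2\leq \Delta_{I^j}^2(1+b_je^{-a_jt})/(4\pi^2)$, so the vertical energy on $[0,T]\times \Sp^1$ is at most $\alpha_j T+C_j$ with $\alpha_j=2\pi+\Delta_{I^j}^2/(2\pi)$. Gluing these prototypes to an arbitrary equivariant Lipschitz map over a compact core yields a global section $v$ of $\tilde \RR\times_\rho \tilde X$ whose energy on the natural compact exhaustion $\RR_T\nearrow \RR$ (cutting each puncture at cylinder time $T$) grows at most linearly in $T$.

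Next I would solve the Dirichlet problem on $\RR_T$ with boundary values from $v$: Korevaar--Schoen theory provides a unique $\rho$-equivariant energy-minimizing section $u_T$, and minimality gives $E^{u_T}[\RR_T]\leq E^v[\RR_T]\leq \sum_j\alpha_j T+C$. The key step is a matching \emph{lower} bound. For any $\rho$-equivariant section $f$ and almost every $t$, the slice curve $\theta\mapsto \tilde f(t,\theta)$ joins a point to its $I^j$-image, so has length $\geq \Delta_{I^j}$, and Cauchy--Schwarz applied to the directional energy density (Subsection~\ref{subsec:korevaarschoen}) yields
\begin{equation*}
\int_0^{2\pi}|\partial_\theta \tilde f|^2(t,\theta)\,d\theta\;\geq\;\frac{\Delta_{I^j}^2}{2\pi},
\end{equation*}
so $E^{u_T}[A^j_{[t_1,t_2]}]\geq (t_2-t_1)\Delta_{I^j}^2/(2\pi)$ on any annular cylinder $A^j_{[t_1,t_2]}=[t_1,t_2]\times \Sp^1$ near $p^j$. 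Subtracting these peripheral lower bounds from the global upper bound confines a definite share of the energy to neighborhoods of the punctures, forcing $E^{u_T}[K]$ to be bounded independently of $T$ for every fixed compact $K\Subset \RR$.

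With uniform local energy bounds, Korevaar--Schoen interior regularity gives uniform local Lipschitz bounds on the $u_T$. Assumption~(A) prevents the images from escaping: for any $\tilde p_0\in \tilde \RR$ and generator $\lambda\in\pi_1(\RR)$, a lift of a representative loop joins $\tilde p_0$ to $\lambda\tilde p_0$ inside a fixed compact set on which the $\tilde u_T$ share a uniform Lipschitz constant, so
\begin{equation*}
d\bigl(\tilde u_T(\tilde p_0),\,\rho(\lambda)\tilde u_T(\tilde p_0)\bigr)=d\bigl(\tilde u_T(\tilde p_0),\,\tilde u_T(\lambda \tilde p_0)\bigr)
\end{equation*}
is uniformly bounded, and Definition~\ref{proper} confines $\tilde u_T(\tilde p_0)$ to a bounded subset of $\tilde X$. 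Arzel\`a--Ascoli then extracts a subsequence converging locally uniformly to a $\rho$-equivariant continuous map $\tilde u$; weak lower semicontinuity of energy combined with a standard local variational comparison shows that the limit is locally energy minimizing, so the associated section $u$ is harmonic, and the logarithmic energy growth is inherited from the upper bound on $E^{u_T}[\RR_T]$ (the cylinder coordinate $t$ is $-\log|z|$ in the conformal disk coordinate). The main technical obstacle is the rigorous execution of the slice-wise lower bound in the Korevaar--Schoen framework and its coupling with the properness-based compactness; once these pieces are assembled, harmonicity and the growth estimate follow from standard variational and semicontinuity arguments.
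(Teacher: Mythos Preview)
Your overall architecture matches the paper's proof (prototype section, Dirichlet solutions on an exhaustion, slice-wise lower bound, properness plus Arzel\`a--Ascoli), but there is a genuine gap in the energy bookkeeping that breaks the compactness step and the final growth claim.

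With your prototype $\tilde v_j(t,\theta)$ built by setting the ray parameter equal to the cylinder time $t$, the bound $|\partial_t\tilde v_j|^2\le 1$ contributes $2\pi T$ to the energy on $[0,T]\times\Sp^1$, so your upper coefficient is $\alpha_j=2\pi+\Delta_{I^j}^2/(2\pi)$. The slice lower bound only gives coefficient $\mathsf E_{\rho^j}=\Delta_{I^j}^2/(2\pi)$. Subtracting, for a fixed compact core $K=\RR_{t_1}$,
\[
E^{u_T}[K]\;\le\;\Bigl(\sum_j\alpha_j\Bigr)T+C-\sum_j(T-t_1)\,\frac{\Delta_{I^j}^2}{2\pi}
\;=\;2\pi n\,T+C',
\]
which is \emph{not} bounded in $T$. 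So you do not obtain uniform local energy bounds, and the limiting argument does not go through. For the same reason the final ``logarithmic energy growth'' claim fails: the definition requires the upper bound to have the \emph{same} coefficient $\sum_j\mathsf E_{\rho^j}$ as the lower bound, not $\sum_j\alpha_j$.

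The paper's remedy (Lemma~\ref{defineg}) is to reparametrize the ray sublinearly, setting $\tilde v(r,\theta)=\tilde\gamma_{(-\log r-\log 2)^{1/3}}(\theta)$; in cylinder time this is $s=(t-\log 2)^{1/3}$. Then $|\partial_t\tilde v|^2\le (t-\log 2)^{-4/3}$, which is integrable on $[0,\infty)$, so the radial contribution to the prototype energy is a \emph{bounded} constant rather than $2\pi T$. Now the upper and lower bounds share the coefficient $\mathsf E_{\rho^j}$, the subtraction yields $E^{u_T}[K]\le C$ independent of $T$, and both the compactness argument and the logarithmic energy growth (Definition~\ref{logdec}) follow. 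Any reparametrization $s=\varphi(t)$ with $\varphi'\in L^2$ and $\varphi(t)\to\infty$ would work; the exponential decay in (B) is still needed so that the $\theta$-energy excess $\int b\,e^{-a\varphi(t)}\,dt$ is also finite.
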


\begin{theorem}[Uniqueness] \label{uniqueness}
The section $u$ of Theorem~\ref{existence} is the unique harmonic section with logarithmic energy growth in the following cases: 
\begin{itemize}
\item[(i)] $\tilde X$ is a negatively curved space as in Definition~\ref{neg}
\item[(ii)] $\tilde X$ is an irreducible symmetric space of non-compact type or
 is an irreducible locally finite Euclidean building such that the action of $\rho(\pi_1(\RR))$ does not fix an unbounded closed convex strict subset  of $\tilde X$.
\end{itemize}
 \end{theorem}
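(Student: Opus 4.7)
The plan is to deduce uniqueness from the convexity of the distance function between two NPC-valued equivariant harmonic maps, combined with an asymptotic analysis at the punctures. Let $u_1, u_2$ be two harmonic sections with logarithmic energy growth and let $\tilde u_1, \tilde u_2 : \tilde \RR \to \tilde X$ be the associated $\rho$-equivariant lifts. The function
\[
\varphi(p) := d(\tilde u_1(\tilde p), \tilde u_2(\tilde p))
\]
is $\rho$-invariant and so descends to $\RR$; by the NPC condition (Definition~\ref{def:NPC}) and the harmonicity of $u_1, u_2$, it is weakly subharmonic on $\RR$.

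The crux is to control $\varphi$ near each puncture $p^j$, working in a conformal cylindrical end coordinate $(t,\theta) \in [T,\infty) \times \Sp^1$. The logarithmic energy growth of $u_1, u_2$ together with the exponential decay of $I^j$ (Definition~\ref{jz}) forces each $\tilde u_k$, along slices $\{t\} \times \Sp^1$, to be asymptotic to the prototype model built from the ray $c_j$ associated to $I^j$ (in the semisimple case, the closed geodesic of length $\Delta_{I^j}$; in the parabolic case, the asymptotic ray of Definition~\ref{jz}). A slice-by-slice Courant--Lebesgue oscillation estimate, combined with the energy monotonicity along the cylinder provided by logarithmic energy growth, yields a uniform bound $\varphi(t,\theta) \leq C$ for all $t \geq T$. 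Since $\varphi$ is thus a bounded weakly subharmonic function on a punctured Riemann surface, the removable-singularity theorem extends it subharmonically to the compact surface $\bar \RR$, and the maximum principle forces $\varphi \equiv c$ for some constant $c \geq 0$.

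It remains to rule out $c > 0$. The standard NPC rigidity shows that if $\varphi \equiv c > 0$, then the geodesic segments $[\tilde u_1(\tilde p), \tilde u_2(\tilde p)]$ (in the sense of Notation~\ref{interpolationnotation}) vary in parallel, and the midpoint map $\tfrac{1}{2}\tilde u_1 + \tfrac{1}{2}\tilde u_2$ is itself a $\rho$-equivariant harmonic map whose image, together with those of $\tilde u_1, \tilde u_2$, lies in a closed convex $\rho$-invariant \emph{parallel set}, which is automatically unbounded (it contains the $\R$-line spanned by $[\tilde u_1(\tilde p), \tilde u_2(\tilde p)]$). In case (i), $\tilde X$ is locally CAT($-\kappa$), so no positive-width flat strip exists, and we get an immediate contradiction. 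In case (ii), irreducibility of $\tilde X$ forces this parallel set to be a \emph{strict} subset: in an irreducible symmetric space of non-compact type the parallel set of a geodesic splits off an $\R$-factor from a proper totally geodesic subspace, while in an irreducible locally finite Euclidean building the parallel set of a direction realized by a geodesic segment is a proper sub-building. This contradicts the hypothesis that $\rho(\pi_1(\RR))$ fixes no unbounded closed convex strict subset, completing the proof.

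The main obstacle I anticipate is establishing the uniform boundedness of $\varphi$ on each end in the \emph{parabolic} case, where no axis is available and the prototype model is only approximately equivariant on the horospheres approaching the puncture. The exponential decay of Definition~\ref{jz} must be integrated with quadrilateral comparison in $\tilde X$ (as in Theorem~\ref{thm:jzcat-1} and Theorem~\ref{thm:jzsymmetricspace}) to convert the energy control into a pointwise bound on $\varphi$, ruling out sub-logarithmic but unbounded growth along the cusp. The finite-energy uniqueness arguments of \cite{mese} and \cite{daskal-meseUnique} supply the subharmonic-comparison machinery used in the interior; what is new here is combining it with the logarithmic energy growth of Definition~\ref{logdec} and the exponential-decay estimates of Section~\ref{sec:Expdecay} to handle the infinite-energy setting.
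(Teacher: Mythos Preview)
Your high-level strategy---show $d(u_0,u_1)\equiv c$ via a removable-singularity argument, then rule out $c>0$---matches the paper. But you misidentify where the real difficulty lies, and this leads to a genuine gap.

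\textbf{On the removable singularity.} You propose proving $\varphi$ is uniformly \emph{bounded} near each puncture via Courant--Lebesgue oscillation estimates, and flag this as ``the main obstacle'' in the parabolic case. The paper sidesteps this entirely: it never proves boundedness directly. Instead, from logarithmic energy growth it derives pointwise derivative estimates on each cylindrical end (subharmonicity of directional energies yields $\lim_{r\to 0}(-\log r)r^2|\partial u/\partial r|^2=0$), which integrate to show that each harmonic section individually has \emph{sub-logarithmic growth} (Lemma~\ref{(iii)}). The triangle inequality then gives $d^2(u_0,u_1)+\epsilon\log|z|\to -\infty$, and the elementary Lemma~\ref{shdisk} (maximum principle against $-\epsilon\log|z|$) supplies the removable singularity. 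Boundedness is a corollary, not a hypothesis, of this argument. So your anticipated obstacle dissolves.

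\textbf{The actual gap: equality of modified energies.} Your step ``standard NPC rigidity shows \dots the segments vary in parallel and the midpoint is harmonic'' is where the infinite-energy case genuinely bites. Knowing only $d(u_0,u_1)\equiv c$ does \emph{not} give that the interpolants $u_s$ are harmonic, nor that the pullback metrics agree; for that you need $E^{u_s}=E^{u_0}$ for all $s$. In the finite-energy setting this comes from global minimality, but here energies are infinite. The paper introduces the \emph{modified energy} $E^f(0)=\lim_{r\to 0}\bigl(E^f[\RR_r]-\sum_j \mathsf E_{\rho^j}\log\tfrac{1}{r}\bigr)$ and proves $E^{u_0}(0)=E^{u_s}(0)$ via a ``bridge'' construction (Lemma~\ref{sae}): assuming strict inequality, one uses the fine asymptotics $\lim_{r\to 0}(-\log r)(|\partial u/\partial\theta|^2-\mathsf E_\rho/2\pi)=0$ to glue $u_0$ on $\RR_{r_0}$ to $u_1$ on the ends across a thin annulus with controlled bridge energy, producing a competitor of strictly smaller energy on $\RR_{r_0^2}$ and contradicting minimality. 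Only after this does one obtain $|(u_s)_*V|^2\equiv |(u_0)_*V|^2$ (Lemma~\ref{twothings}), which is exactly the rigidity input that \cite{mese} (case (i)) and \cite{daskal-meseUnique} (case (ii)) require. Your proposal invokes the parallel-set/flat-strip conclusion without supplying this step, so as written the argument is incomplete in both cases.
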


We explain some of the terminology contained Theorem~\ref{existence}.  Loosely speaking,  {\it sub-logarithmic growth} means that, near  the punctures of the Riemann surface,  the section grows at most logarithmically, and  {\it logarithmic energy growth} means that the energy of the section grows like a logarithmic function.

In order to  give a precise definition, consider the universal cover $\R \rightarrow \Sp^1$  with the free group  $2\pi \Z \simeq \pi_1(\Sp^1)$  acting on $\R$ by translation.  

\begin{definition} \label{infE}
For a homomorphism $\rho:  2\pi \Z \simeq \pi_1(\Sp^1) \rightarrow \tilde X$, 
let  $\R \times_\rho \tilde X \rightarrow \Sp^1$ be the flat $\tilde X$-fiber bundle defined by $\rho$ (cf.~Subsection~\ref{subsec:donaldsonsection}).
Define $\mathsf{E}_{\rho}$ to be  the infimum of the (vertical) energies of sections  $\Sp^1 \rightarrow \R \times_{\rho} \tilde X$ (cf.~(\ref{vertical})). 
\end{definition}

\begin{remark}
If  $\Delta_I$ is the translation length of $I=\rho([\Sp^1])$, then
\[
\mathsf{E}_{\rho}= \frac{\Delta^2_{I}}{2\pi}.
\]
\end{remark}

Next, we will
\begin{equation}  \label{fixeddisk}
\mbox{fix a conformal disk $\D^j \subset \bar \RR$ centered at each puncture $p^j$}
\end{equation}
such that $\D^i \cap \D^j=\emptyset$ for $i\neq j$.  Furthermore, we use the following notation:
\begin{itemize}
\item $\D^{j*}=\D^j \backslash \{0\}$
\item $\D^j_r=\{z \in \D^j: |z|<r\}$
\item $\RR_r=\RR \backslash \bigcup_{j=1}^n \D^j_r$
\item $2\pi \Z \simeq \langle \lambda^j  \rangle$ is  the free group generated by $\lambda^j$
\item $\rho^j:\langle \lambda^j \rangle \rightarrow \mathsf{Isom}(\tilde X)$ be the restriction of $\rho: \pi_1(\RR) \rightarrow \mathsf{Isom}(\tilde X)$.
\end{itemize}

Fix $P_0 \in \tilde X$ and a fundamental domain $F$ of $\tilde \RR$.  Let  $f_0$ be the section of the fiber bundle $\tilde \RR \times_\rho \tilde X \rightarrow \RR$  such that, for any $p \in \RR \cap \Pi(F)$,   $f_0(p)=[(\tilde p,P_0)]$ where $\tilde p = \Pi^{-1}(p) \cap F$.  (Note that  $\Pi(F)$ is of full measure in $\RR$.) Let  $p^j \in \RR$ be a puncture and define
\begin{equation} \label{tildeyes}
\delta_j:\D^{*j} \rightarrow [0,\infty), \ \ \ \ \delta_j(z) = \essinf_{\{  z \in \D^{*j}\}} d( f(z),f_0(z)).
\end{equation}

\begin{definition} \label{logdec} 
We say  a section $f:\RR \rightarrow \tilde \RR \times_\rho \tilde X$  has \emph{sub-logarithmic growth} if for any $j=1, \dots, n$
\[
\lim_{|z| \rightarrow 0} \delta_{j}(z) +\epsilon \log |z|=-\infty \mbox{ in }\D^{j*} .
\]
By the triangle inequality, this definition is independent of the choice of  $P_0 \in \tilde X$.
We say $f$ has \emph{logarithmic energy growth} if there exists $\rho_0>0$ such that 
\begin{equation} \label{Cbddef}
\sum_{j=1}^n
\mathsf{E}_{\rho^j} \log \frac{1}{r} \leq E^{f}[\RR_r] \leq \sum_{j=1}^n\mathsf{E}_{\rho^j} \log \frac{1}{r}+C, \ \ \  0<r<r_0<\rho_0
\end{equation}
for some constant $C>0$. 
 \end{definition}

\section{Infinite energy harmonic maps from a punctured disk}\label{disc}

Before we consider the domain to be a general punctured Riemann surface, we first start by studying harmonic maps from a punctured disk $\bar \D^*$ to an NPC space $\tilde X$.  Identify the boundary of the disk with the circle; i.e.
\[
\partial \D=\Sp^1,
\]
This induces a natural identification  
\[
2\pi \Z \simeq \pi_1(\Sp^1) \simeq  \pi_1( \bar \D^*). 
\]
The main result of this section is the following.

\begin{theorem}[Existence and Uniqueness of the Dirichlet solution on $\D^*$] \label{exists}
Assume the following:
\begin{itemize}
\item $\rho:2\pi \Z \simeq \pi_1(\Sp^1) \simeq  \pi_1( \bar \D^*) \rightarrow \mathsf{Isom}(\tilde X)$  
is a homomorphism

 \item $k:\DDDD$ is a locally Lipschitz section
 \item $I:=\rho([\Sp^1])$ has exponential decay to its translation length
 \end{itemize}
Then there  exists a  harmonic section  
\[
u:\DDDD \mbox{  with  }u|_{\Sp^1} =k|_{\Sp^1}.
\]
  Furthermore,  
there exists a constant $C>0$   that depends only on  $\mathsf{E}_\rho$ of Definition~\ref{infE},  $a$, $b$ from Definition~\ref{jz} and the section $k$ satisfying the following properties:\\

\begin{itemize}
\item[(i)]
$\displaystyle{\mathsf{E}_\rho \log \frac{1}{r} \leq E^u[\D_{r,1}] \leq \mathsf{E}_\rho \log \frac{1}{r}+C, \ \ \ 0<r\leq 1}$
\item[(ii)]  $\displaystyle{\left| \frac{\partial u}{\partial r} \right|^2 \leq \frac{C}{r^2(-\log r)} \ \  \mbox{ and } \ \ 
\left( \left| \frac{\partial u}{\partial \theta} \right|^2 - \frac{\mathsf{E}_\rho}{2\pi} \right)   \leq \frac{C}{-\log r}
}$ \ in \  $\D_{\frac{1}{2}}^*$
 \item[(iii)]  $u$ has sub-logarithmic growth. 
\end{itemize}
Moreover, $u$ is the only harmonic section satisfying $u|_{\partial \D} =k|_{\partial \D}$ and property (iii).
\end{theorem}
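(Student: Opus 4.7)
The domain $\D^*$ is conformally equivalent to the half-cylinder $[0,\infty)\times\Sp^1$ via $z=e^{-(t+i\theta)}$, so by the conformal invariance of two-dimensional Dirichlet energy it suffices to produce a $\rho$-equivariant harmonic map $\tilde u:[0,\infty)\times\R\to\tilde X$ whose restriction to $\{0\}\times\R$ is the equivariant lift of $k|_{\Sp^1}$. The strategy is the prototype-and-exhaustion scheme from the Introduction. First, I would construct a locally Lipschitz $\rho$-equivariant prototype $\tilde v$ satisfying
\[
E^{\tilde v}\bigl[[t_1,t_2]\times\Sp^1\bigr]\le (t_2-t_1)\mathsf{E}_\rho+C_0.
\]
In the semisimple case, pick $P_*\in\mathsf{Min}(I)$ and let $\tilde v(t,\theta)$ be the constant-speed geodesic in $\theta$ from $P_*$ to $I(P_*)$; the radial energy vanishes and the tangential slice energy equals $\mathsf{E}_\rho$. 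In the parabolic case, use $\tilde v(t,\theta)$ equal to the constant-speed geodesic from $c_0(s(t))$ to $I(c_0(s(t)))$, where $c_0$ is the arclength ray of Definition~\ref{jz} and $s(t)\sim(3/a)\log t$; then the tangential excess $\mathsf{E}_\rho b e^{-as(t)}$ and the radial contribution $\le 2\pi(s'(t))^2$ are both integrable in $t$.

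Next, interpolate geodesically in $t$ between $k$ at $t=0$ and $\tilde v$ at $t=1$ to obtain a global competitor $\tilde w$, adding only a bounded extra energy depending on $k$. For each $T>1$, solve the $\rho$-equivariant Dirichlet problem on $[0,T]\times\R$ with boundary values $\tilde w$; existence and local Lipschitz regularity come from \cite{korevaar-schoen2}, yielding $\tilde u_T$. Minimality gives $E^{\tilde u_T}\le E^{\tilde w}\le T\mathsf{E}_\rho+C_1$, while each slice $\{t\}\times\Sp^1$ is a section of $\R\times_\rho\tilde X\to\Sp^1$ whose tangential energy is at least $\mathsf{E}_\rho$ by Definition~\ref{infE}, so $E^{\tilde u_T}[[s,T]\times\Sp^1]\ge(T-s)\mathsf{E}_\rho$. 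Subtracting yields the crucial uniform bound
\[
E^{\tilde u_T}\bigl[[0,s]\times\Sp^1\bigr]\le s\mathsf{E}_\rho+C_1,\qquad 0\le s\le T,
\]
independent of $T$. The Korevaar--Schoen interior Lipschitz estimate converts this into uniform local Lipschitz bounds, and Arzel\`a--Ascoli extracts the desired limit $\tilde u$.

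For the estimates, passing the uniform bound to the limit supplies the upper bound in (i), and the slicewise inequality integrated in $t$ supplies the matching lower bound. Since the energy density of a harmonic map into an NPC space is subharmonic, the sub-mean-value inequality on balls of radius $t/2$ combined with the linear-in-$t$ growth from (i) produces $|\nabla\tilde u|^2(t,\theta)=O(1/t)$; separating tangential and radial components via the slicewise lower bound upgrades this to $|\partial_\theta\tilde u|^2-\mathsf{E}_\rho/(2\pi)\le C/t$ and $|\partial_t\tilde u|^2\le C/t$, and the change of variable $t=-\log r$, $\partial_t=-r\partial_r$ yields (ii). Integrating $|\partial_r u|\le Cr^{-1}(-\log r)^{-1/2}$ from $r$ to $1$ gives sub-logarithmic growth (iii). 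For uniqueness, let $u_1,u_2$ be two harmonic sections with the same boundary values and sub-logarithmic growth. The function $\phi(z):=d(u_1(z),u_2(z))$ is subharmonic on $\D^*$ (distance between NPC-valued harmonic maps), vanishes on $\Sp^1$, and inherits sub-logarithmic growth by the triangle inequality. For each $\epsilon>0$, $\psi_\epsilon:=\phi+\epsilon\log|z|$ is subharmonic, vanishes on $\Sp^1$, and tends to $-\infty$ at the puncture; the maximum principle on $\D^*$ forces $\psi_\epsilon\le 0$, so $\phi(z)\le\epsilon\log(1/|z|)$, and letting $\epsilon\downarrow 0$ gives $\phi\equiv 0$.

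The decisive nontrivial input is the uniform upper bound $E^{\tilde u_T}[[0,s]\times\Sp^1]\le s\mathsf{E}_\rho+C_1$ with the correct leading coefficient $\mathsf{E}_\rho$, which in turn rests on the prototype estimate with slope exactly $\mathsf{E}_\rho$. In the parabolic case, this is what forces the reparameterization $c(t)=c_0(s(t))$ of the arclength geodesic ray and uses the exponential decay rate $a$ quantitatively; cruder prototypes produce an extra linear-in-$t$ term and would yield only the weaker bound $E^u[\D_{r,1}]\le C\log(1/r)+C'$ with $C>\mathsf{E}_\rho$, failing the sharp estimate in (i) that underpins both (ii) and uniqueness.
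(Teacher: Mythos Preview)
Your overall architecture---prototype, exhaustion by Dirichlet problems, Arzel\`a--Ascoli, then (i)$\Rightarrow$(ii)$\Rightarrow$(iii), then uniqueness by the removable-singularity maximum principle---matches the paper's, and your treatments of existence, (i), (iii), and uniqueness are correct and essentially identical to the paper's Lemmas~\ref{(i)}, \ref{(iii)}, and~\ref{uniquenessdisk}.

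The gap is in your derivation of (ii). The assertion that the sub-mean-value inequality on balls of radius $t/2$ yields $|\nabla\tilde u|^2(t,\theta)=O(1/t)$ is false: on the cylinder (or its universal cover $\R^2$ with the periodic lift), a ball of radius $t/2$ has area $\sim t^2$ but contains $\sim t$ copies of a fundamental strip, each carrying energy $\sim t\mathsf{E}_\rho$, so the mean value only gives $|\nabla\tilde u|^2\le \tfrac{2\mathsf{E}_\rho}{\pi^2}+O(1/t)$---a bounded quantity with the \emph{wrong} leading constant. In particular $|\nabla\tilde u|^2\not\to 0$: the map is asymptotic to a geodesic wrap, so $|\partial_\theta\tilde u|^2\to\mathsf{E}_\rho/(2\pi)$. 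Your subsequent ``separating tangential and radial components via the slicewise lower bound'' does not recover the sharp pointwise estimates either, since that lower bound is an integral statement $\int_{\Sp^1}|\partial_\theta\tilde u|^2\,d\theta\ge\mathsf{E}_\rho$, not a pointwise one.

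What the paper actually does (Lemma~\ref{(ii)}) is finer: it uses that the \emph{directional} densities $|\partial_t(u\circ\Phi)|^2$ and $|\partial_\psi(u\circ\Phi)|^2$ are each individually subharmonic (Korevaar--Schoen), hence their slice integrals $G(t)$ and $F(t)=\int_{\Sp^1}(|\partial_\psi|^2-\mathsf{E}_\rho/2\pi)\,d\psi$ are convex in $t$. Since $F,G\ge 0$ and are integrable on $(0,\infty)$ by (i), convexity forces them to be nonincreasing, whence $tF(t)\le 2\int_{t/2}^\infty F$ (and similarly for $G$). Only \emph{then} does one apply the sub-mean-value inequality---on balls of \emph{fixed} radius $1$---to pass from the $O(1/t)$ bound on $F(t),G(t)$ to the pointwise bounds in (ii). You need this convexity-of-sliced-integrals step; the large-ball mean value cannot substitute for it.
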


\begin{proof}
Combine Lemma~\ref{(i)}, Lemma~\ref{(ii)}, Lemma~\ref{(iii)} and Lemma~\ref{uniquenessdisk} below.
\end{proof}
The purpose for the rest of this subsection is to prove the lemmas that comprise the proof of Theorem~\ref{exists}.   
We start with the following preliminary lemma about subharmonic functions.

\begin{lemma} \label{shdisk}
For $r>0$, let $\nu: \D_r^* \rightarrow \R$ be a  subharmonic function that extends as a continuous function to $\bar \D_r^*= \overline{\D_r} \backslash \{0\}$. If we assume that 
\begin{equation} \label{slowblowup}
\lim_{z \rightarrow 0} \  \nu(z) + \epsilon \log |z| =-\infty, \ \ \forall \epsilon>0,
\end{equation}
 then 
 \[
\sup_{z \in \D_r^*} \nu(z)  \leq \max_{\zeta \in \partial \D_r} \nu(\zeta).
\] 
In particular, $\nu$ extends to a subharmonic function on $\D_r$ (cf.~\cite{hayman-kennedy}).
 \end{lemma}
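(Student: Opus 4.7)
The plan is a barrier/comparison argument using the harmonic function $\log|z|$ to compensate for the puncture, followed by a limit $\epsilon\to 0$. Specifically, I would fix
\[
M:=\max_{\zeta\in\partial\D_r}\nu(\zeta)
\]
(finite by the hypothesis that $\nu$ extends continuously to $\bar\D_r^*$) and, for each $\epsilon>0$, consider the auxiliary function
\[
w_\epsilon(z):=\nu(z)+\epsilon\log(|z|/r),\qquad z\in\D_r^*.
\]
Since $\log|z|$ is harmonic on $\D_r^*$ and $\nu$ is subharmonic, $w_\epsilon$ is subharmonic. On $\partial\D_r$ one has $w_\epsilon=\nu\le M$, and the growth hypothesis (\ref{slowblowup}) gives $w_\epsilon(z)\to-\infty$ as $z\to 0$.

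With this setup, the next step is to run the classical maximum principle on a shrinking annulus. Given $\delta>0$, pick $r_\delta\in(0,r)$ small enough that $w_\epsilon\le M$ on $\{|z|=r_\delta\}$; such $r_\delta$ exists by the $-\infty$ behavior at the puncture. Applying the usual maximum principle on the compact annulus $\{r_\delta\le|z|\le r\}$, I conclude $w_\epsilon\le M$ there, and since $r_\delta$ may be taken arbitrarily small, the bound holds throughout $\D_r^*$. Unwinding the definition, this reads
\[
\nu(z)\le M-\epsilon\log(|z|/r)=M+\epsilon\log(r/|z|).
\]
Letting $\epsilon\to 0^+$ with $z\in\D_r^*$ fixed gives $\nu(z)\le M$, the desired bound.

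For the ``in particular'' assertion, the bound $\nu\le M$ shows $\nu$ is bounded above on $\D_r^*$. The standard removable-singularity result for subharmonic functions bounded above (cf.~Hayman--Kennedy, as cited in the statement) then produces a subharmonic extension to all of $\D_r$ by setting the value at the origin equal to $\limsup_{z\to 0}\nu(z)$.

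The main obstacle is purely formal rather than technical: one must be careful that the maximum principle is applied on a domain (an annulus) where $\nu$ is actually continuous up to the boundary, since the hypothesis provides no continuity at the puncture itself. The barrier $\epsilon\log(|z|/r)$ is precisely what allows one to bypass this, by turning the uncontrolled behavior at $0$ into a genuine $-\infty$ limit for $w_\epsilon$ and thereby reducing the problem to the classical setting on a compact annulus. Everything else is routine once this device is in place.
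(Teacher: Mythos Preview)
Your proof is correct and follows essentially the same approach as the paper: a logarithmic barrier $\epsilon\log(|z|/r)$ pushes the value at the puncture to $-\infty$, the maximum principle on annuli gives the bound, and one lets $\epsilon\to 0$. The only cosmetic difference is that the paper compares $\nu$ to the harmonic Dirichlet extension $h$ of $\nu|_{\partial\D_r}$ (and then bounds $h$ by its boundary maximum), whereas you compare $\nu$ directly to the constant $M=\max_{\partial\D_r}\nu$; your version is marginally more direct.
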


\begin{proof}
Let  $h:\D_r \rightarrow \R$ be  the unique Dirichlet solution  with $h|_{\partial \D_r}=\nu|_{\partial \D_r}$.  By the maximum principle,  $h$ is non-negative and bounded on $\D_r$.  The function 
\[
f_{\epsilon}(z)=\nu(z)-h(z)+\epsilon (\log |z|-\log r)
\]
is subharmonic on $\D_r^*$ with 
$
f_{\epsilon}|_{\partial \D_r} \equiv 0$ on $\partial \D_r$.
Furthermore, since $h$ is a bounded function and $\nu$ satisfies (\ref{slowblowup}), 
\[ 
\lim_{z \rightarrow 0} f_{\epsilon}(z) =-\infty.
\]
By the maximum principle, $f_{\epsilon} \leq 0$.  By letting $\epsilon \rightarrow 0$, we conclude $\nu(z) -h(z) \leq 0$  for all $z \in \D_r^*$.
Thus, 
\[
\sup_{z \in \D_r^*} \nu(z)  \leq \sup_{z \in \D_r}h(z) \leq    \max_{\zeta \in \partial \D_r} h(\zeta)=\max_{\zeta \in \partial \D_r} \nu(\zeta).
\] 
\end{proof}

\begin{lemma} \label{lowerbd}
If  $\rho$ and $I$ are as in Theorem~\ref{exists}, then for any section $f: \D_{r,r_0} \rightarrow \DDD \times_{\rho} \tilde X$,
\[
\mathsf{E}_\rho \log \frac{r_0}{r}  \leq 
  \int_{\D_{r,r_0}}  \frac{1}{r^2} \left| \frac{\partial f}{\partial \theta} \right|^2 rdr \wedge d\theta \leq E^{f}[\D_{r,r_0}], \ \ 0<r<r_0<1. 
\] 
\end{lemma}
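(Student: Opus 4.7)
The right inequality is immediate. Working in polar coordinates $(r,\theta)$ on the annulus, the domain metric is $dr^2 + r^2 d\theta^2$, so the Korevaar--Schoen energy density of a section $f$ is $|\nabla f|^2 = |\partial_r f|^2 + r^{-2}|\partial_\theta f|^2$, and the volume form is $r\,dr\wedge d\theta$. Thus
\[
E^f[\D_{r,r_0}] = \int_{\D_{r,r_0}} \Bigl(|\partial_r f|^2 + \tfrac{1}{r^2}|\partial_\theta f|^2\Bigr)\, r\,dr\wedge d\theta \;\geq\; \int_{\D_{r,r_0}} \tfrac{1}{r^2}|\partial_\theta f|^2 \, r\,dr\wedge d\theta,
\]
which is the right-hand inequality.

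For the left inequality, I would use Fubini. Writing the integral in iterated form,
\[
\int_{\D_{r,r_0}} \tfrac{1}{\tilde r^2}|\partial_\theta f|^2\,\tilde r\,d\tilde r\wedge d\theta
= \int_r^{r_0} \frac{1}{\tilde r}\left(\int_0^{2\pi}|\partial_\theta f|^2(\tilde r,\theta)\,d\theta\right) d\tilde r.
\]
For almost every $\tilde r\in(r,r_0)$ the restriction $f_{\tilde r}(\theta) := f(\tilde r,\theta)$ is a well-defined $W^{1,2}$ section of the flat $\tilde X$-bundle over the circle $\{|z|=\tilde r\}$; reparametrizing by arc-length-over-$\tilde r$ identifies this with a section $\Sp^1 \to \R\times_\rho \tilde X$ whose (vertical) energy is precisely $\int_0^{2\pi}|\partial_\theta f|^2(\tilde r,\theta)\,d\theta$. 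By the definition of $\mathsf{E}_\rho$ as the infimum of such energies (Definition~\ref{infE}), this quantity is $\geq \mathsf{E}_\rho$ for a.e.\ $\tilde r$. Integrating,
\[
\int_r^{r_0} \frac{1}{\tilde r}\int_0^{2\pi}|\partial_\theta f|^2\,d\theta\,d\tilde r \;\geq\; \mathsf{E}_\rho \int_r^{r_0}\frac{d\tilde r}{\tilde r} \;=\; \mathsf{E}_\rho \log\frac{r_0}{r}.
\]

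The only nonroutine point is justifying that the slice $f_{\tilde r}$ makes sense as a section of the $\Sp^1$-bundle for a.e.\ $\tilde r$ and that its energy is correctly given by the $\theta$-directional energy. This is the standard Fubini argument in Korevaar--Schoen theory (\cite{korevaar-schoen1}): $|\partial_\theta f|^2 \in L^1$ implies that for a.e.\ radial slice the restriction lies in $W^{1,2}$ and its one-dimensional energy coincides with the integral of the directional energy density along the slice. Neither the hypothesis on exponential decay of $I$ nor the specific form of $\rho$ plays any role here beyond the bare definition of $\mathsf{E}_\rho$, so the lemma holds with the stated constants.
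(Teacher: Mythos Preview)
Your proof is correct and follows essentially the same approach as the paper: the right inequality comes from dropping the radial directional energy, and the left inequality comes from the fact that each circular slice $f|_{\partial \D_{\tilde r}}$ is a section of $\R\times_\rho \tilde X \to \Sp^1$ with energy at least $\mathsf{E}_\rho$, followed by integration in $\tilde r$. Your additional remarks on the Korevaar--Schoen Fubini-type justification are a welcome elaboration of what the paper leaves implicit.
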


\begin{proof}
By the definition of $\mathsf{E}_I$, any section  $c:\Sp^1 \rightarrow \R \times_{\rho} \tilde X$ satisfies
\[
\mathsf{E}_\rho
 \leq   \int_0^{2\pi} \left| \frac{\partial c}{\partial \theta} \right|^2 d\theta.
 \]
After identifying $\Sp^1$ with $\partial \D_r$,  we can view the restriction  $f|_{\partial \D_r}$ as a section  $\Sp^1 \rightarrow \R \times_{\rho} \tilde X$.  
Thus, 
\begin{eqnarray*}\label{ggeo} 
\mathsf{E}_\rho \log \frac{r_0}{r} & = & \mathsf{E}_\rho  \int_r^{r_0} \frac{1}{r^2}  rdr \wedge d\theta \nonumber\\
&  \leq &   \int_0^{2\pi} \int_r^{r_0} \frac{1}{r^2} \left| \frac{\partial f}{\partial \theta} (r,\theta) \right|^2 rdr \wedge d\theta \\
& \leq & E^f[\D_{r,r_0}] \nonumber.
\end{eqnarray*}
\end{proof}

\begin{lemma} \label{defineg}
If $\rho$, $I$ and $k:\DDDD$ are as in  Theorem~\ref{exists}, then there exists a constant $C>0$ and a  Lipschitz section $v:\DDDD$  with $v|_{\partial \D}=k|_{\partial \D}$ such that 
\begin{equation}   \label{glog}
\mathsf{E}_\rho \log \frac{r_0}{r} \leq  E^v[\D_{r,r_0}]  \leq \mathsf{E}_\rho \log \frac{r_0}{r}+C, \ \ \ 0<r<r_0\leq \frac{1}{2}.
\end{equation}
Moreover, the Lipschitz constant of $v$ and $C$ are dependent only on $\mathsf{E}_\rho$, $a$, $b$ from Definition~\ref{jz} and $k$.
\end{lemma}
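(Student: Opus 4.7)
The plan is to pass to the conformal cylinder coordinates $(t,\theta) \in [0,\infty)\times \Sp^1$ via $z = e^{-t+i\theta}$, in which the Dirichlet energy becomes flat,
\[
E^v[\D_{r,r_0}] = \int_{T_0}^T\!\!\int_0^{2\pi}\bigl(|\partial_t \tilde v|^2 + |\partial_\theta \tilde v|^2\bigr)\,d\theta\,dt, \qquad T_0:=-\log r_0,\ T:=-\log r,
\]
and $\rho$-equivariance becomes $\tilde v(t,\theta+2\pi) = I(\tilde v(t,\theta))$. The task then reduces to producing an equivariant $\tilde v$ on the half-strip whose energy per unit $t$ equals $\mathsf{E}_\rho$ up to an integrable error.

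I will construct $\tilde v$ from the geodesic ray $c:[0,\infty)\rightarrow \tilde X$ furnished by the exponential decay hypothesis (Definition~\ref{jz}), so that $d^2(c(s),I(c(s))) \leq \Delta_I^2(1+be^{-as})$; in the semisimple case I take $c$ constant at a point of $\mathsf{Min}(I)$. For each $s\geq 0$ let $\gamma_s:[0,2\pi]\rightarrow \tilde X$ be the constant-speed geodesic from $c(s)$ to $I(c(s))$, extended equivariantly to $\theta\in \R$ by $\gamma_s(\theta+2\pi)=I(\gamma_s(\theta))$. Then I pick a smooth nondecreasing reparameterization $\sigma:[0,\infty)\rightarrow [0,\infty)$ with $\sigma(0)=0$, $\sigma'\in L^2$, and $e^{-a\sigma}\in L^1$; the explicit choice $\sigma(s)=(2/a)\log(1+s)$ works, since $\int_0^\infty \sigma'^2 = 4/a^2$ and $\int_0^\infty e^{-a\sigma} = 1$. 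Using the convex combination of Notation~\ref{interpolationnotation}, I set
\[
\tilde v(t,\theta)=\begin{cases}(1-t)\,\tilde k(0,\theta) + t\,\gamma_0(\theta), & t\in[0,1],\\[1mm] \gamma_{\sigma(t-1)}(\theta), & t\geq 1,\end{cases}
\]
with equivariance in $\theta$ automatic because isometries commute with NPC convex combinations.

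Next, I estimate the two energy densities. The NPC convexity of $\theta\mapsto d(\gamma_{s_1}(\theta),\gamma_{s_2}(\theta))$, together with the isometry property of $I$ giving endpoint distances $d(c(s_1),c(s_2)) = d(I(c(s_1)),I(c(s_2))) = |s_1-s_2|$, yields $|\partial_t\tilde v|^2 \leq \sigma'(t-1)^2$ for $t\geq 1$. Since $\theta\mapsto\gamma_s(\theta)$ has constant speed $d(c(s),I(c(s)))/(2\pi)$, the decay hypothesis produces
\[
\int_0^{2\pi}|\partial_\theta\tilde v|^2\,d\theta = \tfrac{1}{2\pi}d^2\bigl(c(\sigma(t-1)),I(c(\sigma(t-1)))\bigr) \leq \mathsf{E}_\rho\bigl(1+be^{-a\sigma(t-1)}\bigr).
\]
Integrating over $[1,T]\times\Sp^1$ and using $\sigma'\in L^2$, $e^{-a\sigma}\in L^1$ gives a bound $\mathsf{E}_\rho(T-1)+C_\sigma$ with $C_\sigma$ depending only on $\mathsf{E}_\rho$, $a$, $b$. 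The compact strip $[0,1]\times\Sp^1$ contributes a constant controlled by the Lipschitz norm of $k$, the distance $d(P_0,\tilde k(0,\cdot))$, and $\Delta_I$. Subtracting the $[0,T_0]$ piece via Lemma~\ref{lowerbd} then gives $E^v[\D_{r,r_0}] \leq \mathsf{E}_\rho\log(r_0/r) + C$, and the matching lower bound is immediate from Lemma~\ref{lowerbd}.

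The main obstacle — and the point of the logarithmic reparameterization — is the sharp constant $\mathsf{E}_\rho$ in front of $\log(r_0/r)$. The naive choice $\tilde v(t,\theta)=\gamma_t(\theta)$ would force $|\partial_t\tilde v|^2 \leq 1$ and add an unwanted $2\pi\log(r_0/r)$ term, ruining the leading coefficient. Slowing $c$ to logarithmic rate makes $\int \sigma'^2<\infty$ while still letting $\sigma(t-1)\to\infty$, so the $\theta$-energy density approaches the infimum $\mathsf{E}_\rho/(2\pi)$ with integrable defect $\mathsf{E}_\rho b\,e^{-a\sigma(t-1)}$. Once $\sigma$ is fixed, all derivatives of $\tilde v$ are uniformly bounded on $[0,\infty)\times \R$, so $v$ is Lipschitz with constants of the required dependence on $\mathsf{E}_\rho$, $a$, $b$, and $k$.
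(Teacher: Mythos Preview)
Your argument is correct and follows essentially the same strategy as the paper: build $\gamma_s$ as the equivariant geodesic from $c(s)$ to $I(c(s))$, use the NPC quadrilateral comparison to bound $|\partial_s\gamma_s|\le 1$, and then slow the ray with a reparameterization so that the radial energy becomes integrable while the angular energy still approaches $\mathsf{E}_\rho$ with integrable defect. The only difference is the choice of reparameterization: the paper uses $\sigma(t)=(-\log r-\log 2)^{1/3}$, giving $\sigma'^2\sim t^{-4/3}\in L^1$ and $e^{-a\sigma}=e^{-at^{1/3}}\in L^1$, whereas you take $\sigma(t)=(2/a)\log(1+t)$, which makes both integrals explicit. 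Either choice works for the same reason, and your bridge to the boundary data on a compact strip is handled exactly as in the paper.
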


\begin{proof} 
We will  first construct an one-parameter family of  sections   $\gamma_s:\Sp^1 \mapsto \R \times_{\rho} \tilde X$ as follows:
\begin{itemize}
\item If $I$ is semisimple (i.e.~elliptic or hyperbolic), let $P_{\star} \in \tilde X$ be a point where the infimum $\Delta_I$ is attained.  For any $s \in [0,\infty)$, define 
$\tilde \gamma_s:\R \rightarrow \tilde X$ to be the $\rho$-equivariant geodesic  (a constant map if $I$ is elliptic) such that $\tilde \gamma_s(0)=P_{\star}$ and  $\tilde \gamma_s(2\pi)=I(P_{\star})$ and  let $\gamma_s:\Sp^1 \rightarrow \tilde \Sp^1 \times_{\rho} \tilde X$ be the associated section.
\item
If $I$ is parabolic, then 
let  
 $c:[0,\infty) \rightarrow \tilde X$ be a geodesic ray defined in Definition~\ref{jz}.  Define the $\rho$-equivariant curve 
$\tilde \gamma_s:\R  \rightarrow \tilde X$ such that $\tilde \gamma_s|_{[0,2\pi]}$ is  the geodesic from $c(s)$ and $I(c(s))$ and  let $\gamma_s:\Sp^1 \rightarrow \tilde \Sp^1 \times_{\rho} \tilde X$ be the associated section.
\end{itemize}
We note that by the quadrilateral comparison of NPC spaces, for $\theta
\in [0,2\pi]$,
\[
d(\tilde \gamma_{s_1}(\theta), \tilde \gamma_{s_2}(\theta)) \leq (1-\frac{\theta}{2\pi}) d(c(s_1), c(s_2))+\frac{\theta}{2\pi}d(I \circ c(s_1)), I \circ c(s_2)))=|s_1-s_2|.
\]
 Thus
 \begin{equation} \label{npcquad}
\left| \frac{d}{ds} (\tilde \gamma_s(\theta)) \right| \leq 1, \ \ \ \forall  s \in (0,\infty), \  \theta \in \Sp^1.
\end{equation} 
Since  $I \in \mathsf{Isom}(\tilde X)$ has exponential decay of Definition~\ref{jz}, we have
\begin{equation} \label{gammatbd}
\int_{\Sp^1} \left| \frac{\partial \gamma_s}{\partial \theta} \right|^2 d\tau \leq \mathsf{E}_\rho(1+2\pi be^{-as}).
\end{equation} 

Define a $\rho$-equivariant map $\tilde v:  \DDD \rightarrow \tilde X$ as follows:  First, for $(r,t) \in  \DDD$,  let
\[
\tilde v(r,t):=\tilde \gamma_{(-\log r-\log 2)^{\frac{1}{3}}}(t) \mbox{ for } r \in (0,\frac{1}{2}] 
\  \mbox{ and } \ 
 \tilde v(1,t):=\tilde k(1,t)
\] 
Next,  for each $t \in \R$, let 
\[
r \mapsto  \tilde v(r,t) \mbox{ for } r \in [\frac{1}{2},1]
\]
to be the arclength parameterization of the  geodesic between $\tilde v(\frac{1}{2}, t)$ and $\tilde k(\frac{1}{2},t)$.  Finally, let
$v:\DDDD$ be the section associated to the $\rho$-equivariant map $\tilde v$.

For $0<r<r_0<\frac{1}{2}$, we have
\begin{eqnarray*}
\mathsf{E}_\rho \log \frac{r_0}{r} & \leq  &  \int_0^{2\pi} \int_r^{r_0} \frac{1}{r^2} \left| \frac{\partial v}{\partial \theta} \right|^2 rdr \wedge d\theta \ \ \ \ \  \mbox{(by Lemma~\ref{lowerbd})} \nonumber \\
& \leq  &  \int_0^{2\pi} \int_r^{r_0} \frac{1}{r^2}  \left| \frac{\partial (\gamma_{(-\log r-\log 2)^{\frac{1}{3}}})}{\partial \theta} \right|^2 rdr \wedge d\theta
\\
& =  &  \int_0^{2\pi} \int_{-\log r_0}^{-\log r}  \left| \frac{\partial (\gamma_{(t-\log 2)^{\frac{1}{3}}})}{\partial \theta} \right|^2 dtd\theta \nonumber \\
& \leq  &  \int_{-\log r_0}^{-\log r}( \mathsf{E}_\rho+be^{-a(t-\log 2)^{\frac{1}{3}}})
 dt \ \ \ \ \ \ \ \ \mbox{(by (\ref{gammatbd}))}\nonumber \\
& = & 
\mathsf{E}_\rho \log \frac{r_0}{r}+C.  
\end{eqnarray*}
By (\ref{npcquad}), 
\begin{eqnarray*}
\left| \frac{\partial v}{\partial r} \right|^2(r,\theta)  =  \left| \frac{\partial}{\partial s} \Big|_{s=(-\log r-\log 2)^{\frac{1}{3}} } \gamma_s(\theta) \right|^2  \left| \frac{\partial ( (-\log r-\log 2)^{\frac{1}{3}} )}{\partial r} \right|^2
\leq  \frac{1}{r^2(-\log r-\log 2)^{\frac{4}{3}}},
\end{eqnarray*}
and thus
\begin{eqnarray*}
  \int_0^{2\pi} \int_r^{r_0} \left| \frac{\partial v}{\partial r} \right|^2 rdr \wedge d\theta  & = &  \int_0^{2\pi} \int_r^{r_0}  \frac{1}{r(-\log r-\log 2)^{\frac{4}{3}}} dr \wedge d\theta  \leq C.  
\end{eqnarray*}
\end{proof}

\begin{lemma}[Existence and property (i) of Theorem~\ref{exists}] \label{(i)}
If $\tilde X$, $\rho$,  $\mathsf{E}_\rho$ and $k$ as in  Theorem~\ref{exists}, 
then there  exists a  harmonic section  $u:\D^* \rightarrow \widetilde{\D^*} \times_\rho \tilde X$  with  $u|_{\partial \D} =k|_{\partial \D}$ and a constant $C>0$ that   depends only on $\mathsf{E}_\rho$, $a$, $b$ from Definition~\ref{jz} and the section $k$ such that 
\[
\mathsf{E}_\rho  \log \frac{1}{r} \leq E^u[\D_{r,1}] \leq \mathsf{E}_\rho \log \frac{1}{r}+C, \ \ \ 0<r\leq 1.
\]
\end{lemma}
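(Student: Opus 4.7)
The plan is to construct $u$ as a locally uniform limit of Dirichlet solutions on an exhaustion of $\D^*$ by annuli $\D_{r_T,1}$, with the prototype section $v$ from Lemma~\ref{defineg} serving as both inner boundary data and variational competitor. Concretely, I would fix $v$ with $v|_{\partial \D}=k|_{\partial \D}$ and $E^v[\D_{r,r_0}] \leq \mathsf{E}_\rho \log(r_0/r) + C_0$, set $r_T = e^{-T}$ for integers $T \geq 1$, and apply the Korevaar--Schoen Dirichlet theory for maps into NPC spaces to obtain the unique energy-minimizing harmonic section $u_T: \D_{r_T,1} \to \DDD \times_\rho \tilde X$ whose outer trace is $k|_{\partial \D}$ and whose inner trace is $v|_{\partial \D_{r_T}}$.

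The next step would be to derive a uniform energy bound for $\{u_T\}$ on each fixed subannulus $\D_{r,1}$. Since $v$ is an admissible competitor on $\D_{r_T,1}$, minimality of $u_T$ yields
\[
E^{u_T}[\D_{r_T,1}] \leq E^v[\D_{r_T,1}] \leq \mathsf{E}_\rho \log(1/r_T) + C_0.
\]
Restricting $u_T$ to $\D_{r_T,r}$ and invoking Lemma~\ref{lowerbd} produces $E^{u_T}[\D_{r_T,r}] \geq \mathsf{E}_\rho \log(r/r_T)$, and subtraction gives the $T$-independent estimate $E^{u_T}[\D_{r,1}] \leq \mathsf{E}_\rho \log(1/r) + C_0$ for all $T$ with $r_T < r$. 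Combined with the interior Lipschitz regularity of energy-minimizing maps into NPC spaces (cf.~\cite{korevaar-schoen1}), this produces a uniform Lipschitz bound for $u_T$ on every compact subset of $\D^*$, and boundary regularity driven by the Lipschitz character of $k$ together with the uniform local energy bound extends this control up to $\partial \D$.

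Arzela--Ascoli would then yield a subsequence $u_{T_j}$ converging locally uniformly on $\bar \D^*$ to a continuous section $u$ with $u|_{\partial \D} = k|_{\partial \D}$. Harmonicity passes to the limit in the standard way: any admissible variation of $u$ supported on a compact set $K \subset \D^*$ eventually lies in $\D_{r_{T_j},1}$, so the energy minimality of $u_{T_j}$ against the corresponding approximating variations, together with lower semicontinuity of the Korevaar--Schoen energy, forces the same local minimality for $u$. The upper bound $E^u[\D_{r,1}] \leq \mathsf{E}_\rho \log(1/r) + C$ transfers from the uniform bound above by lower semicontinuity, while the lower bound is immediate from Lemma~\ref{lowerbd} applied to $u$ itself.

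The main anticipated obstacle is securing uniform boundary regularity of the family $u_T$ near $\partial \D$, so that their traces converge strongly to $k|_{\partial \D}$ and the limit $u$ actually realizes the boundary data in the classical sense. I would handle this via a trace estimate for NPC-valued Dirichlet energy minimizers near a Lipschitz boundary, exploiting the Lipschitz character of $k$ and the uniform energy bound already established; an alternative approach is to modify $v$ in a small collar of $\partial \D$ so that every $u_T$ agrees with a fixed Lipschitz extension of $k$ on that collar, which would transfer boundary regularity for free.
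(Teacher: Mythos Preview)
Your approach is essentially identical to the paper's: construct Dirichlet solutions on annuli $\D_{r,1}$ with boundary data $v$, use minimality against $v$ together with Lemma~\ref{lowerbd} to obtain $T$-independent energy bounds on each fixed subannulus, extract a locally uniform limit via Arzela--Ascoli and the Korevaar--Schoen interior regularity, and transfer the energy growth bounds to the limit by lower semicontinuity. Your explicit attention to boundary regularity near $\partial \D$ is a point the paper itself elides (it simply asserts $u|_{\partial \D}=k|_{\partial \D}$ after citing interior equicontinuity), and your proposed remedies are reasonable.
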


\begin{proof}
Let  $v$ be as in Lemma~\ref{defineg}. By \cite[Theorem 2.7.2]{korevaar-schoen1}, there exists a  unique harmonic section 
\[
u_r:   \D_{r,1} \rightarrow  \widetilde{ \D_{r,1}}\times_{\rho} \tilde X
\mbox{ with } u_r|_{\partial \D_{r,1}}=v|_{\partial \D_{r,1}}.
 \] 
We have that
\begin{eqnarray}
\mathsf{E}_\rho \log \frac{r_0}{r} +E^{u_r}[\D_{r_0,1}] & \leq & E^{u_r}[\D_{r,r_0}] +E^{u_r}[\D_{r_0,1}]  \ \ \ \ \  \mbox{(by Lemma~\ref{lowerbd})} \nonumber 
\\
& = & E^{u_r}[\D_{r,1}]\nonumber  \\
 & \leq  &  E^{v}[\D_{r,1}] \ \ (\mbox{since $u_r$ is minimizing}) \nonumber  \\
  & = &  E^{v}[\D_{r,r_0}]+ E^{v}[\D_{r_0,1}]  \nonumber  \\
 & \leq &  
\mathsf{E}_\rho \log \frac{r_0}{r} +C +E^{v}[\D_{r_0,1}] 
 \ \ \ \ \  \mbox{(by (\ref{glog}))}.  \nonumber 
 \end{eqnarray}
Therefore,
 \begin{equation} \label{bdvr}
 E^{u_r}[\D_{r_0,1}] \leq C+ E^{v}[\D_{r_0,1}].
 \end{equation}
Note that the right hand side of the above is independent of $r \in (0,\frac{1}{2}]$.  
Thus, $\{u_r|_{\D_{2r_0,1}}\}$ is an equicontinuous family of sections (cf.~\cite[Theorem 2.4.6]{korevaar-schoen1}). Consequently, there exists a sequence  $r_i \rightarrow 0$ and a harmonic section 
\begin{equation} \label{hmpd}
u:  \DDDD \mbox{ with }  u|_{\partial \D} =k|_{\partial \D}.
\end{equation}
 such that the sequence $\{u_{r_i}\}$ converges uniformly on every on compact subset of $\D^*$ to  $u$.

To prove property (i),  we note that for $0<r<r_1<1$,  \begin{eqnarray*}
E^{u_r}[\D_{r,r_1}]  +E^{u}[\D_{r_1,1}] & = & E^{u_r}[\D_{r,1}] \\
& \leq &   E^{v}[\D_{r,1}]
\\
& = & E^{v}[\D_{r,r_1}]+E^{v}[\D_{r_1,1}] \end{eqnarray*}
which, combined with Lemma~\ref{lowerbd} and (\ref{glog}), imply that
\[
E^{u_r}[\D_{r_1,1}]  \leq E^v[\D_{r_1,1}]+C.
\]
Letting $r \rightarrow 0$ and applying the lower semicontinuity of energy \cite[Theorem 1.6.1]{korevaar-schoen1},  we obtain
\[
E^{u}[\D_{r_1,1}]  \leq E^v[\D_{r_1,1}]+C.
\]
Applying Lemma~\ref{lowerbd} and (\ref{glog}), we obtain
\[
\mathsf{E}_\rho \log \frac{1}{r_1}  \leq E^{u}[\D_{r_1,1}]  \leq E^v[\D_{r_1,1}]+C \leq \mathsf{E}_\rho \log \frac{1}{r_1}+C.
\]
\end{proof}

   \begin{lemma}  \label{integrable}
  If    $u:\DDDD$ is a harmonic section  with logarithmic energy growth,
then
 \[
\int_{\D^*}  \left| \frac{\partial u}{\partial r}\right|^2 rdr \wedge d\theta  \leq C \ \mbox{ and } \ \int_{\D^*}  \frac{1}{r^2} \left( \left| \frac{\partial u}{\partial \theta}\right|^2 -  \frac{\mathsf{E}_\rho}{2\pi} \right) rdr \wedge d\theta  \leq C
\]
where $C$ that depends $C_0$ from (\ref{Cbddef}). \end{lemma}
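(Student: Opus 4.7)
The plan is to exploit the tight matching between the assumed logarithmic upper bound on total energy and the logarithmic lower bound on rotational energy supplied by Lemma~\ref{lowerbd}, so that the divergent $\mathsf{E}_\rho \log(1/r)$ contributions cancel and leave only two bounded non-negative remainders.

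In polar coordinates the energy splits as
\[
E^u[\D_{r,1}] = \int_{\D_{r,1}} \left|\frac{\partial u}{\partial r}\right|^2 r\,dr\,d\theta + \int_{\D_{r,1}} \frac{1}{r^2}\left|\frac{\partial u}{\partial \theta}\right|^2 r\,dr\,d\theta.
\]
The first step is to combine the hypothesis $E^u[\D_{r,1}] \leq \mathsf{E}_\rho \log(1/r) + C_0$ (the upper half of logarithmic energy growth) with Lemma~\ref{lowerbd} applied with $r_0=1$, which gives $\int_{\D_{r,1}} r^{-2}|\partial_\theta u|^2 r\,dr\,d\theta \geq \mathsf{E}_\rho \log(1/r)$. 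Subtracting yields
\[
\int_{\D_{r,1}} \left|\frac{\partial u}{\partial r}\right|^2 r\,dr\,d\theta \leq C_0
\]
for every $r\in(0,1)$. Since the integrand is non-negative, monotone convergence as $r\to 0$ immediately gives the first asserted inequality with $C=C_0$.

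For the second inequality, I would rewrite
\[
\int_{\D_{r,1}} \frac{1}{r^2}\left(\left|\frac{\partial u}{\partial \theta}\right|^2 - \frac{\mathsf{E}_\rho}{2\pi}\right) r\,dr\,d\theta = \int_{\D_{r,1}} \frac{1}{r^2}\left|\frac{\partial u}{\partial \theta}\right|^2 r\,dr\,d\theta - \mathsf{E}_\rho \log\frac{1}{r},
\]
bound the first term above by $E^u[\D_{r,1}] \leq \mathsf{E}_\rho\log(1/r) + C_0$, and conclude that the difference is at most $C_0$ for every $r\in(0,1)$. To justify passage to the limit $r\to 0$, I would note that the proof of Lemma~\ref{lowerbd} actually shows the circle-wise bound $\int_0^{2\pi} |\partial_\theta u(r,\theta)|^2 d\theta \geq \mathsf{E}_\rho$ for almost every $r$, so writing the integral as the iterated integral $\int_0^1 \frac{1}{r}\bigl(\int_0^{2\pi}|\partial_\theta u|^2 d\theta - \mathsf{E}_\rho\bigr)\,dr$ presents a non-negative function of $r$; monotone convergence then gives the desired uniform bound over all of $\D^*$.

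There is essentially no obstacle: the whole content of the lemma is a bookkeeping consequence of the cancellation between the logarithmic upper bound carried in the hypothesis and the logarithmic lower bound of Lemma~\ref{lowerbd}, together with the circle-wise non-negativity of the rotational deficit. The only point requiring a moment of care is recognising that even though the integrand of the second estimate is not pointwise non-negative, its average over each circle is, which is exactly what is needed to apply monotone convergence.
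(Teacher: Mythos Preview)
Your argument is correct and is precisely the expanded version of what the paper records in a single sentence (``Follows immediately from the inequality (\ref{Cbddef}) in the Definition~\ref{logdec}''). You have simply made explicit the two ingredients the paper leaves implicit: the polar splitting of the energy together with Lemma~\ref{lowerbd} to cancel the $\mathsf{E}_\rho\log(1/r)$ terms, and the circle-wise non-negativity of the rotational deficit to justify the limit $r\to 0$ via monotone convergence.
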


 \begin{proof}
 Follows immediately from the inequality (\ref{Cbddef}) in the Definition~\ref{logdec}.
\end{proof}

   \begin{lemma} [Property (ii) of Theorem~\ref{exists}] \label{(ii)}  If    $u:\DDDD$ is a harmonic section with logarithmic energy growth,
then
\begin{eqnarray} \label{mika1}
\label{mika2}
 \left(\left| \frac{\partial u}{\partial \theta}\right|^2 - \frac{\mathsf{E}_\rho}{2\pi}\right) & \leq &  \frac{C}{(-\log r)}
\ \text{ in } \,
\D_{\frac{1}{4}}^*
\\
\lim_{r \rightarrow 0}  (-\log r)\left(  \left| \frac{\partial u}{\partial \theta} \right|^2(r,\theta) -     \frac{\mathsf{E}_\rho}{2\pi} \right)
 &  = &  0   \label{limitisF}
\\
\left| \frac{\partial u}{\partial r}\right| & \leq &  \frac{C}{r^2 (-\log r)}  
\ \mbox{ in } \,
\D_{\frac{1}{4}}^*
\label{mika1}
\\
 \label{limitisFrad}
\lim_{r \rightarrow 0}  (-\log r) r^2  \left| \frac{\partial u}{\partial r} \right|^2 (r,\theta) &  = &  0.
\end{eqnarray}
where  $C$ that depends $C_0$ from (\ref{Cbddef}). 
\end{lemma}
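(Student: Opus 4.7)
I would transfer the problem to the infinite cylinder via $t = -\log r$, under which $\D_{1/4}^*$ is conformally equivalent to $[\log 4,\infty)\times \Sp^1$, $r\partial_r = -\partial_t$, and the four claims translate to $|\partial_t u|^2 \leq C/t$, $|\partial_\theta u|^2 - \mathsf{E}_\rho/(2\pi) \leq C/t$, $t|\partial_t u|^2 \to 0$ and $t(|\partial_\theta u|^2 - \mathsf{E}_\rho/(2\pi)) \to 0$ as $t \to \infty$. Set
\[
G(t) := \int_0^{2\pi}|\partial_t u|^2(t,\theta)\,d\theta, \qquad F(t) := \int_0^{2\pi}|\partial_\theta u|^2(t,\theta)\,d\theta.
\]
Under the change of variables, Lemma~\ref{integrable} rewrites as $G \in L^1(0,\infty)$ and $F - \mathsf{E}_\rho \in L^1(0,\infty)$, while Lemma~\ref{lowerbd} applied to each slice yields the pointwise lower bound $F(t) \geq \mathsf{E}_\rho$.

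The structural input is the pointwise identity $F(t) - \mathsf{E}_\rho = G(t)$. I would derive it from the first variation of energy applied to the one-parameter family $v_s(t,\theta) := u(t + s\eta(t), \theta)$ for $\eta \in C_c^\infty(0,\infty)$: after the change of variable $\tau = t + s\eta(t)$,
\[
E^{v_s} = \int \bigl[(1+s\eta'(\tau))|\partial_\tau u|^2 + (1+s\eta'(\tau))^{-1}|\partial_\theta u|^2\bigr]\,d\tau\,d\theta,
\]
so harmonicity of $u$ forces $\int \eta'(\tau)(G(\tau) - F(\tau))\,d\tau = 0$ for every admissible $\eta$, whence $G - F$ is constant in $t$; the integrability of $G$ and $F - \mathsf{E}_\rho$ forces this constant to equal $-\mathsf{E}_\rho$. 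Equivalently, the Hopf differential $\phi := \pi_u(\partial_t,\partial_t) - \pi_u(\partial_\theta,\partial_\theta) - 2i\,\pi_u(\partial_t,\partial_\theta)$ (well-defined via the Korevaar--Schoen pullback tensor) is weakly holomorphic on the cylinder with zeroth Fourier coefficient $-\mathsf{E}_\rho/(2\pi)$.

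For the radial estimate, $|\partial_t u|^2$ is weakly subharmonic for a harmonic map into an NPC target (since $\partial_t$ is parallel on the flat cylinder), so $G$ is convex by integration in $\theta$; combined with $G \geq 0$ and $G \in L^1(0,\infty)$, convexity forces $G$ to be eventually non-increasing, and the standard estimate $G(t) \leq \tfrac{2}{t}\int_{t/2}^{t}G(s)\,ds$ yields both $G(t) \leq C/t$ and $tG(t) \to 0$. The mean-value inequality for the subharmonic function $|\partial_t u|^2$ on the unit ball $B_1(t_0,\theta_0) \subset (0,\infty)\times \Sp^1$ then gives $|\partial_t u|^2(t_0,\theta_0) \leq \tfrac{1}{\pi}\int_{t_0-1}^{t_0+1}G(s)\,ds \leq \tfrac{2}{\pi}G(t_0-1)$, which yields $(\ref{mika1})$ via $G(t_0-1) \leq C/t_0$ and $(\ref{limitisFrad})$ via $t_0 G(t_0-1) \to 0$.

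For the angular estimates, subharmonicity of $|\partial_\theta u|^2$ alone is too weak (it yields only the asymptotic upper bound $2\mathsf{E}_\rho/\pi$, which is four times too large), so I would exploit the full Hopf differential. Holomorphicity of $\phi$ on $(0,\infty)\times \Sp^1$ combined with the $L^\infty$ bound on $|du|^2$ on $[1,\infty)\times \Sp^1$ (from the radial estimate and the mean-value bound on $|\partial_\theta u|^2$) rules out positive-exponent Fourier modes: $\phi(t,\theta) = \sum_{n\leq 0} c_n e^{nt + in\theta}$ with $c_0 = -\mathsf{E}_\rho/(2\pi)$, so $\phi + \mathsf{E}_\rho/(2\pi)$ decays exponentially in $t$. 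Taking real parts yields $|\partial_t u|^2 - |\partial_\theta u|^2 + \mathsf{E}_\rho/(2\pi) = O(e^{-t})$, hence $|\partial_\theta u|^2 - \mathsf{E}_\rho/(2\pi) = |\partial_t u|^2 + O(e^{-t})$, and both $(\ref{mika2})$ and $(\ref{limitisF})$ follow directly from the radial bounds. \emph{The principal technical obstacle} is making the Hopf-differential argument rigorous when $\tilde X$ is merely an NPC space rather than a smooth manifold: this requires invoking weak holomorphicity of $\phi$ via the Korevaar--Schoen pullback tensor (or approximating by smooth NPC targets and passing to the limit), together with care in justifying the subharmonicity of $|\partial_t u|^2$ and the attendant mean-value inequality in that generality.
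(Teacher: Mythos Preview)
Your radial argument (subharmonicity of $|\partial_t u|^2$ on the cylinder, convexity and integrability of $G$, hence monotonicity and $tG(t)\to 0$, followed by the mean-value inequality for the pointwise bound) is exactly the paper's.

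For the angular part you take a genuinely different route, and the paper's is both simpler and avoids the NPC technicality you flag. The paper observes that $|\partial_\psi u|^2$ is \emph{also} subharmonic on the flat cylinder (since $\partial_\psi$ is parallel; this is \cite[Remark~2.4.3]{korevaar-schoen1}), and hence so is $|\partial_\psi u|^2-\mathsf{E}_\rho/(2\pi)$. Its circle-average $F(t)=\int_{\Sp^1}\bigl(|\partial_\psi u|^2-\mathsf{E}_\rho/(2\pi)\bigr)\,d\psi$ is then convex, nonnegative (by Lemma~\ref{lowerbd}), and integrable (by Lemma~\ref{integrable}), hence decreasing with $tF(t)\to 0$ --- the identical argument as for $G$. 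The mean-value inequality applied to the subharmonic function $|\partial_\psi u|^2-\mathsf{E}_\rho/(2\pi)$ then gives the pointwise bounds. Your objection that ``subharmonicity of $|\partial_\theta u|^2$ alone is too weak'' comes from applying mean value to $|\partial_\theta u|^2$ rather than to $|\partial_\theta u|^2-\mathsf{E}_\rho/(2\pi)$: subtracting the constant preserves subharmonicity while making the $\Sp^1$-average nonnegative and $L^1$ in $t$, which is exactly what the monotonicity-plus-mean-value machine needs. Your Hopf-differential route does work and in fact yields the sharper pointwise relation $|\partial_\theta u|^2-\mathsf{E}_\rho/(2\pi)=|\partial_t u|^2+O(e^{-t})$, but at the cost of justifying weak holomorphicity of $\phi$ for singular targets; the paper's symmetric treatment of the two directional energy densities sidesteps this entirely and stays within the Korevaar--Schoen framework already invoked.
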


\begin{proof}
Consider the cylinder
\[
{\mathcal C}
=(0,\infty) \times \Sp^1
\]
and let
\begin{equation} \label{Phi}
\Phi:{\mathcal C} \rightarrow \D^*, \ \ \ \ (t , \psi) = (r=e^{-t}, \theta=\psi). 
\end{equation}

Since $\Phi$ is a conformal map, $u \circ \Phi$ is harmonic.    Thus, the directional energy density functions $\left| \frac{\partial (u \circ \Phi)}{\partial t} \right|^2$ and $\left| \frac{\partial (u \circ \Phi)}{\partial \psi} \right|^2$  are subharmonic by \cite[Remark 2.4.3]{korevaar-schoen1}.
Furthermore,
$\left| \frac{\partial (u \circ \Phi)}{\partial t} \right|^2$ and $\left| \frac{\partial (u \circ \Phi)}{\partial \psi} \right|^2 - \frac{\mathsf{E}_\rho}{2\pi}$ are integrable in ${\mathcal C}=(0,\infty) \times \Sp^1$ by Lemma~\ref{integrable} and the chain rule.   

 The subharmonicity   of the directional energy density functions implies
\[
0 \leq \int_{(t_1,t_2) \times \Sp^1} \triangle \varphi  \left| \frac{\partial (u \circ \Phi)}{\partial \psi} \right|^2 dtd\psi, \ \ \forall \varphi \in C_c^{\infty}((t_1,t_2) \times \Sp^1), \ \varphi \geq 0.
\]
Letting $\varphi$ approximate the characteristic function of $(t_1,t_2) \times \Sp^1$, we obtain
\[
0 \leq  \frac{d}{dt}\Big|_{t=t_2} \left( \int_{\{t\} \times \Sp^1} \left| \frac{\partial (u \circ \Phi)}{\partial \psi} \right|^2 d\psi \right)-\frac{d}{dt}\Big|_{t=t_1} \left( \int_{\{t\} \times \Sp^1} \left| \frac{\partial (u \circ \Phi)}{\partial \psi} \right|^2 d\psi \right).
\]
In other words, 
\[
F(t) := \int_{\{t\} \times \Sp^1}  \left| \frac{\partial (u \circ \Phi)}{\partial \psi} \right|^2 -   \frac{\mathsf{E}_\rho}{2\pi} \ d\psi, \ \ \ t \in (0,\infty)
\]
 is a convex function.    By Lemma~\ref{integrable}, $\int_0^{\infty} F(t) dt< \infty$.  
Since $F(t)$ is convex and  integrable,
$F(t)$ is a  decreasing.  Thus,  we obtain  
\begin{equation} \label{bbbb}
tF(t) \leq 2 \int_{\frac{t}{2}}^{t} F(\tau)\ d\tau \leq 2 \int_{\frac{t}{2}}^{\infty} F(\tau)\ d\tau.
\end{equation}
With $\B_1(t_0,\psi_0)$ denoting the unit disk centered at $(t_0,\psi_0) \in \mathcal C$,  the mean value inequality implies
\begin{eqnarray*}
\lefteqn{
t_0 \left( \left| \frac{\partial (u \circ \Phi)}{\partial \psi}\right|^2
-\frac{\mathsf{E}_\rho}{2\pi} 
\right)(t_0,\psi_0)  
}
\\
& \leq &   
\frac{t_0}{\pi} \int_{\B_1(t_0,\psi_0)}\left( \left| \frac{\partial (u \circ \Phi)}{\partial \psi}\right|^2
-\frac{\mathsf{E}_\rho}{2\pi} 
\right) dtd\psi \\
& \leq & 
\frac{t_0}{\pi} \int_{(t_0-1,t_0+1) \times \Sp^1} \left( \left| \frac{\partial (u \circ \Phi)}{\partial \psi}\right|^2
-\frac{\mathsf{E}_\rho}{2\pi} 
\right) dtd\psi 
\\
&  \leq & \frac{1}{\pi}
\frac{t_0}{t_0-1} \int_{(t_0-1,t_0+1) \times \Sp^1} t   \left( \left| \frac{\partial (u \circ \Phi)}{\partial \psi}\right|^2
-\frac{\mathsf{E}_\rho}{2\pi} 
\right)  dt d\psi  
\\
&  = & 
 \frac{1}{\pi}
\frac{t_0}{t_0-1} \int_{t_0-1}^{t_0+1} t  F(t) dt 
\\
&  \leq & 
 \frac{1}{\pi}
\frac{t_0}{t_0-1}   \int_{t_0-1}^{t_0+1} 2 \int_{\frac{t}{2}}^{\infty} F(\tau)\ d\tau dt \ \ \ \mbox{(by  (\ref{bbbb}))}.
\end{eqnarray*}
Since $F(\tau) \geq 0$, 
\[
\int_{\frac{t}{2}}^{\infty} F(\tau)\ d\tau \leq  \int_{\frac{t_0-1}{2}}^{\infty} F(\tau)\ d\tau, \ \ \ t \in (t_0-1,t_0+1).
\]
Combining the above two inequalities and assuming $t_0 \geq 2$ (and thus $\frac{t_0}{t_0-1}\leq 2$),
\begin{eqnarray}
t_0 \left(  \left| \frac{\partial (u \circ \Phi)}{\partial \psi}\right|^2 (t_0,\psi_0)  -\frac{\mathsf{E}_\rho}{2\pi} \right)
& \leq & \frac{4}{\pi} \int_{\frac{t_0-1}{2}}^{\infty} F(\tau)\ d\tau.
\label{inspector}
\end{eqnarray}
Estimates (\ref{mika2})  and (\ref{limitisF}) both follow from (\ref{inspector}) by the chain rule and the fact that  $F(\tau)$ is integrable on $(0,\infty)$ with  integral bound given by Lemma~\ref{integrable}.

Similar argument for 
\[
G(t):= \int_{\{t\} \times \Sp^1}  \left| \frac{\partial (u \circ \Phi)}{\partial t} \right|^2 d\psi, \ \ \ t \in (0,\infty)
\]
 proves (\ref{mika1}) and (\ref{limitisFrad}).  
 \end{proof}

 \begin{lemma} \label{des}
Let  $u:\DDDD$ be a harmonic section  with logarithmic energy growth.   For any $\epsilon>0$, there exists $\rho_0>0$ such that 
 \[
d^2(u(r_1e^{i\theta}),u(r_0 e^{i\theta})) 
\leq
-\epsilon \log r_1, \ \ \ \forall 0<r_1<r_0\leq \rho_0, \, 0<\theta<2\pi.
\]
\end{lemma}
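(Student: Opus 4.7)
My plan is to bound the radial distance pointwise by integrating the radial derivative, and then to deploy the sharp radial estimate (\ref{limitisFrad}) from Lemma~\ref{(ii)} (which was just established) to control that integral.

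Concretely, I first observe that for each fixed $\theta$, the radial curve $r \mapsto u(re^{i\theta})$ is absolutely continuous, so by the fundamental theorem of calculus (in the Korevaar--Schoen sense),
\[
d(u(r_1 e^{i\theta}), u(r_0 e^{i\theta})) \;\leq\; \int_{r_1}^{r_0} \left|\frac{\partial u}{\partial r}\right|(r,\theta)\, dr.
\]
Since $u$ has logarithmic energy growth, Lemma~\ref{(ii)} applies, and in particular (\ref{limitisFrad}) yields $\lim_{r\to 0} (-\log r)\, r^2 \left|\frac{\partial u}{\partial r}\right|^2(r,\theta) = 0$. I will point out that the proof of Lemma~\ref{(ii)} derives this limit from a mean value inequality argument applied to the subharmonic function $|\partial_t(u \circ \Phi)|^2$ on a unit ball in the cylinder $\mathcal C$; this produces a bound uniform in $\theta$ (i.e.\ in $\psi_0$). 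Thus, given $\epsilon>0$, there exists $\rho_0>0$ such that
\[
(-\log r)\, r^2 \left|\frac{\partial u}{\partial r}\right|^2(r,\theta) \leq \frac{\epsilon}{4} \qquad \text{for all } 0<r\leq \rho_0,\ \theta\in[0,2\pi).
\]

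Combining the two displays, for $0<r_1<r_0\leq \rho_0$,
\[
d(u(r_1 e^{i\theta}), u(r_0 e^{i\theta})) \;\leq\; \frac{\sqrt{\epsilon}}{2}\int_{r_1}^{r_0} \frac{dr}{r\sqrt{-\log r}}.
\]
The substitution $s=-\log r$ evaluates the remaining integral explicitly:
\[
\int_{r_1}^{r_0} \frac{dr}{r\sqrt{-\log r}} = \int_{-\log r_0}^{-\log r_1} \frac{ds}{\sqrt{s}} = 2\left(\sqrt{-\log r_1} - \sqrt{-\log r_0}\right) \leq 2\sqrt{-\log r_1}.
\]
Hence $d(u(r_1 e^{i\theta}), u(r_0 e^{i\theta})) \leq \sqrt{\epsilon}\sqrt{-\log r_1}$, and squaring yields the claim.

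The argument is short and there is no real obstacle beyond invoking Lemma~\ref{(ii)}; the only subtle point to flag is that the vanishing in (\ref{limitisFrad}) must be used uniformly in the angular variable $\theta$. Since (\ref{limitisFrad}) is deduced from the mean value inequality applied in geodesic balls of the conformally equivalent cylinder, the uniformity in $\theta$ follows automatically from the translation invariance of the ball $\B_1(t_0,\psi_0)$ in $\psi_0$, so no additional work is required.
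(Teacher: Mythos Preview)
Your proof is correct and follows essentially the same approach as the paper: both use the vanishing limit (\ref{limitisFrad}) to obtain the pointwise bound $\left|\frac{\partial u}{\partial r}\right| \leq \frac{\sqrt{\epsilon}}{2r\sqrt{-\log r}}$ for small $r$, integrate radially, and evaluate the resulting integral via the substitution $s=-\log r$. Your added remark about uniformity in $\theta$ is a helpful clarification that the paper leaves implicit.
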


 \begin{proof}
 Fix $\epsilon>0$.  The  convergence of 
 $ (-\log r) r^2  \left| \frac{\partial u}{\partial r} \right|^2 (r,\theta)$ to 0 as $r \rightarrow 0$ (cf.~(\ref{limitisFrad})) implies that  there exists $\rho_0>0$ such that
 \[
\left| \frac{\partial u}{\partial r} \right| \leq \frac{\sqrt{\epsilon}}{2r(-\log r)^{\frac{1}{2}} }, \ \ r \in (0,\rho_0]. 
\]
Thus, we have for $0<r_1<r_0 \leq \rho_0$, and noting $\rho$-equivariance of $\tilde u$, we have
\begin{eqnarray*}
d^2(u(r_1e^{i\theta}),u(r_0e^{i\theta})) & \leq & 
 \left(  \int_{r_0}^{r_1}
\frac{d}{dr} d(u(r e^{i\theta}),u(r_0 e^{i\theta}))dr \right)^2
\nonumber \\
& \leq  & 
 \left( 
 \int_{r_1}^{r_0} 
\left| \frac{\partial u}{\partial r} \right|(re^{i\theta}) dr 
\right)^2
\nonumber \\
& \leq  & 
\frac{\epsilon}{4}  \left(  \int_{r_1}^{r_0} \frac{dr}{r(-\log r)^{\frac{1}{2}}}   \right)^2 
\nonumber \\
& \leq  & 
\epsilon \left(  (-\log r_1)^{\frac{1}{2}} - (-\log r_0)^{\frac{1}{2}} \right)^2
 \nonumber \\
 & \leq & 
 -\epsilon \log r_1.
\end{eqnarray*}
\end{proof}

\begin{lemma}[Property (iii) of Theorem~\ref{exists}] \label{(iii)}
If    $u:\DDDD$ is a harmonic section with logarithmic energy growth,
 then $u$ has sub-logarithmic growth.
  \end{lemma}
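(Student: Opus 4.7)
The plan is to deduce sub-logarithmic growth directly from the radial distance estimate of Lemma~\ref{des}. Unfolding Definition~\ref{logdec} for the single puncture at the origin, and taking the fundamental domain $F=\{(r,\theta):r\in(0,1),\ \theta\in[0,2\pi)\}$ of $\widetilde{\D^*}$, for a.e.\ $z=re^{i\theta}\in \D^*$ one has $\delta_1(z)=d(\tilde u(r,\theta),P_0)$, where $\tilde u$ is the $\rho$-equivariant lift associated to $u$. The task therefore reduces to showing that for every $\epsilon'>0$,
\[
\lim_{r\to 0}\bigl(d(\tilde u(r,\theta),P_0)+\epsilon'\log r\bigr)=-\infty
\]
uniformly in $\theta\in[0,2\pi)$.

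Next, apply Lemma~\ref{des} with some fixed choice (say with $\epsilon=1$) to obtain $\rho_0>0$ such that $d(\tilde u(r,\theta),\tilde u(r_0,\theta))\le\sqrt{-\log r}$ for all $0<r<r_0\le \rho_0$ and all $\theta$. Fix once and for all such an $r_0$. Since $\tilde u$ is continuous on the compact set $\{r_0\}\times[0,2\pi]\subset \widetilde{\D^*}$, the constant
\[
C_0:=\sup_{\theta\in[0,2\pi]}d(\tilde u(r_0,\theta),P_0)
\]
is finite. The triangle inequality then yields
\[
\delta_1(re^{i\theta})\le d(\tilde u(r,\theta),\tilde u(r_0,\theta))+C_0\le \sqrt{-\log r}+C_0
\]
for every $0<r<r_0$ and every $\theta$.

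Finally, for any $\epsilon'>0$, setting $t=-\log r$ gives
\[
\delta_1(re^{i\theta})+\epsilon'\log r\le \sqrt{t}-\epsilon' t+C_0,
\]
which tends to $-\infty$ as $r\to 0^+$ since $\sqrt{t}=o(t)$ as $t\to\infty$. This is precisely the sub-logarithmic growth condition of Definition~\ref{logdec}. There is no substantive obstacle at this stage: the heavy lifting has been done in the upper bound on $\left|\partial u/\partial r\right|$ which yielded Lemma~\ref{des}, and the present lemma is a clean consequence of that estimate, the triangle inequality, and continuity of $\tilde u$ on compact subsets of the universal cover.
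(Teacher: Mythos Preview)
Your proof is correct and follows essentially the same route as the paper: invoke Lemma~\ref{des} to bound the radial displacement $d(\tilde u(r,\theta),\tilde u(r_0,\theta))$ by $O(\sqrt{-\log r})$, add a bounded term via the triangle inequality using continuity of $\tilde u$ on the compact circle $\{r_0\}\times[0,2\pi]$, and conclude from $\sqrt{t}=o(t)$. The only cosmetic difference is that the paper carries out the computation with $d^2$ (matching the statement of Lemma~\ref{des}) while you take square roots first and work with $d$, which aligns more directly with Definition~\ref{logdec}.
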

 
 \begin{proof} 
 By Lemma~\ref{des}, for $\epsilon>0$, there exists 
 $r_0>0$ sufficiently small such that
\[
d^2(u(re^{i\theta}), u(r_0e^{i\theta})) \leq  - \frac{\epsilon}{4}\log r, \ \ \ r \in (0,r_0). 
\]
Set $P_0=\tilde u(r_0e^{i\theta_0}) \in \tilde X$ and define $f_0(p)=[(\tilde p, P_0)]$ as in Definition~\ref{logdec}.  For $z =re^{i\theta}$, 
\begin{eqnarray*}
d^2(u(z),f_0(z))+\epsilon \log |z|& = &  
d^2(u(re^{i\theta}),u(r_0e^{i\theta_0}))   +\epsilon \log r \\
& \leq  &  
2d^2(u(re^{\i\theta}),u(r_0e^{i\theta}))+2d^2(u(r_0e^{i\theta}),u(r_0e^{i\theta_0}))   +\epsilon \log r
\\
& \leq &  \frac{\epsilon}{2}\log r +d^2(u(r_0e^{i\theta}),u(r_0e^{i\theta_0})). \end{eqnarray*}
Now let  $r \rightarrow 0$.
 \end{proof}

\begin{lemma}[Uniqueness for sub-logarithmic growth maps] \label{uniquenessdisk}
If   $u, v:\DDDD$ are  harmonic sections  with sub-logarithmic growth  with   $u=v$ on $\partial \D$, then $u=v$ on $\D^*$.
\end{lemma}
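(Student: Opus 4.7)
The plan is to show that $\varphi(z) := d(u(z), v(z))$ (distance in the fibers as in (\ref{defdis})) is a non-negative subharmonic function on $\D^*$, continuous up to $\partial \D$, vanishing on $\partial \D$, and satisfying the growth hypothesis of Lemma~\ref{shdisk}; then Lemma~\ref{shdisk} applied with $r=1$ will force $\varphi \equiv 0$, giving $u=v$.

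First I will verify subharmonicity. Passing to the universal cover, the associated $\rho$-equivariant harmonic maps $\tilde u, \tilde v : \widetilde{\D^*} \to \tilde X$ assemble into a harmonic map $(\tilde u, \tilde v): \widetilde{\D^*} \to \tilde X \times \tilde X$ into the product NPC space. The distance function $d: \tilde X \times \tilde X \to \R$ is convex along geodesics, since $t \mapsto d(c_1(t), c_2(t))$ is convex for any pair of geodesics $c_1, c_2$ in an NPC space (Definition~\ref{def:NPC}). The composition of a harmonic map into an NPC space with a Lipschitz convex function is subharmonic, so $d(\tilde u, \tilde v)$ is subharmonic on $\widetilde{\D^*}$. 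Being $\rho$-invariant (as $\rho$ acts by isometries), it descends to the subharmonic function $\varphi$ on $\D^*$. Interior regularity of harmonic sections makes $\varphi$ continuous, and the assumed continuity of $u$ and $v$ up to $\partial \D$ combined with $u|_{\partial \D} = v|_{\partial \D}$ gives $\varphi|_{\partial \D} \equiv 0$.

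Second I will verify the decay condition. The sub-logarithmic growth of $u$ and $v$ (Definition~\ref{logdec}) says that for each $\epsilon > 0$, the function $\delta_j(z) + \epsilon \log |z|$ tends to $-\infty$ as $|z| \to 0$, both for $u$ and for $v$. The triangle inequality yields
\[
\varphi(z) \leq d(u(z), f_0(z)) + d(v(z), f_0(z))
\]
for a.e.\ $z \in \D^*$, and continuity of $\varphi$ propagates this pointwise bound to all $z \in \D^*$. Consequently $\varphi(z) + \epsilon \log |z| \to -\infty$ as $|z| \to 0$ for every $\epsilon > 0$, which is exactly the hypothesis of Lemma~\ref{shdisk}.

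Applying Lemma~\ref{shdisk} to $\varphi$ on $\D_1 = \D$ yields $\sup_{\D^*} \varphi \leq \max_{\partial \D} \varphi = 0$. Since $\varphi \geq 0$ by definition, $\varphi \equiv 0$, and hence $u=v$ on $\D^*$. The only conceptually nontrivial point is the subharmonicity of $\varphi$, which rests on the standard NPC composition principle; the rest is a direct application of the maximum principle from Lemma~\ref{shdisk}, so I anticipate no serious obstacle.
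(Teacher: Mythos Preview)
Your proof is correct and follows essentially the same approach as the paper: subharmonicity of the fiberwise distance between $u$ and $v$, plus the triangle inequality to verify the growth hypothesis, then an application of Lemma~\ref{shdisk}. The only cosmetic difference is that the paper works with $d^2(u,v)$ (citing \cite[Remark~2.4.3]{korevaar-schoen1}) while you work with $d(u,v)$; your choice is arguably cleaner, since the sub-logarithmic growth condition of Definition~\ref{logdec} directly controls $d$ rather than $d^2$.
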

\begin{proof}
The function $d^2(u,v)$ (as defined by (\ref{defdis}))
 is a continuous subharmonic function (c.f.~\cite[Remark 2.4.3]{korevaar-schoen1}).  Furthermore,  $d^2(u,v)\equiv 0$ on $\partial \D$.  Since $u$, $v$ both have sub-logarithmic growth,   the triangle inequality implies  
 \[
 \lim_{|z| \rightarrow 0} d^2(u(z),v(z)) +\epsilon \log |z| =-\infty.
 \]
Thus, we can apply Lemma~\ref{shdisk}  to conclude $d^2(u(z),v(z))\equiv 0$ on $ \D^*$ which implies $u \equiv v$.
\end{proof}

\begin{corollary} \label{converse}
Any harmonic section  $v:\DDDD$ 
with sub-logarithmic growth satisfies properties (i), (ii) and (iii)  of Theorem~\ref{exists}.
\end{corollary}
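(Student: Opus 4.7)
The plan is to deduce Corollary~\ref{converse} as an immediate combination of the existence part of Theorem~\ref{exists} with the uniqueness Lemma~\ref{uniquenessdisk}. Observe that property (iii) holds for $v$ by hypothesis, so the real content is to establish (i) and (ii). The idea is that any harmonic section on $\bar\D^*$ with the prescribed boundary values of $v$ and sub-logarithmic growth must coincide with the section produced by Theorem~\ref{exists}, hence inherits its quantitative estimates.

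First I would fix the boundary data. Since $v$ is harmonic on $\D^*$, it is smooth in an interior neighborhood of $\partial \D$, so $v|_{\partial \D}$ is a smooth (hence Lipschitz) section of $\tilde{\bar \D^*}\times_{\rho}\tilde X$ restricted to $\partial\D$. Choose any locally Lipschitz section $k:\bar\D^*\to \tilde{\bar \D^*}\times_{\rho}\tilde X$ with $k|_{\partial\D}=v|_{\partial\D}$; for example, extend radially by $k(r,\theta):=v(1,\theta)$ for $r\in(0,1]$, which is clearly locally Lipschitz on $\bar\D^*$. Under the standing hypothesis that $I=\rho([\Sp^1])$ has exponential decay to its translation length, Theorem~\ref{exists} then provides a harmonic section $u:\bar\D^*\to \tilde{\bar\D^*}\times_\rho \tilde X$ such that $u|_{\partial\D}=k|_{\partial\D}=v|_{\partial\D}$, satisfying properties (i), (ii), and (iii) of that theorem.

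Next I would apply uniqueness. Both $u$ and $v$ are harmonic sections on $\bar\D^*$, both have sub-logarithmic growth, and they agree on $\partial\D$. By Lemma~\ref{uniquenessdisk}, $u\equiv v$ on $\bar\D^*$. Consequently, property (i) for $u$ directly gives the energy bound
\[
\mathsf{E}_\rho\log\frac{1}{r}\leq E^v[\D_{r,1}]\leq \mathsf{E}_\rho\log\frac{1}{r}+C,\qquad 0<r\leq 1,
\]
for $v$, and property (ii) for $u$ gives the pointwise derivative estimates for $v$ on $\D^*_{1/2}$. Together with the hypothesis (iii), this proves the corollary.

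There is no serious obstacle: the only point worth noting is that the constant $C$ appearing in the conclusion now depends on the particular extension $k$ chosen, but since any Lipschitz extension of the (smooth) boundary trace $v|_{\partial\D}$ works, one obtains a constant that depends only on $\mathsf{E}_\rho$, the exponential decay constants $a,b$ of $I$, and the section $v$ itself (through its values on $\partial\D$). The circularity worry—namely that Theorem~\ref{exists} requires a Lipschitz ``seed'' while we begin only with a harmonic $v$—is resolved by the fact that harmonicity on the open punctured disk yields smoothness up to $\partial\D$, so $v|_{\partial\D}$ admits the required Lipschitz extension for free.
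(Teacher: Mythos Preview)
Your approach is essentially identical to the paper's one-line proof: apply Theorem~\ref{exists} with boundary data $v|_{\partial\D}$ to produce a harmonic section $u$ satisfying (i)--(iii), then invoke Lemma~\ref{uniquenessdisk} to conclude $u\equiv v$. One minor remark: for general NPC targets $\tilde X$ harmonic maps are only locally Lipschitz rather than smooth, but that is all you actually need (and all that Theorem~\ref{exists} requires of $k$), so the argument goes through unchanged.
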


\begin{proof}
By the uniqueness assertion of Lemma~\ref{uniquenessdisk}, $v$ must be the harmonic section $u$ constructed in Theorem~\ref{exists}.
\end{proof}

 \section{Proof of existence, Theorem~\ref{existence}} \label{sec:proofexistence}

In this section, we prove existence of an equivariant harmonic map from a punctured Riemann surface  $\RR$. 
We  use the following notation:
\begin{itemize}
\item $k: \RR \rightarrow  \tilde{\mathcal R} \times_\rho \tilde X$ is a locally Lipschitz section  (cf.~\cite[Proposition 2.6.1]{korevaar-schoen1})
 
\item   $k^j:=k|_{\bar \D^{j*}}$ 

\item  $\langle \lambda^j \rangle$ is the subgroup of $\pi_1(\RR)$ generated by a loop $\lambda^j$ around the puncture $p^j$
\item $\rho^j:  \langle \lambda^j \rangle \rightarrow \mathsf{Isom}(\tilde X)$ is the restriction of $\rho$ to $ \langle \lambda^j \rangle$
\item $I^j=\rho(\lambda^j)$
\item $\Ej:=\mathsf{E}_{\rho^j}$ as in Definition~\ref{infE} where we identify $[\lambda^j]$ with $[\Sp^1]$ 
\end{itemize}
Applying   Lemma~\ref{defineg} with $\rho=\rho^j$, $I=I^j$ and $k=k^j$ yields  a prototype  section 
\[
v^j:  \D^* \rightarrow \widetilde{ \D^*} \times_{\rho^j} \tilde X \subset  \widetilde{{\mathcal R}} \times_{\rho^j} \tilde X.
\]
The composition of $v^j$ and the quotient map 
$\widetilde{{\mathcal R}} \times_{\rho^j} \tilde X \rightarrow  \widetilde{{\mathcal R}} \times_{\rho} \tilde X$ defines 
a section
 of $\tilde {\mathcal R} \times_\rho \tilde X \rightarrow \mathcal R$ over  ${\D}^{*j}$ which we call again $v^j$.
We extend  these local sections $v^j:{\D}^{*j} \rightarrow \tilde {\mathcal R} \times_{\rho} \tilde X$ for $j=1,\dots, n$  to define a smooth section $v:{\mathcal R} \rightarrow   \tilde {\mathcal R}\times_{\rho} \tilde X$.
By the construction and Lemma~\ref{defineg},
\begin{equation} \label{ebv}
\sum_{j=1}^n\Ej \log \frac{1}{r} \leq 
E^v[\RR_r] \leq
 \sum_{j=1}^n \Ej \log \frac{1}{r}+C, \ \ \ 0<r
\leq  1.
\end{equation}

\begin{definition}  
The locally Lipschitz section 
\begin{equation} \label{mochizukimap}
v:  \mathcal R \rightarrow \tilde{\mathcal R} \times_{\rho} \tilde{X}
\end{equation}
constructed above  is called the  {\it prototype section} of the fiber bundle $\tilde{\mathcal R} \times_{\rho} \tilde{X} \rightarrow \mathcal R$. 
The associated $\rho$-equivariant map $\tilde{v}:\tilde{M} \rightarrow \tilde{X}$  is called the {\it prototype map}.   
\end{definition}

Define
\[
u_{\RR_r}:   \RR_r \rightarrow \tilde \RR \times_{\rho} \tilde X
\]
 to be the unique harmonic section with boundary values equal to that of $v|_{\partial \RR_r}$  (cf.~\cite[Theorem 2.7.2]{korevaar-schoen1}).  Fix $r_0 \in (0,\frac{1}{2}]$ and let $r \in (0,r_0)$.  
By Lemma~\ref{lowerbd},
\[
\Ej\log \frac{r_0}{r} 
  \leq    \int_{\D_{r,r_0}} \frac{1}{r^2} \left| \frac{\partial u_{\RR_r}}{\partial \theta} \right|^2 rdr \wedge d\theta\leq  E^{u_{\RR_r}}[\D^j_{r,r_0}].
\]
Thus, by (\ref{ebv}),
\[
E^v[\bigcup_{j=1}^n \D^j_{r,r_0}] \leq E^{u_{\RR_r}}[\bigcup_{j=1}^n\D^j_{r,r_0}]+C
\]
 which implies 
\begin{eqnarray}
E^{u_{\RR_r}}[\bigcup_{j=1}^n \D^j_{r,r_0}] +E^{u_{\RR_r}}[\RR_{r_0}]& = & E^{u_{\RR_r}}[\RR_r]\nonumber  \\
 & \leq  &  E^v[\RR_r] \nonumber  \\
 & = &  
E^v[\bigcup_{j=1}^n \D^j_{r,r_0}] +E^v[\RR_{r_0}]  \nonumber\\
& \leq &  
E^{u_{\RR_r}}[\bigcup_{j=1}^n \D^j_{r,r_0}] +C+E^v[\RR_{r_0}].   \nonumber  
 \end{eqnarray}
In other words,
 \begin{equation} \label{bdvr}
 E^{u_{\RR_r}}[\RR_{r_0}] \leq C+ E^v[\RR_{r_0}], \ \ \ \forall r \in (0,r_0).
 \end{equation}
The right hand side of the inequality (\ref{bdvr})  is independent of the parameter $r$; i.e.~once we fix $r_0$, the quantity $E^{u_{\RR_r}}[\RR_{r_0}]$ is uniformly bounded for all $r\in (0, r_0)$.  This implies a uniform Lipschitz bound, say $L$,  of $u_{\RR_r}$ for $r \in (0,r_0)$ in $\RR_{2r_0}$ (cf.~\cite[Theorem 2.4.6]{korevaar-schoen1}).  

Let $\{\mu_1, \dots, \mu_N\}$ be a set of generators of $\pi_1(\RR)$ and  $\tilde u_{\RR_r}$, $\tilde v$ be the $\rho$-equivariant maps associated to  sections $u_{\RR_r}$, $v$ respectively.
Thus,
  \[
d(\tilde u_{\RR_r}(\mu_i p), \tilde u_{\RR_r}(p)) \leq L d_{\tilde M}(\mu_i p,p),
\ \ 
\forall p \in \tilde \RR_{2r_0}, \ i=1, \dots, N \ \ r \in (0,r_0).
\]
If we let  
\[
c=L \sup \{d_{\tilde{M}}(\mu_i p,p): i=1,\dots, N, \ p \in \tilde \RR_{2r_0} \},
\]
then by  equivariance
\[
d(\rho(\mu_i)\tilde u_{\RR_r}(p),\tilde u_{\RR_r}(p))) \leq c, \ \ \ 
p \in \tilde \RR_{2r_0}, \ i=1, \dots, N, \ \ r \in (0,r_0).
\]
In other words, $\delta(\tilde u_{\RR_r}(p)) \leq c$ for all $p \in \tilde \RR_{2r_0}$ and $r \in (0,r_0)$.
By the properness of $\rho$, there exists $P_0 \in \tilde X$ and $R_0>0$ such that 
\[
\{\tilde u_{\RR_r}(p): \  p \in \tilde \RR_{2r_0}, \ r \in (0,r_0)\} \subset B_{R_0}(P_0).
\]
Thus,  by taking a compact exhaustion, applying Arzela-Ascoli and the usual diagonalization argument, there exists a subsequence of  $\tilde u_{\RR_r}$ converges locally uniformly  to a $\rho$-equivariant harmonic map $\tilde u: \tilde \RR \rightarrow
\tilde X$.   Let $u: \RR \rightarrow \tilde \RR \times_{\rho} \tilde{X}$ be the associated  harmonic section.
The lower semicontinuity of energy (cf.~\cite[Theorem 1.6.1]{korevaar-schoen1}) and (\ref{bdvr}) imply
\[
E^{u}[\RR_{r_0}] \leq  \sum_{j=1}^n\Ej  \log \frac{1}{r_0}+C, 
\ \ \forall r_0 \in (0,\frac{1}{2}]
\]
which, along with Lemma~\ref{lowerbd}, proves that $u$ has  logarithmic energy growth (cf.~Definition~\ref{logdec}).  The fact that $u$ has sub-logarithmic growth follows from Lemma~\ref{(iii)}.  This completes the proof of  Theorem~\ref{existence}.

\section{Proof of uniqueness, Theorem~\ref{uniqueness}} \label{sec:proofuniqueness}

In this section, we prove the uniqueness of  a harmonic map from a punctured Riemann surface  $\RR$ (cf.~Theorem~\ref{uniqueness}). 
First note that by Lemma~\ref{lowerbd},  for $ 0<r<r_0<1$ and any section $f:\DDDD$,
\begin{equation} \label{lwrbd}
\sum_{j=1}^n\Ej \log \frac{r_0}{r}   \leq  E^f[\bigcup_{j=1}^n \D^j_{r,r_0}].
\end{equation}
\begin{definition} \label{minuslogterm} 
Let 
\[
E^f(r)=E^f[\RR_r] - \sum_{j=1}^n \Ej \log \frac{1}{r}.
\]
Note that by (\ref{lwrbd}),   $r \mapsto E^f(r)$ is an increasing function.
Define the {\it modified energy} of $f$ as 
\[
E^f(0) = \lim_{r \rightarrow 0} E^f(r).
\]
Let $\LL_\rho$ be the set of all sections $\RR \rightarrow \tilde \RR \times_\rho \tilde X$ such that  $E^f(0)<\infty$.  Let $\mathcal H_{\rho} \subset \mathcal L_{\rho}$ denote the set of all   harmonic sections $u:  \RR \rightarrow \tilde \RR \times_\rho \tilde X$ of sub-logarithmic growth such that the associated $\rho$-equivariant map $u$ that is not identically constant or does not map into a geodesic. 
\end{definition}

  The goal is to prove that $\mathcal H_{\rho}$ contains only one element.   We first prove the following series of preliminary lemmas.

\begin{lemma} \label{u1}  If $u \in {\mathcal H}_{\rho}$ and $\D^j \subset \bar \RR$ is  the fixed conformal disk at the puncture $p^j$ (cf.~(\ref{fixeddisk})), then the restriction map $u|_{\D^{j*}}$ satisfies the properties (i), (ii) and (iii)  of Theorem~\ref{exists}. 
\end{lemma}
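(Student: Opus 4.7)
The strategy is to reduce the lemma to Corollary~\ref{converse} applied on a single punctured disk. Once we recognize $u|_{\bar\D^{j*}}$ as a harmonic section with sub-logarithmic growth of the appropriate local flat bundle, all three properties follow from that corollary with no additional work.

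First, I would verify that $u|_{\bar\D^{j*}}$ is a harmonic section of the flat $\tilde X$-bundle $\widetilde{\bar\D^{j*}} \times_{\rho^j} \tilde X \to \bar\D^{j*}$, where $\rho^j = \rho|_{\langle \lambda^j \rangle}$ is the local monodromy around $p^j$. Since $\pi_1(\D^{j*}) = \langle \lambda^j \rangle$ injects into $\pi_1(\mathcal R)$, each connected component of $\Pi^{-1}(\D^{j*}) \subset \tilde{\mathcal R}$ is naturally identified with $\widetilde{\D^{j*}}$, and restricting the $\rho$-equivariant harmonic map $\tilde u$ to such a component gives a $\rho^j$-equivariant harmonic map. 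Harmonicity descends because it is a local condition, so the associated restricted section takes values in the appropriate local flat bundle.

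Second, sub-logarithmic growth transfers automatically puncture by puncture. By Definition~\ref{logdec}, the condition $u \in \mathcal H_\rho$ entails $\lim_{|z| \to 0} \delta_j(z) + \epsilon \log |z| = -\infty$ in $\D^{j*}$ for all $\epsilon > 0$ and every $j$; keeping only the $j$th puncture is precisely the sub-logarithmic growth hypothesis of Theorem~\ref{exists}(iii) applied to $u|_{\bar\D^{j*}}$.

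Finally, Corollary~\ref{converse} applied with $\rho$ there taken to be our $\rho^j$ (so that $\mathsf E_\rho$ becomes $\mathsf E_{\rho^j} = \mathsf E_j$) delivers properties (i), (ii), and (iii) for $u|_{\bar\D^{j*}}$. I do not foresee any serious obstacle; the only care required is the bookkeeping that relates the global $\rho$-bundle over $\mathcal R$ to the local $\rho^j$-bundle over $\bar\D^{j*}$. Notably, the finite-modified-energy condition built into the definition of $\mathcal H_\rho$ is not even used in this reduction: sub-logarithmic growth alone suffices, via the uniqueness Lemma~\ref{uniquenessdisk} that underlies Corollary~\ref{converse}.
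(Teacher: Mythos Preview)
Your proposal is correct and follows the same approach as the paper: the paper's proof is the one-liner ``This follows immediately from Corollary~\ref{converse}.'' You have simply made explicit the bundle bookkeeping (identifying the restriction to $\bar\D^{j*}$ with a harmonic section of the local $\rho^j$-bundle) and the fact that sub-logarithmic growth is a puncture-by-puncture condition, both of which the paper leaves implicit.
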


\begin{proof}
This follows immediately from Corollary~\ref{converse}.
\end{proof}

\begin{lemma} \label{sh01}  
If $u_0, u_1 \in {\mathcal H}_{\rho}$,  then
$
d^2(u_0,u_1)=c
$
for some constant $c$.  
\end{lemma}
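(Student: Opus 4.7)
The plan is to show that $f(z) := d^2(u_0(z), u_1(z))$, which is well-defined on $\mathcal{R}$ by $\rho$-equivariance (cf.~(\ref{defdis})), is a subharmonic function with sub-logarithmic growth near each puncture; then Lemma~\ref{shdisk} extends $f$ to a subharmonic function on the compact closed Riemann surface $\bar{\mathcal{R}}$, which must be constant by the maximum principle.

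\emph{Subharmonicity.} Since $\tilde X$ is NPC and both $\tilde u_0, \tilde u_1$ are harmonic, the function $\tilde p \mapsto d^2(\tilde u_0(\tilde p), \tilde u_1(\tilde p))$ is subharmonic on $\tilde{\mathcal{R}}$ by standard Korevaar--Schoen theory (cf.~\cite{korevaar-schoen1}), and $\rho$-equivariance descends it to a subharmonic function $f$ on $\mathcal{R}$.

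\emph{Sub-logarithmic growth.} Fix a puncture $p^j$ together with the conformal disk $\D^j$. By Lemma~\ref{u1}, each $u_i|_{\D^{j*}}$ ($i=0,1$) satisfies properties (i)--(iii) of Theorem~\ref{exists}. Fix $\epsilon'>0$ and set $\epsilon := \epsilon'/16$. Applying Lemma~\ref{des} to each $u_i$ produces $\rho_0>0$ such that
\[
d^2(u_i(re^{i\theta}), u_i(r_0 e^{i\theta})) \leq -\epsilon \log r,\qquad i=0,1,
\]
whenever $0<r<r_0\leq \rho_0$. Choose any such $r_0$ and let $K:=\sup_\theta d(u_0(r_0 e^{i\theta}), u_1(r_0 e^{i\theta}))<\infty$; this is finite because $f$ is continuous on the compact circle $\{|z|=r_0\}\subset \mathcal{R}$. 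The triangle inequality then gives
\[
d(u_0(re^{i\theta}), u_1(re^{i\theta})) \leq K + 2\sqrt{-\epsilon \log r},
\]
and squaring (using $(a+b)^2\leq 2a^2+2b^2$) yields
\[
f(re^{i\theta}) \leq 2K^2 - 8\epsilon \log r = 2K^2 - \tfrac{\epsilon'}{2}\log r.
\]
Consequently $f(re^{i\theta}) + \epsilon'\log r \to -\infty$ as $r\to 0$, which is precisely the sub-logarithmic growth hypothesis of Lemma~\ref{shdisk} at the puncture $p^j$.

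\emph{Conclusion and main obstacle.} Applying Lemma~\ref{shdisk} on each $\D^{j*}$ extends $f$ to a subharmonic function on every $\D^j$, hence to a subharmonic function on the compact connected closed Riemann surface $\bar{\mathcal{R}}$. The maximum principle then forces $f\equiv c$, so $d^2(u_0,u_1)$ is constant on $\mathcal{R}$. The only nontrivial point is verifying the sub-logarithmic growth of $f$: the individual sub-logarithmic growth of the $u_i$ (Theorem~\ref{exists}(iii)), applied through a reference section via the triangle inequality, would only deliver $d(u_0,u_1) = o(-\log|z|)$, hence $d^2 = o((\log|z|)^2)$, which is insufficient for Lemma~\ref{shdisk}. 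The sharper radial estimate of Lemma~\ref{des}, asserting $d^2(u_i(re^{i\theta}), u_i(r_0 e^{i\theta})) = o(-\log r)$ for each $u_i$ separately, is exactly what is needed to close the gap.
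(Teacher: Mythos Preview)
Your proof is correct and follows essentially the same strategy as the paper: show that $d^2(u_0,u_1)$ is subharmonic on $\mathcal{R}$, verify the growth hypothesis of Lemma~\ref{shdisk} near each puncture so that it extends across the punctures, and then apply the maximum principle on the compact surface $\bar{\mathcal{R}}$. Your treatment of the growth condition is in fact more careful than the paper's: the paper simply invokes the sub-logarithmic growth of $u_0,u_1$ and the triangle inequality to assert $d(u_0(z),u_1(z))+\epsilon\log|z|\to-\infty$ and then applies Lemma~\ref{shdisk} to $d^2$, whereas you correctly observe that this alone controls only $d$, not $d^2$, and you close the gap by appealing to the sharper radial estimate of Lemma~\ref{des} (which is precisely the ingredient underlying the paper's proof of Lemma~\ref{(iii)}).
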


\begin{proof}
We prove  Lemma~\ref{sh01} by showing that $d^2(u_0,u_1)$ extends as a subharmonic function to $\bar \RR$.  Since $\bar \RR$ is compact,  this implies  $d^2(u_0,u_1)$ is constant.  Indeed,   let   $\D \subset \RR$ be a holomorphic disk.  By  \cite[Lemma 2.4.2 and Remark 2.4.3]{korevaar-schoen1},    the function $d^2(u_0,u_1)$ is subharmonic in $\D$.  Since this statement is true for any holomorphic disk $\D \subset \RR$, we conclude that $d^2(u_0,u_1)$ is subharmonic in $\RR$.  

Next, consider the fixed conformal disk $\D^j \subset \bar{\RR}$  centered at   $p^j \in {\mathcal P}$ (cf.~(\ref{fixeddisk})).    
Since both $u_0$ and $u_1$ have sub-logarithmic growth, 
$d(u_0(z),u_1(z))+\epsilon \log  |z| \rightarrow -\infty$ as $z \rightarrow 0$ in $\D^j$ by the triangle inequality.  Thus,  by Lemma~\ref{shdisk},
$d^2(u_0,u_1)$   is bounded in $\D^{j*}$ 
and  extends as a subharmonic function on $\D^j$.  Hence, we conclude that $d^2(u_0,u_1)$ extends as a subharmonic function on $\bar \RR$, thereby proving    Lemma~\ref{sh01}.
\end{proof}

For $u_0, u_1 \in {\mathcal H}_{\rho}$, let $\tilde u_0$, $\tilde u_1$ be the associated $\rho$-equivariant maps.   For $s \in [0,1]$, define  $u_s:\RR \rightarrow  \tilde \RR \times_{\rho} \tilde X$  to be the associated section of the $\rho$-equivariant map
  \begin{equation} \label{us}
 \tilde u_s:  \tilde \RR \rightarrow \tilde X, \ \ \ \tilde u_s(z)=(1-s)\tilde u_0(z)+s \tilde u_1(z)
  \end{equation}
where the sum on the right hand side above denotes geodesic  interpolation (cf.~Notation~\ref{interpolationnotation}). 
Since $\tilde u_0$ and $\tilde u_1$ are $\rho$-equivariant, $\tilde u_s$ is also $\rho$-equivariant.  
 
 From Lemma~\ref{u1}, Lemma~\ref{sh01}, the convexity of the distance function and the convexity of  energy (cf.~\cite[(2.2vi)]{korevaar-schoen1}), it follows that $u_s|_{\D^{j*}}$ also satisfies the properties $(i)$, $(ii)$ and $(iii)$ of Theorem~\ref{exists} for all $s \in [0,1]$; i.e.
\begin{itemize}
\item
$\displaystyle{
E^{u_s}[\D^j_{r,r_0}] \leq  C+ \Ej \log \frac{r_0}{r}
}
$
\vspace{0.5mm}
\item 
$\displaystyle{
\lim_{r \rightarrow 0} E^{u_s|_{\partial \D^j_r}}[\Sp^1]  =  \Ej,
}
\vspace{0.5mm}
$
\item $u_s$ has sub-logarithmic growth.
 \end{itemize}
 
\begin{lemma}  \label{claim:dist}
Let $u_0, u_1 \in {\mathcal H}_{\rho}$ and $\epsilon>0$.  For any $\rho_0 \in (0,1)$, there exists $r_0 \in (0,\rho_0)$ such that
\[
\frac{1}{-\log r_0}  \sum_{j=1}^n \int_0^{2\pi} d^2(u_0|_{\partial \D^j_{r_0}}, u_{s}|_{\partial \D^j_{r_0^2}})d\theta<\epsilon,
\ \  \ \forall s \in [0,1].
\]
\end{lemma}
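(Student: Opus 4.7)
The plan is to split the distance via the triangle inequality and control the two halves using Lemma~\ref{sh01} and Lemma~\ref{des} separately. Writing
\[
d(u_0(r_0 e^{i\theta}), u_s(r_0^2 e^{i\theta})) \leq d(u_0(r_0 e^{i\theta}), u_0(r_0^2 e^{i\theta})) + d(u_0(r_0^2 e^{i\theta}), u_s(r_0^2 e^{i\theta})),
\]
the second summand is handled immediately by the geodesic interpolation identity: since $\tilde u_s(z) = (1-s)\tilde u_0(z) + s\tilde u_1(z)$, we have $d(\tilde u_0(z), \tilde u_s(z)) = s\, d(\tilde u_0(z), \tilde u_1(z))$ pointwise, and Lemma~\ref{sh01} tells us $d(u_0, u_1) \equiv \sqrt c$ is a fixed constant. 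Hence this term is bounded by $\sqrt c$ uniformly in $s\in [0,1]$, in $\theta$, and in the radius.

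For the first summand, I would apply Lemma~\ref{des} to the restriction $u_0|_{\D^{j*}}$. By Lemma~\ref{u1}, $u_0|_{\D^{j*}}$ has logarithmic energy growth in the sense of Definition~\ref{logdec}, so Lemma~\ref{des} is applicable and produces, for any prescribed $\delta > 0$, some threshold radius below which
\[
d^2(u_0(r_0^2 e^{i\theta}), u_0(r_0 e^{i\theta})) \leq -\delta \log(r_0^2) = -2\delta \log r_0.
\]
Squaring the triangle inequality via $(a+b)^2 \leq 2a^2 + 2b^2$, integrating in $\theta$ over $[0, 2\pi]$, summing over $j = 1, \ldots, n$, and dividing by $-\log r_0$ then produces a bound of the shape
\[
\frac{1}{-\log r_0} \sum_{j=1}^n \int_0^{2\pi} d^2\bigl(u_0|_{\partial \D^j_{r_0}}, u_s|_{\partial \D^j_{r_0^2}}\bigr)\, d\theta \;\leq\; 8\pi n \delta + \frac{4\pi n c}{-\log r_0}.
\]

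To conclude, the order of choices is important: first fix $\delta = \epsilon/(16\pi n)$ so that the first term equals $\epsilon/2$, and then shrink $r_0 \in (0, \rho_0)$ far enough to make the residual term $4\pi n c/(-\log r_0)$ smaller than $\epsilon/2$; uniformity in $s$ is automatic because $s$ enters only through the bound $s \leq 1$. I don't expect any real obstacles here: Lemma~\ref{sh01} turns the $s$-dependence into a bounded constant, and Lemma~\ref{des} supplies the precise radial modulus of continuity that beats the $-\log r_0$ denominator. The only subtlety worth tracking is that the auxiliary parameter $\delta$ must be selected before the radius, so that the choice of $r_0$ can be made to accommodate both the absorption of the $c/(-\log r_0)$ tail and the applicability of Lemma~\ref{des} on each of the finitely many punctured disks $\D^{j*}$ simultaneously.
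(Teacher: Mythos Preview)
Your proof is correct and follows essentially the same approach as the paper: triangle inequality plus Lemma~\ref{des} plus Lemma~\ref{sh01}. The only cosmetic difference is that the paper first reduces to $s=1$ (implicitly via NPC convexity of $d^2$ along geodesics) and splits the path as $u_0(r_0)\to u_1(r_0)\to u_1(r_0^2)$, applying Lemma~\ref{des} to $u_1$, whereas you split as $u_0(r_0)\to u_0(r_0^2)\to u_s(r_0^2)$ and obtain the $s$-uniformity directly from the interpolation identity $d(u_0,u_s)=s\,d(u_0,u_1)$ at a common point; your route is in fact slightly cleaner.
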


\begin{proof}
It suffices to prove Lemma~\ref{claim:dist} for $s=1$. Let $\rho_0>0$ be given. First, Lemma~\ref{des} asserts that there exists $\rho_1 \in (0,\rho_0)$ such that 
\[
\frac{1}{-\log r_0}  \sum_{j=1}^n \int_0^{2\pi} d^2(u_1|_{\partial \D^j_{r_0}}, u_1|_{\partial \D^j_{r_0^2}})d\theta
< \frac{\epsilon}{4}, \ \  \ \ \forall r_0 \in (0,\rho_1). 
\]
For $c>0$ as in Lemma~\ref{sh01}, choose $r_0 \in (0,\rho_1)$ such that 
$
 \frac{2\pi  n c}{-\log r_0}<\frac{\epsilon}{4}.
 $
Then 
\[
\frac{1}{-\log r_0} \sum_{j=1}^n \int_0^{2\pi} d^2(u_0|_{\partial \D^j_{r_0}}, u_1|_{\partial \D^j_{r_0}})d\theta
=\frac{2\pi n c}{-\log r_0} < \frac{\epsilon}{4}.
\]
The inequality of Lemma~\ref{claim:dist} for $s=1$ follows from the above two inequalities and the triangle inequality.
\end{proof}

\begin{lemma} \label{claim:lim}
Let $u_0, u_1 \in {\mathcal H}_{\rho}$ and $s \in [0,1]$.  For $\epsilon>0$, there exists  $\rho_0>0$ sufficiently small such that 
\[
-\log r_0  \sum_{j=1}^n \left(E^{u_0|_{\partial \D^j_{r_0}}}[\Sp^1] +E^{u_s |_{\partial \D^j_{r_0^2}}}[\Sp^1] \right) <-2 \log r_0 \sum_{j=1}^n \Ej +\epsilon, \ \ \ 0<r_0<\rho_0.
\]
\end{lemma}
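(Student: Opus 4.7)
The plan is to show that for each $i \in \{0,1\}$ and each puncture $p^j$ the boundary energy $E^{u_i|_{\partial \D^j_r}}[\Sp^1]$ approaches $\mathsf{E}_j$ at rate $o(1/(-\log r))$ as $r \to 0$; then to transfer this rate to the geodesic interpolation $u_s$ by convexity of directional energy; and finally to observe that evaluating at the radius $r_0^2$ rather than $r_0$ only strengthens the estimate since $-\log r_0 \leq -\log r_0^2$.

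For the first step, Lemma~\ref{u1} ensures that each $u_i|_{\D^{j*}}$ is a harmonic section satisfying property (ii) of Theorem~\ref{exists}. Setting $F(t) := E^{u_i|_{\partial \D^j_{e^{-t}}}}[\Sp^1] - \mathsf{E}_j = \int_0^{2\pi}\bigl(|\partial_\theta u_i|^2(e^{-t},\theta) - \tfrac{\mathsf{E}_j}{2\pi}\bigr)\,d\theta$, I observe that this is precisely the function considered in the proof of Lemma~\ref{(ii)}; it is convex on $(0,\infty)$, integrable by Lemma~\ref{integrable}, and nonnegative because the definition of $\mathsf{E}_j = \mathsf{E}_{\rho^j}$ as an infimum forces $E^{u_i|_{\partial \D^j_r}}[\Sp^1] \geq \mathsf{E}_j$. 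The argument leading to inequality~(\ref{bbbb}), namely $tF(t) \leq 2\int_{t/2}^{\infty} F(\tau)\,d\tau$, then gives $tF(t) \to 0$, i.e.,
\[
\lim_{r \to 0}(-\log r)\bigl(E^{u_i|_{\partial \D^j_r}}[\Sp^1] - \mathsf{E}_j\bigr) = 0, \qquad i = 0, 1.
\]

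To transfer this to $u_s$, I would use that $\tilde u_s(z) = (1-s)\tilde u_0(z) + s\tilde u_1(z)$ is the pointwise geodesic interpolation and invoke the convexity of the directional energy functional in the NPC setting (cf.~\cite[(2.2vi)]{korevaar-schoen1}) in the $\partial_\theta$ direction to obtain
\[
E^{u_s|_{\partial \D^j_r}}[\Sp^1] \leq (1-s)\,E^{u_0|_{\partial \D^j_r}}[\Sp^1] + s\,E^{u_1|_{\partial \D^j_r}}[\Sp^1].
\]
Combined with the lower bound $E^{u_s|_{\partial \D^j_r}}[\Sp^1] \geq \mathsf{E}_j$ coming from Definition~\ref{infE} applied to the section $u_s|_{\partial \D^j_r}$ of the flat $\tilde X$-bundle determined by $\rho^j$, this yields the same $o(1/(-\log r))$ convergence, uniformly in $s \in [0,1]$.

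Finally, since $-\log r_0 \leq -\log r_0^2$ whenever $r_0 \in (0,1)$, the previous step gives
\[
(-\log r_0)\bigl(E^{u_s|_{\partial \D^j_{r_0^2}}}[\Sp^1] - \mathsf{E}_j\bigr) \;\leq\; (-\log r_0^2)\bigl(E^{u_s|_{\partial \D^j_{r_0^2}}}[\Sp^1] - \mathsf{E}_j\bigr) \;\longrightarrow\; 0
\]
as $r_0 \to 0$, and summing the analogous $o(1)$ bound for $u_0$ at radius $r_0$ over $j = 1, \ldots, n$ produces the desired inequality for $\rho_0$ chosen small enough. There is no serious obstacle; the only point that requires care is the uniformity of the convergence across $s \in [0,1]$, which is built in once one combines the convexity of the energy with the a priori lower bound $\mathsf{E}_j$ valid for every $s$.
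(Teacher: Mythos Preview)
Your argument is correct and follows essentially the same route as the paper: the paper's proof simply cites Lemma~\ref{(ii)} (specifically the limit \eqref{limitisF}, which integrated over $\theta$ gives exactly your $tF(t)\to 0$) to handle the case $s=1$, and then invokes convexity of energy \cite[(2.2vi)]{korevaar-schoen1} for general $s$, just as you do. Your write-up merely unpacks these two ingredients in more detail, including the observation about $-\log r_0 \le -\log r_0^2$ and the uniformity in $s$.
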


\begin{proof}
For $s=1$,  Lemma~\ref{claim:lim}  follows from Lemma~\ref{(ii)}.  The general case of $s \in [0,1]$ follows immediately by convexity of energy (cf.~\cite[(2.2vi)]{korevaar-schoen1}). 
\end{proof} 

\begin{lemma} \label{sae}   For $u_0, u_1 \in {\mathcal H}_{\rho}$ and $s \in [0,1]$,  we have
$
E^{u_0}(0) = E^{u_s}(0).
$
\end{lemma}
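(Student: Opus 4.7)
The plan is to prove $E^{u_0}(0) \leq E^{u_s}(0)$ for every $s \in [0,1]$ via an energy comparison that uses the harmonicity of $u_0$, and then combine with convexity of energy to force the reverse inequality.  Fix $s \in (0,1)$, $\epsilon > 0$, and $r_0 \in (0,\rho_0)$ for $\rho_0$ small.  I will construct a comparison section $w: \RR_{r_0^2} \rightarrow \tilde{\RR} \times_\rho \tilde X$ by declaring $w = u_s$ on $\RR_{r_0}$ and, on each annulus $\D^j_{r_0^2,r_0}$, defining $w$ to be the fiberwise geodesic homotopy that, in the conformal coordinate $t = -\log r$, interpolates linearly between $u_s|_{\partial \D^j_{r_0}}$ at $t = -\log r_0$ and $u_0|_{\partial \D^j_{r_0^2}}$ at $t = -2\log r_0$.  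The interpolation is performed on the $\rho$-equivariant lifts and descends to a well-defined section; by construction $w = u_0$ on $\partial \RR_{r_0^2}$, so the fact that $u_0$ is the unique Dirichlet solution on $\RR_{r_0^2}$ with its own boundary values yields
\[
E^{u_0}[\RR_{r_0^2}] \ \leq \ E^w[\RR_{r_0^2}] \ = \ E^{u_s}[\RR_{r_0}] + \sum_{j=1}^n E^w\bigl[\D^j_{r_0^2,r_0}\bigr].
\]

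The key step is to bound the annular contribution by $(-\log r_0) \sum_j \Ej + C\epsilon$, so that after subtracting $\sum_j \Ej \log(1/r_0^2)$ from both sides the inequality becomes $E^{u_0}(r_0^2) \leq E^{u_s}(r_0) + C\epsilon$.  In conformal $(t,\theta)$-coordinates on each annulus, the radial term reduces to $(-\log r_0)^{-1} \int_0^{2\pi} d^2(u_s|_{\partial \D^j_{r_0}}, u_0|_{\partial \D^j_{r_0^2}}) \, d\theta$, which by Lemma~\ref{claim:dist} (symmetric in its two arguments since $d(a,b) = d(b,a)$) is $< \epsilon/n$ for small $r_0$.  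For the angular term, the NPC quadrilateral inequality applied to the fiberwise geodesic homotopy gives
\[
|\partial_\theta w(t,\theta)|^2 \ \leq \ (1-\alpha(t))\,|\partial_\theta u_s|^2(r_0,\theta) + \alpha(t)\,|\partial_\theta u_0|^2(r_0^2,\theta),
\]
where $\alpha$ is the affine parameter of the homotopy; integrating in $(t,\theta)$ and using $\int_0^1 \alpha\, d\alpha = \int_0^1 (1-\alpha)\, d\alpha = \tfrac{1}{2}$ produces $\tfrac{-\log r_0}{2}\bigl(E^{u_s|_{\partial \D^j_{r_0}}}[\Sp^1] + E^{u_0|_{\partial \D^j_{r_0^2}}}[\Sp^1]\bigr)$, which Lemma~\ref{claim:lim} bounds by $(-\log r_0)\Ej + \epsilon/n$.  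Summing over $j$ and letting $r_0 \to 0$, using that $r \mapsto E^f(r)$ is monotone with limit $E^f(0)$, yields $E^{u_0}(0) \leq E^{u_s}(0)$.

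For the reverse inequality, the same argument with $u_0$ and $u_1$ swapped (noting that $u_s$ is equally expressible as a $(1-s)$-interpolation starting from $u_1$) yields $E^{u_1}(0) \leq E^{u_s}(0)$, while convexity of the Korevaar--Schoen energy \cite[(2.2vi)]{korevaar-schoen1} gives $E^{u_s}[\RR_r] \leq (1-s) E^{u_0}[\RR_r] + s E^{u_1}[\RR_r]$; subtracting the common logarithmic term and letting $r \to 0$ produces $E^{u_s}(0) \leq (1-s) E^{u_0}(0) + s E^{u_1}(0) \leq E^{u_s}(0)$, forcing equality throughout and establishing the lemma.  The main technical point is the delicate annular angular-energy estimate above: the quantity $\tfrac{1}{2}(E^{u_s|_{\partial \D^j_{r_0}}}[\Sp^1] + E^{u_0|_{\partial \D^j_{r_0^2}}}[\Sp^1])$ must approach $\Ej$ at rate $o(1/(-\log r_0))$ so that after multiplication by $-\log r_0$ its excess cancels precisely the logarithmic mismatch between $E^{u_0}(r_0^2)$ and $E^{u_s}(r_0)$; this precise cancellation is the content of Lemma~\ref{claim:lim} combined with Lemma~\ref{claim:dist}.
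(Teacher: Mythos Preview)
Your proof is correct and follows essentially the same strategy as the paper's: both construct a geodesic-interpolation ``bridge'' section on the annuli $\D^j_{r_0^2,r_0}$ and control its energy via Lemma~\ref{claim:dist} and Lemma~\ref{claim:lim}; the paper phrases this as a contradiction (taking $h = u_0 \cup b$ as a competitor for $u_1$ and contradicting the annular lower bound~(\ref{lwrbd})), while you argue the inequality $E^{u_0}(0)\le E^{u_s}(0)$ directly and then close with convexity.

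One small correction: your justification ``symmetric in its two arguments since $d(a,b)=d(b,a)$'' for invoking Lemma~\ref{claim:dist} is not quite right---you need $d^2(u_s|_{\partial \D^j_{r_0}}, u_0|_{\partial \D^j_{r_0^2}})$, whereas the lemma bounds $d^2(u_0|_{\partial \D^j_{r_0}}, u_s|_{\partial \D^j_{r_0^2}})$, i.e.\ the \emph{radii} are swapped as well as the maps. The variant you need does hold, but one must rerun the proof using Lemma~\ref{des} for $u_0$ together with the constancy $d(u_0,u_s)\equiv s\,d(u_0,u_1)$ coming from Lemma~\ref{sh01}; the same remark applies to your use of Lemma~\ref{claim:lim}.
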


\begin{proof}
It suffices to prove Lemma~\ref{sae} for $s=1$.   Assume on the contrary  that $E^{u_0}(0) \neq  E^{u_1}(0)$.  By changing the role of $u_0$ and $u_1$ if necessary, we can assume $E^{u_0}(0) < E^{u_1}(0)$.  Let  $\rho_0>0$ smaller of  the $\rho_0$ in Lemma~\ref{claim:dist} and Lemma~\ref{claim:lim}.  Choose $\epsilon>0$  and $r_0 \in (0,\rho_0]$ such that
\[
E^{u_0}(r_0) <  E^{u_1}(r_0) -2\epsilon
\]
which implies (cf.~Definition~\ref{minuslogterm})\begin{equation} \label{ococ}
E^{u_0}[\RR_{r_0}] <  E^{u_1}[\RR_{r_0}] -2\epsilon.
\end{equation}
Next fix $r_1 \in (0,r_0)$.  Let  $\tilde u_0$, $\tilde u_1:  \tilde \RR \rightarrow \tilde X$ be the $\rho$-equivariant maps associated to sections $u_0$, $u_1$. 
 For the fixed conformal disk $\D^j \subset \bar \RR$ centered at the puncture $p^j$ (cf.~(\ref{fixeddisk})), let  $ \tilde \D^j_{r_1,r_0} \subset \tilde \RR$ the lift of   $\D^j_{r_1,r_0} \subset \RR$.
We define a ``bridge" between  map $\tilde u_0$ and $\tilde u_1$  by setting 
\[
\tilde b: \bigcup_{j=1}^n \tilde \D^j_{r_1,r_0}  \rightarrow \tilde X
\] 
 to be the geodesic  interpolation $(1-t) U_0(\theta) +tU_1(\theta)$ where
  \[
 t=\frac{\log |z|-\log r_0}{\log r_1 - \log r_0}, \ \   U_0(\theta) = \tilde u_0(r_0,\theta) \ \text{ and } \  U_1(\theta) = \tilde u_1(r_1,\theta).
 \]
In other words,
\[
\tilde b(r,\theta)=\frac{\log r_1 - \log r}{\log r_1 - \log r_0} \tilde u_0(r_0, \theta)+\frac{\log r - \log r_0}{\log r_1 - \log r_0} \tilde u_1(r_1, \theta)
\]
 for $(r,\theta)  \in \tilde \D^j_{r_1,r_0}$,  $j=1,\dots, n$.  
Let
 \[
b:\bigcup_{j=1}^n \D^j_{r_1,r_0} \rightarrow 
\tilde \RR \times_{\rho}  \tilde X
\]
be the local section associated with $\tilde b$.

The NPC condition implies (by an argument analogous to the proof the  bridge lemma  \cite[Lemma 3.12]{korevaar-schoen2})
\begin{eqnarray*}
E^b[\bigcup_{j=1}^n \D^j_{r_1,r_0}] 
& \leq & 
\frac{1}{2} \log \frac{r_0}{r_1} \sum_{j=1}^n \left(E^{u_0|_{\partial \D^j_{r_0}}}[\Sp^1] +E^{u_1|_{\partial \D^j_{r_1}}}[\Sp^1] \right) \\
& & \  + \frac{1}{\log \frac{r_0}{r_1}} \sum_{j=1}^n \int_0^{2\pi} d^2(u_0|_{\partial \D^j_{r_0}}, u_1|_{\partial \D^j_{r_1}})d\theta.
\end{eqnarray*} 

Choose $r_1=r_0^2$ (cf.~ Lemma~\ref{claim:dist} and Lemma~\ref{claim:lim}) to obtain 
 \begin{equation} \label{acac}
E^b[\bigcup_{j=1}^n \D^j_{r_0^2,r_0}] < -\log r_0
\sum_{j=1}^n \Ej  +2\epsilon.
\end{equation}
Since the section 
\[
h:\RR_{r_0^2} \rightarrow \tilde \RR \times_\rho \tilde X
\]
 defined by setting 
 \[
 h=
 \left\{ 
 \begin{array}{ll}
 u_0 & \mbox{ in }\RR_{r_0}\\
b & \mbox{ in } \displaystyle{\bigcup_{j=1}^n \D^j_{r_0^2,r_0}}
\end{array}
\right.
\] is a competitor for $u_1$, we have
\begin{eqnarray*}
E^{u_1}[\RR_{r_0^2}]  & \leq &  E^h[\RR_{r_0^2}]
\\
& = & E^{u_0}[\RR_{r_0}]+ E^b[\bigcup_{j=1}^n \D^j_{r_0^2,r_0}]\\
&  <&  E^{u_1}[\RR_{r_0}]-2\epsilon + E^b[\bigcup_{j=1}^n \D^j_{r_0^2,r_0}] \ \ \ \ \ \ \mbox{(by (\ref{ococ}))}\\
& < & E^{u_1}[\RR_{r_0}] -\log r_0 \sum_{j=1}^n\frac{\Delta_{I^j}^2}{2\pi}  \ \ \ \ \ \ \mbox{(by (\ref{acac}))}.
\end{eqnarray*}
Thus,
\[
E^{u_1}[\bigcup_{j=1}^n \D^j_{r_0^2,r_0}] < -\log r_0 \sum_{j=1}^n \Ej.
\]
This contradicts   (\ref{lwrbd}) and proves Lemma~\ref{sae}.
\end{proof}

\begin{lemma} \label{twothings}
For $u_0, u_1 \in \HH_\rho$, there exists a constant $c$ such that
\begin{eqnarray} 
d(u_s(p), u_1(p)) & \equiv  & cs, \forall p \in  \RR
\label{d=1}
 \\
 |(u_s)_*(V)|^2(p) & = & |(u_0)_*(V)|^2(p), \ \ \mbox{for a.e.~}s \in [0,1], \ p \in \RR, V \in T_p\tilde M. \label{pullback}
\end{eqnarray}
\end{lemma}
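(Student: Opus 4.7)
The first identity (\ref{d=1}) is essentially a reformulation of Lemma~\ref{sh01}. That lemma gives $d(u_0(p), u_1(p)) \equiv c_0$ constant on $\RR$ (setting $c_0^2$ equal to the constant value of $d^2(u_0, u_1)$). Since by Notation~\ref{interpolationnotation} the point $\tilde u_s(\tilde p)$ lies on the minimizing geodesic from $\tilde u_0(\tilde p)$ to $\tilde u_1(\tilde p)$ at parameter value $s$, one has $d(\tilde u_0(\tilde p), \tilde u_s(\tilde p)) = s c_0$ and $d(\tilde u_s(\tilde p), \tilde u_1(\tilde p)) = (1-s) c_0$ for every $\tilde p \in \tilde \RR$; passing to the quotient via (\ref{defdis}) gives the claimed constancy in $p$ (the literal form $d(u_s, u_1) \equiv cs$ is most naturally read as the equivalent assertion $d(u_0, u_s) \equiv cs$, up to the harmless factor $1-s$ versus $s$).

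For the second identity (\ref{pullback}), the strategy is to combine NPC convexity with Lemma~\ref{sae}. First, the Korevaar--Schoen pointwise convexity \cite[(2.2vi)]{korevaar-schoen1}, applied to the interpolation (\ref{us}), gives
\[
|(u_s)_*(V)|^2(p) \leq (1-s)|(u_0)_*(V)|^2(p) + s |(u_1)_*(V)|^2(p)
\]
for every $V$ and a.e.\ $p$; integrating against the volume form of $g$ over any $\Omega \subset\subset \RR$ yields the energy convexity $E^{u_s}[\Omega] \leq (1-s) E^{u_0}[\Omega] + s E^{u_1}[\Omega]$. I promote this to equality by a spread argument. If strict inequality with some defect $\delta > 0$ held on some $\Omega \subset\subset \RR$, then combining with the weak inequality on $\RR_r \setminus \Omega$ for $r$ so small that $\Omega \subset \RR_r$, and subtracting $\sum_j \Ej \log(1/r)$, one would obtain $E^{u_s}(r) \leq (1-s) E^{u_0}(r) + s E^{u_1}(r) - \delta$; letting $r \to 0$ and invoking Lemma~\ref{sae} (which forces $E^{u_s}(0) = E^{u_0}(0) = E^{u_1}(0)$) gives $E^{u_0}(0) \leq E^{u_0}(0) - \delta$, a contradiction. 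Hence equality $E^{u_s}[\Omega] = (1-s) E^{u_0}[\Omega] + s E^{u_1}[\Omega]$ holds on every $\Omega \subset\subset \RR$. Since the pointwise defect $(1-s)\pi_{u_0} + s \pi_{u_1} - \pi_{u_s}$ is a positive semidefinite $(0,2)$-tensor whose contraction against the positive-definite $g^{ij}$ integrates to zero on every such $\Omega$, it must vanish a.e., giving
\[
\pi_{u_s}(V, V)(p) = (1-s)\pi_{u_0}(V, V)(p) + s \pi_{u_1}(V, V)(p)
\]
for every $V$ and a.e.\ $(s, p)$.

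The main obstacle is the concluding rigidity step: the affine formula just derived reduces to the stated identity $|(u_s)_*(V)|^2 = |(u_0)_*(V)|^2$ only when the slope vanishes, i.e., $\pi_{u_0} = \pi_{u_1}$ a.e. This rigidity exploits the constancy of $d(u_0, u_1)$ from Lemma~\ref{sh01} together with the harmonicity of both endpoints: in an NPC target, a pair of harmonic maps whose pointwise distance is constant is related by parallel translation along the geodesic segments $s \mapsto \tilde u_s(\tilde p)$, and such parallel translation is an isometry of tangent spaces and hence preserves the pullback inner product. Substituting $\pi_{u_0} = \pi_{u_1}$ into the affine interpolation formula then yields $|(u_s)_*(V)|^2(p) = |(u_0)_*(V)|^2(p)$ for a.e.\ $(s, p, V)$, completing the proof.
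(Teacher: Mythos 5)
Your treatment of (\ref{d=1}) is fine and a little more direct than the paper's, which rederives $d(u_0,u_1)\equiv c$ from the Korevaar--Schoen energy convexity inequality (with the $s(1-s)\int|\nabla d(u_0,u_1)|^2$ correction term) together with Lemma~\ref{sae}, rather than invoking Lemma~\ref{sh01}; you are also right that the displayed formula should read $d(u_0,u_s)\equiv cs$, or equivalently $d(u_s,u_1)\equiv c(1-s)$. Likewise the part of your argument for (\ref{pullback}) that produces the a.e.\ affine identity $\pi_{u_s}=(1-s)\pi_{u_0}+s\pi_{u_1}$ is sound and tracks the paper's reasoning.

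The gap is the concluding rigidity step. The assertion that ``a pair of harmonic maps whose pointwise distance is constant is related by parallel translation along the geodesic segments $s\mapsto\tilde u_s(\tilde p)$, and such parallel translation is an isometry of tangent spaces and hence preserves the pullback inner product'' cannot be invoked: in a general NPC target there is no parallel transport, and the claim that this geodesic variation preserves infinitesimal distances is exactly the conclusion $\pi_{u_0}=\pi_{u_1}$ you are trying to prove, so the argument is circular. What actually closes the gap is the sharper form of the quadrilateral comparison, which comes for free from the NPC convexity of $t\mapsto d(P_t,Q_t)$ along two constant-speed geodesics: squaring that convexity bound gives
\[
d^2(P_s,Q_s)\ \leq\ (1-s)\,d^2(P_0,Q_0)+s\,d^2(P_1,Q_1)-s(1-s)\bigl(d(P_0,Q_0)-d(P_1,Q_1)\bigr)^2,
\]
and taking $P_s=u_s(p)$, $Q_s=u_s(q)$ and letting $q\to p$ along $V$ yields
\[
|(u_s)_*V|^2\ \leq\ (1-s)\,|(u_0)_*V|^2+s\,|(u_1)_*V|^2-s(1-s)\bigl(|(u_0)_*V|-|(u_1)_*V|\bigr)^2.
\]
Integrating over $\RR_r$, passing to the modified energy of Definition~\ref{minuslogterm} as $r\to 0$, and invoking Lemma~\ref{sae} forces $\int\bigl(|(u_0)_*V|-|(u_1)_*V|\bigr)^2=0$, hence $|(u_0)_*V|=|(u_1)_*V|$ a.e.; substituting back into the convexity inequality (or your affine identity) then gives (\ref{pullback}). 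This is the precise ingredient the parallel-translation heuristic was standing in for.
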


\begin{proof}
By the  convexity of energy (cf.~\cite[(2.2vi)]{korevaar-schoen1}), 
\[
E^{u_s}[\RR_r] \leq (1-s)E^{u_0}[\RR_r] + s E^{u_1}[\RR_r] -s(1-s) \int_{\RR_r} |\nabla d(u_0, u_1)|^2 d\mbox{vol}_{\domain}
\]
for any $r>0$.
\[
E^{u_s}(r) \leq (1-s)E^{u_0}(r) + s E^{u_1}(r) -s(1-s) \int_{\RR_r} |\nabla d(u_0, u_1)|^2 d\mbox{vol}_{\domain}.
\]
Letting $r \rightarrow 0$ and applying Lemma~\ref{sae},  we conclude 
\begin{eqnarray}
0 &= & \int_{\RR} |\nabla d(u_0, u_1)|^2 d\mbox{vol}_{\domain}
 \label{graddist}
 \\
E^{u_s}& = & E^{u_0}, \ \ \forall s \in [0,1] \label{energyconstant}
 \end{eqnarray}

First, (\ref{graddist}) implies that $\nabla d(u_0, u_1)=0$~a.e.~in $\domain$ which in turn implies $d(u_0,u_1) \equiv c$ for some $c$.  By the definition of the map $u_s$, (\ref{d=1}) follows immediately.

Next, for $\{P,Q,R,S\} \subset \tilde X$, the quadrilateral comparison for NPC spaces   implies
\[
d^2(P_t,Q_t) \leq (1-t) d^2(P,Q)+td^2(R,S)
\]
where $P_t=(1-t)P+tS$ and $Q_t=(1-t)Q+tR$.
Applying the above inequality with  $P=u_0(p)$, $Q=u_1(p)$, $S=u_1(\exp_p(tV))$ and $Q=u_0(\exp_t(tV))$ where $t >0$ and $V \in T_p\tilde M$, dividing by $y$ and letting $t \rightarrow 0$, we obtain (cf.~\cite[Theorem 1.9.6]{korevaar-schoen1})
\[
|(u_s)_*(V)|^2(p) \leq (1-s) |(u_0)_*(V)|^2(p) + s|(u_1)_*(V)|^2(p),
\ 
\mbox{a.e.~$p \in \tilde M, V \in T_p\tilde M$.}
\]   Integrating the above over all unit vectors $V \in T_p\tilde M$  and then over $p \in F$, we obtain
\[
E^{u_s} \leq (1-s) E^{u_0}+ s E^{u_1}.
\]
Combining this with  (\ref{energyconstant}) implies (\ref{pullback}).
\end{proof}

 We assume there exists $u_0, u_1 \in \HH_\rho$ such that $u_0 \not \equiv u_1$ and treat the two  cases (i) and (ii) of Theorem~\ref{uniqueness} separately. 
 
 \begin{itemize}
 \item {\it {\bf $\tilde X$ is a negatively curved space as in Definition~\ref{neg}}}:  
For  $p \in \tilde \RR$,  choose $\kappa>0$ and $R>0$ such that $B_{R}(u_0(p))$ is a CAT(-$\kappa$) space.  Let   $\mathcal U$ be a sufficiently small neighborhood of  $p$  and $s_0 \in [0,1]$ sufficiently small such that $\tilde u_s(\mathcal U) \subset B_{R}(u(p))$ for $s \in [0,s_0]$.  Thus, if $\tilde u_{s_0}(p)\neq \tilde u_0(p)$, then applying  \cite[Section 5]{mese} implies that the image under $u_0$ of a  sufficiently small  neighborhood of $p$ is contained in an image $\sigma(\R)$ of a geodesic line.   (If $\tilde X$ is a smooth manifold, this follows by Hartman \cite{hartman}).  If $ \tilde u_0 \not \equiv \tilde u_1$, then $\tilde u_s(p) \neq \tilde u_0(p)$ for all $p \in \tilde \RR$ and $s \in (0,s_0]$ by (\ref{d=1}).  Thus, we conclude that $\tilde u_0(\tilde \RR)$ is contained in $\sigma(\R)$.  Consequently, $\rho(\pi_1(\tilde \RR))$ fixes $\sigma(\R)$ which contradicts  the fact that $\rho(\pi_1(\RR))$ is satisfies assumption (B). 

\item {\bf $\tilde X$ is an irreducible symmetric space of non-compact type or locally finite Euclidean buildings}:
For these two target spaces, Lemma~\ref{twothings} combined with \cite{daskal-meseUnique}  implies  the uniqueness result.
\end{itemize}

\chapter{Harmonic maps from K\"ahler surfaces} \label{chap:Kahler surfaces}

In this chapter, we assume that  \emph{the complex dimension of $M$ is 2.}
In dimension 2, the intersection of two irreducible components of the normal crossings divisor consists of  isolated points.  Thus, the analysis around the juncture of a normal crossing divisor  is much easier to understand than in higher dimensions.

Throughout this section, we assume:
\begin{itemize}
\item $M$ is a smooth quasi-projective variety such that $M=\bar M \backslash \Sigma$ where $\bar M$ is a  smooth projective variety of complex dimension $2$,  $\Sigma$ is a divisor with normal crossings and $M=\bar M \backslash \Sigma$ 
\item $\tilde X$ is an NPC space
 \item  $\rho: \pi_1(M) \rightarrow  \mathsf{Isom}(\tilde X)$  is a proper homomorphism that  does not fix an unbounded closed convex strict subset of $\tilde X$.
\item every element (resp.~commuting pair) of $\mathsf{Isom}(\tilde X)$ has exponential decay (cf.~Definition~\ref{jz} and Definition~\ref{jz2})
 \end{itemize}
 In other words, $M$ and $\rho$ are as in 
Theorem~\ref{theorem:pluriharmonic} with the additional assumption that 
\[
\dim_\C M=2,
\]
 but the target space $\tilde X$ is more general  than in Theorem~\ref{theorem:pluriharmonic}.  By relaxing the restriction on the target space, we  include for example the case when $\tilde X$ is a Euclidean building   (cf.~Theorem~\ref{theorem:buildings}) and the Weil-Petersson completion of Teichm\"uller space (cf.~Theorem~\ref{theorem:teichmuller}).

The main theorems  of Chapter~\ref{chap:Kahler surfaces} are:

\begin{theorem}  \label{theorem:pluriharmonicDim2}
With the Poincar\'e-type K\"ahler metric $g$  on $M$ (cf.~Section~\ref{poincarekahler}), there exists    a   harmonic section 
$
u: M \rightarrow \tilde M \times_\rho \tilde{X}
$
 of logarithmic energy growth.
\end{theorem}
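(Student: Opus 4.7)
The plan is to mimic the Riemann surface argument of Chapter~\ref{chap:Riemann surfaces}, replacing the punctured disk by tubular neighborhoods of the components of $\Sigma$ and their pairwise intersections. The three ingredients I will need are (a)~a prototype section $v:M\to \tilde M\times_\rho\tilde X$ whose energy grows at most logarithmically along $\Sigma$, (b)~a family of Dirichlet solutions on an exhaustion of $M$ which are controlled in $C^{0,1}_{\mathrm{loc}}$ by $v$, and (c)~a compactness argument that extracts an equivariant harmonic limit whose image stays in a bounded region of $\tilde X$ thanks to properness of $\rho$.

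First I would construct the prototype section. Write $\Sigma=\bigcup_j\Sigma_j$ with each $\Sigma_j$ smooth, meeting other components transversally in finitely many points. Away from the crossings, each $\Sigma_j$ has a tubular neighborhood $U_j$ biholomorphic to a disk bundle over $\Sigma_j$ minus its zero section, and on each transverse punctured disk I use the fiberwise prototype of Lemma~\ref{defineg} associated with the loop $\lambda^j$ around $\Sigma_j$ and its isometry $I^j=\rho(\lambda^j)$, which has exponential decay by the hypothesis. Near a crossing point $p\in\Sigma_i\cap\Sigma_j$ choose local coordinates $(z_1,z_2)$ in which $\Sigma_i=\{z_1=0\}$, $\Sigma_j=\{z_2=0\}$; the loops around $\Sigma_i$ and $\Sigma_j$ commute in $\pi_1$, so $I^i$ and $I^j$ commute in $\mathsf{Isom}(\tilde X)$, and I apply Lemma~\ref{almostflattorus} (the almost-flat torus lemma, using exponential decay of commuting pairs) to obtain an equivariant family of maps from $\R^2$ parameterized by $t\in[0,\infty)$. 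Setting the parameter $t$ to $-\log|z_1|$ and $-\log|z_2|$ in the respective angular directions, with smooth interpolation, produces a local prototype. I would glue these local pieces by a partition of unity adapted to a stratification, and extend over the compact core of $M$ by any locally Lipschitz section. By direct computation in the Poincar\'e-type metric (whose volume form and inverse metric coefficients are estimated in Section~\ref{sec:metric}), the resulting $v$ satisfies an estimate of the form
\begin{equation*}
\sum_{j}\mathsf E_j\,\log\tfrac{1}{r}\;\le\;E^v[M_r]\;\le\;\sum_{j}\mathsf E_j\,\log\tfrac{1}{r}+C
\end{equation*}
for a suitable exhaustion $M_r$ by removing $r$-neighborhoods of $\Sigma$, with $\mathsf E_j$ determined by $\Delta_{I^j}$.

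Next, by \cite[Theorem 2.7.2]{korevaar-schoen1} applied on each compact Lipschitz domain $M_r$, there is a unique harmonic section $u_r:M_r\to \tilde M\times_\rho\tilde X$ agreeing with $v$ on $\partial M_r$. The lower bound analogous to Lemma~\ref{lowerbd} (obtained fiberwise on the transverse disks and on the torus fibers at crossings from the definition of $\mathsf E_{\rho^j}$ and Lemma~\ref{almostflattorus}) combined with the minimizing property of $u_r$ yields a uniform bound
\[
E^{u_r}[M_{r_0}]\;\le\;E^{v}[M_{r_0}]+C,\qquad r<r_0,
\]
independent of $r$. This translates, via the interior Lipschitz estimate of \cite[Theorem 2.4.6]{korevaar-schoen1}, to a uniform local Lipschitz bound for the $u_r$ on each $M_{2r_0}$. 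To pass this to a bound on the lifted maps $\tilde u_r$ I invoke the properness hypothesis on $\rho$ exactly as in Section~\ref{sec:proofexistence}: a uniform bound on $\delta(\tilde u_r(p))$ for $p$ in a fundamental domain forces the images to lie in a common ball of $\tilde X$. Arzel\`a--Ascoli together with a compact exhaustion and a diagonal subsequence then produces a limit $\tilde u:\tilde M\to\tilde X$, $\rho$-equivariant and locally energy minimizing hence harmonic, and with associated section $u:M\to\tilde M\times_\rho\tilde X$.

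Finally, logarithmic energy growth of $u$ follows by combining the lower semicontinuity of energy (\cite[Theorem 1.6.1]{korevaar-schoen1}) with the uniform upper bound inherited from $v$, together with the lower bound furnished by the same fiberwise slicing argument. I expect the main obstacle to lie in the prototype construction and its estimate near crossings of $\Sigma$: there the relevant normal directions are simultaneously going to infinity at different rates in the Poincar\'e-type metric, the local isometries $I^i$ may be parabolic rather than semisimple, and only the commuting-pair exponential decay of Lemma~\ref{almostflattorus} plus the careful matching of the parameter $t$ with the logarithmic distance to each divisor component keeps the energy density integrable against the Poincar\'e volume form. Once this local estimate is established, the remainder of the argument is a direct two-dimensional analogue of the Riemann surface case carried out in Section~\ref{sec:proofexistence}.
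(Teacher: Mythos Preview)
Your overall strategy matches the paper's proof exactly: build a prototype $v$ with logarithmic energy growth, solve Dirichlet problems $u_r$ on the exhaustion $M_r$ with boundary data $v|_{\partial M_r}$, use the comparison $E^{u_r}[M_{r_0}]\le E^v[M_{r_0}]+C$ (the paper's Proposition~\ref{comparison}) to get uniform local Lipschitz bounds, invoke properness of $\rho$ to trap the images, and extract a harmonic limit by Arzel\`a--Ascoli and diagonalization. Two implementation details differ and are worth flagging. First, away from the crossings the paper does not use the Lipschitz prototype of Lemma~\ref{defineg} on the transverse disks but the \emph{fiberwise harmonic} Dirichlet solution of Theorem~\ref{exists}; this is what lets the local sections agree \emph{exactly} on overlaps by the uniqueness clause of Theorem~\ref{exists} (see Lemma~\ref{vpdefined}), so no partition-of-unity gluing---which in an NPC target would require geodesic interpolation and a separate energy check---is needed. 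Second, your crossing construction (``set $t$ to $-\log|z_1|$ and $-\log|z_2|$ in the respective angular directions'') is too schematic to give the required estimates: $t$ in Lemma~\ref{almostflattorus} is a single parameter, and the paper instead prescribes $v$ only along the diagonal $\{|z_1|=|z_2|=\rho\}$ with $t=3(-\log\rho)^{1/3}$, then extends to the two halves $\{|z_1|\gtrless|z_2|\}$ by fiberwise harmonic maps in the remaining transverse variable (Subsection~\ref{prototype(B)}), again using uniqueness to match on the diagonal; the energy estimates near the crossing (Lemmas~\ref{Bestnear}, \ref{lemmadirvB2}, \ref{arealemma}) rely on this specific construction. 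Your instinct that the crossing analysis is the crux is correct.
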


\begin{theorem}
\label{pluriharmonicity}
If in addition $\tilde X$ is as in Theorem~\ref{theorem:pluriharmonic}, the harmonic section $
u: M \rightarrow \tilde M \times_\rho \tilde{X}
$ of Theorem~\ref{theorem:pluriharmonicDim2}
is pluriharmonic.
\end{theorem}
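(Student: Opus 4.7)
The plan is to deduce pluriharmonicity from the modified Sampson-type Bochner identity of Theorem~\ref{mochizukibochnerformula'} by integrating it over an exhaustion of $M$ by compact pieces and controlling the resulting boundary contributions. Since every symmetric space of non-compact type has Hermitian-negative curvature (Remark~\ref{symHer}), the term $Q_0$ on the right of that identity is non-negative, as is $|\partial_E\bar\partial u|^2$. Consequently, if the total boundary integral produced by Stokes' theorem can be shown to vanish in the limit, the two non-negative interior terms must themselves vanish, which gives $\partial_E\bar\partial u \equiv 0$---the statement that $u$ is pluriharmonic.

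Explicitly, let $\sigma_j$ be the canonical section of $\mathcal{O}(\Sigma_j)$ and set $\Omega_\epsilon := M \setminus \bigcup_j \{|\sigma_j|_{h_j} \le \epsilon\}$. Integrating Theorem~\ref{mochizukibochnerformula'} with $n=2$ over $\Omega_\epsilon$ and applying Stokes' theorem yields
\[
\int_{\partial \Omega_\epsilon} \{\bar \partial_E \partial u,\; \bar \partial u - \partial u\} \;=\; \int_{\Omega_\epsilon} \bigl(8\, |\partial_E \bar\partial u|^2 + 8\, Q_0\bigr)\, \frac{\omega^2}{2!}.
\]
The right side is monotone and non-negative as $\epsilon \to 0$, so it suffices to prove that the boundary integral tends to zero along a subsequence.

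The boundary $\partial \Omega_\epsilon$ decomposes into tubes around smooth points of the components $\Sigma_j$ and small neighborhoods of the (finitely many) pairwise crossings $\Sigma_j \cap \Sigma_k$. Near a smooth point of $\Sigma_j$ we pick holomorphic coordinates $(z,w)$ with $z$ transverse; the slice $\{w=w_0\}$ is a punctured disc and $u|_{\{w=w_0\}}$ is a harmonic map of the type controlled by Chapter~\ref{chap:Riemann surfaces}, with loop isometry $I^j$. The pointwise directional energy-density bounds of Lemma~\ref{(ii)} combined with the Poincar\'e-type volume estimates of Section~\ref{sec:metric} control the integrand on $\{|z|=\epsilon\}$. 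The essential feature, inherited from Mochizuki's reformulation, is that pairing $\bar\partial_E\partial u$ with the \emph{antisymmetric} combination $\bar\partial u - \partial u$---rather than with $\bar\partial u$ alone as in the classical Siu/Sampson identity---cancels the leading radial term produced by the prototype asymptotics. What remains is of order $(-\log \epsilon)^{-1}$ times the Poincar\'e $3$-volume of the tube, which tends to $0$.

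Near a crossing $p \in \Sigma_j \cap \Sigma_k$ the loops $\lambda^j, \lambda^k$ produce a commuting pair $I^j, I^k$, which by our standing hypothesis has exponential decay in the sense of Section~\ref{sec:Expdecay2}. A local prototype built from Lemma~\ref{flattorus} or Lemma~\ref{almostflattorus} approximates $u$ near $p$, and the slice-wise argument from the previous paragraph is then repeated in both normal directions $z_1, z_2$ simultaneously. The Poincar\'e metric factorizes and the damping factor $(\log|z_1|^{-1}\cdot\log|z_2|^{-1})^{-1}$ dominates the integrand on the torus-like boundary component. The main obstacle is precisely this crossing analysis: tracking the joint Poincar\'e asymptotics in two normal directions at once and isolating the Mochizuki-type cancellation in the pairing. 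It requires the commuting-pair exponential decay in its full strength (Section~\ref{sec:Expdecay2}) together with a careful asymptotic comparison between $u$ and the bi-dimensional prototype simultaneously in both $z_1$ and $z_2$. Once both the smooth-divisor and crossing boundary contributions are shown to vanish, the Bochner identity forces $\partial_E \bar \partial u \equiv 0$ and hence $u$ is pluriharmonic.
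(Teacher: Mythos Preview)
Your outline has a genuine gap in how you control the boundary contribution. The integrand on $\partial\Omega_\epsilon$ contains the Hessian term $\bar\partial_E\partial u$, and nothing you have cited bounds it. Lemma~\ref{(ii)} controls only first derivatives of the Dirichlet solution on a punctured disk; for the harmonic section $u$ on $M$ the available estimates (Theorems~\ref{weneedA} and~\ref{weneedB}) are integral bounds on first derivatives, and there is no a priori pointwise---or even integral---control of second derivatives near the divisor. Without that, the claim that the boundary term is of order $(-\log\epsilon)^{-1}$ has no basis.

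The paper resolves this by a two-pass argument that your outline collapses into one. In the first pass (Proposition~\ref{integrable}) one integrates the \emph{unmodified} Siu/Sampson formula against a smooth cutoff $\chi_N$; after integration by parts the resulting term $\int \partial\bar\partial\chi_N \wedge \{\bar\partial u,\bar\partial u\}$ involves only \emph{first} derivatives of $u$, so the energy estimates suffice to show it stays \emph{bounded} as $N\to\infty$ (not zero---there is a finite residue coming from the curvature of $\mathcal{O}(\Sigma_j)$). This yields $\int_M |\partial_{E'}\bar\partial u|^2\,\omega^2 < \infty$. Only in the second pass does one integrate the modified formula: the term $\int d\chi_N \wedge \{\bar\partial_{E'}\partial u,\bar\partial u-\partial u\}$ now contains the Hessian, but Cauchy--Schwarz splits off a factor $\bigl(\int_{\mathrm{supp}\,d\chi_N}|\partial_{E'}\bar\partial u|^2\bigr)^{1/2}$, which tends to zero by the integrability just established and dominated convergence. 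The cancellation $\tfrac{dr}{r}\wedge\tfrac{dr}{r}=0$ you correctly identified then handles the remaining first-derivative factor. Your single-pass approach with hard boundaries cannot access this mechanism: integrability of the Hessian is precisely what you are implicitly assuming rather than proving.
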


The main idea of the proof  of Theorem~\ref{theorem:pluriharmonicDim2} is to   construct a \emph{prototype section} which almost minimizes energy near infinity.  This idea goes back to \cite{lohkamp} and  \cite{jost-zuo}.  On the other hand,   since we want include the case when  the normal bundle of $\Sigma$ is non-trivial and also the case when the divisor consists of more than one irreducible components,  our argument closely follows the construction in Mochizuki \cite{mochizuki-memoirs}.  We use the prototype section to construct a harmonic section.  The idea is to consider the Dirichlet problem on a compactly supported domain with boundary values equal to $v$ and then to take a compact exhaustion.

Throughout this Chapter, we let $\bar M$ be a   smooth projective variety of complex dimension $\geq 2$ and  $\Sigma$ be a divisor with normal crossings such that
\[
M=\bar M \backslash \Sigma.
\]  Furthermore, let 
\[
\Sigma= \bigcup_{j=1}^L \Sigma_j
\]
where $\{\Sigma_j\}$ is the set of irreducible components of $\Sigma$. \\

\noindent Chapter~\ref{chap:Kahler surfaces} consists of the following: 
\vspace*{0.05in}

In Section~\ref{sec:neardivisor}, we study neighborhoods of the divisor of $\bar M$.  To do so, we consider the following:
\begin{itemize}
\item A neighborhood $\Omega \times \D_{\frac{1}{4}}$  of a point in $\Sigma_j$ away from the crossings
\item A neighborhood
  $\D_{\frac{1}{4}}  \times  \D_{\frac{1}{4}}$ near  the crossing $\Sigma_j \cap \Sigma_i$.
  \end{itemize}
  
  In Section~\ref{sec:metric},  we discuss a  Poincar\'e-type metric $g$ (cf.~Definition~\ref{poincarekahler}) due to Cornalba and Griffiths \cite{cornalba-griffiths}.  This is a complete metric which puts the divisor $\Sigma$ at infinity.

    In Section~\ref{prototype}, we construct  a $\rho$-equivariant {\it prototype section} $v:M \rightarrow \tilde M \times_\rho \tilde X$   with controlled growth near infinity. The crucial tool in this contruction is Theorem~\ref{exists} (i.e.~the Dirichlet solution on the punctured disk).  This enables us to construct a fiber-wise harmonic map on the normal bundle of the divisor.  This map defined near the divisor is then extended to all of $M$.  
    
    In Section~\ref{sec:ptm},  we give  precise estimates for energy growth of the prototype section near the divisor at infinity.  These are important because they imply the estimates for the harmonic section.
    
In Section~\ref{sec:existence}, we use the prototype section $v:M \rightarrow \tilde M \times_\rho \tilde X$  in order to construct a harmonic section $u:M \rightarrow \tilde M \times_\rho \tilde X$ (cf.~Theorem~\ref{theorem:pluriharmonicDim2}). We end with some energy estimates of the harmonic section constructed. 

In Section~\ref{sec:pluriharmonicity}, we show that the harmonic section is in fact pluriharmonic (cf.~Theorem~\ref{pluriharmonicity}). This is where we take advantage of the variation of the Bochner formulas proved in Chapter~\ref{chap:bochner}.

 \section{Neighborhoods near the divisor} \label{sec:neardivisor}
We continue to denote by $\D$ the unit disk in the complex plane. For clarity, we will also denote the unit disk by $\D_{z}$ to indicate that $\D$ is being parameterized by the complex variable $z=re^{i\theta}$.    We use analogous notation $\D_{z,r}$, $\D_{z, r_1,r_2}$, etc.~that corresponding to the sets defined in Section~\ref{disc}. We also use the notation $\Sp^1_{\theta}$ to denote the circle  $\Sp^1$  parameterized by the real variable $\theta$ and identify $\Sp^1_{\theta}$ as the boundary of ${\D}_{z}$ via the map $\theta \mapsto e^{i\theta}$.

Recall that $\bar M$ is a compact K\"ahler surface with a divisor $\Sigma$ containing only  simple normal crossings,
$
M=\bar M \backslash \Sigma,
$
 and $\{\Sigma_j : j=1,\cdots, L\}$ are the irreducible components of  $\Sigma$.  Let $\sigma_j$ be the canonical section of ${\mathcal O}(\Sigma_j)$ with zero set $\Sigma_j$. 

To study a neighborhood of the juncture, let $P \in \Sigma_i \cap \Sigma_j$ for some $i, j \in \{1, \dots, L\}$ with $i \neq j$, and let $V_P$ be a neighborhood of $P$ containing no other crossings.
Choose holomorphic  trivializations $e_i$ (resp. $e_j$)  of $\mathcal O(\Sigma_i)$ (resp. $\mathcal O(\Sigma_j)$)  on 
$V_P$ and define $z^1$ (resp. $z^2$) by setting
\begin{equation} \label{z12}
\sigma_i=z^1e_i, \mbox{ (resp. }\sigma_j=z^2e_j).
\end{equation}
For each $j=1, \dots, L$, let $h_j$ be a Hermitian metric  on $\mathcal\mathcal {\mathcal O}(\Sigma_j)$ such that $|e_j|_{h_j}=1$ in $V_P$ for any crossing $P$.
Let $h$ be a Hermitian metric on $\bar{M}$,
not necessarily K\"ahler, such that the following holds:
\begin{itemize}
\item[(i)] The metric $h$ is the Euclidean metric in a neighborhood $V_P$ of every crossing $P$, i.e.~
\begin{equation} \label{hiseuc}
h|_{V_P} =dz^1d\bar z^1+ dz^2d\bar z^2. 
\end{equation}
By rescaling $\sigma_1$ and $\sigma_2$ if necessary, we can assume without the loss of generality that 
  \begin{equation} \label{D12}
  \bar{\D}_{z^1} \times \bar{\D}_{z^2} \subset V_P.
\end{equation}  
\item[(ii)] The metric $h$ induces the orthogonal decomposition $T \bar M|_{\Sigma_j} = T \Sigma_j \oplus N \Sigma_j$ and  under the natural isomorphism 
\begin{equation} \label{natiso}
N \Sigma_j\simeq \mathcal\mathcal O(\Sigma_j)|_{\Sigma_j},
\end{equation}
 the restriction of $h$ to $N \Sigma_j$ is same as $ h_j$.
\end{itemize}

For   $r \in (0,1]$, we  set
  \begin{eqnarray*} \label{nbhds}
{\mathcal D}_{j,r}  & = &   \{\nu \in  N\Sigma_j: |\nu|_{h_j} < r\},   \nonumber \\
{\mathcal D}_{j,r}^*  & = &   \{\nu \in   N\Sigma_j: 0<|\nu|_{h_j} < r\},    \nonumber \\
\bar{\mathcal D}_{j,r}  & = &   \{\nu  \in N\Sigma_j: |\nu|_{h_j} \leq r\} \nonumber\\
\bar{\mathcal D}_{j,r}^*  & = &    \{\nu \in \subset N\Sigma_j: 0<|\nu|_{h_j} \leq r\},  
\end{eqnarray*}

\begin{eqnarray*}
{\mathcal D}_r = \bigcup_{j=1}^L {\mathcal D}_{j,r}, \ \  {\mathcal D}_r^* = \bigcup_{j=1}^L {\mathcal D}_{j,r}^*, \ \ 
\bar{\mathcal D}_r = \bigcup_{j=1}^L\bar{\mathcal D}_{j,r}, \ \   \bar{\mathcal D}_r^* = \bigcup_{j=1}^L\mathcal D_{j,r}^*
\end{eqnarray*}

\begin{eqnarray} \label{nbhd12'} 
{\mathcal D}_{r_1,r_2} = {\mathcal D}_{r_2} \backslash \bar{\mathcal D}_{r_1}
\mbox{ for
$
0< r_1< r_2 \leq 1.
$}
\end{eqnarray} 
There exists  $r>0$ such that the restriction of the exponential map  
\[
\exp : N \Sigma_j \subset T \bar M|_{\Sigma_j} \rightarrow \bar M
\] 
defines diffeomorphism of 
${\mathcal D}_{j,r}$ to a neighborhood of $\Sigma_j$ in $\bar M$. By rescaling $\sigma_j$ if necessary, we may assume $r>1$.  
In particular, we identify (\ref{nbhd12'}) for sufficiently small $r>0$ as an open subset of $M$ via the exponential map; i.e.
\begin{equation} \label{nbhd12}
{\mathcal D}_{r_1,r_2} := \bigcup_j \{\exp \nu: \nu \in N\Sigma_j, \ r_1<|\nu|_{h_j} <r_2\} \subset M.
\end{equation}
Denote
\[
\bar{\mathcal D}_j:=\bar{\mathcal D}_{j,1} \subset N\Sigma_j.
\]
The restriction of the normal bundle $N\Sigma_j \rightarrow \Sigma_j$ to $\bar{\mathcal D}_j$ defines a disk bundle
\begin{equation} \label{diskbundle}
\pi_j : \bar{\mathcal D}_j \rightarrow \Sigma_j.
\end{equation}
We also identity $\bar{\mathcal D}_j$  as a subset of $\bar{M}$; i.e.
\begin{equation}  \label{assubsets}
\bar{\mathcal D}_j \simeq \exp(\bar{\mathcal D}_j) \subset \bar{M}.
\end{equation}
We denote by $J_{\bar M}$  the  holomorphic structure on $\bar{\mathcal D}_j$ defined by pulling back the complex  structure on $\bar M$ via the exponential map.

 We now consider a finite collection of sets near the divisor 
of the following two types:
\begin{itemize}
 
\item A  set   of type (A) admits a local unitary trivialization
\begin{equation} \label{ref:typeA}
\pi_j^{-1}(\Omega) \simeq  \Omega \times \bar{\D}_{z^2}, \ \ 
\end{equation}
of $\pi_j:  \bar {\mathcal D}_j \rightarrow \Sigma_j$ where $\Omega \subset \Sigma_j$ is a contractible open subset of  $\Sigma_j$ containing no crossings.  We will use coordinates $(z^1,z^2)$ where  $z^1$  is a holomorphic local coordinate in  $\Omega$ and $z^2$ is the standard coordinate of $\bar \D$. 
Although the coordinates $(z^1,z^2)$  are holomorphic with respect to the  product complex structure $J_{prod}$ of $\Omega \times \bar \D_{z^2} $, they  are {\it not holomorphic} with respect to the complex structure $J_{\bar M}$.  However, by construction, we have that  
\[
J_{\bar M}=J_{prod} \ \ \mbox{on} \ \  T_{(z^1,0)} (\Omega \times \bar \D_{z^2}).
\]

\item  
A set of type (B) is as in (\ref{D12}); i.e.
\begin{equation} \label{ref:typeB}
 \bar{\D}_{z^1} \times \bar{\D}_{z^2} \subset V_P
 \end{equation}
where $V_P$ be an  open set containing a single crossing $P \in \Sigma_i \cap \Sigma_j$ ($i\neq j$).
 By the property (i) of the hermitian metric $h$,  
$(z^1,z^2)$ are holomorphic coordinates with respect to $J_{\bar M}$. 
  Furthermore, with the identification
$\bar\D_{z^1} \simeq \bar\D_{z^1} \times \{0\} \subset \Sigma_1$ (resp. $\bar\D_{z^2} \simeq  \{0\} \times \bar\D_{z^2}\subset \Sigma_2$),  
 \begin{equation} \label{unitarytrivialization}
 \pi_j^{-1}(\bar\D_{z^1}) \simeq \bar\D_{z^1} \times \bar\D_{z^2} \ \ \ (\mbox{resp. }\pi_i^{-1}(\bar\D_{z^2}) \simeq \bar\D_{z^1} \times \bar\D_{z^2})
 \end{equation}
 is a local unitary trivialization of $\pi_j:  \bar{\mathcal D}_j \rightarrow \Sigma_j$ (resp. $\pi_i: \bar {\mathcal D}_i \rightarrow \Sigma_i$). 
\end{itemize}
\begin{definition} \label{stdprod}
We will refer to the coordinates $(z^1,z^2)$ discussed in (A) above as the \emph{standard product coordinates} on a set  $\Omega \times \bar\D_{z^2} $ of type (A).  
\end{definition}
\begin{definition} \label{product=holomorphic}
We will refer to the coordinates $(z^1,z^2)$ discussed in (B) above as the {\it standard product coordinates} and the {\it holomorphic coordinates} on a set $\bar{\D}_{z^1}^* \times \bar{\D}_{z^2}^*$ of type (B). 
\end{definition}
\begin{remark}
Holomorphic coordinates in a set of type (A) is defined later (cf.~Definition~\ref{holcoord}).
\end{remark}

\section{The Poincar\'e-type metric and its estimates} \label{sec:metric}

\subsection{Poincar\'e-type metric}

Recall the Poincar\'e metric
\begin{equation} \label{poincaremetric}
g_{poin} = \mbox{Re} \left( \frac{dz \oplus d
\bar z}{|z|^2(\log|z|^2)^2}\right) \mbox{ on } \D^*.
\end{equation}
Using the canonical section $\sigma_j \in {\mathcal O}(\Sigma_j)$ and the Hermitian metric $h_j$ on ${\mathcal O}(\Sigma_j)$ given in Section~\ref{sec:neardivisor}, we define the Poincare-type metric on $M$.
\begin{definition} \label{poincarekahler} 
Let $\bar{\omega}$ be the K\"ahler form on $\bar{M}$.  Scale   the metric $h_j$ such that  $|\sigma_j|_{h_j}<1$ and define
\begin{equation} \label{donaldsonmetric}
\omega=  \bar{\omega}-\frac{\sqrt{-1}}{2}  \sum_{l=1}^L  \partial \bar{\partial} \log (\log |\sigma_j|_{h_j}^{-2}).
\end{equation}
By scaling $\bar \omega$ if necessary, we can assume that $\omega$ defines a positive form. We denote by $g$ the  K\"ahler metric on $M$ induced by the K\"ahler form $\omega$. 
\end{definition}

\begin{definition} \label{(j)} 
Fix $j$ and define on $\bar M \backslash \bigcup_{i \neq j} \Sigma_i$ the K\"ahler form
\begin{equation} \label{rest}
\omega + \frac{\sqrt{-1}}{2} \partial \bar{\partial }\log \log |\sigma_j|_{h_j}^{-2} = \bar{\omega}- \frac{\sqrt{-1}}{2}  \sum_{i \neq j} \partial \bar{\partial} \log \log |\sigma_i|_{h_i}^{-2} .
\end{equation}
Define $g_{\Sigma_j}$ to be the restriction  to $\Sigma_j \backslash\bigcup_{i \neq j} \Sigma_i$ of the K\"ahler metric associated to this K\"ahler form.  Thus,  away from $\bigcup_{i \neq j} \Sigma_i$, the metric $g_{\Sigma_j}$ is the smooth part of the metric $g|_{\Sigma_j}$.
\end{definition}

Below we derive some estimates for the metric $g$ in a set of type (A) and of type (B) (See~(\ref{ref:typeA}) and (\ref{ref:typeB}) for definitions of a set of type (A) and (B)).  These are an expanded form of the estimates derived by Mochizuki \cite{mochizuki-memoirs}.  

\subsection{Metric estimates in set of Type (A)}

Let $\pi_j^{-1}(\Omega) \simeq \Omega \times \bar\D$ be a set of type (A) with the standard product coordinates $(z^1, z^2)$.   (Recall that  $\Omega \subset \Sigma_j$ does not intersect $\Sigma_i$ for $i \neq j$.) 
We will write
\[
z^1=x+iy \ \mbox{ and } \ z^2=re^{i\theta}.
\]
Fix  a local trivialization $e$ of ${\mathcal O}(\Sigma_j)$, holomorphic with respect to the complex structure $J_{\bar{M}}$.    
Define
\begin{equation} \label{bU}
b:\Omega \times \bar \D  \rightarrow [0,\infty), \  \ b= |e|^{-2}_{h_j}.
\end{equation}
With $\sigma_j$ the canonical section of ${\mathcal O}(\Sigma_j)$ as before, define a  function $\zeta$ on $\Omega \times \bar\D$ by
\begin{equation} \label{bU'}
\sigma_j = \zeta e.
\end{equation}
Thus, $\zeta$ is holomorphic with respect to $J_{\bar{M}}$. 
\begin{definition}  \label{holcoord}
We refer to  
\[
(z^1,\zeta)
\]
as the  {\it holomorphic coordinates (with respect to $J_{\bar{M}}$}) on  a set $\Omega \times \bar\D$ of type (A).
\end{definition}

Since $z^2=0=\zeta$ on $\Omega \times \{0\}$ and $z^2 \neq 0$, $\zeta \neq 0$ on $\Omega \times \bar \D^*$, 
\[
d\log z^2 - d\log \zeta = \frac{dz^2}{z^2} - \frac{d\zeta}{\zeta} = O(1)(dz^1+d\bar z^1 + dz^2+d\bar z^2).
\]
Taking real and imaginary parts, 
\begin{eqnarray} \label{sreta}
 \frac{dr}{r} - \frac{ds}{s} &  = &   O(1)(dz^1+d\bar z^1 + dz^2+d\bar z^2)
 \nonumber 
 \\
d\theta-d\eta & = &  O(1)(dz^1+d\bar z^1 + dz^2+d\bar z^2).
\end{eqnarray}

Let
\[
a:\Omega \times \bar\D  \rightarrow \C^*
\]
be a smooth function  bounded above and bounded away from 0 satisfying 
\begin{equation} \label{az}
a d\zeta\big|_{\Omega \times \{0\}} = dz^2\big|_{(p,0)}, \ \ \forall z^1 \in \Omega.
\end{equation}
Thus,
\[
ad\zeta=dz^2(1+O(r))  =\frac{dz^2}{z^2} z^2(1+O(r))= \left(\frac{d\zeta}{\zeta} +O(1)\right)z^2(1+O(r)).
\]
Plugging in $\frac{\partial}{\partial\zeta}$ in the above equation, we obtain
\begin{equation} \label{laer}
a=\frac{z^2}{\zeta}(1+ O(r)).
\end{equation}
From this, we immediately obtain
\begin{eqnarray} \label{az'}
|a \zeta| & = &  r(1+O(r))
\nonumber  \\
\log |a\zeta|^2 & = & \log r^2+ \log (1+O(r))= \log r^2+ O(r).\ \nonumber 
\end{eqnarray}
This implies 
\[
\log |\sigma_j|_{h_j}^{-2}=\log b|\zeta|^{-2}
=\log b -\log r^2 +\log |a|^2+O(r)=-\log r^2+A +O(r)
\]
where
\[
A(z^1)=\log b(z^1,0)+\log |a(z^1,0)|^2.
\]

The  function $a$ depends on the choice of $e$ and $\sigma$ whereas the function $b$ depends on the choice of $e$ and $h_j$.  Thus, by scaling $\sigma$  if necessary, we can assure that $b$ satisfies the following two conditions:
\begin{eqnarray} 
-\log |\zeta|^2+\log b & > & 0 \mbox{ on } \Omega \times \bar\D \label{mochizukirequirement1}\\
\log b +\log |a|^2& > & 0 \mbox{ on } \Omega \times \{0\}.  \label{mochizukirequirement2}
\end{eqnarray}  
We compute
 \begin{eqnarray}
-\frac{\sqrt{-1}}{2}\partial \bar{\partial }\log \log |\sigma_j|_{h_j}^{-2} 
& = & 
\frac{\sqrt{-1}}{2} \left( \frac{\partial \log |\sigma_j|_{h_j}^{2} \wedge 
\bar{\partial} \log |\sigma_j|_{h_j}^{2}}{ (\log |\sigma_j|_{h_j}^2)^2 } -\frac{\partial \bar{\partial} \log |\sigma_j|_{h_j}^{2}}{ \log |\sigma_j|_{h_j}^{2}}\right) \nonumber\\
& = & 
\frac{\sqrt{-1}}{2} \left( 
 \frac{ (\partial \log \zeta +\partial \log b) \wedge
  (\bar{\partial} \log \bar{\zeta}+\bar{\partial} \log b)}
{ (\log |\sigma_j|_{h_j}^{2})^2 }  -\frac{\partial \bar{\partial} \log b}{ \log |\sigma_j|_{h_j}^{2}}\right).
\nonumber\\
& = & 
\frac{\sqrt{-1}}{2} \left( 
 \frac{ \partial \log \zeta \wedge
  \bar{\partial} \log \bar{\zeta}}
{ (\log |\sigma_j|_{h_j}^{2})^2 } +  \frac{ \partial \log \zeta \wedge
  \bar{\partial} \log b}
{ (\log |\sigma_j|_{h_j}^{2})^2 }  \right.
\nonumber \\
& &  \ \ \ \ \ \ \ \ \ \ \  +  \frac{ \partial \log b \wedge
  \bar{\partial} \log \bar{\zeta}}
{ (\log |\sigma_j|_{h_j}^{2})^2 }+ \left. \frac{ \partial \log b \wedge
  \bar{\partial} \log b}
{ (\log |\sigma_j|_{h_j}^{2})^2 } -\frac{\partial \bar{\partial} \log b}{ \log |\sigma_j|_{h_j}^{2}}\right).
\label{expandedemetric}
 \end{eqnarray} 
Note that because of (\ref{az}) and since $b$ is a smooth function bounded away from 0, we have
\begin{eqnarray} \label{mainterm}
\frac{\partial \log \zeta \wedge 
\bar{\partial} \log \zeta}{ (\log |\sigma_j|_{h_j}^2)^2 }
& = &  \frac{dz^2 \wedge 
 d \bar{z}^2}{r^2(-\log r^2+A)^2}+ Error_1+ Error_2
 \nonumber 
 \\
\frac{ \partial \log \zeta \wedge
  \bar{\partial} \log b} 
{ (\log |\sigma_j|_{h_j}^{2})^2 }, \ \frac{ \partial \log b \wedge
  \bar{\partial} \log b}
{ (\log |\sigma_j|_{h_j}^{2})^2 }
& = &  Error_1+ Error_2
\nonumber \\
\frac{ \partial \log b \wedge
  \bar{\partial} \log b}
{ (\log |\sigma_j|_{h_j}^{2})^2 }, \ \frac{ \partial \bar{\partial} \log b}
{ \log |\sigma_j|_{h_j}^{2} } & = & Error_2.
\end{eqnarray}
where $Error_1$ is a form of the type
\[
O\left(\frac{1}{ r(-\log r^2+A)^2 }\right)dz^1\wedge d \bar{z}^2 \
\mbox{or} \
O\left(\frac{1}{ r(-\log r^2+A)^2 }\right)dz^2\wedge d \bar{z}^1
\]
and $Error_2$ is a form of the type
\[
O\left(\frac{1}{ (-\log r^2+A)^2 }\right)dz^1\wedge d \bar{z}^1\
\mbox{or} \
O\left(\frac{1}{ (-\log r^2+A)^2 }\right)dz^2\wedge d \bar{z}^2.
\]

In coordinate $z^1$ of $\Omega \subset \Sigma_j$, let the local expression of the metric  $g_{\Sigma_j}$ given by  Definition~\ref{(j)} be  $\lambda \, dz^1d\bar z^1$.  Then it follows from the above estimates that in the coordinates $(z^1, z^2)$ of $\Omega \times \bar \D$ and with $r=|z^2|$,   the metric expression of $g$ is 
\begin{equation} \label{ginA}
\left( 
\begin{array}{cc}
g_{1\bar 1} & g_{1 \bar 2}
\\
g_{2 \bar 1} & g_{2 \bar 2}
\end{array}
\right)
=
\left(
\begin{array}{ll}
\lambda + O\left(\frac{1}{ (-\log r^2+A)^2 }\right)& O\left(\frac{1}{ r(-\log r^2+A)^2 } \right) 
\\
O\left(\frac{1}{ r(-\log r^2+A)^2 } \right) 
& \frac{1}{r^2(-\log r^2+A)^2} + O\left(\frac{1}{ (-\log r^2+A)^2 }\right)
\end{array}
 \right).
 \end{equation}
Furthermore, the local expression for the inverse $g^{-1}$ is
 \begin{equation} \label{ginverse}
\left( 
\begin{array}{cc}
g^{1\bar 1} & g^{1 \bar 2}
\\
g^{2 \bar 1} & g^{2 \bar 2}
\end{array}
\right)
=
\left(
\begin{array}{ll}
\frac{1}{\lambda} +O\left( r^2\right)& O\left(r \right) 
\\
O\left(r \right) 
&r^2(-\log r^2+A)^2+O(r^2)
\end{array}
 \right).
 \end{equation}
The  {it product metric} $P$ on $\Omega \times \bar\D$  is defined by taking the dominant terms of $g$.  More precisely, let 
\begin{equation} \label{def:P}
\left( 
\begin{array}{cc}
P_{1\bar 1} & P_{1 \bar 2}
\\
P_{2 \bar 1} & P_{2 \bar 2}
\end{array}
\right)
=
\left(
\begin{array}{lc}
\lambda & 0\\
0 & \frac{1}{r^2(-\log r^2+A)^2} 
\end{array}
 \right).
 \end{equation}
The inverse $P^{-1}$
\[
\left( 
\begin{array}{cc}
P^{1\bar 1} & P^{1 \bar 2}
\\
P^{2 \bar 1} & P^{2 \bar 2}
\end{array}
\right)
=
\left(
\begin{array}{lc}
\frac{1}{\lambda}& 0\\
0
&r^2(-\log r^2+A)^2
\end{array}
 \right).
 \]
Thus,
\begin{equation} \label{closegP}
g^{-1} -P^{-1}= 
\left(
\begin{array}{ll}
 O\left(r^2\right)& O\left(r \right) 
\\
O\left(r \right) 
& O\left(r^2\right)
\end{array}
 \right).
 \end{equation}
Comparing the local expression of $g$ and $P$, we obtain
  \begin{equation} \label{volcomp}
 d\mbox{vol}_g =d\mbox{vol}_P \left(1+O\left(\frac{1}{(-\log r^2+A)^2} \right)\right).
  \end{equation}
A straightforward computation gives
  \begin{eqnarray} \label{volumeformofP}
  d\mbox{vol}_P & = & d\mbox{vol}_{g_{\Sigma_j}}  \wedge \frac{dz^2\wedge d\bar{z}^2}{-2i r^2(-\log r^2+A)^2}  
  \nonumber  \\
    P^{1\bar 1} d\mbox{vol}_P & = &g_{\Sigma_j}^{1\bar 1} d\mbox{vol}_{g_{\Sigma_j}}  \wedge \frac{dz^2\wedge d\bar{z}^2}{-2i r^2(-\log r^2+A)^2}\\
    P^{2\bar 2} d\mbox{vol}_P & = &  d\mbox{vol}_{g_{\Sigma_j}} \wedge \frac{dz^2 \wedge d\bar{z}^2}{-2i}. \label{P11}\nonumber
  \end{eqnarray} 
  The metric $P$ (and hence the metric $g$) is of finite volume  over $\Omega \times \D^*$  since
$g_{\Sigma_j}$ is a smooth metric on $\Omega \subset \Sigma_j$ and  
 \begin{eqnarray*} 
\mbox{Vol}_P(\Omega \times \D^*) & = & \mbox{Area}(\Omega) \cdot 2\pi \lim_{\epsilon \rightarrow 0} \int_{\epsilon}^1 \frac{rdr}{r^2(-\log r^2+A)^2} 
\nonumber 
\\
& = &  -\mbox{Area}(\Omega) \cdot 2\pi \lim_{\epsilon \rightarrow 0} \int_{\epsilon}^1 \frac{d(-\log r^2+A)}{2(-\log r^2+A)^2}
 \nonumber 
 \\
 & = & - \mbox{Area}(\Omega) \cdot \pi \lim_{\epsilon \rightarrow 0} \left. \frac{1}{-\log r^2+A}\right|_{\epsilon}^{1}<\infty.
  \end{eqnarray*}
 Moreover, since
 \begin{eqnarray} \label{fingers}
2\pi \mbox{Area}_{g_{\Sigma_j}}(\Omega)  \log \frac{r_2}{r_1} 
\nonumber 
& = & 
  \int_{\Omega \times \D_{r_1,r_2}}  d\mbox{vol}_{g_{\Sigma_j}}  \wedge \frac{dr \wedge d\theta}{r} =  \int_{\Omega \times  \D_{r_1,r_2}} P^{\theta \theta} d\mbox{vol}_P,
\end{eqnarray}
there exists a constant $C>0$ such that 
 \begin{eqnarray} \label{areacompA}
\left|\frac{1}{2\pi}  \int_{\Omega \times \D_{r_1,r_2}}g^{\theta \theta} d\mbox{vol}_g -   \mbox{Area}(\Omega) \log \frac{r_2}{r_1} \right|
\leq 
 C, \ \ \ \ 0<r_1<r_2<1.
\end{eqnarray}

   \begin{lemma} \label{vol} 
The Poincar\'e type metric $g$ defined by Definition~\ref{poincarekahler} satisfies the following:  There exists $c>0$ such that, on the set $\Omega \times \D^*_{\frac{1}{4}}$  away from the crossings with holomorphic coordinates $(z^1,\zeta=re^{i\theta})$, 
 \begin{equation} 
 \frac{1}{c} \, d\mbox{vol}_g \ \leq \  \rho d\rho \wedge d\phi  \wedge \frac{r dr \wedge d\theta}{r^2(-\log r^2)^2} \  \leq \ c \, d\mbox{vol}_g \nonumber
 \end{equation}
 \end{lemma}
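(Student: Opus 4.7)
The plan is to reduce the claim to the volume form identities (\ref{volcomp}) and (\ref{volumeformofP}) that are already established, together with the coordinate change from standard product coordinates $(z^{1},z^{2})$ to holomorphic coordinates $(z^{1},\zeta)$ recorded in (\ref{laer}) and (\ref{sreta}).

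First, by (\ref{volcomp}) we have $d\mathrm{vol}_g = \bigl(1+O((-\log|z^{2}|^{2}+A)^{-2})\bigr)\,d\mathrm{vol}_P$, and on $\Omega \times \D^{*}_{\frac{1}{4}}$ the parenthesized factor is bounded above and below by positive constants. Hence it suffices to prove the comparison with $d\mathrm{vol}_P$ in place of $d\mathrm{vol}_g$. Next, by (\ref{volumeformofP}),
\[
d\mathrm{vol}_P \;=\; d\mathrm{vol}_{g_{\Sigma_j}} \wedge \frac{s\,ds\wedge d\eta}{s^{2}(-\log s^{2}+A)^{2}},
\]
where $(s,\eta)$ denote polar coordinates of $z^{2}$. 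Since $\Omega \subset \Sigma_j$ avoids all crossings, $g_{\Sigma_j}$ is a smooth Riemannian metric there, and after shrinking $\Omega$ if necessary we may assume $\bar{\Omega}$ is relatively compact; then $d\mathrm{vol}_{g_{\Sigma_j}}$ is uniformly comparable to the Euclidean area element $\rho\,d\rho\wedge d\phi$ in which $z^{1}=\rho\,e^{i\phi}$.

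It remains to compare the radial factor in polar coordinates of $z^{2}$ with that in polar coordinates of $\zeta$. By (\ref{laer}), $z^{2}=a\zeta$ where $a$ is smooth, nowhere vanishing, and uniformly bounded above and away from $0$ on $\Omega\times\bar{\D}$. Thus $s=|a|r$ is comparable to $r=|\zeta|$, and $-\log s^{2}+A = -\log r^{2}-\log|a|^{2}+A$ is comparable to $-\log r^{2}$ on $\D^{*}_{\frac{1}{4}}$. Likewise, the change of variables $(s,\eta)\mapsto(r,\theta)$ (viewed at fixed $z^{1}$, with the arguments related by $\eta=\theta+\arg a$) is a diffeomorphism with Jacobian uniformly bounded above and away from $0$, so the $2$-form $\frac{s\,ds\wedge d\eta}{s^{2}(-\log s^{2}+A)^{2}}$ is uniformly comparable to $\frac{r\,dr\wedge d\theta}{r^{2}(-\log r^{2})^{2}}$. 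Wedging with the comparison for $d\mathrm{vol}_{g_{\Sigma_j}}$ from the previous paragraph yields the desired two-sided bound with a uniform constant $c$.

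The only genuine obstacle is the coordinate change in the final step, since the polar coordinates of the non-holomorphic trivializing section $z^{2}$ and those of the holomorphic coordinate $\zeta$ differ by a non-holomorphic bounded multiplicative factor $a$. However, all the information needed is already contained in (\ref{laer}) and (\ref{sreta}), which reduce this to routine bookkeeping; the substantive content of the lemma is really (\ref{volcomp}) and (\ref{volumeformofP}) together with the observation that on $\Omega\times\D^{*}_{\frac{1}{4}}$ the corrections $\log b$, $\log|a|^{2}$ and $A$ are all $O(1)$ and hence negligible compared with $-\log r^{2}$.
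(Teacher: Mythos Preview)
Your proof is correct and follows essentially the same approach as the paper: reduce $d\mathrm{vol}_g$ to $d\mathrm{vol}_P$ via (\ref{volcomp}), then use (\ref{volumeformofP}) together with the smoothness of $g_{\Sigma_j}$ on $\Omega$. The paper's own proof is a single sentence invoking exactly these two ingredients; you have simply unpacked it, and in particular you explicitly carry out the coordinate change from the standard product coordinate $z^{2}$ to the holomorphic coordinate $\zeta$ using (\ref{laer}), a step the paper leaves implicit. One notational caveat: you use $(s,\eta)$ for the polar coordinates of $z^{2}$ and $(r,\theta)$ for those of $\zeta$, which is the reverse of the convention the paper adopts later in Lemma~\ref{zzetaest1}; this is harmless mathematically but worth aligning if you incorporate your argument into the paper.
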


\begin{proof}
This  is  immediate from the fact that the metric  $g_{\Sigma_j}$ given by  Definition~\ref{(j)}   is smooth combined with (\ref{volcomp}).
\end{proof}

 \begin{remark} \label{alike}
  The key feature  of the metric $P$ is the following:   Define $Q$ to be the product metric 
  \[
  Q=g_{\Sigma_i} \oplus \mbox{Re}\left(  \frac{dz^2 d\bar{z}^2}{-2i} \right)
  =
  g_{\Sigma_i} \oplus (dr^2+r^2 d\theta^2)
 \  \mbox{
 on $\Omega \times \D^*$.}
 \]
   Then
  \[
  P^{2\bar{2}} d\mbox{vol}_P = Q^{2\bar{2}} d\mbox{vol}_{Q}.
  \] 
This is important in   Section~\ref{sec:ptm} below where we estimate the energy of $v$.  In particular, we have
\[
\int_{\Omega \times \D}  P^{2\bar 2} \left| \frac{\partial v}{\partial z^2} \right|^2 d\mbox{vol}_P= \int_{\Omega \times \D} Q^{2\bar 2} \left| \frac{\partial v}{\partial z^2} \right|^2 d\mbox{vol}_Q= \int_{\Omega} \left( \int_{\{z^2\} \times \D} \left| \frac{\partial v}{\partial z^2} \right|^2 \frac{dz^2 \wedge d\bar z^2}{-2i} \right) d\mbox{vol}_{g_{\Sigma_j}}.
\] 
Note that  the inside integral on the right hand side above is exactly the energy of the harmonic map $v|_{\{z^1\} \times \D}$ from the disk.  Indeed, we have already recorded an estimate for this integral in Lemma~\ref{Aest}.  We will take advantage of this   in the proof of  Lemma~\ref{lemmadirvA} below.
    \end{remark}

\subsection{Metric estimates in a set of Type $\mbox{(B)}$}  First recall that a type (B) set of Section~\ref{sec:neardivisor} is a set  $\bar{\D}_{z^1} \times \bar{\D}_{z^2}$  with  
\[
\bar\D_{z^1} \subset \Sigma_j, \ \bar\D_{z^2} \subset \Sigma_i \ \mbox{ and }(0,0) \in \Sigma_j \cap \Sigma_i
\] 
such  that the standard product coordinates $(z^1,z^2)$ are also holomorphic coordinates with respect to complex structure $J_{\bar{M}}$. 
Since $|\sigma_i|_{h_i}=|z^1|$ and $|\sigma_j|_{h_j}=|z^2|$,
\begin{eqnarray*}
-\frac{\sqrt{-1}}{2} \left(\partial \bar{\partial }\log \log |\sigma_i|_{h_i}^{-2} \partial \bar{\partial }\log \log |\sigma_j|_{h_j}^{-2} \right)
 & = &  \frac{\partial \log z^1 \wedge 
\bar{\partial} \log \bar{z}^1}{ (-\log |z^1|^2)^2 }+  \frac{\partial \log z^2 \wedge 
\bar{\partial} \log \bar{z}^2}{ (-\log |z^2|^2)^2 }.
\end{eqnarray*}
In the  coordinates $(z^1,z^2)$ and with $\rho=|z^1|$ and $r=|z^2|$, the local expression of the   metric $P$ associated to the above K\"ahler form is 
\begin{equation} \label{productmetricB}
\left( 
\begin{array}{cc}
P_{1\bar 1} & P_{1 \bar 2}
\\
P_{2 \bar 1} & P_{2 \bar 2}
\end{array}
\right)
=
\left(
\begin{array}{ll}
\frac{1}{\rho^2 (\log \rho^2)^2} & 0
\\
0 
& \frac{1}{r^2 (\log r^2)^2} 
\end{array}
 \right)
 \end{equation}
 and the  inverse $P^{-1}$ is given by 
 \[
\left( 
\begin{array}{cc}
P^{1\bar 1} & P^{1 \bar 2}
\\
P^{2 \bar 1} & P^{2 \bar 2}
\end{array}
\right)
=
\left(
\begin{array}{ll}
\rho^2 (\log \rho^2)^2 & 0
\\
0 
& r^2 (\log r^2)^2 
\end{array}
 \right).
 \]
With $\diamond=O( \rho^2 (\log \rho^2)^2)$ and $\Box=O(r^2 (\log r^2)^2)$,  the local expression of the metric $g$ and its inverse $g^{-1}$ is
 \begin{eqnarray} \label{ginB}
\left( 
\begin{array}{cc}
g_{1\bar 1} & g_{1 \bar 2}
\\
g_{2 \bar 1} & g_{2 \bar 2}
\end{array}
\right)
& = & 
\left(
\begin{array}{ll}
\frac{1}{\rho^2 (\log \rho^2)^2} +O(1)& O(1)
\\
O(1)
& \frac{1}{r^2 (\log r^2)^2} +O(1)
\end{array}
 \right)
\\
\left( 
\begin{array}{cc}
g^{1\bar 1} & g^{1 \bar 2}
\\
g^{2 \bar 1} & g^{2 \bar 2}
\end{array}
\right)
& = & 
\left(
\begin{array}{ll}
\rho^2 (\log \rho^2)^2 (1+\diamond+ \Box) & \diamond \Box
\\
\diamond \Box
& r^2 (\log r^2)^2(1 +\diamond+ \Box)
\end{array}
 \right). \label{ginverseinB}
 \end{eqnarray}
Thus,
 \begin{equation} 
\label{closegPB}
 g^{-1} - P^{-1} = \left(
\begin{array}{ll}
\rho^2 (\log \rho^2)^2(\diamond + \Box) & \diamond \Box
\\
\diamond \Box
& r^2(\log r^2)^2 (\diamond +\Box)
\end{array}
 \right)
 \end{equation}

  \begin{equation} \label{volcompB}
 d\mbox{vol}_g =d\mbox{vol}_P \left(1+O\left(\frac{1}{(-\log r^2+A)^2} \right) + O\left(\frac{1}{(-\log r^2+A)^2} \right)\right).
  \end{equation}
A straightforward computation gives
  \begin{eqnarray} \label{volumeprodB}
  d\mbox{vol}_P & = & \frac{dz^1\wedge d\bar{z}^1}{-2i \rho^2(\log \rho^2)^2}  \wedge \frac{dz^2\wedge d\bar{z}^2}{-2ir^2(\log r^2)^2} \nonumber  \\
    P^{1\bar 1} d\mbox{vol}_P & = &\frac{dz^1\wedge d\bar{z}^1}{-2i }  \wedge \frac{dz^2\wedge d\bar{z}^2}{-2i r^2(\log r^2)^2}\\
    P^{2\bar 2} d\mbox{vol}_P & = &\frac{dz^1\wedge d\bar{z}^1}{-2i \rho^2(\log \rho^2)^2}  \wedge \frac{dz^2\wedge d\bar{z}^2}{-2i }.
    \label{P11'} \nonumber
  \end{eqnarray} 
Similarly to (\ref{areacompA}), we also obtain for subsets $\D_{r_1,r_2} \times \Omega$, $ \Omega \times \D_{r_1,r_2}$ of  $\bar \D_{z^1} \times \bar \D_{z_2}$
  \begin{eqnarray} \label{areacompB}
\left| \frac{1}{2\pi} \int_{\D_{r_1,r_2}  \times  \Omega}  g^{\phi \phi} d\mbox{vol}_g -  \mbox{Area}_{g_{\Sigma_i}}(\Omega) \log \frac{r_2}{r_1}   \right|  &  \leq &     C,
\nonumber \\
\left| \frac{1}{2\pi} \int_{\Omega \times   \D_{r_1,r_2}}  g^{\theta \theta} d\mbox{vol}_g -  \mbox{Area}_{g_{\Sigma_j}}(\Omega) \log \frac{r_2}{r_1}   \right|  &  \leq &     C.
\end{eqnarray}

   \begin{lemma} \label{volB} 
The Poincar\'e type metric $g$ defined by Definition~\ref{poincarekahler} satisfies the following:  There exists $c>0$ such that in  neighborhood 
  $\bar \D_{\frac{1}{4}}^*  \times  \bar \D_{\frac{1}{4}}^*$ near  a crossing with holomorphic coordinates $(z^1=\rho e^{i\phi}, z^2=re^{i\theta})$,
\begin{equation} 
  \frac{1}{c} \, d\mbox{vol}_g \ \leq \ \frac{\rho  d\rho \wedge d\phi}{\rho^2(-\log \rho^2)^2}  \wedge \frac{r dr \wedge d\theta}{r^2(-\log r^2)^2} \  \leq \ c \, d\mbox{vol}_g. \nonumber
 \end{equation}
 \end{lemma}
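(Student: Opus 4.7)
The plan is to derive the lemma directly from the metric estimates (\ref{productmetricB}), (\ref{volumeprodB}) and (\ref{volcompB}) that have already been established in a set of type (B). Essentially no new computation is needed; the lemma is a translation of those estimates into polar form.

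First I would rewrite the claimed density in holomorphic polar coordinates. With $z^1 = \rho e^{i\phi}$ and $z^2 = r e^{i\theta}$ one has
\[
\frac{dz^1 \wedge d\bar z^1}{-2i} = \rho \, d\rho \wedge d\phi, \qquad \frac{dz^2 \wedge d\bar z^2}{-2i} = r \, dr \wedge d\theta.
\]
Substituting this into the expression for $d\mbox{vol}_P$ in (\ref{volumeprodB}) yields
\[
d\mbox{vol}_P = \frac{\rho \, d\rho \wedge d\phi}{\rho^2(-\log \rho^2)^2} \wedge \frac{r \, dr \wedge d\theta}{r^2(-\log r^2)^2}.
\]
So it suffices to show that $d\mbox{vol}_g$ and $d\mbox{vol}_P$ are comparable on $\bar\D_{\frac{1}{4}}^* \times \bar\D_{\frac{1}{4}}^*$ with a constant independent of the point.

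Next I would invoke (\ref{volcompB}), which expresses $d\mbox{vol}_g$ as $d\mbox{vol}_P$ times a factor of the form $1 + O((-\log \rho^2)^{-2}) + O((-\log r^2)^{-2})$. On the restricted bidisk the coordinates satisfy $\rho, r \leq \tfrac{1}{4}$, hence $-\log \rho^2,\, -\log r^2 \geq \log 16 > 0$, and so these error terms are uniformly bounded on $\bar\D_{\frac{1}{4}}^* \times \bar\D_{\frac{1}{4}}^*$. Consequently there exist constants $0 < c_1 \leq c_2 < \infty$ such that
\[
c_1 \, d\mbox{vol}_P \leq d\mbox{vol}_g \leq c_2 \, d\mbox{vol}_P
\]
throughout the bidisk.

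Combining the two steps and choosing $c := \max\{c_2, 1/c_1\}$ gives the desired two-sided bound. There is no real obstacle in this argument: the work has already been done in establishing (\ref{ginB})--(\ref{volumeprodB}) for the Poincaré type metric near a crossing, and the lemma is just the polar-coordinate reformulation combined with the fact that the distance to the divisor is bounded away from zero on $\bar\D_{\frac{1}{4}}^* \times \bar\D_{\frac{1}{4}}^*$. The only point requiring any care is keeping track of the fact that both logarithmic factors $-\log \rho^2$ and $-\log r^2$ are bounded below on this neighborhood, which guarantees that the perturbation in (\ref{volcompB}) does not blow up or degenerate.
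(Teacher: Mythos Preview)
Your argument is correct and follows the same approach as the paper, which simply cites (\ref{volumeprodB}); you have made the implicit use of (\ref{volcompB}) explicit. One small slip in your closing remarks: the distance to the divisor is \emph{not} bounded away from zero on $\bar\D_{\frac{1}{4}}^* \times \bar\D_{\frac{1}{4}}^*$ (indeed $\rho$ and $r$ can tend to $0$); what you actually need and correctly used earlier is that $-\log\rho^2$ and $-\log r^2$ are bounded \emph{below} since $\rho, r \le \tfrac14$, which is what keeps the error terms in (\ref{volcompB}) uniformly small.
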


\begin{proof}
This  is  immediate from (\ref{volumeprodB}).
\end{proof}

\section{The prototype section}\label{prototype}
The goal of this section is to construct a   {\it prototype section}  
with logarithmic energy growth near the divisor.  The key is the fiber-wise harmonic sections on the normal bundle of the divisor $\Sigma$, the existence of which follows from the Dirichlet problem on the punctured disk (cf.~Theorem~\ref{exists}).

The first step in the construction  is  to define local sections near the divisor. Recall the sets of type (A) and of type (B) described in (\ref{ref:typeA}) and (\ref{ref:typeB}) respectively. In Subsection~\ref{NJ},  we construct a local prototype  section in a set of type (A).  In Subsection~\ref{NJ}, we construct a local prototype section  in a set of type (B).  In Subsection~\ref{gluing}, we glue these sections together to define a prototype section near the divisor and extend it to all of $M$.   
In summary, we construct  a locally Lipschitz global section 
\[
v:  M \rightarrow \tilde M \times_\rho \tilde X
\]
of logarithmic energy growth near the divisor.  

\subsection{In a neighborhood away from the junctures} \label{NY}

The goal of this subsection is to construct a local prototype section in a set of type (A) and derive some energy estimates.  We start with the following:
\begin{itemize}
\item $\Omega  \times  \bar {\D}$ is a set of type (A)  with $\Omega \subset \Sigma_j$
\item $(z^1,z^2)$ are the standard product coordinates of $\Omega  \times  \bar {\D}$ (cf~(\ref{stdprod}))
\item
$r$, $\theta$ are parameters defined by
$z^2=re^{i\theta}
$  

\item $\Sp^1_{\theta}
$ is  the boundary $\partial \bar{\D}_{z^2}$ of $ \bar{\D}_{z^2}$

\item $\R_\theta \rightarrow \Sp^1$ is the universal cover

\item $\widetilde{\bar{\D}^*} \rightarrow \bar \D^*$ is the universal cover

\item 
$[\Sp^1_{\theta}]$ is  the element of $\pi_1(\Sp^1_\theta) \simeq \pi_1(\Omega \times \bar \D^*)$ associated to the loop $\Sp^1_\theta \simeq \{z^1\} \times \Sp^1_{\theta}$

\item $\rho':\pi_1(\Omega \times \bar \D^*) \rightarrow \mathsf{Isom}(\tilde X)$ is a homomorphism

\item $ (\Omega \times \widetilde{\bar{\D}^*})  \times_{\rho'} \tilde X \rightarrow \Omega \times \bar{\D}^*$ is a   fiber bundle

 \item $k:\Omega \times \bar \D^* \rightarrow  (\Omega \times \widetilde{\bar{\D}^*})  \times_{\rho'} \tilde X$ is a locally Lipschitz section

\item  $\R_\theta \times_{\rho'} \tilde X \rightarrow \Sp^1_\theta$ is a  fiber bundle

\item  $\Ej$ is  the infimum of the energies of sections $\Sp^1_\theta  \rightarrow \R_\theta \times_{\rho'} \tilde X$.

 \end{itemize}

\subsubsection{Construction of a prototype section in a set  of type (A)}  \label{AJ} 

We define
\begin{equation} \label{mochizukimap}
v: \Omega \times \bar \D^* \rightarrow (\Omega \times \widetilde{\bar{\D}^*})  \times_{\rho'} \tilde X
\end{equation}
by setting $v$ to be 
the  fiber-wise harmonic section  with boundary values given by $k\big|_{ \Omega \times \Sp^1 }$.  More precisely, we apply Theorem~\ref{exists} as follows:  For each $z^1 \in \Omega$, the restriction 
\[
v_{z^1}:=
v\big|_{ \{z^1\}\times \bar\D^*}:\bar{\D}^* \approx \{z^1\} \times \bar{\D}^* \rightarrow \widetilde{\bar \D^*} \times_{\rho'} \tilde X
\]
is the unique  harmonic section with logarithmic energy growth and boundary values
\[
v_{z^1}\big|_{ \Sp^1 \approx \{z^1 \} \times \Sp^1}=k\big|_{ \Sp^1 \approx \{z^1 \} \times \Sp^1}.
\]
\subsubsection{Derivative estimates   in a set of type (A)}

\begin{lemma}[Derivative estimates in set of type (A)]  \label{Aest}
There exists a constant $C$  such that 
\begin{eqnarray*}
\left| \frac{\partial v}{\partial z^1}(z^1_0,z^2_0)  \right| & \leq &  C,  \ \ \forall (z^1_0,z^2_0) \in \Omega \times \bar \D^*  \label{tangdir}
\\
\int_{ \{z^1_0\}  \times \D_{r,r_0}} \left| \frac{\partial v}{\partial z^2} \right|^2   \frac{dz^2 \wedge d\bar{z}^2}{-2i} & \leq &  C+\Ej\log \frac{r_0}{r} , \ \ \  \forall z^1_0 \in \Omega \label{dirz2}
\end{eqnarray*}
where $0<r<r_0\leq \frac{1}{4}$ and $z^2=re^{i\theta}$.
\end{lemma}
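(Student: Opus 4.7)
The second inequality should follow directly from the fiberwise construction of $v$ combined with property (i) of Theorem~\ref{exists}. Fixing $z^1_0 \in \Omega$, the map $v_{z^1_0}$ is the harmonic section of $\widetilde{\bar\D^*}\times_{\rho'}\tilde X \to \bar\D^*$ with boundary values $k|_{\{z^1_0\}\times\Sp^1}$ and logarithmic energy growth. Theorem~\ref{exists}(i) gives $E^{v_{z^1_0}}[\D_{r,1}] \leq \Ej\log(1/r) + C_0$ with $C_0$ depending on $\Ej$, the constants $a,b$ of Definition~\ref{jz}, and the Lipschitz norm of $k|_{\{z^1_0\}\times\Sp^1}$. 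Since $k$ is locally Lipschitz on $\bar\Omega\times\Sp^1$, one can arrange that $C_0$ is uniform in $z^1_0$. Combined with the lower bound $E^{v_{z^1_0}}[\D_{r_0,1}] \geq \Ej\log(1/r_0)$ from Lemma~\ref{lowerbd}, subtraction yields the desired integral estimate on $\D_{r,r_0}$.

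For the first inequality, the plan is to exploit the stability of harmonic maps into NPC spaces with respect to boundary data. For $z^1_0, z^1_1 \in \Omega$, lift $v_{z^1_0}$ and $v_{z^1_1}$ to $\rho'$-equivariant maps $\tilde v_{z^1_0},\tilde v_{z^1_1}:\widetilde{\bar\D^*}\to\tilde X$ and set $\delta(z^2) := d(v_{z^1_0}(z^2), v_{z^1_1}(z^2))$ using (\ref{defdis}). Then $\delta^2$ is a continuous subharmonic function on $\bar\D^*$ by \cite[Remark 2.4.3]{korevaar-schoen1}. Since $v_{z^1_0}$ and $v_{z^1_1}$ both have sub-logarithmic growth (Theorem~\ref{exists}(iii)), the triangle inequality shows $\delta^2$ also has sub-logarithmic growth, so Lemma~\ref{shdisk} applies and gives
\[
\sup_{z^2 \in \bar\D^*} \delta(z^2) \leq \max_{\theta \in \Sp^1} d\bigl(k(z^1_0, e^{i\theta}), k(z^1_1, e^{i\theta})\bigr) \leq L|z^1_0-z^1_1|,
\]
where $L$ is the Lipschitz constant of $k$ on $\bar\Omega\times\Sp^1$.

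Dividing by $|z^1_0-z^1_1|$ and letting $z^1_1 \to z^1_0$, one concludes $|\partial v/\partial z^1|(z^1_0,z^2_0)\leq L$ uniformly on $\Omega\times\bar\D^*$, provided $v$ is differentiable in $z^1$ almost everywhere; this follows from standard interior regularity for harmonic maps on the relatively compact subdomains $\Omega\times\D_{r,1}\subset M$, where the Poincar\'e-type metric $g$ is smooth. The main obstacle I anticipate is ensuring that the constant $C_0$ in Theorem~\ref{exists}(i) is truly uniform as $z^1_0$ ranges over $\Omega$; this amounts to tracking the dependence in the construction of the prototype section $v$ in the proof of Theorem~\ref{exists} and using compactness of $\bar\Omega$ together with the locally Lipschitz nature of $k$.
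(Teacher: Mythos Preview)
Your proposal is correct and follows essentially the same approach as the paper: for the first inequality, both you and the paper use subharmonicity of $d^2(v_{z^1_0},v_{z^1_1})$ on $\D^*$, the sub-logarithmic growth from Theorem~\ref{exists}(iii) together with Lemma~\ref{shdisk} to extend across the puncture, and then the maximum principle against the Lipschitz boundary data $k$; for the second inequality, the paper simply invokes Theorem~\ref{exists}, which is exactly your subtraction argument via Theorem~\ref{exists}(i) and Lemma~\ref{lowerbd}. Your additional remarks about uniformity of $C_0$ in $z^1_0$ and a.e.\ differentiability are reasonable caveats that the paper leaves implicit.
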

\begin{proof}
Denote the Lipschitz constant of $k$ on $\Omega \times \Sp^1$ by $L$.
Let  $z^1_0$,  $z^1 \in \Omega$.  Since $v_{z^1_0}$ and $v_{z^1}$ are harmonic sections, the function $z \mapsto d^2(v_{z^1_0}(z),v_{z^1}(z))$ is subharmonic in $\D^*$ (cf.~\cite[Remark 2.4.3]{korevaar-schoen1}).

 \begin{figure}[h!]
  \includegraphics[width=2.5in]{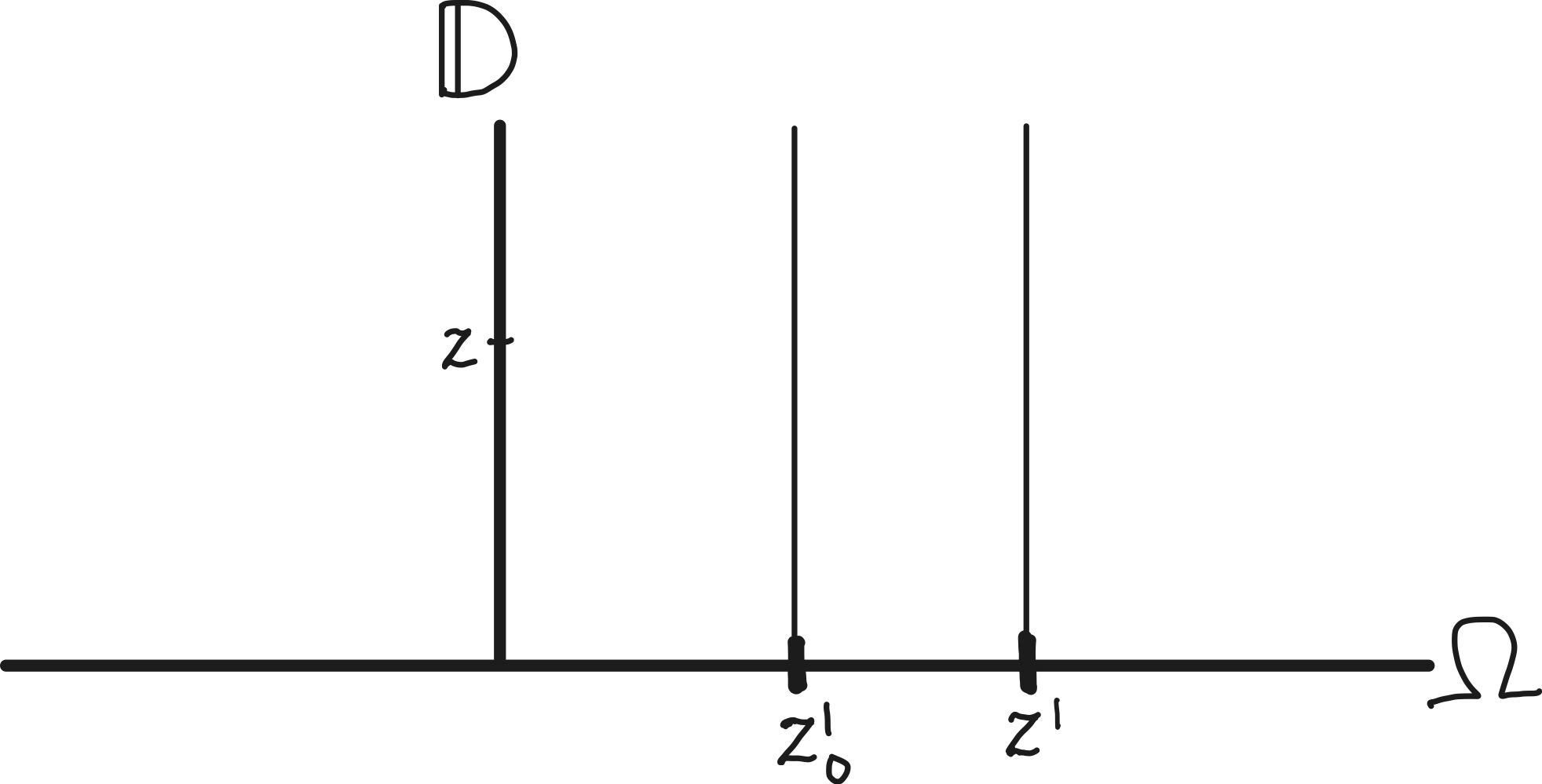}
  \caption{The map $z \mapsto d^2(v_{z^1_0}(z),v_{z^1}(z))$ is defined on $\D$.}
  \label{fig:slices}
\end{figure}

By Theorem~\ref{exists} (iii) and the triangle inequality,  
 \[
 \lim_{z \rightarrow 0} \  d^2(v_{z^1_0}(z),v_{z^1}(z)) + \epsilon \log |z| =-\infty, \ \ \forall \epsilon>0.
 \]  Thus, $d^2(v_{z^1_0},v_{z^1})$
 extends to subharmonic function on $\D$ (cf.~Lemma~\ref{shdisk}).  Thus, the maximum principle implies that for any $z^2_0=r_0e^{i\theta_0}$, we have that,
 \begin{eqnarray} \label{mp}
  d^2(v_{z^1_0}(r_0 e^{i\theta_0}), v_{z^1} (r_0 e^{i\theta_0})) &\leq& \sup_{\theta \in \Sp^1} d^2(k(z^1_0, e^{i\theta}), k(z^1, e^{i\theta}))\\
&\leq& L^2 |z_0^1-z^1|^2 \nonumber.
 \end{eqnarray}
In other words, for every fixed $z^2_0=r_0e^{i\theta_0}$, the map  $z_1 \mapsto v_{z^1}(z^2_0)$ is Lipschitz  which immediately implies the first estimate.    The second estimate follows from
Theorem~\ref{exists}.
  \end{proof}

\subsection{In a neighborhood of the juncture} 
\label{NJ}

The goal of this subsection is to construct a local prototype section in a set of type (B) and derive some derivative estimates.  We start with the following:

\begin{itemize}
\item $\bar{\D}_{z^1} \times \bar{\D}_{z^2}$  is a set of type (B) with $\D_{z^1} \subset \Sigma_j$ and $\D_{z^2} \subset \Sigma_i$
\item  $(z^1,z^2)$ are the standard product (and holomorphic) coordinates 
\item
 $\rho$, $\phi$, $r$, $\theta$ are the parameters defined by
$
z^1=\rho e^{i\phi} \ \mbox{  and } z^2=re^{i\theta}
$  

\item $
\Sp^1_{\phi}$ is the boundary of $\bar \D_{z^1}$ and   $\Sp^1_{\theta}
$ is  the boundary of $\bar{\D}_{z^2}$

\item $\R_\phi \rightarrow \Sp^1_\phi$ and $\R_\theta \rightarrow \Sp^1_\theta$ are the universal covers

\item $\widetilde{\bar{\D}^*_{z^1}} \rightarrow \bar \D^*_{z^1}$ and $\widetilde{\bar{\D}^*_{z^2}} \rightarrow \bar \D^*_{z^2}$ are  the universal cover

\item $
\Sp^1_{\phi} \times \Sp^1_{\theta}
$ is  the boundary of $\bar{\D}_{z^1} \times \bar{\D}_{z^2}$

\item $\R_{\phi} \times \R_{\theta} \rightarrow \Sp^1_{\phi} \times \Sp^1_{\theta}
$ is the universal covering map

\item $[\Sp^1_{\phi}]$  is the element of $\pi_1(\bar \D^*_{z^1}) \simeq \pi_1(\Sp^1_\phi)$  generated by $\Sp^1_{\phi}$  and $[\Sp^1_{\theta}]$  is the element of $\pi_1(\bar \D^*_{z^2}) \simeq \pi_1(\Sp^1_\theta)$  generated by $\Sp^1_{\theta}$

\item 
$[\Sp^1_{\phi}]$ and $[\Sp^1_{\theta}]$ also are the elements of $\pi_1(\bar{\D}^*_{z^1} \times \bar{\D}^*_{z^2})$ generated by $\Sp^1_{\phi}  \simeq \Sp^1_{\phi} \times \{z^2\}$ and $\Sp^1_{\theta} \simeq \{z^1\} \times \Sp^1_{\theta}$ respectively

\item $\pi_1(\bar \D_{z^1}^*) \simeq \pi_1(\Sp^1_\phi)$  and $\pi_1(\bar \D_{z^2}^*) \simeq \pi_1(\Sp^1_\theta)$ are  identified as a subgroup of  $\pi_1(\bar{\D}^*_{z^1} \times \bar{\D}^*_{z^2})$ by the above identification 
\item $\rho':\pi_1(\bar \D^*_{z^1} \times \bar \D^*_{z^2}) \rightarrow \mathsf{Isom}(\tilde X)$ is a homomorphism and $\rho'_k=\rho'|_{ \pi_1(\bar \D_{z^k}^*)}$ for $k=1,2$

\item $(\widetilde{\bar{\D}_{z^1}^*} \times \widetilde{\bar{\D}^*_{z^2}})  \times_{\rho'} \tilde X \rightarrow \bar{\D}_{z^1}^* \times \bar{\D}^*_{z^2}$ and $\widetilde{\bar{\D}_{z^k}^*}  \times_{\rho'_k} \tilde X \rightarrow \bar{\D}_{z^k}^*$ for $k=1,2$ are  fiber bundles

\item $\R_\phi \times_{\rho'_1} \tilde X \rightarrow \Sp^1_\phi$ and $\R_\theta \times_{\rho'_2} \tilde X \rightarrow \Sp^1_\theta$ are fiber bundles

\item    
 $\Ei$ is  the infimum of the energies of sections $\Sp^1_\phi \rightarrow \R_\phi \times_{\rho'_1} \tilde X$
 \item  $\Ej$ is  the infimum of the energies of sections $\Sp^1_\theta \rightarrow \R_\theta  \times_{\rho'_2} \tilde X$
 
\item  If $\rho'([\Sp^1_{\theta^1}])$ and $\rho'([\Sp^2_{\theta^2}]) \in \mathsf{Isom}(\tilde X)$ are semisimple, then call this set-up the {\it  semisimple case}.  If they are parabolic we call it the {\it  parabolic case}.  Note that by Proposition~\ref{prop:fys}, $\rho([\Sp^1])$ and $\rho([\Sp^2])$ are both semisimple or both parabolic.

\item $k: \bar{\D}_{z^1}^* \times \bar{\D}^*_{z^2} \rightarrow  (\widetilde{\bar{\D}_{z^1}^*} \times \widetilde{\bar{\D}^*_{z^2}})  \times_{\rho'} \tilde X$ is a locally Lipschitz section 
\item $\tilde k:  \widetilde{\bar{\D}_{w^1}^*} \times \widetilde{\bar{\D}^*_{w^2}}  \rightarrow \tilde X$ is the corresponding $\rho'$-equivariant map

\item 
$
\kappa$
is the restriction of  $\tilde k$ to $\R_\phi \times \R_\theta$.
\end{itemize}

\subsubsection{Construction of a prototype section  in a set  of type (B)}  \label{prototype(B)}
We construct the local section 
\[
v:  \bar{\D}^*_{z^1} \times \bar{\D}^*_{z^2} \rightarrow (\widetilde{\bar{\D}^*_{z^1}} \times \widetilde{\bar{\D}^*_{z^2} }) \times_{\rho'} \tilde X
\]
 as follows:

\begin{itemize}
\item 
Apply Lemma~\ref{flattorus} in the semisimple case  and Lemma~\ref{almostflattorus} in the parabolic case.  In either case, 
 for each $t \in[0,\infty)$, 
there exist constants $a,b>0$ and a $\rho'$-equivariant  map 
\[
\tilde h_t: \R_\phi \times \R_\theta \rightarrow \tilde X
\]
and an associated section
\[
h_t: \Sp^1_{\phi} \times \Sp^1_{\theta} \rightarrow (\R_\phi \times \R_\theta) \times_{\rho'} \tilde X
\]
such that 
 \begin{equation} \label{hder}
\left| \frac{\partial  h_t}{\partial t} \right|^2 \leq 1, \ \ \ \left| \frac{\partial  h_t}{\partial \phi} \right|^2 \leq \frac{1}{2\pi}\Ei(1+be^{-at}), \ \ \ 
\left| \frac{\partial  h_t}{\partial \theta}
\right|^2 \leq \frac{1}{2\pi}\Ej(1+be^{-at}).
 \end{equation}
 \item 
Define the diagonal set 
 \[
 D=\{(\rho e^{i\phi}, \rho e^{i\theta}) \in \bar{\D}_{z^1}^* \times \bar{\D}^*_{z^2}:  \rho \in (0,1], \ \ \phi, \theta \in \Sp^1\}.
 \]
\item Define $v_D:D \rightarrow X$  as follows: Fix $(\theta,\phi) \in \Sp^1 \times \Sp^1$.   
\begin{itemize}
\item[$\circ$] For $\rho \in (0,\frac{1}{2}]$, 
let
  \[
v_D(\rho e^{i\phi}, \rho e^{i\theta})=h_{3(-\log \rho)^{\frac{1}{3}}}(\phi,\theta).
 \]
\item[$\circ$]  For $\rho \in [\frac{1}{2},1]$, 
let
  the curve
 \[
 \rho \mapsto \gamma(\rho) \mbox{ for } \rho \in [\frac{1}{2},1]
 \]
 be the  the geodesic  between $\tilde h_{3(\log 2)^{\frac{1}{3}}}(\phi, \theta)$ and $\kappa(\phi, \theta)$.
 Define
 \[
  v_D(\rho e^{i\phi}, \rho e^{i\theta})=\gamma(\rho).
   \]
\end{itemize}

 \item  Let  
 \[
 Z_1:=\{ (z^1,z^2) \in  \bar{\D}_{z^1}^* \times \bar{\D}^*_{z^2}:  |z^1|\geq |z^2|\}
 \]
 and  
 \[
 \varphi_1: \bar{\D}_{w^1}^* \times \bar{\D}^*_{w^2} \rightarrow  Z_1 \subset  \bar{\D}_{z^1}^* \times \bar{\D}^*_{z^2}
 \]
be a homeomorphism defined by (see Figure~\ref{fig:phi1})
 \[
(z^1,z^2)=\varphi_1(w^1,w^2), \ \ \  z^1=w^1, \ z^2=|w^1|w^2.
 \]
 \begin{figure}[h!]
  \includegraphics[width=2.5in]{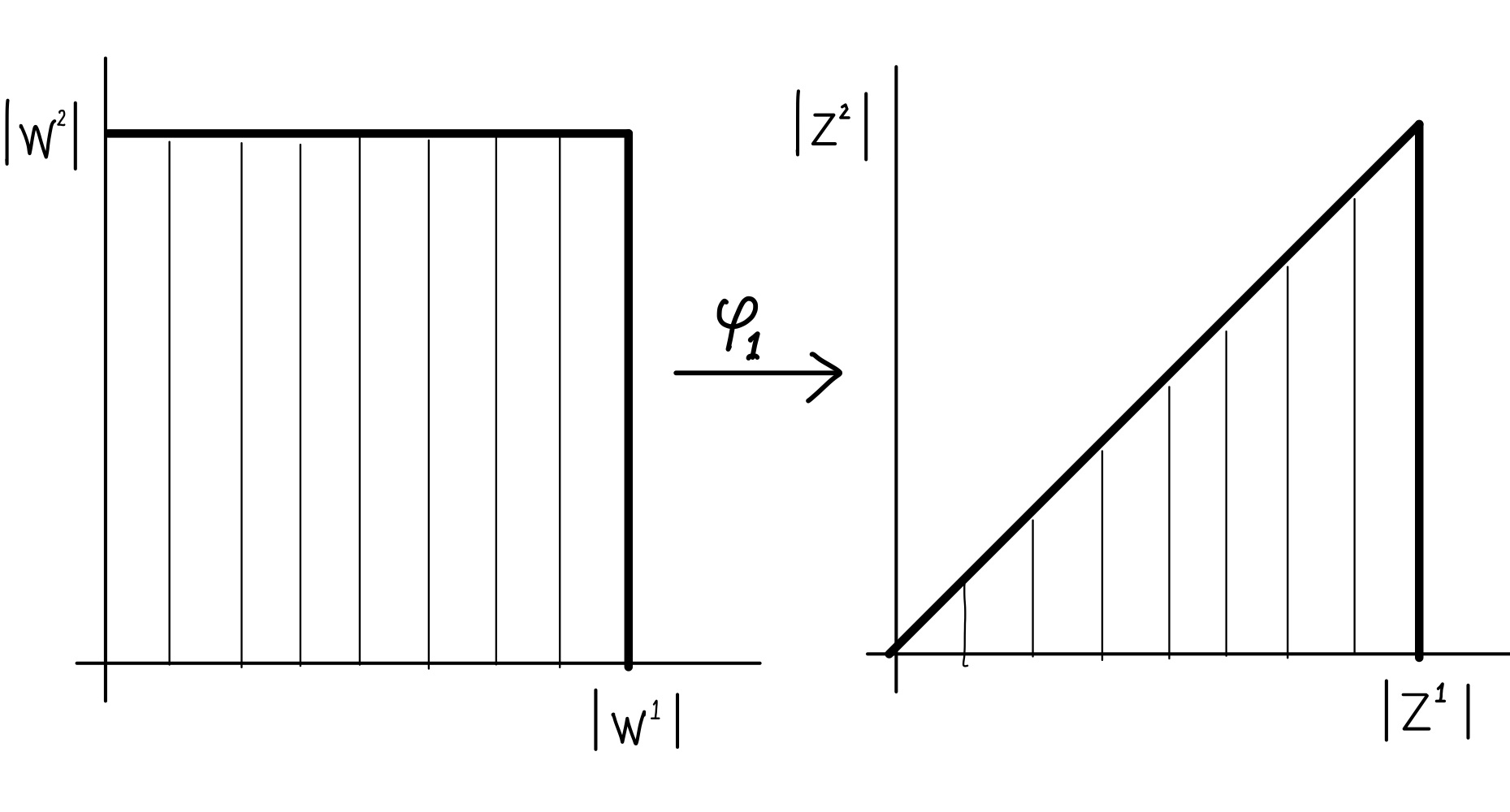}
  \caption{The map $\varphi_1$.}
  \label{fig:phi1}
\end{figure}
\item Define
 \[
v_1:  \bar{\D}_{w^1}^* \times \bar{\D}^*_{w^2} \rightarrow (\widetilde{\bar{\D}_{w^1}^*} \times \widetilde{\bar{\D}^*_{w^2}})  \times_{\rho'} \tilde X
 \]
by setting $v_1$ to be 
the  fiber-wise harmonic section with boundary values given by $v_D \circ \varphi_1 \big|_{ \bar{\D}^*_{w^1} \times \Sp^1_{\theta} }$.  More precisely, we apply Theorem~\ref{exists} as follows:  For each $w^1 \in \bar{\D}^*_{w^1} $, the restriction  
\[
v_{1,w^1}:=v_1\big|_{\{w^1\} \times \bar{\D}^*_{w_2}}
\]
is the unique  harmonic section  with boundary values 
\[
v_{1,w^1}\big|_{\Sp^1 \approx \{w^1\} \times \Sp^1_{\theta}}
=
v_D \circ \varphi_1 \big|_{\Sp^1 \approx \{w^1\} \times \Sp^1_{\theta}}
\]   as in Theorem~\ref{exists}.

\item 
Similarly, let 
\[
 Z_2:=\{ (z^1,z^2) \in  \bar{\D}_{z^1}^* \times \bar{\D}^*_{z^2}:  |z^1|\leq |z^2|\}
 \]
and
\[
 \varphi_2: \bar{\D}_{w^1}^* \times \bar{\D}^*_{w^2} \rightarrow  Z_2 \subset  \bar{\D}_{z^1}^* \times \bar{\D}^*_{z^2}
 \]
 be the homeomorphism defined by
 \[
(z^1,z^2)=\varphi_2(w^1,w^2), \ \ \  z^1=|w^2|w^1, \ z^2=w^2.
 \]
We define
 \[
 {v_2}:  \bar{\D}_{w^1}^* \times \bar{\D}^*_{w^2} \rightarrow (\widetilde{\bar{\D}_{w^1}^*} \times \widetilde{\bar{\D}^*_{w^2}}) \times_{\rho'} \tilde X
 \]
by setting $v_2$ to be 
the  fiber-wise harmonic section  with boundary values given by $v_D \circ \varphi_2 \big|_{\Sp^1_{\phi} \times  \bar{\D}^*_{w^2}  }$.  
\item
Let 
\[
v: \bar{\D}^*_{z^1} \times \bar{\D}^*_{z^2} \rightarrow (\widetilde{\bar{\D}_{z^1}^*} \times \widetilde{\bar{\D}^*_{z^2}}) \times_{\rho'} \tilde X
\]
be the section defined by 
\begin{eqnarray*}
v =
\left\{
\begin{array}{ll}
 v_1 \circ \varphi_1^{-1} & \mbox{ on}  \ \ Z_1,
\\
{v_2}  \circ \varphi_2^{-1}  & \mbox{ on}  \ \ Z_2.
\end{array}
\right.
\end{eqnarray*}
Note that $v$ is well defined since $Z_1 \cap Z_2 =D$ and 
\[
{v_1} \circ \varphi_1^{-1} \big|_D = v_D ={v_2} \circ \varphi_2^{-1} \big|_D.
\]
\end{itemize}

\subsubsection{Derivative estimates in a set  of type (B)}

Since $z^1=w^1$ and $z^2=|w^1|w^2$, we have
\begin{equation} \label{coins}
w^1=z^1=\rho e^{i\phi}, \ \ z^2=\rho e^{i\theta}, \ \ w^2=\frac{z^2}{|z^2|}=\frac{r}{\rho} e^{i\theta}.
\end{equation}
By (\ref{hder}), 
\[
\left| \frac{\partial h_{3(-\log \rho)^{\frac{1}{3}}}}{\partial \rho}(\theta,\phi)\right|^2=  \left| \frac{\partial h_t}{\partial t}(\theta,\phi)\right|^2 \left| \frac{\partial t}{\partial \rho}(\theta,\phi)\right|^2 \leq  \frac{1}{\rho^2(-\log \rho)^{\frac{4}{3}} }.
\] 
Thus, noting that $w^1=z^1=\rho e^{i\phi}$,
\[
\left| \frac{\partial (v_D \circ \varphi_1)}{\partial \rho}\right|^2 \leq \frac{1}{\rho^2(-\log \rho)^{\frac{4}{3}} } \mbox{ on  $\D^*_{w^1,\frac{1}{2}} \times \{ |w^2|=1\}$}.
\]
Furthermore,  (\ref{hder}) implies
\[
\left| \frac{\partial (v_D \circ \varphi_1)}{\partial \phi }\right|^2  \leq \frac{\Ei}{2\pi}(1+be^{-a(-\log \rho)})^2=\frac{\Ei}{2\pi}(1+b\rho^a)^2 \ \mbox{ on $\D^*_{w^1,\frac{1}{2}} \times \{ |w^2|=1\}$.}
\]
Since $v_1$ is a fiber-wise harmonic section,  an argument analogous to the proof  of first inequality of Lemma~\ref{Aest} (i.e.~apply the maximum principle for subharmonic functions $d(u(\rho_1e^{i\phi}), u(\rho_2e^{i\phi}))$ and $d(u(\rho e^{i\phi_1}), u(\rho e^{i\phi_2}))$) implies
\[
\left| \frac{\partial  {v_1}}{\partial \rho}\right|^2 \leq  \frac{1}{\rho^2(-\log \rho)^{\frac{4}{3}} } \ \mbox{ and } \left| \frac{\partial  {v_1}}{\partial \phi }\right|^2  \leq \frac{\Ei}{2\pi}(1+b\rho^a)^2 \ \ \mbox{ in } \D^*_{w^1,\frac{1}{2}} \times \D^*_{w^2}.
\]
Thus,
\begin{equation} \label{penny}
\left| \frac{\partial v_1}{\partial w^1 }\right|^2  \leq  \frac{1}{\rho^2(-\log \rho)^{\frac{4}{3}} } +\frac{\Ei}{2\pi \rho^2}(1+b\rho^a)^2 
\ \ \mbox{ in } \D^*_{w^1,\frac{1}{2}} \times \D^*_{w^1}.
\end{equation}
Since $\kappa$ is a Lipschitz section, $v_D \circ \varphi_1$ is a Lipschitz section for $\rho\geq \frac{1}{2}$.  Thus,
\begin{equation} \label{penny1}
\left| \frac{\partial v_1}{\partial w^1 }\right|^2 \leq C \ \ \mbox{ in } \D^*_{w^1,\frac{1}{2},1} \times \D^*_{w^2}.
\end{equation}
Furthermore, using the harmonicity of  $v_1$  restricted to the slice $\{w^1_0\} \times \bar \D^*_{w^2}$,   we have by Theorem~\ref{exists} that
\begin{eqnarray} \label{nickel}
\Ej\log \frac{r_0}{r} \leq \int_{ \{w^1_0\}  \times \D_{r,r_0}} \left| \frac{\partial   {v_1}}{\partial w^2} \right|^2   \frac{dw^2 \wedge d\bar{w}^2}{-2i} & \leq &  C+\Ej\log \frac{r_0}{r}
\end{eqnarray}
for $0<r<r_0\leq \frac{1}{4}$.

\begin{lemma}[Derivative estimates a set of type (B) away from the juncture] \label{Bestaway}

For  $\Omega :=\D_{\frac{1}{4},1}$,  there exists a constant $C$  such that the following estimates hold:  
\begin{eqnarray*}
\left| \frac{\partial v}{\partial z^1}(z^1_0,z^2_0)  \right| & \leq &  C,  \ \ \forall (z^1_0,z^2_0) \in \Omega \times \D^*\\
\Ej\log \frac{r_0}{r} \leq \int_{ \{z^1_0\}  \times \D_{r,r_0}} \left| \frac{\partial v}{\partial z^2} \right|^2   \frac{dz^2 \wedge d\bar{z}^2}{-2i} & \leq &  C+\Ej\log \frac{r_0}{r}
\end{eqnarray*}
where $0<r<r_0\leq \frac{1}{4}$, $z_0^1 \in \Omega$ and $z^2=re^{i\theta}$.
\end{lemma}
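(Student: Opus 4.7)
The plan is to reduce both estimates to the already-established bounds on the fiber-wise harmonic sections $v_1, v_2$ via the explicit coordinate changes $\varphi_1, \varphi_2$ introduced in Section~\ref{prototype(B)}, exploiting that $\Omega = \D_{\frac{1}{4},1}$ keeps us uniformly away from the juncture at $z^1=0$. The first observation I would make is that for any $(z_0^1, z^2) \in \Omega \times \D_{r,r_0}$ with $r_0 \leq \frac{1}{4}$, we have $|z^2| \leq r_0 \leq \frac{1}{4} \leq |z_0^1|$, so the entire slice $\{z_0^1\} \times \D_{r,r_0}$ lies in the region $Z_1$ and $v = v_1 \circ \varphi_1^{-1}$ there. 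For the pointwise estimate of $|\partial v/\partial z^1|$ on all of $\Omega \times \D^*$, I would split into $Z_1$ and $Z_2$; on $Z_2$ both $|z_0^1|$ and $|z_0^2|$ are trapped in $[\tfrac{1}{4},1]$, a regime where $v_2 \circ \varphi_2^{-1}$ is uniformly Lipschitz by the analogue of (\ref{penny1}) for $v_2$. Thus the substantive work is in $Z_1$.

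For the $z^2$-derivative integral, I would use the coordinate change $w^1 = z^1$, $w^2 = z^2/|z^1|$ of $\varphi_1$, under which $\partial v/\partial z^2 = |z_0^1|^{-1} \partial v_1/\partial w^2$ and $dz^2 \wedge d\bar z^2 = |z_0^1|^2\, dw^2 \wedge d\bar w^2$. The Jacobian exactly cancels, yielding
\[
\int_{\{z_0^1\} \times \D_{r,r_0}} \left|\frac{\partial v}{\partial z^2}\right|^2 \frac{dz^2 \wedge d\bar z^2}{-2i} \;=\; \int_{\{w_0^1\} \times \D_{r/\rho_0,\, r_0/\rho_0}} \left|\frac{\partial v_1}{\partial w^2}\right|^2 \frac{dw^2 \wedge d\bar w^2}{-2i},
\]
where $\rho_0 = |z_0^1|$. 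Since the slice $v_{1,w_0^1}$ is the unique sub-logarithmic growth harmonic section $\bar\D^*_{w^2} \to \widetilde{\bar\D^*} \times_{\rho'_2} \tilde X$, Lemma~\ref{lowerbd} supplies the lower bound $\Ej \log(r_0/r)$, and property (i) of Theorem~\ref{exists} applied to $v_{1,w_0^1}$ supplies the upper bound $\Ej \log(r_0/r) + C$; crucially, the ratio of radii is invariant under the rescaling by $\rho_0$.

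For the pointwise $z^1$-derivative estimate in $Z_1$, I would apply the chain rule to $v(z^1,z^2) = v_1(z^1, z^2/|z^1|)$, writing
\[
\left|\frac{\partial v}{\partial z^1}\right|^2 \;\lesssim\; \left|\frac{\partial v_1}{\partial w^1}\right|^2 + \left(\frac{|z^2|}{|z^1|^2}\right)^2 \left|\nabla_{w^2} v_1\right|^2.
\]
The first term is bounded by combining (\ref{penny}) and (\ref{penny1}) with $|z_0^1| \geq \tfrac{1}{4}$. For the second, property (ii) of Theorem~\ref{exists} applied to the harmonic slice $v_{1,w_0^1}$ gives $|\nabla_{w^2} v_1|^2 \leq C/(|w^2|^2(-\log|w^2|))$ in $\D^*_{1/2}$, with uniform boundedness for $|w^2|$ away from zero. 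Substituting $|w^2| = |z^2|/\rho_0$ shows the second term is controlled by $C/(-\log(4|z^2|))$, which is uniformly bounded on $\Omega \times \D^*$.

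The main (albeit modest) obstacle is verifying that the additive constant $C$ from each slicewise application of Theorem~\ref{exists}~(i)--(ii) is uniform in $w_0^1 \in \Omega$. This reduces to observing that the data entering those estimates --- the translation length $\Delta_{\rho'([\Sp^1_\theta])}$, the exponential-decay constants $a,b$ from Definition~\ref{jz}, and the Lipschitz constant of the boundary data $v_D \circ \varphi_1|_{\partial\D_{w^2}}$ --- all depend only on the fixed homomorphism $\rho'$ and the original Lipschitz section $k$, and not on the slice parameter $w_0^1$, whose range $\Omega$ is precompact in $\bar\D_{z^1}$.
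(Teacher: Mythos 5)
Your proposal follows essentially the same route as the paper: transfer the estimates (\ref{penny}), (\ref{penny1}), (\ref{nickel}) on $v_1$ to $v$ via the change of variables $\varphi_1$ from (\ref{coins}), and use $\rho > \frac{1}{4}$ on $\Omega$ to make the resulting constants uniform. The paper's proof is a one-liner that leaves the chain rule and the cancellation of Jacobians implicit; you spell out both, and your observation that the radius ratio $r_0/r$ is preserved under the $w^2 = z^2/|z^1|$ rescaling is exactly the point that makes the logarithmic bound transfer cleanly.

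One small inaccuracy: Theorem~\ref{exists}~(ii) does \emph{not} give $|\nabla_{w^2} v_1|^2 \leq C/(|w^2|^2(-\log|w^2|))$. It gives $|\partial v_1/\partial r|^2 \leq C/(r^2(-\log r))$ and $|\partial v_1/\partial \theta|^2 \leq \frac{\mathsf{E}_{\rho}}{2\pi} + \frac{C}{-\log r}$, so the full gradient satisfies $|\nabla_{w^2} v_1|^2 \leq \frac{\mathsf{E}_\rho}{2\pi |w^2|^2} + \frac{C'}{|w^2|^2(-\log|w^2|)}$, with the $\frac{\mathsf{E}_\rho}{2\pi |w^2|^2}$ term dominant; consequently, after the chain rule the second contribution to $|\partial v/\partial z^1|^2$ is of order $\frac{\mathsf{E}_\rho}{2\pi|z^1|^2}$, a genuine $O(1)$ quantity (bounded by $\frac{8\mathsf{E}_\rho}{\pi}$ using $|z^1|>\frac{1}{4}$), rather than $O(1/(-\log|z^2|))$. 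This does not affect your conclusion — the term is still uniformly bounded on $\Omega \times \D^*$ — but the claimed logarithmic decay is wrong and worth correcting. The rest of the argument, including the $Z_2$ case, the exact Jacobian cancellation for the $z^2$-derivative integral, and the observation that the constants in Theorem~\ref{exists} depend only on $\rho'$ and $k$ (not on the slice parameter $w_0^1$), is correct and matches the paper's intent.
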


\begin{proof}
Since $\D_{\frac{1}{4},1} \times \D_{\frac{1}{4}}^* \subset Z_1$, the estimates follow by applying the change of variables (\ref{coins}) to estimates (\ref{penny}), (\ref{penny1}), (\ref{nickel}) and noting that $\rho>\frac{1}{4}$ in $\Omega$.
\end{proof}

\begin{lemma}[Derivative estimates  in a set of type (B) near the juncture] \label{Bestnear}

For $v$ restricted to  $\D^*_{\frac{1}{4}} \times \D^*_{\frac{1}{4}}$,  there exists a constant $C$  such that the following estimates hold:
\begin{eqnarray*}
\Ej\log \frac{r_0}{r} \leq \int_{ \{z^1_0\}  \times \D_{r,r_0}} \left| \frac{\partial v}{\partial z^2} \right|^2   \frac{dz^2 \wedge d\bar{z}^2}{-2i} & \leq &  C+ \Ej\log \frac{r_0}{r},
\\
\Ei\log \frac{r_0}{r} \leq \int_{ \D_{r,r_0} \times \{z^2_0\} } \left| \frac{\partial v}{\partial z^1} \right|^2   \frac{dz^1 \wedge d\bar{z}^1}{-2i} & \leq &  C+ \Ei\log \frac{r_0}{r}
\end{eqnarray*}
for $z^1_0, z^2_0 \in \D^*_{\frac{1}{4}}$ and $0<r<r_0 \leq \frac{1}{4}$.
\end{lemma}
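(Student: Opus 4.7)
My plan is to establish the lower bounds directly via Lemma~\ref{lowerbd}, and to derive the upper bounds by decomposing the slice of integration into the regions $Z_1$ and $Z_2$ from the construction in Subsection~\ref{prototype(B)}. The key inputs are the already-established estimates for $v_1$---the pointwise bound (\ref{penny}) on $|\partial v_1/\partial w^1|$ and the integral bound (\ref{nickel}) on the fiber integral of $|\partial v_1/\partial w^2|^2$---together with their symmetric analogs for $v_2$, obtained by interchanging the roles of $w^1$ and $w^2$ (valid because $v_2$ is fiber-wise harmonic in $w^1$).

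For the lower bounds, I would apply Lemma~\ref{lowerbd} to the slice $\{z^1_0\}\times\D_{r,r_0}$ (respectively $\D_{r,r_0}\times\{z^2_0\}$), noting that the boundary loop realizes $[\Sp^1_\theta]$ (resp.\ $[\Sp^1_\phi]$) and that the infimum of the energies of sections of the corresponding flat $\tilde X$-bundle is $\Ej$ (resp.\ $\Ei$).

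For the first upper bound, fix $z^1_0 \in \D^*_{\frac{1}{4}}$ and decompose the slice into $S_1 = \{z^2 : r \leq |z^2| \leq \min(|z^1_0|,r_0)\}$ (contained in $Z_1$) and $S_2 = \{z^2 : \max(r,|z^1_0|) \leq |z^2| \leq r_0\}$ (contained in $Z_2$). On $S_1$, the identity $v(z^1_0,z^2) = v_1(z^1_0, z^2/|z^1_0|)$ combined with the conformal invariance of $|\partial v/\partial z^2|^2 \, dz^2\wedge d\bar z^2$ under the constant dilation $w^2 = z^2/|z^1_0|$ reduces the integral to the fiber-wise estimate (\ref{nickel}) applied to $v_1$ on the rescaled annulus, yielding a contribution of the form $\Ej\log(\min(|z^1_0|,r_0)/r) + C$. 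On $S_2$, parameterize by $(r,\theta)$ with $w^1 = z^1_0/r$ and $w^2 = re^{i\theta}$; the chain rule gives a pointwise bound of the form $|\partial v/\partial z^2|^2 \leq C(|\partial v_2/\partial w^2|^2 + |\partial v_2/\partial w^1|^2 \cdot |z^1_0|^2/|z^2|^4)$. The first term is controlled by the pointwise analog of (\ref{penny}) for $v_2$, which has the dominant behavior $\Ej/(2\pi|w^2|^2)$, integrating over $S_2$ to $\Ej\log(r_0/\max(r,|z^1_0|)) + C$. The second term is handled by Fubini: at fixed $r = |z^2| = |w^2|$, the $\theta$-integral of $|\partial v_2/\partial w^1|^2$ at $w^1 = z^1_0/r$ is controlled via the analog of (\ref{nickel}) for $v_2$ in the harmonic direction $w^1$, and the remaining weight $|z^1_0|^2/r^3$ integrates over $r\in[|z^1_0|,r_0]$ to a constant uniformly in $z^1_0$. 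Summing the $S_1$ and $S_2$ contributions telescopes to $\Ej\log(r_0/r) + C$ in every configuration of $|z^1_0|$ relative to $r,r_0$. The second upper bound follows by the same argument after exchanging the roles of $(z^1,v_1)$ and $(z^2,v_2)$.

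The main obstacle will be the chain-rule analysis on $S_2$ for the $|\partial v_2/\partial w^1|^2$ contribution: since we have only an integral bound (not a pointwise one) on $v_2$ in its harmonic direction $w^1$, the application of Fubini must be executed carefully, using that the surface $(r,\theta)\mapsto(z^1_0/r,re^{i\theta})$ fibers over the $r$-variable with $w^1 = z^1_0/r$ constant on each fiber, so that each fiber integral is majorized by a full $\theta$-circle integral to which the (\ref{nickel})-analog applies. One must then verify that the outer $r$-integral of the factor $|z^1_0|^2/r^3$ over $[|z^1_0|,r_0]$ contributes only a bounded constant, which holds precisely because the lower cutoff $|z^1_0|$ in $S_2$ regularizes the otherwise divergent weight.
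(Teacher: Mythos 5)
Your decomposition of the slice $\{z^1_0\}\times\D_{r,r_0}$ into $S_1\subset Z_1$ and $S_2\subset Z_2$ matches the paper's case analysis, and your treatment of $S_1$ via dilation invariance and (\ref{nickel}) is correct. The gap is in how you dispose of the $S_2$ cross-term $|\partial v_2/\partial w^1|^2\cdot|z^1_0|^2/|z^2|^4$. You assert that the $\theta$-integral of $|\partial v_2/\partial w^1|^2$ (at fixed $r=|z^2|$ and $w^1=z^1_0/r$) ``is controlled via the analog of (\ref{nickel})''. But the $v_2$-analog of (\ref{nickel}) is a bound on $\int_{\D_{\rho_1,\rho_2}\times\{w^2_0\}}|\partial v_2/\partial w^1|^2$, an integral over an \emph{annulus in the $w^1$-variable} at fixed $w^2$, whereas your $\theta$-integral runs over a \emph{circle in the $w^2$-variable} at fixed $w^1$; these are unrelated domains of integration and (\ref{nickel}) says nothing about the latter. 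The best available bound on the full $w^1$-derivative, from Theorem~\ref{exists}(ii), is pointwise of size $\Ei/(2\pi|w^1|^2)$ up to lower order; multiplying by the Jacobian $|z^1_0|^2/(4|z^2|^4)$ gives $\Ei/(8\pi|z^2|^2)$, and the $r$-integral over $[|z^1_0|,r_0]$ then produces $\sim\Ei\log(r_0/|z^1_0|)$, not a constant. Your step ``the remaining weight $|z^1_0|^2/r^3$ integrates to a constant'' would close the argument only if the $\theta$-integral were bounded by an $r$-independent constant, and it is not: it scales like $r^2/|z^1_0|^2$.

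The observation that actually rescues the cross-term, and which your chain-rule decomposition is too coarse to exploit, is geometric: as $z^2$ sweeps the slice, $w^1=z^1_0/|z^2|$ moves along a \emph{radial} segment in the $w^1$-disk; only $\rho=|w^1|$ changes, $\arg w^1$ stays fixed. So the chain rule picks up only $\partial v_2/\partial\rho$, never the angular component $\partial v_2/\partial\phi$ which carries the $\Ei/(2\pi)$ mass. The radial derivative has the extra decay $|\partial v_2/\partial\rho|^2\leq C/(\rho^2(-\log\rho))$ supplied by Theorem~\ref{exists}(ii), and it is this extra $(-\log\rho)^{-1}$ factor, not (\ref{nickel}), that suppresses the cross-term's contribution to the leading $\Ej\log$ order. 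Note that the paper does not perform this chain-rule computation at all; it asserts the pointwise estimate (\ref{Z2estimate}) directly after the change of variables and then integrates it. You should reconcile your decomposition with (\ref{Z2estimate}) and with the role of the radial-versus-angular split just described.
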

\begin{proof}
We only prove the first estimate, the second being similar.  First, note that the lower bound follows from the definition of $\Ej$.  Next, we estimate the upper bound.  For $z^1_0 \in \D^*_{\frac{1}{4}}$, we have the inclusion $\{z^1_0\} \times \D_{r,r_0} \subset Z_1$ whenever $r_0 \leq |z^1_0|$.  Thus, the change of variables $w^2 \mapsto z^2=|w^1|w^2$ in   (\ref{nickel}) yields
\[
\int_{ \{z^1_0\}  \times \D_{r,r_0}} \left| \frac{\partial {v}}{\partial z^2} \right|^2   \frac{dz^2 \wedge d\bar{z}^2}{-2i}  \leq   C+\Ej\log \frac{r_0}{r}
\]
for  $z^1_0 \in \D^*_{\frac{1}{4}}$ and  $0<r < r_0\leq |z_0^1|$.

If $r<|z_0^1|<r_0$, 
we break up the integral  into two integrals since the estimate for $\left| \frac{\partial v}{\partial z^2} \right|^2$ is different in $Z_2$ than in $Z_1$.  Indeed, by  an analogous argument to the proof of  (\ref{penny}),  
we have 
\[
\left| \frac{\partial v_2}{\partial w^2 }\right|^2  \leq  \frac{1}{r^2(-\log r)^{\frac{4}{3}} } +\frac{\Ej}{2\pi r^2}(1+r^a)^2 
\]
After a change of variables $z^1=|w^2|w^1, \ z^2=w^2$, 
\begin{equation} \label{Z2estimate}
\left| \frac{\partial v}{\partial z^2 }\right|^2  \leq \frac{1}{r^2(-\log r)^{\frac{4}{3}}} +\frac{\Ej}{2\pi r^2}(1+r^a)^2 
\ \ \mbox{ in } Z_2.
\end{equation}
Thus, we have
\begin{eqnarray*}
\lefteqn{\int_{ \{z^1_0\}  \times \D_{r,r_0}} \left| \frac{\partial v}{\partial z^2} \right|^2   \frac{dz^2 \wedge d\bar{z}^2}{-2i} }
\\
& \leq &  \int_0^{2\pi} \left( \int_r^{|z_0^1|}  \left| \frac{\partial v}{\partial z^2} \right|^2  \, rdr +\int_{|z_0^1|}^{r_0} \left| \frac{\partial v}{\partial z^2} \right|^2  \, rdr \right) d\theta 
\\
&  \leq & \left( C+\Ej\log \frac{|z_0^1|}{r} \right) + \int_0^{2\pi} \int_{|z_0^1|}^{r_0} \left( \frac{1}{r^2(-\log r)^{\frac{4}{3}} } + \frac{\Ej}{2\pi r^2}(1+r^a)^2 \right) \ rdr  d\theta
\\
& \leq & C +  \Ej\log \frac{r_0}{r}.
\end{eqnarray*}
Finally, if $|z_0^1|<r<r_0$, we only use the estimate (\ref{Z2estimate}).  We omit the details. 
\end{proof}

\subsection{Gluing the maps} \label{gluing}

Given the homomorphism $\rho:\pi_1(M) \rightarrow \mathsf{Isom}(\tilde X)$ of Theorem~\ref{theorem:pluriharmonic}, we will construct a prototype section $v:M \rightarrow \tilde M \times_\rho \tilde X$.  

Let $P \in \Sigma_j \cap \Sigma_i$ and $U = \bar\D_{z^1} \times \bar\D_{z^2}$ be a set of  of type (B) with 
\[
\bar\D_{z^1} \simeq \bar\D_{z^1} \times \{0\} \subset \Sigma_j \ \mbox{ and } \   \bar\D_{z^2} \simeq \{0\} \times \bar\D_{z^2} \subset \Sigma_i
\]
and 
\[
P=\{0\} \times \{0\}.
\]
The identification of the product  space
$
\bar\D_{z^1} \times \bar\D_{z^2}
$
as a subset of $M$ is simultaneously induced by  the local trivializations 
of the disk bundles $\pi_j:\bar{\mathcal D}_j \rightarrow \Sigma_j$ and  $\pi_i: \bar{\mathcal D}_i \rightarrow \Sigma_i$
via
\[
 \pi_j^{-1}(\bar\D_{z^1}) \simeq \bar\D_{z^1} \times \bar\D_{z^2}
  \ \mbox{ and } \   \pi_i^{-1}(\bar\D_{z^2}) \simeq \bar\D_{z^1} \times \bar\D_{z^2}  \]  
(cf.~(\ref{unitarytrivialization})).

Recall the following items associated with the set $U:=\D_{z^1} \times \D_{z^2}$.
\begin{itemize}

\item $[\Sp^1_{\theta^k}]$  is the element of $\pi_1(\bar \D^*_{z^k})$  generated by $\Sp^1_{\theta^k}$ for $k=1,2$

\item 
$[\Sp^1_{\theta^1}]$ and $[\Sp^1_{\theta^2}]$ also are the elements of $\pi_1(\bar{\D}^*_{z^1} \times \bar{\D}^*_{z^2})$ generated by $\Sp^1_{\theta^1}  \simeq \Sp^1_{\theta^1} \times \{z^2\}$ and $\Sp^1_{\theta^1} \simeq \{z^1\} \times \Sp^1_{\theta^2}$ respectively

\item $ \pi_1(\bar \D_{z^k}^*)$  is identified as a subgroup of  $\pi_1(\bar{\D}^*_{z^1} \times \bar{\D}^*_{z^2})$ for $k=1,2$ 
\item $\rho':\pi_1(\bar \D^*_{z^1} \times \bar \D^*_{z^1}) \rightarrow \mathsf{Isom}(\tilde X)$ is a homomorphism and $\rho'_k=\rho'|_{ \pi_1(\bar \D_{z^k}^*)}$ for $k=1,2$

\item $(\widetilde{\bar{\D}_{z^1}^*} \times \widetilde{\bar{\D}^*_{z^2}})  \times_{\rho'} \tilde X \rightarrow \bar{\D}_{z^1}^* \times \bar{\D}^*_{z^2}$ and $\widetilde{\bar{\D}_{z^k}^*}  \times_{\rho'_k} \tilde X \rightarrow \bar{\D}_{z^k}^*$ for $k=1,2$ are  fiber bundles.
\end{itemize}

Next, let 
$V  = \Omega  \times \bar\D_{ z_2}$ be a set of type (A) with 
\[
\Omega \simeq \Omega \times \{0\} \subset \Sigma_j.
\]
The identification $V \simeq \Omega \times \bar \D^*_{ z^2}$ is induced by the local trivialization 
\[
\pi_j^{-1}(\Omega) \simeq \Omega \times \bar \D_{z^2}
\]
of the bundle $\pi_j:\bar{\mathcal D}_j \rightarrow \Sigma_j$ (cf.~(\ref{ref:typeA})).
If $U \cap V \neq \emptyset$ (and hence
 $\Omega \cap \bar\D_{z^1} \neq \emptyset$), 
 then the transition function of the disk bundle $\pi_j: \bar{\mathcal D}_j \rightarrow \Sigma_j$ defines a smooth map
\[
\tau: \Omega \cap \bar{\D}_{z^1} \rightarrow U(1). 
\]

By \cite[Proposition 2.6.1]{korevaar-schoen1}, there exists a locally Lipschitz section $k: M \rightarrow \tilde M \times_\rho \tilde X$ of the fiber bundle $\tilde M \times_\rho \tilde X \rightarrow X$. 
Let $k_U$ be the lift to $(\widetilde{\bar{\D}_{z^1}^*} \times \widetilde{\bar{\D}^*_{z^2}})  \times_{\rho'} \tilde X$ of the restriction  
of $k$ to $U^* := \bar{\D}^*_{z^1} \times \bar{\D}^*_{z^2}$ and let 
\[
v_U: U^* := \bar{\D}^*_{z^1} \times \bar{\D}^*_{z^2} \rightarrow  (\widetilde{\bar{\D}^*_{z^1}} \times \widetilde{\bar{\D}^*_{z^2} } )\times_{\rho'} \tilde X
\]
be the local prototype section defined  in Subsection~\ref{NJ}.
The composition of $v_U$ and the quotient map 
$\tilde M \times_{\rho'} \tilde X \rightarrow  \tilde M \times_{\rho} \tilde X$ defines 
a section
 of $\tilde M \times_\rho \tilde X \rightarrow U$  which we call again $v_U$.

 Also
let $k_V$ be the lift to $(\Omega \times \widetilde{\bar \D^*_{z^2}}) \times_{\rho'_2} \tilde X$ of  the restriction  of $k$ to $V^*=\Omega \times \bar \D^*_{z^2}$ and let  
\[
v_V: V^* := \Omega \times \bar \D^*_{z^2} \rightarrow (\Omega \times \widetilde{\D^*_{z^2}}) \times_{\rho'_2} \tilde X
\]
be the local prototype section defined  in Subsection~\ref{AJ}.  The composition of $v_V$ and the quotient map 
$\tilde M \times_{\rho'_2} \tilde X \rightarrow  \tilde M \times_{\rho} \tilde X$ defines 
a section
 of $\tilde M \times_\rho \tilde X \rightarrow V$  which we call again $v_V$.
 
We claim that we can glue these local sections together to define $v$ in $U \cup V$. To do so, we have to show the following.
\begin{lemma} \label{vpdefined}
If $U$ and $V$ are sets of type (B) and (A) respectively, then the  sections $v_U$ and $v_V$  agree on $U^* \cap V^*$.  
\end{lemma}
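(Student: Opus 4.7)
The plan is to establish $v_U = v_V$ on $U^* \cap V^*$ by applying fiber-wise uniqueness (Lemma~\ref{uniquenessdisk}) along the fibers of the disk bundle $\pi_j: \bar{\mathcal D}_j \to \Sigma_j$. Since $\Omega$ contains no crossings, for each $z^1_0 \in \Omega \cap \bar\D^*_{z^1}$ the fiber $F_{z^1_0} = \pi_j^{-1}(z^1_0) \cap M$ is an intrinsically defined punctured disk. The $U$- and $V$-trivializations of $\pi_j^{-1}(\Omega \cap \bar\D_{z^1})$ restrict on $F_{z^1_0}$ to parametrizations that differ by multiplication with $\tau(z^1_0) \in U(1)$, a conformal (in fact unitary) rotation that preserves harmonicity, boundary values, and sub-logarithmic growth intrinsically.

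By the construction in Subsection~\ref{AJ}, the restriction $v_V|_{F_{z^1_0}}$ is the unique harmonic section on $F_{z^1_0}$ with boundary values $k|_{\partial F_{z^1_0}}$ and sub-logarithmic growth at the puncture (cf.~Theorem~\ref{exists}(iii)). The next step is to show that $v_U|_{F_{z^1_0}}$ enjoys the same three properties. On the annular portion $F_{z^1_0} \cap Z_1 = \{|z^2| \leq |z^1_0|\}$, the identification $v_U = v_1 \circ \varphi_1^{-1}$ together with the fact that $\varphi_1^{-1}$ restricts on $F_{z^1_0}$ to the conformal homothety $z^2 \mapsto z^2/|z^1_0|$ shows that $v_U$ inherits harmonicity from the fiber-wise harmonic section $v_1$. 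On $F_{z^1_0} \cap Z_2 = \{|z^2| \geq |z^1_0|\}$, where $v_U = v_2 \circ \varphi_2^{-1}$, one uses that along the diagonal $D$ both pieces coincide with $v_D$ (and $v_D = \tilde k$ at $\rho = 1$), together with Theorem~\ref{exists}, to identify $v_U|_{F_{z^1_0}}$ as the fiber-wise harmonic extension of the prescribed outer boundary data $k|_{\partial F_{z^1_0}}$.

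Once harmonicity, matching boundary values, and sub-logarithmic growth are verified, Lemma~\ref{uniquenessdisk} yields $v_U|_{F_{z^1_0}} = v_V|_{F_{z^1_0}}$. Since $U^* \cap V^* = \bigcup_{z^1_0 \in \Omega \cap \bar\D^*_{z^1}} F_{z^1_0}$, we conclude $v_U = v_V$ on $U^* \cap V^*$. The main obstacle is the verification of harmonicity of $v_U$ on the $Z_2$ portion of each $\pi_j$-fiber: the map $v_2$ is built to be harmonic along the fibers of the other normal bundle $\pi_i$, so its restriction to a transversal $\pi_j$-fiber is not automatically harmonic. Overcoming this is the technical heart of the lemma, and should rely on the diagonal compatibility of the pieces $v_1 \circ \varphi_1^{-1}$ and $v_2 \circ \varphi_2^{-1}$ together with the uniqueness assertion of Theorem~\ref{exists} applied intrinsically on the disk fiber.
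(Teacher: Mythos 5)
Your plan — restrict to $\pi_j$-fibers, use that the $U(1)$-valued transition $\tau(z^1_0)$ is conformal, and invoke fiber-wise uniqueness from Theorem~\ref{exists} and Lemma~\ref{uniquenessdisk} — is exactly the paper's argument. But the obstacle you flag on the $Z_2$ portion is genuine, and the resolution you sketch (``diagonal compatibility'' plus ``identify $v_U|_{F_{z^1_0}}$ as the fiber-wise harmonic extension of $k|_{\partial F_{z^1_0}}$'') does not go through when $|z^1_0|<1$. Concretely, at $(z^1_0,z^2)$ with $|z^2|=1$ one has $\varphi_2^{-1}(z^1_0,z^2)=(z^1_0,z^2)$, so $v_U(z^1_0,z^2)=v_2(z^1_0,z^2)$ is the \emph{interior} evaluation (at $w^1=z^1_0$, $|z^1_0|<1$) of the $w^1$-fiberwise harmonic section $v_{2,z^2}$, which is generically not the boundary value $k(z^1_0,z^2)$ that $v_V$ takes there. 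Even on $Z_1$, the piece $v_1\circ\varphi_1^{-1}$ restricted to $F_{z^1_0}$ is the harmonic section of the smaller disk $\{|z^2|\le |z^1_0|\}$ with boundary data coming from $v_D$ on the circle $|z^2|=|z^1_0|$, whereas $v_V|_{F_{z^1_0}}$ is harmonic on the whole fiber disk with boundary data $k$ at $|z^2|=1$; their values on $|z^2|=|z^1_0|$ need not agree. So the two prototype sections do not coincide on all of $U^*\cap V^*$, and trying to force harmonicity of $v_U$ along a general $\pi_j$-fiber is the wrong move.

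What the gluing (\ref{join}) actually needs — and what the uniqueness argument delivers cleanly — is agreement on the interface $\partial U \cap V$, i.e.\ the locus $|z^1_0|=1$. There the whole $\pi_j$-fiber lies in $Z_1$, $\varphi_1^{-1}$ acts as the identity on the fiber, and $v_{U,z^1_0}=v_{1,z^1_0}$ is, by Theorem~\ref{exists}, the unique sub-logarithmic harmonic section of $\bar\D^*$ with boundary values $v_D(e^{i\phi_0},e^{i\theta})=k(e^{i\phi_0},e^{i\theta})$ at $|z^2|=1$; since $v_{V,z^1_0}\circ\tau(z^1_0)$ is the unique sub-logarithmic harmonic section with those same boundary values, uniqueness gives the equality. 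That is the reading that makes the paper's one-line proof correct, and it is what the continuity (and local Lipschitz bound) of the glued section in (\ref{join}) actually requires. So you should restrict the fiber-wise comparison to $|z^1_0|=1$ rather than attempting to resolve the $Z_2$ harmonicity in the interior of $U$.
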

\begin{proof}
For $p \in \bar \D_{z^1} \cap \Omega$, let $v_{U,p}$, $k_{U,p}$ be the restrictions of $v_U$, $k_U$ respectively to 
$\{p\} \times  \bar\D_{z^2}^* $ and $v_{V,p}$, $k_{V,p}$  be the restrictions of $v_V$, $k_V$ respectively to $\{p\} \times  \bar\D_{z^2}^* $.  We claim that the harmonic sections $v_{U,p}$ and $ v_{V,p}$ are related by the transition relation
\[
v_{U,p} = 
v_{V,p} \circ \tau(p).
\]
Indeed, since multiplication by $\tau(p) \in U(1)$ is a conformal map, 
$v_{V,p} \circ \tau(p)$ is  harmonic on $\bar \D^*$ with boundary values $k_{U,p}=k_{V,p} \circ \tau(p)\big|_{\partial \D \simeq \{p\} \times \Sp^1_{\theta} }$.  The assertion follows from the uniqueness of harmonic maps  (cf.~Theorem~\ref{exists}).
\end{proof}

 Similar construction holds for two sets of type (A).

\begin{lemma}
If  $U$ and $V$ are both of  type (A) and $U \cap V \neq \emptyset$, then $v_U$ and $v_V$  agree on $U^*\cap V^*$.
\end{lemma}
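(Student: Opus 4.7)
The plan is to mimic the proof of Lemma~\ref{vpdefined} almost verbatim, with the only change being that the transition function now relates two trivializations of the same disk bundle rather than connecting trivializations coming from two different irreducible components. The key observation is that both constructions of $v_U$ and $v_V$ in a set of type (A) are fiber-wise, so gluing reduces to a uniqueness statement for harmonic maps from the punctured disk.

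First I would note that since $U \cap V \neq \emptyset$ and each of $U$, $V$ is of type (A), both must come from local trivializations of the same disk bundle $\pi_j : \bar{\mathcal D}_j \to \Sigma_j$ for some fixed irreducible component $\Sigma_j$; more precisely, there exist contractible open subsets $\Omega_U, \Omega_V \subset \Sigma_j$ containing no crossings with $\Omega_U \cap \Omega_V \neq \emptyset$, such that $U \simeq \Omega_U \times \bar{\mathbb D}$ and $V \simeq \Omega_V \times \bar{\mathbb D}$. The transition function of the bundle $\pi_j$ then defines a smooth map $\tau : \Omega_U \cap \Omega_V \to U(1)$, and the two local trivializations are related on the overlap by $(p,z) \mapsto (p, \tau(p)\,z)$.

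Next, for each $p \in \Omega_U \cap \Omega_V$, let $v_{U,p}$ (resp.\ $v_{V,p}$) denote the restriction of $v_U$ (resp.\ $v_V$) to $\{p\} \times \bar{\mathbb D}^*$, and similarly $k_{U,p}$, $k_{V,p}$ for the lifts of the background section $k$. By construction in Subsection~\ref{AJ}, each of $v_{U,p}$ and $v_{V,p}$ is the unique harmonic section of sub-logarithmic growth on $\bar{\mathbb D}^*$ with boundary values $k_{U,p}|_{\mathbb S^1}$ and $k_{V,p}|_{\mathbb S^1}$ respectively, as supplied by Theorem~\ref{exists}. Since $k_U$ and $k_V$ are both lifts of the same section $k$, the transition relation forces $k_{U,p} = k_{V,p} \circ \tau(p)$ on $\mathbb S^1$.

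The final step is the uniqueness argument: multiplication by $\tau(p) \in U(1)$ is a conformal (in fact isometric) automorphism of $\bar{\mathbb D}^*$, so $v_{V,p} \circ \tau(p)$ is again harmonic on $\bar{\mathbb D}^*$, has boundary values $k_{V,p}\circ \tau(p) = k_{U,p}$, and has sub-logarithmic growth (since composition with a rotation preserves the growth condition in Definition~\ref{logdec}). By the uniqueness assertion in Theorem~\ref{exists}, we conclude
\[
 v_{U,p} \;=\; v_{V,p} \circ \tau(p), \qquad \forall p \in \Omega_U \cap \Omega_V,
\]
which is precisely the compatibility needed to ensure that $v_U$ and $v_V$ descend to the same section on $U^* \cap V^*$. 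There is no real obstacle here; the whole content of the lemma is that the fiber-wise harmonic-map construction is equivariant under the structure group $U(1)$ of the normal disk bundle, and this equivariance is guaranteed by uniqueness in Theorem~\ref{exists}.
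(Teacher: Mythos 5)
Your proposal is correct and is essentially the paper's intended argument: the paper's own proof is literally ``Apply the same argument as Lemma~\ref{vpdefined},'' and you have spelled that argument out verbatim, correctly noting that two intersecting type (A) sets are unitary trivializations of the same disk bundle $\pi_j$, that the transition $\tau(p) \in U(1)$ is conformal (indeed isometric) and hence preserves harmonicity and sub-logarithmic growth, and that the conclusion follows from the uniqueness statement of Theorem~\ref{exists}. No gap.
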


\begin{proof}
Apply the same argument as Lemma~\ref{vpdefined}.
\end{proof}

Let ${\mathcal U}$ be a finite open cover of $\bar{\mathcal D}$ by sets of type (A) and of type (B).  Let ${\mathcal U}_A \subset {\mathcal U}$ be the collection of sets of type (A) and ${\mathcal U}_B \subset {\mathcal U}$ be the collections of sets of type  (B).   Without the loss of generality, we can assume ${\mathcal U}_B$ is a collection of disjoint sets.  Set
\begin{equation}\label{join}
v = \left\{ 
\begin{array}{ll}
v_U & \mbox{in } U \cap {\mathcal D}^*_{\frac{1}{4}} \mbox{ where }U \in {\mathcal U}_B  \\
v_V & \mbox{in } (V \cap {\mathcal D}^*_{\frac{1}{4}}) \backslash \bigcup_{U \in {\mathcal U}_B} U \mbox{ where }V \in {\mathcal U}_A
\end{array}
\right.
\end{equation}
and extend to the rest of $M$ as a well-defined, locally Lipschitz global section of $\tilde M \times_\rho \tilde X \rightarrow M$.

\begin{definition}  \label{prototypemap}
The  map 
\begin{equation} \label{mochizukimap}
v:  M \rightarrow \tilde M \times \tilde X
\end{equation}
constructed above is called the \emph{prototype section}.
The corresponding $\rho$-equivariant map $\tilde{v}:\tilde{M} \rightarrow \tilde{X}$  is called the {\it prototype map}. 
\end{definition}

\section{Energy estimates of the prototype section} \label{sec:ptm}

The goal of this section is to obtain  the energy estimates  of the prototype section  \[
v:  M \rightarrow \tilde{M} \times_{\rho} \tilde{X}
\] 
of Definition~\ref{prototypemap} with respect to the Poincar\'e-type metric $g$ given by Definition~\ref{poincarekahler}.
Throughout this section, we use $C$ to denote constants that are independent of the distance to the divisor. (Note that $C$ may change from line to line.)  

We consider the following three types of sets intersecting the divisor $\Sigma \subset \bar M$: 
\begin{itemize}
\item[(i)] $\Omega \times \bar{\D}_{\frac{1}{4}} \subset \Omega \times \bar \D$, a subset in a set  of type (A)
\item[(ii)] $\Omega \times  \bar \D_{\frac{1}{4}} :=\D_{\frac{1}{4},1}  \times \bar \D_{\frac{1}{4}}  \subset \bar{\D} \times\bar{\D}$, a subset in a set of type (B) away from the crossing (cf.~Figure~\ref{fig:subsetsB})
\item[(iii)] $\bar{\D}_{\frac{1}{4}} \times\bar{\D}_{\frac{1}{4}} \subset \bar{\D} \times\bar{\D}$, a subset in a set of type (B) at the crossing (cf.~Figure~\ref{fig:subsetsB})
 \end{itemize}
 \begin{figure}[h!]
  \includegraphics[width=2in]{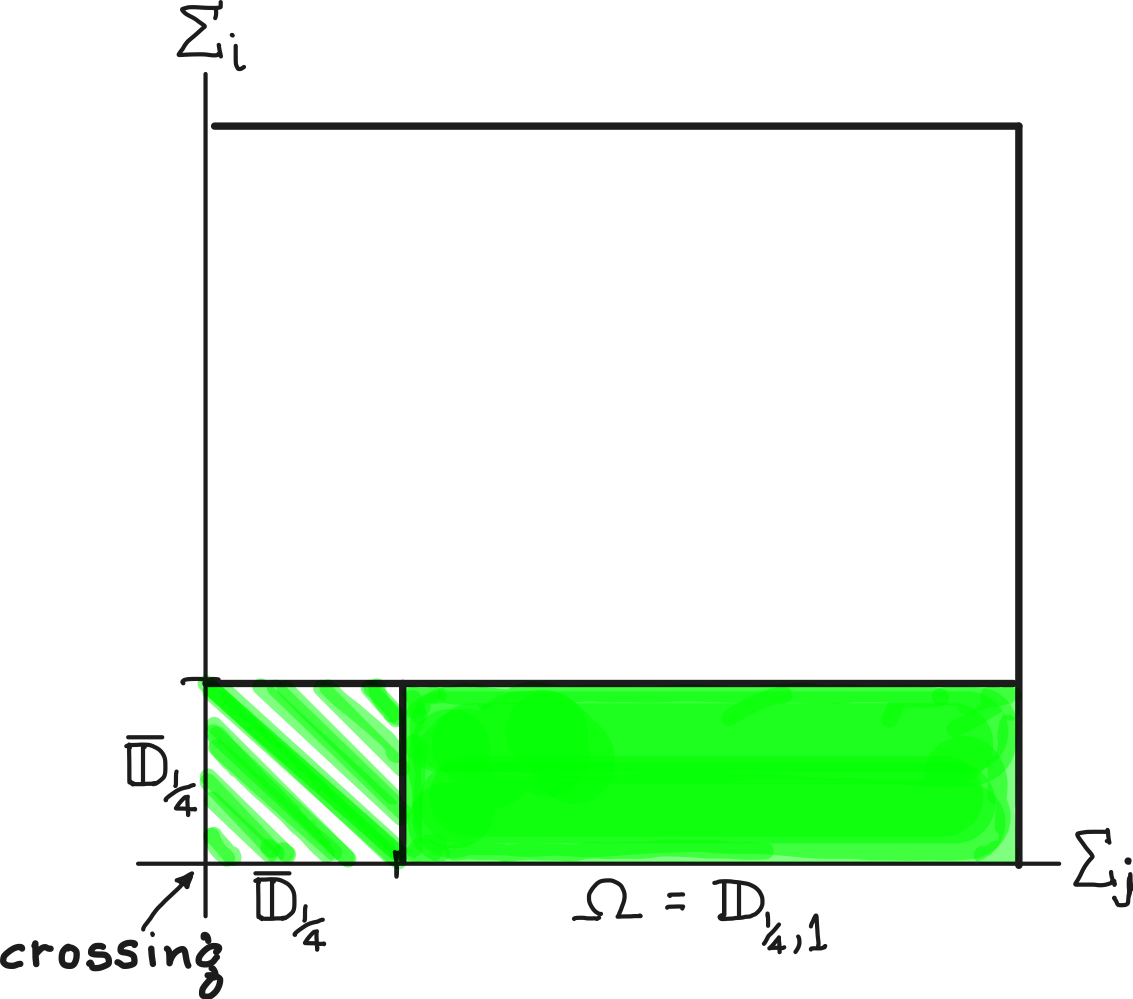}
  \caption{Subsets of a set of type (B). }
  \label{fig:subsetsB}
\end{figure}
A neighborhood of $\Sigma$ can be covered by a {\it{finite collection of sets}} of the above type.  In order to estimate the energy of $v$, we will compute its energy in each such set.

\subsection{Energy in a set of type (A)} \label{subsec:EA}
In this subsection we will use the following  notation in addition to the one used in Subsection~\ref{NY}.

\begin{itemize}

\item $\Omega \times  \D_{r_1,r_2}$ is the subset of  $\Omega \times \bar \D$ with $0<r_1<|z^2| <r_2 < \frac{1}{4}$.

\item
 $g_{\Sigma_j}$ is the smooth metric on $\Omega$ as defined in Definition~\ref{(j)}

 \item $\mbox{Area}_{g_{\Sigma_j}}$ is  the area  with respect to  $g_{\Sigma_j}$
 
  \item $P$ is the product metric on $\Omega \times \bar \D^*$ defined by (\ref{def:P}).
 \end{itemize}
  Note that
  \[
  {\mathcal D}_{r_1,r_2} \cap (\Omega \times \bar \D) = \Omega \times \D_{r_1,r_2} \ \  \mbox{(cf.~(\ref{nbhd12}))}.
  \]

The strategy for estimating the energy of the prototype section $v$ in $\Omega \times \bar \D$ will be to first compute the energy of $v$ with respect to the product metric $P$ (cf.~(\ref{def:P})).  Since the metrics $P$ and $g$ are close (cf.~(\ref{closegP})), this will give us the estimate of the energy of $v$ with respect to $g$.

  \begin{lemma}\label{lemmadirvA}
For  a subset $\Omega \times \bar{\D}$ of a set of  type (A),  there exists a constant $C>0$ such that  the energy with respect to the metric $P$ of the prototype section $v$ satisfies
 \begin{eqnarray*}
0 \leq {^PE}^v[\Omega \times \D_{r_1,r_2}] -  \Ej\mbox{Area}_{g_{\Sigma_j}}(\Omega)  \log \frac{r_2}{r_1} \leq  C,  \ \ \ \ 0<r_1<r_2\leq \frac{1}{4}.
 \end{eqnarray*}
  \end{lemma}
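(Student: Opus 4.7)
The plan is to exploit the fact that, in the standard product coordinates $(z^1, z^2)$, the product metric $P$ is \emph{diagonal}, so that the Korevaar--Schoen energy splits as
\begin{equation*}
{^PE}^v[\Omega \times \D_{r_1,r_2}] \;=\; \int_{\Omega \times \D_{r_1,r_2}} P^{1\bar 1}\left|\frac{\partial v}{\partial z^1}\right|^2 d\mbox{vol}_P \;+\; \int_{\Omega \times \D_{r_1,r_2}} P^{2\bar 2}\left|\frac{\partial v}{\partial z^2}\right|^2 d\mbox{vol}_P.
\end{equation*}
I will show that the second (``fiber'') term carries the entire logarithmic contribution $\Ej\,\mbox{Area}_{g_{\Sigma_j}}(\Omega)\log(r_2/r_1)$ modulo an $O(1)$ error, while the first (``base'') term is uniformly $O(1)$.

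For the fiber term, the identity $P^{2\bar 2}d\mbox{vol}_P = d\mbox{vol}_{g_{\Sigma_j}} \wedge (dz^2 \wedge d\bar z^2)/(-2i)$ from (\ref{volumeformofP}) (the point already emphasized in Remark~\ref{alike}) yields
\begin{equation*}
\int_{\Omega \times \D_{r_1,r_2}} P^{2\bar 2}\left|\frac{\partial v}{\partial z^2}\right|^2 d\mbox{vol}_P \;=\; \int_{\Omega}\left(\int_{\{z^1\}\times \D_{r_1,r_2}} \left|\frac{\partial v}{\partial z^2}\right|^2 \frac{dz^2 \wedge d\bar z^2}{-2i}\right)d\mbox{vol}_{g_{\Sigma_j}}(z^1).
\end{equation*}
By construction of the prototype in Subsection~\ref{AJ}, each slice $v_{z^1}$ is the unique harmonic section furnished by Theorem~\ref{exists}, so the inner integral is bounded above by $C + \Ej\log(r_2/r_1)$ via the second inequality of Lemma~\ref{Aest} and bounded below by $\Ej\log(r_2/r_1)$ via Lemma~\ref{lowerbd}. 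Integrating over $\Omega$ against $d\mbox{vol}_{g_{\Sigma_j}}$, whose total mass is finite, produces the main term $\Ej\,\mbox{Area}_{g_{\Sigma_j}}(\Omega)\log(r_2/r_1)$ together with a uniformly $O(1)$ additive error.

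For the base term, the first inequality of Lemma~\ref{Aest} provides the pointwise bound $|\partial v/\partial z^1|^2 \leq C$ on $\Omega \times \bar{\D}^*$. Combined with the companion identity
\begin{equation*}
P^{1\bar 1}d\mbox{vol}_P \;=\; g_{\Sigma_j}^{1\bar 1}\, d\mbox{vol}_{g_{\Sigma_j}} \wedge \frac{dz^2 \wedge d\bar z^2}{-2i\, r^2(-\log r^2 + A)^2}
\end{equation*}
from (\ref{volumeformofP}), it follows that the base term is bounded by
\begin{equation*}
C \int_{\Omega} g_{\Sigma_j}^{1\bar 1}\, d\mbox{vol}_{g_{\Sigma_j}} \cdot \int_0^{2\pi}\int_0^{1/4} \frac{dr\, d\theta}{r(-\log r^2 + A)^2},
\end{equation*}
and the Poincar\'e-type radial weight is integrable on $(0,\tfrac14]$, so the entire base term is $O(1)$ independently of $r_1, r_2$. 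Combining the fiber and base estimates, and using nonnegativity of the base term together with the slice-wise lower bound on the fiber term, yields the asserted two-sided inequality.

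The only real difficulty is bookkeeping: one must keep track of the fact that the diagonal structure of $P$, the exact fiber-integration identities (\ref{volumeformofP}), and the slice-wise harmonicity of each $v_{z^1}$ all assemble consistently, so that the punctured-disk estimates of Theorem~\ref{exists} transplant without loss to the full region $\Omega \times \D_{r_1,r_2}$. No new PDE or geometric input beyond what has already been developed for the punctured disk is required.
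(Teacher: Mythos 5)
Your proof is correct and follows essentially the same route as the paper: both split the $P$-energy into the $P^{1\bar 1}$ (base) and $P^{2\bar 2}$ (fiber) pieces, use the fiber-integration identity (\ref{volumeformofP}) (Remark~\ref{alike}) to reduce the fiber piece to the slice-wise punctured-disk energy, invoke Lemma~\ref{Aest} for the slice-wise upper bound and the boundedness of $|\partial v/\partial z^1|$, and derive the lower bound from the definition of $\Ej$ (Lemma~\ref{lowerbd}). The only cosmetic difference is that you spell out the integrability of the Poincar\'e radial weight to finish off the base term, whereas the paper leaves that implicit in the citation of Lemma~\ref{Aest} and Remark~\ref{alike}.
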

  
   \begin{proof}
By  (\ref{volumeformofP}),
 \begin{eqnarray*}
 ^PE^v[\Omega \times \D_{r_1,r_2}] & = & 
 \int_{\Omega \times  \D_{r_1,r_2}} P^{1\bar1}\left| \frac{\partial v}{\partial z^1} \right|^2  +P^{2\bar2} \left| \frac{\partial v}{\partial z^2} \right|^2    d\mbox{vol}_P
 \\
 &  = &   \int_{\Omega \times  \D_{r_1,r_2}} P^{1\bar1}\left| \frac{\partial v}{\partial z^1} \right|^2   d\mbox{vol}_P
 + 
   \int_{\Omega}
   \left(
   \int_{\D_{r_1,r_2}}
  \left| \frac{\partial v}{\partial z^2} \right|^2 \frac{dz^2 \wedge d\bar z^2}{-2i} 
  \right)  d\mbox{vol}_{g_{\Sigma_j}},
\end{eqnarray*}
hence the inequality on the right follows  from Lemma~\ref{Aest} (cf~Remark~\ref{alike}).

By the definition of $\Ej$,
\begin{eqnarray*}
\Ej\mbox{Area}_{g_{\Sigma_j}}(\Omega)  \log \frac{r_2}{r_1} 
 & = &  \int_{\Omega} \int_{r_1}^{r_2} \Ej \frac{dr}{r}
d\mbox{vol}_{g_{\Sigma_j}}
\\
& \leq &  \int_{\Omega} \left(  \int_{ \D_{r_1,r_2}} \left| \frac{\partial v}{\partial \theta} \right|^2    \frac{dr \wedge d\theta}{r} \right)
d\mbox{vol}_{g_{\Sigma_j}} \\
& \leq & 
\int_{\Omega  \times \D_{r_1,r_2}} 
P^{1\bar 1} \left| \frac{\partial v}{\partial z^1} \right|^2 +P^{2\bar 2} \left| \frac{\partial v}{\partial z^2} \right|^2
d\mbox{vol}_P
\\
& = & ^PE^v[\Omega \times \D_{r_1,r_2}],
\end{eqnarray*}
which proves the inequality on the left.
  \end{proof}

\begin{lemma} \label{egrowthA}
For  a subset $\Omega \times \bar{\D}$ of a set  type (A),  there exists a constant $C>0$ such that  the prototype section $v$ satisfies

\begin{eqnarray*}
\left|
 ^gE^v[\Omega \times \D_{r_1,r_2}] -  \Ej\mbox{Area}_{g_{\Sigma_j}}(\Omega)  \log \frac{r_2}{r_1} \right| \leq  C, \  \ 0<r_1<r_2 \leq \frac{1}{4}.
\end{eqnarray*}
\end{lemma}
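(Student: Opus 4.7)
The plan is to derive Lemma~\ref{egrowthA} from Lemma~\ref{lemmadirvA} by controlling the difference between the $g$-energy and the $P$-energy of $v$ on $\Omega \times \D_{r_1,r_2}$. Since the metric $g$ and the product metric $P$ are close in the sense of (\ref{closegP}), and the volume forms are close by (\ref{volcomp}), one expects that the difference
\[
  {}^gE^v[\Omega \times \D_{r_1,r_2}] - {}^PE^v[\Omega \times \D_{r_1,r_2}]
\]
is uniformly bounded in $r_1,r_2$, which combined with Lemma~\ref{lemmadirvA} will yield the claim.

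First I would split the difference into two pieces, writing
\[
  {}^gE^v - {}^PE^v
   = \int (g^{i\bar j} - P^{i\bar j}) \pi_v(\partial_i,\partial_{\bar j}) \, d\mathrm{vol}_g
   + \int P^{i\bar j} \pi_v(\partial_i,\partial_{\bar j}) \, (d\mathrm{vol}_g - d\mathrm{vol}_P).
\]
For each piece, I would use the quantitative bounds: (\ref{closegP}) for the differences of the inverse metric coefficients (the diagonal entries are $O(r^2)$ and the off-diagonal ones are $O(r)$) and (\ref{volcomp}) for the ratio $d\mathrm{vol}_g / d\mathrm{vol}_P - 1 = O((-\log r^2 + A)^{-2})$. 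To control the integrands I would apply the derivative estimates of Lemma~\ref{Aest}: the tangential derivative $|\partial v/\partial z^1|$ is uniformly bounded, while the normal derivative satisfies $\int_{\{z^1\} \times \D_{r,r_0}} |\partial v/\partial z^2|^2 \, dz^2 d\bar z^2 \lesssim C + \mathsf{E}_j \log(r_0/r)$, and the cross terms can be handled via Cauchy--Schwarz on $\pi_v(\partial_1,\partial_{\bar 2})$.

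The routine but essential step is to verify that each term contributes only $O(1)$ as $r_1 \to 0$. For the first integral, the dominant summand is the one involving $g^{2\bar 2} - P^{2\bar 2} = O(r^2)$ paired with $|\partial v/\partial z^2|^2$: using Remark~\ref{alike} to reduce to slicewise integrals on $\{z^1\} \times \D$ and invoking Theorem~\ref{exists}(ii), this combination integrates to a finite amount over $\Omega \times \D_{r_1,r_2}$ independent of $r_1, r_2$, because the $O(r^2)$ factor absorbs the near-singular contribution of $P^{2\bar 2} |\partial v/\partial z^2|^2$. The off-diagonal terms are smaller by the same mechanism and Cauchy--Schwarz. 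For the second integral, the factor $O((-\log r^2 + A)^{-2})$ multiplying the (logarithmically growing) $P$-energy density is integrable in $r$ and yields $O(1)$.

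The main obstacle will be making the bookkeeping for the cross terms clean; in particular, one must check that the off-diagonal entries $g^{1\bar 2}-P^{1\bar 2} = O(r)$, when paired with the non-diagonal pullback components $\pi_v(\partial_1,\partial_{\bar 2})$ and integrated against $d\mathrm{vol}_g$, do not accumulate a logarithmic divergence. The cleanest route is to bound $|\pi_v(\partial_1,\partial_{\bar 2})| \leq |\partial v/\partial z^1| \cdot |\partial v/\partial z^2|$ and then argue by Cauchy--Schwarz in the $r$-variable, using that $\int_0^{r_2} r \cdot |\partial v/\partial z^2|^2 \, dr$ is integrable against the relevant volume factor thanks to Theorem~\ref{exists}(ii). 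Once all four terms are shown to be $O(1)$, Lemma~\ref{lemmadirvA} gives both the upper and lower bounds in Lemma~\ref{egrowthA}.
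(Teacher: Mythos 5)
Your proposal is correct but follows a technically different route than the paper. Both proofs begin with the same reduction: by Lemma~\ref{lemmadirvA}, it suffices to bound $\bigl|{}^gE^v[\Omega \times \D_{r_1,r_2}] - {}^PE^v[\Omega \times \D_{r_1,r_2}]\bigr|$. Where you diverge is in how the singular $\theta$-contribution is tamed. You invoke the pointwise estimates of Theorem~\ref{exists}(ii) applied fiberwise to the slice maps $v|_{\{z^1\}\times\D}$ (which are, by construction, harmonic sections of logarithmic energy growth), obtaining $|\partial v/\partial z^2|^2 = O(1/r^2)$, $|\partial v/\partial \theta|^2 = O(1)$, and $|\partial v/\partial r|^2 = O(1/(r^2(-\log r)))$, with constants uniform in $z^1$ because the Lipschitz constant of the boundary data $k|_{\{z^1\}\times \Sp^1}$ is uniform over $\Omega$. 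These pointwise bounds control every summand in your decomposition directly. The paper, by contrast, deliberately avoids the pointwise bound on $|\partial v/\partial\theta|^2$: it first derives an integrated estimate on $\frac{1}{r^2}\bigl(|\partial v/\partial\theta|^2 - \tfrac{\Ej}{2\pi}\bigr)$ by subtracting the $z^1$- and $r$-energy contributions from the full $P$-energy bound of Lemma~\ref{lemmadirvA}, then feeds this into the averaging result Lemma~\ref{calculus} (the appendix lemma) to obtain the crucial weighted estimate~(\ref{conlem}), and only then bounds the ``trickiest'' terms $\int P^{\theta\theta}|\partial v/\partial\theta|^2(d\mbox{vol}_g - d\mbox{vol}_P)$ and $\int(g^{\theta\theta}-P^{\theta\theta})|\partial v/\partial\theta|^2\,d\mbox{vol}_P$. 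The reason for the paper's indirection is uniformity of method: the identical computation must immediately afterwards be run in Lemma~\ref{comparisonA} for an \emph{arbitrary} section $f$, where no pointwise derivative bounds are available and only the integral hypothesis~(\ref{alb}) for Lemma~\ref{calculus} can be verified. Your more direct argument works fine for Lemma~\ref{egrowthA} itself, but you would still need the Lemma~\ref{calculus} machinery to handle Lemma~\ref{comparisonA}; the paper simply chose to set up a single argument serving both.
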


\begin{proof}
By Lemma~\ref{lemmadirvA}, it suffices to show that the  difference of the energy of $v$ with respect to $g$ and with respect to  $P$  is bounded; i.e.~
\[
\left| ^gE^v[\Omega \times \D_{r_1,r_2}]-{^PE}^v[\Omega \times \D_{r_1,r_2}] \right| \leq C.
\]
  We obtain the above estimate with the help of  Lemma~\ref{calculus} found in the Appendix to this chapter. Therefore we need to first show  that the assumption (\ref{alb}) of Lemma~\ref{calculus} is satisfied; in other words, we need an estimate  of the intergral of  $\frac{1}{r^2}\left| \frac{\partial v}{\partial \theta} \right|^2$.  Below, we will derive the estimate (\ref{alb}) by bounding the $z^1$-energy and $r$-energy of $v$ and then subtracting those from the full energy estimate of Lemma~\ref{lemmadirvA}. 

First, to bound the $z^1$-energy of $v$, we use  estimate  $ \left| \frac{\partial v}{\partial z^1} \right|^2 \leq C$ of (\ref{Aest}) to see that 
\[
\int_{\Omega  \times \D_{r_1,r_2}} 
  \left| \frac{\partial v}{\partial z^1} \right|^2  d\mbox{vol}_{g_{\Sigma_j}}  \wedge \frac{dz^2\wedge d\bar{z}^2}{-2i r^2(\log r^2+A)^2}  \leq C.
\]
Second, to bound the $r$-energy of $v$,  
use estimate $\left| \frac{\partial v}{\partial r} \right|^2 \leq \frac{C}{r^2 (-\log r)}$ from Theorem~\ref{exists} to see that 
\[
 \int_{\Omega  \times \D_{r_1,r_2}}   \left| \frac{\partial v}{\partial r} \right|^2 
d\mbox{vol}_{g_{\Sigma_j}} \wedge \frac{dz^2\wedge d\bar{z}^2}{-2i } \leq C.
    \]
    Thus, Lemma~\ref{lemmadirvA} and the identities for $P^{1\bar 1} d\mbox{vol}_P$ and $    P^{2\bar 2} d\mbox{vol}_P$ given by (\ref{volumeformofP}) imply the following integral estimate on $\frac{1}{r^2} \left| \frac{\partial v}{\partial \theta} \right|^2$:
   \[
   \int_{\Omega \times \D_{r_1,r_2}}
 \frac{1}{r^2} \left| \frac{\partial v}{\partial \theta} \right|^2 
d\mbox{vol}_{g_{\Sigma_j}}  \wedge \frac{dz^2\wedge d\bar{z}^2}{-2i }  -\Ej\mbox{Area}_{g_{\Sigma_j}}(\Omega)  \log \frac{r_2}{r_1} \leq C. 
   \]
We set  $r_2=\frac{1}{4}$ and let $r_1 \rightarrow 0$ above to obtain
\[
 \int_{\Omega \times \D_{\frac{1}{4}}}
\left(   \left| \frac{\partial v}{\partial \theta} \right|^2 -\Ej
\right) 
d\mbox{vol}_{g_{\Sigma_j}} \wedge \frac{dr \wedge d\theta}{r} 
\leq  C.
\]
Noting that $g_{\Sigma_j}$ is a smooth metric, the  assumption (\ref{alb}) of Lemma~\ref{calculus} is satsified.  Consequently (noting that $A$ appears in the metric expression of $P$  is a bounded function), 
\begin{equation} \label{conlem}
 \int_{\Omega \times \D_{\frac{1}{4}}}
  \left| \frac{\partial v}{\partial \theta} \right|^2 
d\mbox{vol}_{g_{\Sigma_j}} \wedge \frac{dr \wedge d\theta}{r(\log r^2+A)^2} 
\leq C'
\end{equation}
where the constant $C'$ depends only on $C$.

We use the above estimate to  compute the difference between $^gE^v[\Omega \times \D_{r_1,r_2}]$ and $^PE^v[\Omega \times \D_{r_1,r_2}]$. The trickiest to bound include the following  two terms for which we use the  estimate (\ref{conlem}):  \\
\\
$\bullet$ $\displaystyle{\left| \int_{\Omega \times \D_{\frac{1}{4}}} P^{\theta \theta}  
  \left| \frac{\partial v}{\partial \theta} \right|^2  (d\mbox{vol}_g  - d\mbox{vol}_P) \right|
}$\\
\\
To bound this term, note that  since by (\ref{volcomp})
\[
d\mbox{vol}_P - d\mbox{vol}_g=O\left(\frac{1}{(-\log r^2 +A)^2} \right) d\mbox{vol}_P=O(1) d\mbox{vol}_{g_{\Sigma_j}}  \wedge \frac{dz^2\wedge d\bar{z}^2}{-2i r^2(-\log r^2+A)^4}\]
 and 
 \[
 P^{2\bar 2} = r^2( \log r^2+A)^2.
 \]
 and thus
 \[
 P^{2\bar 2} (d\mbox{vol}_P - d\mbox{vol}_g)=O(1) d\mbox{vol}_{g_{\Sigma_j}}  \wedge \frac{dz^2\wedge d\bar{z}^2}{-2i (-\log r^2+A)^2}.
 \]
Thus, 
\[
P^{\theta \theta}   \left( d\mbox{vol}_P - d\mbox{vol}_g \right)=d\mbox{vol}_{g_{\Sigma_j}} \wedge   \frac{dz^2 \wedge d\bar z^2}{-2ir^2(-\log r^2 +A)^2}= d\mbox{vol}_{g_{\Sigma_j}} \wedge \frac{ dr \wedge d\theta}{r(-\log r^2 +A)^2}
\]
which in turn implies
\begin{eqnarray*}
\left| \int_{\Omega \times \D_{\frac{1}{4}}} P^{\theta \theta}  
  \left| \frac{\partial v}{\partial \theta} \right|^2  (d\mbox{vol}_g  - d\mbox{vol}_P) \right|
  & \leq & 
C\int_{\Omega \times \D_{\frac{1}{4}}}   \left| \frac{\partial v}{\partial \theta} \right|^2 d\mbox{vol}_{g_{\Sigma_j}} \wedge \frac{ dr \wedge d\theta}{r(-\log r^2 +A)^2} \\
  & \leq & CC'.
\end{eqnarray*}

\noindent $\bullet$ $\displaystyle{ \left| \int_{\Omega \times \D_{\frac{1}{4}}} \left( g^{\theta \theta} - P^{\theta \theta}  \right)
  \left| \frac{\partial v}{\partial \theta} \right|^2  d\mbox{vol}_P   \right|}$\\
  \\
To bound this term, note that since $g^{2\bar2}-P^{2\bar 2}=O(r^2)$, we have
\[
g^{\theta \theta} - P^{\theta \theta}=O(1) \mbox{ and }
  d\mbox{vol}_P = d\mbox{vol}_{g_{\Sigma_j}}   \wedge \frac{dz^2\wedge d\bar{z}^2}{-2i r^2(\log r^2+A)^2}
  \] by (\ref{volumeformofP}).  Combining the above,  we obtain
\[
\left( g^{\theta \theta} - P^{\theta \theta} \right) d\mbox{vol}_P =O(1) d\mbox{vol}_{g_{\Sigma_j}} \wedge \frac{dr\wedge d\theta}{r(-\log r^2+A)^2}.
\]  Thus
\begin{eqnarray*}
\left| \int_{\Omega \times \D_{\frac{1}{4}}} \left( g^{\theta \theta} - P^{\theta \theta}  \right)
  \left| \frac{\partial v}{\partial \theta} \right|^2  d\mbox{vol}_P   \right|
  &  \leq & C \int_{\Omega \times \D_{\frac{1}{4}}}
  \left| \frac{\partial v}{\partial \theta} \right|^2 
d\mbox{vol}_{g_{\Sigma_j}}\wedge \frac{dr \wedge d\theta}{r(-\log r^2+A)^2} 
\leq CC'.
\end{eqnarray*}
The other terms of $\left| ^gE^v[\Omega \times \D_{r_1,r_2}]-{^PE}^v[\Omega \times \D_{r_1,r_2}] \right|$ are also bounded by similar computations.  We omit the details.
  \end{proof}

\begin{lemma} \label{comparisonA}
For  a subset $\Omega \times \bar{\D}$ of  a set type (A),  there exists a constant $C>0$ such that  the prototype section $v$ satisfies
\[
^gE^v[\Omega \times \D_{r_1,r_2}]
\leq {^gE^f}[\Omega \times \D_{r_1,r_2}]+C, \ \ 0<r_1<r_2 \leq \frac{1}{4}
\]
for  any locally Lipschitz section $f:\Omega  \times \D_{r_1,r_2} \rightarrow \tilde M \times_{\rho} \tilde X $.
\end{lemma}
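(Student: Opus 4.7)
The plan is to deduce the lemma from Lemma~\ref{egrowthA} by establishing a matching lower energy bound on an arbitrary locally Lipschitz competitor $f$. Lemma~\ref{egrowthA} already gives
\[
{}^gE^v[\Omega \times \D_{r_1,r_2}] \le \Ej\,\mbox{Area}_{g_{\Sigma_j}}(\Omega)\log\frac{r_2}{r_1} + C_1,
\]
so the task reduces to proving
\[
\Ej\,\mbox{Area}_{g_{\Sigma_j}}(\Omega)\log\frac{r_2}{r_1} \le {}^gE^f[\Omega \times \D_{r_1,r_2}] + C_2
\]
with $C_2$ independent of $f$; summing the two gives the lemma. We may assume $^gE^f < \infty$, for otherwise the claim is vacuous.

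To establish this lower bound, I would apply Lemma~\ref{lowerbd} fiber-wise: for a.e.~$z^1 \in \Omega$, the restriction of $f$ to $\{z^1\} \times \D_{r_1,r_2}$ is a valid section and so
\[
\Ej \log\frac{r_2}{r_1} \le \int_{\D_{r_1,r_2}} \frac{1}{r}\left|\frac{\partial f}{\partial\theta}\right|^2 dr\wedge d\theta.
\]
Integrating over $\Omega$ against $d\mbox{vol}_{g_{\Sigma_j}}$ and recognizing the right-hand side as $\int P^{\theta\theta} |\partial_\theta f|^2\,d\mbox{vol}_P$ via the identity $P^{\theta\theta}\,d\mbox{vol}_P = d\mbox{vol}_{g_{\Sigma_j}}\wedge\frac{dr\wedge d\theta}{r}$ derivable from (\ref{volumeformofP}), one obtains
\[
\Ej\,\mbox{Area}_{g_{\Sigma_j}}(\Omega)\log\frac{r_2}{r_1} \le \int_{\Omega\times\D_{r_1,r_2}} P^{\theta\theta}|\partial_\theta f|^2\,d\mbox{vol}_P \le {}^PE^f[\Omega \times \D_{r_1,r_2}],
\]
the last step using that $P$ is block-diagonal in the real coordinates $(x^1,y^1,r,\theta)$, so the $\theta$-contribution is dominated by the full $P$-energy density.

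It then remains to show $|{}^gE^f-{}^PE^f|\le C'$ uniformly, which produces $C_2 = C'$. By the estimates (\ref{closegP}) and (\ref{volcomp}), the difference decomposes into integrals of $|\partial f|^2$ against coefficients of size $O(r)$, $O(r^2)$, or $O((-\log r^2+A)^{-2})$, each vanishing at the divisor. These error terms are controlled by the calculus lemma of Appendix~\ref{appendix:calculus}, exactly as in the proof of Lemma~\ref{egrowthA}; the hypothesis of that lemma reduces to finiteness of $\int |\partial_\theta f|^2\,\frac{dr\,d\theta}{r}\,d\mbox{vol}_{g_{\Sigma_j}}$, which coincides with the $\theta$-part of $^PE^f$ and is therefore bounded via the quasi-isometry of $g$ and $P$ by $^gE^f$. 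The main technical obstacle is precisely this last step, since unlike the case of the prototype section $v$, for which Lemma~\ref{Aest} and Theorem~\ref{exists} supply pointwise derivative bounds feeding directly into the calculus lemma, for an arbitrary competitor $f$ we can only use finiteness of $g$-energy; the argument nonetheless goes through because every error coefficient carries an explicit decay at the divisor of exactly the form required by the calculus lemma.
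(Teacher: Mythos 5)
Your plan follows the paper's route — bound ${}^gE^v$ from above via Lemma~\ref{egrowthA}, bound the competitor from below via Lemma~\ref{lowerbd} applied fiber-wise (which is exactly the paper's inequality~(\ref{piggy})), and then reduce to comparing ${}^gE^f$ with ${}^PE^f$ via Lemma~\ref{calculus}. But the last step contains a genuine gap. You state that the hypothesis of Lemma~\ref{calculus} ``reduces to finiteness of $\int |\partial f/\partial\theta|^2\,\tfrac{dr\,d\theta}{r}\,d\mbox{vol}_{g_{\Sigma_j}}$, which coincides with the $\theta$-part of ${}^PE^f$ and is therefore bounded \dots\ by ${}^gE^f$.'' This misreads the hypothesis: condition~(\ref{alb}) bounds the \emph{centered} quantity $\int\bigl(|\partial f/\partial\theta|^2 - \tfrac{\Ej}{2\pi}\bigr)\tfrac{dr\,d\theta}{r}\,d\mbox{vol}_{g_{\Sigma_j}}$, not the uncentered one. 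The uncentered integral is generically infinite (since $\int_0^{1/4}\tfrac{dr}{r}=\infty$ while the slice $\theta$-energy is bounded below by $\Ej$), and so is ${}^gE^f$ over $\Omega\times\D^*$; ``finiteness'' and ``bounded by ${}^gE^f$'' therefore have no content here. For the prototype section $v$ the centered bound is verified from the sharp estimates of Theorem~\ref{exists} and Lemma~\ref{Aest}, but for an \emph{arbitrary} locally Lipschitz competitor $f$ it may simply fail, and nothing in your argument produces it.

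The paper handles exactly this by a dichotomy that your proposal omits: either the centered $\theta$-integral of $f$ is finite, in which case Lemma~\ref{calculus} applies and the comparison $\bigl|{}^gE^f-{}^PE^f\bigr|\le C'$ goes through as you envisage; or it is infinite, in which case ${}^PE^f-\Ej\,\mbox{Area}_{g_{\Sigma_j}}(\Omega)\log\tfrac{r_2}{r_1}$ (hence, by multiplicative comparability of $g$ and $P$ near the divisor, also the $g$-version) tends to $\infty$ as $r_1\to0$, while Lemma~\ref{egrowthA} keeps ${}^gE^v-\Ej\,\mbox{Area}_{g_{\Sigma_j}}(\Omega)\log\tfrac{r_2}{r_1}$ uniformly bounded, so the desired inequality holds outright. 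Without this case split you cannot invoke Lemma~\ref{calculus} for a general $f$, and the proof is incomplete. A secondary point: the constant $C'$ produced by Lemma~\ref{calculus} depends on the bound in~(\ref{alb}), which for a general $f$ depends on $f$, so the constant you extract is $f$-dependent; you should not claim ``$C_2$ independent of $f$'' without argument.
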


\begin{proof}
Since $\gamma_j$ is minimizing,
\begin{eqnarray}
 \Ej\mbox{Area}_{g_{\Sigma_j}}(\Omega)  \log \frac{r_2}{r_1} 
\nonumber 
 & \leq & \int_0^{2\pi} \left| \frac{\partial f}{\partial \theta} \right|^2 d\theta \int_\Omega d\mbox{vol}_{g_{\Sigma_j}} \int_{r_1}^{r_2} \frac{dr}{r}
 \\
 & = & 
 \int_{\Omega  \times \D_{r_1,r_2}} \left| \frac{\partial f}{\partial \theta} \right|^2  d\mbox{vol}_{g_{\Sigma_j}}  \wedge \frac{dr \wedge d\theta}{r}
\nonumber \\
& \leq & 
\int_{\Omega  \times \D_{r_1,r_2}} 
P^{1\bar 1} \left| \frac{\partial f}{\partial z^1} \right|^2 +P^{2\bar 2} \left| \frac{\partial f}{\partial z^2} \right|^2 \
d\mbox{vol}_P
\nonumber \\
& = & ^PE^f[\Omega \times \D_{r_1,r_2}].
\label{piggy}
\end{eqnarray}
Thus, the desired estimate with $g$ replaced by $P$ follows from combining the above estimate with Lemma~\ref{lemmadirvA}.
Thus, we are left to show that 
\[
\left| {^gE^f}[\Omega \times \D_{r_1,r_2}] -{^PE^f}[\Omega \times \D_{r_1,r_2}]\right|
\leq C.
\]
To do so, note that if the inequality
\[
 \int_{\Omega \times \bar \D_{\frac{1}{4}} }
\left(   \left| \frac{\partial f}{\partial \theta} \right|^2 -\frac{\Ej}{2\pi}
\right) 
d\mbox{vol}_{g_{\Sigma_j}}\wedge \frac{dr \wedge d\theta}{r} 
<\infty
\]
does not hold, then we are done since the desired estimate  holds by Lemma~\ref{egrowthA}.  Hence, we can  assume the above inequality and apply Lemma~\ref{calculus}  to conclude 
\[
 \int_{\Omega \times \D_{\frac{1}{4}}}
  \left| \frac{\partial f}{\partial \theta} \right|^2 
d\mbox{vol}_{g_{\Sigma_j}} \wedge \frac{dr \wedge d\theta}{r(\log r^2+A)^2} 
\leq C'
\]
where the constant $C'$ depends only on $C$. 
The rest of the proof is exactly as in the proof of Lemma~\ref{lemmadirvA}. 
\end{proof}

\subsection{Energy in a set of Type (B) away from the crossing}
In this subsection we will use the following   notation in addition to the one used in Subsection~\ref{NJ}.

\begin{itemize}
\item $\Omega:=\D_{\frac{1}{4},1}$

\item 
$\Omega \times  \D_{r_1,r_2}$ is the subset of  $\Omega \times \bar \D=\D_{\frac{1}{4},1} \times \bar \D$ with $0<r_1<|z^2| <r_2 < \frac{1}{4}$.

\item $g_{\Sigma_j}$ is the smooth metric on $\Omega$  as in Definition~\ref{(j)}

\item $\mbox{Area}_{g_{\Sigma_j}}$ is the area  with respect to  $g_{\Sigma_j}$

\item $P$ is the product metric on $\bar \D \times \bar \D$ defined by (\ref{productmetricB}).
\end{itemize}

Since $\Omega:=\D_{\frac{1}{4},1} \subset \D$, the points of $\Omega \times \D_{r_1,r_2}$ are uniformly away from the crossing.  In particular, since 
\[
\frac{1}{4}<\rho 
\mbox{ in $\Omega \times  \D_{r_1,r_2}$},
\]
 the metric expressions of $g$ in a set of type (A) and of type (B) (cf.~(\ref{ginA}) and (\ref{ginB}) respectively) show  that $g$ restricted to $\Omega \times  \D_{r_1,r_2} $ in set type (B)  has   the same asymptotic behavior as $r \rightarrow 0$ as $g$ in a set of type (A).   Thus, the procedure for obtaining  energy estimates of $v$ will be analogous to that in the previous subsection.
   Note that 
   \[
   {\mathcal D}_{r_1,r_2} \cap (\Omega \times \bar {\D})) = \Omega  \times \D_{r_1,r_2} \mbox{ for $0<r_1<r_2<\frac{1}{4}$ (cf.~(\ref{nbhd12}))}.
   \]

  \begin{lemma}  \label{lemmadirvB1} 
For  a subset $\Omega \times  \D_{r_1,r_2}$ of a set  of  type (B) (with $\Omega=\D_{\frac{1}{4},1}$),  there exists a constant $C>0$ such that  the prototype section $v$ satisfies
 \begin{eqnarray*}
^PE^v[\Omega  \times \D_{r_1,r_2}]\leq \Ej\mbox{Area}_{g_{\Sigma_j}}(\Omega)  \log \frac{r_2}{r_1} + C,  \ \ \ \ 0<r_1<r_2\leq \frac{1}{4}.
 \end{eqnarray*}
  \end{lemma}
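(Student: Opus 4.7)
The plan is to proceed along the same lines as Lemma~\ref{lemmadirvA}, now using the derivative estimates in Lemma~\ref{Bestaway} in place of Lemma~\ref{Aest}, and using the type (B) volume--form identities (\ref{volumeprodB}) in place of (\ref{volumeformofP}). Since only the upper bound is required, the argument is in fact simpler than in the type (A) case: in particular, no appeal to the calculus lemma (Lemma~\ref{calculus}) will be needed. I would first split
\[
{^PE}^v[\Omega \times \D_{r_1,r_2}] = \int_{\Omega \times \D_{r_1,r_2}} P^{1\bar 1}\left|\frac{\partial v}{\partial z^1}\right|^2 d\mbox{vol}_P + \int_{\Omega \times \D_{r_1,r_2}} P^{2\bar 2}\left|\frac{\partial v}{\partial z^2}\right|^2 d\mbox{vol}_P
\]
and bound the two terms separately.

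For the first summand, using $P^{1\bar 1}d\mbox{vol}_P = \frac{dz^1\wedge d\bar z^1}{-2i}\wedge \frac{dz^2\wedge d\bar z^2}{-2i r^2(\log r^2)^2}$ and the uniform pointwise bound $|\partial v/\partial z^1|^2\leq C$ from Lemma~\ref{Bestaway}, the integral reduces to a constant multiple of
\[
\int_\Omega \frac{dz^1\wedge d\bar z^1}{-2i}\cdot \int_{r_1}^{r_2}\int_0^{2\pi}\frac{dr\, d\theta}{r(\log r^2)^2},
\]
whose radial factor is bounded above by the convergent integral $\int_0^{1/4}\frac{dr}{r(\log r^2)^2}$. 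This yields an upper bound of the form $C_1$ on the first summand, uniform in $r_1,r_2$. For the second summand, $P^{2\bar 2}d\mbox{vol}_P = \frac{dz^1\wedge d\bar z^1}{-2i\rho^2(\log\rho^2)^2}\wedge \frac{dz^2\wedge d\bar z^2}{-2i}$, so Fubini combined with the slice estimate
\[
\int_{\{z^1\}\times \D_{r_1,r_2}}\left|\frac{\partial v}{\partial z^2}\right|^2 \frac{dz^2\wedge d\bar z^2}{-2i} \leq C_2+\Ej\log\frac{r_2}{r_1}
\]
from Lemma~\ref{Bestaway} gives
\[
\int_{\Omega \times \D_{r_1,r_2}} P^{2\bar 2}\left|\frac{\partial v}{\partial z^2}\right|^2 d\mbox{vol}_P \leq C_3 + \Ej\log\frac{r_2}{r_1}\cdot \int_\Omega \frac{dz^1\wedge d\bar z^1}{-2i\rho^2(\log\rho^2)^2},
\]
with $C_3$ independent of $r_1,r_2$ (since $\int_\Omega \frac{dz^1\wedge d\bar z^1}{-2i\rho^2(\log\rho^2)^2}$ is a fixed finite number depending only on $\Omega$).

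The final step is to compare the remaining $z^1$-integral to $\mbox{Area}_{g_{\Sigma_j}}(\Omega)$. Unwinding Definition~\ref{(j)} on the slice $\{z^2=0\}\subset \Sigma_j$, where $|\sigma_i|_{h_i}=|z^1|=\rho$, the K\"ahler form of $g_{\Sigma_j}$ decomposes as $\bar\omega|_{\Sigma_j}$ plus the Poincar\'e piece $\frac{\sqrt{-1}}{2}\,\frac{dz^1\wedge d\bar z^1}{\rho^2(\log\rho^2)^2}$. Since the $\bar\omega|_{\Sigma_j}$ contribution to the volume density is smooth and non-negative on $\Omega$, integrating yields
\[
\int_\Omega \frac{dz^1\wedge d\bar z^1}{-2i\rho^2(\log\rho^2)^2} \leq \mbox{Area}_{g_{\Sigma_j}}(\Omega),
\]
and combining this with the bounds on the two summands above gives the claim. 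There is no serious obstacle; the only bookkeeping point worth flagging is precisely this identification of the Poincar\'e piece of $d\mbox{vol}_{g_{\Sigma_j}}$, which makes the ``$\Ej \mbox{Area}_{g_{\Sigma_j}}(\Omega)$'' coefficient appear.
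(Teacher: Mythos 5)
Your proof follows the same route the paper has in mind (the paper simply points back to Lemma~\ref{lemmadirvA}, swapping (\ref{volumeformofP}) for (\ref{volumeprodB}) and Lemma~\ref{Aest} for Lemma~\ref{Bestaway}): split ${^PE}^v$ into the $P^{1\bar1}$ and $P^{2\bar2}$ contributions, bound the first by $O(1)$ using the pointwise estimate on $\partial v/\partial z^1$ and the convergence of $\int_0^{1/4}dr/(r(\log r^2)^2)$, and apply Fubini with the slice estimate to the second. What you correctly zero in on — and what the paper's parenthetical ``the expressions are comparable'' glosses over — is that in a type (B) set the $z^1$-factor of $P$ is the pure annular Poincar\'e metric $P_{\Sigma_j}$, not $g_{\Sigma_j}$, so to land on the coefficient $\Ej\,\mbox{Area}_{g_{\Sigma_j}}(\Omega)$ one needs the genuine inequality $\mbox{Area}_{P_{\Sigma_j}}(\Omega)\leq\mbox{Area}_{g_{\Sigma_j}}(\Omega)$, not mere comparability (a constant-multiple bound would be fatal, as it sits in front of $\log(r_2/r_1)$). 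Your decomposition of $\omega_{g_{\Sigma_j}}$ into the Poincar\'e piece plus the positive form $\bar\omega|_{\Sigma_j}$ supplies exactly that. One small imprecision worth noting: by Definition~\ref{(j)}, $\omega_{g_{\Sigma_j}}$ also carries terms $-\frac{\sqrt{-1}}{2}\partial\bar\partial\log\log|\sigma_l|_{h_l}^{-2}|_{\Sigma_j}$ for every $l\neq j$, not just $l=i$; on $\Omega$ these extra terms are smooth and bounded (since no other component meets $V_P$) but are not a priori non-negative. The standard remedy — implicit in the paper's scaling convention in Definition~\ref{poincarekahler} — is that $\bar\omega$ can be scaled so that $\bar\omega|_{\Sigma_j}$ dominates these bounded forms, after which your inequality holds. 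With that caveat the argument is sound.
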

  
  \begin{proof}
   Follow the proof of Lemma~\ref{lemmadirvA} but by replacing (\ref{volumeformofP}) by (\ref{volumeprodB}) and Lemma~\ref{Aest} by Lemma~\ref{Bestaway}.  (Note that $\frac{1}{4}<\rho$, i.e.~$\rho$ is bounded away from 0, so the expressions in  (\ref{volumeformofP}) and (\ref{volumeprodB}) are comparable.)
  \end{proof}

\begin{lemma} \label{egrowthB1}
For  a subset $\Omega \times  \D_{r_1,r_2}$ of a set  of  type (B) (with $\Omega=\D_{\frac{1}{4},1}$),  there exists a constant $C>0$ such that  the prototype section $v$ satisfies
\begin{eqnarray*}
\left|
 ^gE^v[\Omega  \times \D_{r_1,r_2}]-  \Ej\mbox{Area}_{g_{\Sigma_j}}(\Omega)  \log \frac{r_2}{r_1} \right| \leq  C, \  \ 0<r_1<r_2 \leq \frac{1}{4}.
\end{eqnarray*}
\end{lemma}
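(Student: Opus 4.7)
The plan is to mirror the proof of Lemma~\ref{egrowthA} almost verbatim, exploiting the fact that on $\Omega \times \D_{r_1,r_2}$ with $\Omega = \D_{\frac{1}{4},1}$ the radial coordinate $\rho = |z^1|$ is uniformly bounded away from $0$. Consequently the only genuine degeneration occurs as $r = |z^2| \to 0$, and the asymptotic behavior of $g$ and $P$ in the $z^2$-direction matches what was seen in the type~(A) case. More precisely, comparing (\ref{ginB}) and (\ref{ginverseinB}) with (\ref{ginA}) and (\ref{ginverse}) under the restriction $\rho \geq \frac{1}{4}$, all coefficients involving $\rho$ become $O(1)$ quantities, so the same schematic bounds $g^{-1}-P^{-1}$ and $d\mathrm{vol}_g - d\mathrm{vol}_P$ used in the type~(A) proof apply here with at most a dimensional constant.

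First, I would invoke Lemma~\ref{lemmadirvB1} to reduce the statement to proving $\bigl|\,^gE^v[\Omega \times \D_{r_1,r_2}] - {}^PE^v[\Omega \times \D_{r_1,r_2}]\bigr| \leq C$, so the logarithmic main term cancels out. To estimate the four difference terms (one for each pair $(\alpha,\bar\beta)$ in the sum $g^{\alpha\bar\beta}|\partial v / \partial z^\alpha \cdot \partial v/\partial z^{\bar\beta}|$ or the volume-form discrepancy), I would next obtain a uniform integral bound on $\frac{1}{r^2}|\partial v/\partial\theta|^2$. The $z^1$-derivative contribution is controlled by the first estimate of Lemma~\ref{Bestaway}, which gives a uniform Lipschitz bound on $z^1 \mapsto v(z^1,z^2)$ in this region; the $r$-derivative contribution is controlled by the fiberwise estimate from Theorem~\ref{exists}\,(ii) applied to each harmonic slice $v_{z^1_0}$, which transfers to the $z^2$-radial derivative with a bound of the form $|\partial v/\partial r|^2 \leq C/(r^2(-\log r))$. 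Subtracting these two contributions from the total $P$-energy bound in Lemma~\ref{lemmadirvB1} isolates a bound on the angular part.

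At that point I apply Lemma~\ref{calculus} of the appendix (Section~\ref{appendix:calculus}) to upgrade the integral bound on $(|\partial v/\partial\theta|^2 - \Ej)$ into the weighted estimate
\[
\int_{\Omega \times \D_{\frac{1}{4}}} \left| \frac{\partial v}{\partial \theta}\right|^2 \, d\mathrm{vol}_{g_{\Sigma_j}} \wedge \frac{dr\wedge d\theta}{r(-\log r^2 + A)^2} \leq C',
\]
which is exactly the quantity that governs the discrepancy between the $\theta\theta$-terms of the two metrics. Each of the remaining pieces of $|^gE^v - {}^PE^v|$, namely those arising from $P^{\theta\theta}(d\mathrm{vol}_g - d\mathrm{vol}_P)$, $(g^{\theta\theta}-P^{\theta\theta})d\mathrm{vol}_P$, and the off-diagonal and $r$-derivative cross terms, is controlled by the same two model estimates combined with the explicit order of the error coefficients read off from (\ref{ginB}), (\ref{ginverseinB}), (\ref{closegPB}), and (\ref{volcompB}).

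The main obstacle I anticipate is purely notational: verifying that Lemma~\ref{calculus} is applicable with the same hypothesis (\ref{alb}) when the underlying base measure $d\mathrm{vol}_{g_{\Sigma_j}}$ on $\Omega = \D_{\frac{1}{4},1}$ is no longer a neighborhood in a smooth component of $\Sigma$ but rather an annulus inside a disk containing the crossing. Since the assertion only requires $\Omega$ to carry a fixed smooth Riemannian measure (and the restricted metric $g_{\Sigma_j}$ on $\D_{\frac{1}{4},1} \subset \Sigma_j$ is smooth away from the crossing at $0$), the same lemma applies, and the remainder of the proof proceeds exactly as in Lemma~\ref{egrowthA}. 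I would therefore omit the detailed term-by-term calculation and refer the reader to that earlier proof, writing only the adjustments needed to pass from (\ref{volumeformofP}) to (\ref{volumeprodB}) and from Lemma~\ref{Aest} to Lemma~\ref{Bestaway}.
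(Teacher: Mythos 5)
Your proposal is correct and follows essentially the same route as the paper: the paper's proof is the one-liner ``Follow the proof of Lemma~\ref{egrowthA} using Lemma~\ref{lemmadirvB1} instead of Lemma~\ref{lemmadirvA},'' and what you have written is simply a faithful unpacking of that instruction (replacing (\ref{volumeformofP}) by (\ref{volumeprodB}), Lemma~\ref{Aest} by Lemma~\ref{Bestaway}, and reusing the appendix Lemma~\ref{calculus}). One small point worth making explicit if you write this up: as printed, Lemma~\ref{lemmadirvB1} only records the \emph{upper} bound on $^{P}E^{v}$, whereas the two-sided estimate of the present lemma also needs the matching lower bound; this lower bound comes directly from Lemma~\ref{lowerbd} and the definition of $\Ej$, exactly as in the last displayed inequality of the proof of Lemma~\ref{lemmadirvA}, and should be noted so the reduction to $\bigl|\,^{g}E^{v}-{}^{P}E^{v}\bigr|\le C$ is complete.
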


\begin{proof}
Follow the proof of Lemma~\ref{egrowthA} using Lemma~\ref{lemmadirvB1} instead of Lemma~\ref{lemmadirvA}.
\end{proof}

    \begin{lemma} \label{comparisonB1}
For  a subset $\Omega \times  \D_{r_1,r_2}$ of a set  of  type (B) (with $\Omega=\D_{\frac{1}{4},1}$),  there exists a constant $C>0$ such that  the prototype section $v$ satisfies
\[
^gE^v[\Omega  \times \D_{r_1,r_2}]
\leq {^gE^f}[\Omega  \times \D_{r_1,r_2}]+C, \ \ 0<r_1<r_2 \leq \frac{1}{4}
\]
for  any locally Lipschitz section $f:\Omega  \times \D_{r_1,r_2} \rightarrow \tilde M \times_{\rho} \tilde X $.
\end{lemma}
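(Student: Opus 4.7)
The plan is to mirror the proof of Lemma~\ref{comparisonA} almost verbatim, exploiting the crucial observation that on $\Omega \times \D_{r_1,r_2}$ with $\Omega = \D_{\frac14,1}$ the radial coordinate $\rho = |z^1|$ is uniformly bounded away from $0$. Consequently the metric $g$ restricted to this set has the same asymptotic behavior as $r \to 0$ as a metric on a set of type (A): compare the local expressions (\ref{ginA}) and (\ref{ginB}) and note that when $\rho \geq \frac14$, the diagonal $\rho^{-2}(\log \rho^2)^{-2}$ factor is a bounded smooth function of $z^1$, so it plays the role of the smooth factor $\lambda$ from Definition~\ref{(j)}. With this identification the error terms in (\ref{closegPB}) and (\ref{volcompB}) reduce to the type (A) error terms (\ref{closegP}) and (\ref{volcomp}).

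The first step is to establish the corresponding inequality at the level of the product metric $P$. Using the defining property of $\Ej$ applied slice-wise and following exactly the calculation (\ref{piggy}), I would obtain
\[
\Ej\,\mathrm{Area}_{g_{\Sigma_j}}(\Omega)\,\log\frac{r_2}{r_1} \;\leq\; {}^PE^f[\Omega \times \D_{r_1,r_2}],
\]
which combined with the upper bound of Lemma~\ref{lemmadirvB1} yields
\[
{}^PE^v[\Omega \times \D_{r_1,r_2}] \;\leq\; {}^PE^f[\Omega \times \D_{r_1,r_2}] + C.
\]
The second step is the passage from $P$-energy to $g$-energy, where it suffices to show
$|{}^gE^f - {}^PE^f| \leq C$. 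If the quantity $\int_{\Omega \times \D_{\frac14}}(|\partial_\theta f|^2 - \Ej)\,d\mathrm{vol}_{g_{\Sigma_j}} \wedge \frac{dr \wedge d\theta}{r}$ is not finite, then the desired inequality is already implied by Lemma~\ref{egrowthB1}, so there is nothing to prove. Otherwise, Lemma~\ref{calculus} from the Appendix applies and gives
\[
\int_{\Omega \times \D_{\frac14}} |\partial_\theta f|^2\, d\mathrm{vol}_{g_{\Sigma_j}} \wedge \frac{dr \wedge d\theta}{r(\log r^2 + A)^2} \;\leq\; C'.
\]
With this integrability in hand, I would run exactly the two term-by-term comparisons $\big|\int P^{\theta\theta}|\partial_\theta f|^2 (d\mathrm{vol}_g - d\mathrm{vol}_P)\big|$ and $\big|\int (g^{\theta\theta} - P^{\theta\theta})|\partial_\theta f|^2 \, d\mathrm{vol}_P\big|$ displayed in the proof of Lemma~\ref{egrowthA}, using (\ref{closegPB}) and (\ref{volcompB}) in place of (\ref{closegP}) and (\ref{volcomp}). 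All remaining cross-terms involving $|\partial_{z^1} f|^2$ are bounded identically.

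The main point to verify carefully, which is the only place the type (B) character of the set matters at all, is that the $\rho$-dependent factors appearing in (\ref{ginverseinB}) and (\ref{volumeprodB}) do not degrade the error estimates; since $\rho \geq \frac14$ on $\Omega$, every occurrence of $\rho^2(\log\rho^2)^2$ is uniformly bounded above and below, so the bookkeeping reduces to the type (A) case. I do not anticipate any genuine obstacle beyond this routine verification, as both the competitor inequality at the product-metric level and the $P$-to-$g$ comparison have direct analogues already proven in Lemma~\ref{comparisonA} and Lemma~\ref{egrowthA}.
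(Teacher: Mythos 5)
Your proposal is correct and follows essentially the same route as the paper, which simply says to repeat the proof of Lemma~\ref{comparisonA} with Lemma~\ref{egrowthB1} (and implicitly Lemma~\ref{lemmadirvB1}) in place of their type-(A) counterparts. Your observation that $\rho \geq \frac14$ keeps all $\rho$-dependent factors uniformly bounded is exactly the reason the reduction to the type-(A) computations works, and it is the same point the paper relies on.
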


\begin{proof}
Follow the proof of Lemma~\ref{comparisonA} using Lemma~\ref{egrowthB1} instead of Lemma~\ref{egrowthA}
\end{proof}

\subsection{Energy in a set of Type $\mbox{(B)}$ at the  crossing}  
In this subsection we will use the following notation in  addition the one used in Subsection~\ref{NJ}.

\begin{itemize}

\item $\D_{r_1,r_2} \times \D_{r_1,r_2}$ is the subset of $\bar \D \times \bar \D$ with $0<r_1<|z^k| <r_2 <\frac{1}{4}$ for $k=1,2$.

\item
$g_{\Sigma_j}$, $g_{\Sigma_i}$ are as in Definition~\ref{(j)}

\item  $\mbox{Area}_{g_{\Sigma_j}}$ and $\mbox{Area}_{g_{\Sigma_i}}$ are the areas with respect to   $g_{\Sigma_i}$

\item $P$ is the product metric on $\bar \D \times \bar \D$ defined by (\ref{productmetricB}).

\end{itemize}

The goal is to  estimate the energy of $v$  in the set 
\[
U_{r_1,r_2} = (\D_{r_1,\frac{1}{4}}  \times \D_{r_1,\frac{1}{4}}) \backslash (\D_{r_2,\frac{1}{4}}  \times \D_{r_2, \frac{1}{4}}),
\]
 pictured in Figure~\ref{fig:Ur1r2}.
\begin{figure}[h!]
  \includegraphics[width=2in]{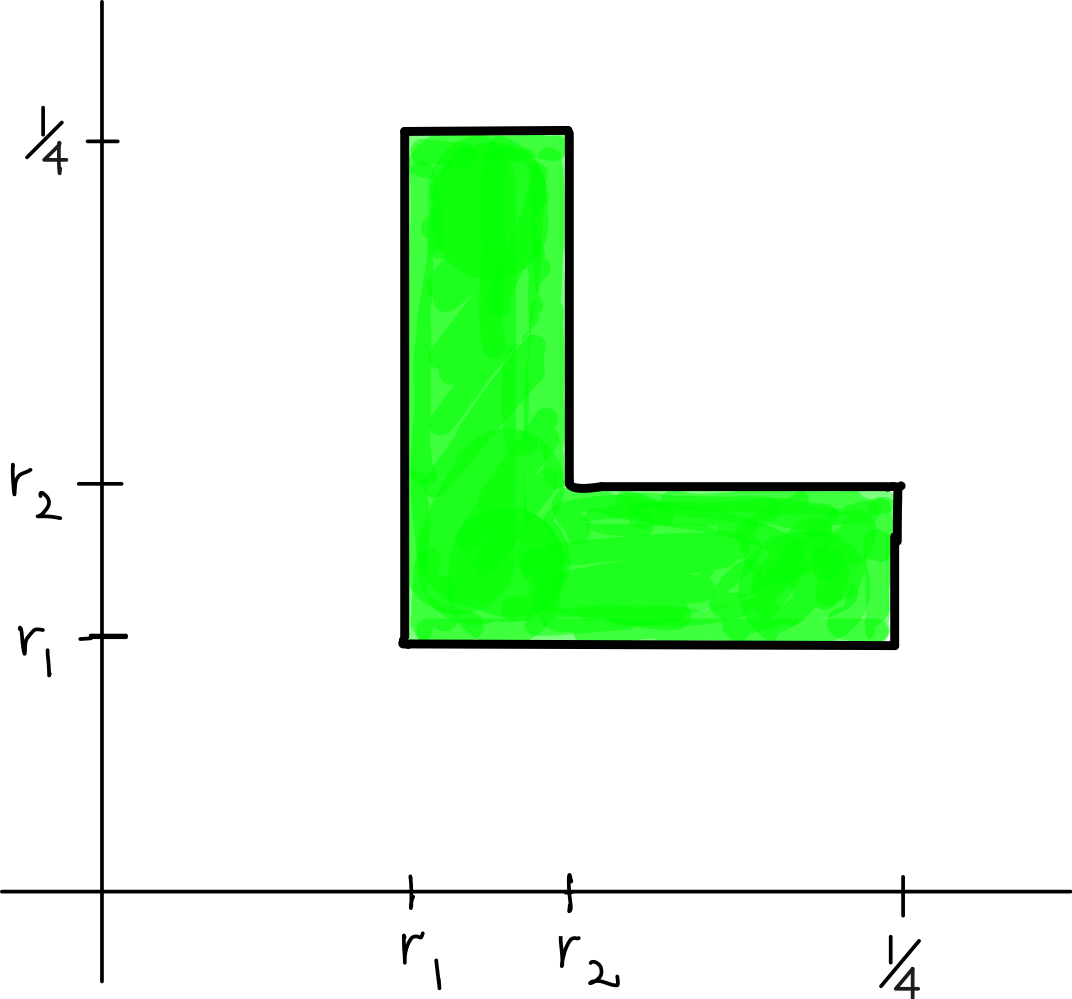}
  \caption{The region  $U_{r_1,r_2}$.}
  \label{fig:Ur1r2}
\end{figure}
The procedure for doing so involves an extra step compared to the procedure in  the previous two subsections.  Namely, we will first derive an expression for the energy with respect to the product metric $P$ in the box  $\D_{r,\frac{1}{4}}  \times \D_{r,\frac{1}{4}}$ pictured in Figure~\ref{fig:Boxr1r2}  (cf. Lemma~\ref{lemmadirvB2} below).
\begin{figure}[h!]
  \includegraphics[width=2in]{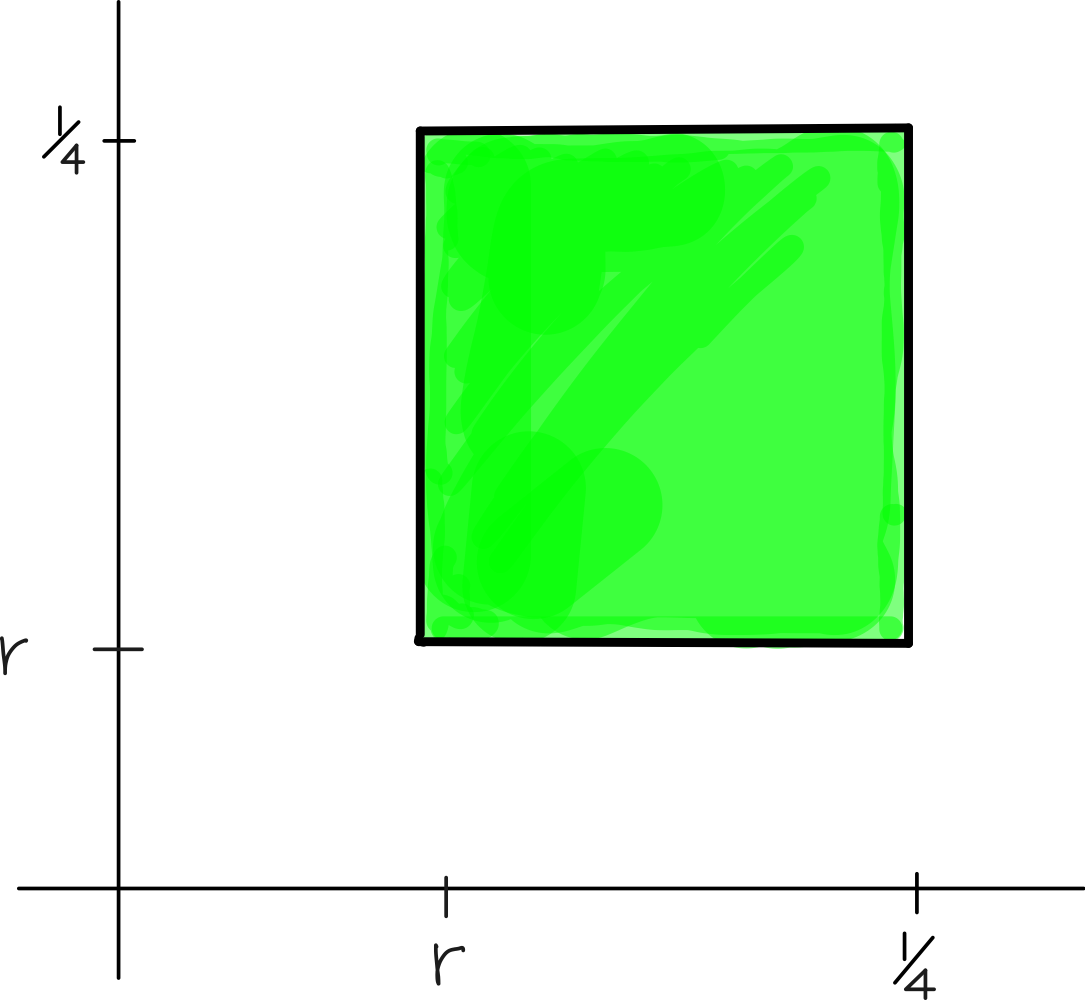}
  \caption{The region  $\D_{r,\frac{1}{4}}  \times \D_{r,\frac{1}{4}}$.}
  \label{fig:Boxr1r2}
\end{figure}
Then we take the difference of the energy with respect to $P$ contained in $\D_{r_1,\frac{1}{4}}  \times \D_{r_1,\frac{1}{4}}$ and in  $\D_{r_2,\frac{1}{4}}  \times \D_{r_2, \frac{1}{4}}$ to   bound the energy   in $U_{r_1,r_2}$  (cf.~Lemma~\ref{arealemma} below).  
Finally, since the difference between the metric $g$ and $P$ is small,  we obtain a  bound for the energy with respect to $g$ in $U_{r_1,r_2}$ (cf.~Lemma~\ref{egrowthB2} below).

\begin{definition}
We define $P_{\Sigma_i}$, $P_{\Sigma_j}$ to be the restriction of $P$ (cf.~(\ref{productmetricB})) to $\Sigma_i$, $\Sigma_j$ respectively.  
\end{definition}

\begin{remark} \label{areasymmetry}
Note that $\mbox{Area}_{P_{\Sigma_i}}(\D_{r,r'})=\mbox{Area}_{P_{\Sigma_j}}(\D_{r,r'})$ for $0<r<r' \leq \frac{1}{4}$ by the symmetry of $P$.
\end{remark}

\begin{lemma} \label{lemmadirvB2}
In a subset $\bar\D_{\frac{1}{4}} \times \bar\D_{\frac{1}{4}}$ of a set  $\D^* \times \D^*$ of type (B),   the prototype section $v$  satisfies
  \[
 \left|  ^PE^v[\D_{r,\frac{1}{4}}  \times \D_{r,\frac{1}{4}}] - \left(  \Ej\mbox{Area}_{P_{\Sigma_j}}(\D_{r,\frac{1}{4}})  \log  {\frac{1}{4r}} +\Ei\mbox{Area}_{P_{\Sigma_i}}(\D_{r,\frac{1}{4}})  \log  {\frac{1}{4r}} \right) \right| \leq C
  \]
for $0<r<\frac{1}{4}$.
\end{lemma}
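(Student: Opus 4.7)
The plan is to decompose $^PE^v[\D_{r,\frac{1}{4}} \times \D_{r,\frac{1}{4}}]$ into its two diagonal pieces using the fact that $P^{1\bar 2} = 0$ by (\ref{productmetricB}), and then to evaluate each piece by Fubini, since the product form of $d\mbox{vol}_P$ given in (\ref{volumeprodB}) factors into a slice-measure in one variable and an area form for $P_{\Sigma_j}$ or $P_{\Sigma_i}$ in the other. Concretely, I would write
\[
P^{2\bar 2}\, d\mbox{vol}_P = \frac{dz^1 \wedge d\bar z^1}{-2i \rho^2 (\log \rho^2)^2} \wedge \frac{dz^2 \wedge d\bar z^2}{-2i}, \qquad P^{1\bar 1}\, d\mbox{vol}_P = \frac{dz^1 \wedge d\bar z^1}{-2i} \wedge \frac{dz^2 \wedge d\bar z^2}{-2i r^2 (\log r^2)^2},
\]
so that integrating $P^{2\bar 2}|\partial v/\partial z^2|^2$ over $\D_{r,\frac{1}{4}} \times \D_{r,\frac{1}{4}}$ equals the integral over $z^1 \in \D_{r,\frac{1}{4}}$ (with respect to $d\mbox{vol}_{P_{\Sigma_j}}$) of the slice integral $\int_{\{z^1\} \times \D_{r,\frac{1}{4}}} |\partial v/\partial z^2|^2 \, \frac{dz^2 \wedge d\bar z^2}{-2i}$, and symmetrically for the $z^1$-derivative.

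For the upper bound, the slice integrals are controlled directly by Lemma~\ref{Bestnear}: the $z^2$-slice integral is at most $C + \Ej \log \frac{1}{4r}$ uniformly in $z^1 \in \D^*_{\frac{1}{4}}$, and likewise the $z^1$-slice integral is at most $C + \Ei \log \frac{1}{4r}$. Multiplying by the corresponding $P_{\Sigma_j}$ or $P_{\Sigma_i}$ area of $\D_{r,\frac{1}{4}}$ (both of which stay bounded by a fixed constant independent of $r$, as is immediately checked from (\ref{productmetricB})) gives the required upper bound within an additive constant. For the lower bound I would use the corresponding lower inequalities in Lemma~\ref{Bestnear}, which originate from Lemma~\ref{lowerbd}: each slice integral is at least $\Ej \log \frac{1}{4r}$ respectively $\Ei \log \frac{1}{4r}$, and integrating these exact lower bounds against the product area form produces exactly $\Ej \mbox{Area}_{P_{\Sigma_j}}(\D_{r,\frac{1}{4}})\log \frac{1}{4r} + \Ei \mbox{Area}_{P_{\Sigma_i}}(\D_{r,\frac{1}{4}})\log \frac{1}{4r}$ without any additive loss.

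The main subtlety, and the step I would look at most carefully, is the fact that when $|z^1_0| < r_0$ the slice $\{z^1_0\} \times \D_{r,r_0}$ straddles the diagonal set $D$ where the prototype section switches from its $Z_1$-branch to its $Z_2$-branch, so that on part of the slice one uses the fiber-wise harmonic piece $v_1$ while on the other part one uses the explicit interpolation $v_2 \circ \varphi_2^{-1}$. This crossing is exactly what makes the upper slice estimate in Lemma~\ref{Bestnear} non-obvious; however, that lemma has already absorbed this difficulty via the splitting of the integral at $|z^1_0|$ and the pointwise estimate (\ref{Z2estimate}). Consequently, the only genuine work left here is the Fubini bookkeeping and verifying that the integration limits on $z^1$ (which also run from $r$ to $\frac{1}{4}$) do not reintroduce divergences — and they do not, since the $P_{\Sigma_j}$-area of any punctured disk $\D_{r,\frac{1}{4}}$ in the Poincar\'e-type metric remains uniformly bounded. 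Combining the two sides then yields the claimed estimate with a single constant $C$ independent of $r \in (0,\frac{1}{4})$.
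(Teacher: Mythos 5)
Your proposal is correct and follows essentially the same route as the paper: decompose the energy into its two diagonal pieces via the product structure of $P$, factor the volume form as in (\ref{volumeprodB}), apply the slice estimates of Lemma~\ref{Bestnear} in each fiber, and integrate against the $P_{\Sigma_i}$ and $P_{\Sigma_j}$ area measures, using the uniform boundedness of these areas to absorb the slice constant. Your remark about the diagonal crossing being already absorbed into Lemma~\ref{Bestnear} correctly identifies where the genuine content of the estimate lives.
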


\begin{proof}
 By Lemma~\ref{Bestnear},
\begin{eqnarray*}
\Ei\log \frac{r_0}{r} \leq \int_{ \D_{r,r_0} \times \{z^2_0\} } \left| \frac{\partial v}{\partial z^1} \right|^2   \frac{dz^1 \wedge d\bar{z}^1}{-2i} & \leq &  C+\Ei\log \frac{r_0}{r}
\end{eqnarray*}
for $z^2_0 \in \D^*_{\frac{1}{4}}$ and $0<r<r_0$.   By (\ref{volumeprodB}),
\begin{eqnarray*}
 \int_{ \D_{r,\frac{1}{4}} \times \D_{r,\frac{1}{4}}}  
P^{1\bar 1} \left|\frac{\partial v}{\partial z^1} \right|^2  d\mbox{vol}_P 
& = &   \int_{ \D_{r,\frac{1}{4}}}   \left(  \int_{ \D_{r,\frac{1}{4}}}  
 \left|\frac{\partial v}{\partial z^1} \right|^2 \frac{dz^1\wedge d\bar{z}^1}{-2i } 
 \right) \frac{dz^2\wedge d\bar{z}^2}{-2i r^2(\log r^2)^2}.
 \end{eqnarray*}
Combining the above, we obtain
 \begin{eqnarray*}
\Ei \mbox{Area}_{P_{\Sigma_i}}(\D_{r,\frac{1}{4}})\log  {\frac{1}{4r}} \leq  \int_{ \D_{r,\frac{1}{4}} \times \D_{r,\frac{1}{4}}}  
P^{1\bar 1} \left|\frac{\partial v}{\partial z^1} \right|^2  d\mbox{vol}_P    \leq   \Ei \mbox{Area}_{P_{\Sigma_i}}(\D_{r,\frac{1}{4}})\log  {\frac{1}{4r}} +C
\end{eqnarray*}
and similarly
\[
\Ej \mbox{Area}_{P_{\Sigma_j}}(\D_{r,\frac{1}{4}})\log  {\frac{1}{4r}} \leq  \int_{ \D_{r,\frac{1}{4}} \times \D_{r,\frac{1}{4}}}  
P^{2\bar 2} \left| \frac{\partial v}{\partial z^2} \right|^2 d\mbox{vol}_P 
\leq \Ej\mbox{Area}_{P_{\Sigma_i}}(\D_{r,\frac{1}{4}}) \log  {\frac{1}{4r}}+C.
\]
\end{proof}

\begin{lemma} \label{arealemma}
In a subset $\bar\D_{\frac{1}{4}} \times \bar\D_{\frac{1}{4}}$ of a set  of type (B), there exists a constant $C>0$ such that  in any 
 subset (cf.~(\ref{nbhd12}))
\[
U_{r_1,r_2} ={\mathcal D}_{r_1,r_2} \cap (\bar\D_{\frac{1}{4}} \times \bar\D_{\frac{1}{4}}),   \ \ \ 0<r_1<r_2\leq\frac{1}{4},
\]
the prototype section $v$ satisfies
\begin{eqnarray*}
0 \leq {^PE^v[}U_{r_1,r_2}]
-\Ej  \mbox{Area}_{P_{\Sigma_j}} (\D^*_{\frac{1}{4}}) \log \frac{r_2}{r_1} - \Ei  \mbox{Area}_{P_{\Sigma_i}}(\D^*_{\frac{1}{4}}) \log \frac{r_2}{r_1}  \leq C.
\end{eqnarray*}
\end{lemma}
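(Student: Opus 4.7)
The plan is to exploit the additivity identity
\[
{^PE}^v[\D_{r_1,\tfrac{1}{4}}\times\D_{r_1,\tfrac{1}{4}}] \;=\; {^PE}^v[\D_{r_2,\tfrac{1}{4}}\times\D_{r_2,\tfrac{1}{4}}] \;+\; {^PE}^v[U_{r_1,r_2}],
\]
which holds because the two product boxes together with $U_{r_1,r_2}$ partition $\D_{r_1,\tfrac{1}{4}}\times\D_{r_1,\tfrac{1}{4}}$ up to a set of measure zero. This reduces the estimate to controlling each of the two ``filled box'' energies by Lemma~\ref{lemmadirvB2}, and then converting the areas $\mbox{Area}_{P_{\Sigma_k}}(\D_{r,\tfrac{1}{4}})$ that appear there into the full-disk areas $\mbox{Area}_{P_{\Sigma_k}}(\D^*_{\tfrac{1}{4}})$ that appear in the statement.

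First, I would apply Lemma~\ref{lemmadirvB2} at $r=r_1$ and at $r=r_2$. Combining the constant-free lower bound at $r_1$ (which the proof of Lemma~\ref{lemmadirvB2} produces by slice-by-slice application of Lemma~\ref{Bestnear}) with the upper bound at $r_2$ gives a lower bound for ${^PE}^v[U_{r_1,r_2}]$ of the form
\[
\sum_{k\in\{i,j\}} \mathsf{E}_k \bigl[\mbox{Area}_{P_{\Sigma_k}}(\D_{r_1,\tfrac{1}{4}})\log\tfrac{1}{4r_1} - \mbox{Area}_{P_{\Sigma_k}}(\D_{r_2,\tfrac{1}{4}})\log\tfrac{1}{4r_2}\bigr] - C,
\]
and the reverse pairing bounds ${^PE}^v[U_{r_1,r_2}]$ from above by the same bracketed quantity plus $C$. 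Next, an explicit integration of the Poincar\'e area element $\frac{dz\wedge d\bar z}{-2i\,|z|^2(\log|z|^2)^2}$ on $\{|z|\le r\}$ yields
\[
\mbox{Area}_{P_{\Sigma_k}}(\D_{r,\tfrac{1}{4}}) \;=\; \mbox{Area}_{P_{\Sigma_k}}(\D^*_{\tfrac{1}{4}}) + \frac{\pi}{2\log r},
\]
so that
\[
\mbox{Area}_{P_{\Sigma_k}}(\D_{r,\tfrac{1}{4}})\log\tfrac{1}{4r} \;=\; \mbox{Area}_{P_{\Sigma_k}}(\D^*_{\tfrac{1}{4}})\log\tfrac{1}{4r} + \frac{\pi\log(1/(4r))}{2\log r}.
\]
Because $\log(1/(4r))/\log r = -1-\log 4/\log r$ is uniformly bounded on $(0,\tfrac{1}{4}]$, the residual term on the right is bounded independently of $r$. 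Subtracting the identity for $r_1$ and $r_2$ telescopes the main term into $\mbox{Area}_{P_{\Sigma_k}}(\D^*_{\tfrac{1}{4}})\log(r_2/r_1)$ and leaves a bounded residue, which can be absorbed into the additive constant.

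The hard part is tightening the lower bound from $-C$ to exactly $0$ as stated. This requires using the constant-free lower half of Lemma~\ref{lemmadirvB2} in full force, and then arguing that the bounded residue from the area computation enters on the correct side: the contribution of $\eta_k(r_1,r_2):=\tfrac{\pi\log(1/(4r_1))}{2\log r_1}-\tfrac{\pi\log(1/(4r_2))}{2\log r_2}$ must be non-negative to yield the sharp lower bound, which is the point where one has to either refine the slice-by-slice estimate of Lemma~\ref{Bestnear} (applied to the L-shaped region $U_{r_1,r_2}$ by decomposing it as $\{|z^1|\le r_2\}\cup\{|z^2|\le r_2\}$ and integrating in the direction complementary to the smaller coordinate) or absorb any unfavorable sign into the constant on the upper side; the upper bound is in all cases unproblematic.
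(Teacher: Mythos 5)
Your approach coincides with the paper's: decompose ${^PE}^v[U_{r_1,r_2}]$ as the difference of the two filled-box energies, apply Lemma~\ref{lemmadirvB2} at scales $r_1$ and $r_2$, and convert $\mbox{Area}_{P_{\Sigma_k}}(\D_{r,\frac{1}{4}})$ into $\mbox{Area}_{P_{\Sigma_k}}(\D^*_{\frac{1}{4}})$ at the cost of a bounded residue. The paper performs the conversion with the telescoping identity
\[
\mbox{Area}_{P_{\Sigma_j}}(\D_{r_1,\tfrac{1}{4}})\log\tfrac{1}{4r_1}-\mbox{Area}_{P_{\Sigma_j}}(\D_{r_2,\tfrac{1}{4}})\log\tfrac{1}{4r_2}=\mbox{Area}_{P_{\Sigma_j}}(\D_{r_1,\tfrac{1}{4}})\log\tfrac{r_2}{r_1}+\mbox{Area}_{P_{\Sigma_j}}(\D_{r_1,r_2})\log\tfrac{1}{4r_2},
\]
bounds the second summand by $1$ by explicit integration, and uses $\mbox{Area}_{P_{\Sigma_j}}(\D_{r_1,\frac{1}{4}})\le\mbox{Area}_{P_{\Sigma_j}}(\D^*_{\frac{1}{4}})$; your direct integration of the Poincar\'e area element is equivalent bookkeeping. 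Both routes give the upper inequality cleanly.

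Your scrutiny of the lower bound is accurate and sharper than the paper's treatment. Simplifying your residual, $\eta_k(r_1,r_2)=\tfrac{\pi\log 4}{2}\bigl(\tfrac{1}{\log r_2}-\tfrac{1}{\log r_1}\bigr)$, which is \emph{strictly negative} since $\log r_1<\log r_2<0$; so the box decomposition paired with the constant-free lower half of Lemma~\ref{lemmadirvB2} at $r_1$ and the upper half at $r_2$ produces the lower bound $-C$, not $0$. The shortfall is not an artifact of this pairing: applying the constant-free slice bounds of Lemma~\ref{Bestnear} directly to either rectangular decomposition of the L-shaped region $U_{r_1,r_2}$ yields the same deficit, tending to $\tfrac{\pi\log 4}{2(-\log r_2)}$ as $r_1\to 0$. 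In fact, the paper's own proof never establishes the left-hand $0\le$; it proves only the upper inequality and then concludes. This imprecision is harmless, because the only downstream uses (Lemmas~\ref{egrowthB2} and~\ref{comparisonB2}) go through $|\cdots|\le C$, for which a two-sided bound with $-C$ on the left is exactly what is needed. So the correct resolution is the one you mention last: absorb the deficit into the additive constant; no refinement of Lemma~\ref{Bestnear} can make it vanish, since it arises purely from the Poincar\'e mass of $\D^*_{r_1}$ that sits in $\D^*_{\frac{1}{4}}$ but outside $\D_{r_1,\frac{1}{4}}$.
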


\begin{proof}
By a straightforward computation,
\begin{eqnarray} \label{itoarea1}
\lefteqn{ 
 \mbox{Area}_{P_{\Sigma_j}} (\D_{r_1,\frac{1}{4}}) \log \frac{1}{4r_1}
-
  \mbox{Area}_{P_{\Sigma_j}} (\D_{r_2,\frac{1}{4}}) \log \frac{1}{4r_2}} 
  \nonumber \\
& = &   \mbox{Area}_{P_{\Sigma_j}} (\D_{r_1,\frac{1}{4}}) \log \frac{1}{4r_1}
-
  \mbox{Area}_{P_{\Sigma_j}} (\D_{r_1,\frac{1}{4}}) \log \frac{1}{4r_2}
    \nonumber \\
 & & +  \mbox{Area}_{P_{\Sigma_j}} (\D_{r_1,\frac{1}{4}}) \log \frac{1}{4r_2}
-  \mbox{Area}_{P_{\Sigma_j}} (\D_{r_2,\frac{1}{4}}) \log \frac{1}{4r_2}
 \nonumber  \\
& = &  \mbox{Area}_{P_{\Sigma_j}} (\D_{r_1,\frac{1}{4}})\log \frac{r_2}{r_1} 
 +  \left( \mbox{Area}_{P_{\Sigma_j}} (\D_{r_1,r_2})
 \right)
 \log \frac{1}{4r_2}.
   \nonumber
\end{eqnarray}
The second term is bounded by
\begin{eqnarray*}
0 \leq 
 \left( \mbox{Area}_{P_{\Sigma_j}} (\D_{r_1,r_2})
 \right)
 \log \frac{1}{4r_2}
 & = & \left( \int_{r_1}^{r_2} \frac{dr}{r(\log r^2)^2}\right) \log \frac{1}{4r_2} 
\\
 &
= &  \left( \frac{1}{\log r_2} - \frac{1}{\log r_1} \right) \log 4r_2 \leq 1. 
\end{eqnarray*}
Thus, combining the above equality and the inequality and then multiplying by $\Ej $, we obtain 
\begin{eqnarray*}
\lefteqn{
\Ej \mbox{Area}_{P_{\Sigma_j}} (\D_{r_1,\frac{1}{4}}) \log \frac{1}{4r_1}
-
\Ej  \mbox{Area}_{P_{\Sigma_j}} (\D_{r_2,\frac{1}{4}}) \log \frac{1}{4r_2} 
}
\\
& \leq  & \Ej  \left( \mbox{Area}_{P_{\Sigma_j}} (\D^*_{\frac{1}{4}}) \log \frac{r_2}{r_1}+1 \right).  \hspace{1.5in}
\end{eqnarray*}
Similarly,
\begin{eqnarray*}
\lefteqn{
\Ei \mbox{Area}_{P_{\Sigma_i}} (\D_{r_1,\frac{1}{4}}) \log \frac{1}{4r_1}
-
\Ei  \mbox{Area}_{P_{\Sigma_i}} (\D_{r_2,\frac{1}{4}}) \log \frac{1}{4r_2} 
}
\\
& \leq  & \Ei  \left( \mbox{Area}_{P_{\Sigma_i}} (\D^*_{\frac{1}{4}}) \log \frac{r_2}{r_1}  +1 \right). \hspace{1.5in}
\end{eqnarray*}
Furthermore,  Lemma~\ref{lemmadirvB2} implies 
\begin{eqnarray*}
^PE^v[U_{r_1,r_2}] & = & {^PE}^v[\D_{r_1,\frac{1}{4}} \times \D_{r_1,\frac{1}{4}}]- {^PE}^v[\D_{r_2,\frac{1}{4}} \times \D_{r_2,\frac{1}{4}}]
\\
& \leq  & \Ej\mbox{Area}_{P_{\Sigma_j}}(\D_{r_1,\frac{1}{4}})  \log \frac{1}{4r_1} +\Ei\mbox{Area}_{P_{\Sigma_j}}(\D_{r_1,\frac{1}{4}})  \log \frac{1}{4r_1}\\
& & \ - \Ej\mbox{Area}_{P_{\Sigma_i}}(\D_{r_2,\frac{1}{4}})  \log \frac{1}{4r_2} -\Ei\mbox{Area}_{P_{\Sigma_i}}(\D_{r_2,\frac{1}{4}})  \log \frac{1}{4r_2}+C.
\end{eqnarray*}
Thus, the desired estimate follows from the the fact that $\mbox{Area}_{P_{\Sigma_i}}(\D_{r,\frac{1}{4}})=\mbox{Area}_{P_{\Sigma_j}}(\D_{r,\frac{1}{4}})$ (cf.~Remark~\ref{areasymmetry}).  
 \end{proof}

\begin{lemma} \label{egrowthB2}
In a subset $\bar\D_{\frac{1}{4}} \times \bar\D_{\frac{1}{4}}$ of a set  $\bar \D^* \times \bar \D^*$ of type (B), there exists a constant $C>0$ such that  in any 
 subset (cf.~(\ref{nbhd12}))
\[
U_{r_1,r_2} ={\mathcal D}_{r_1,r_2} \cap (\bar\D_{\frac{1}{4}} \times \bar\D_{\frac{1}{4}}),   \ \ \ 0<r_1<r_2\leq\frac{1}{4},  \
\]
the prototype section $v$ satisfies\begin{eqnarray*}
\left| ^gE^v[U_{r_1,r_2}]
-\Ej  \mbox{Area}_{g_{\Sigma_j}} (\D^*_{\frac{1}{4}}) \log \frac{r_2}{r_1} - \Ei  \mbox{Area}_{g_{\Sigma_i}} (\D^*_{\frac{1}{4}}) \log \frac{r_2}{r_1} \right| \leq C.
\end{eqnarray*}
\end{lemma}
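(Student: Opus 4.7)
My strategy is to mirror the proof of Lemma~\ref{egrowthA}, where the passage from $^PE^v$ to $^gE^v$ is achieved via explicit metric comparison. The only new feature here, compared with the away-from-crossing case, is that $\mbox{Area}_{g_{\Sigma_l}}$ and $\mbox{Area}_{P_{\Sigma_l}}$ disagree on the crossing boxes: the smooth $\bar\omega$-contribution in (\ref{donaldsonmetric}), restricted to $\Sigma_l$, produces an $O(1)$ discrepancy between these two areas. Consequently $|{^gE^v[U_{r_1,r_2}]} - {^PE^v[U_{r_1,r_2}]}|$ will not be uniformly bounded but will carry a compensating $\log(r_2/r_1)$-growth whose coefficient is exactly $\Ej[\mbox{Area}_{g_{\Sigma_j}}(\D_{\frac{1}{4}}^*) - \mbox{Area}_{P_{\Sigma_j}}(\D_{\frac{1}{4}}^*)] + \Ei[\mbox{Area}_{g_{\Sigma_i}}(\D_{\frac{1}{4}}^*) - \mbox{Area}_{P_{\Sigma_i}}(\D_{\frac{1}{4}}^*)]$. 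Combined with Lemma~\ref{arealemma}, this is precisely what must be shown.

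First I would prove the $g$-analogue of Lemma~\ref{lemmadirvB2}: for $r \in (0,\tfrac{1}{4}]$,
\[
\left|{^gE^v[\bar\D_{r,\frac{1}{4}}\times \bar\D_{r,\frac{1}{4}}]} - \bigl(\Ej\mbox{Area}_{g_{\Sigma_j}}(\bar\D_{r,\frac{1}{4}}) + \Ei\mbox{Area}_{g_{\Sigma_i}}(\bar\D_{r,\frac{1}{4}})\bigr)\log\tfrac{1}{4r}\right| \leq C,
\]
by repeating that proof with $g$ in place of $P$. Each diagonal term $\int g^{k\bar k}|\partial v/\partial z^k|^2\, d\mbox{vol}_g$ is handled by Fubini via (\ref{ginverseinB}) and (\ref{volcompB}): the Poincar\'e main contribution reproduces the $P_{\Sigma_l}$-area factor of Lemma~\ref{lemmadirvB2}, while the smooth bounded correction (coming from $\bar\omega|_{\Sigma_l}$) integrates against the angular energy $|\partial v/\partial\theta|^2$, which has asymptotic value $\Ej/(2\pi)$ by Theorem~\ref{exists}(ii) applied along each fiber disk. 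This extra contribution exactly promotes the $P_{\Sigma_l}$-area to the $g_{\Sigma_l}$-area up to an $O(1)$ error. The cross terms $g^{1\bar 2}, g^{2\bar 1}$ are doubly small near the crossing, of order $\rho^2(\log\rho^2)^2 \cdot r^2(\log r^2)^2$ by (\ref{ginverseinB}), and hence contribute $O(1)$.

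With the $g$-version of Lemma~\ref{lemmadirvB2} in hand, the algebraic rearrangement of Lemma~\ref{arealemma} applies verbatim with $g$ everywhere replacing $P$: decomposing ${^gE^v[U_{r_1,r_2}]} = {^gE^v[\bar\D_{r_1,\frac{1}{4}}\times \bar\D_{r_1,\frac{1}{4}}]} - {^gE^v[\bar\D_{r_2,\frac{1}{4}}\times \bar\D_{r_2,\frac{1}{4}}]}$ and applying the identity used there, the residual term $\mbox{Area}_{g_{\Sigma_l}}(\D_{r_1,r_2})\log(1/4r_2)$ is bounded because $g_{\Sigma_l}$ is Poincar\'e-like plus a smooth correction, so $\mbox{Area}_{g_{\Sigma_l}}(\D_{r_1,r_2}) = O(1/|\log r_1|)$; finally, $\mbox{Area}_{g_{\Sigma_l}}(\bar\D_{r_1,\frac{1}{4}})$ differs from $\mbox{Area}_{g_{\Sigma_l}}(\D_{\frac{1}{4}}^*)$ by $O(1/|\log r_1|)$, which when multiplied by $\log(r_2/r_1)$ stays bounded for $r_1 < r_2 \leq \frac{1}{4}$. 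The main obstacle is the Fubini analysis in the $g$-version of Lemma~\ref{lemmadirvB2}: pairing the smooth correction $g - P$ against the $L^2$-angular energy of $v$ requires the integrability-conversion argument of Lemma~\ref{calculus} together with Theorem~\ref{exists}(ii), exactly as in the proof of Lemma~\ref{egrowthA}.
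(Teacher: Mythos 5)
Your proposal is substantively correct, and it is more honest than the paper's own one-line proof, which simply invokes Lemma~\ref{arealemma} together with the metric comparison (\ref{closegPB}). You have identified the real content being suppressed there: Lemma~\ref{arealemma} is stated with $\mbox{Area}_{P_{\Sigma_l}}(\D^*_{1/4})$, while the present lemma asserts the estimate with $\mbox{Area}_{g_{\Sigma_l}}(\D^*_{1/4})$, and at a type (B) crossing these genuinely differ (in type (B) the product metric $P$ of (\ref{productmetricB}) is the pure doubled Poincar\'e metric, so $P_{\Sigma_l}$ does \emph{not} coincide with $g_{\Sigma_l}$, in contrast to type (A) where $P$ is built with $\lambda = g_{\Sigma_j}$). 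Since the area discrepancy is a fixed nonzero constant multiplying $\log(r_2/r_1)$, a naive ``$g$ and $P$ are close, so the energies agree up to a bounded error'' cannot close the gap; the $g$-vs-$P$ correction must itself produce a compensating $\log(r_2/r_1)$ term, which is exactly what your $g$-analogue of Lemma~\ref{lemmadirvB2} extracts. Your mechanism for producing it --- Fubini against the fiber energy $\int |\partial v/\partial z^k|^2$, whose leading behavior is $\Ej\log(1/4r)$ by Lemma~\ref{Bestnear} / Theorem~\ref{exists}(ii), paired with the smooth $\bar\omega$-density that realizes $g_{\Sigma_l}-P_{\Sigma_l}$ --- is the right one, and your treatment of the cross terms (doubly small by (\ref{ginverseinB})) and of the residual $\mbox{Area}_{g_{\Sigma_l}}(\D_{r_1,r_2})\log(1/4r_2)$ term in the $g$-version of the Lemma~\ref{arealemma} rearrangement is correct: the $O(r^2)$ smooth part of $\mbox{Area}_{g_{\Sigma_l}}(\D_{r_1,r_2})$ contributes only $O(r_2^2 |\log r_2|) = O(1)$. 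The two approaches differ in how they package the same estimates: the paper relies on the reader reconstructing the $P\leftrightarrow g$ conversion as in Lemma~\ref{egrowthA}, while you make it explicit and notice that (unlike in the type (A) case) this conversion is not merely an $O(1)$ perturbation but changes the leading log-coefficient. That observation is the genuine point of the lemma, and you have it.

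One place to be slightly more careful: in establishing your $g$-analogue of Lemma~\ref{lemmadirvB2}, the ``smooth correction'' in $g^{2\bar 2}\,d\mbox{vol}_g - P^{2\bar 2}\,d\mbox{vol}_P$ is a bound $O(\diamond + \Box)$, not an identity. To extract the precise coefficient $\mbox{Area}_{g_{\Sigma_j}} - \mbox{Area}_{P_{\Sigma_j}}$ rather than just a constant times $\log(1/4r)$, you should expand the inverse metric to first order in the same way (\ref{ginverseinB}) is derived from (\ref{ginB}), and separate the part of the discrepancy coming from $\bar\omega|_{\Sigma_l}$ (which yields the exact area correction) from the genuinely lower-order remainders (which integrate to $O(1)$ via Theorem~\ref{exists}(ii) and Lemma~\ref{calculus}, as you say). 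Once that bookkeeping is done, your argument closes.
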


\begin{proof}
Follows from the metric estimate (\ref{closegPB})
 and Lemma~\ref{arealemma}.
\end{proof}

\begin{lemma} \label{comparisonB2}
In a subset $\bar\D_{\frac{1}{4}} \times \bar\D_{\frac{1}{4}}$ of a set $\bar \D^* \times \bar \D^*$ of type (B), there exists a constant $C>0$ such that  in any 
 subset (cf.~(\ref{nbhd12}))
\[
U_{r_1,r_2} ={\mathcal D}_{r_1,r_2} \cap (\bar\D_{\frac{1}{4}} \times \bar\D_{\frac{1}{4}}),   \ \ \ 0<r_1<r_2\leq\frac{1}{4},
\]
the prototype section $v$ satisfies 
\[
^gE^v[U_{r_1,r_2}]
\leq {^gE^f}[U_{r_1,r_2}]+C
\]
for  any locally Lipschitz section $f:U_{r_1,r_2}  \rightarrow \tilde M \times_{\rho} \tilde X $.
\end{lemma}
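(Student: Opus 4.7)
The plan is to mirror the two-step strategy already used for Lemma~\ref{comparisonA} and Lemma~\ref{comparisonB1}: first bound the $P$-energy of $f$ from below by the same quantity that appears in Lemma~\ref{arealemma}, and then transfer the comparison from $P$ to $g$ using the closeness estimate (\ref{closegPB}) together with the calculus lemma from the appendix. Combining this with Lemma~\ref{egrowthB2} then yields the desired inequality. The novelty here, compared to the previous two comparison lemmas, is that we are at a crossing, so both the $\phi$-direction (around $\Sigma_i$) and the $\theta$-direction (around $\Sigma_j$) contribute infimum-type lower bounds simultaneously.

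Concretely, for any locally Lipschitz section $f$, the defining properties of $\Ei$ and $\Ej$ give
\[
\int_{\Sp^1_\phi} \left|\frac{\partial f}{\partial \phi}\right|^2 d\phi \, \geq \, \Ei
\quad\text{and}\quad
\int_{\Sp^1_\theta} \left|\frac{\partial f}{\partial \theta}\right|^2 d\theta \, \geq \, \Ej
\]
on each coordinate circle in the respective factor. Using the product form of $P$ (cf.~(\ref{productmetricB})) together with the volume identities (\ref{volumeprodB}), I would integrate the $\phi$-direction bound against the $z^2$-factor and the $\theta$-direction bound against the $z^1$-factor, as in the proof of Lemma~\ref{comparisonA}, to conclude
\[
{}^PE^f[U_{r_1,r_2}] \, \geq \, \Ei \, \mathrm{Area}_{P_{\Sigma_i}}(\D^*_{\tfrac14}) \log \tfrac{r_2}{r_1} + \Ej \, \mathrm{Area}_{P_{\Sigma_j}}(\D^*_{\tfrac14}) \log \tfrac{r_2}{r_1}.
\]
Pairing this with Lemma~\ref{arealemma} (applied to $v$) immediately gives the desired comparison for the $P$-energies, ${}^PE^v[U_{r_1,r_2}] \leq {}^PE^f[U_{r_1,r_2}] + C$.

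To pass from $P$ to $g$, I would argue exactly as in the closing paragraph of Lemma~\ref{comparisonA}. We may assume ${}^gE^f[U_{r_1,r_2}]$ is finite (else the conclusion is vacuous), which forces the finiteness of the $\phi$- and $\theta$-integrals appropriate to each factor. Applying Lemma~\ref{calculus} separately in the $\phi$-direction and the $\theta$-direction yields the weighted integrability estimates
\[
\int_{\bar\D_{\tfrac14}\times \bar\D_{\tfrac14}} \frac{1}{\rho^2 (\log \rho^2)^2}\left|\frac{\partial f}{\partial \phi}\right|^2 d\mathrm{vol} \, \leq \, C', \qquad \int_{\bar\D_{\tfrac14}\times \bar\D_{\tfrac14}} \frac{1}{r^2 (\log r^2)^2}\left|\frac{\partial f}{\partial \theta}\right|^2 d\mathrm{vol} \, \leq \, C'.
\]
These are precisely the integrals that (\ref{closegPB}) and (\ref{volcompB}) produce when one expands $|{}^gE^f[U_{r_1,r_2}] - {}^PE^f[U_{r_1,r_2}]|$ into diagonal and off-diagonal pieces, and they show that all error terms are bounded uniformly in $r_1,r_2$.

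The main obstacle I anticipate is controlling the mixed off-diagonal contributions $g^{1\bar 2}\, \partial_{z^1}f \cdot \overline{\partial_{z^2} f}$ coming from (\ref{ginverseinB}): these are of size $O(\rho^2(\log\rho^2)^2\cdot r^2(\log r^2)^2)$ and do not appear in either of the single-divisor comparison lemmas. I would dispatch them by Cauchy--Schwarz combined with the two weighted estimates above, so that both weights participate simultaneously. Once that is handled, the chain ${}^gE^v \leq {}^PE^v + C \leq {}^PE^f + C \leq {}^gE^f + C$ closes the proof.
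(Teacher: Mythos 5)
Your proposal is correct and takes essentially the same route as the paper, which gives only the one-line instruction ``follow the proof of Lemma~\ref{comparisonA} with Lemma~\ref{egrowthB2} in place of Lemma~\ref{egrowthA}''; you are filling in exactly the intended steps — the two-directional $\Ei,\Ej$ lower bound for $^PE^f$, comparison with $^PE^v$ via Lemma~\ref{arealemma}, and the $P$-to-$g$ transfer via Lemma~\ref{calculus} applied in each factor. One small imprecision: the infimum argument, integrated over $U_{r_1,r_2}$, really produces $\Ei\,\mathrm{Area}_{P_{\Sigma_i}}(\D_{r_1,\frac14})\log\frac{r_2}{r_1}$ (and likewise for $\Ej$) rather than the areas of the full punctured disks, but the discrepancy is $O(1)$ uniformly in $r_1,r_2$, so this does not affect the conclusion.
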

\begin{proof}
Follow the proof of Lemma~\ref{comparisonA} using Lemma~\ref{egrowthB2} instead of Lemma~\ref{egrowthA}
\end{proof}

\subsection{Energy estimates for the prototype section near the divisor}

Combining the results of the previous three subsections, we obtain the following estimate  in an open set  ${\mathcal D}_{r_1,r_2}$ (cf.~(\ref{nbhd12})) near the divisor.
\begin{proposition} \label{l}
There exists a constant $C>0$ such that the prototype section $v$ of Definition~\ref{prototypemap} satisfies
\[
\left| {^gE^v}[{\mathcal D}_{r_1,r_2}] - \sum_{j =1}^L  \Ej \mbox{Area}_{g_{\Sigma_j}}(\Sigma_j) \log \frac{r_2}{r_1}  \right|< C, \ \ \ 0<r_1<r_2 \leq \frac{1}{4}.
\]
\end{proposition}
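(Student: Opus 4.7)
The plan is to reduce the global bound to a finite sum of the local energy estimates from Lemma~\ref{egrowthA}, Lemma~\ref{egrowthB1}, and Lemma~\ref{egrowthB2} via a measure-disjoint decomposition of $\mathcal{D}_{r_1,r_2}$ adapted to the cover $\mathcal{U} = \mathcal{U}_A \sqcup \mathcal{U}_B$ from Subsection~\ref{gluing}. First I would fix $\mathcal{U}_B$ to be a pairwise disjoint family of type (B) neighborhoods $U_k = \bar\D \times \bar\D$, one around each crossing $P_k \in \Sigma_{j(k)} \cap \Sigma_{i(k)}$, chosen large enough so that every point in an overlap $\bar{\mathcal D}_{j,1/4} \cap \bar{\mathcal D}_{i,1/4}$ of two distinct tubular shells lies in some $U_k$; this is achievable because the crossings form a finite set and the components of $\Sigma$ are otherwise disjoint. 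Then for each $j$ I would cover the compact set $\Sigma_j \setminus \bigcup_k U_k$, which contains no crossing, by finitely many type (A) trivializations $V_{j,\alpha} = \Omega_{j,\alpha} \times \bar\D_{\frac{1}{4}}$, and refine them to a measurable partition $\Sigma_j \setminus \bigcup_k U_k = \bigsqcup_\alpha W_{j,\alpha}$ with $W_{j,\alpha} \subset \Omega_{j,\alpha}$.

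The decomposition of $\mathcal{D}_{r_1,r_2}$ then consists of three families of pieces: (a) the crossing sub-boxes $A_k = (\bar\D_{\frac{1}{4}} \times \bar\D_{\frac{1}{4}}) \cap \mathcal{D}_{r_1,r_2}$ for each $k$; (b) the off-crossing type (B) slabs $T_{k,j} = (\D_{\frac{1}{4},1} \times \D_{r_1,r_2}) \cap U_k$ and their symmetric counterparts $T_{k,i}$; and (c) the type (A) slabs $B_{j,\alpha} = W_{j,\alpha} \times \D_{r_1,r_2}$. The choice of the $U_k$ ensures that these pieces are measure-disjoint and exhaust $\mathcal{D}_{r_1,r_2}$. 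Applying Lemma~\ref{egrowthB2} to each $A_k$, Lemma~\ref{egrowthB1} to each $T_{k,j}$ and $T_{k,i}$, and a measurable-subset version of Lemma~\ref{egrowthA} to each $B_{j,\alpha}$, additivity of the energy integral yields
\[
\bigl| {}^g E^v[\mathcal{D}_{r_1,r_2}] - S \log \tfrac{r_2}{r_1} \bigr| \leq C
\]
with $C$ the finite sum of the local constants.

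The coefficient $S$ regroups naturally by component. For each fixed $j$, the $\Ej$-factor multiplies the sum of $\mathrm{Area}_{g_{\Sigma_j}}$ over (i)~$\Sigma_j \cap \bar\D_{\frac{1}{4}} \simeq \bar\D_{\frac{1}{4}}$ at each crossing on $\Sigma_j$ (from $A_k$), (ii)~$\Sigma_j \cap \D_{\frac{1}{4},1} \simeq \D_{\frac{1}{4},1}$ at each such crossing (from $T_{k,j}$), and (iii)~the $W_{j,\alpha}$ (from $B_{j,\alpha}$). Parts (i) and (ii) together account for $\Sigma_j \cap U_k \simeq \bar\D$ at each crossing, and (iii) accounts for $\Sigma_j \setminus \bigcup_k U_k$, so their union is a measurable partition of $\Sigma_j$. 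Hence $S = \sum_{j=1}^L \Ej \,\mathrm{Area}_{g_{\Sigma_j}}(\Sigma_j)$, which is the asserted main term.

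The main obstacle will be the combinatorial bookkeeping needed to verify that the three families of pieces genuinely form a measure-disjoint partition of $\mathcal{D}_{r_1,r_2}$ and that their projections to each $\Sigma_j$ recombine to its full area; the key geometric input is that the $U_k$ can be made simultaneously pairwise disjoint and large enough to absorb every overlap of distinct tubular shells, which relies only on the finiteness of the crossing set and on the normal-crossings transversality of $\Sigma$. A secondary and more routine point is the extension of Lemma~\ref{egrowthA} to measurable subsets $W \subset \Omega$: both the upper and lower bounds in its proof arise from integrating per-slice estimates (from Lemma~\ref{Aest} and from the defining infimum property of $\Ej$) over $z^1 \in \Omega$, and this integration passes to any measurable $W \subset \Omega$ with constants controlled by $\mathrm{Area}_{g_{\Sigma_j}}(\Omega)$.
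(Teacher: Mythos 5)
Your proposal is correct and takes essentially the same approach as the paper: the paper's proof consists of exactly the same reduction (covering a neighborhood of $\Sigma$ by finitely many sets of types (A) and (B) and invoking Lemma~\ref{egrowthA}, Lemma~\ref{egrowthB1}, and Lemma~\ref{egrowthB2}), simply stated more tersely. The bookkeeping you spell out --- the measure-disjoint decomposition adapted to the cover, the area identity $\mathrm{Area}_{g_{\Sigma_j}}(\Sigma_j) = \mathrm{Area}_{g_{\Sigma_j}}(\Sigma_j \setminus \bigcup_k U_k) + \sum_{P_k \in \Sigma_j} \mathrm{Area}_{g_{\Sigma_j}}(\Sigma_j \cap U_k)$, and the observation that the slicewise estimates underlying Lemma~\ref{egrowthA} restrict to measurable $W \subset \Omega$ --- is precisely the content left implicit in the paper's one-line proof.
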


\begin{proof}
Since we can cover a neighborhood of $\Sigma$  by a finite collection of sets of type (A) and type (B), the estimate   follows from Lemma~\ref{egrowthA}, Lemma~\ref{egrowthB1} and Lemma~\ref{egrowthB2}.
\end{proof}

\begin{proposition} \label{comparison}
The section $v$ is almost minimizing in $M$ in the following sense:  There exists a constant $C>0$ such that
\[
^gE^v[{\mathcal D}_{r_1,r_2}]
\leq {^gE^f}[{\mathcal D}_{r_1,r_2}]+C, \ \ \ 0 <r_1<r_2\leq \frac{1}{4}
\]
for any section $f: M \rightarrow \tilde{M} \times_{\rho} \tilde{X}$. 
\end{proposition}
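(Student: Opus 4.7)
My plan is to reduce the global comparison to the three local comparison lemmas already established: Lemma~\ref{comparisonA} for a set of type (A), Lemma~\ref{comparisonB1} for a set of type (B) away from a crossing, and Lemma~\ref{comparisonB2} for a set of type (B) at a crossing. The key structural observation is that the number of local pieces we use to cover a neighborhood of $\Sigma$ is a fixed finite number, independent of the radii $r_1, r_2$, so summing finitely many local constants yields a uniform global constant.

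First I would use exactly the same finite open cover ${\mathcal U} = {\mathcal U}_A \cup {\mathcal U}_B$ of a neighborhood of $\bar{\mathcal D}$ that was used to define $v$ in (\ref{join}). Recall that ${\mathcal U}_B$ consists of pairwise disjoint type-(B) neighborhoods, one per crossing point of $\Sigma$. For each $U = \bar\D_{z^1} \times \bar\D_{z^2} \in {\mathcal U}_B$, I would partition $U \cap {\mathcal D}_{r_1,r_2}$ into the crossing piece $\bar\D_{\frac14} \times \bar\D_{\frac14} \cap {\mathcal D}_{r_1,r_2}$ together with the two away-from-crossing pieces $\D_{\frac14,1} \times \bar\D_{\frac14} \cap {\mathcal D}_{r_1,r_2}$ and $\bar\D_{\frac14} \times \D_{\frac14,1} \cap {\mathcal D}_{r_1,r_2}$. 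For each $V \in {\mathcal U}_A$, I would take $(V \cap {\mathcal D}_{r_1,r_2}) \setminus \bigcup_{U \in {\mathcal U}_B} U$. These pieces tile ${\mathcal D}_{r_1,r_2}$ with pairwise disjoint interiors, and each fits the hypotheses of exactly one of the local comparison lemmas.

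Next I would apply on each piece the corresponding local lemma to the restrictions of $v$ and $f$: Lemma~\ref{comparisonB2} on each crossing piece, Lemma~\ref{comparisonB1} on each type-(B) away-from-crossing piece, and Lemma~\ref{comparisonA} on each type-(A) piece. Each yields a local bound of the form
\[
{^g}E^v[\text{piece}] \ \leq \ {^g}E^f[\text{piece}] + C_{\text{piece}},
\]
with $C_{\text{piece}}$ depending only on the chosen local chart. Summing over the finitely many pieces (there are $N$ of them, with $N$ determined by the cardinality of ${\mathcal U}$ alone) gives
\[
{^g}E^v[{\mathcal D}_{r_1,r_2}] \ \leq \ {^g}E^f[{\mathcal D}_{r_1,r_2}] + NC',
\]
where $C' = \max_{\text{piece}} C_{\text{piece}}$. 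This is the desired inequality with $C := NC'$, independent of $r_1, r_2$.

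The main obstacle, though not a deep one, is ensuring that the tiling actually agrees with the pieces on which $v$ was defined in (\ref{join}) so that the local form of $v$ on each piece is exactly the prototype section to which the comparison lemmas apply; this is automatic because we use the same cover ${\mathcal U}$ as in the construction of $v$. A minor additional point is that the three local comparison lemmas are stated for locally Lipschitz competitors, so if $f$ is only assumed to be a measurable finite-energy section, a standard approximation argument (replacing $f$ near each piece's boundary by a mollification matching $v$ outside an arbitrarily thin collar, using the bridge-type construction from the proof of Lemma~\ref{sae}) reduces to the Lipschitz case without affecting the constants.
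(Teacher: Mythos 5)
Your proposal is correct and matches the paper's own argument, which is precisely the finite-cover reduction to Lemma~\ref{comparisonA}, Lemma~\ref{comparisonB1}, and Lemma~\ref{comparisonB2}; the paper's proof is a single sentence stating exactly this reduction. Your additional remark about approximating a non-Lipschitz competitor is a reasonable observation but is not addressed in the paper (and is not needed in the applications, where the competitors used against $v$ in Proposition~\ref{comparison} are always Lipschitz harmonic sections, e.g.~$f = u_r$ in the proof of Theorem~\ref{theorem:pluriharmonicDim2}).
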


\begin{proof}
Since we can cover a neighborhood of $\Sigma$  by a finite collection of sets of type (A) and type (B), the estimate, the estimate follows from Lemma~\ref{comparisonA}, Lemma~\ref{comparisonB1} and Lemma~\ref{comparisonB2}.
\end{proof}

\section{Harmonic maps of possibly infinite energy}  \label{sec:existence}

The goal of this section is to prove Theorem~\ref{theorem:pluriharmonicDim2}, the existence of a harmonic map of logarithmic energy growth.  
In Subsection~\ref{subsec:thmdim2}, we show the existence of a harmonic map with the help of the prototype map.  In Subsection~\ref{subsec:thmdim2energy}, we record the energy growth estimates for this map.

Throughout this section, we use $C$ to denote constants that are independent of the parameter $r$.  Note that $C$ may change from line to line.

\subsection{Proof of existence, Theorem~\ref{theorem:pluriharmonicDim2}} 
\label{subsec:thmdim2}

\begin{proof}
For $r  \in (0,\frac{1}{4}]$, let  $M_r =M \backslash {\mathcal D}_r$ (see Figure~\ref{Fig:Mr}).
  \begin{figure}[h!]
  \includegraphics[width=2in]{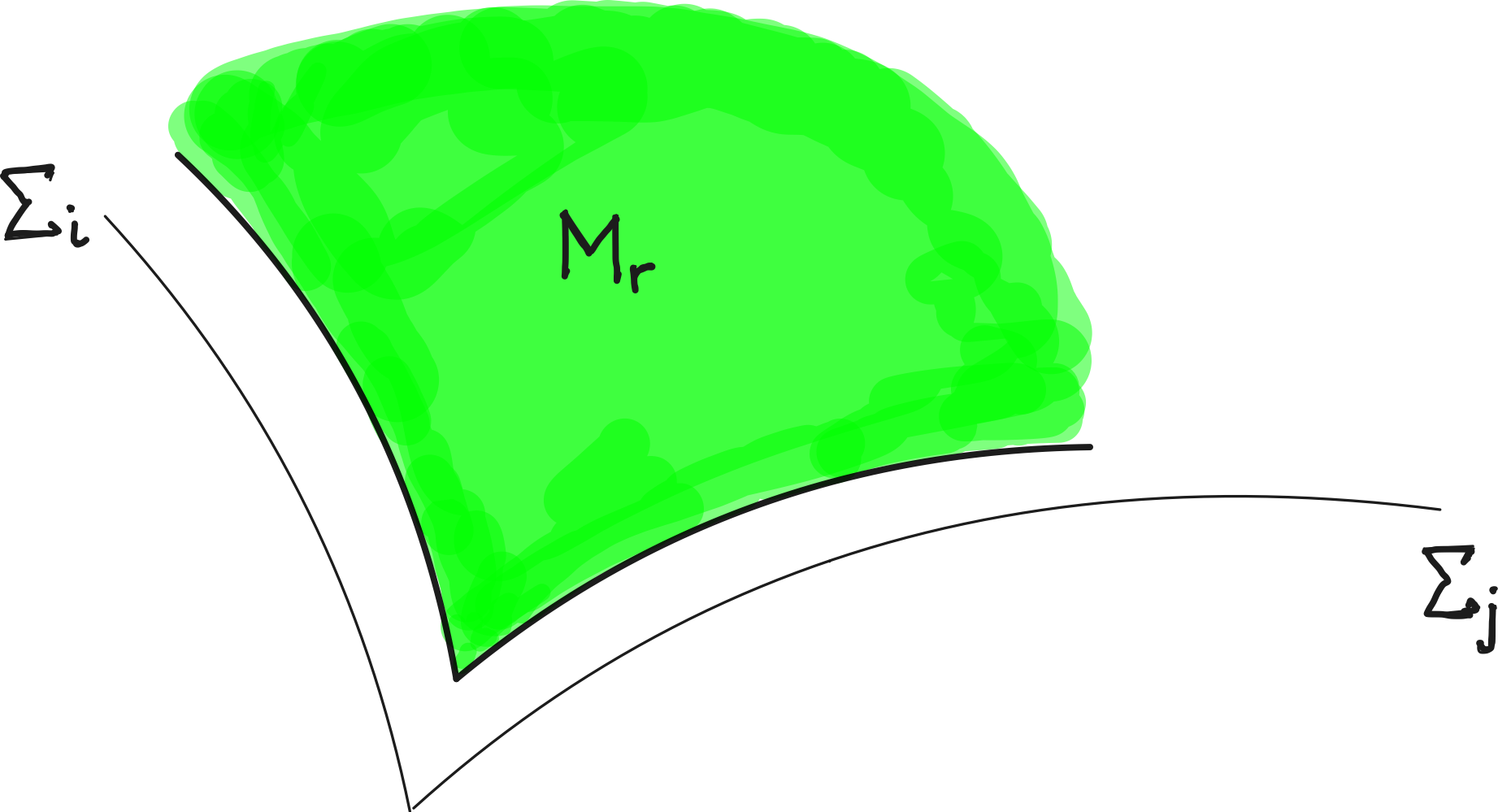}
  \caption{The region  $M_r \subset M$.}
  \label{Fig:Mr}
\end{figure}

Next, let $v:M \rightarrow  \tilde{M} \times_{\rho} \tilde{X}$ be the prototype section  of Definition~\ref{prototypemap} and 
let
\[
u_r:M_r  \rightarrow \tilde{M} \times_{\rho} \tilde{X}
\]
 be the energy minimizer among all  sections that  agree with $v$ on
$\partial M_r$ for each $r \in (0, r_1]$.  
The existence of such a section $u_r$ 
follows from  the  proof of ~\cite{korevaar-schoen1} Theorem 2.7.2.  

 Since
 \begin{eqnarray}
^gE^{u_r}[{\mathcal D}_{r,r_1}]+  {^gE^{u_r}}[M_{r_1}] & = & {^gE^{u_r}}[M_r] \nonumber  \\
 & \leq  &  {^gE^v}[M_r] \ \ \ \mbox{ (since $u_r$ is minimizing in $M_r$)} \nonumber  \\
 & = &  
{^gE^v}[{\mathcal D}_{r,r_1}] + {^gE^v}[M_{r_1}]\nonumber  \\
 & \leq &  
{^gE^{u_r}}[{\mathcal D}_{r,r_1}] +C+ {^gE^v}[M_{r_1}] \  \ \ \ \mbox{(by Proposition~\ref{comparison})},  \nonumber  
 \end{eqnarray}
we have that
\begin{equation} \label{mpe}
{^gE^{u_r}}[M_{r_1}] \leq {^gE^v}[M_{r_1}] + C.
\end{equation}
The right hand side of the inequality (\ref{mpe})  is independent of the parameter $r$; i.e.~once we fix $r_1 \in (0,\frac{1}{4}]$, the quantity $^gE^{u_r}[M_{r_1}]$ is uniformly bounded for all $r\in (0, r_1]$.  This implies a uniform Lipschitz bound, say $L$,  of $u_r$ for $r \in (0,r_1]$ in $M_{2r_1}$ (cf.~\cite[Theorem 2.4.6]{korevaar-schoen1}).  

Let $\tilde{u}_r$ and $\tilde v$ be the $\rho$-equivariant maps corresponding to  sections $u_r$ and $v$.
Thus,
  \[
d(\tilde u_r(\lambda(p)), \tilde u_r(p)) \leq L d_{\tilde M}(\lambda(p),p),
\ \ 
p \in M_{2r_1}, \ \lambda \in \Lambda, \ \ r \in (0,r_1].
\]
If we let  
\[
c=L \max \{d_{\tilde{M}}(\lambda(p),p): \lambda \in \Lambda, \ p \in \overline{M_{2r_1}} \},
\]
then by  equivariance
\[
d(\rho(\lambda)\tilde{u}_r(p),\tilde{u}_r(p))) \leq c, \ \ \ 
p \in M_{2r_1}, \ \lambda \in \Lambda, \ \ r \in (0,r_1].
\]
In other words, $\delta(\tilde{u}_r(p)) \leq L$ for all $p \in M_{2r_1}$ and $r \in (0,r_1]$.
By the properness of $\rho$, there exists $P_0 \in \tilde X$ and $R_0>0$ such that 
\[
\{\tilde{u}_r(p): \  p \in M_{2r_1}, \ r \in (0,r_1]\} \subset B_{R_0}(P_0).
\]
Thus,  by taking a compact exhaustion, applying Arzela-Ascoli and the usual diagonalization argument, there exists a subsequence of  $\tilde{u}_r$ that converges locally uniformly  to a $\rho$-equivariant harmonic map $\tilde u: \tilde M \rightarrow
\tilde X$.   Let $u: M \rightarrow \tilde{M} \times_{\rho} \tilde{X}$ be the corresponding harmonic section.
 \end{proof}
 
  \subsection{Energy estimates for the harmonic section} \label{subsec:thmdim2energy}

\begin{lemma} \label{lemma:mpe}
For the harmonic section $u:M \rightarrow \tilde M \times_\rho \tilde X$ of Theorem~\ref{theorem:pluriharmonicDim2} and the prototype section  of Definition~\ref{prototypemap}, we have
\[
{^gE^u}[M_{r_1}] \leq {^gE^v}[M_{r_1}] + C, \ \ \ \forall r_1 \in (0,\frac{1}{4}].
\]
\end{lemma}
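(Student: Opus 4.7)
The estimate is essentially an immediate consequence of the construction carried out in the proof of Theorem~\ref{theorem:pluriharmonicDim2}, combined with the lower semicontinuity of Korevaar--Schoen energy. The plan is to revisit the uniform bound (\ref{mpe}) satisfied by the minimizers $u_r$ on $M_r$ with boundary data $v|_{\partial M_r}$, and then transfer that bound to the limit section $u$ along the convergent subsequence used to construct $u$.

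Concretely, fix $r_1 \in (0,\frac{1}{4}]$. For every $r \in (0, r_1]$, the minimizing property of $u_r$ compared with $v$, together with Proposition~\ref{comparison} applied on ${\mathcal D}_{r,r_1}$, yields exactly (\ref{mpe}):
\[
{^gE^{u_r}}[M_{r_1}] \;\leq\; {^gE^v}[M_{r_1}] + C,
\]
with $C$ independent of $r$. The subsequence $u_{r_i}$ extracted in the proof of Theorem~\ref{theorem:pluriharmonicDim2} converges locally uniformly on $M$ to the harmonic section $u$, so in particular it converges uniformly on any compact subset $K \subset M_{r_1}$. Uniform convergence implies $L^2$-convergence on $K$, so the lower semicontinuity of energy \cite[Theorem 1.6.1]{korevaar-schoen1} gives
\[
{^gE^u}[K] \;\leq\; \liminf_{i\to\infty} {^gE^{u_{r_i}}}[K] \;\leq\; \liminf_{i\to\infty} {^gE^{u_{r_i}}}[M_{r_1}] \;\leq\; {^gE^v}[M_{r_1}] + C.
\]
Taking the supremum over an exhaustion of $M_{r_1}$ by compact subsets $K$ then produces the desired bound
\[
{^gE^u}[M_{r_1}] \;\leq\; {^gE^v}[M_{r_1}] + C.
\]

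There is no serious obstacle: the only mildly delicate point is that $M_{r_1}$ is not compact (it has boundary, and $u_{r_i}$ is only defined on $M_{r_i}$, which does contain $M_{r_1}$ once $r_i < r_1$), so lower semicontinuity must be applied on compact exhaustions rather than directly on $M_{r_1}$. Once that is handled, the estimate is immediate from (\ref{mpe}) and the convergence statement used to construct $u$.
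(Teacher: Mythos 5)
Your argument is correct and is precisely the paper's own proof: the paper simply cites the uniform bound~(\ref{mpe}) and lower semicontinuity of energy \cite[Theorem 1.6.1]{korevaar-schoen1}. Your remark about taking a compact exhaustion of $M_{r_1}$ is a reasonable precaution but is the same one-line argument spelled out a bit more carefully.
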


\begin{proof}
Follows from (\ref{mpe}) and the lower semicontinuity of energy (cf.~\cite[Lemma~1.6.1]{korevaar-schoen1}).
\end{proof}
 \begin{lemma} \label{comparinguv}
If  $v:M \rightarrow \tilde M \times_{\rho} \tilde X$ is the prototype section of Definition~\ref{prototypemap} and $u:M \rightarrow \tilde M \times_{\rho} \tilde X$ is the  harmonic section of Theorem~\ref{theorem:pluriharmonicDim2},  there exists a constant $C>0$ such that 
\[
\left| {^gE^{u}}[\mathcal D_{r_1,r_2}] - {^gE^v}[\mathcal D_{r_1,r_2}] \right|\leq  C,  \  \ 0<r_1<r_2 \leq \frac{1}{4}.
\]
 \end{lemma}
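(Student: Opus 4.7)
The plan is to bound the two-sided difference of energies on $\mathcal D_{r_1, r_2}$ by combining Proposition~\ref{comparison} (the almost-minimizing property of $v$) with Lemma~\ref{lemma:mpe} (the upper bound on ${^gE^u}[M_{r_1}]$ inherited from the Dirichlet approximants $u_r$). The starting point is the additivity identity
\[
{^gE^u}[\mathcal D_{r_1, r_2}] - {^gE^v}[\mathcal D_{r_1, r_2}] = \bigl({^gE^u}[M_{r_1}] - {^gE^v}[M_{r_1}]\bigr) - \bigl({^gE^u}[M_{r_2}] - {^gE^v}[M_{r_2}]\bigr),
\]
valid because $M_{r_1} = M_{r_2} \cup \mathcal D_{r_1, r_2}$ with interiors disjoint.

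For the inequality ${^gE^v}[\mathcal D_{r_1, r_2}] - {^gE^u}[\mathcal D_{r_1, r_2}] \leq C$, I would apply Proposition~\ref{comparison} directly with comparison section $f = u$. This is immediate since $u$ is a globally defined section of $\tilde M \times_\rho \tilde X \to M$.

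For the reverse inequality ${^gE^u}[\mathcal D_{r_1, r_2}] - {^gE^v}[\mathcal D_{r_1, r_2}] \leq C$, I would bound each of the two parenthesized differences in the additivity identity separately. The first, ${^gE^u}[M_{r_1}] - {^gE^v}[M_{r_1}]$, is $\leq C$ immediately from Lemma~\ref{lemma:mpe}. The second requires ${^gE^v}[M_{r_2}] - {^gE^u}[M_{r_2}] \leq C$. To obtain this, apply Proposition~\ref{comparison} to the fixed-outer-radius annulus $\mathcal D_{r_2, 1/4}$ with $f = u$, yielding ${^gE^v}[\mathcal D_{r_2, 1/4}] \leq {^gE^u}[\mathcal D_{r_2, 1/4}] + C$. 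Since $M_{r_2} = M_{1/4} \cup \mathcal D_{r_2, 1/4}$ and ${^gE^u}[\mathcal D_{r_2, 1/4}] \leq {^gE^u}[M_{r_2}]$, this gives
\[
{^gE^v}[M_{r_2}] = {^gE^v}[M_{1/4}] + {^gE^v}[\mathcal D_{r_2, 1/4}] \leq {^gE^u}[M_{r_2}] + {^gE^v}[M_{1/4}] + C,
\]
and ${^gE^v}[M_{1/4}]$ is a finite constant independent of $r_1, r_2$ because $v$ is locally Lipschitz and $\overline{M_{1/4}}$ is compact in $(M, g)$. The endpoint case $r_2 = 1/4$ is trivial since then $\mathcal D_{r_2, 1/4}$ is empty.

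The one conceptual point to reconcile is that Proposition~\ref{comparison} is phrased for annular regions while Lemma~\ref{lemma:mpe} is phrased for sublevel regions $M_r$; the device of fixing the outer radius at $r^\star = 1/4$ and absorbing the fixed compact core $M_{1/4}$ into a uniform constant is the only bookkeeping required. Beyond this, the result is a purely formal consequence of the two inputs already established in Section~\ref{sec:ptm} and Subsection~\ref{subsec:thmdim2energy}, with no new analytic estimate needed.
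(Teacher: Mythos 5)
Your proof is correct and uses the same two key inputs as the paper's proof — Lemma~\ref{lemma:mpe} and Proposition~\ref{comparison} — together with the device of fixing the outer radius at $1/4$ and absorbing the energy of $v$ on the compact core $M_{1/4}$ into the constant. The bookkeeping is organized slightly differently (you invoke the additivity identity over $M_{r_1} = M_{r_2} \cup \mathcal D_{r_1,r_2}$ up front and get one direction by applying Proposition~\ref{comparison} directly to $\mathcal D_{r_1,r_2}$, whereas the paper bounds $\left|{^gE^u}[\mathcal D_{r_i, 1/4}] - {^gE^v}[\mathcal D_{r_i, 1/4}]\right|$ for $i=1,2$ separately and then subtracts), but this is a presentational difference rather than a different route.
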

 
 \begin{proof}
From the fact that $\mathcal D_{r_1,\frac{1}{4}} \subset \mathcal D_{r_1,\frac{1}{4}} \cup M_{\frac{1}{4}}= M_{r_1}$, Lemma~\ref{mpe}, and the lower semicontinuity of energy (cf.~\cite[Theorem 1.6.1]{korevaar-schoen1}), we obtain
\[
{^gE^{u}}[\mathcal D_{r_1,\frac{1}{4}}] \leq {^gE^u}[M_{r_1}] \leq {^gE^v}[M_{r_1}]+C
={^gE^v}[\mathcal D_{r_1,\frac{1}{4}}] + {^gE^v}[M_{\frac{1}{4}}] + C.
\]
 Proposition~\ref{comparison} implies
\[
{^gE^v}[\mathcal D_{r_1,\frac{1}{4}}] \leq {^gE^u}[\mathcal D_{r_1,\frac{1}{4}}] + C.
\]
Combining the above two inequalities we obtain
\[
\left| {^gE^{u}}[\mathcal D_{r_1,\frac{1}{4}}] - {^gE^v}[\mathcal D_{r_1,\frac{1}{4}}] \right|\leq  {^gE^v}[M_{\frac{1}{4}}]+C
\]
and similarly
\[
\left| {^gE^{u}}[\mathcal D_{r_2,\frac{1}{4}}] - {^gE^v}[\mathcal D_{r_2,\frac{1}{4}}] \right|\leq  {^gE^v}[M_{\frac{1}{4}}]+ C.
\]
The desired estimate follows from the above two inequalities.
 \end{proof}
 
 \begin{lemma} \label{prelude}
If $u:M \rightarrow \tilde M \times_{\rho} \tilde X$ is the  harmonic section  of Theorem~\ref{theorem:pluriharmonicDim2}, then there exists $C>0$ such that 
\begin{eqnarray*}
\left| ^gE^u[{\mathcal D}_{r_1,r_2}] - \sum_{j =1}^L \Ej \mbox{Area}_{g_{\Sigma_j}}(\Sigma_j) \log \frac{r_2}{r_1}  \right| & \leq &  C, \ \ \
0<r_1<r_2 \leq \frac{1}{4}.
\end{eqnarray*}
\end{lemma}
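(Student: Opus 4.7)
The plan is to obtain the estimate as an immediate consequence of two results already established in this chapter: the energy growth estimate for the prototype section (Proposition~\ref{l}) and the comparison between the energies of the harmonic section and the prototype section (Lemma~\ref{comparinguv}). Both results have already done the heavy lifting — the former via the careful neighborhood-by-neighborhood energy computation in sets of type (A) and (B), and the latter via the almost-minimizing property of $v$ together with the lower semicontinuity of energy applied to the approximating Dirichlet solutions $u_r$ whose limit is $u$.

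The argument will proceed as follows. First, I invoke Proposition~\ref{l} to write
\[
\left| {^gE^v}[\mathcal D_{r_1,r_2}] - \sum_{j=1}^L \Ej \, \mbox{Area}_{g_{\Sigma_j}}(\Sigma_j) \log \frac{r_2}{r_1} \right| \leq C_1,
\quad 0<r_1<r_2 \leq \tfrac{1}{4},
\]
for a constant $C_1>0$ independent of $r_1,r_2$. Next, Lemma~\ref{comparinguv} furnishes a constant $C_2>0$ such that
\[
\left| {^gE^u}[\mathcal D_{r_1,r_2}] - {^gE^v}[\mathcal D_{r_1,r_2}] \right| \leq C_2,
\quad 0<r_1<r_2 \leq \tfrac{1}{4}.
\]
Adding the two inequalities via the triangle inequality yields the desired bound with $C := C_1 + C_2$.

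There is no real obstacle here, since the substantive work has been done upstream: the subtle control over the cross-terms between the metric $g$ and the product metric $P$ in sets of type (A) (handled through Lemma~\ref{calculus}) and at the crossings in sets of type (B) (via the symmetry $\mbox{Area}_{P_{\Sigma_i}}=\mbox{Area}_{P_{\Sigma_j}}$ used in Lemma~\ref{arealemma}) is already encoded in Proposition~\ref{l}, while the fact that $u$ inherits the almost-minimality is precisely Lemma~\ref{comparinguv}. The only thing to verify in the write-up is that the constants are genuinely independent of $r_1,r_2 \in (0,\tfrac{1}{4}]$, which is clear since both $C_1$ and $C_2$ are produced uniformly on this range.
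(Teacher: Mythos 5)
Your proof is correct and is exactly the paper's approach: the paper's own one-line proof says the estimate follows from Proposition~\ref{l} and Lemma~\ref{comparinguv}, which is precisely the triangle-inequality combination you spell out.
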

 
 \begin{proof}
The estimate follows from Proposition~\ref{l} and Lemma~\ref{comparinguv}. 
 \end{proof}

  \begin{lemma} \label{lulocal1}
If $u:M \rightarrow \tilde M \times_{\rho} \tilde X$ is the  harmonic section  of Theorem~\ref{theorem:pluriharmonicDim2}, then we have the following estimates in the subset $\Omega \times \D^*_{\frac{1}{4}}$ of a  set $ \Omega \times \bar\D $  of type (A) or  the subset $\Omega \times \bar \D^*: = \D_{\frac{1}{4},1} \times \bar\D^*$ of  a set $\bar \D \times \bar \D$ of type (B):
\begin{eqnarray*}
\int_{\Omega \times \bar \D^*_{\frac{1}{4}} }  \left| \frac{\partial u}{\partial z^1} \right|^2 dz^1 \wedge d\bar z^1 \wedge \frac{d\zeta \wedge d\bar z^2}{r^2(-\log r^2)^2} & < & \infty
 \\
  \int_{\Omega \times \bar \D^*_{\frac{1}{4}} }  \left( \left| \frac{\partial u}{\partial z^2} \right|^2 - \frac{\Ej}{8\pi r^2} \right) dz^1 \wedge d\bar z^1 \wedge dz^2 \wedge d\bar z^2    & < & \infty
\\
    \int_{\Omega \times \bar \D^*_{\frac{1}{4}} }   \left| \frac{\partial u}{\partial r} \right|^2  dz^1 \wedge d\bar z^1 \wedge dz^2 \wedge d\bar z^2    & < & \infty
  \\
 \int_{\Omega \times \bar \D^*_{\frac{1}{4}} }
  \left(   \left| \frac{\partial u}{\partial \theta} \right|^2 -\frac{\Ej}{2\pi} 
\right) 
dz^1 \wedge d\bar z^1 \wedge \frac{dz^2 \wedge d\bar z^2}{r^2}  & < & \infty
   \\
 \int_{\Omega \times \bar\D^*_{\frac{1}{4}}}
  \left| \frac{\partial u}{\partial \theta} \right|^2 
dz^1 \wedge d\bar z^1 \wedge \frac{dz^2 \wedge d\bar z^2}{r^2(-\log r^2)^2} 
& < & \infty
\\
   \int_{\Omega \times \bar \D^*_{\frac{1}{4}} }  \left| \frac{\partial u}{\partial z^2} \right|^2 dz^1 \wedge d\bar z^1 \wedge \frac{dz^2 \wedge d\bar z^2}{(-\log r^2)^2}    & < & \infty
\end{eqnarray*}
where $(z^1, z^2=re^{i\theta})$ are the standard product coordinates on $\Omega \times \bar \D$.
\end{lemma}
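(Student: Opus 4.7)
The plan is to derive all six integrability estimates from the energy comparison of Lemma~\ref{prelude} by isolating the $\theta$-direction as the unique source of logarithmic growth, then using the positivity of the remaining directional densities to extract finite $L^1$ bounds.

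I would begin by fixing $r_2=1/4$ and viewing Lemma~\ref{prelude} restricted to $\Omega \times \bar\D_{1/4}^*$, which gives the two-sided asymptotic
\begin{align*}
{^gE^u}[\Omega \times \D_{r_1, 1/4}] \;=\; \Ej\,\mathrm{Area}_{g_{\Sigma_j}}(\Omega)\, \log(1/(4r_1)) + O(1).
\end{align*}
Using the metric expressions (\ref{ginverse})--(\ref{closegP}) in a set of type (A), or (\ref{ginverseinB})--(\ref{closegPB}) in the away-from-crossing subset of a set of type (B), I will decompose the energy density $|\nabla u|^2/4 = g^{\alpha\bar\beta}\pi_u(\partial/\partial z^\alpha, \partial/\partial \bar z^\beta)$ into the diagonal terms $g^{1\bar 1}|\partial u/\partial z^1|^2$ and $g^{2\bar 2}|\partial u/\partial z^2|^2$ plus the cross terms involving $g^{1\bar 2}=O(r)$. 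Since $|g^{1\bar 2}|^2/(g^{1\bar 1} g^{2\bar 2}) = O((-\log r)^{-2})$, a weighted Cauchy--Schwarz of the form $|g^{1\bar 2}\pi_u(\partial/\partial z^1,\partial/\partial\bar z^2)| \leq \tfrac12 g^{2\bar 2}|\partial u/\partial z^2|^2 + \tfrac{2|g^{1\bar 2}|^2}{g^{2\bar 2}}|\partial u/\partial z^1|^2$ will show that cross terms contribute at most $O(1)$ once the first estimate is bootstrapped iteratively together with the integrability of $(-\log r)^{-2}\,\frac{dr}{r}$ near $r=0$.

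Next, the infimum characterization of $\Ej$ combined with the area identity (\ref{areacompA}) yields the lower bound
\begin{align*}
\int_{\Omega \times \D_{r_1,1/4}} \frac{|\partial u/\partial\theta|^2}{r^2}\, dz^1 \wedge d\bar z^1 \wedge \frac{dz^2 \wedge d\bar z^2}{-2i} \;\geq\; \Ej\, \mathrm{Area}_{g_{\Sigma_j}}(\Omega)\, \log\frac{1}{4r_1} - C,
\end{align*}
and the identity $|\partial u/\partial\theta|^2 = 2r^2|\partial u/\partial z^2|^2$ together with the fact that $g^{2\bar 2}d\mathrm{vol}_g$ equals $\lambda\,dz^1 d\bar z^1 dz^2 d\bar z^2$ up to $O(r)$-corrections shows that the $\theta$-diagonal matches the leading term exactly. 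Subtracting this lower bound from the upper bound of Lemma~\ref{prelude} then forces the $z^1$-diagonal, the $r$-part of the $z^2$-diagonal, and the $\theta$-residual all to be uniformly bounded as $r_1 \to 0$; these give, respectively, the first, third, and fourth estimates after matching measures. The second estimate follows by combining the third and fourth via $|\partial u/\partial z^2|^2 = \tfrac14(|\partial u/\partial r|^2 + r^{-2}|\partial u/\partial\theta|^2)$ and $\Ej/(8\pi r^2) = \tfrac14\cdot\Ej/(2\pi r^2)$. The fifth and sixth are weaker: the extra weight $(-\log r^2)^{-2}$ renders the otherwise-divergent term $\int\frac{\Ej/(2\pi)}{r}\,dr$ integrable near $r=0$, so the fifth follows from the (unsigned) fourth by absorbing the weight, and the sixth from the second analogously.

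The main obstacle will be the simultaneous control of the cross terms $g^{1\bar 2}\pi_u(\partial/\partial z^1,\partial/\partial\bar z^2)$ and the $O(r)$, $O(r^2)$ error terms in the metric expressions; they must be shown to contribute only $O(1)$ without spoiling the matching of the $\theta$-diagonal with the leading logarithmic term. In contrast with the prototype $v$, where Lemma~\ref{Aest} and Theorem~\ref{exists} provide pointwise derivative bounds, the harmonic section $u$ must be controlled purely through the integrated identity of Lemma~\ref{prelude}, which requires the weighted Cauchy--Schwarz argument above combined with a short bootstrap that first deduces the first and third estimates modulo a small cross-term error, then uses them to tighten the cross-term bound, arriving at the stated $L^1$ estimates.
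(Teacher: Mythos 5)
Your strategy for the first four estimates is essentially what the paper intends by stating that they ``follow immediately from Lemma~\ref{prelude}'': decompose the $g$-energy into the $z^1$-diagonal, the radial and angular parts of the $z^2$-diagonal, and cross terms, use the infimum characterization of $\Ej$ to peel off the leading logarithmic divergence in the $\theta$-diagonal, and control the cross terms via weighted Cauchy--Schwarz together with the $O(r)$ size of $g^{1\bar 2}$. The paper is terse here, and your expansion fills in sensible details (apart from a slip: you write $|\partial u/\partial\theta|^2 = 2r^2|\partial u/\partial z^2|^2$, which is false in general; the correct relation, which you do use later, is $|\partial u/\partial z^2|^2 = \tfrac14\left(|\partial u/\partial r|^2 + r^{-2}|\partial u/\partial\theta|^2\right)$). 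So far the two approaches agree in substance.

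The genuine gap is in your treatment of the fifth and sixth estimates. You propose to ``absorb the weight'' $(-\log r^2)^{-2}$ into the fourth estimate, but this step is not valid as stated: the integrand $|\partial u/\partial\theta|^2 - \Ej/(2\pi)$ is not pointwise non-negative, and the fourth estimate only gives a one-sided bound on its integral. Multiplying a possibly sign-changing integrand by $(-\log r^2)^{-2}\le 1$ does not in general decrease the integral, so finiteness of the fourth does not directly give finiteness of the fifth. What makes the argument work is the circle-average lower bound $\int_{\{z^1\}\times\partial\D_r}|\partial u/\partial\theta|^2\,d\theta \ge \Ej$, which forces the quantity $\psi(r):=\int_{\partial\D_r}|\partial u/\partial\theta|^2\,d\theta$ to be bounded below by the constant $c=\Ej/(2\pi)\cdot 2\pi$; combined with a dyadic decomposition this yields the weighted bound. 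This is exactly the content and hypothesis of Lemma~\ref{calculus}, which the paper invokes at this point. Your proposal omits this mechanism, and the parenthetical ``(unsigned) fourth'' does not substitute for it. The sixth estimate has the same issue through its $\theta$-part. To repair the argument, invoke the circle-average lower bound from the infimum characterization of $\Ej$ and run the dyadic comparison of Lemma~\ref{calculus}, or cite that lemma directly.
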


\begin{proof}
All the estimates except for the last two follow immediately from Lemma~\ref{prelude}.  The last two follow from the other estimates and  Lemma~\ref{calculus}.\end{proof}

Recall that the standard product coordinates $(z^1,z^2)$ on a set $\Omega \times \bar \D$ of type (A) are not necessarily the holomorphic coordinates $(z^1,\zeta)$ of  Definition~\ref{holcoord}.   We will now reframe the statements of Lemma~\ref{lulocal1}  in terms of  the holomorphic coordinates on the set of type (A).  We first need some estimates that compares  $\zeta$ to $z^2$.

 \begin{lemma}\label{zzetaest1}
If $(z^1,z^2=re^{i\theta})$ and $(z^1, \zeta=se^{i\eta})$ are the standard product coordinates and holomorphic coordinates respectively on a set $\Omega \times \bar \D$ of type (A), then 
 \begin{eqnarray*}
 r=as+O(r^2)\\
 \frac{\partial z^2}{\partial \zeta} = a(1+O(r)) & & \frac{\partial \bar z^2}{\partial \zeta} = O(r)
 \\
 \frac{\partial r}{\partial s}=|\alpha|(1+O(r)) & & \frac{\partial \theta}{\partial s}=O(1) \\
\frac{\partial r}{\partial \eta} = O(r^2) & & \frac{\partial \theta}{\partial \eta}=O(1)
\end{eqnarray*}
where $a$ is a smooth function both bounded above and bounded away from 0 (cf.~(\ref{az}) and (\ref{laer})).
\end{lemma}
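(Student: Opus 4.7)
The plan is to derive all the estimates directly from the single master relation (\ref{laer}), i.e.\ $a = \frac{z^2}{\zeta}(1 + O(r))$, or equivalently $z^2 = a\,\zeta\,(1 + \epsilon)$ with $\epsilon$ a smooth function of $(\zeta,\bar\zeta)$ (parametrized by $z^1$) satisfying $\epsilon = O(r)$. Since $a$ is smooth on $\Omega\times\bar\D$ and bounded away from $0$, the modulus $|a|$ is bounded above and below by positive constants, so taking moduli in $z^2 = a\zeta(1+\epsilon)$ immediately gives $r = |a|\,s\,(1+O(r))$, which rearranges as $r = |a|\,s + O(r^2)$; this is how I interpret the first displayed identity (and yields in particular the comparability $s\asymp r$ we use repeatedly below).

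Next I will compute the Wirtinger derivatives. Differentiating $z^2 = a\,\zeta\,(1+\epsilon)$ in $\zeta$ at fixed $\bar\zeta$ and $z^1$,
\begin{equation*}
\frac{\partial z^2}{\partial \zeta} = a(1+\epsilon) + \zeta\bigl(\partial_\zeta a\bigr)(1+\epsilon) + a\,\zeta\,\partial_\zeta \epsilon = a + O(s) = a\bigl(1+O(r)\bigr),
\end{equation*}
because $a$ is smooth (hence $\partial_\zeta a$ and $\partial_\zeta\epsilon$ are bounded) and $|\zeta| = s = O(r)$. Similarly, differentiating $\bar z^2 = \bar a\,\bar\zeta\,(1+\bar\epsilon)$ with respect to $\zeta$ annihilates the leading term $\bar a \bar\zeta$, leaving only contributions proportional to $\bar\zeta$, which are $O(s) = O(r)$; this yields $\partial\bar z^2/\partial \zeta = O(r)$.

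For the polar-coordinate derivatives, I will pass to Taylor expansions about $\zeta = 0$. Writing $a(z^1,\zeta,\bar\zeta) = a_0(z^1) + a_1(z^1)\zeta + a_2(z^1)\bar\zeta + O(|\zeta|^2)$ and $\epsilon = O(s)$, the identity $z^2 = a\zeta(1+\epsilon)$ becomes $r\,e^{i\theta} = a_0\,s\,e^{i\eta} + O(s^2)$. Taking modulus and argument gives
\begin{equation*}
r = |a_0|\,s + O(s^2), \qquad \theta = \eta + \arg(a_0) + O(s).
\end{equation*}
Because $a_0 = a_0(z^1)$ is independent of $\eta$, the only $\eta$-dependence of $r$ and $\theta$ enters through the $O(s^2)$ and $O(s)$ remainders. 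Differentiation of the Taylor expansion with respect to $s$ (at fixed $\eta$, $z^1$) produces $\partial r/\partial s = |a_0| + O(s) = |a|\,(1+O(r))$ and $\partial\theta/\partial s = O(1)$ directly. Differentiating with respect to $\eta$ kills the leading $|a_0|\,s$ and $\eta + \arg a_0$ contributions to $r$ and $\theta$ respectively: one picks up at most an $s\cdot\partial_\eta(|a|)$-type term for $r$, which is $s\cdot O(s) = O(r^2)$; and $\partial_\eta \theta = 1 + \partial_\eta\arg(a) + \partial_\eta O(s) = 1 + O(r) = O(1)$.

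The main technical point is the sharper bound $\partial r/\partial\eta = O(r^2)$ rather than the naive $O(r)$; this requires observing that the $\eta$-dependence of $|a|$ itself is $O(s)$ (coming from the Taylor terms $a_1\zeta + a_2\bar\zeta$, which depend on $\eta$ only through $se^{\pm i\eta}$ with a prefactor $s$), and then multiplying by the explicit factor of $s$ in $r = s|a| + O(s^2)$ to obtain the desired $O(s^2) = O(r^2)$. All other estimates are routine consequences of the chain rule, the smoothness and non-vanishing of $a$, and the comparability $r \asymp s$ established in the first step.
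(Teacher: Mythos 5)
Your proof is correct, and for the first three estimates it coincides with the paper's argument: both take moduli in $z^2 = a\zeta(1 + O(r))$ and differentiate with respect to $\zeta$, using the smoothness and non-vanishing of $a$. For the last four polar-coordinate estimates, however, you take a genuinely different route. The paper evaluates the previously derived one-form identity (\ref{sreta}), namely $\frac{dr}{r}-\frac{ds}{s}=O(1)(dz^1+d\bar z^1+dz^2+d\bar z^2)$ and $d\theta-d\eta=O(1)(\cdots)$, on the coordinate vector fields $\partial/\partial s$ and $\partial/\partial \eta$; the key input is that $dz^2(\partial/\partial s)=O(1)$ and $dz^2(\partial/\partial \eta)=O(r)$, and the extra factor of $1/r$ in $dr/r$ then yields $\partial r/\partial \eta = O(r^2)$. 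You instead Taylor-expand $a$ around $\zeta=0$ to get $re^{i\theta} = a_0(z^1)\,s\,e^{i\eta} + O(s^2)$ and read off the derivatives of modulus and argument directly. Each approach has its advantages: the paper's is shorter because it piggybacks on (\ref{sreta}), which was already established for the metric computations; yours is more self-contained, does not depend on having derived (\ref{sreta}), and makes transparent exactly where the crucial gain from $O(r)$ to $O(r^2)$ in $\partial r/\partial\eta$ comes from, namely that the leading coefficient $a_0(z^1)$ is $\eta$-independent and the $\eta$-dependence of $|a|$ enters only at order $O(s)$. Both derivations are valid; the substance and conclusions agree.
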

\begin{proof}
Since 
\[
z^2 =a\zeta + O(r^2)
\]
by  (\ref{laer}), the first estimate follows immediately.
Furthermore, differentiating the above with respect to $\zeta$, we obtain the next two estimates. 
The last four estimates are obtained  
by evaluating the differential forms of (\ref{sreta}) on the vector fields 
$\frac{\partial }{\partial s}$,
$\frac{\partial }{\partial \eta}$ 
and using the fact that 
$|\frac{\partial }{\partial s}|=O(1)$, 
$|\frac{\partial }{\partial \eta}|=O(r^2)$.
\end{proof}

  \begin{theorem} \label{weneedA}
If $u:M \rightarrow \tilde M \times_{\rho} \tilde X$ is the  harmonic section  of Theorem~\ref{theorem:pluriharmonicDim2}, then we have the following estimates in the set $\Omega \times \D^*_{\frac{1}{4}}$ away from a crossing (i.e.~a subset of a  set $ \Omega \times \bar\D $  of type (A) or a subset $\Omega \times \bar \D^*: = \D_{\frac{1}{4},1} \times \bar\D^*$ of  a set $\bar \D \times \bar \D$ of type (B)),
\begin{eqnarray*}
\int_{\Omega \times \bar \D^*_{\frac{1}{4}} }  \left| \frac{\partial u}{\partial z^1} \right|^2 dz^1 \wedge d\bar z^1 \wedge \frac{d\zeta \wedge d\bar \zeta}{s^2(-\log s^2)^2} & < & \infty
 \\
  \int_{\Omega \times \bar \D^*_{\frac{1}{4}} }  \left( \left| \frac{\partial u}{\partial \zeta} \right|^2 - \frac{\Ej}{8\pi s^2} \right) dz^1 \wedge d\bar z^1 \wedge d\zeta \wedge d\bar \zeta    & < & \infty
  \\
   \int_{\Omega \times \bar \D^*_{\frac{1}{4}} }  \left| \frac{\partial u}{\partial \zeta} \right|^2 dz^1 \wedge d\bar z^1 \wedge \frac{d\zeta \wedge d\bar \zeta}{(-\log s^2)^2}    & < & \infty
\\
    \int_{\Omega \times \bar \D^*_{\frac{1}{4}} }   \left| \frac{\partial u}{\partial s} \right|^2  dz^1 \wedge d\bar z^1 \wedge d\zeta \wedge d\bar \zeta    & < & \infty
  \\
 \int_{\Omega \times \bar \D^*_{\frac{1}{4}} }
  \left(   \left| \frac{\partial u}{\partial \eta} \right|^2 -\frac{\Ej}{2\pi} 
\right) 
dz^1 \wedge d\bar z^1 \wedge \frac{d\zeta \wedge d\bar \zeta}{s^2}  & < & \infty \\
 \int_{\Omega \times \bar\D^*_{\frac{1}{4}}}
  \left| \frac{\partial u}{\partial \eta} \right|^2 
dz^1 \wedge d\bar z^1 \wedge \frac{d\zeta \wedge d\bar \zeta}{s^2(-\log s^2)^2} 
& < & \infty
\end{eqnarray*}

where $(z^1, \zeta=s e^{i\eta})$ are the holomorphic coordinates on $\Omega \times \bar \D$ (cf.~Definition~\ref{holcoord}).
\end{theorem}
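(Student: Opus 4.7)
The plan is to prove Theorem~\ref{weneedA} by a direct change of variables from the standard product coordinates $(z^1, z^2 = re^{i\theta})$ to the holomorphic coordinates $(z^1, \zeta = se^{i\eta})$, using Lemma~\ref{zzetaest1} to convert derivatives and the estimates in Lemma~\ref{lulocal1} to control the resulting integrals. In a set of type (B), the standard product coordinates \emph{are} the holomorphic coordinates (cf.~Definition~\ref{product=holomorphic}), so the theorem reduces immediately to Lemma~\ref{lulocal1}; I would therefore focus the entire argument on the type (A) case, where the reframing is genuinely needed.

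First, I would record the basic comparability facts that are implicit in Lemma~\ref{zzetaest1}. Since the function $a$ is smooth and bounded above and away from $0$, the identity $r = |a|s + O(r^2)$ gives $\tfrac{1}{C} \leq \tfrac{s}{r} \leq C$ and hence $|\log s^2 - \log r^2| \leq C$, so that the weights $s^2(-\log s^2)^2$ and $r^2(-\log r^2)^2$ are uniformly comparable. Moreover, the Jacobian of the map $(\zeta,\bar\zeta)\mapsto (z^2,\bar z^2)$ equals $|a|^2 + O(r)$, so $d\zeta\wedge d\bar\zeta$ and $dz^2\wedge d\bar z^2$ are uniformly comparable volume forms. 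Next, using the chain rule $\partial_\zeta = \tfrac{\partial z^2}{\partial \zeta}\partial_{z^2} + \tfrac{\partial \bar z^2}{\partial \zeta}\partial_{\bar z^2}$ and the analogous expressions for $\partial_{\bar\zeta}, \partial_s, \partial_\eta$, I would expand $|\partial u/\partial \zeta|^2 = \pi_u(\partial_\zeta, \partial_{\bar\zeta})$ and the analogous quantities in terms of the Korevaar--Schoen tensor components in the $(z^2)$-coordinates. Lemma~\ref{zzetaest1} then gives, for instance,
\[
\left|\tfrac{\partial u}{\partial \zeta}\right|^2 = |a|^2 \left|\tfrac{\partial u}{\partial z^2}\right|^2 + O(r)\cdot\bigl(|\partial u/\partial z^2|^2 + \pi_u(\partial_{z^2},\partial_{z^2}) + \pi_u(\partial_{\bar z^2},\partial_{\bar z^2})\bigr),
\]
and bounded-coefficient expressions for $|\partial u/\partial s|^2$ and $|\partial u/\partial \eta|^2$ in terms of $|\partial u/\partial r|^2$, $|\partial u/\partial \theta|^2$, and the cross term $\pi_u(\partial_r,\partial_\theta)$.

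With these preliminaries in place, the four estimates not involving a subtraction follow immediately by combining the chain rule identities, the comparability of weights, and the corresponding estimates of Lemma~\ref{lulocal1} (using $|\pi_u(\partial_r,\partial_\theta)|\leq \tfrac12(|\partial u/\partial r|^2 + \tfrac{1}{r^2}|\partial u/\partial \theta|^2)$ wherever Cauchy--Schwarz on the pull-back tensor is required). The main obstacle is the second estimate, namely $\int (|\partial u/\partial \zeta|^2 - \tfrac{E_j}{8\pi s^2})\, dz^1 d\bar z^1 d\zeta d\bar \zeta <\infty$, where both terms individually diverge like $1/s$ and the finiteness depends on a precise cancellation. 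For this I would write
\[
\left|\tfrac{\partial u}{\partial \zeta}\right|^2 - \tfrac{E_j}{8\pi s^2} = |a|^2\Bigl(\left|\tfrac{\partial u}{\partial z^2}\right|^2 - \tfrac{E_j}{8\pi r^2}\Bigr) + \tfrac{E_j}{8\pi}\Bigl(\tfrac{|a|^2}{r^2} - \tfrac{1}{s^2}\Bigr) + (\mathrm{error}),
\]
where the error consists of the $O(r)$ terms from the chain rule expansion. Squaring the asymptotic $r = |a|s + O(s^2)$ yields $r^2 = |a|^2 s^2 + O(s^3)$, so $\bigl|\tfrac{|a|^2}{r^2} - \tfrac{1}{s^2}\bigr| = O(1/s)$, which integrates to a finite quantity against $d\zeta\wedge d\bar\zeta \sim s\, ds\, d\eta$. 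The error terms of the form $O(r)|\partial u/\partial z^2|^2$ are integrable against $d\zeta d\bar\zeta$ because $|\partial u/\partial z^2|^2 \sim 1/r^2$ near the divisor while $r\,d\zeta d\bar\zeta \sim r^2\,dr\,d\theta$, and the integrals of $O(r)\pi_u(\partial_{z^2},\partial_{z^2})$ and its conjugate are handled by expanding into $r$- and $\theta$-derivatives and invoking the pointwise bounds $r^2|\partial u/\partial r|^2$ and $|\partial u/\partial \theta|^2$ given by Theorem~\ref{exists}(ii) fiberwise. The first term, $|a|^2(|\partial u/\partial z^2|^2 - \tfrac{E_j}{8\pi r^2})$, has a finite integral against $dz^1 d\bar z^1 dz^2 d\bar z^2$ directly by Lemma~\ref{lulocal1}, and the bounded Jacobian converts this into the desired integral against $dz^1 d\bar z^1 d\zeta d\bar\zeta$. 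The fifth estimate (with subtraction of the constant $E_j/(2\pi)$) is handled analogously but is simpler: the identity $\theta = \eta + \arg a(z^1,0) + O(s)$, derived from $z^2 = a\zeta + O(r^2)$, gives $\partial\theta/\partial\eta = 1 + O(s)$, so $|\partial u/\partial \eta|^2 - \tfrac{E_j}{2\pi} = (|\partial u/\partial \theta|^2 - \tfrac{E_j}{2\pi}) + O(s)\cdot|\partial u/\partial\theta|^2 + O(r^2)\cdot(\ldots)$, and the integrability against $dz^1 d\bar z^1 d\zeta d\bar\zeta/s^2 \sim dz^1 d\bar z^1 \,s^{-1}ds\,d\eta$ follows from Lemma~\ref{lulocal1}.
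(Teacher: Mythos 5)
Your proposal is correct and follows essentially the same approach as the paper's proof: a direct change of variables from the standard product coordinates $(z^1,z^2)$ to the holomorphic coordinates $(z^1,\zeta)$ using the asymptotics of Lemma~\ref{zzetaest1}, reducing each estimate to the corresponding one in Lemma~\ref{lulocal1}. Your treatment is somewhat more explicit than the paper's — in particular you spell out the cross-term bookkeeping for the Korevaar--Schoen pullback tensor and the refinement $\partial\theta/\partial\eta = 1 + O(s)$ needed for the fifth estimate, which the paper leaves implicit — but the underlying argument is the same.
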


\begin{proof} 
By Lemma~\ref{zzetaest1}, we have
 \begin{eqnarray*}  
s^2 (\log s^2)^2 & = & r^2(\log r^2)O(1),
\\
\frac{1}{s^2} & = & \frac{|a|}{r^2}(1+O(r)),
\\
\frac{\partial u}{\partial \zeta}\
& = &\frac{\partial u}{\partial z^2} \frac{\partial z^2}{\partial \zeta} +  \frac{\partial u}{\partial \bar z^2} \frac{\partial \bar z^2}{\partial \zeta}=\frac{\partial u}{\partial z^2}a(1+O(r)) +  \frac{\partial u}{\partial \bar z^2} O(r),
\\
\frac{\partial u}{\partial \eta}  & = &  \frac{\partial u}{\partial r}\frac{\partial r}{\partial \eta}+\frac{\partial u}{\partial \theta}\frac{\partial \theta}{\partial \eta} 
= \frac{\partial u}{\partial r}O(r^2)  +  \frac{\partial u}{\partial \theta}O(1).
\end{eqnarray*}
Thus,
\begin{eqnarray*}
 \left|\frac{\partial u}{\partial  \zeta}\right|^2 - \frac{\Ej}{8\pi s^2}
&=& 
|a|^2\left(  \left|\frac{\partial u}{\partial z^2}\right|^2-\frac{\Ej}{8\pi r^2}\right) (1+O(r))
\\
\left| \frac{\partial u}{\partial \eta} \right|^2 
& = &   \left| \frac{\partial u}{\partial \theta}\right|^2 O(1) +\left|\frac{\partial u}{\partial r}\right|^2 O(r^2). 
\end{eqnarray*}
Thus, the second, third and fourth estimates now follow from Lemma~\ref{lulocal1}.   The first estimate is a restatement of the first estimate of  Lemma~\ref{lulocal1}.   
\end{proof}

\begin{theorem} \label{weneedB}
If $u:M \rightarrow \tilde M \times_{\rho} \tilde X$ is the  harmonic section  of Theorem~\ref{theorem:pluriharmonicDim2}, then we have the following estimates in the set  $\bar \D^*_{\frac{1}{4}}  \times  \bar \D^*_{\frac{1}{4}}$ at a crossing (i.e.~a subset of a set $\bar \D \times \bar\D$ of type (B)),
\begin{eqnarray*}
\int_{\bar \D^*_{\frac{1}{4}}  \times  \bar \D^*_{\frac{1}{4}}}
\left( 
   \left| \frac{\partial u}{\partial z^1} \right|^2 -  \frac{\Ei}{8\pi \rho^2}
   \right)  dz^1 \wedge d\bar z^1\wedge \frac{dz^2 \wedge d\bar z^2}{r^2 (-\log r^2)^2} & < & \infty 
   \\
   \int_{\bar \D^*_{\frac{1}{4}}  \times  \bar \D^*_{\frac{1}{4}}}
  \left(  \left| \frac{\partial u}{\partial z^2} \right|^2 -  \frac{\Ej}{8\pi r^2} \right)  \frac{dz^1 \wedge d\bar z^1}{\rho^2 (-\log \rho^2)^2} \wedge dz^2 \wedge d\bar z^2 & < & \infty
\\
   \int_{\bar \D^*_{\frac{1}{4}}  \times  \bar \D^*_{\frac{1}{4}}}
  \left| \frac{\partial u}{\partial \rho} \right|^2  dz^1 \wedge d\bar z^1 \wedge \frac{dz^2 \wedge d\bar z^2}{r^2(-\log r^2)^2} & < & \infty
  \\
   \int_{\bar \D^*_{\frac{1}{4}}  \times  \bar \D^*_{\frac{1}{4}}}
  \left| \frac{\partial u}{\partial r} \right|^2  \frac{dz^1 \wedge d\bar z^1}{\rho^2 (-\log \rho^2)^2}  \wedge dz^2 \wedge d\bar z^2 & < & \infty
 \\
\int_{\bar \D^*_{\frac{1}{4}}  \times  \bar \D^*_{\frac{1}{4}}}
 \left(  \left| \frac{\partial u}{\partial \phi} \right|^2 - \frac{\Ej}{2\pi}
\right)  \frac{dz^1 \wedge d\bar z^1}{\rho^2 } \wedge \frac{dz^2 \wedge d\bar z^2}{r^2 (-\log r^2)^2} & < & \infty
\\
\int_{\bar \D^*_{\frac{1}{4}}  \times  \bar \D^*_{\frac{1}{4}}}
 \left( \left| \frac{\partial u}{\partial \theta} \right|^2 -  \frac{\Ei}{2\pi}
\right) 
\frac{dz^1 \wedge d\bar z^1}{\rho^2 (-\log \rho^2)^2} \wedge \frac{dz^2 \wedge d\bar z^2}{r^2} & < & \infty
\end{eqnarray*}
where $(z^1=\rho e^{i\phi},z^2=re^{i\theta})$  are the holomorphic coordinates on $\bar \D \times \bar\D$ (cf.~Definition~\ref{product=holomorphic}).
\end{theorem}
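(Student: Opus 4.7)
The plan is to parallel the proof of Theorem~\ref{weneedA} while exploiting the symmetry of a set of type (B) at a crossing. A key simplification is that on $\bar\D^*_{\frac{1}{4}} \times \bar\D^*_{\frac{1}{4}}$ the standard product coordinates $(z^1,z^2)$ are already holomorphic (cf.~Definition~\ref{product=holomorphic}), so no analogue of Lemma~\ref{zzetaest1} is needed at the end. The new feature is that both variables degenerate simultaneously, so the estimates must be obtained in both directions; but the explicit symmetry of the product metric $P$ in~(\ref{productmetricB}) lets us run the $z^1$- and $z^2$-arguments in parallel.

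First, I would combine Lemma~\ref{prelude} with the metric expressions~(\ref{ginB}), (\ref{ginverseinB}), (\ref{closegPB}) and the volume identities~(\ref{volumeprodB}) to separate the total $g$-energy of $u$ on $\bar\D^*_{r,\frac{1}{4}} \times \bar\D^*_{r,\frac{1}{4}}$ into its $z^1$- and $z^2$-directional contributions. The point is that
\[
g^{1\bar 1}\, d\mathrm{vol}_g = \frac{dz^1 \wedge d\bar z^1}{-2i} \wedge \frac{dz^2 \wedge d\bar z^2}{-2i\,r^2(\log r^2)^2}\bigl(1 + O(\rho^2(\log\rho^2)^2) + O(r^2(\log r^2)^2)\bigr),
\]
with the symmetric statement for $g^{2\bar 2}\, d\mathrm{vol}_g$, while the off-diagonal terms $g^{1\bar 2},\,g^{2\bar 1} = O(\diamond\Box)$ contribute an error that an AM--GM step bounds by $\diamond\,|\partial u/\partial z^1|^2 + \Box\,|\partial u/\partial z^2|^2$ times $d\mathrm{vol}_g$; these are dominated by the diagonal leading terms modulo bounded remainder. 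Following the splitting already used in Lemma~\ref{lemmadirvB2}, where the full energy decomposes into an $\Ei$-piece along $\Sigma_i$ and an $\Ej$-piece along $\Sigma_j$, I then match the directional contributions against $\Ei\,\mathrm{Area}_{P_{\Sigma_i}}(\D_{r,\frac{1}{4}})\log\frac{1}{4r}$ and $\Ej\,\mathrm{Area}_{P_{\Sigma_j}}(\D_{r,\frac{1}{4}})\log\frac{1}{4r}$ respectively. Subtracting leading orders yields the first two displayed estimates.

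The remaining four estimates then follow as in the proof of Lemma~\ref{lulocal1} and Theorem~\ref{weneedA}. Decomposing
\[
\Bigl|\tfrac{\partial u}{\partial z^k}\Bigr|^2 = \tfrac{1}{4}\Bigl|\tfrac{\partial u}{\partial \mathrm{rad}_k}\Bigr|^2 + \tfrac{1}{4\,\mathrm{rad}_k^2}\Bigl|\tfrac{\partial u}{\partial \mathrm{ang}_k}\Bigr|^2
\]
with $(\mathrm{rad}_1,\mathrm{ang}_1) = (\rho,\phi)$ and $(\mathrm{rad}_2,\mathrm{ang}_2) = (r,\theta)$, the first two estimates together with harmonicity isolate separate radial and angular integrability. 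Applying Lemma~\ref{calculus} once in each of $\rho$ and $r$ trades "finite leading-order-subtracted integral" for "bounded logarithmically-weighted integral", producing the four remaining estimates.

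The main obstacle is that both $z^1$ and $z^2$ degenerate at once, unlike in Theorem~\ref{weneedA} where $z^1$ lay in a smooth compact region $\Omega \subset \Sigma_j$; this requires iterating the calculus-lemma trick in both variables and a careful estimate of the off-diagonal metric terms. However, the product form of $P$ (cf.~(\ref{productmetricB}), Remark~\ref{areasymmetry}) means that every $z^1$-argument can be copied verbatim for $z^2$, so once the $z^1$-case is written down the $z^2$-case follows by symmetry.
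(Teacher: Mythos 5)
Your overall strategy is close to the paper's: the paper's proof of Theorem~\ref{weneedB} also exploits that in a type~(B) set the standard product coordinates are already holomorphic (so no analogue of Lemma~\ref{zzetaest1} is needed), and then reads the estimates off from Lemma~\ref{lemmadirvB2} (directional $P$-energy split into $\mathsf{E}_i$- and $\mathsf{E}_j$-pieces on the box $\D_{r,\frac{1}{4}} \times \D_{r,\frac{1}{4}}$) together with the volume comparison~(\ref{volcompB}), with the transfer from the prototype $v$ to the harmonic section $u$ supplied by the comparison lemmas. Your plan of bounding the off-diagonal $g^{1\bar 2}$-term via Cauchy--Schwarz and the estimate $g^{1\bar 2}=O(\diamond\Box)$ from~(\ref{closegPB}) is a reasonable way to redo the $v$-to-$u$ and $g$-to-$P$ steps.

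However, the last paragraph contains a genuine misconception: you claim the final four estimates require iterating Lemma~\ref{calculus} once in $\rho$ and once in $r$. Look again at the weights: in each of the six displayed estimates the $\frac{1}{(-\log\,\cdot\,^2)^2}$ factor appears in exactly one of the two variables, and always in the variable \emph{opposite} to the one being differentiated. There is no estimate with the double log-weight $\frac{1}{\rho^2(-\log\rho^2)^2}\cdot\frac{1}{r^2(-\log r^2)^2}$. Consequently estimates 3--6 follow directly from estimates 1--2 by the polar decomposition $|\partial u/\partial z^1|^2 = \frac14|\partial u/\partial\rho|^2 + \frac{1}{4\rho^2}|\partial u/\partial\phi|^2$ together with the lower bound $\int_0^{2\pi}|\partial u/\partial\phi|^2\,d\phi \geq \mathsf{E}_i$ coming from the definition of $\mathsf{E}_i$ (which makes both the radial summand and the angular excess nonnegative after integrating over the circle), and symmetrically for $z^2$. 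Lemma~\ref{calculus} is not needed here, and indeed as stated it cannot be applied: its hypotheses place the first factor in a smooth compact region ($\Omega\subset\Sigma_j$ or $\D_{\frac14,1}$), not in a second punctured disk, so your proposed double application of it would require reproving a new version. This is not fatal to your argument since the step is unnecessary, but you should recognize that the structure of the six estimates was chosen precisely so that the crossing case needs \emph{less} from Lemma~\ref{calculus} than the type~(A) case, not more.
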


\begin{proof}
The standard product coordinates of a set of type (B) are also the holomorphic coordinates.
Thus, these estimates follow immediately from 
Lemma~\ref{lemmadirvB2} and  
(\ref{volcompB}).
\end{proof}

\section{Pluriharmonicity}
\label{sec:pluriharmonicity}
 
 The goal of this section is to prove Theorem~\ref{pluriharmonicity}.  In other words, we show  the pluriharmonicity of the  harmonic section $u:M \rightarrow \tilde M \times_\rho \tilde X$ of Theorem~\ref{theorem:pluriharmonicDim2} in the case when the target space $\tilde X$ is as in Theorem~\ref{theorem:pluriharmonic}.  
 \begin{quote}
{\it  For simplicity of exposition, we will assume that $\tilde X$ is a complete, simply connected K\"ahler manifold of strong nonpositive curvature in the sense of Siu.}
 \end{quote}
  Thus, we can apply the variation of Siu's Bochner formula, Theorem~\ref{mochizukibochnerformula}.  
  More generally, $\tilde X$ is a complete, simply connected Riemannian manifold of Hermitian negative curvature (cf.~Remark~\ref{symHer}).  Thus, the other cases follow analogously by using the variation of Sampson's Bochner formula, Theorem~\ref{mochizukibochnerformula'}, and thus we omit the details.

 The proof involves two steps:  first is to show the integrability of all the terms in the Siu's Bochner formula  of Theorem~\ref{siubochner} (cf.~Subsection~\ref{sec:integrability})  and second is to  apply the modified Siu's Bochner formula of Theorem~\ref{mochizukibochnerformula} to prove each of these terms are actually equal to 0 (cf.~Subsection~\ref{plur}).
Our proof closely follows Mochizuki's  in \cite{mochizuki-memoirs}, but we apply the modified Bochner formula of Theorem~\ref{mochizukibochnerformula} instead of the one used in his paper.

In Subsection~\ref{sec:integrability}, we prove the integrability result.   In the case when $M$ is a closed K\"ahler manifold, Stoke's theorem implies that the integration over the  $\partial \bar \partial$-term of Siu's formula (i.e.~the left hand side of the formula from Theorem~\ref{siubochner}) is equal to 0.  On the other hand,  we are considering  non-compact domains. Thus, we must first multiply Siu's formula by an appropriate cut-off function to apply Stoke's Theorem.  Unlike the compact case, this process yields terms   that apriori seem to be an obstruction to pluriharmonicity. Nevertheless,  this computation shows  that the curvature function $Q$ defined in Lemma~\ref{branch} and the norm squared of the Hessian associated to $u$ are integrable.   

In Subsection~\ref{plur}, we prove the pluriharmonicity result, Theorem~\ref{pluriharmonicity}.  
This is accomplished by combining the integrability results from Subsection~\ref{sec:integrability} and  the modified Bochner formula of Theorem~\ref{mochizukibochnerformula}.
 
\subsection{Integrability of the curvature and Hessian terms from Siu's formula} \label{sec:integrability}

Let  $u:M \rightarrow \tilde M \times_\rho \tilde X$ be the harmonic section of Theorem~\ref{theorem:pluriharmonicDim2}). 
Denote by $\mathcal V \rightarrow \tilde M \times_\rho \tilde X \rightarrow M$  the vertical tangent bundle of $\tilde M \times_\rho \tilde X \rightarrow M$.  Let $\mathcal V^\C$ denote the vertical complexified  tangent bundle and 
\[
E= u^*(\mathcal V^\C).
\]
The Levi-Civita connection on $\tilde X$ induces a connection on $E$, and  the
Bochner formulas of  Chapter~\ref{chap:bochner}  can be interpreted in terms of sections of the appropriate bundles induced by $E$.  In particular, the section $u$ satisfies Siu's Bochner formula (cf.~Theorem~\ref{siubochner} with $n=2$)
\begin{eqnarray*}
 \partial  \bar \partial\{\bar \partial u,\bar \partial u \} \nonumber
& = &    \left(4\left|\partial_{E'}\bar \partial u \right|^2 +Q\right) \frac{ \omega^2}{2}\end{eqnarray*}
 where $Q$ is the curvature term defined in Lemma~\ref{branch} and $\omega$ is the K\"ahler form of $M$.  The goal is to show that the right hand side above is integrable in $M$.  We first prove the following preliminary lemma.

\begin{lemma} \label{lemma:prelim}
Let $V=\bar \D_{\frac{1}{4}}^*  \times  \bar \D_{\frac{1}{4}}^*$ be a set  at  a crossing (cf.~Theorem~\ref{weneedB})
and  $(z^1=\rho e^{i\phi},z^2=r e^{i\theta})$ be holomorphic coordinates in $V$.  
  If   $F_N$ is a sequence of functions defined on $V$ satisfying the following:
   \begin{itemize}
   \item[(a)] $\displaystyle{|F_N(z^1,z^2)| \leq \frac{c}{(-\log r^2)^2}}$ for some constant $c>0$
   \item[(b)] $\displaystyle{ c_0:=\int_V   F_N(z^1,z^2)
\frac{dz^1 \wedge d\bar z^1}{\rho^2} \wedge \frac{dz^2 \wedge d\bar z^2}{r^2}}$ is  independent of $N$.

\item[(c)] For any  $z^2 \in \D_{\frac{1}{4}}^*$ with $|z^2|=r$, 
$F_N(z^1,z^2)=0$  for $N$ sufficiently large,
   \end{itemize}
    then
  \[
  \lim_{N \rightarrow \infty}  \int_V F_N(z^1,z^2) \left| \frac{\partial u}{\partial z^1}\right|^2 dz^1 \wedge d\bar z^1 \wedge \frac{dz^2 \wedge d\bar z^2}{r^2} =\frac{c_0 \Ei}{8\pi}.
    \]
  \end{lemma}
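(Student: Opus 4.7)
The plan is to isolate inside $\left|\frac{\partial u}{\partial z^1}\right|^2$ the expected leading singular term $\frac{\Ei}{8\pi\rho^2}$, evaluate its contribution exactly via hypothesis~(b), and show that the remainder vanishes in the limit by dominated convergence. First I would decompose
\begin{align*}
\int_V F_N \left|\frac{\partial u}{\partial z^1}\right|^2 dz^1 \wedge d\bar z^1 \wedge \frac{dz^2 \wedge d\bar z^2}{r^2}
&= \frac{\Ei}{8\pi} \int_V F_N \frac{dz^1 \wedge d\bar z^1}{\rho^2} \wedge \frac{dz^2 \wedge d\bar z^2}{r^2} + B_N,
\end{align*}
where
$$B_N := \int_V F_N \left(\left|\frac{\partial u}{\partial z^1}\right|^2 - \frac{\Ei}{8\pi\rho^2}\right) dz^1 \wedge d\bar z^1 \wedge \frac{dz^2 \wedge d\bar z^2}{r^2}.$$
By hypothesis~(b), the first summand on the right equals $c_0 \Ei/(8\pi)$ for every $N$, so the lemma reduces to showing $B_N \to 0$.

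To handle $B_N$, I would absorb the logarithmic weight that makes the subtraction of $\Ei/(8\pi\rho^2)$ from $\left|\partial u/\partial z^1\right|^2$ integrable in Theorem~\ref{weneedB}. Setting $H_N := F_N(-\log r^2)^2$, the remainder rewrites as
$$B_N = \int_V H_N \cdot \left(\left|\frac{\partial u}{\partial z^1}\right|^2 - \frac{\Ei}{8\pi\rho^2}\right) \frac{dz^1 \wedge d\bar z^1 \wedge dz^2 \wedge d\bar z^2}{r^2(-\log r^2)^2}.$$
Hypothesis~(a) gives the uniform bound $|H_N| \leq c$, while hypothesis~(c) gives $H_N \to 0$ pointwise on $V$. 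The second estimate in Theorem~\ref{weneedB} asserts that the $H_N$-independent factor is integrable on $V$, so the Dominated Convergence Theorem yields $B_N \to 0$, completing the proof.

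The main subtlety will be verifying that Theorem~\ref{weneedB} provides absolute (not merely signed) integrability of the kernel, since $\left|\partial u/\partial z^1\right|^2 - \Ei/(8\pi\rho^2)$ can a priori change sign; this is what the standard form of dominated convergence requires. If that needed separate justification, I would reduce to it via the polar identity $4\left|\partial u/\partial z^1\right|^2 = \left|\partial u/\partial \rho\right|^2 + \rho^{-2}\left|\partial u/\partial \phi\right|^2$, splitting $B_N$ into a manifestly non-negative radial piece controlled by the $\partial u/\partial \rho$-estimate in Theorem~\ref{weneedB} (where the integrand is already non-negative, hence absolutely integrable) and a signed angular piece of the form $\frac{1}{4}\int_V H_N\bigl(\left|\partial u/\partial \phi\right|^2 - \frac{\Ei}{2\pi}\bigr)\frac{dz^1 \wedge d\bar z^1}{\rho^2}\wedge\frac{dz^2 \wedge d\bar z^2}{r^2(-\log r^2)^2}$; applying dominated convergence on the Hahn decomposition of the latter finite signed measure then gives the vanishing of both pieces, and hence of $B_N$.
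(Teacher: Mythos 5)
Your opening decomposition is exactly the paper's, and reducing to $B_N \to 0$ is the right move. You also correctly identified the delicate point: the kernel $K := \bigl(|\partial u/\partial z^1|^2 - \tfrac{\Ei}{8\pi\rho^2}\bigr)\,dz^1\wedge d\bar z^1\wedge \tfrac{dz^2\wedge d\bar z^2}{r^2(-\log r^2)^2}$ changes sign, so Theorem~\ref{weneedB} does not by itself give the absolute integrability that a naive dominated-convergence argument requires. But the fallback you offer does not close this gap. Your proposed "finite signed measure" for the angular piece is not established to be finite: the negative part of $|\partial u/\partial \phi|^2 - \tfrac{\Ei}{2\pi}$ is only bounded by the constant $\tfrac{\Ei}{2\pi}$, and $\int_V \tfrac{dz^1\wedge d\bar z^1}{\rho^2}\wedge\tfrac{dz^2\wedge d\bar z^2}{r^2(-\log r^2)^2}$ diverges (because $\int_0^{1/4}\tfrac{d\rho}{\rho}=\infty$). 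So the Hahn decomposition may not exist in the finite sense you need, and the step "applying dominated convergence on the Hahn decomposition" is unjustified.

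The resolution the paper uses, and the one that makes the analysis work, is to integrate out the angular variable $\phi$ \emph{first}. By the variational characterization of $\Ei$ (every slice $\phi \mapsto u(\rho e^{i\phi}, z^2)$ is a competitor for the minimum, exactly as in Lemma~\ref{lowerbd}), one has, for each fixed $(\rho, z^2)$,
\[
\int_0^{2\pi}\left(\left|\frac{\partial u}{\partial z^1}\right|^2 - \frac{\Ei}{8\pi\rho^2}\right)d\phi \;\geq\; 0.
\]
Since the $F_N$ that occur (and that the lemma is implicitly written for, cf.~the proof of Proposition~\ref{integrable}) depend only on $\rho=|z^1|$ and $z^2$, one may pull $|F_N|$ out of the $\phi$-integral. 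Assumption~(a) then dominates the $\phi$-averaged integrand by $\tfrac{c}{(-\log r^2)^2}$ times a genuinely \emph{non-negative} function of $(\rho, z^2)$ whose integral is finite by Theorem~\ref{weneedB}, while assumption~(c) gives pointwise convergence to zero. Dominated convergence then applies legitimately because the dominating function is non-negative with finite integral -- absolute integrability is automatic once the $\phi$-integral has been performed. Your polar identity $4|\partial u/\partial z^1|^2 = |\partial u/\partial\rho|^2 + \rho^{-2}|\partial u/\partial\phi|^2$ is the right ingredient for seeing this non-negativity, but the next step should be the $\phi$-average rather than a Hahn decomposition.
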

 
 \begin{proof}
 We first rewrite
\begin{eqnarray} 
\lefteqn{
 \int_V F_N(z^1,z^2) \left| \frac{\partial u}{\partial z^1}\right|^2 dz^1 \wedge d\bar z^1 \wedge \frac{dz^2 \wedge d\bar z^2}{r^2}
 } \nonumber 
\\
 & = &  
  \frac{\Ei}{8\pi} \int_V   F_N(z^1,z^2)
\frac{dz^1 \wedge d\bar z^1}{\rho^2} \wedge \frac{dz^2 \wedge d\bar z^2}{r^2}
 \nonumber \\
 &  & +\int_V F_N(z^1,z^2) \left( \left| \frac{\partial u}{\partial z^1} \right|^2 -
\frac{\Ei}{8\pi \rho^2} \right) dz^1 \wedge d\bar z^1 \wedge \frac{dz^2 \wedge d\bar z^2}{r^2}. 
\end{eqnarray}
The first term is equal to $\frac{c_0 \Ei}{8\pi}$  by assumption (b).
For the second term of (\ref{int-(i)}), we first rewrite the integral as
\[
\int_0^{\frac{1}{4}} \left( \int_{\D^*_{\frac{1}{4}} }\int_0^{2\pi}F_N(z^1,z^2) \left(   \left| \frac{\partial u}{\partial z^1} \right|^2 -\frac{\Ei}{8\pi \rho^2} 
\right) \rho d\phi \wedge \frac{dz^2 \wedge d\bar z^2}{-2ir^2}  \right)d\rho.
\]
By assumption (a), the integral inside the bracket, i.e.~the function
\begin{equation}  \label{fncFN'}
r \mapsto \int_{\D^*_{\frac{1}{4}} }\int_0^{2\pi}F_N(z^1,z^2) \left(   \left| \frac{\partial u}{\partial z^1} \right|^2 -\frac{\Ei}{8\pi \rho^2} 
\right) \rho d\phi \wedge \frac{dz^2 \wedge d\bar z^2}{-2ir^2},
\end{equation}
is bounded from above (independently of $N$) by a non-negative function
\[
r \mapsto c \int_{\D^*_{\frac{1}{4}} }\int_0^{2\pi} \left(   \left| \frac{\partial u}{\partial z^1} \right|^2 -\frac{\Ei}{8\pi \rho ^2}
\right)  \rho d\phi \wedge \frac{dz^2 \wedge d\bar z^2}{-2ir^2(-\log r^2)^2}.
\]
The above is non-negative by the definition of $\Ei$ and integrable over the interval $[0,\frac{1}{4}]$ by Lemma~\ref{weneedB}.
Furthermore, 
the function (\ref{fncFN'}) converges to 0 for each  $r \in (0,\frac{1}{4})$ by assumption (c).  Thus,   Lebesgue's dominated  convergence theorem implies   the result.
\end{proof}

\begin{proposition} \label{integrable}  
Let $u:M \rightarrow \tilde M \times_\rho \tilde X$ be the harmonic section of  Theorem~\ref{theorem:pluriharmonicDim2}.    
If
 $\tilde X$ is a K\"ahler manifold with strong nonpositive curvature in the sense of Siu,    then
\[
\int_M Q \, \omega^2 < \infty \mbox{ and } \int_M |\partial_{E'} \bar\partial  u|^2= \int_M |\partial_{E''} \bar\partial  \bar u|^2  \, \omega^2 < \infty.
\] 
\end{proposition}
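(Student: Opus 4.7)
The plan is to integrate Siu's Bochner formula against a suitable family of cutoff functions and extract a uniform bound. Since $\dim_\C M = 2$, Theorem~\ref{siubochner} collapses to
\[
\partial\bar\partial\{\bar\partial u, \bar\partial u\} = \bigl(4|\partial_{E'}\bar\partial u|^2 + Q\bigr)\frac{\omega^2}{2},
\]
with the right-hand side non-negative by Lemma~\ref{branch}. Testing against a smooth, compactly supported $\chi_N : M \to [0,1]$ and integrating by parts twice (using $\partial^2 = \bar\partial^2 = 0$ together with the bidegree count that kills the $\bar\partial\chi_N$-wedge term) yields
\[
\int_M \chi_N\bigl(4|\partial_{E'}\bar\partial u|^2 + Q\bigr)\frac{\omega^2}{2} = \int_M \partial\bar\partial\chi_N \wedge \{\bar\partial u, \bar\partial u\}.
\]
If the right-hand side admits a bound uniform in $N$, then, taking $\chi_N \nearrow 1$ pointwise, monotone convergence delivers $\int_M (4|\partial_{E'}\bar\partial u|^2 + Q)\omega^2 < \infty$, hence both $\int_M Q\,\omega^2$ and $\int_M |\partial_{E'}\bar\partial u|^2\,\omega^2$ are finite. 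The equality $\int_M |\partial_{E'}\bar\partial u|^2 \omega^2 = \int_M |\partial_{E''}\bar\partial\bar u|^2 \omega^2$ is then immediate from Lemma~\ref{palm}.

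For the cutoff I would take a \emph{Poincar\'e-linear} family
\[
\chi_N = \prod_{j=1}^L \phi\!\left(\frac{-\log|\sigma_j|_{h_j}^2}{N}\right),
\]
where $\phi : \R \to [0,1]$ is fixed, equal to $1$ on $(-\infty, 1]$ and $0$ on $[2, \infty)$. Then $\chi_N$ is supported in $M$, equals $1$ on $M_{e^{-N/2}}$, and $\partial\bar\partial\chi_N$ is supported in the annular region where $-\log|\sigma_j|_{h_j}^2 \in [N, 2N]$ for some $j$. In the holomorphic coordinates of Definition~\ref{holcoord} on a type-(A) set (with $\sigma_j = \zeta e$ and $b = |e|_{h_j}^{-2}$), a direct computation gives
\[
\partial\bar\partial\chi_N = \frac{\phi''(L_j/N)}{N^2}\,\partial\log|\sigma_j|_{h_j}^2 \wedge \bar\partial\log|\sigma_j|_{h_j}^2 + \frac{\phi'(L_j/N)}{N}\,\partial\bar\partial\log b,
\]
so $\partial\bar\partial\chi_N$ decomposes into a \emph{Poincar\'e piece} of order $N^{-2}|\zeta|^{-2}\,d\zeta \wedge d\bar\zeta$ and a \emph{smooth piece} of order $N^{-1}$ (bounded in every bidegree component by the curvature of $h_j$). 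At a crossing (type (B)), where $\sigma_i = z^1 e_i$ and $\sigma_j = z^2 e_j$ with $|e_i|_{h_i} = |e_j|_{h_j} = 1$, the smooth piece vanishes and the product $\chi_N^{(i)}\chi_N^{(j)}$ generates two analogous Poincar\'e pieces in the transverse directions together with genuinely mixed terms from $\partial\chi_N^{(i)} \wedge \bar\partial\chi_N^{(j)}$.

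To bound the right-hand side I would localize to the finite cover of a neighborhood of $\Sigma$ by sets of type (A) and (B). In each pointwise wedge $\partial\bar\partial\chi_N \wedge \{\bar\partial u, \bar\partial u\}$, the bidegree constraint forces the Poincar\'e piece (carrying $d\zeta \wedge d\bar\zeta$) to pair only with the regular $d\bar z^1 \wedge dz^1$-component of $\{\bar\partial u, \bar\partial u\}$ and vice versa, since the singular-singular pairing $d\zeta \wedge d\bar\zeta \wedge d\zeta \wedge d\bar\zeta$ vanishes identically. The surviving contributions in a type-(A) set take the form
\[
\frac{1}{N^2}\int_{\Omega \times \{L_j \in [N,2N]\}} \left|\frac{\partial u}{\partial z^1}\right|^2 \frac{d\zeta \wedge d\bar\zeta}{|\zeta|^2}\,dz^1 \wedge d\bar z^1 \quad \text{and} \quad \frac{1}{N}\int_{\Omega \times \{L_j \in [N,2N]\}} \left|\frac{\partial u}{\partial \zeta}\right|^2\,dz^1 \wedge d\bar z^1 \wedge d\zeta \wedge d\bar\zeta;
\]
each is $O(1)$ as $N \to \infty$, because the Poincar\'e width of the transition annulus is comparable to $N$ (which exactly cancels the $N^{-1}$ or $N^{-2}$ prefactor when combined with the weighted integrals controlled by the first and second lines of Theorem~\ref{weneedA}). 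Cross terms pairing $\partial u/\partial z^1$ with the mixed $dz^1 \wedge d\bar\zeta$-components of $\partial\bar\partial\chi_N$ carry an additional factor of $|\zeta|$ or $(\log|\zeta|^2)^{-1}$ and are smaller; at the crossings, the entirely analogous decomposition is controlled by the six estimates of Theorem~\ref{weneedB} together with the factored form (\ref{productmetricB}) of the Poincar\'e-type metric.

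The main obstacle is precisely that the bound obtained above is $O(1)$ rather than $o(1)$. The form $\{\bar\partial u, \bar\partial u\}$ is genuinely not integrable on $M$ (its integral reproduces the full energy, which diverges logarithmically by Proposition~\ref{l}), so the integration-by-parts identity above leaves a nonvanishing residue at infinity that must be tracked term by term at every divisor component and every crossing. This residue is consistent with—and indeed suffices for—the integrability claimed by Proposition~\ref{integrable}, but it is exactly the ``apriori obstruction to pluriharmonicity'' mentioned in the introduction to Section~\ref{sec:pluriharmonicity}: removing it to prove pluriharmonicity in Theorem~\ref{pluriharmonicity} requires the modified Bochner formula of Theorem~\ref{mochizukibochnerformula} and the Mochizuki cancellation, whereas for mere integrability the uniform $O(1)$ bound established here is enough.
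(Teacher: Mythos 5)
Your proposal matches the paper's proof in method and structure: both multiply Siu's Bochner formula by a logarithmically scaled cutoff supported away from $\Sigma$, integrate by parts to land the derivatives on $\chi_N$, localize to sets of type (A) and (B), use the bidegree constraint to identify which components of $\{\bar\partial u,\bar\partial u\}$ can pair with which components of $\partial\bar\partial\chi_N$, and invoke the weighted energy estimates of Theorems~\ref{weneedA} and \ref{weneedB} to bound each piece uniformly in $N$; monotone convergence plus non-negativity of $4|\partial_{E'}\bar\partial u|^2+Q$ then gives integrability, and $|\partial_{E'}\bar\partial u|^2=|\partial_{E''}\bar\partial\bar u|^2$ follows from Lemma~\ref{palm}. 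One refinement worth noting: the Poincar\'e piece $\frac{\phi''}{N^2}\,\partial\log|\sigma_j|^2\wedge\bar\partial\log|\sigma_j|^2$ paired with $|\partial u/\partial z^1|^2$ is in fact $o(1)$, not merely $O(1)$, because after multiplying by the Poincar\'e weight $(\log s^2)^{-2}\sim N^{-2}$ it becomes the tail of the convergent integral in the first line of Theorem~\ref{weneedA}; the genuinely surviving residues come from the smooth curvature piece $\frac{\phi'}{N}\partial\bar\partial\log b$ paired against the leading $\frac{\Ej}{8\pi s^2}$ singularity of $|\partial u/\partial\zeta|^2$ (limiting to $\frac{\Ej}{2\pi i}\int_\Omega\Theta(\mathcal O(\Sigma_j))$) and, at a crossing, from the mixed terms $\partial\chi_N^{(i)}\wedge\bar\partial\chi_N^{(j)}$. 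Since you only claim an $O(1)$ bound, this imprecision does not damage the conclusion, but it does mean the "exactly cancels" heuristic is literally true only for the second family of terms and requires the split of $|\partial u/\partial\zeta|^2$ into its leading $1/s^2$ part and the integrable remainder (exactly what line 2 of Theorem~\ref{weneedA} provides) rather than a pure Poincar\'e-width count.
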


\begin{proof}
Let $\eta:[0,\infty) \rightarrow [0,1]$ be a non-increasing, non-negative smooth function
satisfying
\[
\eta(x)=1 \mbox{ for } 0 \leq x \leq \frac{1}{2},  \ \ \eta(x) = 0 \mbox{ for } \frac{2}{3} \leq x <\infty.
\]
For $N \in {\mathbb N}$, define a cut-off function 
\begin{equation} \label{defchi}
\chi_N: M \rightarrow [0,1], \ \ \ \chi_N =\left\{
\begin{array}{ll}
\displaystyle{\prod_{j=1}^L  \eta \left(N^{-1}\log |\sigma_j|_{h_j}^{-2} \right) } & \mbox{in } \displaystyle{\bigcup_{j=1}^L {\mathcal D}_j^*}
\\
1 & \mbox{otherwise}
\end{array}
\right.
\end{equation}
where $\sigma_j$ is the canonical section and $h_j$ is the Hermitian metric on ${\mathcal O}(\Sigma_j)$ (cf.~Section~\ref{sec:neardivisor}). Note that we use (\ref{assubsets}) to identify the neighborhood ${\mathcal D}_j$ of the zero section of ${\mathcal O}(\Sigma_j)$ with the neighborhood of $\Sigma_j$ in $\bar M$.   
Multiplying the Siu-Bochner formula of Theorem~\ref{siubochner} by $\chi_N$, integrating over $M$ and applying integration by parts, we obtain
\begin{equation} \label{multiply}
 \int_M\partial \bar{\partial}  \chi_N \wedge \{ \bar \partial u, \bar\partial u \} = \frac{1}{2} \int_M \chi_N \left(   4\left|\partial_{E'} \bar \partial u\right|^2 +Q  \right) \omega^2.
\end{equation}
By the assumption that $\tilde{X}$ has strong non-positive curvature in the sense of Siu (cf.~Lemma~\ref{branch}),  the right hand side of the above equality is non-negative.

Let $V$  be either a set $\Omega \times \D^*_{\frac{1}{4}}$ away from the crossings (cf.~Theorem~\ref{weneedA}) or a set 
  $\bar \D_{\frac{1}{4}}^*  \times  \bar \D_{\frac{1}{4}}^*$  at  a crossing (cf.~Theorem~\ref{weneedB}).
 Since   $\partial \bar \partial \chi_N$  is supported in the finite union of such sets for sufficiently large $N$,  it suffices to prove 
 \begin{equation} \label{ibp}
\lim_{N\rightarrow \infty} \int_{V} \partial \bar{\partial} \chi_N \wedge \{ \bar\partial u, \bar\partial u \}<\infty
\end{equation}
for either    $V=\Omega \times \D^*_{\frac{1}{4}}$ or $V=\bar \D_{\frac{1}{4}}^*  \times  \bar \D_{\frac{1}{4}}^*$.
Throughout this proof of (\ref{ibp}), we will use $c$ to denote a generic positive constant that may change from line to line but is independent of $N \in {\mathbb N}$.

First, consider   the subset
  $V=\bar \D_{\frac{1}{4}}^*  \times  \bar \D_{\frac{1}{4}}^*$ near  a crossing with local holomorphic coordinates $(z^1=\rho e^{i\phi},z^2=r e^{i\theta})$.  In $V$ and  for $N$ sufficiently large,   
  \begin{eqnarray*}
\chi_N(z^1,z^2)  
& = & \eta \left(-N^{-1} \log \rho^2 \right) \eta \left(-N^{-1} \log r^2 \right). 
\end{eqnarray*}
The support of $\eta'\left(-N^{-1} \log \rho^2 \right) $ and $\eta''\left(-N^{-1} \log \rho^2 \right) $ is contained in 
\begin{equation} \label{sptphi1}
W_N:=\left\{\frac{1}{2} \leq -N^{-1} \log \rho^2 \leq \frac{2}{3}\right\} 
\end{equation}
and the support of $\eta'\left(-N^{-1} \log r^2 \right) $ and $\eta''\left(-N^{-1} \log r^2 \right)$ is contained in 
\begin{equation} \label{sptphi2}
V_N:=\left\{\frac{1}{2} \leq -N^{-1} \log r^2 \leq \frac{2}{3}\right\}.
\end{equation}
Therefore,
\begin{eqnarray} 
(-\log \rho^2) \left| \frac{\eta'(-N^{-1} \log \rho^2)}{N} 
\right|
\leq  c, & &  (-\log r^2)\left| \frac{\eta'(-N^{-1} \log r^2)}{N} 
\right|
\leq c \nonumber  \\
(-\log \rho^2)^2 \left| \frac{\eta'' (-N^{-1} \log \rho^2)}{N^2} \right| \leq  c,
& & 
(-\log r^2)^2 \left| \frac{\eta'' (-N^{-1} \log r^2)}{N^2} \right| \leq  c.
\label{cleverbd!} 
\end{eqnarray}
We have
\begin{eqnarray}
\partial \bar{\partial} \chi_N & = &  \eta(-N^{-1}\log \rho^2)\eta''(-N^{-1}\log r^2) \frac{dz^2 \wedge d\bar{z}^2}{N^2r^2}
\nonumber 
\\
& & 
\
+
\eta''(-N^{-1}\log \rho^2) \eta(-N^{-1}\log r^2)\frac{dz^1 \wedge d\bar{z}^1}{N^2 \rho^2}
\nonumber
\\
& & \ \ +\eta'(-N^{-1}\log \rho^2) \eta'(-N^{-1}\log r^2) \frac{dz^1 \wedge d\bar{z}^2}{N^2 z^1\bar{z}^2}
\nonumber 
\\
& & \ \ \ +\eta'(-N^{-1}\log r^2) \eta'(-N^{-1}\log \rho^2) \frac{dz^2 \wedge d\bar{z}^1}{N^2 \bar{z}^1z^2}.
\label{sum'}
\end{eqnarray}
Using  (\ref{sum'}),  we write the integral of  (\ref{ibp}) as the sum $(i)+(ii)+(iii)+(iv)$
where
\begin{eqnarray*}
(i) &= & 
  \int_V  \eta(-N^{-1}\log \rho^2)\eta''(-N^{-1}\log r^2) \frac{dz^2 \wedge d\bar{z}^2}{N^2r^2}   \wedge \{ \bar\partial u, \bar\partial u \}\\
 (ii) &= & 
 \int_V  \eta''(-N^{-1}\log \rho^2)\eta(-N^{-1}\log r^2) \frac{dz^1 \wedge d\bar{z}^1}{N^2\rho^2}   \wedge \{ \bar\partial u, \bar\partial u \}\\
 (iii) &= & 
 \int_V  \eta'(-N^{-1}\log \rho^2)\eta'(-N^{-1}\log r^2) \frac{dz^1 \wedge d\bar{z}^2}{N^2z^1\bar{z}^2}   \wedge \{ \bar\partial u, \bar\partial u \}
\\
 (iv) &= & 
 \int_V  \eta'(-N^{-1}\log \rho^2)\eta'(-N^{-1}\log r^2) \frac{dz^2 \wedge d\bar{z}^1}{N^2\bar{z}^1z^2}   \wedge \{ \bar\partial u, \bar\partial u \}.
\end{eqnarray*}

First, consider the integral $(i)$.  Using the identity
\[ 
\langle  \frac{\partial  u}{\partial{ \bar z^\alpha}},  \frac{\partial  u}{ {\partial{\bar  z^\beta}}} \rangle  d\bar z^\alpha \wedge dz^\beta = h_{i\bar j}  \frac{\partial  u^i}{\partial{ \bar z^\alpha}} 
 \overline{
 \frac{\partial  u^j}{ {\partial{\bar  z^\beta}}} 
 }
 d\bar z^\alpha \wedge dz^\beta =\{\bar \partial u, \bar \partial u\},
 \]
 we have
\begin{equation}   \label{int-(i)}
(i) =  \int_V  
\eta(-N^{-1}\log \rho^2) \frac{\eta''(-N^{-1}\log r^2)}{N^2} 
\left| \frac{\partial u}{\partial z^1} \right|^2 dz^1\wedge d\bar z^1 \wedge \frac{dz^2 \wedge d\bar z^2}{r^2}.
\end{equation}
We now check that  
\[
F_N(z^1,z^2) = \eta(-N^{-1}\log \rho^2) \frac{\eta''(-N^{-1}\log r^2)}{N^2}
\]
satisfies the assumptions (a), (b) and (c) of  Lemma~\ref{lemma:prelim}.
First, $F(z^1,z^2)$ satisfies assumption (a) of Lemma~\ref{lemma:prelim} by (\ref{cleverbd!}).
Next, we will check that the function $F_N(z^1,z^2)$ also satisfies assumption (b) of Lemma~\ref{lemma:prelim}.
Indeed, after  a change of variables, 
\begin{equation} \label{stvar}
t=-N^{-1}\log \rho \ \mbox{ and } \ s=-N^{-1}\log r,
\end{equation}
we obtain
\[
\frac{dz^1 \wedge d\bar{z}^1}{N\rho^2} =-2i \frac{d\rho \wedge d\phi}{N\rho} = 2i dt \wedge d\phi \ \mbox{ and } \ \frac{dz^2 \wedge d\bar{z}^2}{Nr^2} = -2i \frac{dr \wedge d\theta}{Nr} =  2i ds \wedge d\theta.
\]
Thus, 
\begin{eqnarray*}  \label{int-(i)1}
\lefteqn{  c_0:=
\int_V 
\eta(-N^{-1}\log \rho^2)  \frac{\eta''(-N^{-1}\log r^2)}{N^2}  \frac{dz^1 \wedge d\bar{z}^1}{\rho^2} \wedge \frac{d\bar{z}^2 \wedge dz^2}{r^2}
 } 
\nonumber  \\
& = & c \int_0^{\frac{1}{3}} \eta(2t) dt \int_{\frac{1}{4}}^{\frac{1}{3}}  \eta''(2s) ds= c \int_0^{\frac{1}{3}} \eta(2t) dt \cdot \left(  \eta'(\frac{2}{3}) - \eta'(\frac{1}{2}) \right) =0.
 \end{eqnarray*}
Finally,  $F_N(z^1,z^2)$ satisfies assumption (c) of Lemma~\ref{lemma:prelim} by (\ref{sptphi2}).
By applying Lemma~\ref{lemma:prelim}, we conclude
\begin{equation} \label{ito0}
\lim_{N \rightarrow \infty}|(i)| =0.
\end{equation}
The same argument also implies
\[
\lim_{N \rightarrow \infty}|(ii)| =0.
\]
We will now bound  the term $(iii)$.    Indeed, we can rewrite
\begin{eqnarray}
|(iii)|&= &
\left| \int_V \frac{ \eta'(-N^{-1}\log \rho^2)}{N}\frac{\eta'(-N^{-1}\log r^2)}{N} \frac{dz^1 \wedge d\bar{z}^2}{z^1 \bar{z}^2}   \wedge \{ \bar\partial u, \bar\partial u \} \right|
 \nonumber  \\
  &\leq   & 
 \int_V
\left|  \langle \frac{\partial u}{\partial \bar{z}^1}, \frac{\partial  u}{\partial \bar{z}^2}\rangle  \right| 
\frac{ \eta'(-N^{-1}\log \rho^2)}{N}\frac{\eta'(-N^{-1}\log r^2)}{N}  
\frac{dz^1 \wedge d\bar{z}^1}{\rho} \wedge \frac{dz^2 \wedge d \bar z^2}{r} 
 \nonumber  \\
& \leq &   \int_{V_N}
\left|  \frac{\partial u}{\partial \bar{z}^1}\right|^2
\left( \frac{\eta'(-N^{-1}\log \rho^2)}{N}  \right)^2
dz^1 \wedge d\bar{z}^1 \wedge \frac{dz^2 \wedge d \bar z^2}{r^2}
 \nonumber  \\
& & \ \ +  \int_{W_N}
\left| \frac{\partial  u}{\partial \bar{z}^2} \right|^2 
\left( \frac{ \eta'(-N^{-1}\log r^2)}{N}\right)^2  
\frac{dz^1 \wedge d\bar{z}^1}{\rho^2} \wedge dz^2 \wedge d \bar z^2.
 \label{iiiint}
 \end{eqnarray}
 
 For the first integral on the right hand side of (\ref{iiiint}), we let
 \[
 F_N(z^1,z^2) =\chi_{V_N}
\left( \frac{ \eta'(-N^{-1}\log r^2)}{N}\right)^2 
 \]
 where $\chi_{V_N}$ is the characteristic function of $V_N$.
First,  $F_N(z^1,z^2)$ satisfies assumption (a) of Lemma~\ref{lemma:prelim} by (\ref{cleverbd!}),.
Next, we check that it satisfies assumption (b) of Lemma~\ref{lemma:prelim}.  Indeed, using the substitution (\ref{stvar}), 
\begin{eqnarray*}
c_0 & := &  \int_V \chi_{V_N} \left( \frac{\eta'(-N^{-1}\log r^2)}{N}  \right)^2 \frac{dz^1 \wedge d\bar{z}^1}{\rho^2} \wedge \frac{dz^2 \wedge d \bar z^2}{r^2} 
\\
& = &  c   \int_{\frac{1}{4}}^{\frac{1}{3}} dt \  \int_{\frac{1}{4}}^{\frac{1}{3}}   (\eta'(2s))^2  ds.
\end{eqnarray*}
Finally,  $F_N(z^1,z^2)$ satisfies assumption (c) of Lemma~\ref{lemma:prelim} by (\ref{sptphi2}).
  Thus, the second integral on the right hand side of (\ref{iiiint}) limits to $\frac{c_0\Ei}{8\pi}$ as $N \rightarrow \infty$ by Lemma~\ref{lemma:prelim}.  Analogously, the second integral on the right hand side of (\ref{iiiint}) limits to $\frac{c_0\Ej}{8\pi}$ as $N \rightarrow \infty$. 
Thus, we have shown 
 \begin{equation} \label{iii0}
\lim_{N \rightarrow \infty}|(iii)| =\frac{c_0(\Ei +\Ej)}{8\pi}.
\end{equation}
Same argument shows 
\[
\lim_{N \rightarrow \infty}|(iv)| =\frac{c_0(\Ei +\Ej)}{8\pi}.
\]
Summing the limits of $(i)$, $(ii)$, $(iii)$ and $(iv)$, we conclude that (\ref{ibp}) is satisfied  in the case   $V=\bar \D_{\frac{1}{4}}^*  \times  \bar \D_{\frac{1}{4}}^*$.

Next, consider  set $V=\Omega \times \D^*_{\frac{1}{4}}$ away from the crossings with holomorphic coordinates  $(z^1,\zeta=r e^{i\theta})$.   
In $V$ and for sufficiently large $N$, 
\begin{equation}\label{setb0} 
\chi_N(z^1,\zeta) =\eta \left(N^{-1} \log b|\zeta|^{-2} \right). 
\end{equation}
We compute
\begin{eqnarray}
\partial \bar{\partial} \chi_N & = & \frac{\eta''(N^{-1} \log b|\zeta|^{-2})}{N^2} \left(\frac{d\zeta \wedge d\bar{\zeta}}{|\zeta|^2}
+  \frac{\partial b \wedge \bar{\partial} b}{b^2}- \frac{\partial b \wedge d\bar{\zeta}}{b \bar \zeta}- \frac{d\zeta \wedge \bar{\partial} b}{\zeta b}\right)\nonumber \\
&  &  \ +\frac{\eta'(N^{-1} \log b|\zeta|^{-2})}{N}  \partial \bar{\partial} \log b. \label{sum}
\end{eqnarray}
The support of $\eta'$ and $\eta''$ is contained in 
\begin{eqnarray*} 
\left\{\frac{1}{2} \leq N^{-1} \log b|\zeta|^{-2} \leq \frac{2}{3}\right\},
\end{eqnarray*}
which  is contained in the set
 \begin{equation}\label{sprt}
V_N=\Omega \times \D_{z^2,c_1e^{-\frac{N}{3}}, c_2e^{-\frac{N}{4}}}
\end{equation}
for appropriate constants $c_1$ and $c_2$ depending only on $b$. 
Therefore,
\begin{eqnarray} 
\left| \frac{\eta'(N^{-1} \log b|\zeta|^{-2})}{N} 
\right|
& \leq &  \frac{c}{\log br^{-2}}
\leq  \frac{c}{-\log r^2}, \label{cleverbd1}\\
\left| \frac{\eta'' (N^{-1} \log b|\zeta|^{-2})}{N^2} \right|& \leq & \frac{c}{(\log br^{-2})^2}
\leq  \frac{c}{(-\log r^2)^2}.
\label{cleverbd2} 
\end{eqnarray}
Using  (\ref{sum}),  we write the integral of  (\ref{ibp}) as the sum $(I)+(II)+(III)+(IV)+(V)$.  For the integral $(I)$, we write
\begin{eqnarray*}
|(I)| 
&  = &   
\left|
\int_V \frac{\eta'' (N^{-1} \log b|\zeta|^{-2})}{N^2}  \frac{d\zeta \wedge d\bar{\zeta}}{|\zeta|^2} \wedge \{ \bar\partial u, \bar\partial u \}   
\right|
 \\
 & \leq &
c\int_{V_N}  \left| \frac{\partial u}{\partial \bar z^1} \right|^2  dz^1 \wedge d\bar z^1 \wedge \frac{d\zeta \wedge d\bar \zeta}{r^2 (-\log r^2)^2}\ \ \ (\mbox{by (\ref{cleverbd2}))}.
\end{eqnarray*}
By Theorem~\ref{weneedA}, 
\[
\int_V     \left| \frac{\partial     u}{\partial \bar z^1} \right|^2  dz^1 \wedge d\bar z^1 \wedge \frac{d\zeta \wedge d\bar \zeta}{r^2 (-\log r^2)^2}<\infty.
\]
Thus, Lebesgue's dominated  convergence Theorem implies
\begin{equation} \label{I0}
\lim_{N \rightarrow \infty} (I) = 0.
\end{equation}
For the integral $(II)$, we write
\begin{eqnarray*}
 |(II)| 
 & = &
 \left|   \int_V \frac{\eta'' (N^{-1} \log b|\zeta|^{-2})}{N^2} \frac{\partial b \wedge \bar{\partial} b}{b^2} \wedge \{ \bar\partial u, \bar\partial u \}  \right| 
 \\
 & \leq & c   \int_{V_N}  \left|\frac{\partial u}{\partial \bar{z}^1}\right |^2  dz^1 \wedge d\bar z^1 \wedge \frac{d\zeta \wedge d\bar \zeta}{(-\log r^2)^2}+ \int_{V_N} \left |\frac{\partial u}{\partial \bar{\zeta}}\right |^2  dz^1 \wedge d\bar z^1 \wedge \frac{d\zeta \wedge d\bar \zeta}{(-\log r^2)^2}  \\
  & &  \ \ \  (\mbox{by}  \ \frac{\partial b}{b}=O(1)\ \mbox{and }  (\ref{cleverbd2})).
 \end{eqnarray*}
By Theorem~\ref{weneedA}, we can apply an analogous argument to (\ref{I0}) to conclude
 \[
 \lim_{N \rightarrow \infty} (II)=0.
 \] 
 In order to estimate $(III)$, notice that
  \begin{eqnarray*} \label{cancel1}
  d\bar{\zeta} \wedge \{\bar \partial u, \bar\partial u \}=d\bar{\zeta} \wedge\left(\left|\frac{\partial u}{\partial \bar{z}^1} \right|^2d\bar{z}^1\wedge d{z}^1
 + \langle \frac{\partial u}{\partial \bar{z}^1}, \frac{\partial u}{\partial \bar{\zeta}}\rangle d\bar{z}^1\wedge d\zeta \right).
 \end{eqnarray*}
Thus,
  \begin{eqnarray*}
|(III)| & = &  \left|     \int_V \frac{\eta'' (N^{-1} \log b|\zeta|^{-2})}{N^2} \frac{\partial b \wedge d\bar{\zeta}}{b\bar{\zeta}} \wedge \{\bar \partial u, \bar\partial u \}     \right|    
\\
& \leq & 
 c     \int_{V_N} \left(\left|\frac{\partial u}{\partial \bar{z}^1} \right|^2
 +\left| \langle \frac{\partial u}{\partial \bar{z}^1}, \frac{\partial u}{\partial \bar\zeta}\rangle \right| \right) dz^1 \wedge d\bar z^1 \wedge \frac{d\zeta \wedge d\bar \zeta}{r(-\log r^2)^2} 
 \\
 & & 
\ \ \  ({\mbox{by}  \ \frac{\partial b}{b}=O(1) \mbox{ and } (\ref{cleverbd2}))}\\
\\
& = & 
 c     \int_{V_N}   r \left(\left|\frac{\partial u}{\partial \bar{z}^1} \right|^2
 +\left| \langle \frac{\partial u}{\partial \bar{z}^1}, \frac{\partial u}{\partial \bar\zeta} \rangle\right| \right)\ dz^1 \wedge d\bar z^1 \wedge \frac{d\zeta \wedge d\bar \zeta}{r^2(-\log r^2)^2} 
\\
& \leq & 
 c     \int_{V_N}  \left(  \left|\frac{\partial u}{\partial \bar{z}^1} \right|^2 
 +  r^2\left| \frac{\partial u}{\partial \bar\zeta} \right|^2 \right)dz^1 \wedge d\bar z^1 \wedge \frac{d\zeta \wedge d\bar \zeta}{r^2(-\log r^2)^2} \ \   \mbox{(by Cauchy-Schwartz)}.
\end{eqnarray*}
By Theorem~\ref{weneedA}, we can apply an analogous argument to (\ref{I0}) to conclude
 \[
 \lim_{N \rightarrow \infty} (III)=0.
 \] 
Similarly,  
 \[
 \lim_{N \rightarrow \infty} (IV)=0.
 \] 
We thus conclude
\[
\lim_{N \rightarrow \infty} |(I)|+|(II)|+|(III)|+|(IV)|=0.
\]
Next,
 \begin{eqnarray*}
 (V) & = &
   \int_V \frac{\eta'(N^{-1} \log b|\zeta|^{-2})}{N}   \partial  \bar{\partial} \log b \wedge \{ \bar\partial u, \bar\partial u \}
   \\   
   & = &
   \int_V \frac{\eta'(N^{-1} \log b|\zeta|^{-2})}{N}   \frac{\partial^2 \log b} { \partial  z^1  \partial \bar{z}^1} dz^1 \wedge d{\bar z}^1 \wedge \left|\frac{\partial u}{\partial \bar \zeta} \right|^2  d \bar\zeta \wedge d{\zeta}\\
   & & \ +
   \int_V \frac{\eta'(N^{-1} \log b|\zeta|^{-2})}{N}   \frac{\partial^2 \log b} { \partial \zeta  \partial \bar\zeta} d\zeta \wedge d{\bar \zeta} \wedge \left|\frac{\partial u}{\partial \bar z^1} \right|^2  d \bar{z}^1 \wedge dz^1\\
   & & \ \ +
   \int_V \frac{\eta'(N^{-1} \log b|\zeta|^{-2})}{N}   \frac{\partial^2 \log b} { \partial z^1  \partial \bar\zeta} dz^1 \wedge d{\bar \zeta} \wedge <\frac{\partial u}{\partial \bar z^1},  \frac{\partial u}{\partial \bar \zeta}>  d \bar{z}^1 \wedge d\zeta\\
   & & \ \ \ +
   \int_V \frac{\eta'(N^{-1} \log b|\zeta|^{-2})}{N}   \frac{\partial^2 \log b} { \partial \zeta  \partial  \bar{z}^1} d \zeta \wedge d \bar{z}^1 \wedge <\frac{\partial u}{\partial \bar \zeta}, \frac{\partial u}{\partial \bar{z}^1}>  d \bar\zeta \wedge d{z}^1\\
   & =: & (V)_1+(V)_2+(V)_3+(V)_4.
    \end{eqnarray*}
We estimate
 \begin{eqnarray*}
|(V)_2| 
& \leq  &  c
 \int_V
\left|  \frac{\eta'(N^{-1} \log b|\zeta|^{-2})}{N}  \right| \left| \frac{\partial^2 \log b} { \partial  z^1  \partial \bar{z}^1} \right| \left|\frac{\partial u}{\partial \bar z^1} \right|^2 dz^1 \wedge   d \bar{z}^1 \wedge d\zeta \wedge d\bar \zeta\nonumber 
\\
& \leq  &  c
 \int_{V_N}
\left|\frac{\partial u}{\partial \bar z^1} \right|^2 dz^1 \wedge   d \bar{z}^1 \wedge \frac{d \zeta \wedge d\bar \zeta}{(-\log r^2)}\nonumber \\
&& \left(\mbox{by} \  \frac{\partial^2 \log b} { \partial \zeta  \partial \bar \zeta}=O(1) \ \mbox{ and }  (\ref{cleverbd1})\right)
\\
|(V)_3| & \leq & 
  \int_V \left| \frac{\eta'(N^{-1} \log b|\zeta|^{-2})}{N}\right|  \left| \frac{\partial^2 \log b} { \partial z^1  \partial \bar\zeta} \right| \left| \frac{\partial u}{\partial \bar z^1}\right|\left| \frac{\partial u}{\partial \bar \zeta}\right| dz^1 \wedge   d \bar{z}^1 \wedge d{\bar \zeta} \wedge d\zeta \\
  & \leq & c
\int_{V_N} 
\frac{1}{(-\log r^2)}\left|\frac{\partial u}{\partial \bar z^1} \right|\left|\frac{\partial u}{\partial \bar \zeta} \right| dz^1 \wedge   d \bar{z}^1 \wedge d{\bar \zeta} \wedge d\zeta \nonumber 
\\
&& \left(\mbox{by}  \ \frac{\partial^2 \log b} { \partial \zeta  \partial \bar \zeta}=O(1) \ \mbox{and } \  (\ref{cleverbd1})\right)\\
& \leq   &  c
\int_{V_N}
 \left( \left|\frac{\partial u}{\partial \bar z^1} \right| ^2+\frac{1}{(-\log r^2)^2} \left|\frac{\partial u}{\partial \bar \zeta} \right|^2 \right) dz^1 \wedge   d \bar{z}^1 \wedge d\zeta \wedge d\bar \zeta\nonumber 
  \end{eqnarray*}
and similarly 
 \begin{eqnarray*}
|(V)_4|  & \leq   &  c
\int_{V_N}
 \left( \left|\frac{\partial u}{\partial z^1} \right| ^2+\frac{1}{(-\log r^2)^2} \left|\frac{\partial u}{\partial \zeta} \right|^2 \right) dz^1 \wedge   d \bar{z}^1 \wedge d\zeta \wedge d\bar \zeta.   \end{eqnarray*}
With these estimates, we can argue as in  the proof of (\ref{I0}) to  conclude
\[
\lim_{N \rightarrow \infty} (V)_2+(V)_3+(V)_4=0.
\]
 We are left  to compute
 \[
  (V)_1 =
   \int_V \frac{\eta'(N^{-1} \log b|\zeta|^{-2})}{N}   \frac{\partial^2 \log b} { \partial  z^1  \partial \bar{z}^1} dz^1 \wedge d{\bar z}^1 \wedge \left|\frac{\partial u}{\partial \bar \zeta} \right|^2  d \bar\zeta \wedge d{\zeta}.
   \]   
   First,  use the identity
  \[
  \frac{\partial^2 \log b} {  \partial z^1  \partial \bar{z}^1}(z^1,\zeta)=\frac{\partial^2 \log b} { \partial z^1  \partial \bar{z}^1}(z^1,0)+O(r)
  \]
  to write 
  \[
  (V)_1=(V)_{1a}+(V)_{1b}.
  \]
We estimate 
   \begin{eqnarray*}
|(V)_{1b}|& = &   \int_V \left| \frac{\eta'(N^{-1} \log b|\zeta|^{-2})}{N}  \right|  \left|\frac{\partial u}{\partial \bar \zeta} \right|^2O(\zeta) dz^1 \wedge d{\bar z}^1 \wedge   d \zeta \wedge d \bar \zeta\\   
   & \leq  &  c
\int_{V_N}  
\frac{r}{(-\log r^2)}\left|\frac{\partial u}{\partial \bar \zeta} \right|^2dz^1 \wedge d{\bar z}^1 \wedge   d \zeta \wedge d \bar \zeta \  \mbox{ (by (\ref{cleverbd1})}).
   \end{eqnarray*}
Thus, we can argue as in  the proof of (\ref{I0}) to  conclude 
   \[
   \lim_{N \rightarrow \infty} (V)_{1b} =0.
   \]
Furthermore,
  \begin{eqnarray*}
(V)_{1b} & = & 
   \int_V \frac{\eta'(N^{-1} \log b|\zeta|^{-2})}{N}   \frac{\partial^2 \log b} { \partial z^1  \partial \bar{z}^1}(z^1,0) dz^1 \wedge d{\bar z}^1 \wedge \left|\frac{\partial u}{\partial \bar \zeta} \right|^2  d \bar\zeta \wedge d{\zeta}
   \\
    & = &
  \int_V \frac{\eta'(N^{-1} \log b|\zeta|^{-2})}{N}   \frac{\partial^2 \log b} { \partial z^1  \partial \bar{z}^1}(z^1,0) dz^1 \wedge d{\bar z}^1 \wedge \left(\left|\frac{\partial u}{\partial \bar \zeta} \right|^2-\frac{\Ej}{8\pi r^2}\right) d \bar\zeta \wedge d{\zeta}
  \\
   &  &
\ \ + \  \frac{\Ej}{2\pi}  \int_V \frac{\eta'(N^{-1} \log b|\zeta|^{-2})}{Nr^2}    \frac{\partial^2 \log b} { \partial z^1  \partial \bar{z}^1}(z^1,0) dz^1 \wedge d{\bar z}^1 \wedge d \bar\zeta \wedge d{\zeta}.
   \end{eqnarray*}
The first  term on the right hand side above can be estimated by 
  \begin{eqnarray*}
\lefteqn{
\left| \int_V \frac{\eta'(N^{-1} \log b|\zeta|^{-2})}{N}    \frac{\partial^2 \log b} { \partial z^1  \partial \bar{z}^1}(z^1,0) \left(\left|\frac{\partial u}{\partial \bar \zeta} \right|^2-\frac{\Ej}{2\pi r^2}\right) dz^1 \wedge d{\bar z}^1 \wedge  d \bar\zeta \wedge d{\zeta}  \right|
}
\\
   & \leq  &  c
\int_{V_N}  
\left(\left|\frac{\partial u}{\partial \bar \zeta} \right|^2-\frac{\Ej}{2\pi r^2}\right) dz^1 \wedge d{\bar z}^1 \wedge   \frac{d \bar\zeta \wedge d{\zeta}}{(-\log r^2)} \  \left(\mbox{by}  \ \frac{\partial^2 \log b} { \partial z^1  \partial \bar{z}^1} =O(1) \ \mbox{and} \  (\ref{cleverbd1})\right).
  \end{eqnarray*} 
With these estimates, we can argue as in  the proof of (\ref{ito0}) to  conclude \begin{eqnarray*}
 \lim_{N \rightarrow \infty} (V)_{1} & = &  \lim_{N \rightarrow \infty} (V)_{1a} + (V)_{1b} \\
 & = & \frac{\Ej}{2\pi}  \lim_{N \rightarrow \infty}\int_V \frac{\eta'(N^{-1} \log b|\zeta|^{-2})}{Nr^2}   \frac{\partial^2 \log b} { \partial z^1  \partial \bar{z}^1}(z^1,0)dz^1 \wedge d{\bar z}^1\wedge d \bar\zeta \wedge d{\zeta}\\
 &=&\frac{\Ej}{2\pi}  \int_{\Omega} \frac{\partial^2 \log b} { \partial z^1  \partial \bar{z}^1}(z^1,0) d{z}^1\wedge d{\bar z}^1   \cdot \lim_{N \rightarrow \infty} \int_{\bar{\D}^*_{\frac{1}{4}}}
 \frac{\eta'(N^{-1} \log b|\zeta|^{-2})}{Nr^2}  d \bar\zeta \wedge d{\zeta}\\
 &=&\frac{\Ej}{2\pi i} \int_{\Omega} \Theta({\mathcal O}(\Sigma_j))
\cdot  \lim_{N \rightarrow \infty}  \int_0^{\frac{1}{4}}
 \frac{\eta'(-N^{-1} \log br^2)}{Nr}dr\\
 &=&\frac{\Ej}{2\pi i} \int_{\Omega} \Theta({\mathcal O}(\Sigma_j)).
\end{eqnarray*}
In the above $ \Theta({\mathcal O}(\Sigma_j))$  denotes the curvature of the hermitian metric $h_j$ on the line bundle ${\mathcal O}(\Sigma_j)$. The estimates for $(I)$, $(II)$, $(III)$, $(IV)$ and $(V)$ imply that (\ref{ibp}) also holds for $V=\Omega \times \D^*_{\frac{1}{4}}$ away from the crossings.
\end{proof}

\subsection{Proof of Theorem~\ref{pluriharmonicity}}\label{plur}  

Let $\chi_N$ be as in the proof of Proposition~\ref{integrable} (cf.~(\ref{defchi})).  
We multiply the modified Siu-Bochner formula of Theorem~\ref{mochizukibochnerformula} by $\chi_N$, integrate over $M$ and apply integration by parts to obtain
\begin{equation} \label{multiply'}
 \int_M d \chi_N \wedge  \{ \bar\partial_E \partial u, \partial u -\bar{\partial} u\}  = \frac{1}{2} \int_M \chi_N  \left( 4\left| {\partial}_{E'} \bar \partial u\right|^2+Q \right) \omega^2.
\end{equation}
By the monotone convergence theorem and Proposition~\ref{integrable},
\begin{eqnarray}
\lim_{N \rightarrow \infty} \int_M \chi_N \left(  \left|{\partial}_{E'} \bar \partial u\right|^2 +Q \right) \omega^2 
\nonumber 
& = & \int_M  \left(  \left|{\partial}_{E'} \bar \partial u\right|^2 +Q \right) \omega^2
\nonumber \\
& = & \int_M \left(  \left|{\partial}_{E''} \bar \partial \bar u\right|^2 +Q \right) \omega^2. \label{lint}
\end{eqnarray}
By the assumption that $\tilde{X}$ has strong non-positive curvature in the sense of Siu,  the $\left|{\partial}_{E'} \bar \partial u\right|^2 +Q=\left|{\partial}_{E''} \bar \partial \bar u\right|^2 +Q$  is non-negative.
Let $V$  be either a set $\Omega \times \D^*_{\frac{1}{4}}$  away from the crossings or the set
  $\bar \D_{\frac{1}{4}}^*  \times  \bar \D_{\frac{1}{4}}^*$ near  a crossing (cf.~Section~\ref{sec:neardivisor}).
 We claim that 
\begin{equation} \label{limitis0}
\lim_{N \rightarrow \infty}  \int_V d \chi_N \wedge  \{\bar\partial_{E'}\partial u, \partial u -\bar{\partial} u\}   = 
0
\end{equation}
for any such set $V$.   This implies that the integral (\ref{lint}) is equal to 0 from which we conclude $\partial_{E'} \bar \partial u=0=\partial_{E'} \bar \partial \bar u$.

The rest of the proof is devoted to proving (\ref{limitis0}).  For the sequel, the constant $c>0$ is an arbitrary constant independent of the parameter $N$.  
First, consider the set
  $V=\bar \D_{\frac{1}{4}}^*  \times  \bar \D_{\frac{1}{4}}^*$ at  a crossing with local holomorphic coordinates $(z^1=\rho e^{i\phi},z^2=r e^{i\theta})$ (c.f.~Theorem~\ref{weneedB}).
   We have
\begin{eqnarray*}
\partial u - \bar{\partial} u  
& = &
\left( \frac{\partial u}{\partial z^1 }dz^1- \frac{\partial u}{\partial \bar{z}^1 }d\bar{z}^1 \right) + \left( \frac{\partial u}{\partial z^2 }dz^2- \frac{\partial u}{\partial \bar{z}^2 }d\bar{z}^2 \right)
\\
& = &    i \left(   \frac{\partial u}{\partial \rho} \rho d\phi - \frac{\partial u}{\partial \phi} \frac{d\rho}{\rho} \right)+ i \left(   \frac{\partial u}{\partial r} r d\theta - \frac{\partial u}{\partial \theta} \frac{dr}{r} \right)
\end{eqnarray*}
and
\begin{eqnarray*}
d\chi_N & = &  -\eta(-N\log \rho^2) \frac{\eta'(-N^{-1}\log r^2)}{N}  \frac{2dr}{r}  -\frac{\eta'(-N^{-1}\log \rho^2)}{N}  \eta(-N\log r^2)  \frac{2d\rho}{\rho}.
\end{eqnarray*}
Thus,
\begin{eqnarray} \label{goto00}
\lefteqn{
  \int_V d \chi_N \wedge  \{\bar\partial_{E'}\partial u, \partial u -\bar{\partial} u\} }
 \nonumber \\
 & = &   
 - \frac{2}{N} \int_V   \eta(-N^{-1}\log \rho^2) \eta'(-N^{-1}\log r^2)  \frac{dr}{r} \wedge \{ \partial_{E'} \bar \partial u, \frac{\partial u}{\partial z^1 }dz^1- \frac{\partial u}{\partial \bar{z}^1 }d\bar{z}^1  \} 
 \nonumber \\
 & & 
 \      - \frac{2i}{N}  \int_V  \eta(-N^{-1}\log \rho^2) \eta'(-N^{-1}\log r^2) \frac{dr}{r}  \wedge \{ \partial_{E'} \bar \partial u,   \frac{\partial u}{\partial r} rd\theta \}  \nonumber \\
 & & 
 \ \ \  -\frac{2}{N}  \int_V  \eta'(-N^{-1}\log \rho^2) \eta(-N^{-1}\log r^2)   \frac{d\rho}{\rho} \wedge \{ \partial_{E'} \bar \partial u,  \frac{\partial u}{\partial z^2 }dz^2- \frac{\partial u}{\partial \bar{z}^2 }d\bar{z}^2 \} 
\nonumber  \\
 & & \ \ \ \  - \frac{2i}{N}    \int_V  \eta'(-N^{-1}\log \rho^2) \eta(-N^{-1}\log r^2) \frac{d\rho}{\rho}  \wedge \{ \partial_{E'} \bar \partial u,   \frac{\partial u}{\partial \rho} \rho d\phi \}  \\
  & = &(i)+(ii)+(i')+(ii'). \nonumber 
 \end{eqnarray}
 
 We will show that all the terms $(i)$, $(ii)$, $(i')$ and $(ii')$ go to 0 as $N \rightarrow \infty$. We start with $(i)$.
Note that  $|\eta(-N^{-1}\log \rho^2)|$ has  support in $\rho \geq e^{-\frac{N}{3}}$ and $|\eta'(-N^{-1}\log r^2)|$ has  support in $ e^{-\frac{N}{3}} \leq r \leq e^{-\frac{N}{4}}$  (cf.~(\ref{sptphi1})). Thus, the integrand of $(i)$ has support in 
\[
D_N:=  \D_{z^2,e^{-\frac{N}{3}}, \frac{1}{4}} \times \D_{z^1,e^{-\frac{N}{3}}, e^{-\frac{N}{4}}}.
\]
We estimate
\begin{eqnarray}
\lefteqn{|(i)|
\leq 
c  \int_{D_N} \left|  \frac{ \eta'(-N^{-1}\log r^2)}{N} \right|   |\partial_{E'} \bar \partial u|  \left| \frac{\partial u}{\partial z^1}\right|  \rho d\rho \wedge d\phi \wedge \frac{r dr \wedge d\theta}{r}
 }
 \nonumber   \\
& \leq &
 c \left(\int_{D_N}  |\partial_{E'} \bar \partial u|^2  dz^1 \wedge d\bar z^1 \wedge 
 dz^2 \wedge d\bar z^2
\right)^{\frac{1}{2}} \nonumber
 \\
& & \ \times 
 \left(  \int_{D_N}  \left( \frac{ \eta'(-N^{-1}\log r^2)}{N} \right)^2 \left| \frac{\partial u}{\partial z^1}\right|^2 dz^1 \wedge d\bar z^1\wedge \frac{dz^2 \wedge d\bar z^2}{r^2}
\right)^{\frac{1}{2}} \nonumber\\
 && \ \ \ \ \ \ \mbox{ (by Cauchy-Schwartz and (\ref{cleverbd!}))}. 
  \label{sades}
\end{eqnarray}
The first integral above limits to 0 as $N \rightarrow \infty$ by Proposition~\ref{integrable}, volume estimate (\ref{volcompB}) and  Lebesgue's dominated convergence theorem.  The limit as $N \rightarrow \infty$ of the second integral exists by Lemma~\ref{lemma:prelim} by  following the proof  of (\ref{iii0}). Thus
$
\lim_{N \rightarrow \infty} (i) =0.
$
An analogous argument shows $
\lim_{N \rightarrow \infty} (i') =0.
$

Next,  \begin{eqnarray*}
|(ii)| 
 & \leq &    
c \int_{V}   \left| \frac{ \eta'(-N^{-1}\log r^2)}{N} \right| | \bar\partial_{E'}\partial u|
 \left| \frac{\partial u}{\partial r} \right|
dz^1  \wedge d\bar z^1 \wedge
\frac{dz^2 \wedge d\bar z^2}{r} 
\\
  & \leq &    
c \left(  \int_{D_N} 
  | \bar\partial_{E'}\partial u|^2 
dz^1  \wedge d\bar z^1 \wedge
\frac{dz^2 \wedge d\bar z^2}{r^2 (-\log r^2)^2}
 \right)^{\frac{1}{2}}  
 \\
 &  & \ \times
\left(  \int_{D_N}  
 \left|\frac{\partial u}{\partial r} \right|^2 
  dz^1  \wedge d\bar z^1 \wedge
dz^2 \wedge d\bar z^2 
\right)^{\frac{1}{2}}\\
&  & (\mbox{by Cauchy-Schwartz and (\ref{cleverbd!})}).
 \end{eqnarray*}
 The first integral limits to 0 as $N \rightarrow \infty$ by  Proposition~\ref{integrable}, volume estimate (\ref{volcompB}) and  Lebesgue's dominated   convergence Theorem.  The second integral also limits to 0 by Theorem~\ref{weneedB} and Lebesgue's dominated  convergence theorem. Thus, $
\lim_{N \rightarrow \infty} (ii) =0$, and an analogous argument shows 
$\lim_{N \rightarrow \infty} (ii') =0$.

Next, consider a set $V=\Omega \times \D^*_{\frac{1}{4}}$ away from the crossings   with holomorphic coordinates  $(z^1,\zeta=r e^{i\theta})$.  
Since
\[
\partial u - \bar{\partial} u =  \left( \frac{\partial u}{\partial z^1 }dz^1- \frac{\partial u}{\partial \bar{z}^1 }d\bar{z}^1 \right)+ i \left(   \frac{\partial u}{\partial r} r d\theta - \frac{\partial u}{\partial \theta} \frac{dr}{r} \right),
\]
we have
\begin{eqnarray} \label{goto0}
\lefteqn{
  \int_M d \chi_N \wedge  \{\bar\partial_{E'}\partial u, \partial u -\bar{\partial} u\} }
 \\
 & = &   
 i \int_Md\chi_N \wedge \{ \bar\partial_{E'}\partial u, \frac{\partial u}{\partial r} r d\theta \} 
 +
 i \int_Md\chi_N \wedge \{ \bar\partial_{E'}\partial u,  \frac{\partial u}{\partial \theta}  \frac{dr}{r} \} 
  \nonumber  \\
 & & \ +   \int_Md\chi_N \wedge \{ \bar\partial_{E'}\partial u, \frac{\partial u}{\partial z^1 }dz^1- \frac{\partial u}{\partial \bar{z}^1 }d\bar{z}^1 \}  \nonumber \\
  & = & (I)+(II)+(III) \nonumber
 \end{eqnarray}
where  the integrals $(I)$, $(II)$, and $(III)$ are estimated below. Let
\[
G_N:=\Omega \times\D_{z^1,c_1e^{-\frac{N}{3}}, c_2e^{-\frac{N}{4}}}.
\]
Since, 
\begin{eqnarray*}
d\chi_N 
 & = & -\frac{\eta'(N^{-1}\log br^{-2})}{N} \left( \frac{2dr}{r} - \frac{db}{b} \right), \end{eqnarray*}
integral $(I)$ is bounded by
 \begin{eqnarray*}
\left| (I) \right| 
& = & 
 \left|  \int_V  \frac{\eta'(N^{-1}\log br^{-2})}{N} \left( \frac{2dr}{r} - \frac{db}{b}    \right) \wedge \{ \bar\partial_{E'}\partial u, \frac{\partial u}{\partial r} r d\theta \}  \right|  
 \\
 & \leq &c \int_V   |\bar{\partial}_{E'} \partial u| \left| \frac{\partial u}{\partial r} \right| \rho d\rho \wedge d\phi \wedge \frac{r dr \wedge d\theta}{r (-\log r^2)}  \ \  \left(\mbox{since} \ \frac{db}{b}=O(1) \right)
 \\
& \leq  & c \left(  \int_{G_N} |\bar{\partial}_{E'} \partial u|^2  d z^1 \wedge d\bar z^1 \wedge  \frac{d \zeta \wedge d\bar \zeta}{r^2 (-\log r^2)^2} \right)^{\frac{1}{2}} \\
& & \ \ \times  \left( \int_{G_N}   \left| \frac{\partial u}{\partial r} \right|^2 
d z^1 \wedge d\bar z^1 \wedge d\zeta \wedge d\bar \zeta  \right)^{\frac{1}{2}} \ \  \mbox{(by Cauchy-Schwartz)}.
 \end{eqnarray*}
The first integral limits to 0 by  Proposition~\ref{integrable}, volume estimate (\ref{volcomp}) and Lebesgue's dominated  convergence theorem.  The second integral also limits to 0 by Theorem~\ref{weneedA} (with $s=r$) and  Lebesgue's dominated  convergence theorem. Thus, $
\lim_{N \rightarrow \infty} (I) =0$.

   Next, we estimate $(II)$ and note that the modified Siu's  Bochner formula is crucial.  Indeed, we hightlight the cancellation $\frac{dr}{r} \wedge \frac{dr}{r}=0$ below:
\begin{eqnarray*}
|(II)| 
& =& 
 \left|  \int_V \frac{\eta'(N^{-1}\log br^{-2})}{N} \left( \frac{2dr}{r} - \frac{db}{b}  \right) \wedge \{ \bar\partial_{E'}\partial u, \frac{\partial u}{\partial \theta} \frac{dr}{r} \}  \right|
 \\
 & \leq &c \int_V  \left| \frac{\eta'(N^{-1}\log br^{-2})}{N} \right| |\bar{\partial}_{E'} \partial u| \left| \frac{\partial u}{\partial \theta} \right|  \rho d\rho \wedge d\phi \wedge \frac{r dr \wedge d\theta}{r} \\
 & & \ \ \  \ \left(\mbox{since} \ \frac{db}{b}=O(1)  \mbox{ and } \frac{dr}{r} \wedge \frac{dr}{r}=0\right)
\\   & \leq & 
c \left( \int_{G_N}   | \bar\partial_{E'}\partial u|^2  dz^1 \wedge d\bar z^1 \wedge d\zeta \wedge d\bar \zeta \right)^{\frac{1}{2}}
\\
& & \ \ \times \left(\int_{G_N}    \left| \frac{\partial u}{\partial \theta} \right|^2 dz^1 \wedge d\bar z^1 \wedge \frac{d\zeta \wedge d\bar \zeta}{r^2 (-\log r^2)^2} \right)^{\frac{1}{2}}  \   \mbox{ (by Cauchy-Schwartz)}.
\end{eqnarray*}
The first integral limits to 0 by  Proposition~\ref{integrable}, volume estimate (\ref{volcomp}) and  Lebesgue's dominated  convergence theorem.  The second integral also limits to 0 by Theorem~\ref{weneedA} (with $r=s$ and $\theta=\eta$) and Lebesgue's dominated  convergence theorem. Thus, $
\lim_{N \rightarrow \infty} (II) =0$.

Finally, 
\begin{eqnarray*}
|(III)| 
 & = & 
 \left|  \int_V  \frac{\eta'(N^{-1}\log br^{-2})}{N} \left( \frac{2dr}{r} - \frac{db}{b} \right) \wedge
 \{ \bar\partial_{E'}\partial u, \frac{\partial u}{\partial z^1 }dz^{1}- \frac{\partial u}{\partial \bar{z}^1 }d\bar{z}^1 \}  \right|
 \\
 & \leq & 
 c\int_{G_N}  | \bar\partial_{E'}\partial u| \left|  \frac{\partial u}{\partial z^1}\right| dz^1 \wedge d\bar z^1 \wedge \frac{d\zeta \wedge d\bar \zeta}{r (-\log r^2)} \\
 &&  
\ \ \  \left(\mbox{since} \ \frac{db}{b}=O(1) \ \mbox{and by} \ (\ref{cleverbd1}) \right)
 \\  & \leq & 
c \left( \int_{G_N}   | \bar\partial_{E'}\partial u|^2 dz^1 \wedge d\bar z^1 \wedge d\zeta \wedge d\bar \zeta \right)^{\frac{1}{2}} 
\\
& & \ \times \left( \int_{G_N} \left|  \frac{\partial u}{\partial z^1}\right|^2   dz^1 \wedge d\bar z^1 \wedge \frac{d\zeta \wedge d\bar \zeta}{r^2 (-\log r^2)^2} \right)^{\frac{1}{2}} \ \ \ \  \mbox{ (by Cauchy-Schwartz)}.
\end{eqnarray*}
The first integral limits to 0 by  Proposition~\ref{integrable}, volume estimate (\ref{volcomp}) and  Lebesgue's dominated  convergence theorem.  The second integral also limits to 0 by Theorem~\ref{weneedA} (with $r=s$) and  Lebesgue's dominated  convergence theorem. Thus, $
\lim_{N \rightarrow \infty} (III) =0$.

We now conclude that  (\ref{goto0}) $\rightarrow 0$ as $N \rightarrow \infty$, which combined with the fact that (\ref{goto00}) $\rightarrow 0$ as $N \rightarrow \infty$
implies (\ref{limitis0}).  This concludes the proof of Theorem~\ref{pluriharmonicity}.

\section{Appendix to Chapter~\ref{chap:Kahler surfaces}}
\label{appendix:calculus}
We conclude this chapter with the following calculus result which was used  in the derivation of the energy estimates.

\begin{lemma} \label{calculus}
Let  $\Omega \times \D^*_{\frac{1}{4}}$ be a  subset of  $ \Omega \times \bar \D $  of type (A) or  $\Omega \times \bar \D^*: = \D_{\frac{1}{4},1} \times \bar\D$ of  type (B)$_1$ with standard product coordinates  $(z^1, z^2=re^{i\theta})$.
If a locally Lipschitz map $f$ defined on $\Omega \times \D^*_{\frac{1}{4}}$ 
satisfies
\begin{equation} \label{alb}
 \int_{\Omega \times \bar \D^*_{\frac{1}{4}} }
\left(   \left| \frac{\partial f}{\partial \theta} \right|^2 -c
\right) 
dz^1 \wedge d\bar z^1 \wedge \frac{dz^2 \wedge d\bar z^2}{r^2}
\leq C
\end{equation}
where
\[
\int_{\{z^1\} \times \partial \D_r} \left| \frac{\partial f}{\partial \theta} \right|^2 d\theta \geq c \geq 0,
\]
then
\[
 \int_{\Omega \times \bar\D^*_{\frac{1}{4}}}
  \left| \frac{\partial f}{\partial \theta} \right|^2 
dz^1 \wedge d\bar z^1 \wedge \frac{dz^2 \wedge d\bar z^2}{r^2(-\log r^2)^2}   
\leq C'
\]
where $C'$ is a constant depending only on $C$ and $c$.  \end{lemma}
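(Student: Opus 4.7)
The proof rests on two elementary features of the weight $w(r):=1/(-\log r^2)^2$ on the range $r\in (0,1/4]$. First, via the substitution $u=-\log r^2$ one has
\begin{equation*}
\int_0^{1/4}\frac{dr}{r(-\log r^2)^2} \;=\; \frac{1}{2}\int_{\log 16}^{\infty}\frac{du}{u^2} \;=\; \frac{1}{2\log 16},
\end{equation*}
so the measure $w(r)\,dr/r$ is integrable near $r=0$. Second, $-\log r^2\geq \log 16>1$ throughout this range, so that $w(r)\leq 1$ pointwise.

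The plan is to split the integrand as $|\partial f/\partial\theta|^2 = c + (|\partial f/\partial\theta|^2 - c)$ and to estimate each piece separately. After rewriting the target integral in polar form, using $dz^2\wedge d\bar z^2/r^2 = -2i\,dr\wedge d\theta/r$, and noting that $dz^1\wedge d\bar z^1$ is a bounded multiple of the area form on $\Omega\subset\Sigma_j$, the constant piece contributes a quantity equal (up to bounded prefactors) to
\begin{equation*}
2\pi c\cdot \mathrm{Area}(\Omega)\int_0^{1/4}\frac{dr}{r(-\log r^2)^2},
\end{equation*}
which is finite by the first estimate above and depends only on $c$ and $\Omega$.

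For the excess piece, the weight $w(r)$ may be pulled outside the $\theta$-integral since it is $\theta$-independent. The slice hypothesis, combined with the normalization used in the applications (Lemmas~\ref{egrowthA}--\ref{egrowthB2}, where the stronger bound $\int_0^{2\pi}|\partial f/\partial\theta|^2\,d\theta\geq \mathsf{E}_j$ is in fact established), yields $\int_0^{2\pi}(|\partial f/\partial\theta|^2-c)\,d\theta\geq 0$ pointwise in $(z^1,r)$. With this pointwise non-negativity and $w\leq 1$ one obtains
\begin{equation*}
w(r)\int_0^{2\pi}\bigl(|\partial f/\partial\theta|^2 - c\bigr)\,d\theta \;\leq\;\int_0^{2\pi}\bigl(|\partial f/\partial\theta|^2 - c\bigr)\,d\theta,
\end{equation*}
and integrating in $r$ and $z^1$ dominates the excess contribution by the main hypothesis, which is bounded by $C$. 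Combining the two contributions gives the stated $C'$ depending only on $C$, $c$, and $\mathrm{Area}(\Omega)$.

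The only real subtlety is the matching of normalizations so that the slice average of $|\partial f/\partial\theta|^2$ dominates $c$; this is what makes the excess piece pointwise non-negative after integration in $\theta$, and it is automatic in every application of the lemma in this chapter. The remainder of the argument is then the elementary weighted-integral manipulation above, with no appeal to PDE theory or the harmonic map structure beyond the hypotheses already supplied.
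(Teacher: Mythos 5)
Your proof is correct and reaches the same conclusion by a cleaner route. Both you and the paper split $|\partial f/\partial\theta|^2 = c + (|\partial f/\partial\theta|^2 - c)$, treat the constant piece by integrability of $dr/(r(\log r^2)^2)$ near $r=0$, and treat the excess piece by the nonnegativity of the $\theta$-slice integral of $|\partial f/\partial\theta|^2 - c$. Where the paper differs is that it packages the one-variable estimate as a separate calculus claim proved by a dyadic decomposition of $(0,\tfrac14]$: on each annulus $\{2^{-i-1}\leq r\leq 2^{-i}\}$ it uses $(-\log r^2)^{-2}\lesssim i^{-2}$, extends the excess integral to the full range by nonnegativity, and sums $\sum i^{-2}$. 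You replace that machinery with the single pointwise inequality $(-\log r^2)^{-2}\leq (\log 16)^{-2}$ valid for $r\leq\tfrac14$, which immediately dominates the weighted excess integral by the unweighted one. This is a genuine simplification and even yields sharper constants (for instance, with $\psi\equiv c$ your bound is exact, while the paper's $c\log 2$ is not). There is no loss: the dyadic stratification buys nothing here that the uniform bound on the weight does not already give.

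One point worth being precise about: you correctly flag that the argument hinges on the $\theta$-slice integral of $|\partial f/\partial\theta|^2 - c$ being nonnegative, but the passage attributing this to the "normalization used in the applications" is a bit muddled. As literally written, the hypotheses of the lemma are off by a factor of $2\pi$: the slice hypothesis gives $\int_0^{2\pi}|\partial f/\partial\theta|^2\,d\theta\geq c$, whereas a literal $\theta$-integration of (\ref{alb}) produces $\psi(r)-2\pi c$, not $\psi(r)-c$. This is an inconsistency already present in the paper's statement and its own proof (which silently treats $\int_0^{2\pi}c\,d\theta$ as $c$), so you are not introducing a gap; but it would be clearer to say outright that one reads the $c$ in (\ref{alb}) as carrying the normalized angular measure, equivalently replaces $c$ there by $c/(2\pi)$, rather than appealing to downstream lemmas. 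With that understanding your $w(r)\,\psi_{\text{excess}}(r)\leq\psi_{\text{excess}}(r)$ step is exactly right and the rest follows.
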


\begin{proof}
We start with the following claim:  For any function $\psi:[0,\frac{1}{4}] \rightarrow \R$  satisfying $\psi(r) \geq c$, 
\begin{equation} \label{calclemma}
\int_0^\frac{1}{4} \psi(r)  \frac{dr}{r(\log r^2)^2}  \leq c  \log 2 +\int_0^\frac{1}{4} \left( \psi(r) -c \right) \frac{dr}{r}.
\end{equation}
To prove (\ref{calclemma}),  first note that
$
2^{-i-1}\leq
 r \leq 2^{-i}
 $ 
  implies 
\[
\frac{1}{(\log r^2)^2}  \leq \frac{1}{(\log 2^{-2i})^2}=\frac{1}{4(\log 2)^2}\frac{1}{i^2}. 
 \]
Furthermore,
 by the assumption that $\psi(r) \geq c \geq 0$, 
\[
 \int_{2^{-i-1}}^{2^{-i}}  \psi(r) \frac{dr}{r}  =  c \int_{2^{-i-1}}^{2^{-i}}  \frac{dr}{r}+ \int_{2^{-i-1}}^{2^{-i}} \left( \psi(r) -c \right) \frac{dr}{r}     \leq  c \log2 + \int_0^\frac{1}{4} \left( \psi(r) -c \right) \frac{dr}{r}.
\]
The above two inequalities imply  
\begin{eqnarray*}
\int_0^\frac{1}{4} \frac{\psi(r)}{r(\log r^2)^2} dr 
& = & 
\sum_{i=2}^{\infty}
\int_{2^{-i-1}}^{2^{-i}} \frac{\psi(r)}{r(\log r^2)^2} dr\\
& \leq & 
\frac{1}{4(\log 2)^2} \sum_{i=2}^{\infty}
\frac{1}{i^2} \int_{2^{-i-1}}^{2^{-i}} \psi(r) \frac{dr}{r}  
\\
& = & 
\frac{1}{4(\log 2)^2} \sum_{i=2}^{\infty}
\frac{1}{i^2} \cdot\left( c \log2 + \int_0^\frac{1}{4} \left( \psi(r) -c \right) \frac{dr}{r} \right)\\
& \leq &  c\log 2 + \int_0^\frac{1}{4} \left( \psi(r) -c \right) \frac{dr}{r}
\end{eqnarray*}
which proves (\ref{calclemma}).

Let  
\[
\psi(r):=\int_{\{z^1\} \times \partial \D_r} \left| \frac{\partial f}{\partial \theta} \right|^2 d\theta. 
\]
Since $\psi(r) \geq c$, we have by (\ref{calclemma}) that 
\[
\int_0^{\frac{1}{4}}
\left(
 \int_{\{z^1\} \times \partial \D_r} \left| \frac{\partial f}{\partial \theta} \right|^2 d\theta
 \right)
  \frac{dr}{r(\log r^2)^2} 
\leq 
c \log 2 + \int_0^{\frac{1}{4}} \left( \int_{\{z^1\} \times \partial \D_r} \left| \frac{\partial f}{\partial \theta} \right|^2 d\theta - c \right) \frac{dr}{r}
\]
for a.e.~$z^1 \in \Omega$.
Thus,
\begin{eqnarray*}
\lefteqn{
\int_{\Omega \times \D_{z^2,\frac{1}{4}} }
  \left| \frac{\partial f}{\partial \theta} \right|^2 
\frac{dz^1 \wedge d\bar z^1}{-2i} \wedge \frac{r dr \wedge d\theta}{r^2(\log r^2)^2}
}
\\
  & = & 
\int_{\Omega} \int_0^{\frac{1}{4}} \left( \int_{\{z^1\} \times \partial \D_r} 
  \left| \frac{\partial f}{\partial \theta} \right|^2 d\theta  \right) \frac{dr}{r(\log r^2)^2}  
\wedge \frac{dz^1 \wedge d\bar z^1}{-2i}
\\
  & \leq & 
 \int_{\Omega} 
\left(  c \log 2+ \int_0^{\frac{1}{4}}  
 \left( 
\int_{\{z^1\}  \times \D_{z^2,\frac{1}{4}} }    \left| \frac{\partial f}{\partial \theta} \right|^2 d\theta - c   
\right) 
 \frac{dr}{r}  
 \right)
\frac{dz^1 \wedge d\bar z^1}{-2i}
\\
& = &
  c \log 2 \cdot \int_\Omega \frac{dz^1 \wedge d\bar z^1}{-2i}
\\
& & 
\ +   \int_{\Omega \times \bar \D_{\frac{1}{4}} }
\left(   \left| \frac{\partial f}{\partial \theta} \right|^2 -c
\right) 
\frac{dz^1 \wedge d\bar z^1}{-2i} \wedge \frac{dr \wedge d\theta}{r} .
\end{eqnarray*}
\end{proof}

\chapter{Harmonic maps in higher dimensions} \label{chap:higher dimensions}

The goal of this section is to prove Theorem~\ref{theorem:pluriharmonic}.  
The proof closely follows the argument in \cite{mochizuki-memoirs} with some modifications due to the fact that we are dealing with a more general situation.    The preparation for the proof is contained in Section~\ref{premsetup}.   The proof,  contained in Section~\ref{proofpluriharmonic},   is by induction on the dimension of $\bar M$. Namely, the unique pluriharmonic map of logarithmic growth from a domain of dimension $n$ is constructed by piecing together pluriharmonic maps from lower dimensional subvarieties.

\section{Preliminary set-up}
\label{premsetup}
Throughout this section,   $M$, $\tilde X$ and $\rho:  \pi_1(M) \rightarrow \mathsf{Isom}(\tilde X)$ are as in Theorem~\ref{theorem:pluriharmonic}.  
Furthermore, we let $\bar M$ be a compact projective variety and $\Sigma$ be a normal crossing divisor such that $M =\bar M \backslash \Sigma$.
We start by recording  the following  version of the Lefschetz hyperplane theorem (cf.~\cite[Lemma 21.8]{mochizuki-memoirs}). 
\begin{lemma}\label{lefschetz}
If $\bar Y \subset \bar M$ is a sufficiently ample smooth divisor such that $\bar Y \cap \Sigma$  is a normal crossing divisor, then 
the map $\iota_*: \pi_1(Y ) \rightarrow \pi_1(M)$ is onto where $Y = \bar Y \backslash \Sigma$ and 
$
\iota: Y  \rightarrow M
$
is the inclusion map. 
\end{lemma}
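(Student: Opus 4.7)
\textbf{Proof plan for Lemma~\ref{lefschetz}.}
The plan is to invoke a quasi-projective version of the Zariski--Lefschetz hyperplane theorem, exactly as recorded in \cite[Lemma 21.8]{mochizuki-memoirs} and originating in the work of Hamm--L\^e. That statement gives the desired surjection $\iota_*:\pi_1(Y) \twoheadrightarrow \pi_1(M)$ (in fact, an isomorphism on $\pi_k$ for $k<n-1$, where $n=\dim_{\C}\bar M$) as soon as $\bar Y$ is a smooth, sufficiently ample divisor whose union $\bar Y \cup \Sigma$ with the boundary is again a normal crossing divisor. The work therefore breaks into (a) confirming that the hypothesis ``sufficiently ample'' produces a divisor with all the required transversality, and (b) outlining the topological argument that delivers the surjection.

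For step (a), I would check by Bertini, applied to $|\mathcal{O}_{\bar M}(d)|$ for $d$ large, that a generic member $\bar Y$ is smooth and transverse to every stratum of the normal crossing stratification $\Sigma = \bigcup_j \Sigma_j$. Transversality to all strata is exactly equivalent to $\bar Y \cup \Sigma$ being a normal crossing divisor, so the hypothesis ``sufficiently ample'' can be arranged. For step (b), I would follow the classical Lefschetz pencil argument adapted to the open setting: select a generic pencil $\{\bar Y_t\}_{t \in \Pb^1}$ in $|\mathcal{O}_{\bar M}(d)|$ containing $\bar Y$ with smooth base locus $B$ of codimension $2$ transverse to $\Sigma$, blow up $B$ to obtain $f:\widetilde{\bar M} \to \Pb^1$ with fibers $\bar Y_t$, and restrict to the complement of the strict transform of $\Sigma$. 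Outside the finite set of critical values (where $\bar Y_t$ is singular or fails to be transverse to $\Sigma$), the restriction of $f$ is a locally trivial topological fibration with fiber $Y$ over a Zariski open $U \subset \Pb^1$. The homotopy exact sequence for this fibration, combined with a Morse-theoretic analysis of the vanishing cycles at the excluded critical values (they are real $(n-1)$-spheres and so contribute nothing to $\pi_1$ when $n \geq 2$), together with the observation that blowing up along the codimension-$2$ smooth locus $B$ does not change $\pi_1$ of the open complement, delivers the surjection $\iota_*:\pi_1(Y) \twoheadrightarrow \pi_1(M)$.

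The main obstacle is not in the algebraic topology but in the Bertini-type transversality: one must ensure for a single choice of pencil that $\bar Y_t \cup \Sigma$ is normal crossings for all but finitely many $t \in \Pb^1$, and simultaneously that the base locus $B$ meets every stratum of $\Sigma$ transversely. Both conditions follow by applying Bertini stratum-by-stratum in a sufficiently ample linear system, so the arithmetic of ``how ample'' one needs the system to be is determined by the combinatorics of $\Sigma$. Once this transversality is in place, the remainder of the argument is a routine application of the pencil method, and the lemma follows.
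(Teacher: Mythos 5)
Your proposal is correct and takes the same approach as the paper: the paper also records the lemma as a citation to \cite[Lemma~21.8]{mochizuki-memoirs} (a quasi-projective Zariski--Lefschetz theorem) without supplying an independent proof. The pencil/Bertini sketch you append is a reasonable outline of what lies behind that citation, but it is extra to what the paper itself does.
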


We will use the following notation throughout this section. For 
\[
\bar Y, \  Y=\bar Y \backslash \Sigma, \  \mbox{ and } \ \iota:Y \rightarrow M
\] 
as in Lemma~\ref{lefschetz}:
\begin{itemize}
\item
$
\rho_Y: \pi_1(Y) \rightarrow \mathsf{Isom}(\tilde X)$ is the homomorphism  $\rho_{Y}:=\rho \circ \iota_*
$
\item $K = \mbox{kernel}(\iota_*:  \pi_1(Y) \rightarrow \pi_1(M))$ 
\item 
$\tilde Y$ is the universal cover of $Y$
\item $\tilde Y/K$ is the set of orbits $[p]$  of $p \in \tilde Y$ by the action of $K$
\item  
$
\hat \rho_Y:  \pi_1(Y)/K \simeq \pi_1(M)
\rightarrow \mathsf{Isom}(\tilde X)$ is the homomorphism
$\hat \rho_Y([\gamma]) \mapsto \rho_Y(\gamma)$
\end{itemize}

\begin{lemma}
The homomorphism $\rho_Y:\pi_1(Y) \rightarrow \mathsf{Isom}(\tilde X)$ is proper.
\end{lemma}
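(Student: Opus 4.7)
The plan is to reduce properness of $\rho_Y$ directly to properness of $\rho$ by exploiting the fact, guaranteed by Lemma~\ref{lefschetz}, that $\iota_*$ is surjective. First, since $Y=\bar Y\setminus \Sigma$ is a smooth quasi-projective variety, $\pi_1(Y)$ is finitely generated, so I would fix a finite generating set $\Lambda_Y=\{\mu_1,\dots,\mu_N\}$ of $\pi_1(Y)$. By Lemma~\ref{lefschetz}, $\iota_*:\pi_1(Y)\to\pi_1(M)$ is onto, hence $\Lambda_M:=\{\iota_*(\mu_1),\dots,\iota_*(\mu_N)\}$ is a finite generating set of $\pi_1(M)$.

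Second, I would compare the two functions $\delta$ from Definition~\ref{proper}. Writing $\delta_Y$ for the one built from $(\rho_Y,\Lambda_Y)$ and $\delta_M$ for the one built from $(\rho,\Lambda_M)$, we have, for every $P\in\tilde X$,
\[
\delta_Y(P)=\max_{1\le i\le N}d(\rho_Y(\mu_i)P,P)=\max_{1\le i\le N}d(\rho(\iota_*(\mu_i))P,P)=\delta_M(P),
\]
because by definition $\rho_Y=\rho\circ\iota_*$. Thus the two functions $\delta_Y$ and $\delta_M$ coincide on $\tilde X$.

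Third, I would invoke the standard fact that the notion of properness in Definition~\ref{proper} is independent of the choice of finite generating set: if $\Lambda,\Lambda'$ are two finite generating sets of a group $\Gamma$ with homomorphism $\rho$ to $\mathsf{Isom}(\tilde X)$, then writing each $\lambda'\in\Lambda'$ as a word in $\Lambda^{\pm 1}$ of bounded length $N$ and using that $\rho$ acts by isometries gives $\delta_{\Lambda'}(P)\le N\,\delta_\Lambda(P)$ and vice versa, so bounded sublevel sets for one are equivalent to bounded sublevel sets for the other. Since $\rho$ is assumed proper, $\delta_M$ has bounded sublevel sets, and therefore so does $\delta_Y=\delta_M$. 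This proves $\rho_Y$ is proper. There is no essential obstacle; once Lemma~\ref{lefschetz} is in hand, the argument is almost immediate, and the only point worth spelling out is the generator-independence of properness so that the comparison $\delta_Y=\delta_M$ can be applied to any chosen generating set of $\pi_1(Y)$.
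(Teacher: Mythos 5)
Your proposal is correct and is essentially the same argument as the paper's, which states tersely that Lemma~\ref{lefschetz} forces $\mathrm{Im}(\rho_Y)=\mathrm{Im}(\rho)$ and hence properness of $\rho_Y$ follows from properness of $\rho$. You have simply unpacked this into the explicit comparison $\delta_Y=\delta_M$ via the pushed-forward generating set $\iota_*(\Lambda_Y)$ together with the (standard and correctly justified) independence of Definition~\ref{proper} from the choice of finite generating set.
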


\begin{proof}
Lemma~\ref{lefschetz} implies the image of $\rho_Y$ is equal to the image of $\rho$. Thus, $\rho_Y$ is proper  since $\rho$ is proper.\end{proof}

Let $\iota^*(\tilde M) \rightarrow Y$ denote the pullback bundle of the universal cover $\tilde M \rightarrow M$.   Since $\pi_1(Y)/K \simeq \pi_1(M)$, we have the identification $\iota^*(\tilde M) \simeq \tilde Y/K$.
The commutative diagram 
\begin{equation*}
\begin{tikzcd}
  \tilde Y/K  \simeq \iota^*(\tilde M) \arrow[r, "{\iota^*}"] \arrow[d]
    &  \tilde M \arrow[d ] \\
  Y  \arrow[r, "\iota" ]
& M \end{tikzcd}
\end{equation*}
induces  an injective morphism of fiber bundles 
\begin{equation*}
\begin{tikzcd}
\tilde Y/K \times_{\hat \rho_Y} \tilde X \simeq {\iota^*}(\tilde M \times_\rho \tilde X)  \ \ \ \ \ \  \hookrightarrow \arrow[d,]
    &  \tilde M \times_\rho \tilde X \arrow[d ] \\
  Y  \arrow[r, "\iota" ]
& M
 \end{tikzcd}
\end{equation*}
Finally, by the identification 
\[
\tilde Y \times_{\rho_Y} \tilde X \rightarrow \tilde Y/K \times_{\hat \rho_Y} \tilde X, \ \ \ [(p,x)]_{\rho_Y} \mapsto [([p],x)]_{\hat \rho_Y}
\]
we obtain
\begin{equation*}
\begin{tikzcd}
 \tilde Y \times_{\rho_Y} \tilde X  \ \ \ \ \ \  \hookrightarrow \arrow[d,]
    &  \tilde M \times_\rho \tilde X \arrow[d ] \\
  Y  \arrow[r, "\iota" ]
& M
 \end{tikzcd}
\end{equation*}
Thus, we conclude that any section
 \begin{equation} \label{ustilde}
s: Y \rightarrow  \tilde Y \times_{\rho_Y} \tilde X
\end{equation}
of the fiber bundle $\tilde Y \times_{\rho_Y} \tilde X \rightarrow Y$ can be viewed as a map $Y \rightarrow  \tilde M \times_\rho \tilde X$.  Conversely, 
given a section $u:M \rightarrow \tilde M \times_\rho \tilde X$ and $Y \subset M$, its restriction    
\begin{equation} \label{restrict}
u|_Y: Y \rightarrow  \tilde M \times_\rho \tilde X
\end{equation}
defines a section $Y \rightarrow \tilde Y \times_{\rho_Y} \tilde X$ which we denote by 
\begin{equation} \label{restrictionsection}
u_Y:Y  \rightarrow 
\tilde Y \times_{\rho_Y} \tilde X.
\end{equation}

In Definition~\ref{logdec}, we defined the notion of sub-logarithmic growth for harmonic maps from a domain of complex dimension 1.
We use the identification of the restriction (\ref{restrict}) of a section from $M$  to a section (\ref{restrictionsection}) from a subvariety $Y \subset M$ to define the notion of \emph{sub-logarithmic growth} for a pluriharmonic map from a domain of arbitrary dimensions.  
To this end,
let
 $L$ be an ample line bundle over $\bar{M}$ and  $H^0(\bar M,L)$ be the space of global holomorphic sections.   Define the projectified space
\begin{equation} \label{Pb}
\Pb = \Pb(H^0(\bar M, L)^*).
\end{equation}
Let 
$\CC$ be the set of sufficiently ample smooth  curves $\bar \RR \subset \bar M$ with the following properties:  
\begin{itemize}
\item
$\RR  := \bar \RR \backslash \Sigma= \{s_1=0\} \cap \dots \cap \{s_{n-1}=0\}, \ s_1, \dots, s_{n-1} \in \Pb
$ with $\dim_\C \bar M=n$.
\item  $\bar \RR \cap \Sigma$ is a  divisor in $\bar M$ with  normal crossings. 
\end{itemize}

\begin{definition} \label{sub-log}
 We say a pluriharmonic section $u: M \rightarrow \tilde M \times_\rho  \tilde X$ has \emph{sub-logarithmic growth} if, for $\bar \RR \in \CC$, the section $u_\RR:\RR \rightarrow \tilde \RR \times_{\rho_\RR} \tilde X$ (as defined by (\ref{restrictionsection})) has sub-logarithmic growth (cf.~Definition~\ref{logdec}).  
\end{definition}

\begin{remark}
It follows from the definition that if $u:M \rightarrow \tilde M \times_\rho \tilde X$ is a pluriharmonic section of sub-logarithmic growth and  $\bar Y$ is as in Lemma~\ref{lefschetz}, then its restriction $u_Y: Y \rightarrow \tilde Y \times_{\rho_Y} \tilde X$  (as defined by (\ref{restrictionsection})) is a pluriharmonic section of sub-logarithmic growth.  
\end{remark}

\section{Proof of the Theorem~\ref{theorem:pluriharmonic}, Theorem~\ref{theorem:holomorphic}, and Theorem~\ref{theorem:sameconclusion}}
\label{proofpluriharmonic}

We will prove Theorem~\ref{theorem:pluriharmonic}   by an induction on the (complex) dimension of $M$.    
Consider the following:

\begin{quote}
{\sc statement} $(n)$:  Theorem~\ref{theorem:pluriharmonic} is true whenever $\dim_{\C} M=n$.  
\end{quote}
\subsection{Intial step of the induction} \label{dim2case}
In this subsection, we  prove {\sc statment} ($2$).  Thus, assume $\dim_\C \bar M=2$.  We have already shown that there exists a pluriharmonic section $u:M \rightarrow \tilde M\times_\rho \tilde X$ in Theorem~\ref{pluriharmonicity}.  What remains to be shown is that $u$ is the unique pluriharmonic section of sub-logarithmic growth.

\begin{lemma}
\label{uniqueH}
If $u: M \rightarrow \tilde M \times_\rho \tilde{X}$  is the  pluriharmonic section  of Theorem~\ref{pluriharmonicity} (i.e.~$\dim_C \bar M=2$), then $u$ has sub-logarithmic growth.  \end{lemma}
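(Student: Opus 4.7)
The plan is to verify that for each sufficiently ample curve $\bar{\mathcal{R}} \in \mathcal{C}$, the restriction $u_\mathcal{R}:\mathcal{R} \to \tilde{\mathcal{R}} \times_{\rho_\mathcal{R}} \tilde{X}$ satisfies the sub-logarithmic growth condition of Definition~\ref{logdec} at every puncture. The argument reduces the two-dimensional problem to the one-dimensional Riemann surface theory of Chapter~\ref{chap:Riemann surfaces}.

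First, I would observe that $u_\mathcal{R}$ is a harmonic section: since $u$ is pluriharmonic by Theorem~\ref{pluriharmonicity}, its restriction to any complex submanifold of $M$ is harmonic, and in particular $u_\mathcal{R}$ is a harmonic section of $\tilde{\mathcal{R}} \times_{\rho_\mathcal{R}} \tilde{X} \to \mathcal{R}$.

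Next, I would establish logarithmic energy growth of $u_\mathcal{R}$ on a punctured-disk neighborhood of each puncture. Fix $p \in \mathcal{R}$ a puncture. Because $\bar{\mathcal{R}} \cap \Sigma$ is, by the definition of $\mathcal{C}$, a normal crossings divisor on the curve $\bar{\mathcal{R}}$, the curve necessarily misses the crossings of $\Sigma$ in $\bar M$, so $p$ corresponds to a transverse intersection of $\bar{\mathcal{R}}$ with a smooth point of some irreducible component $\Sigma_\ell$. Choose a type (A) local chart from Section~\ref{sec:neardivisor} near this point with coordinates $(z^1,z^2)$ in which $\Sigma_\ell=\{z^2=0\}$, and parameterize $\bar{\mathcal{R}}$ near $p$ by a local coordinate $w$ so that $\bar{\mathcal{R}}$ is locally $\{(\phi(w),w):w\in\D\}$ for a holomorphic function $\phi$. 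By conformal invariance of the two-dimensional Dirichlet energy, the energy of $u_\mathcal{R}$ on $\D^*_{r,r_0}$ equals, up to a universal constant,
\[
\int_{\D^*_{r,r_0}} \left( \bigl|\phi'(w)\partial_{z^1}u+\partial_{z^2}u\bigr|^2 + \bigl|\phi'(w)\partial_{\bar z^1}u+\partial_{\bar z^2}u\bigr|^2 \right) \frac{dw\wedge d\bar w}{-2i},
\]
where the integrand is evaluated along $\bar{\mathcal{R}}$. To bound this by $\mathsf{E}_\ell \log(r_0/r)+C$, I would combine the $L^1$ estimates for $u$ of Theorem~\ref{weneedA} with the pluriharmonicity of $u$: since $\partial_{z^\alpha}u$ is holomorphic in the transverse variable (as a consequence of $\partial\bar\partial u=0$), the directional energy densities $|\partial_{z^\alpha}u|^2$ are subharmonic in that variable, and the mean value inequality promotes the almost-everywhere slice estimates obtained from Theorem~\ref{weneedA} via Fubini into genuine pointwise and line-integrated bounds along the specific slice $\{z^1=\phi(w)\}$. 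The matching lower bound $\mathsf{E}_{\rho^p_\mathcal{R}}\log(r_0/r)\leq E^{u_\mathcal{R}}[\D^*_{r,r_0}]$ is immediate from Lemma~\ref{lowerbd} applied to the loop around $p$.

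With logarithmic energy growth in a disk neighborhood of each puncture established, Corollary~\ref{converse} (which applies because the restriction of the Poincaré-type K\"ahler metric of Definition~\ref{poincarekahler} to $\bar{\mathcal{R}}$ is conformal to the standard Poincaré metric near each puncture) yields sub-logarithmic growth of $u_\mathcal{R}$ at each puncture, and hence on all of $\mathcal{R}$. Since this conclusion holds for an arbitrary $\bar{\mathcal{R}}\in\mathcal{C}$, Definition~\ref{sub-log} gives sub-logarithmic growth of $u$ itself.

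The main obstacle is the passage from the 2D integrated estimates of Theorem~\ref{weneedA} to a 1D slice estimate along the specific holomorphic curve $\bar{\mathcal{R}}$: Fubini alone yields such a slice estimate only for almost every slice, and the crucial upgrade to every smooth curve in $\mathcal{C}$ is powered by the subharmonicity of $|\partial_{z^\alpha}u|^2$ that is available precisely because $u$ is pluriharmonic. If this slice-wise passage proves more delicate than anticipated in a neighborhood of the puncture, a fallback is to establish the property first for a dense subset of $\mathcal{C}$ using Fubini and extend it to all of $\mathcal{C}$ by a semi-continuity argument on the parameter space of sufficiently ample divisors.
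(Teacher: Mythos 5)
Your proposal diverges from the paper's proof in a way that creates a genuine gap. The paper does not try to establish logarithmic energy growth of $u_\RR$ along the curve $\RR$ itself. Instead it works with the transverse holomorphic disks $\DD^{z^1}_{\rho_0} = \{(z^1,\zeta): |\zeta| < \rho_0\}$, which \emph{are} level sets of $z^1$ and hence captured by Fubini: Theorem~\ref{weneedA} plus Fubini give logarithmic energy growth of the restriction $u_{z^1}$ for a.e.\ $z^1$, hence sub-logarithmic growth for a.e.\ $z^1$ by Lemma~\ref{(iii)} (which is the direction you need; Corollary~\ref{converse} runs the other way). The subharmonicity of $\zeta \mapsto d^2(u_{z^1}(\zeta), u_{z^1_0}(\zeta))$ --- a consequence of harmonicity and the NPC inequality, requiring no curvature condition on derivative densities --- combined with Lemma~\ref{shdisk} and the maximum principle then yields Lipschitz continuity of $z^1 \mapsto u(z^1,\zeta)$ uniformly in $\zeta$, and the triangle inequality transfers sub-logarithmic growth from one transverse slice $u_{z^1_0}$ to $u_\RR$, using that $\RR$ has bounded $z^1$-coordinate near the puncture and $\zeta(z) \sim z^m$ by (\ref{orderofintersection}).

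Your attempted direct route --- bounding the one-dimensional energy of $u_\RR$ along $\RR$ from the two-dimensional estimates of Theorem~\ref{weneedA} --- does not close. The curve $\RR$ is not a level set of $z^1$, so Fubini alone cannot single it out. The proposed repair via subharmonicity of $|\partial_{z^\alpha}u|^2$ and the mean value inequality has two problems: the quantity actually controlled near the divisor is the signed difference $|\partial u/\partial \zeta|^2 - \mathsf{E}_j/(8\pi s^2)$, which is not subharmonic since the subtracted term is itself subharmonic; and even for a genuinely subharmonic density the mean value inequality would require balls that do not shrink as one approaches the divisor, whereas the $L^1$ bounds of Theorem~\ref{weneedA} carry weights that become singular there, so any pointwise bound extracted that way degenerates. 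The fallback of a semi-continuity argument on the parameter space of ample curves is also left unsupported: sub-logarithmic growth is an upper bound of the wrong sign for lower semi-continuity of energy to help. The paper's Lipschitz-in-$z^1$ step sidesteps all of this because distance-function subharmonicity is available unconditionally and yields a Lipschitz estimate that is uniform in the distance to the divisor.
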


\begin{remark} \label{uniqueH'}
Intuitively, the proof is clear.  Near the divisor, $M$ has local holomorphic coordinates $(z^1,\zeta)$.  From the energy estimate  Theorem~\ref{weneedA}, the section $u$ is Lipschitz in the direction parallel to the divisor (in the $z^1$-direction) and has logarithmic  energy growth when restricted to the  transverse disks to the divisor (i.e.~on the holomorphic  disks given by parameter $\zeta$). Since  harmonic maps from the punctured disk have sub-logarithmic growth (cf.~Theorem~\ref{exists}),  $u$ restricted to the transverse holomorphic disks has sub-logarithmic growth.  It then follows that  the restriction of  $u$ to $\RR$ for   any   $\bar \RR \in C$ also has sub-logarithmic growth. Details are below.
\end{remark}

\begin{proof}
Let $\RR \in \CC$.  Since $u: M \rightarrow \tilde M \times_\rho \tilde X$ is pluriharmonic,  $u_\RR: \RR \rightarrow \tilde \RR \times_{\rho_\RR} \tilde X$ (as defined by (\ref{restrictionsection})) is harmonic. 
 Let $\bar \RR \cap \Sigma=\{p^1,\dots,p^n\}$ be the punctures of $\RR$.  Fix a puncture $p^k$ 
and let $V$ be a neighborhood of $p^k$.
The assumption that  $\bar \RR \cup \Sigma$ is a divisor with  normal crossings means that $p_k$ is not a juncture of $\Sigma$.  Thus,  we may assume that  $V= \Omega \times \D_{\frac{1}{4}}$   is a subset of a set $\Omega \times \D$ away from the crossing (cf.~Theorem~\ref{weneedA}) and  such that $p^k \in \Omega \times \{0\}$.

Let $(z^1,\zeta)$ be the holomorphic coordinates  of $V$ (cf.~(\ref{holcoord})).  Choose  $V_0$ to be a compactly contained subset of $V$.  More precisely,  we choose an open subset $\Omega_0$  of $\Omega$ and $\rho_0>0$ such that $p^k \in \Omega_0 \times \{0\}$ and set
\[
V_0:= \{(z^1,\zeta): \  z^1 \in \Omega_0, \zeta \in \D_{\rho_0}^*\} \subset V.
\]   
For each $z^1 \in \Omega_0$, the disk $\DD_{\rho_0}^{z^1}:=\{(z^1,\zeta) \in V_0: \ \zeta \in \D_{\rho_0} \}$ is holomorphic in $\bar M$.  Define the subsets
\[
\DD_{\rho_0}^{z^1*}:=\{(z^1,\zeta):  \ \zeta \in \D^*_{\rho_0} \}, \ \ \ \DD_{r,r_0}^{z^1*}:=\{(z^1,\zeta):  \ \zeta \in \D^*_{r,r_0} \}.
\]
Let $\D$ be a holomorphic disk of $\RR$  centered at $p:=p^k$ and $z$ be its holomorphic coordinate.   Assume  that $\D \subset \bar \RR \cap V_0$ and let 
$(z^1(z), \zeta(z))$ be the coordinates of $z \in \D^*$ as a point in $V_0$.  
Since $\bar \RR$ is transverse to $\Sigma_j$ and $\zeta(z)$ is an analytic function in $z$, there exists $m \in \N \backslash \{0\}$ and a constant $c>0$ such that
\begin{equation} \label{orderofintersection}
\frac{1}{c} \leq \left| \frac{\zeta(z)}{z^m} \right| \leq c.
\end{equation}

Define the restriction map
\[
u_{z^1}=u\big|_{\DD^{z^1*}}:\DD_{\rho_0}^{z^1*} \rightarrow \tilde \RR \times_\rho \tilde X.
\]
The pluriharmonicity of $u$ implies  the harmonicity of $u_{z^1}$ and $u_{z^1_0}$ for any $z^1, z^1_0 \in \Omega_0$.  Thus,   
the function 
\[
\zeta \mapsto d^2 (u_{z^1}(\zeta),u_{z^1_0}(\zeta)), \ \zeta \in \D^*_{\rho_0} \simeq  \DD_{\rho_0}^{z^1*} \simeq \DD_{\rho_0}^{z_0^1*}
\] 
 is a subharmonic function.  (Note that this is analogous to the function $z \mapsto d^2(v_{z^1_0}(z),v_{z^1}(z))$ defined in proof of Lemma~\ref{Aest} and Figure~\ref{fig:slices}.)

Theorem~\ref{weneedA} and Fubini's theorem, for a.e.~$z^1 \in \Omega_0$, 
that there exists a constant $C=C(z^1)>0$ such that
\[
\int_{\DD_{r,r_0}^{z^1*}} \left| \frac{\partial u}{\partial r}\right|^2 + \frac{1}{r^2}  \left| \frac{\partial u}{\partial \theta}\right|^2 \ \frac{dz^1 \wedge d\zeta}{-2i} \leq \frac{\Delta_I^2}{2\pi}\log \frac{r_0}{r}+C(z^1), \ \ 0<r<r_0<\rho_0.
\]
Thus, noting the lower bound of Lemma~\ref{lowerbd}, we conclude that 
 for a.e.~$z^1 \in \Omega_0$, 
\begin{equation} \label{aez}
\frac{\Delta_I^2}{2\pi} \log \frac{r_0}{r} \leq  E^{u_{z^1}}[\DD_{r,r_0}^{z^1*}]  \leq \frac{\Delta_I^2}{2\pi}\log \frac{r_0}{r}+C(z^1), \ \ \ 0<r<r_0\leq \rho_0.
\end{equation}
Let $\Omega_0'\subset \Omega_0$ be the set of  all $z^1 \in \Omega_0$ such that (\ref{aez}) holds.
By Theorem~\ref{exists}, $u_{z^1}$ has sub-logarithmic growth for $z^1 \in \Omega_0'$. 
In other words, $u_{z^1}$ has sub-logarithmic growth for a.e.~$z_1$.  We will show that this implies that $u_{z^1}$ has sub-logarithmic growth {\it for every }$z_1$ since $z^1 \mapsto u(z^1,\zeta)$ is Lipschitz continuous.

Indeed, given $z^1, z^1_0 \in \Omega_0'$, the triangle inequality implies that for all $\epsilon>0$,
\[
\lim_{\zeta \rightarrow 0} d^2 (u(z^1,\zeta),u(z^1_0,\zeta)) +\epsilon \log |\zeta| =-\infty.
\]
Hence, $\zeta \mapsto d^2 (u(z^1,\zeta),u(z^1_0,\zeta))$ extends as a subharmonic function on $\D_{\rho_0}$ by Lemma~\ref{shdisk}. The maximum principle therefore implies that
for $z^1, z^1_0 \in \Omega_0'$  and for $\zeta \in \D_{\frac{\rho_0}{2}}$, 
 \begin{eqnarray} \label{mp}
  d^2(u(z^1,\zeta), u (z^1_0,\zeta)) &\leq& \sup_{ \eta \in \partial \D_{\frac{\rho_0}{2}} } d^2(u(z^1,\eta), u(z^1_0, \eta))\\
&\leq& L^2 |z^1-z^1_0|^2   \nonumber
 \end{eqnarray}
where $L$ is the Lipschitz bound of $u$ in a compact set of $M$ containing $\{(z^1,\zeta):  z^1 \in \Omega_0, \ |\zeta|=\frac{\rho_0}{2} \}$.  
The continuity of $u$ implies that (\ref{mp}) is valid for any $z^1, z^1_0 \in \Omega_0$.  In other words, we have shown the Lipschitz continuity of $z^1 \mapsto u(z^1, \zeta)$.

We now finish the proof.
Fix $\epsilon>0$ and let $z^1_0 \in \Omega_0'$.  Since $u_{z^1_0}$ has sub-logarithmic growth, there exists   $\delta \in (0,\rho_0)$ be such that, with $f_0$ as in Definition~\ref{logdec}, 
\begin{equation} \label{ppl}
d^2(u(\zeta),f_0(\zeta)) \leq -\frac{\epsilon}{2cm} \log |\zeta|, \ \ \ \forall p'  \in V_0 \text{ and } |\zeta|<\delta
\end{equation}
where $c$,  $m$ as in (\ref{orderofintersection}).
For a point $p \in V_0$, write 
$
(z^1(p), \zeta(p))
$
in terms of the holomorphic coordinates $(z^1,\zeta)$ of $V$.
Furthermore, let $p' \in V_0$, write  $(z_0^1, \zeta(p'))$ with $|\zeta(p')|< \delta$.  Then
\begin{eqnarray*} 
d^2(u_\RR(p), f_0(p)) & = & d^2(u(p), f_0(p))
\nonumber \\
& \leq
&  2d^2(u(p), u(p'))+ 2d^2(u(p'), f_0(p))\ \  \ \ \mbox{(by the triangle inequality)}
\nonumber \\ 
& = & 2d^2(u(z^1(p), \zeta(p)), u(z^1_0, \zeta(p')))-\frac{\epsilon}{2cm} \log |\zeta(p)|  \ \ \ \ \ \ \mbox{(by (\ref{ppl}))}\nonumber \\
&
\leq
&  2L^2|z^1(p)-z^1_0|^2 -\frac{\epsilon}{2cm} \log |\zeta(p)| \ \ \ \ \mbox{(by (\ref{mp})}).
\end{eqnarray*}
By (\ref{orderofintersection}), 
\[
\lim_{|\zeta(p)| \rightarrow 0} d^2(u_\RR(p), P_0)+\epsilon \log |\zeta(p)| =-\infty.
\]
Since $p$ is an arbitrary puncture of $\RR$, we have shown that  $u_\RR$ has sub-logarithmic growth.
\end{proof}

For any element $s \in \Pb = \Pb(H^0(\bar M, L)^*)$, we set 
\[
{\bar Y}_s := s^{-1}(0) \ \mbox{and} \  Y_s={\bar Y}_s \backslash \Sigma
\]
and 
\[
 U= \{ s \in \Pb: {\bar Y}_s \  \mbox{smooth} \ \mbox{and} \ {\bar Y}_s \cap \Sigma \  \mbox{ is a normal crossing} \}. 
 \]
 For $q \in M$, consider  the subspace
\[
V(q) =\{s \in H^0(\bar M, L): s(q)=0\}, \ \ \ \Pb(q) := \Pb(V(q)^*)
\]
 and
 \[
 U(q) = U \cap \Pb(q).
 \]
 The sets $U$, $U(q)$  are Zariski open subsets of $\Pb$, $\Pb(q)$ respectively.

\begin{lemma}
\label{unique!}
The pluriharmonic section $u: M \rightarrow  \tilde M \times_\rho \tilde X$ of  Theorem~\ref{pluriharmonicity} (i.e.~$\tilde M$ is a complex surface) is unique amongst all sections $M \rightarrow \tilde M \times_\rho \tilde X$ with sub-logarithmic growth.
\end{lemma}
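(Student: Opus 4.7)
The plan is to reduce this uniqueness statement to the corresponding result for harmonic sections from punctured Riemann surfaces (Theorem~\ref{uniqueness}) by restricting to sufficiently ample curves. Let $u$ denote the pluriharmonic section of Theorem~\ref{pluriharmonicity}, and suppose $v : M \to \tilde M \times_\rho \tilde X$ is another pluriharmonic section of sub-logarithmic growth. Since both are continuous, it suffices to prove $u = v$ on a Zariski dense subset of $M$. By Bertini's theorem applied to the ample line bundle $L$, every generic $q \in M$ lies on some $\bar{\mathcal R} \in \CC$, so the task reduces to verifying $u_{\mathcal R} = v_{\mathcal R}$ on $\mathcal R = \bar{\mathcal R} \setminus \Sigma$ for each such curve.

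Fix such a $\bar{\mathcal R}$ with inclusion $\iota : \mathcal R \hookrightarrow M$ and restricted homomorphism $\rho_{\mathcal R} = \rho \circ \iota_*$. Pluriharmonicity of $u$ and $v$ together with the holomorphic embedding of $\mathcal R$ imply that $u_{\mathcal R}$ and $v_{\mathcal R}$ are harmonic sections of $\tilde{\mathcal R} \times_{\rho_{\mathcal R}} \tilde X$; by Definition~\ref{sub-log} they have sub-logarithmic growth on $\mathcal R$. Around each puncture $p^k$ of $\mathcal R$ I would choose a small holomorphic punctured disk $\D^{*k}$ and invoke Corollary~\ref{converse} to see that the restrictions of $u_{\mathcal R}$ and $v_{\mathcal R}$ to $\D^{*k}$ satisfy the logarithmic energy bound of Theorem~\ref{exists}(i). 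Summing these local bounds over the finitely many punctures yields the global bound of Definition~\ref{logdec}, so $u_{\mathcal R}$ and $v_{\mathcal R}$ both have logarithmic energy growth on $\mathcal R$. Lemma~\ref{lefschetz} shows that $\iota_*$ is surjective, so $\rho_{\mathcal R}$ has the same image as $\rho$ in $\mathsf{Isom}(\tilde X)$; it therefore inherits properness and the absence of fixed unbounded closed convex strict subsets of $\tilde X$ (the latter persisting under decomposition of $\tilde X$ into irreducible symmetric factors, since a fixed convex strict subset in one factor yields one in the product). Case~(ii) of Theorem~\ref{uniqueness}, applied to each irreducible factor, then yields $u_{\mathcal R} = v_{\mathcal R}$ on $\mathcal R$, and varying $\bar{\mathcal R}$ over $\CC$ completes the argument.

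The main obstacle is the middle step: upgrading sub-logarithmic growth on each curve $\mathcal R$ to the logarithmic energy growth demanded by Theorem~\ref{uniqueness}. This upgrade is local around the punctures and is supplied by Corollary~\ref{converse}, the converse direction in the disk case; without this bridge the reduction to the Riemann surface uniqueness statement would not close. The remaining ingredients — Lefschetz surjectivity, the hereditary nature of the hypotheses on $\rho_{\mathcal R}$, and the Bertini-style covering of $M$ by curves in $\CC$ — are routine, and no new Bochner-type analysis in dimension two is needed beyond what Chapter~\ref{chap:Riemann surfaces} already provides.
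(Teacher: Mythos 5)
Your proof is correct and follows the same strategy as the paper: restrict both pluriharmonic sections to sufficiently ample curves through an arbitrary point, use sub-logarithmic growth of the restrictions, and invoke the curve-level uniqueness theorem (Theorem~\ref{uniqueness}). Where the paper's own proof is terse — it passes from ``the restrictions have sub-logarithmic growth'' directly to ``By Theorem~\ref{uniqueness}'' — you are more careful, explicitly using Corollary~\ref{converse} near each puncture to upgrade sub-logarithmic growth to the logarithmic energy growth hypothesis of Theorem~\ref{uniqueness}, and noting that Lemma~\ref{lefschetz} transfers the properness and no-fixed-convex-subset hypotheses from $\rho$ to $\rho_{\mathcal R}$. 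Both of these steps are genuinely needed and are left implicit in the paper, so the added detail is welcome. The only place you go slightly beyond what is strictly required is the remark about decomposing $\tilde X$ into irreducible symmetric factors; this is a valid concern if $\tilde X$ is reducible (Theorem~\ref{uniqueness}(ii) is stated for irreducible targets) and your sketch of how to reduce to the irreducible case is reasonable, though one should add that $\rho$ may need to be restricted to a finite-index subgroup to ensure the factors are preserved before projecting. In short: same route as the paper, with the bridging lemmas spelled out.
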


\begin{proof}
Let $v: M \rightarrow  \tilde M \times_\rho \tilde X$ also be a  pluriharmonic section with sub-logarithmic growth.  For $q \in M$ and $s \in  U(q)$, 
the restrictions $u_{Y_s}: Y_s \rightarrow \tilde Y_s \times_{\rho_{Y_s}} \tilde X$ and $v_{Y_s}: Y_s \rightarrow \tilde Y_s \times_{\rho_{Y_s}} \tilde X$ (as in  (\ref{restrictionsection})) have sub-logarithmic growth.    
By  Theorem~\ref{uniqueness}, $u_{Y_s}=v_{Y_s}$.   Since  $q$ is an arbitrary point in $M$, we conclude $u=v$.
\end{proof}

We observe that {\sc statement} $(2)$ follows from Theorem~\ref{theorem:pluriharmonicDim2}, Theorem~\ref{pluriharmonicity},  Lemma~\ref{uniqueH} and Lemma~\ref{unique!}.

 \subsection{Inductive step} \label{MT}

The goal of this subsection is to 
assume {\sc statement}  $(2)$, $\dots$, {\sc statement}  $(n-1)$,    then  to prove {\sc statement} ($n$).

To this end, we start with the following:
For each   $s \in \Pb$, let 
\[
u^s: Y_s \rightarrow \tilde Y_s \times_{\rho_{Y_s}} \tilde X \simeq \iota^*(\tilde M \times_\rho \tilde X) \hookrightarrow \tilde M \times_\rho \tilde X
\]
be the pluriharmonic section of logarithmic growth whose existence is guaranteed by the inductive hypothesis.

\begin{lemma} \label{welldefined}   
For $q \in M$ and $s_1, s_2 \in U(q)$, we have
$
u^{s_1}(q) = u^{s_1}(q).
$
\end{lemma}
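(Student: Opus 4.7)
The plan is to prove $u^{s_1}(q)=u^{s_2}(q)$ by introducing an auxiliary third divisor through $q$ and invoking the uniqueness part of the inductive hypothesis on lower-dimensional intersections. The key observation is that although $\bar Y_{s_1}$ and $\bar Y_{s_2}$ a priori have no common geometry beyond the point $q$, we can bridge them through a generic $\bar Y_{s_3}$ and compare pluriharmonic sections on the two intersections $Y_{s_1}\cap Y_{s_3}$ and $Y_{s_2}\cap Y_{s_3}$, each of which has dimension $n-2$.

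First, I would use a Bertini-type argument to select $s_3\in U(q)$ sufficiently generic so that both $\bar Y_{s_1}\cap\bar Y_{s_3}$ and $\bar Y_{s_2}\cap\bar Y_{s_3}$ are smooth subvarieties of $\bar M$ of complex dimension $n-2$, and such that $(\bar Y_{s_i}\cap\bar Y_{s_3})\cap\Sigma$ is a normal crossing divisor in $\bar Y_{s_i}\cap\bar Y_{s_3}$ for $i=1,2$. The set of such $s_3$ is a non-empty Zariski open subset of $\Pb(q)$, so such an $s_3$ exists. Next, I would observe that since $Y_{s_1}\cap Y_{s_3}$ is a complex submanifold of $Y_{s_1}$, the restriction $u^{s_1}|_{Y_{s_1}\cap Y_{s_3}}$ remains pluriharmonic; moreover, the sub-logarithmic growth condition (Definition~\ref{sub-log}) is inherited, because every curve $\bar{\mathcal R}\in\CC$ contained in $\bar Y_{s_1}\cap\bar Y_{s_3}$ is in particular a curve in $\bar Y_{s_1}$. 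The same reasoning applies to $u^{s_3}|_{Y_{s_1}\cap Y_{s_3}}$. Thus both restrictions are pluriharmonic sections of sub-logarithmic growth on $Y_{s_1}\cap Y_{s_3}$; applying statement $(n-2)$ of the inductive hypothesis when $n-2\geq 2$, or Theorem~\ref{uniqueness} when $n-2=1$, they must coincide with the unique such section. Evaluating at $q\in Y_{s_1}\cap Y_{s_3}$ yields $u^{s_1}(q)=u^{s_3}(q)$. Running the identical argument with $s_2$ in place of $s_1$ gives $u^{s_2}(q)=u^{s_3}(q)$, and transitivity finishes the proof.

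The main obstacle I foresee is the need to verify that, for the inductive hypothesis to apply on $Y_{s_i}\cap Y_{s_3}$, the restricted homomorphism $\rho_{Y_{s_i}\cap Y_{s_3}}:\pi_1(Y_{s_i}\cap Y_{s_3})\to\mathsf{Isom}(\tilde X)$ still satisfies the required hypotheses, i.e.~it is proper and does not fix an unbounded closed convex strict subset of $\tilde X$. This should follow from Lemma~\ref{lefschetz} applied successively to the ample slicings $\bar Y_{s_3}\subset\bar M$ and $\bar Y_{s_i}\cap\bar Y_{s_3}\subset\bar Y_{s_i}$, which guarantees that the image of the restricted representation coincides with the image of $\rho$. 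Together with the Bertini-type verification of the genericity of $s_3$ and the inheritance of sub-logarithmic growth under restriction, these are the only technical checks required, and they form the routine backbone of an otherwise formally straightforward argument.
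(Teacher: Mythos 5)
Your proposal is correct and follows essentially the same approach as the paper's proof: both introduce a third "bridge" divisor (your $s_3$, the paper's $s$) lying in a nonempty Zariski-open subset of $\Pb(q)$ so that the intersections $\bar Y_{s_i}\cap\bar Y_{s_3}$ are smooth with $\Sigma\cap\bar Y_{s_i}\cap\bar Y_{s_3}$ normal crossing, then apply the uniqueness part of the inductive hypothesis (\textsc{statement} $(n-2)$) to identify the restrictions of $u^{s_i}$ and $u^{s_3}$ on $Y_{s_i}\cap Y_{s_3}$ and evaluate at $q$. Your additional remarks on the inheritance of sub-logarithmic growth and on properness via Lemma~\ref{lefschetz} make explicit checks that the paper leaves implicit, but the argument is the same.
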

\begin{proof}
For  $i=1,2$ and $q \in M$, let 
 \[ 
 U(s_i, q)= \{s \in U(q) :  {{\bar Y}_s}\ \mbox{transversal to} \ {\bar Y}_{s_i} \ \mbox{and} \ \Sigma \cap {\bar Y}_s\cap {\bar Y}_{s_i} \ \mbox{normal crossing}\}.
 \]
 The set $U(s_i, q)$ is Zariski open in $U(q)$ which implies $U(s_1, q) \cap U(s_2, q) \neq \emptyset$.  Fix $s\in U(s_1, q) \cap U(s_2, q)$.
Let $\iota:Y_{s_i} \cap Y_s \rightarrow M$ be the inclusion map.  Consider the pluriharmonic sections of sub-logarithmic growth
 \[
u^s:  Y_s \rightarrow \tilde Y_s \times_{\rho_{Y_s}}  \tilde X \ \mbox{ and } \ 
 \tilde u^{s_i}:  Y_{s_i} \rightarrow \tilde Y_{s_i} \times_{Y_{s_i}} \tilde X
 \] 
and their restriction 
 \[
u^s_{Y_{s_i} \cap Y_s}: Y_{s_i} \cap Y_s \rightarrow {\widetilde{Y_{s_i} \cap Y_s}} \times_{\rho_{Y_{s_i} \cap Y_s}}  \tilde X
  \]
  and 
   \[
u^{s_i}_{Y_{s_i} \cap Y_s}: Y_{s_i} \cap Y_s \rightarrow {\widetilde{Y_{s_i} \cap Y_s}} \times_{\rho_{Y_{s_i} \cap Y_s}}  \tilde X.
   \]
Since these restrictions are both pluriharmonic sections of sub-logarithmic growth by {\sc statement} $(n-2)$, they  are in fact the same section; i.e.~the sections $u^s$ and $u^{s_i}$ restricted to $Y_{s_i} \cap Y_s$ is equal equal.
Since $q \in Y_{s_i} \cap Y_s$, we conclude $u^{s_i}(q) =u^s(q)$.    \end{proof}

By Lemma~\ref{welldefined}, we can define
\[
 u:M \rightarrow \tilde M \times_\rho \tilde X, \ \ u(q):=u_s(q) \text{ for } s\in U(q).
\]
To complete the inductive step, we are left to  show that  $u$  is  a pluriharmonic section with sub-logarithmic growth and moreover unique amongst pluriharmonic sections with sub-logarithmic growth. 
Given $q \in M$ and any complex 1-dimensional subspace $\PP \subset T_q^{1,0}(M)$, we consider the non-empty algebraic set
\[
A_\PP(q) = \{s \in \Pb(q): \PP \subset T_q^{1,0}(Y_s) \}
\]
and its Zariski open subset 
\[
U_\PP(q)= A_\PP(q) \cap U(q).
\]
Let $s\in U_\PP(q)$.  By the construction of $u$, its restriction $u_{Y_s}$ is the  unique pluriharmonic section of sub-logarithmic growth. Thus,  all derivatives of $u|_{Y_s}$ exist in the direction of $\PP$ and 
\[
\partial_{E'} \bar \partial|_\PP u=\partial_{E'} \bar \partial|_\PP u_s=0,
\]
thereby proving that $u$ is  pluriharmonic with sub-logarithmic growth.  

To prove uniqueness of $u$, let $v:  M \rightarrow \tilde M \times_\rho \tilde X$ be a pluriharmonic section of sub-logarithmic growth.  For $q \in M$, let  $s \in U(q)$.  
The restriction $v_{Y_s}$ (as in (\ref{restrictionsection})) is a pluriharmonic map of sub-logarithmic growth.     By the uniqueness assertion in  {\sc statement ($n-1$)}, we conclude $u_{Y_s}=v_{Y_s}$.   Since  $q$ is an arbitrary point in $M$, we conclude $u=v$.  This completes the proof of Theorem~\ref{theorem:pluriharmonic}.

\subsection{Other cases}
We include the proofs of Theorem~\ref{theorem:holomorphic} and Theorem~\ref{theorem:sameconclusion} in this subsection.\\

\begin{proofTheorem2}
Follows from an argument exactly as in \cite{siu1}.  
\end{proofTheorem2}

\begin{proofTheorem3}
The only difference between the proof of Theorem~\ref{theorem:pluriharmonic}
and this case is that the uniqueness of $\rho$-equivariant harmonic maps here follows from Theorem~\ref{uniqueness} (i) instead of Theorem~\ref{uniqueness} (ii).
\end{proofTheorem3}

\chapter{Euclidean buildings and Teichm\"uller spaces} \label{chap:Euclidean buildings}

In this chapter, we consider harmonic maps into  a locally finite Euclidean building $\tilde X$ and the Weil-Petersson completion of Teichm\"uller space.  
Both these spaces are singular spaces, and 
 harmonic maps into $\tilde X$ may have singular points.

\section{Singlar sets of harmonic maps}

\label{sec:singularsets}
\begin{definition}
Let $u:\Omega \rightarrow \tilde X$  be a harmonic map from a Riemannian domain  into an NPC space $\tilde X$.  A point $x \in \Omega$ is said to be a {\it regular point} if there exists a neighborhood ${\mathcal N}$ of $x$ and a totally geodesic subspace  $M \subset \tilde X$ isometric to a smooth Riemannian manifold such that $u(\NN) \subset M$.
A {\it singular point} of $u$ is a point of $\Omega$ that is not a regular point.   
A regular (resp.~singular) set of $u$ is the set of all regular (resp.~singular) points of $u$.
\end{definition}

We adopt the following definition from \cite[Section 7]{gromov-schoen}.

\begin{definition}
A harmonic map $u: \Omega \rightarrow  \tilde X$   is said to be {\it pluriharmonic} provided it is pluriharmonic in the usual sense (i.e. $\partial_{E'} \bar \partial u=0$ as in Section~\ref{KtoK}) on the regular set.
\end{definition}

In Section~\ref{sec:pluriharmonicity}, we proved that a harmonic map  into a smooth manifold is pluriharmonic.  The idea was to   
multiply the Bochner formula by a smooth function $\chi_N$ with support away from the divisor.  We then apply integration by parts  to obtain equalities (\ref{multiply}) and (\ref{multiply'}).  For a harmonic map into a singular space considered here,  we need to 
 justify integration by parts in the presence of  its singular set.  To do so, we need the following cut-off functions.

Recall that a $k$-dimensional Euclidean building $\tilde X$ is the union of $k$-flats.   The following theorem due to Gromov and Schoen (cf.~\cite[Theorem 6.4]{gromov-schoen} asserts that away from a small closed set,  a harmonic map  into a locally finite Euclidean building maps is locally a harmonic map into a $k$-flat.

\begin{theorem} \label{gs}
If $u: \Omega \rightarrow \tilde X$   is a harmonic map into a locally finite Euclidean building, then  the the singular set ${\mathcal S}(u)$ is a closed set of Hausdorff codimension  greater or equal to 2.
  For any compact subdomain $\Omega_1$ of $\Omega$, there is a sequence 
of Lipschitz functions $\{\psi_i\}$ with $\psi_i \equiv 0$ in a neighborhood of ${\mathcal S}(u) \cap \bar \Omega_1$, $0 \leq \psi_i \leq 1 $ and $\psi_i(x) \rightarrow 1$ for all $x \in \Omega_1 \backslash {\mathcal S}(u)$ such that
\[
\lim_{i \rightarrow \infty} \int_\Omega|\nabla \psi_i| d\mu =0
\]
and 
\[
\lim_{i \rightarrow \infty}\int |\nabla \nabla u| \, | \nabla \psi_i| d\mu =0.
\]
\end{theorem}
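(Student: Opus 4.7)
The plan is to follow the Gromov--Schoen strategy from their original paper on harmonic maps into Euclidean buildings, adapting the arguments to the locally finite setting at hand. The proof breaks into two logically separate parts: the structural result on the singular set $\mathcal S(u)$, and the subsequent construction of the cut-off sequence $\{\psi_i\}$.

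For the first part, the starting point is an order function. For each $x\in\Omega$ and sufficiently small $r>0$ set
\[
\mathrm{Ord}_u(x,r)=\frac{r\,\int_{B_r(x)}|\nabla u|^2\,d\mu}{\int_{\partial B_r(x)}d^2\bigl(u,u(x)\bigr)\,d\Sigma},
\]
and define $\mathrm{Ord}_u(x)=\lim_{r\to 0^+}\mathrm{Ord}_u(x,r)$. First I would prove the Gromov--Schoen monotonicity formula, namely that $r\mapsto e^{Cr}\mathrm{Ord}_u(x,r)$ is monotone non-decreasing; this follows from a domain variation computation combined with the NPC condition on $\tilde X$. Monotonicity yields existence of $\mathrm{Ord}_u(x)$ as an upper semicontinuous function and shows $\mathrm{Ord}_u(x)\ge 1$ everywhere. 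Blow-up arguments then produce tangent maps $u_\star:T_x\Omega\to T_{u(x)}\tilde X$, which are homogeneous of degree $\mathrm{Ord}_u(x)$ and harmonic into the tangent cone of $\tilde X$ at $u(x)$. Since $\tilde X$ is a locally finite Euclidean building, this tangent cone is itself a Euclidean building of the same dimension, and a point is regular precisely when $\mathrm{Ord}_u(x)=1$ and the image of some tangent map lies in a single apartment; all other points have $\mathrm{Ord}_u(x)\ge 1+\epsilon_0$ for a uniform $\epsilon_0>0$, by the discreteness of the link of a building vertex.

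From here I would run the Federer-style dimension reduction: if $\mathcal S(u)$ had Hausdorff dimension exceeding $n-2$, one could iterate the blow-up to produce a tangent map on $\R^{n-1}$ whose singular set still has positive $(n-2)$-measure, contradicting the corresponding one-dimensional statement (a harmonic map from an interval into an NPC space is a geodesic, hence has no singularities). This is the main technical obstacle, as carrying out the reduction requires careful bookkeeping of how the target building decomposes under tangent cone formation, and control of the convergence of the blow-up sequences in the locally uniform sense; the locally finite hypothesis is crucial here because it provides compactness of the pointed target spaces.

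For the second part, the construction of $\{\psi_i\}$ is a capacity-theoretic consequence of the codimension bound. Because $\mathcal S(u)\cap\bar\Omega_1$ has Hausdorff codimension $\ge 2$, for each $i$ we can cover it by a finite collection of balls $\{B_{r_{i,k}}(x_{i,k})\}$ with $\sum_k r_{i,k}^{n-2}\le \tfrac{1}{i}$. Let $\eta:\R\to[0,1]$ be a smooth cutoff with $\eta\equiv 0$ on $[0,1]$, $\eta\equiv 1$ on $[2,\infty)$, and set
\[
\psi_i(x)=\prod_k \eta\!\left(\frac{\mathrm{dist}(x,x_{i,k})}{r_{i,k}}\right).
\]
A direct computation using the scaling $|\nabla\psi_i|\lesssim r_{i,k}^{-1}\chi_{B_{2r_{i,k}}\setminus B_{r_{i,k}}}$ gives
\[
\int_\Omega |\nabla\psi_i|\,d\mu\;\lesssim\;\sum_k r_{i,k}^{n-1}\cdot r_{i,k}^{-1}=\sum_k r_{i,k}^{n-2}\le \tfrac{1}{i},
\]
which handles the first limit. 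For the second limit, the delicate issue is that $|\nabla\nabla u|$ is not globally bounded; near a point of order $\alpha\ge 1+\epsilon_0$ one has $|\nabla\nabla u|\lesssim r^{\alpha-2}$ by scaling of the monotonicity formula. Refining the ball cover so that $r_{i,k}$ is chosen in accordance with the local order (which is possible by upper semicontinuity of $\mathrm{Ord}_u$ and the uniform gap $\epsilon_0$), one obtains $\int |\nabla\nabla u|\,|\nabla\psi_i|\,d\mu\lesssim \sum_k r_{i,k}^{n-2+\epsilon_0}\to 0$. The main obstacle in this last step is verifying that the pointwise bound $|\nabla\nabla u|\lesssim r^{\alpha-2}$ survives on the building target (since $\nabla\nabla u$ must be interpreted through the Korevaar--Schoen formalism on the regular set, where the image lies in a smooth flat); this is precisely where the structural analysis from the first part is reinvested.
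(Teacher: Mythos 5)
Your proposal follows exactly the Gromov--Schoen strategy that the paper itself invokes: the paper simply cites \cite[Theorem 6.4]{gromov-schoen} for everything except the first displayed limit and notes that limit follows by the same argument, while you re-derive the ingredients (monotonicity of the order, tangent maps, the order gap $\epsilon_0$, Federer dimension reduction, and the capacity-theoretic cut-off construction). So the approach is the same.

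One small slip worth fixing: in your estimate for $\int_\Omega|\nabla\psi_i|\,d\mu$ you write the annulus contribution as $r_{i,k}^{n-1}\cdot r_{i,k}^{-1}=r_{i,k}^{n-2}$, but $\mathrm{Vol}\bigl(B_{2r_{i,k}}\setminus B_{r_{i,k}}\bigr)\approx r_{i,k}^{n}$, so the correct bound is $\int_\Omega|\nabla\psi_i|\,d\mu\lesssim\sum_k r_{i,k}^{n-1}$. The conclusion still holds, since $\sum_k r_{i,k}^{n-1}\le\bigl(\max_k r_{i,k}\bigr)\sum_k r_{i,k}^{n-2}$ and the covering radii can be taken to tend to zero as $i\to\infty$; but the displayed equality as written is off by a power of $r_{i,k}$, and the decay of $\max_k r_{i,k}$ should be stated explicitly since it is what actually closes the estimate.
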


 \begin{proof}
The assertions of this theorem except for the first equality are contained in \cite[Theorem 6.4]{gromov-schoen}.    
The first equality  can  proved by following the proof of \cite[Theorem 6.4]{gromov-schoen}. We omit the details.
 \end{proof}
 
 \begin{lemma} \label{Bintegrable}
For $u$ as in Theorem~\ref{gs},
 \[
\int \partial \bar \partial \{ \bar \partial u, \bar \partial u\}  \chi_N 
= \int   \{ \bar \partial u, \bar \partial u\} \wedge \bar \partial \partial \chi_N.
\]
\end{lemma}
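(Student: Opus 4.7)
The plan is to multiply both integrands by the Gromov--Schoen cutoffs $\psi_i$ from Theorem~\ref{gs}, apply the classical integration by parts on the smooth region, and then let $i \to \infty$. On the support of $\chi_N \psi_i$, the map $u$ stays away from its singular set $\mathcal{S}(u)$, so it takes values locally in a single Euclidean flat of $\tilde X$ and, by standard harmonic map regularity, is smooth there. In particular $\{\bar\partial u, \bar\partial u\}$ is a smooth $(1,1)$-form on the support of $\chi_N \psi_i$, which is compact in $\Omega \setminus \mathcal{S}(u)$, so Stokes' theorem applies unambiguously.

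I would carry out two successive integrations by parts. The first transfers a $\bar\partial$ off of $\{\bar\partial u, \bar\partial u\}$:
\begin{equation*}
\int \partial\bar\partial\{\bar\partial u, \bar\partial u\}\,\chi_N \psi_i = -\int \bar\partial\{\bar\partial u,\bar\partial u\}\wedge \psi_i \partial\chi_N \;-\; \int \bar\partial\{\bar\partial u,\bar\partial u\}\wedge \chi_N\,\partial\psi_i.
\end{equation*}
Bounding $|\bar\partial\{\bar\partial u,\bar\partial u\}| \leq C\,|\nabla u|\,|\nabla\nabla u|$ and using that $|\nabla u|$ is bounded on the compact support of $\chi_N$ (harmonic maps are locally Lipschitz), the second term is controlled by $C\int |\nabla\nabla u|\,|\nabla \psi_i|\,d\mu$, which tends to $0$ by the second limit in Theorem~\ref{gs}. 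A second integration by parts on the remaining term yields
\begin{equation*}
-\int \bar\partial\{\bar\partial u,\bar\partial u\}\wedge \psi_i\,\partial\chi_N \;=\; \int \{\bar\partial u,\bar\partial u\}\wedge \psi_i\,\bar\partial\partial\chi_N \;+\; \int \{\bar\partial u,\bar\partial u\}\wedge \bar\partial\psi_i \wedge \partial\chi_N,
\end{equation*}
whose error term is dominated by $C\int |\nabla u|^2\,|\nabla \chi_N|\,|\nabla \psi_i|\,d\mu$; since $|\nabla u|$ and $|\nabla\chi_N|$ are bounded on the support of $\chi_N$, this tends to $0$ by the first limit in Theorem~\ref{gs}.

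To conclude, I would pass to the limit $i \to \infty$ in the surviving main terms using the dominated convergence theorem. On the right, $\psi_i\,\bar\partial\partial\chi_N \to \bar\partial\partial\chi_N$ pointwise a.e., and the integrand is dominated by the integrable function $C\,|\nabla u|^2\,|\bar\partial\partial\chi_N|$ on the compact support of $\chi_N$. On the left, Siu's Bochner formula on the regular set (Theorem~\ref{siubochner}) identifies $\partial\bar\partial\{\bar\partial u,\bar\partial u\}$ with the non-negative, locally integrable density $\bigl(4|\partial_{E'}\bar\partial u|^2 + Q\bigr)\omega^n/n!$, which allows convergence of $\chi_N \psi_i \to \chi_N$ off the measure-zero singular set. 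The main obstacle is ensuring that the two error terms produced by the two integrations by parts are precisely of the two shapes $\int |\nabla u|\,|\nabla\nabla u|\,|\nabla \psi_i|\,d\mu$ and $\int |\nabla u|^2\,|\nabla \psi_i|\,d\mu$ that Theorem~\ref{gs} was designed to annihilate; this pairing is not accidental but reflects exactly the construction of the Gromov--Schoen cutoffs, and once the correspondence is verified the rest of the argument is routine.
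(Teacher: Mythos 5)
Your proof is essentially the same as the paper's: truncate by the Gromov--Schoen cutoffs $\psi_i$, integrate by parts twice, and control the two error terms by the limits $\int|\nabla\psi_i|\to 0$ and $\int|\nabla\nabla u|\,|\nabla\psi_i|\to 0$ supplied by Theorem~\ref{gs}, then pass to the limit $i\to\infty$. One side remark on signs: $\bar\partial\{\bar\partial u,\bar\partial u\}$ is a $(1,2)$-form (odd total degree), so the first Leibniz step actually reads $\int\partial\bar\partial\{\cdot\}\,\chi_N\psi_i = +\int\bar\partial\{\cdot\}\wedge\partial(\chi_N\psi_i)$ rather than the minus you wrote, and carrying the correct signs through both integrations by parts yields $\int\partial\bar\partial\{\cdot\}\,\chi_N = \int\{\cdot\}\wedge\partial\bar\partial\chi_N = -\int\{\cdot\}\wedge\bar\partial\partial\chi_N$, so the printed lemma appears to carry a sign typo ($\bar\partial\partial$ should be $\partial\bar\partial$); your sign slip in the first step happens to reproduce the printed statement, but neither discrepancy affects the substance of the argument since the estimates only use absolute values.
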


\begin{proof}
With $\psi_i$ as in Theorem~\ref{gs}, we have
 \begin{eqnarray*}
\lefteqn{ \int \partial \bar \partial \{ \bar \partial u, \bar \partial u\}  \chi_N \psi_i }
\\
& = & \int \bar \partial \{ \bar \partial u, \bar \partial u\} \wedge \partial (\chi_N \psi_i)
\\
& = & \int (\bar \partial \{ \bar \partial u, \bar \partial u\} \wedge \partial \chi_N)  \psi_i + \int (\bar \partial \{ \bar \partial u, \bar \partial u\} \wedge \partial \psi_i)  \chi_N
\\
& = & -\int  \left( \{ \bar \partial u, \bar \partial u\} \wedge \bar \partial \partial \chi_N \right)  \psi_i + \int \{ \bar \partial u, \bar \partial u\} \wedge \partial \chi_N \wedge \bar \partial \psi_i + \int \left( \bar \partial \{ \bar \partial u, \bar \partial u\} \wedge \partial \psi_i  \right) \chi_N.
 \end{eqnarray*}
Furthermore, there exists a constant $C>0$ depending only on the Lipschitz constant of $u$ and $\chi_N$   such that 
 \begin{eqnarray*}
 \left|  \int \{ \bar \partial u, \bar \partial u\} \partial \chi_N \wedge \bar \partial \psi_i \right| & \leq & C\int |\nabla \psi_i| \omega^2,
 \end{eqnarray*}
 
 \begin{eqnarray*}
 \left| \int \left( \bar \partial \{ \bar \partial u, \bar \partial u\} \wedge \partial \psi_i  \right) \chi_N \right| \leq  C\int |\nabla \nabla u| |\nabla \psi_i| \omega^2.
 \end{eqnarray*}
Thus, the assertion follows from letting $i \rightarrow \infty$ and applying Theorem~\ref{gs}.
 \end{proof}
 
 \begin{lemma} \label{Bpluri}
For $u$ as in Theorem~\ref{gs},

 \[
 - \int_M \chi_N d  \{ \bar\partial_E \partial u, \partial u -\bar{\partial} u\} = 
 \int_M d \chi_N \wedge  \{ \bar\partial_E \partial u, \partial u -\bar{\partial} u\}.
 \]
\end{lemma}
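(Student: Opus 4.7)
The plan is to mirror the argument of Lemma~\ref{Bintegrable}, the key new ingredient being the approximation of the characteristic function of the regular set by the cut-off functions $\psi_i$ supplied by Theorem~\ref{gs}. Set $\alpha := \{\bar\partial_E \partial u,\, \partial u - \bar\partial u\}\wedge \frac{\omega^{n-2}}{(n-2)!}$, which is a smooth differential form on $M\setminus \mathcal S(u)$ because $u$ takes values in a totally geodesic smooth submanifold near every regular point. The cut-off $\chi_N$ is smooth with support in a compact subset $\Omega_1 \Subset M$, and for each $i$ the product $\chi_N \psi_i$ is Lipschitz with compact support disjoint from $\mathcal S(u)$.

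First, on the open regular set we have the Leibniz identity
\[
d(\chi_N\psi_i\,\alpha) \;=\; \chi_N\psi_i\, d\alpha \;+\; d\chi_N \wedge \psi_i\,\alpha \;+\; \chi_N\, d\psi_i \wedge \alpha .
\]
Since $\chi_N\psi_i\,\alpha$ is a compactly supported Lipschitz form on the smooth (regular) part, Stokes' theorem gives
\[
0 \;=\; \int_M \chi_N\psi_i\, d\alpha \;+\; \int_M d\chi_N \wedge \psi_i\,\alpha \;+\; \int_M \chi_N\, d\psi_i \wedge \alpha .
\]

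The next step is to take $i\to\infty$. The third integral is bounded by $C\!\int |\nabla\psi_i|\,|\nabla\nabla u|\,|\nabla u|\,\omega^n$; since $u$ is locally Lipschitz on $\Omega_1$ (harmonic maps into NPC targets are locally Lipschitz), this is dominated by $C'\!\int |\nabla\nabla u|\,|\nabla\psi_i|\,\omega^n$, which tends to $0$ by Theorem~\ref{gs}. For the first two terms, the pointwise convergence $\psi_i \to 1$ on $\Omega_1\setminus\mathcal S(u)$ together with the integrability of $\alpha$ and $d\alpha$ on $\Omega_1$ (these are controlled by $|\partial_E\bar\partial u|^2+Q$ and by $|\nabla u|^2$ multiplied by $|\nabla\nabla u|$, with the same integrability estimates developed in Section~\ref{sec:integrability} carrying over verbatim on the regular set) allows us to invoke the dominated convergence theorem to obtain
\[
\int_M \chi_N\psi_i\, d\alpha \longrightarrow \int_M \chi_N\, d\alpha, \qquad \int_M d\chi_N \wedge \psi_i\,\alpha \longrightarrow \int_M d\chi_N \wedge \alpha.
\]
Combining the three limits yields the claimed identity.

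The main obstacle in this plan is justifying the dominated convergence for the first two integrals in the presence of the singular set. This requires knowing that $|\nabla\nabla u|\,|\nabla u|$ and $|\nabla\nabla u|^2$ are locally integrable on $\Omega_1$, and that the corresponding integrals over $\Omega_1\setminus \mathcal S(u)$ are finite. The first follows from the local Lipschitz bound on $u$ (which gives $|\nabla u|\in L^\infty_{\mathrm{loc}}$) together with the $L^2_{\mathrm{loc}}$ bound on $|\nabla\nabla u|$ on the regular set, and the second is the analogue of Proposition~\ref{integrable}, whose proof goes through without change on $M\setminus \mathcal S(u)$ because the Bochner argument and the asymptotic energy estimates near the divisor are entirely local on the regular set.
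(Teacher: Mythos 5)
Your proposal is correct and is essentially the same proof as in the paper: integrate by parts against $\chi_N\psi_i$ using Stokes on the regular set, bound the extra $\chi_N\,d\psi_i$ term by $C\int|\nabla\nabla u|\,|\nabla\psi_i|$ (absorbing $|\nabla u|\in L^\infty_{\mathrm{loc}}$ into $C$), and let $i\to\infty$ via Theorem~\ref{gs}. The only difference is that you spell out the dominated-convergence step for the two surviving integrals (and carry the $\omega^{n-2}$ factor, which is vacuous in the relevant case $n=2$), whereas the paper handles that passage tersely; the underlying argument is the same.
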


\begin{proof}
With $\psi_i$ as in Theorem~\ref{gs}, we have
\begin{eqnarray*}
\lefteqn{ - \int_M \chi_N \psi_i d  \{ \bar\partial_E \partial u, \partial u -\bar{\partial} u\} }\\
& = &  
 \int_M \psi_i d \chi_N \wedge  \{ \bar\partial_E \partial u, \partial u -\bar{\partial} u\} +  \int_M \chi_N d\psi_i  \wedge  \{ \bar\partial_E \partial u, \partial u -\bar{\partial} u\}.
\end{eqnarray*}
Thus,  there exists a constant $C>0$ depending only on the Lipschitz constant of $u$  in the support of $\chi_N$ such that
\[
\left| \int_M \chi_N d\psi_i  \wedge  \{ \bar\partial_E \partial u, \partial u -\bar{\partial} u\} \right| \leq  C \int |\nabla \nabla u| \, |\nabla \psi|,
 \]
 the assertion follows from letting $i \rightarrow \infty$ and applying Theorem~\ref{gs}.
\end{proof}

\section{Proof of  Theorem~\ref{theorem:buildings} and Theorem~\ref{theorem:teichmuller}}
\label{proofofthm4}
\begin{proofTheorem4}
First assume $\dim_C M =2$.  To
 apply Theorem~\ref{theorem:pluriharmonicDim2}, we use the fact that the isometry group of a Euclidean building without a Euclidean factor consists of only elliptic and hyperbolic elements (cf.~\cite[Theorem 4.1]{parreau}). 
Therefore, any element (resp.~commuting pair) of the isometry group has exponential decay (cf.~Definition~\ref{jz} and Definition~\ref{jz2}).
We can thus invoke Theorem~\ref{theorem:pluriharmonicDim2} to conclude that, with the Poincar\'e-type K\"ahler metric $g$  on $M$ (cf.~Section~\ref{poincarekahler}), there exists    a   harmonic section 
$
u: M \rightarrow \tilde M \times_\rho \tilde{X}
$
 of logarithmic energy growth.

Next, we follow the proof of inequality (\ref{ibp}) to prove
\[
\lim_{N \rightarrow \infty}
\int \partial \bar \partial \chi_N \wedge  \{ \bar \partial u, \bar \partial u\} < \infty.
\]  
Consequently, Lemma~\ref{Bintegrable} implies
\[
\int    \bar \partial \partial \{ \bar \partial u, \bar \partial u\}=
\lim_{N \rightarrow \infty} \int    \bar \partial \partial \{ \bar \partial u, \bar \partial u\} \chi_N<\infty.
\]
Since Siu's Bochner formula (cf.~Theorem~\ref{siubochner}) holds for $u$ in the regular set which is of full measure,  the conclusion of  Proposition~\ref{integrable} holds.  Consequently, we can now  apply the proof of (\ref{limitis0}) to show
 that 
 \[
 \lim_{N \rightarrow \infty}  \int_V d \chi_N \wedge  \{\bar\partial_{E'}\partial u, \partial u -\bar{\partial} u\}   = 
0.
\]
By Lemma~\ref{Bpluri}, 
\[
\int_M d  \{ \bar\partial_E \partial u, \partial u -\bar{\partial} u\} =\lim_{N \rightarrow \infty} \int_M \chi_N d  \{ \bar\partial_E \partial u, \partial u -\bar{\partial} u\}=0.
\]
 Since the variation of the Siu's Bochner formula of Theorem~\ref{mochizukibochnerformula} holds for $u$ in the regular set, we conclude  that $\left|{\partial}_{E'} \bar \partial u\right|=0$ on the regular set of $u$.  In other words, $u$ is pluriharmonic.

 The rest of the proof follows exactly as in the case when $\tilde X$ is a symmetric space.  Namely, the uniqueness follows from the same argument as Lemma~\ref{uniqueH} and Lemma~\ref{unique!}.  Finally,   the  inductive argument of Chapter~\ref{chap:higher dimensions} proves the conclusion of Theorem~\ref{theorem:pluriharmonic} for $\dim_C M >2$.
\end{proofTheorem4}

We next prove Theorem~\ref{theorem:teichmuller}.
Set theoretically, recall that the Weil-Petersson completion $\overline{\mathcal T}$ of the Teichm\"uller space $\mathcal T$  is the augmented Teichmüller space, i.e.~$\overline{\mathcal T}$ is obtained by adding points corresponding to nodal Riemann surfaces to $\mathcal T$. Its boundary $\partial \mathcal T$ can be stratified by smooth open strata of lower dimensional Teichm\"uller spaces.  In other words, $\overline{\mathcal T}$ is a stratified space (with the original Teichmüller space $\mathcal T$ being the top dimensional open stratum).\\

\begin{proofTheorem5}
We  repeat the proof of Theorem~\ref{theorem:buildings} but with the following three differences:
\begin{itemize}
\item The isometry group $\mathsf{Isom}(\overline{\mathcal T})$  consists of only elliptic and hyperbolic elements by Thurston's classification of mapping classes and \cite{daskal-wentworth}.   (Also see \cite[Theorem A]{bridson}.)   
\item   We  use our regularity theorem for harmonic maps into $\overline{\mathcal T}$ \cite[Theorem 1.5 and Theorem 1.6]{daskal-meseHR} instead of the Gromov-Schoen regularity theorem for Euclidean buildings (i.e.~\cite[Theorem 6.4]{gromov-schoen} or Theorem~\ref{gs}).  Consequently, Lemma~\ref{Bintegrable} and Lemma~\ref{Bpluri} hold for harmonic map $u:\Omega \rightarrow \overline{\mathcal T}$.
\item  The assumption that $\rho$ does not fix an unbounded closed convex strict subset is not needed. Indeed, in the proof of Theorem~\ref{unique!}, we  apply part (i)  instead of part (ii) of Theorem~\ref{uniqueness} to prove the uniqueness  of harmonic maps with sub-logarithmic growth. This is possible because the Weil-Petersson metric has negative sectional curvature.
\end{itemize}
\end{proofTheorem5}

\end{document}